\newcommand{\arxiv}[1]{{\tt arXiv:#1}}
\DeclareFontFamily{OT1}{pzc}{}
\DeclareFontShape{OT1}{pzc}{m}{it}{<-> s * [1.10] pzcmi7t}{}
\DeclareMathAlphabet{\mathpzc}{OT1}{pzc}{m}{it}
\tikzset{anchorbase/.style={baseline={([yshift=-0.5ex]current bounding box.center)}}}
\tikzset{wipe/.style={white,line width=4pt}}
\def\stringnumber#1{{\scriptstyle\color{blue}#1}}
\def\dt{{\color{white}\bullet}\!\!\!\circ}
\newcommand\opendot[1]{\node at (#1) {$\dt$}}
\crefname{definition}{Definition}{Definitions}
\crefname{example}{Example}{Examples}
\crefname{lemma}{Lemma}{Lemmas}
\crefname{corollary}{Corollary}{Corollaries}
\crefname{theorem}{Theorem}{Theorems}
\crefname{remark}{Remark}{Remarks}
\crefname{equation}{}{}
\crefname{enumi}{}{}
\crefname{section}{Section}{Sections}
\def\red{\color{purple}}
\def\sqi{{\mathbf{i}}}
\def\OGBim{\mathpzc{OGBim}}
\def\GSCAT{\mathpzc{gsCat}}
\def\tU{\widetilde{U}}
\def\tev{\widetilde{\ev}}
\def\tcoev{\widetilde{\coev}}
\def\tV{\widetilde{V}}
\def\tu{{\tilde u}}
\def\tv{{\tilde v}}
\def\tb{{\tilde b}}
\def\tc{{\tilde c}}
\def\UU{\mathfrak{U}}
\def\AA{\mathfrak{A}}
\def\even{\operatorname{even}}
\def\odd{\operatorname{odd}}
\newcommand\cA{\mathpzc{A}}
\newcommand\cR{\mathpzc{R}}
\newcommand\cI{\mathpzc{I}}
\newcommand\cB{\mathpzc{B}}
\newcommand\cC{\mathpzc{C}}
\newcommand\cV{\mathpzc{V}}
\newcommand\cW{\mathpzc{W}}
\newcommand\sVec{\mathpzc{gsVec}}
\newcommand\sVecunderline{\mathpzc{g}\!\underline{\mathpzc{sVec}}}
\newcommand\gsKarunderline{\operatorname{g\underline{sKar}}}
\newcommand\fdmod{{\operatorname{-mod}}}
\newcommand\sMod{{\operatorname{-gsMod}}}
\newcommand\psmod{{\operatorname{-pgsmod}}}
\newcommand\psMod{{\operatorname{-pgsMod}}}
\newcommand\Upsmod{{\operatorname{-pg\underline{smod}}}}
\newcommand\UpsMod{{\operatorname{-pg\underline{sMod}}}}
\newcommand\bisMod{{\operatorname{-gsMod-}}}
\newcommand\doMs{{\operatorname{gsMod-}}}
\newcommand\smod{{\operatorname{-gsmod}}}
\newcommand\Usmod{{\operatorname{-g\underline{smod}}}}
\newcommand\UsMod{{\operatorname{-g\underline{sMod}}}}
\newcommand{\sEnd}{\operatorname{gsEnd}}
\def\hatotimes{{\,\bar\otimes\,}}
\def\br{B}
\def\lround{(\!(}
\def\rround{)\!)}
\def\S{S\!}
\def\E{{E}}
\def\F{{F}}
\newcommand\C{\mathbb{C}}
\newcommand\Z{\mathbb{Z}}
\newcommand\Q{\mathbb{Q}}
\newcommand\N{\mathbb{N}}
\renewcommand\k{\mathbb{F}}
\def\sop{{\operatorname{sop}}}
\def\lex{{\operatorname{lex}}}
\def\oldomega{\varpi}
\def\rev{{\operatorname{rev}}}
\def\deg{\operatorname{deg}}
\def\h{{\operatorname{ht}}}
\def\parity{\operatorname{par}}
\def\sign{\operatorname{sgn}}
\def\diag{\operatorname{diag}}
\def\smiley{{\gamma}}
\def\ocirc#1{{\stackrel{_{\,\circ}}{#1}}}
\def\TT{{T}}
\def\o{o}
\def\c{c}
\newcommand{\sqbinom}[2]{\genfrac{[}{]}{0pt}{}{#1}{#2}}
\renewcommand{\rev}{{\operatorname{rev}}}
\def\transpose{\mathtt{t}}
\def\reverse{\mathtt{r}}
\def\0{{\bar{0}}}
\def\1{{\bar{1}}}
\def\eps{\varepsilon}
\def\Par{{\Lambda^{\!+}}}
\def\SPar#1{{\Lambda^{\!+}_{#1}}}
\def\GPar#1#2{{\Lambda^{\!+}_{#1\times#2}}}
\def\Comp{\Lambda}
\def\gsdim{\operatorname{dim}_{q,\pi}}
\def\gsrank{\operatorname{rk}_{q,\pi}}
\def\sl{\mathfrak{sl}}
\def\osp{\mathfrak{osp}}
\def\dash{{\text{-}}}
\def\fin{{\operatorname{fin}}}
\def\ev{\operatorname{ev}}
\def\Ch{\operatorname{Ch}}
\def\coev{\operatorname{coev}}
\DeclareMathOperator{\Hom}{Hom}
\DeclareMathOperator{\End}{End}
\DeclareMathOperator{\Aut}{Aut}
\def\CT{{\mathtt{CT}}}
\DeclareMathOperator{\SHOM}{\mathpzc{gsHom}}
\DeclareMathOperator{\SEND}{\mathpzc{gsEnd}}
\DeclareMathOperator{\END}{\mathpzc{End}}
\DeclareMathOperator{\id}{id}       
\DeclareMathOperator{\Id}{Id}       
\DeclareMathOperator{\im}{im}       
\DeclareMathOperator{\GSKar}{gsKar}     
\def\OSym{OS\!ym}
\def\OPol{O\hspace{-.15mm}Pol}
\def\Sym{S\!ym}
\DeclareMathOperator{\res}{Res}
\DeclareMathOperator{\ind}{Ind}
\DeclareMathOperator{\coind}{Coind}
\DeclareMathOperator{\tr}{tr}
\DeclareMathOperator{\Tr}{Tr}
\def\gamm{\eta}
\def\chi{\xi}
\def\Cone{\operatorname{Cone}}
\def\ONH{ONH}
\def\ROH{\overline{OH}}
\def\RONH{\overline{ONH}}   
\def\EONH{ONH}
\def\EOH{OH}
\def\sig{\operatorname{sh}} 
\def\NE{\overline{NE}}
\newtheorem{theorem}{Theorem}[section]
\newtheorem{lemma}[theorem]{Lemma}
\newtheorem*{lemma*}{Lemma}
\newtheorem{corollary}[theorem]{Corollary}
\theoremstyle{definition}
\newtheorem{definition}[theorem]{Definition}
\newtheorem{remark}[theorem]{Remark}
\newtheorem{example}[theorem]{Example}
\numberwithin{equation}{section}
\begin{document}

\title[Derived equivalences for spin symmetric groups]{Odd Grassmannian bimodules and derived equivalences for spin symmetric groups}

\author[J. Brundan]{Jonathan Brundan}
\address{Department of Mathematics,
University of Oregon, Eugene, OR 97403, USA}
\email{brundan@uoregon.edu}

\author[A. Kleshchev]{Alexander Kleshchev}
\address{Department of Mathematics,
University of Oregon, Eugene, OR 97403, USA}
\email{klesh@uoregon.edu}

\thanks{2020 {\it Mathematics Subject Classification}: 17B10, 18D25, 20C30.}
\thanks{Research supported in part by NSF grants DMS-2101783 (J.B.)
and DMS-2101791 (A.K.).}

\begin{abstract}
We prove odd analogs of results of Chuang and Rouquier 
on $\mathfrak{sl}_2$-categorification. 
Combined also with recent work of the second author with Livesey,
this allows us to complete the proof of Brou\'e's Abelian Defect Conjecture for the double covers of symmetric groups.
The article also develops the theory of odd symmetric functions initiated a decade ago by Ellis, Khovanov and Lauda. 
A key role in our approach is played by a 2-category
consisting of {\em odd Grassmannian bimodules} over superalgebras
which are odd analogs of equivariant cohomology algebras of Grassmannians. This is the odd analog of the category of Grassmannian bimodules which was at the heart of Lauda's 
independent approach to 
categorification of $\mathfrak{sl}_2$. We also construct an action of
the odd Kac-Moody 2-category $\UU(\sl_2)$ on the 2-category of odd Grassmannian
bimodules, and use this to give a new proof of its non-degeneracy.
\tableofcontents
\end{abstract}

\maketitle

\section{Introduction}

This paper establishes ``odd" analogs of results of Chuang and Rouquer \cite{CR}. The motivating 
problem is to prove Brou\'e's Abelian Defect Group Conjecture for the double covers of symmetric groups.
In the ordinary even theory, Brou\'e's conjecture for symmetric groups was proved in two steps. First came the work of Chuang and Kessar \cite{CK}, which established a Morita equivalence reducing the 
proof of Brou\'e's conjecture to proving that all blocks of
symmetric groups in characteristic $p > 0$ with the same defect
are derived equivalent.
Then the second part of the proof came in Chuang and Rouquier's work which deduced this assertion from a special case of a remarkable general
theory of $\mathfrak{sl}_2$-categorification. Their theory has had many
other significant applications and generalizations, especially following
the works of Rouquier \cite{Rou,Rou2} and Khovanov and 
Lauda \cite{KL3}, 
which upgraded from $\mathfrak{sl}_2$
to an arbitrary symmetrizable Kac-Moody algebra $\mathfrak{g}$.

The analogous story for the double covers of symmetric groups 
has an equally long history, being initiated of course by Schur soon after
the ordinary representation theory of symmetric groups was worked out.
In \cite{BK}, we uncovered a connection in the same spirit as Grojnowski's work
 \cite{Groj}---an important predecessor of \cite{CR}---between modular representations of spin symmetric groups 
in odd characteristic $p=2 l+1$ and the
Kac-Moody group of type $A_{2 l}^{(2)}$.
A few years later, the odd theory was 
given new life by work of Ellis, Khovanov and Lauda
\cite{EK, EKL, EL, E}, 
whose motivation came from the completely different direction of the categorification program related to odd Khovanov homology.
They developed a substantial theory of {\em odd symmetric functions} which plays a key role in this article.
Soon after the work of Ellis, Khovanov and Lauda, a major breakthrough was made in work of Kang, Kashiwara, Oh and Tsuchioka \cite{KKT, KKO1,KKO2}. They introduced
so-called {\em quiver Hecke superalgebras}, which are the odd analogs of the Khovanov-Lauda-Rouquier algebras that 
underpin all current approaches to 
categorification of Kac-Moody algebras. In fact, quiver Hecke superalgebras categorify the positive part of the
so-called {\em covering quantum group} $U_{q,\pi}(\mathfrak{g})$ associated to a
super Kac-Moody datum with underlying symmetrizable Kac-Moody algebra
$\mathfrak{g}$. These covering quantum groups were defined
independently and studied in
great detail by
Clark, Wang and Hill \cite{CW, CHW1, CHW2, Clark}.
Then there was a lull in activity, until work of the first author with Ellis
\cite{BE2} which simplified the 
definition of the odd analog of the 2-category associated to $\sl_2$
made originally by Ellis and Lauda \cite{EL}
and extended it to an arbitrary super Kac-Moody datum.
Recently, Dupont, Ebert and Lauda \cite{DEL} have 
used  ``rewriting theory" 
to prove that the odd $\sl_2$ 2-category from \cite{BE2} is non-degenerate, but this is still an open problem for other odd types.

It has in fact been expected for long time that there should exist a comprehensive odd analog of the Chuang and Rouquier theory, and that this should play a role in constructing the derived equivalences required to prove Brou\'e Conjecture for spin symmetric groups. However, due in part to the lack of an appropriate analog of the first part of the proof for symmetric groups---the part provided by the work of Chuang and Kessar---it was not investigated seriously until now. 
This analog has recently been established, in work  
of the second author with Livesey \cite{KLi}, and is in fact highly non-trivial. The arguments in \cite{KLi} depend essentially on the Morita equivalence between cyclotomic quiver Hecke superalgebras 
and group algebras of spin symmetric groups constructed in \cite{KKT},
and also rely on the new approach to the study of RoCK blocks developed by the second author and Evseev in \cite{EvK}.

This article completes the second step of this program for spin symmetric groups. In order to do this, one needs to be able to compute explicitly with some realization of the categorification of
the analog $V(-\ell)$ 
of the $\sl_2$-module of lowest weight $-\ell$
 for the covering quantum group
$U_{q,\pi}(\sl_2)$. We do this in this article by developing a non-trivial theory of {\em odd Grassmannian bimodules}. These are bimodules over pairs of algebras which we refer to as the
{\em equivariant odd Grassmannian cohomology algebras} since they are analogous to the $GL_\ell(\C)$-equivariant cohomology algebras of the usual
Grassmannian of $n$-dimensional subspaces of $\C^\ell$. 
The specialized versions of these algebras
with the word ``equivariant" removed were worked out already by
Ellis, Khovanov and Lauda \cite{EKL}, but the generalization to the equivariant setting is not obvious due to the non-commutativity of the algebra $\OSym$ of odd symmetric functions. 
The definition of equivariant odd Grassmannian cohomology algebras---which are purely algebraic in nature rather than coming from any known cohomology theory---is given in \cref{minuets}, and then the all-important
2-category $\OGBim_\ell$ of bimodules over these algebras is
introduced in \cref{shy}. The key property of this,
its {\em rigidity}, is established in \cref{cupsandcaps,secondadjunction}.

With this theory in place, in \cref{weekend},
we are able to write down the odd analog of the
{\em singular Rouquier complex} in the category $\OGBim_\ell$, proving
the necessary homological properties of this needed to be able to
obtain derived equivalences between the module categories over odd Grassmannian cohomology algebras. After that, we digress to explain the relationship between 
the odd $\sl_2$ 2-category $\UU(\sl_2)$ from \cite{EL, BE2}
(\cref{km2cat})
and the category $\OGBim_\ell$, namely, there is a 2-functor from the former to the latter (\cref{actiontheorem}).
This is the odd analog of the main result about the ordinary $\sl_2$
2-category obtained by Lauda in \cite{Lauda, Lauda2}. We use this 2-functor to give another proof of the
non-degeneracy of the odd $\sl_2$ 2-category established originally in
\cite{DEL}; see \cref{nondegthm}.
This implies that the Grothendieck ring of the super Karoubi envelope
of $\UU(\sl_2)$ is isomorphic to the appropriate integral form of the
covering quantum group $U_{q,\pi}(\sl_2)$.
Then, in \cref{2Repsec},
we develop some of the basic theory of
 2-representations of the odd $\sl_2$ 2-category, following 
\cite{Rou} quite closely. This is applied in the next section
to prove \cref{bigt}, which may be paraphrased as follows:

\vspace{2mm}

\noindent
{\bf Theorem.}
{\em
The bounded homotopy category $K^b(\cV)$
of any integrable Karoubian 2-representation $\cV$ of the odd $\sl_2$ 2-category $\UU(\sl_2)$
admits an auto-equivalence
categorifying the action of the simple reflection
in the associated Weyl group. 
}

\vspace{2mm}

In the final \cref{sapps},
we apply this, together with its even analog from \cite{CR},
to establish the key derived equivalences between blocks of double covers of symmetric and alternating groups predicted by Kessar and Schaps \cite{KS}.
In fact, we establish derived equivalences between 
the corresponding cyclotomic quiver Hecke superalgebras of type $A_{2l}^{(2)}$,
which were shown to be Morita equivalent to spin blocks of symmetric groups
up to Clifford twists in \cite{KKT}. 
Our arguments here also rest crucially on the results of
\cite{KKO1,KKO2} in order to check that representations of cyclotomic quiver Hecke superalgebras do admit the structure of a 
super Kac-Moody 2-representation.
Combined with the results in \cite{KLi}, this is sufficient to complete the proof of the Brou\'e Conjecture for double covers of symmetric and alternating groups. 

We would finally like to discuss some significant overlaps 
between the results of this article
and the independent work of Ebert, Lauda and Vera \cite{ELV}. 
Their work also introduces the equivariant odd Grassmannian cohomology algebras studied here, relating them to deformed odd cyclotomic nilHecke algebras in a similar way to  \cref{geordie} below, and they also establish the derived equivalences necessary to complete the proof of Brou\'e's Conjecture for spin symmetric groups. 
We view their general approach as complementary to ours, and it is reassuring to have an independent proof of this difficult place in the theory.
The strategy adopted by 
Ebert, Lauda and Vera is modelled on Vera's new treatment of derived equivalences in the ordinary even case developed in \cite{V}. 
In particular, it uses a version of the results of 
Kang, Kashiwara and Oh \cite{KKO1,KKO2} to construct the universal categorification of the $U_{q,\pi}(\sl_2)$-module $V(-\ell)$
in terms of representations of deformed odd cyclotomic
nil-Hecke algebras. This is the place where we use instead the theory of odd Grassmannian bimodules developed in this article, making our article more self-contained.


We expect the results here will have further applications, notably, to the representation theory of the Lie superalgebra $\mathfrak{q}_n(\C)$. This article also initiates the study of 2-representations of super Kac-Moody 2-categories in the spirit of Rouquier's 2-representation theory for ordinary Kac-Moody 2-categories.

\vspace{2mm}
\noindent
{\em Acknowledgements.}
We would like to thank Aaron Lauda for his generosity in discussing the results of \cite{ELV} and all of its authors for patiently waiting for our much less concise
text to be completed.

\vspace{2mm}
\noindent
{\em General conventions.}
With the exception of \cref{leavingdepoe}, we work over an algebraically closed field $\k$ of characteristic different from 2, and
all categories, functors, etc. are assumed to be $\k$-linear without further comment. The symbol $\otimes$ with no additional subscript denotes tensor product over $\k$.
We use the shorthand $X \in \cC$ to indicate that $X$ is an element of the object set of a category $\cC$.

Let $\Par$ be the set of all {\em partitions} $\lambda = (\lambda_1 \geq \lambda_2 \geq \cdots)$. We adopt standard notations such as $\lambda^\transpose = (\lambda_1^\transpose,\lambda_2^\transpose,\dots)$ 
for the transpose partition and $\h(\lambda)$ for the number $\lambda^\transpose_1$ of non-zero parts. The usual dominance ordering is denoted $\leq$. The lexicographic ordering 
$\leq_{\lex}$ is a total order refining $\leq$.
We use the English convention for Young diagrams and tableaux, so rows and columns are indexed like for matrices.
Let $\SPar{n} := \{\lambda \in \Par\:|\:\h(\lambda) \leq n\}$ be the set of partitions of height at most $n$ and 
$\GPar{m}{n}$ be the set of partitions $\lambda$
whose Young diagram fits into an $m \times n$ rectangle, i.e.
$\h(\lambda) \leq m$ and $\lambda_1 \leq n$. Note that
\begin{equation*}
\big|\GPar{m}{n}\big| = \binom{m+n}{n}.
\end{equation*}
For $\lambda\in\Par$, the following will be needed in various formulae for signs, following \cite[Sec.~2.2]{E}:
\begin{itemize} 
\item $N(\lambda)$ is the number of pairs $(A,B)$ such that $B$ is strictly north of $A$ (strictly above in any column);
\item $NE(\lambda)$ is the number of pairs of boxes $(A,B)$ such that $B$ is strictly northeast of $A$ (strictly above and strictly to the right);
\item $\NE(\lambda)$ is the number of pairs of boxes $(A,B)$ such that $B$ is weakly northeast of $A$ ([above or in the same row] {and} [to the right or in the same column]);
\item $dN(\lambda)$ is the number of pairs $(A,B)$
of boxes in the Young diagram of $\lambda$ such that $B$ is due north of $A$ (strictly above and in the same column);
\item $dE(\lambda)$
is the number of pairs of boxes $(A,B)$ in the Young diagram such that $B$ 
is due east of $A$ (strictly to the right and in the same row).
\end{itemize}
Some equivalent definitions: 
$N(\lambda) = \sum_{1 \leq i < j} \lambda_i \lambda_j$;
$dN(\lambda) = \sum_{i \geq 1} (i-1) \lambda_i$;
$dE(\lambda) = \sum_{i \geq 1} \binom{\lambda_i}{2}=dN(\lambda^\transpose)$;
$\NE(\lambda)=|\lambda|+dN(\lambda)+dE(\lambda)+NE(\lambda)$.

Let $\Comp(k,n)$ be the set of all {\em compositions} 
of $n$ with $k$ parts, that is, 
$k$-tuples $\alpha = (\alpha_1,\dots,\alpha_k)$ of non-negative integers
such that $|\alpha|:=\alpha_1+\cdots+\alpha_k = n$.
Let $N(\alpha) := \sum_{1 \leq i < j \leq k} \alpha_i\alpha_j$
(like for partitions).
The reversed composition is $\alpha^\reverse := (\alpha_k,\dots,\alpha_1)$.
Also $\alpha\sqcup\beta$ denotes concatenation of compositions
$\alpha$ and $\beta$.

We denote the symmetric group by $\S_n$ acting on the left on $\{1,\dots,n\}$.
The $i$th basic transposition is $s_i = (i\:\,i\!+\!1)$ and $\ell:\S_n \rightarrow \N$ is the associated length function.
We use the notation $w_n$ to denote the longest element of $\S_{n}$ of length $\ell(w_n) = \binom{n}{2}$.
We will often use the identities
$$
\binom{r+s}{2} = \binom{r}{2}+\binom{s}{2} + rs,
\qquad
\binom{-r}{2} = \binom{r+1}{2}
$$
for $r,s \in \Z$. 
For $\alpha\in \Comp(k,n)$, there is a corresponding parabolic subgroup $\S_\alpha$ of $\S_n$; it is the subgroup generated by all $s_i$ for $
i \in \{1,\dots,n\}-\{\alpha_1,\alpha_1+\alpha_2,\dots,\alpha_1+\cdots+\alpha_k\}.
$
We use $\left[\S_n / \S_\alpha\right]_{\min}$ and $\left[\S_\alpha \backslash \S_n\right]_{\min}$ to denote the sets of minimal length left and right coset representatives, respectively.
Also let $w_\alpha$ be the longest element of $\S_\alpha$ and $w^\alpha$ be the longest element of $\left[\S_n / \S_\alpha\right]_{\min}$, so that $w_n = w^\alpha w_\alpha$.
For example, $\S_{(n-1,1)}$ is $\S_{n-1}$ embedded into $\S_n$ as the permutations that fix $n$. These natural embeddings define a tower of subgroups
$\S_0 < \S_1 < \S_2 < \cdots.$
There is also the {\em shifted} embedding $\sig_{n}:\S_{n'} \hookrightarrow \S_{n+n'}, s_j \mapsto s_{n+j}$ for $n,n' \geq 0$.

We may implicitly identify $\lambda \in \Par$ with the ``dominant" composition $(\lambda_1,\dots,\lambda_k) \in \Comp(k,n)$ where $n := |\lambda|$ and $k := \h(\lambda)$.
Note then that $NE(\lambda)$ defined combinatorially above is the length of the unique minimal length
$\S_{\lambda^\transpose} \backslash \S_n / \S_\lambda$-double coset representative $w$ such that $\big|S_{\lambda^\transpose} \cap w S_\lambda w^{-1}\big| = 1$.
For $n \in \Z, r \geq 0$, we let
$$ 
\textstyle
n\# r := n+(n+1)+\cdots+(n+r-1) = nr + \binom{r}{2}.
$$

\section{Graded superalgebra}\label{sgradedsuperalgebra}

In this section, we review some basic language, referring the reader to the exposition in the introduction of \cite{BE} for more details; see also \cite[Sec.~6]{BE} which discusses gradings. 
For a commutative 
ring $R$, we write $R^\pi$ for the ring $R[\pi] / (\pi^2-1)$.
Assuming that 
$2$ is invertible in $R$, the Chinese Remainder Theorem
gives a ring isomorphism $R^\pi \stackrel{\sim}{\rightarrow} 
R \times R,
a \mapsto (a_+, a_-)$ for $a_{\pm} \in R$ defined 
by evaluating $\pi$ at $\pm 1$. Then we have that
$a \in (R^\pi)^\times$ if and only if both $a_+ \in R^\times$
and $a_- \in R^\times$.
For example,
$\pi q - q^{-1} \in \Q(q)^\pi$ is invertible because
both $q-q^{-1}$ and $-q-q^{-1}$ are invertible in $\Q(q)$.

A {\em graded vector superspace} is a
$\Z \times\Z/2$-graded vector space $V = \bigoplus_{d \in \Z, p \in \Z/2} V_{d,p}$. 
We may also write $V_p$ for $\bigoplus_{d \in \Z} V_{d,p}$, so $V_\0$ is the {\em even part} and $V_\1$ is the {\em odd part} of $A$.
For a homogeneous vector $v \in V_{d,p}$, we write $\deg(v)$ for its {\em degree} $d$ (the $\Z$-grading) and $\parity(v)$ for its {\em parity} $p$ (the $\Z/2$-grading). 
We write $\sVecunderline$ for the closed symmetric monoidal category of graded vector superspaces with morphisms that preserve both degree and parity of vectors.
Its symmetric braiding is defined on graded vector superspaces $V$ and $W$ by
\begin{align}
\br_{V,W}:&V \otimes W \rightarrow W \otimes V,
&v \otimes w \mapsto (-1)^{\parity(v) \parity(w)} w \otimes v.
\end{align}
This only makes sense if $v$ and $w$ are homogeneous, but we adopt the usual abuse of notation by suppressing this assumption. 
We use the notation $\Pi$ for the {\em parity switch functor} and $Q$ for the upward {\em grading shift functor}, using $\pi$ and $q$ for the induced maps at the level of Grothendieck groups.

A {\em graded superalgebra} is an associative, unital 
algebra in $\sVecunderline$.
Any graded superalgebra $A$ has the {\em parity involution}
\begin{align}
\operatorname{p}:A&\rightarrow A,&
a \mapsto (-1)^{\parity(a)} a.
\end{align}
For graded superalgebras $A$ and $B$, their tensor product $A \otimes B$ is the tensor product of the underlying graded vector superspaces viewed as a graded superalgebra so that
$$
(a_1  \otimes b_1) (a_2 \otimes b_2) = (-1)^{\parity(b_1)\parity(a_2)} a_1 a_2 \otimes b_1 b_2
$$
for $a_1,a_2 \in A, b_1, b_2 \in B$. 
We also write $A^{\sop}$
for the opposite superalgebra, whose multiplication is defined 
from $a \cdot b := (-1)^{\parity(a)\parity(b)} ba$.

Very important in this article is the graded superalgebra $\OPol_n$ of {\em odd polynomials}. By definition, this is the tensor product
\begin{equation}\label{opol}
\OPol_n := \underbrace{\OPol_1 \otimes \cdots \otimes \OPol_1}_{\text{$n$ times}},
\end{equation}
where $\OPol_1 := \k[x]$ is the usual commutative polynomial algebra in an indeterminate $x$, viewed as a graded
superalgebra so that $x$ is odd of degree 2. 
We let $x_i := 1 \otimes \cdots \otimes x \otimes \cdots \otimes 1$ where $x$ is in 
the $i$th tensor position from the left.
Here are some further observations.
\begin{itemize}
\item
For $\alpha \in \Comp(k,n)$, the tensor product $\OPol_{\alpha_1} \otimes \cdots\otimes\OPol_{\alpha_k}$ is canonically identified with $\OPol_n$
so that $1^{\otimes (j-1)} \otimes x_i\otimes 1^{\otimes(k-j)}\equiv x_{\alpha_1+\cdots+\alpha_{j-1}+i}$.
\item
The elements $x_1,\dots,x_n$ generate $\OPol_n$ subject to the relations
$x_j x_i  = - x_i x_j$
for $1 \leq i < j \leq n$. 
\item
There are no (non-zero) zero divisors in $\OPol_n$, although it does not have unique factorization, e.g., $(x_1-x_2)^2 = (x_1+x_2)^2$.
\item
The monomials
$x^\kappa := x_1^{\kappa_1}\cdots x_n^{\kappa_n} \in \OPol_n$
for $\kappa = (\kappa_1,\dots,\kappa_n) \in \N^n$
give a linear basis for $\OPol_n$. 
\end{itemize}
From the last point, it follows that
\begin{equation}\label{dimensionformula}
\gsdim \OPol_n = \frac{1}{(1-\pi q^2)^n} \in \Z\llbracket q\rrbracket^\pi,
\end{equation}
meaning that the coefficient of $q^d \pi^p$ in this generating 
function is the dimension of the homogeneous component in degree $d \in
\Z$ and parity $p \in \Z/2$.

A {\em graded supercategory} is a category enriched in $\sVecunderline$, and
a {\em graded superfunctor} 
is an enriched functor. In particular, graded superfunctors preserve degrees and parities of morphisms.
Given graded supercategories $\cA$ and $\cB$, 
we use the notation $\SHOM(\cA,\cB)$ for the graded supercategory of graded superfunctors and graded {\em super}natural transformations in the sense of \cite[Def.~1.1]{BE}. (The terminology ``supernatural transformation'' used here appeared earlier in \cite[Def.~2.16]{EL}; it is the appropriate notion of morphism in the $\sVec$-enriched Yoneda Lemma.) In particular, $\SEND(\cA) := \SHOM(\cA,\cA)$ is a strict {\em graded monoidal supercategory}; see \cite[Ex.~1.5(ii)]{BE}.
More generally, there is a strict {\em graded 2-supercategory} $\GSCAT$ 
consisting 
of (small) graded supercategories, graded superfunctors
and graded supernatural transformations; see \cite[Sec.~6]{BE}.
For graded superfunctors 
$F, G:\cA \rightarrow \cB$,
we denote the morphism space
$\Hom_{\SHOM(\cA,\cB)}(F,G)$, 
that is, the graded superspace consisting of all graded
supernatural transformations from $F$ to $G$, simply by $\operatorname{gsHom}(F,G)$. If $F=G$ we denote it by
$\operatorname{gsEnd}(F)$, 
this being a graded superalgebra.

For any graded supercategory $\cA$, we denote the {\em underlying ordinary category} consisting of the same objects and the morphisms that are even of degree 0 by $\underline{\cA}$. 
We will systematically write $\cong$ to denote the existence of an isomorphism that is not necessarily homogeneous, and $\simeq$ to denote the existence of an isomorphism that preserves parities and degrees.
For example,
the category $\sVecunderline$ is the underlying ordinary category of a graded monoidal supercategory $\sVec$.
In $\sVec$, a linear map $f:V \rightarrow W$ is homogeneous of degree $d \in \Z$ and parity $p \in \Z/2$ if $f(V_{d',p'}) \subseteq W_{d+d',p+p'}$
for all $d'\in \Z, p' \in \Z/2$. Then an arbitrary morphism in $\sVec$
is a finite sum of homogeneous linear maps of different degrees. 

In fact, $\sVec$ is a {\em graded monoidal
$(Q,\Pi)$-supercategory}; see
 \cite[Def.~1.12, Def.~6.5]{BE} for the formal definition.
The unit object is the field $\k$ viewed as a
graded superspace so that it is even in degree 0, the parity shift
functor is $\Pi \k \otimes-$ where $\Pi \k$ is $\k$ in degree 0
and odd parity, and grading shift functor is $Q\k \otimes-$ where 
$Q \k$ is $\k$ in degree one and even parity.
For any graded $(Q,\Pi)$-supercategory $\cV$, its 
underlying ordinary category is a {\em $(Q,\Pi)$-category} in the sense of \cite[Def.~6.12]{BE}. 
In fact,  $\sVecunderline$ is a 
{\em monoidal $(Q,\Pi)$-category} in the sense of \cite[Def.~1.14, Def.~6.14]{BE}.

Given two graded superalgebras $A$ and $B$, we write $A\bisMod B$
for the graded supercategory of graded $(A,B)$-superbimodules
and graded $(A,B)$-superbimodule homomorphisms; such a homomorphism is a finite sum of homogeneous $(A,B)$-superbimodule homomorphisms of different degrees and parities.
We adopt the usual sign convention for morphisms as in
\cite[Ex.~1.8]{BE}. So a morphism $f:V \rightarrow W$ in $A\bisMod B$
satisfies $f(avb) = (-1)^{\parity(f)\parity(a)}a f(v) b$ for $a \in A, b \in
B, v \in M$.
The category $A\bisMod B$ is a graded $(Q,\Pi)$-supercategory in the sense of
\cite[Def.~1.7, Def.~6.4]{BE}.
We use the notation $\Hom_{A\dash B}(V,W)$ to denote a morphism space in this category,
which is a graded vector superspace.
The parity switching functor $\Pi$ takes a
graded $(A,B)$-superbimodule $V$ to the same underlying graded 
vector space 
viewed as a superspace with the opposite parities
and actions of $a \in A$ and $b \in B$ on $v \in \Pi V$
defined in terms of the original action so that
\begin{equation}\label{clarifyparify}
a \cdot v\cdot  b := (-1)^{\parity(a)} avb.
\end{equation}
On a morphism $f:V\rightarrow W$, $\Pi f:\Pi V \rightarrow \Pi W$ 
is defined so that $(\Pi f) (v) = (-1)^{\parity(f)} f(v)$.
The grading shift functor $Q$ takes $V$ to the same underlying superbimodule
with the new grading $(Q V)_d := V_{d-1}$. This is less delicate since it does not introduce any additional signs. The definition of
graded $(Q,\Pi)$-supercategory also involves some additional data 
of supernatural isomorphisms $\zeta:\Pi\stackrel{\sim}{\Rightarrow}\Id, \sigma:Q\stackrel{\sim}{\Rightarrow}\Id$ and $\bar\sigma:Q^{-1}\stackrel{\sim}{\Rightarrow}\Id$, which in this case all come from the identity function on the underlying vector space. We will not need these in any significant way, so refer the reader to \cite{BE} for 
the details.

The graded $(Q,\Pi)$-supercategories $A\sMod$ and $\doMs B$ of graded left $A$-supermodules and right $B$-supermodules can now be defined
to be $A\bisMod \k$ and $\k \bisMod B$, respectively. We use the notation
$\Hom_{A\dash}(V,W)$ and $\Hom_{\dash B}(V,W)$ to denote the morphism spaces in these categories. We use
$A\smod$, $A\psMod$ and $A\psmod$
to denote the full subcategories of $A\sMod$ consisting of the finite-dimensional, projective and finitely generated projective left
$A$-supermodules, respectively.
The underlying ordinary categories are
$A\UsMod$, $A\Usmod$, $A\UpsMod$ and $A\Upsmod$.

For graded supercategories $\cA, \cB$, an {\em adjoint pair} $(E,F)$ of graded superfunctors 
$E:\cA \rightarrow \cB$ and $F:\cB \rightarrow \cA$ 
means an adjoint pair of $\k$-linear 
functors in the usual sense, such that in addition the unit and the counit of the adjunction are both even supernatural transformations of degree 0. It follows that the restrictions of $E$ and $F$
to the underlying ordinary categories also form an adjoint pair.

Now suppose that $A$ and $B$ are graded superalgebras such that 
$A$ is a (unital) subalgebra of $B$.
Then there is an adjoint triple of graded superfunctors
$(\ind_A^B, \res_A^B, \coind_A^B)$:
\begin{align}\label{at1}
\ind_A^B := B \otimes_A -&:A \sMod \rightarrow B \sMod,\\ 
\res_A^B := \Hom_{B \dash}(B,-)\simeq B \otimes_B -&:B\sMod \rightarrow A\sMod,\label{at2}\\
\coind_A^B := \Hom_{A \dash}(B, -)&:A\sMod \rightarrow B\sMod.\label{at3}
\end{align}
Following \cite{PS} (which explicitly treats graded superalgebras),
we say that $B$ is a {\em Frobenius extension} of $A$ of degree $d \in \Z$ and parity $p \in \Z/2$ if 
there exists a {\em trace map} $\tr:B \rightarrow A$ that 
is a homogeneous graded $(A,A)$-superbimodule homomorphism 
of degree $-d$ and parity $p$, together with homogeneous elements 
$b_1,\dots,b_m, b_1^\vee,\dots,b^\vee_m$ of $B$
such that
\begin{align}\label{traceprops1}
\deg(b_i)+\deg(b^\vee_i) &= d,&
\parity(b_i)+\parity(b^\vee_i) &= p,\\
\sum_{i=1}^m b_i \tr(b^\vee_i b)&=b, &\sum_{i=1}^m (-1)^{p \parity(b)+p\parity(b^\vee_i)}\tr(b b_i) b^\vee_i&=b\label{traceprops2}
\end{align} 
for any $b \in B$.
The associated {\em comultiplication} is
the homogeneous graded $(B,B)$-superbimodule homomorphism 
\begin{align}\label{generaltheory}
\Delta:B &\rightarrow B \otimes_A B,&1 \mapsto \sum_{j=1}^m (-1)^{p\parity(b_j^\vee)}b_j \otimes  b_j^\vee,
\end{align}
which is of degree $d$ and parity $p$.
In the adjoint pair $(\ind_A^B, \res_A^B)$,
the unit and counit of the canonical adjunction making
$(\ind_A^B, \res_A^B)$ into an adjoint pair
are induced by the superbimodule homomorphisms
$\eta:A \rightarrow B$ and $\mu:B \otimes_A B \rightarrow B$
given by the canonical inclusion and multiplication, respectively.
Assuming that we have a Frobenius extension, there is also an adjunction making
$(\res_A^B,  Q^{-d}\Pi^p \ind_A^B)$
into an adjoint pair, with
unit and counit of adjunction 
induced by the superbimodule homomorphisms
$\tr$ and $\Delta$ viewed now as homogeneous 
graded supermodule homomorphisms $\tr:Q^{-d} \Pi^p B \rightarrow A$
and $\Delta:B \rightarrow Q^d\Pi^p  B \otimes_A B$ that are even of
degree 0. This adjunction induces a canonical even degree 0 isomorphism
$\ind_A^B \simeq Q^{d}\Pi^p  \coind_A^B$.
Conversely, if there exists such an isomorphism of graded
superfunctors then 
 $B$ is a Frobenius extension of $A$ of degree $d \in \Z$ and parity
 $p \in \Z/2$; 
see \cite[Th.~6.2]{PS}.

We will only use the definitions from the previous paragraph in situations in which $B$ is positively graded and connected (i.e., $B_{0} = \k$). In that case,
the trace map is unique up to multiplication by a non-zero scalar;
see \cite[Prop.~4.7]{PS} for a more general uniqueness statement.
Moreover, 
the elements $b_1,\dots,b_m$ 
(resp., $b_1^\vee,\dots,b_m^\vee$) can be chosen so that they
give a basis for $B$ as a free right (resp., left) $A$-supermodule,
in which case \cref{traceprops2} can be replaced simply by the condition
\begin{equation}\label{newcond}
\tr(b_i^\vee b_j) = \delta_{i,j}
\end{equation}
for all $i,j$, i.e., the two bases are {\em dual} to each other.

Now suppose that $A$ is a {\em supercommutative} graded superalgebra,
i.e., its multiplication satisfies 
$a b = (-1)^{\parity(a)\parity(b)}ba$ for all $a,b \in A$.
A {\em graded $A$-superalgebra} $B$ is a graded 
superalgebra together with a 
structure map $\eta:A \rightarrow Z(B)$ which is a (unital) graded superalgebra homomorphism from $A$ to the {\em supercenter}
\begin{equation}
Z(B) = \big\{c \in B\:\big|\:bc = (-1)^{\parity(b)\parity(c)}cb\text{
  for all }b \in B\big\}.
\end{equation}
In particular, such a superalgebra $B$ is a {\em graded $A$-supermodule}, by
which we mean a graded
$(A,A)$-superbimodule
such that the left and right actions of $a \in A$ on a vector $v$ are related by
\begin{equation}
a v = (-1)^{\parity(a)\parity(v)} va.
\end{equation} 
We say that a graded $A$-superalgebra $B$ is a 
{\em graded Frobenius superalgebra} over $A$ 
of degree $d$ and parity $p$ if 
the structure 
map $\eta:A \rightarrow Z(B)$ is injective, and
$B$ is a Frobenius extension of $\eta(A)$ of degree $d$ and parity $p$ in the sense from the previous paragraph.

By the {\em graded super Karoubi envelope} $\GSKar(\cA)$
of a graded supercategory $\cA$
we mean the graded $(Q,\Pi)$-supercategory 
obtained by first passing to its
$(Q,\Pi)$-envelope $\cA_{q,\pi}$ (see the next paragraph),
then to the additive envelope of $\cA_{q,\pi}$, then finally
to the completion of that at all homogeneous idempotents.
The underlying ordinary category 
$\gsKarunderline(\cA)$, which is
an additive and idempotent complete $(Q,\Pi)$-category, is what 
is called the graded super Karoubi envelope in the final paragraph of \cite[Sec.~6]{BE}.
Any graded superfunctor $F:\cA \rightarrow \cB$ to an additive graded
$(Q,\Pi)$-supercategory $\cB$ 
whose underlying ordinary category is idempotent complete
induces a canonical graded superfunctor
$F_{q,\pi}:\GSKar(\cA) \rightarrow \cB$; this follows from \cite[Th.~6.3]{BE} combined with the usual universal properties of 
additive envelopes and idempotent completions.
The split Grothendieck group $K_0\big(\gsKarunderline(\cA)\big)$ is naturally a $\Z[q,q^{-1}]^\pi$-module.
For example, if $\cA$ is the graded supercategory with one object
whose endomorphisms are given by some graded superalgebra $A$
then, by Yoneda Lemma, $\GSKar(\cA)$ is contravariantly
graded superequivalent to $A\psmod$. In this case, we denote
$K_0\big(\gsKarunderline(\cA)\big)
\cong K_0(A\Upsmod)$ simply by $K_0(A)$.

The only part of the construction of $\GSKar(\cA)$ just outlined
which is not standard is the
notion of the {\em $(Q,\Pi)$-envelope} of $\cA$. According to \cite[Def.~6.8]{BE},
this is the graded supercategory $\cA_{q,\pi}$ with objects 
given by the symbols
$Q^d \Pi^p X$ for $d \in \Z, p \in \Z/2$ and $X \in \cA$,
with
$$
\Hom_{\cA_{q,\pi}}(Q^d \Pi^p X, Q^e \Pi^q Y)
:= Q^{e-d} \Pi^{p+q} \Hom_{\cA}(X,Y),
$$
where $Q$ and $\Pi$ on the right hand side are the grading and parity shift functors on $\sVec$.
Denoting $f \in \Hom_{\cA}(X,Y)$
viewed as a morphism in
$\Hom_{\cA_{q,\pi}}(Q^d \Pi^p X, Q^e \Pi^q Y)$
by $f^{e,q}_{d,p}$ as in \cite{BE}, the composition law in
$\cA_{q,\pi}$ is induced by the composition law in $\cA$
in the obvious way: 
\begin{equation}\label{vertcomp}
g_{d,q}^{e,r} \circ f_{c,p}^{d,q}
:= (g \circ f)_{c,p}^{e,r}.
\end{equation}
The grading and parity shift functors making
$\cA_{q,\pi}$ into a graded 
$(Q,\Pi)$ are defined on objects so that
$Q (Q^d \Pi^p X) = Q^{d+1} \Pi^p X$ and
$\Pi(Q^d \Pi^p X) = Q^d \Pi^{p+\1} X$,
and on morphisms so that $Q(f_{d,p}^{e,q}) = f_{d+1,p}^{e+1,q}$
and $\Pi f_{d,p}^{e,q} = (-1)^{\parity(f)+p+q} f_{d,p+\1}^{e,q+\1}$.
For a more complete account (including also the definitions of the
required supernatural isomorphisms $\zeta, \sigma$ and $\bar\sigma$)
we refer to \cite[Def.~6.8]{BE}.
We will always {\em identify} $\cA$ with the full subcategory
of $\cA_{q,\pi}$ via the obvious graded superfunctor
$\cA \rightarrow \cA_{q,\pi}, X \mapsto Q^0 \Pi^\0 X,
f \mapsto f_{0,\0}^{0,\0}$.
The graded supercategory $\cA$
is called {\em $(Q,\Pi)$-complete}
if this functor is a graded superequivalence;
equivalently, for every object $X \in \cA$,
$d \in \Z$ and $p \in \Z/2$ there is another
object $Y$ and an isomorphism
$f:Y\stackrel{\sim}{\rightarrow} X$
that is homogeneous of degree $d$ and parity $p$; cf. 
\cite[Lem.~4.1]{BE}.

It also makes sense to take the graded super Karoubi envelope 
$\GSKar(\AA)$ of
a graded 2-supercategory $\AA$, 
which is a graded $(Q,\Pi)$-2-supercategory.
The split Grothendieck ring $K_0\big(\gsKarunderline(\AA)\big)$ is naturally a locally unital $\Z[q,q^{-1}]^\pi$-algebra
equipped with a distinguished collection of mutually orthogonal
idempotents indexed by the objects of $\AA$.
We just 
explain the non-trivial step here, which is
the construction of the $(Q,\Pi)$-envelope $\AA_{q,\pi}$
of a graded 2-supercategory,
referring to \cite[Def.~6.10]{BE} and the subsequent discussion
for a fuller account.
In the case that $\AA$ is a strict graded 2-supercategory,
$\AA_{q,\pi}$ is a strict graded $(Q,\Pi)$-2-supercategory with 
the same objects as $\AA$ and morphism supercategories that 
 are the $(Q,\Pi)$-envelopes of the morphism supercategories in $\AA$. 
For 1-morphisms $X, Y: \lambda \rightarrow \mu$ in $\AA$, 
we represent the 2-morphism
$Q^d \Pi^p X \Rightarrow Q^e \Pi^q Y$
in $\AA_{q,\pi}$
associated to the 2-morphism $\alpha:X \Rightarrow Y$
by $\alpha_{d,p}^{e,q}$, which 
is of parity $\parity(\alpha)+p+q$ and degree $\deg(\alpha)+e-d$.
Vertical composition is defined in the obvious way as in
\cref{vertcomp}.
Horizontal composition is defined in terms of horizontal composition in $\AA$ but there are some surprising signs:
\begin{equation}
\alpha_{d,p}^{e,q} (\alpha')^{e',q'}_{d',p'}
:=
(-1)^{p(\parity(\alpha')+p'+q')+\parity(\alpha)q'}
(\alpha \alpha')_{d+d',p+p'}^{e+e',q+q'}
\label{horizcomp}
\end{equation}
These signs play an important role in the following lemma,
which when $p = \1$ is the idea of ``odd adjunction".

\begin{lemma}\label{oddadjunction}
Let $\AA$ be a strict graded 2-supercategory.
Suppose that $E:\lambda\rightarrow\mu$ and $F:\mu\rightarrow\lambda$ 
are 1-morphisms and
$\eps:E\circ F \Rightarrow 1_\mu$ and $\eta:1_\lambda \Rightarrow F\circ E$
are 2-morphisms of parity $p$
and degrees $d$ and $-d$, respectively, such that
$(\eps 1_E) \circ (1_E \eta) = 1_E$
and $(1_F \eps) \circ (\eta 1_F) = (-1)^p 1_F$.
\begin{enumerate}
\item
The degree 0 even 2-morphisms
$\eps_{d,p}^{0,\0}:E\circ (Q^d \Pi^p F) \Rightarrow \Id$ and $\eta_{0,\0}^{d,p}:\Id \Rightarrow (Q^{d} \Pi^p F) \circ E$ give the unit and counit of an adjunction
making $Q^{d} \Pi^p F$ into a right dual of $E$ in 
$\AA_{q,\pi}$.
\item
For a supernatural transformation 
$\alpha:E \Rightarrow E$ in $\AA$,
its mate $\Big(1_{Q^d \Pi^p F} \eps_{d,p}^{0,\0}\Big)\circ \Big(1_{Q^d \Pi^p F}
\alpha 1_{Q^d \Pi^p F}\Big)
\circ \Big(\eta_{0,\0}^{d,p} 1_{Q^d \Pi^p F}\Big):
Q^d \Pi^p F \Rightarrow Q^d \Pi^p F$ 
 with respect to the adjunction constructed in (1) is equal to 
$(-1)^p
\big((1_F \eps) \circ (1_F \alpha 1_F) \circ (\eta 1_F)\big)_{d,p}^{d,p}$.
\end{enumerate}
\end{lemma}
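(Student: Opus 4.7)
The plan is to verify both parts by direct unfolding, reducing every identity in $\AA_{q,\pi}$ to its counterpart in $\AA$ through careful application of the horizontal composition rule \cref{horizcomp} and the vertical composition rule \cref{vertcomp}. First I would read off from the formulas for the degree and parity of $\alpha_{d,p}^{e,q}$ recorded just before \cref{horizcomp} that $\eps_{d,p}^{0,\0}$ and $\eta_{0,\0}^{d,p}$ are indeed even of degree $0$.

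For part (1), the main task is to verify the two snake identities
\begin{align*}
\big(\eps_{d,p}^{0,\0}\, 1_E\big) \circ \big(1_E\, \eta_{0,\0}^{d,p}\big) &= 1_E, \\
\big(1_{Q^d\Pi^p F}\, \eps_{d,p}^{0,\0}\big) \circ \big(\eta_{0,\0}^{d,p}\, 1_{Q^d\Pi^p F}\big) &= 1_{Q^d\Pi^p F}
\end{align*}
in $\AA_{q,\pi}$. For the first, applying \cref{horizcomp} to each of $1_E\,\eta_{0,\0}^{d,p}$ and $\eps_{d,p}^{0,\0}\,1_E$ yields trivial signs, because in each case the parameter $p$ appearing as the leading factor of the exponent is $\0$. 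Then \cref{vertcomp} reduces the left-hand side to $\big((\eps 1_E)\circ(1_E\eta)\big)_{0,\0}^{0,\0}$, which equals $1_E$ by hypothesis. For the second identity, I write $1_{Q^d\Pi^p F} = (1_F)_{d,p}^{d,p}$; applying \cref{horizcomp} to $\eta_{0,\0}^{d,p}\cdot(1_F)_{d,p}^{d,p}$ produces the sign $(-1)^{p\cdot p}=(-1)^p$, whereas $(1_F)_{d,p}^{d,p}\cdot\eps_{d,p}^{0,\0}$ has trivial sign because $\parity(\eps)+p+\0=\0$. Combining via \cref{vertcomp}, the left-hand side becomes $(-1)^p\big((1_F\eps)\circ(\eta 1_F)\big)_{d,p}^{d,p}$; the hypothesis $(1_F\eps)\circ(\eta 1_F)=(-1)^p 1_F$ contributes a second factor of $(-1)^p$, and these cancel. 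This is the key point where the ``odd'' sign in the hypothesis is absorbed by the sign convention of $\AA_{q,\pi}$.

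For part (2), the mate is by definition the threefold composition displayed in the statement. I would expand each horizontal composition via \cref{horizcomp}. The middle factor $(1_F)_{d,p}^{d,p}\cdot\alpha\cdot(1_F)_{d,p}^{d,p}$ requires two applications of \cref{horizcomp}, each producing a sign $(-1)^{p\,\parity(\alpha)}$, and these cancel to leave $(1_F\alpha 1_F)_{2d,\0}^{2d,\0}$. The outer factors were already computed in part (1): $(1_F)_{d,p}^{d,p}\cdot\eps_{d,p}^{0,\0}=(1_F\eps)_{2d,\0}^{d,p}$ with trivial sign, and $\eta_{0,\0}^{d,p}\cdot(1_F)_{d,p}^{d,p}=(-1)^p(\eta 1_F)_{d,p}^{2d,\0}$. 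The three middle indices match by inspection, and vertical composition \cref{vertcomp} then yields exactly $(-1)^p\big((1_F\eps)\circ(1_F\alpha 1_F)\circ(\eta 1_F)\big)_{d,p}^{d,p}$, as required.

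The only real difficulty is careful sign bookkeeping, making sure that every application of \cref{horizcomp} correctly identifies its two arguments' parity and shift parameters. No conceptual ingredient beyond this is needed; everything is dictated by the hypothesized identities in $\AA$ together with the explicit formulas for horizontal and vertical composition in $\AA_{q,\pi}$.
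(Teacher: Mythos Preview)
Your proposal is correct and follows essentially the same approach as the paper: both arguments unwind the horizontal composition rule \cref{horizcomp} on each factor, track the resulting signs, and then apply \cref{vertcomp} to reduce to the hypotheses in $\AA$. Your write-up is slightly more detailed in that you also spell out the easier first zig-zag identity (which the paper omits); one minor slip is that for $\eps_{d,p}^{0,\0}\,1_E$ the leading parity parameter in \cref{horizcomp} is $p$, not $\0$---the sign is trivial there because all of $\parity(1_E),p',q'$ vanish, not because the leading parameter does---but the conclusion and the rest of the computation are unaffected.
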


\begin{proof}
(1)
We need to show that
$\Big(\eps_{d,p}^{0,\0} 1_E\Big)\circ\Big(1_E \eta_{0,\0}^{d,p}\Big)
= 1_E$
and
$\Big(1_{Q^d\Pi^p F} \eps_{d,p}^{0,\0}\Big)\circ\Big(\eta_{0,\0}^{d,p}
1_{Q^d \Pi^p F}\Big)
= 1_{Q^d \Pi^p F}$.
We just check the second of these (the signs are more interesting!).
We have that $1_{Q^d \Pi^p F} = Q^d \Pi^p 1_F = (1_F)^{d,p}_{d,p}$.
So
\begin{align*}
\Big(1_{Q^d\Pi^p F} \eps_{d,p}^{0,\0}\Big)\circ\Big(\eta_{0,\0}^{d,p}
1_{Q^d \Pi^p F}\Big)
&=
\Big((1_F)_{d,p}^{d,p} \eps_{d,p}^{0,\0}\Big)
\circ \Big(\eta_{0,\0}^{d,p} (1_F)_{d,p}^{d,p}\Big)
=
(1_F \eps)_{2d,\0}^{d,p}
\circ \Big(\eta_{0,\0}^{d,p} (1_F)_{d,p}^{d,p}\Big)\\
&=
(-1)^p (1_F \eps)_{2d,\0}^{d,p}
\circ (\eta 1_F)_{d,p}^{2d,\0}
=
(-1)^p \big((1_F \eps)\circ (\eta 1_F)\big)_{d,p}^{d,p}\\
&=
(1_F)_{d,p}^{d,p}
= 1_{Q^d \Pi^p F}.
\end{align*}

\vspace{2mm}
\noindent
(2) This is another such explicit calculation.
We just note that $1_{Q^d \Pi^p F} \alpha 1_{Q^d \Pi^p F}
= (1_F \alpha 1_F)_{2d,\0}^{2d,\0}$ regardless of the parity of $\alpha$.
\end{proof}

\section{Combinatorics of \texorpdfstring{$(q,\pi)$-}{}binomial coefficients and odd quantum \texorpdfstring{$\mathfrak{sl}_2$}{sl(2)}}\label{leavingdepoe}

In this section, we recall briefly the definition of the enveloping algebra of ``odd quantum $\sl_2$'' discovered by Clark and Wang \cite{CW} and developed in much greater generality in \cite{CHW1}. 
We work initially over $\Q(q)^\pi$; cf. the opening paragraph
of \cref{sgradedsuperalgebra}.
The most significant difference compared to \cite{CHW1, Clark} 
is that our $q$ is $q^{-1}$ in  \cite{CHW1} and $\nu^{-1}$ in \cite{Clark}. We define the {\em $(q,\pi)$-integers} 
\begin{equation}\label{qinteger}
[n]_{q,\pi} := \frac{(\pi q)^n - q^{-n}}{\pi q - q^{-1}} = \left\{
\begin{array}{ll}
q^{1-n}+\pi q^{3-n}+\cdots+\pi^{n-1} q^{n-1}&\text{if $n \geq 0$,}\\ 
-\pi^n (q^{n+1}+\pi q^{n+3}+\cdots+\pi^{-n-1} q^{-n-1})&\text{if $n \leq 0$}
\end{array}\right.
\end{equation}
for any $n \in \Z$.
This is exactly the same as the definition given in
\cite[Sec.~1.6]{CHW1}---but because our $q$ is the $q^{-1}$ in
\cite{CHW1} it is actually a different convention!
For $n \neq 0$, the $(q,\pi)$-integer 
$[n]_{q,\pi}$ is invertible in the
ring $\Q(q)^\pi$; this follows because
the elements of $\Q(q)$ obtained from $[n]_{q,\pi}$
by setting $\pi = \pm 1$ are both non-zero.
Note also that
\begin{equation}\label{negativeintegers}
[-n]_{q,\pi} = -\pi^n [n]_{q,\pi}.
\end{equation}
There are corresponding $(q,\pi)$-factorials $[n]_{q,\pi}^!$ for $n \geq 0$:
\begin{align}\label{poincare}
[n]_{q,\pi}^!& := [n]_{q,\pi} [n-1]_{q,\pi}\cdots [1]_{q,\pi} = q^{-\binom{n}{2}} \sum_{w \in \S_n} (\pi q^{2})^{\ell(w)},
\end{align}
where the last equality is a consequence 
of the well-known factorization of the Poincar\'e polynomial for the symmetric group.
Then we have the $(q,\pi)$-binomial coefficients $\sqbinom{n}{r}_{q,\pi}$, which make sense as written for any $n \in \Z$ and $r \geq 0$:
\begin{align}
\sqbinom{n}{r}_{q,\pi} &:=\frac{[n]_{q,\pi} [n-1]_{q,\pi} \cdots [n-r+1]_{q,\pi}} {[r]^!_{q,\pi} }.
\end{align}
We also adopt the convention that $\sqbinom{n}{r}_{q,\pi}=0$ for any $n \in \Z$ and $r < 0$. Note by \cref{negativeintegers} that
\begin{equation}
\sqbinom{-n}{r}_{q,\pi} = (-1)^r \pi^{nr+\binom{r}{2}}
\sqbinom{n+r-1}{r}_{q,\pi}.
\end{equation}
We also need quantum {\em tri}nonomial coefficients for $n \in \Z$ and $r,s \geq 0$:
\begin{equation}
\sqbinom{n}{r,s}_{q,\pi} := \frac{[n]_{q,\pi} [n-1]_{q,\pi}\cdots [n-r-s+1]_{q,\pi}} {[r]_{q,\pi}^! [s]_{q,\pi}^!} = \sqbinom{n}{r}_{q,\pi} \sqbinom{n-r}{s}_{q,\pi}.
\end{equation}
Again we interpret $\sqbinom{n}{r,s}_{q,\pi}$ as zero if $r < 0$ or $s < 0$.
More generally, for $\alpha \in \Comp(k,n)$, let
\begin{equation}
\displaystyle\sqbinom{n}{\alpha}_{q,\pi}:= 
\frac{[n]^!_{q,\pi}}{[\alpha_1]^!_{q,\pi}\cdots [\alpha_k]^!_{q,\pi}}
\end{equation}
be the $(q,\pi)$-multinomial coefficient. The identity \cref{poincare} implies
that
\begin{equation}\label{multiidentity}
\sqbinom{n}{\alpha}_{q,\pi}
=
q^{-N(\alpha)}
\sum_{w \in [\S_n / \S_\alpha]_{\min}}
(\pi q^2)^{\ell(w)}.
\end{equation}

We let $-:\Q(q)^\pi\rightarrow \Q(q)^\pi$ be the $\Q^\pi$-algebra
involution with $\overline{q} = q^{-1}$. We use the word {\em
  anti-linear} for a $\Z$-module homomorphism  $f:V \rightarrow W$ 
between $\Q(q)^\pi$- or $\Z[q,q^{-1}]^\pi$-modules 
such that $f(c v) = \overline{c} f(v)$.
We have that
\begin{align}\label{baringpi}
\overline{[n]}_{q,\pi} &=\pi^{n-1} [n]_{q,\pi}, 
& \overline{[n]^!}_{\!\!q,\pi} &= \pi^{\binom{n}{2}}[n]_{q,\pi}^!,\\\label{goingtosunriver}
\overline{\sqbinom{n}{r}}_{q,\pi} &=\pi^{(n-r)r}\sqbinom{n}{r}_{q,\pi},
& \overline{\sqbinom{n}{r,\!s}}_{q,\pi}&= \pi^{(n-r)(r+s)+s} \sqbinom{n}{r,\!s}_{q,\pi}.
\end{align}
We stress that our bar involution is {\em  not} the same as the bar involution introduced in \cite{CHW1,Clark};
the latter takes $q$ to $\pi q^{-1}$ and {\em fixes} the
$(q,\pi)$-integers, $(q,\pi)$-factorials and $(q,\pi)$-binomial. 
Some 
further properties of $(q,\pi)$-binomial and trinomial coefficients are proved in the next two lemmas.

\begin{lemma}\label{bin}
The $(q,\pi)$-binomial and trinomial 
coefficients have the following properties.
\begin{enumerate}
\item For $n \in \Z$ and $r \geq 0$, we have that
\begin{equation*}
\displaystyle\sqbinom{n}{r}_{q,\pi} = q^{-r}\sqbinom{n-1}{r}_{q,\pi} + (\pi q)^{n-r} \sqbinom{n-1}{r-1}_{q,\pi} = (\pi q)^{r}\sqbinom{n-1}{r}_{q,\pi} +   q^{r-n} \sqbinom{n-1}{r-1}_{q,\pi}.
\end{equation*}
\item For $n \in \Z$ and $r, s \geq 0$, we have that
\begin{align*}
\sqbinom{n}{r,s}_{q,\pi} &= \pi^{s}q^{s-r} \sqbinom{n-1}{r,s}_{q,\pi} + (\pi q)^{n-r} \sqbinom{n-1}{r-1,s}_{q,\pi} + q^{s-n}
        \sqbinom{n-1}{r,s-1}_{q,\pi}.
\end{align*}
\item
For $n \in \Z$ and $r \geq 0$, we have that
\begin{align*}
\sum_{s+t=r} \pi^{\binom{t}{2}}(-q)^{-t} \sqbinom{n+s}{s,t}_{q,\pi} &= (\pi q)^{nr}.
\end{align*}
\end{enumerate}
\end{lemma}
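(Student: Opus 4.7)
For parts (1) and (2), the plan is to reduce each Pascal-like identity to a linear relation among $(q,\pi)$-integers that can be verified directly from the closed formula $[m]_{q,\pi} = \tfrac{(\pi q)^m - q^{-m}}{\pi q - q^{-1}}$ recorded in \cref{qinteger}. For the first form of (1), pulling out the common factor $\tfrac{[n-1]^!_{q,\pi}}{[r]^!_{q,\pi}[n-r]^!_{q,\pi}}$ from the right-hand side reduces the identity to the equality $q^{-r}[n-r]_{q,\pi} + (\pi q)^{n-r}[r]_{q,\pi} = [n]_{q,\pi}$, which is immediate upon substitution since the two cross terms cancel. The second form of (1) reduces analogously to $(\pi q)^{r}[n-r]_{q,\pi} + q^{r-n}[r]_{q,\pi} = [n]_{q,\pi}$. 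For (2), clearing the denominator $[r]^!_{q,\pi}[s]^!_{q,\pi}[n-r-s]^!_{q,\pi}$ reduces the identity to the three-term analogue
\begin{equation*}
\pi^{s}q^{s-r}[n-r-s]_{q,\pi} + (\pi q)^{n-r}[r]_{q,\pi} + q^{s-n}[s]_{q,\pi} = [n]_{q,\pi},
\end{equation*}
and again the $(\pi q)^m$ and $q^{-m}$ contributions telescope under the closed formula.

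For (3), set $g(n,r) := \sum_{s+t=r}\pi^{\binom{t}{2}}(-q)^{-t}\sqbinom{n+s}{s,t}_{q,\pi}$; the aim is to show $g(n,r) = (\pi q)^{nr}$. The plan is to derive a three-term recursion by factoring $\sqbinom{n+1+s}{s,t} = \sqbinom{n+1+s}{s}\sqbinom{n+1}{t}$ and applying the first form of (1) to each factor, which yields a four-term expansion. After multiplying by $\pi^{\binom{t}{2}}(-q)^{-t}$ and summing over $s+t=r$, the two summands carrying the factor $\sqbinom{n+s}{s-1}$ (those coming from the $(\pi q)^{n+1}$ branch on the left) recombine, after the index shift $s=s'+1$, via the Pascal identity $\sqbinom{n+1}{t} = q^{-t}\sqbinom{n}{t}+(\pi q)^{n+1-t}\sqbinom{n}{t-1}$ into the single term $(\pi q)^{n+1}g(n+1,r-1)$. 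The summand $q^{-s}(\pi q)^{n+1-t}\sqbinom{n+s}{s}\sqbinom{n}{t-1}$ is re-indexed via $t=t'+1$; using $\binom{t'+1}{2}=\binom{t'}{2}+t'$ and the relation $s+t'=r-1$ to collapse the $q$-exponent to the $t'$-independent value $n-r$, this piece simplifies to $-(\pi q)^{n}q^{-r}g(n,r-1)$. The fourth summand contributes $q^{-r}g(n,r)$, and combining gives
\begin{equation*}
g(n+1,r) = q^{-r}g(n,r) - (\pi q)^{n}q^{-r}g(n,r-1) + (\pi q)^{n+1}g(n+1,r-1).
\end{equation*}

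The base cases $g(n,0)=1$ and $g(0,r)=1$ are immediate from the definition. Substituting $g(n,r) = (\pi q)^{nr}$ into the recursion, the first two terms on the right cancel and the third yields $(\pi q)^{(n+1)r}$, so the claimed closed form satisfies both the recursion and the base cases; induction on $n$ then settles (3) for $n \geq 0$, and solving the same recursion for $g(n,r)$ in terms of $g(n+1,r)$, $g(n,r-1)$, $g(n+1,r-1)$ and inducting on $-n$ handles $n \leq 0$. The main obstacle is bookkeeping: one must carefully track the signs in the four-term expansion (in particular the handling of $(-q)^{-t-1}$ and $\pi^{\binom{t+1}{2}}$) and verify that the two ``$s-1$'' summands reassemble cleanly back into the trinomial $\sqbinom{n+1+s'}{s',t}$ via the Pascal identity rather than leaving a residue. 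Once these cancellations are confirmed, the induction is purely mechanical.
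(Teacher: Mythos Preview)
Your proof is correct. For (1) you arrive at the same key identity $q^{-r}[n-r]_{q,\pi} + (\pi q)^{n-r}[r]_{q,\pi} = [n]_{q,\pi}$ as the paper, though your phrasing of the ``common factor'' as $\tfrac{[n-1]^!_{q,\pi}}{[r]^!_{q,\pi}[n-r]^!_{q,\pi}}$ is only literally meaningful for $n \geq r$; for general $n \in \Z$ the binomial is defined as the product $[n][n-1]\cdots[n-r+1]/[r]^!_{q,\pi}$, and what you actually pull out is $[n-1]\cdots[n-r+1]/[r]^!_{q,\pi}$. The substance is unaffected. For (2) the paper instead applies (1) twice rather than reducing directly to a three-term $(q,\pi)$-integer identity; your route is marginally more direct, and the same notational caveat about $[n-r-s]^!_{q,\pi}$ applies.

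For (3) your approach is genuinely different and cleaner. The paper applies the trinomial recursion (2) to derive a recursion for $a_n(r)$ that involves the bar involution, namely
\[
a_n(r)=\pi^{nr}(\pi q)^{-r}\,\overline{a_{n-1}(r)}+(\pi q)^{n}a_n(r-1)-\pi^{nr}(\pi q)^{1-n-r}\,\overline{a_{n-1}(r-1)},
\]
and then must track how the bar acts on $(q,\pi)$-trinomials via \cref{goingtosunriver}. Your recursion
\[
g(n+1,r)=q^{-r}g(n,r)-(\pi q)^{n}q^{-r}g(n,r-1)+(\pi q)^{n+1}g(n+1,r-1),
\]
obtained by applying (1) to each factor of $\sqbinom{n+1+s}{s}\sqbinom{n+1}{t}$, avoids the bar entirely, and the verification that $(\pi q)^{nr}$ satisfies it is a one-line cancellation. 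The only cost is that your induction is really a double induction on $(n,r)$ (respectively $(-n,r)$ for negative $n$), since $g(n+1,r-1)$ appears on the right; this is implicit in your write-up but worth stating explicitly.
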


\begin{proof}
\vspace{1mm}
\noindent
(1) 
The first equality follows from the definition of $(q,\pi)$-binomial
coefficient by replacing the $[n]_{q,\pi}$ in the numerator with
$q^{-r} [n-r]_{q,\pi} + (\pi q)^{n-r}[r]_{q,\pi}$ and then splitting
the result into two fractions. The second follows by replacing it
instead with
$\pi^r q^{r} [n-r]_{q,\pi} + q^{r-n}[r]_{q,\pi}$. 

 \vspace{1mm}
 \noindent
 (2) Using (1) twice, we have that
\begin{align*}
\pi^{s}q^{s-r} \sqbinom{n-1}{r,s}_{q,\pi}\!\!\!+ q^{s-n} \sqbinom{n-1}{r,s-1}_{q,\pi}
&=q^{-r}\sqbinom{n-1}{r}_{q,\pi} \left((\pi q)^{s} \sqbinom{n-r-1}{s}_{q,\pi}+ q^{r+s-n} \sqbinom{n-r-1}{s-1}_{q,\pi}\right)\\
&=   q^{-r} \sqbinom{n-1}{r}_{q,\pi}\sqbinom{n-r}{s}_{q,\pi}\!\!
=   \left(\sqbinom{n}{r}- (\pi q)^{n-r} \sqbinom{n-1}{r-1}_{q,\pi}\right)\sqbinom{n-r}{s}_{q,\pi}\\
&=\sqbinom{n}{r,s}_{q,\pi} - (\pi q)^{n-r} \sqbinom{n-1}{r-1,s}_{q,\pi}.
\end{align*}
    
\vspace{1mm}
\noindent
(3)
Let $a_n(r) := \sum_{s+t=r} \pi^{\binom{t}{2}} (-q)^{-t} \sqbinom{n+s}{s,t}_{q,\pi}$.
The goal is to show that $a_n(r) = (\pi q)^{nr}$. It is easy to check that this is true when $nr=0$. This gives the base of an induction.  For the induction step, we also need the identity
$$
a_n(r)=\pi^{nr}(\pi q)^{-r}\; \overline{a_{n-1}(r)}+ (\pi q)^{n}a_n(r-1) - \pi^{nr} (\pi q)^{1-n-r}\;\overline{a_{n-1}(r-1)},
$$
which will be verified in the next paragraph. Using this, it is easy to complete the proof for all $n \geq 0$ and $r \geq 0$ by induction on $n+r$. The proof for $n \leq 0$ and $r \geq 0$ goes instead by induction on
$r-n$ using  the following:
$$
a_{n}(r)=\pi^{nr}q^{-r} \;\overline{a_{n+1}(r)}- \pi^{nr} (\pi q)^{-n-1}q^{-r}\;\overline{a_{n+1}(r-1)} + (\pi q)^{n}a_{n}(r-1)
$$
This follows by applying $-$ to the previous identity, then replacing $n$ by $n+1$ and rearranging.

It remains to prove the first identity. Using (2), we have that 
\begin{align*}
a_n(r)&= \sum_{s+t=r} \pi^{\binom{t}{r}}(-q)^{-t}\left( \pi^t q^{t-s}  \sqbinom{n+s-1}{s,t}_{q,\pi} + (\pi q)^{n}\sqbinom{n+s-1}{s-1,t}_{q,\pi} + q^{t-s-n}\sqbinom{n+s-1}{s,t-1}_{q,\pi}\right).
\end{align*}
Moving the sum inside the parentheses produces three terms which we simplify separately, reindexing the second sum by replacing $s$ by $s+1$ and the third sum by replacing $t$ by $t+1$. We also use
\begin{align*}
\overline{a_n(r)} = \pi^{nr} \sum_{s+t=r}  \pi^{\binom{t+1}{2}} (-q)^{t} \sqbinom{n+s}{s,t}_{q,\pi},
\end{align*}
which follows by \cref{goingtosunriver}. In this way, the three terms become:
\begin{align*}
q^{-r} \sum_{s+t=r}\pi^{\binom{t+1}{2}}(-q)^{t}\sqbinom{n+s-1}{s,t}_{q,\pi} 
&= \pi^{(n-1)r} q^{-r} \;\overline{a_{n-1}(r)},\\
(\pi q)^{n}\sum_{s+t=r-1}\pi^{\binom{t}{2}} (-q)^{-t} \sqbinom{n+s}{s,t}_{q,\pi}&= (\pi q)^{n}a_n(r-1),\\
-q^{1-n-r} \sum_{s+t=r-1} \pi^{\binom{t+1}{2}}(-q)^{t}  \sqbinom{n+s-1}{s,t}_{q,\pi}
&= - q^{1-n-r}\pi^{(n-1)(r-1)} \overline{a_{n-1}(r-1)}.
\end{align*}
The sum of these three produces the right hand side of the identity we are proving.
\end{proof}

\begin{corollary}\label{qbinomialformula}
For $0 \leq r \leq n$, we have that
\begin{align*}
q^{(n-r)r}\sqbinom{n}{r}_{q,\pi} &=\sum_{\lambda\in \GPar{r}{(n-r)}} 
(\pi q^2)^{|\lambda|}.
\end{align*}
\end{corollary}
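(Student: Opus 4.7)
\medskip

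The plan is to reduce the claim directly to the identity \cref{multiidentity} applied to the composition $\alpha = (r, n-r) \in \Comp(2,n)$, and then invoke the classical bijection between minimal length coset representatives and partitions in a rectangle.

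First, I would observe that
\begin{equation*}
\sqbinom{n}{r,n-r}_{q,\pi} = \sqbinom{n}{r}_{q,\pi}\sqbinom{n-r}{n-r}_{q,\pi} = \sqbinom{n}{r}_{q,\pi}
\end{equation*}
so that $\sqbinom{n}{r}_{q,\pi}$ is the same as the $(q,\pi)$-multinomial coefficient for $\alpha = (r,n-r)$. Since $N(\alpha) = r(n-r)$ by the definition given in the general conventions, \cref{multiidentity} specializes to
\begin{equation*}
\sqbinom{n}{r}_{q,\pi} = q^{-r(n-r)} \sum_{w \in [\S_n/\S_{(r,n-r)}]_{\min}} (\pi q^2)^{\ell(w)},
\end{equation*}
and multiplying through by $q^{(n-r)r}$ transforms the target identity into
\begin{equation*}
\sum_{w \in [\S_n/\S_{(r,n-r)}]_{\min}} (\pi q^2)^{\ell(w)} = \sum_{\lambda \in \GPar{r}{n-r}} (\pi q^2)^{|\lambda|}.
\end{equation*}

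The remaining task is therefore a purely combinatorial identity with the weight $(\pi q^2)^?$ treated as a single formal variable. I would finish by exhibiting the standard length-preserving bijection $[\S_n/\S_{(r,n-r)}]_{\min} \stackrel{\sim}{\longrightarrow} \GPar{r}{n-r}$: a minimal coset representative $w$ is uniquely determined by the $r$-subset $\{i_1 < \cdots < i_r\} := w\big(\{1,\dots,r\}\big) \subseteq \{1,\dots,n\}$, and one sends it to the partition $\lambda$ with $\lambda_j := i_{r-j+1} - (r-j+1)$ for $1 \leq j \leq r$. Then $\lambda \in \GPar{r}{n-r}$, and $\ell(w) = \sum_{j=1}^r (i_j - j) = |\lambda|$.

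There is no significant obstacle here: the only substantive ingredient is \cref{multiidentity} (already established), and the bijection above is standard and can be stated in one or two sentences. The proof is essentially a repackaging, so I would keep it brief.
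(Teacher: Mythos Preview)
Your proof is correct, but it takes a genuinely different route from the paper. The paper's proof is the one-line remark ``This is an induction exercise using \cref{bin}(1)'': one multiplies the Pascal-type recurrence by $q^{(n-r)r}$ and matches it against the obvious recursion on $\sum_{\lambda \in \GPar{r}{(n-r)}}(\pi q^2)^{|\lambda|}$ obtained by splitting according to whether $\lambda_r = 0$ or not.

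Your argument bypasses \cref{bin} entirely and instead invokes \cref{multiidentity} for $\alpha=(r,n-r)$ together with the standard length-preserving bijection $[\S_n/\S_{(r,n-r)}]_{\min}\stackrel{\sim}{\rightarrow}\GPar{r}{(n-r)}$. This is more conceptual: it explains structurally why the right-hand side is indexed by partitions in a rectangle, and it makes the result a direct reformulation of the Poincar\'e polynomial factorization \cref{poincare} rather than a separate inductive computation. The paper's induction, on the other hand, is more self-contained within the $(q,\pi)$-binomial formalism and requires no combinatorics of coset representatives. Either is fine; your version is arguably cleaner once \cref{multiidentity} is on the table, though it sits slightly oddly as a ``corollary'' of \cref{bin} since it does not use that lemma.
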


\begin{proof}
This is an induction exercise using \cref{bin}(1).
\end{proof}

Recall from the {\em General conventions}
that $n \# r$ denotes $n + (n+1)+\cdots+(n+r-1)$.

\begin{lemma}\label{numerology}
For $m, n \in \Z$ and $r \geq 0$, we let
\begin{align*}
b_{m,n}(r) &:=
(\pi q^{-2})^{(n-r)\# r}
\sum_{s=0}^{r-1} (\pi q^{2})^{n-r+m(r-s-1)}
q^{(m-n+r-1)(n-r+s+1)+(n-r)s}
\sqbinom{m+s}{n-r+s+1}_{q,\pi} \sqbinom{n-r+s}{s}_{q,\pi},
\\
c_{m,n}(r) &:= 
(\pi q^{-2})^{(n-r)\# r}
q^{(m-n+r)n+(n-r)r}\sqbinom{m+r}{n}_{q,\pi} \sqbinom{n}{r}_{q,\pi}.
\end{align*} 
Then we have that
$c_{m,n}(r) = b_{m,n}(r) + b_{m,n}(r+1)$
for any $m,n \in \Z$ and $r \geq 0$.
\end{lemma}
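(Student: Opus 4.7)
The plan is to prove the identity by a direct computation, after factoring out the common prefactor $(\pi q^{-2})^{(n-r)\# r}$ on both sides, and then applying the $(q,\pi)$-Pascal identities of \cref{bin}(1). The argument is most conveniently organised as an induction on $r \geq 0$.

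For the base case $r=0$: the sum defining $b_{m,n}(0)$ is empty, so $b_{m,n}(0)=0$, while $b_{m,n}(1)$ consists of the single summand with $s=0$. Using the observation $(\pi q^{-2})^{(n-1)\# 1}(\pi q^{2})^{n-1} = (\pi q^{-2})^{n-1}(\pi q^{2})^{n-1} = 1$, this simplifies to $b_{m,n}(1) = q^{(m-n)n}\sqbinom{m}{n}_{q,\pi}$, which coincides with $c_{m,n}(0)$.

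For the inductive step, I would first use the identity $(n-r-1)\#(r+1) - (n-r)\# r = n-r-1$ to pull out the common prefactor from $b_{m,n}(r)$, $b_{m,n}(r+1)$ and $c_{m,n}(r)$. The leftover factor $(\pi q^{-2})^{n-r-1}$ in $b_{m,n}(r+1)$ combines with $(\pi q^{2})^{n-r-1 + m(r-s)}$ in the summand to produce $(\pi q^{2})^{m(r-s)}$, making the two sums in $b_{m,n}(r) + b_{m,n}(r+1)$ more directly comparable. I would then apply the two versions of Pascal's rule in \cref{bin}(1), splitting $\sqbinom{m+r}{n}_{q,\pi}$ and $\sqbinom{n}{r}_{q,\pi}$ into four pieces and using Pascal again to re-index the sum of $b_{m,n}(r+1)$ by $s \mapsto s+1$, so that its summands telescopically align with those of $b_{m,n}(r)$.

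The main obstacle is the bookkeeping of the various powers of $q$ and $\pi$: the Pascal identities introduce factors of the shape $q^{-r}$ versus $(\pi q)^{n-r}$, and these must be matched against the powers of the form $(\pi q^{2})^{n-r+m(r-s-1)}$ and $q^{(m-n+r-1)(n-r+s+1)+(n-r)s}$ appearing inside $b_{m,n}(r)$. To cut down on the case analysis and to sidestep the delicate sign tracking, I would use the Chinese Remainder decomposition $\Q(q)^{\pi} \cong \Q(q)\times\Q(q)$ from the start of \cref{sgradedsuperalgebra}, which reduces the identity to two separate equalities of rational functions in $\Q(q)$: one (at $\pi = 1$) is a standard $q$-binomial identity that follows from iterated Pascal, and the other (at $\pi=-1$) follows by the same argument with careful sign accounting. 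Combining the two specialisations recovers the original identity in $\Q(q)^{\pi}$.
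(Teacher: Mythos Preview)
Your base case $r=0$ is correct, and the general framework (induction on $r$, Pascal's rule from \cref{bin}(1)) is sound. But your inductive step has a genuine gap: you never say where the induction hypothesis is actually used. You propose to split $\sqbinom{m+r}{n}_{q,\pi}\sqbinom{n}{r}_{q,\pi}$ into four pieces and to re-index $b_{m,n}(r+1)$ by $s\mapsto s+1$, yet the binomials $\sqbinom{m+s}{n-r+s+1}_{q,\pi}\sqbinom{n-r+s}{s}_{q,\pi}$ appearing in the sums do not telescope in any evident way against those four pieces, and you give no mechanism for the $2r+1$ summands on the left to collapse to the single term $c_{m,n}(r)$. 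The Chinese Remainder reduction to $\pi=\pm 1$ is formally valid but does not help: the $\pi=1$ specialisation is not a named $q$-binomial identity, and the same combinatorics must still be carried out in each component.

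The paper supplies the missing mechanism, and it involves a descent in $n$ that your sketch does not mention. Separating off the final summand ($s=r-1$), one checks that the remaining $r-1$ terms of $b_{m,n}(r)$ are exactly $(\pi q^{-2})^{n-m-1}\,b_{m,n-1}(r-1)$, and similarly the first $r$ terms of $b_{m,n}(r+1)$ give $(\pi q^{-2})^{n-m-1}\,b_{m,n-1}(r)$. Adding these two recursions and invoking the induction hypothesis at $(m,n-1,r-1)$, namely $b_{m,n-1}(r-1)+b_{m,n-1}(r)=c_{m,n-1}(r-1)$, leaves three explicit terms; two applications of \cref{bin}(1) then combine them into $c_{m,n}(r)$. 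This simultaneous shift $n\mapsto n-1$, $r\mapsto r-1$ is the idea that makes the induction close.
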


\begin{proof}
Proceed by induction on $r$. The base case $r=0$ is easily checked.
For the induction step, take $r > 0$.
We have that 
\begin{align*}
b_{m,n}(r) &= 
(\pi q^{-2})^{n-m-1}
b_{m,n-1}(r-1)
+
(\pi q^{-2})^{(n-r)\# r - n+r} 
q^{(m-n+r-1)n + (n-r)(r-1)}
\sqbinom{m+r-1}{n}_{q,\pi} \sqbinom{n-1}{r-1}_{q,\pi},\\
b_{m,n}(r+1) &= 
(\pi q^{-2})^{n-m-1}
b_{m,n-1}(r)
+
(\pi q^{-2})^{(n-r-1)\#(r+1) -n+r+1} 
q^{(m-n+r)n + (n-r-1)r}
\sqbinom{m+r}{n}_{q,\pi} \sqbinom{n-1}{r}_{q,\pi}.
\end{align*}
Note that 
$(n-r)\#(r-1) = (n-r)\# r -n+1$ and
$(n-r-1)\#(r+1) -n+r+1
= (n-r)\# r$.
Adding the above equations and using the induction hypothesis gives that
\begin{multline*}
b_{m,n}(r)+b_{m,n}(r+1)
=
(\pi q^{-2})^{(n-r)\# r - m}
q^{(m-n+r)(n-1)+(n-r)(r-1)}\sqbinom{m+r-1}{n-1}_{q,\pi} \sqbinom{n-1}{r-1}_{q,\pi}\\
\qquad\qquad\qquad\qquad
+(\pi q^{-2})^{(n-r)\# r - n+r} 
q^{(m-n+r-1)n + (n-r)(r-1)}
\sqbinom{m+r-1}{n}_{q,\pi} \sqbinom{n-1}{r-1}_{q,\pi}\\
+(\pi q^{-2})^{(n-r)\# r} 
q^{(m-n+r)n + (n-r-1)r}
\sqbinom{m+r}{n} \sqbinom{n-1}{r}_{q,\pi}.
\end{multline*}
Using the identity $(\pi q)^{m-n+r}\sqbinom{m+r-1}{n-1}_{q,\pi}+ q^{-n}\sqbinom{m+r-1}{n}_{q,\pi} = \sqbinom{m+r}{n}_{q,\pi}$
from \cref{bin}(1),
the first two terms combine into one leaving us with
$$
(\pi q^{-2})^{(n-r)\# r - n+r} 
q^{(m-n+r)n + (n-r)(r-1)}
\sqbinom{m\!+\!r}{n}_{q,\pi} \sqbinom{n\!-\!1}{r\!-\!1}_{q,\pi}
\!\!\!\!+(\pi q^{-2})^{(n-r)\# r} 
q^{(m-n+r)n + (n-r-1)r}
\sqbinom{m\!+\!r}{n}_{q,\pi} \sqbinom{n\!-\!1}{r}_{q,\pi}\!\!\!\!.
$$
Then we use the identity $(\pi q)^{n-r}\sqbinom{n-1}{r-1}_{q,\pi}+q^{-r}\sqbinom{n-1}{r} = \sqbinom{n}{r}_{q,\pi}$
to see finally that this is equal to $c_{m,n}((r)$.
\end{proof}

\begin{corollary}\label{numerologyc}
$\displaystyle 
q^{(n-r)r} \sqbinom{n}{r}_{q,\pi}\!\!=
\sum_{s=0}^{r} (\pi q^{2})^{(n-r)(r-s)}
q^{(n-r-1)s}
\sqbinom{n-r+s-1}{s}_{q,\pi}$ for $0 \leq r \leq n$.
\end{corollary}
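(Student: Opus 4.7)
The plan is to prove the identity combinatorially, by applying \cref{qbinomialformula} twice with a partition bijection in between.

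First, I would use \cref{qbinomialformula} to rewrite the left hand side as the partition generating function
\begin{equation*}
q^{(n-r)r}\sqbinom{n}{r}_{q,\pi} = \sum_{\lambda\in\GPar{r}{n-r}} (\pi q^2)^{|\lambda|}.
\end{equation*}
Next, I would stratify $\GPar{r}{n-r}$ (viewing $\lambda$ as a tuple $(\lambda_1,\dots,\lambda_r)$ with possibly trailing zeros) by the integer $s := \#\{i \in \{1,\dots,r\} : \lambda_i < n-r\}$. Any such $\lambda$ has its top $r-s$ rows forced to be of full width $n-r$, so it decomposes uniquely as $\lambda = ((n-r)^{r-s},\mu)$ for some $\mu = (\lambda_{r-s+1},\dots,\lambda_r) \in \GPar{s}{n-r-1}$; conversely any such $\mu$ gives back a valid $\lambda$ since $\mu_1 \leq n-r-1 < n-r$. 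Since $|\lambda| = (n-r)(r-s) + |\mu|$, this bijection breaks up the sum as
\begin{equation*}
\sum_{\lambda\in\GPar{r}{n-r}} (\pi q^2)^{|\lambda|} = \sum_{s=0}^{r} (\pi q^2)^{(n-r)(r-s)} \sum_{\mu \in \GPar{s}{n-r-1}} (\pi q^2)^{|\mu|}.
\end{equation*}

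Finally, I would re-apply \cref{qbinomialformula} to each inner sum, this time taking $n' := n-r+s-1$ and $r' := s$ so that $n'-r' = n-r-1$:
\begin{equation*}
\sum_{\mu \in \GPar{s}{n-r-1}} (\pi q^2)^{|\mu|} = q^{(n-r-1)s}\sqbinom{n-r+s-1}{s}_{q,\pi}.
\end{equation*}
Substituting this into the previous display recovers the right hand side of the corollary.

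No real obstacle is expected; the argument is a straightforward unpacking of the partition model provided by \cref{qbinomialformula}. The only point requiring care is the edge case $n = r$, where $\GPar{s}{-1}$ is empty for $s \geq 1$; this is consistent because the corresponding $\sqbinom{s-1}{s}_{q,\pi}$ vanishes (its numerator contains the factor $[0]_{q,\pi} = 0$), so only $s = 0$ contributes and the identity reduces to $1 = 1$.
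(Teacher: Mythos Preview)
Your proof is correct and takes a genuinely different route from the paper. The paper deduces this corollary as a one-line specialization of \cref{numerology}: setting $m=n-r$ there forces $b_{n-r,n}(r)=0$ (its binomial factors $\sqbinom{n-r+s}{n-r+s+1}_{q,\pi}$ all vanish), so the identity $c_{m,n}(r)=b_{m,n}(r)+b_{m,n}(r+1)$ collapses to $c_{n-r,n}(r)=b_{n-r,n}(r+1)$, which after cancelling the common prefactor $(\pi q^{-2})^{(n-r)\#r}$ is exactly the stated formula. Your argument instead bypasses \cref{numerology} entirely, giving a direct combinatorial proof via the partition model of \cref{qbinomialformula}: stratifying $\GPar{r}{(n-r)}$ by the number $r-s$ of full rows and peeling those off leaves a partition in $\GPar{s}{(n-r-1)}$, to which \cref{qbinomialformula} applies again. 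This is more transparent and self-contained; the paper's route is efficient only because \cref{numerology} is proved anyway for later use (it controls the numerology of the singular Rouquier complex in \cref{derby}). One small remark: in the edge case $n=r$, your appeal to \cref{qbinomialformula} for the inner sum is strictly outside its stated range $0\le r'\le n'$, but as you note the inner partition set is then empty for $s\ge 1$ and $\sqbinom{s-1}{s}_{q,\pi}=0$, so the conclusion stands.
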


\begin{proof}
Take $m=n-r$ in \cref{numerology}.
\end{proof}

Let $U_{q,\pi}(\sl_2)$ denote the locally unital $\Q(q)^\pi$-algebra with distinguished idempotents $\{1_k\:|\:k \in \Z\}$ and generators $\E 1_k = 1_{k+2} \E, \F 1_k = 1_{k-2} \F$ subject to the relations
\begin{align}\label{fridaylate}
\E\F 1_k -\pi \F\E 1_k &= \overline{[k]}_{q,\pi} 1_k
\end{align}
for all $k \in \Z$. Note there is some flexibility in writing the idempotents $1_k$---in any given monomial one just needs to include one idempotent somewhere in the word for the notation to be unambiguous. Let
\begin{align}\label{snow}
\E^{(d)}1_k = 1_{k+2d} E^{(d)} &:= \frac{\E^d 1_k}{[d]_{q,\pi}^!},
&1_k\F^{(d)} = \F^{(d)} 1_{k+2d} &:= \frac{\F^d 1_k}{[d]^!_{q,\pi}},\\
\label{snowtilde}
\overline{\E}^{(d)}1_k= 1_{k+2d} \overline{\E}^{(d)} &:= \frac{\E^d 1_k}{\overline{[d]^!}_{\!q,\pi}},&
1_k \overline{\F}^{(d)} = \overline{\F}^{(d)} 1_{k+2d} &:= \frac{\F^d 1_k}{\overline{[d]^!}_{\!q,\pi}}.
\end{align}
By \cref{goingtosunriver}, we have that
\begin{align}\label{snowier}
\overline{\E}^{(d)} 1_k &= \pi^{\binom{d}{2}} \E^{(d)} 1_k,&
1_k \overline{\F}^{(d)} &= \pi^{\binom{d}{2}} 1_k \F^{(d)}.
\end{align}
There is an anti-linear involution
\begin{align}
\oldomega:U_{q,\pi}(\sl_2)&\rightarrow U_{q,\pi}(\sl_2),
&1_k&\mapsto 1_{-k},\: \E 1_k\mapsto \F 1_{-k},\: \F 1_k\mapsto \E 1_{-k}.
\end{align}
This sends $\E^{(d)} 1_k \mapsto \overline{\F}^{(d)} 1_{-k}$ and $\F^{(d)} 1_k \mapsto \overline{\E}^{(d)} 1_{-k}$.
We warn the reader that this is different from the involution $\omega$ in \cite{CHW1}.

By a $U_{q,\pi}(\sl_2)$-module we mean a locally unital left module $V  =\bigoplus_{k \in \Z} 1_k V$. We call $1_k V$ the {\em $k$-weight space} of $V$. We say that $V$ is {\em integrable} if any weight vector $v \in 1_k V$ for $k\in \Z$ is annihilated by  $\E^n 1_k$ and $\F^n 1_k$ for $n \gg 0$ (depending on $v$). For $\ell \in \N$---a dominant weight for $\sl_2$---there is a $U_{q,\pi}(\sl_2)$-module $V(-\ell)$ which is free as a $\Q(q)^\pi$-module with basis $\{b_n^\ell\:|\:0 \leq n \leq \ell\}$ such that
\begin{itemize}
\item $b_n^\ell$ is of weight $2n-\ell$, i.e., $1_{2n-\ell} b_{n}^\ell = b_{n}^\ell$;
\item $b_{\ell}^\ell$ is a highest weight vector
and $b_0^\ell$ is a lowest weight vector, i.e., $\E b^\ell_\ell=\F b_0^\ell=0$;
\item for $0 \leq n < \ell$, we have that $\E b_n^\ell = [n+1]_{q,\pi} b_{n+1}^\ell$ and $\F b_{n+1}^\ell = \pi^n [\ell-n]_{q,\pi} b_{n}^\ell$.
\end{itemize}
We visualize the action with the familiar $\sl_2$-type picture showing how the operators $\E$ and $\F$ raise and lower basis vectors to multiples of basis vectors:
\begin{equation}\label{mmm}
\begin{tikzcd}
\phantom{a}& b_{\ell}^\ell\arrow[d,bend left,"{\pi^{\ell-1}[1]_{q,\pi}}" right]&\phantom{axyzw}\arrow[dddd,dashed, "\F" right]\\
\phantom{a}&  \arrow[u,bend left,"{[\ell]_{q,\pi}}" left] b_{\ell-1}^\ell \arrow[bend left,d,"{\pi^{\ell-2}[2]_{q,\pi}}" right]&\phantom{a}\\
\phantom{a}&  \arrow[u,bend left,"{[\ell-1]_{q,\pi}}" left]\vdots\arrow[d,bend left,"{\pi [\ell-1]_{q,\pi}}" right]& \phantom{a}\\
\phantom{a}&\arrow[u,bend left,"{[2]_{q,\pi}}" left]  b_{1}^\ell\arrow[d,bend left,"{[\ell]_{q,\pi}}" right]&\phantom{a}\\
\phantom{abb}\arrow[uuuu,dashed,"\E" left]&\arrow[u,bend left,"{[1]_{q,\pi}}" left]    b_0^\ell&\phantom{a}
\end{tikzcd}
\end{equation}
For $0 \leq n \leq \ell-d$, we have that
\begin{align}\label{pork}
  \E^{(d)} b_{n}^\ell &= \sqbinom{n+d}{d}_{q,\pi} b_{n+d}^\ell,&
 \F^{(d)} b_{n+d}^\ell &= \pi^{\binom{d}{2}+nd}\sqbinom{\ell-n}{d}_{q,\pi} b_{n}^\ell.
\end{align}
There is an anti-linear involution
\begin{align}\label{omegaV}
\oldomega:V(-\ell)&\rightarrow V(-\ell),& b_{n}^\ell&\mapsto \pi^{n(\ell-n)}b_{\ell-n}^\ell.
\end{align}
This has the key property that 
\begin{equation}\label{intertwiner}
\oldomega(uv) = \oldomega(u) \oldomega(v)
\end{equation}
for all $u \in U_{q,\pi}(\sl_2), v \in V(-\ell)$.

Let $V_{\pm}(-\ell) := \frac{1}{2}(1\pm \pi) V(-\ell)$. These are irreducible $U_{q,\pi}(\sl_2)$-modules generated by the highest weight vectors $\frac{1}{2}(1\pm \pi) b_{\ell}^\ell$ of weight $\ell$ on which $\pi$ acts by the scalar $\pm 1$. In particular, these modules are not isomorphic for different $\ell$ or different choices of sign.

\begin{theorem}[{\cite[Cor.~3.3.3]{CHW1}}]\label{integrablemodules}
Any integrable $U_{q,\pi}(\sl_2)$-module decomposes as a direct sum of the modules $V_\pm(-\ell)$ for $\ell \in \N$.
\end{theorem}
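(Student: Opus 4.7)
The plan is to split $V$ using the central element $\pi$ and then reduce each summand to a familiar classification. Since $\pi \in \Q(q)^\pi$ is central with $\pi^2 = 1$ and $\operatorname{char}(\k) \neq 2$, the operator $\pi$ acts on $V$ as an involution, so we obtain a decomposition $V = V_+ \oplus V_-$ with $V_\pm := \tfrac{1}{2}(1\pm\pi) V$. Because $\pi$ commutes with $E$, $F$, and the $1_k$, each $V_\pm$ is an integrable $U_{q,\pi}(\sl_2)$-submodule; since $V_\pm(-\ell) = \tfrac{1}{2}(1\pm\pi) V(-\ell)$ by construction, it suffices to show that each of $V_+$ and $V_-$ is a direct sum of copies of $V_+(-\ell)$'s and $V_-(-\ell)$'s, respectively.

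On $V_+$, the element $\pi$ acts as $+1$, so \eqref{fridaylate} specialises to $E F 1_k - F E 1_k = [k]_q 1_k$, the defining relation of the ordinary $U_q(\sl_2)$. Thus $V_+$ is an integrable module for the usual quantum $\sl_2$ with integer weight spaces, and Lusztig's classical theorem decomposes it as a direct sum of the irreducible highest weight modules $L(\ell)$ for $\ell \in \N$. Matching the action formulae in \eqref{pork} at $\pi = 1$ identifies each such $L(\ell)$ with $V_+(-\ell)$.

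On $V_-$, the relation becomes the twisted $E F 1_k + F E 1_k = \overline{[k]}_{q,-1} 1_k$. The plan is to replay the classical highest-weight argument directly, exploiting integrability. Any nonzero $V_-$ contains a highest weight vector $v \in 1_\ell V_-$ with $E v = 0$ and $F^N v = 0$ for some $N$; the twisted commutation relation together with \eqref{pork} forces $\ell \in \N$ and shows that $\{F^{(n)} v\}_{0 \leq n \leq \ell}$ spans a submodule isomorphic to $V_-(-\ell)$. Because the weight spaces of this submodule are one-dimensional over $\Q(q)^\pi$ and $E^{(n)} F^{(n)} v$ is a nonzero scalar multiple of $v$ by the $(q,\pi)$-binomial formulae in \eqref{pork}, such modules are irreducible, so a Zorn's lemma argument produces a maximal direct sum $W \subseteq V_-$ of such submodules.

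The main obstacle is the final step $W = V_-$, i.e.\ complete reducibility in the twisted setting, which is where the subtlety relative to the classical argument lies. Following \cite{CHW1}, the cleanest route is to construct an analog of the quantum Casimir element in $U_{q,\pi}(\sl_2)$ whose scalar eigenvalue on $V_\epsilon(-\ell)$ is an injective function of $(\epsilon, \ell) \in \{\pm\} \times \N$; its generalised eigenspace decomposition sorts $V$ into isotypic components, and within each the splitting is obtained from explicit divided-power projectors whose existence rests on the $(q,\pi)$-binomial identities from \cref{bin} and \cref{numerology}. Once $W = V_-$ is established, combining with the decomposition of $V_+$ from the second paragraph completes the proof.
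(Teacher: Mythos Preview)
The paper does not prove this theorem at all; it simply cites \cite[Cor.~3.3.3]{CHW1} and moves on. So there is no argument in the paper to compare against---only the external reference.

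Your outline is a reasonable sketch of the shape of the argument in \cite{CHW1}: split by the action of $\pi$ into $V_+ \oplus V_-$, reduce $V_+$ to the classical $U_q(\sl_2)$ complete reducibility (cf.\ \cref{specializing}), and then argue complete reducibility on $V_-$ via a Casimir element. A few points deserve comment. First, the ground ring here is $\Q(q)^\pi$, which has characteristic zero; your ``$\operatorname{char}(\k) \neq 2$'' is out of place (the field $\k$ appears only from Section~4 onward and plays no role in this statement). Second, your appeal to \cref{numerology} is misplaced: that identity is a bespoke computation used later for the singular Rouquier complex and has nothing to do with constructing projectors here. Third, and most importantly, your final paragraph correctly identifies complete reducibility as the crux, then immediately defers it to a Casimir construction ``following \cite{CHW1}''---which is exactly what the paper does by citing \cite{CHW1} in the first place. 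So your proposal does not supply an independent proof so much as it retraces the path back to the same citation; the genuine work (constructing the Casimir and verifying injectivity of its eigenvalues) is not carried out.
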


Now we can prove the main result of the section.

\begin{theorem}\label{oddreflection}
Let $V$ be an integrable $U_{q,\pi}(\sl_2)$-module. 
There is a linear automorphism $\TT:V\stackrel{\sim}{\rightarrow} V$ sending $1_{-k} V$ to $1_{k} V$ for each $k \in \Z$
such that 
\begin{align*}
\TT(v) &= 
\sum_{d \geq \max(0,-k)} (-q)^{d} \E^{(k+d)} \F^{(d)} v\\\intertext{
on a vector $v \in 1_{-k} V$.
The inverse is given explicitly by the formula}
\TT^{-1}(v_k) &= \sum_{d \geq \max(0,-k)} (-q)^{-d} \overline{\F}^{(k+d)} \overline{\E}^{(d)} v
\end{align*}
on a vector $v \in 1_k V$.
\end{theorem}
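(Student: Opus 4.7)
By \cref{integrablemodules} and linearity of the formulas in $v$, it suffices to prove the theorem on $V = V(-\ell)$ for each $\ell \in \N$.

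\emph{Step 1.} I first check that the sum defining $T(b_n^\ell)$ is finite on each weight vector. For $b_n^\ell$ of weight $-k = 2n-\ell$, \cref{pork} gives $F^{(d)} b_n^\ell = 0$ whenever $d > n$, so the sum runs over $\max(0, 2n - \ell) \leq d \leq n$. Each summand lies in the $k$-weight space, which is spanned by $b_{\ell-n}^\ell$. A direct application of \cref{pork} will produce
\begin{equation*}
T(b_n^\ell) \;=\; \alpha_n^\ell\, b_{\ell-n}^\ell, \qquad \alpha_n^\ell \;:=\; \sum_{d=0}^{n} (-q)^d\, \pi^{\binom{d}{2}+(n-d)d}\, \sqbinom{\ell-n+d}{d}_{\!q,\pi}\, \sqbinom{\ell-n}{n-d}_{\!q,\pi},
\end{equation*}
where the sum can be extended to $0 \leq d \leq n$ since $\sqbinom{\ell-n}{n-d}_{q,\pi} = 0$ for $d < 2n - \ell$, using the symmetry $\sqbinom{\ell-n}{\ell-2n+d}_{q,\pi} = \sqbinom{\ell-n}{n-d}_{q,\pi}$ of $(q,\pi)$-binomials.

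\emph{Step 2.} Next, I produce a closed form for $\alpha_n^\ell$. The elementary identity $\binom{d}{2} + d(n-d) + \binom{n-d}{2} = \binom{n}{2}$ (from the expansion of $\binom{a+b}{2}$) rewrites the $\pi$-exponent as $\binom{n}{2}+\binom{n-d}{2}$ modulo $2$ (using $\pi^2 = 1$). Pulling $\pi^{\binom{n}{2}}$ outside the sum and reindexing with $s = d$, $t = n-d$, the sum matches \cref{bin}(3) with parameters $(n, r) = (\ell - n, n)$, after using $\sqbinom{n+s}{s,t}_{q,\pi} = \sqbinom{n+s}{s}_{q,\pi}\sqbinom{n}{t}_{q,\pi}$ and multiplying through by $(-q)^n$. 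This yields the closed form
\begin{equation*}
\alpha_n^\ell \;=\; (-1)^n\, \pi^{\binom{n}{2} + n(\ell-n)}\, q^{n(\ell - n + 1)}.
\end{equation*}

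\emph{Step 3.} Finally, to identify the inverse, I apply the anti-linear involution $\omega$ of \cref{omegaV} to the defining formula for $T$. Using \cref{intertwiner} together with $\omega(E^{(d)} 1_k) = \overline{F}^{(d)} 1_{-k}$ and $\omega(F^{(d)} 1_k) = \overline{E}^{(d)} 1_{-k}$ (from \cref{snowtilde} and anti-linearity of $\omega$), this yields the operator identity
\begin{equation*}
\omega \circ T \;=\; T^{-1} \circ \omega
\end{equation*}
on $V(-\ell)$, where $T^{-1}$ denotes the operator defined by the formula in the statement; the two summation ranges coincide. In particular $T^{-1}$ is well-defined. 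Combined with $\omega(b_m^\ell) = \pi^{m(\ell-m)} b_{\ell-m}^\ell$ and $\omega^2 = \id$, this gives
\begin{equation*}
T^{-1} T(b_n^\ell) \;=\; (\omega T)^2 (b_n^\ell) \;=\; \alpha_n^\ell \cdot \overline{\alpha_n^\ell}\, b_n^\ell \;=\; b_n^\ell,
\end{equation*}
where the last equality is immediate from the closed form in Step~2. The equation $TT^{-1} = \id$ then follows from the finite-dimensionality of each weight space.

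The hard part will be Step~2: recognizing the sum as an instance of \cref{bin}(3). The key combinatorial input is the elementary identity $\binom{d}{2} + d(n-d) + \binom{n-d}{2} = \binom{n}{2}$, which aligns the $\pi$-exponents on both sides. Once that observation is made, the remaining bookkeeping of signs, $q$-powers, and binomial symmetries is mechanical.
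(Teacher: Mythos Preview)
Your proof is correct and follows essentially the same route as the paper: reduce to $V(-\ell)$ via \cref{integrablemodules}, compute the scalar $\alpha_n^\ell$ by massaging the $\pi$-exponent with $\binom{d}{2}+d(n-d)+\binom{n-d}{2}=\binom{n}{2}$ and invoking \cref{bin}(3), and then use the involution $\oldomega$ to identify the inverse. The only cosmetic difference is that you package Step~3 as the operator identity $S=\oldomega\, T\,\oldomega$ (so that $ST=(\oldomega T)^2$), whereas the paper computes $\oldomega(u)\,b_{n'}^\ell$ directly; the two are equivalent once one tracks the $\pi^{nn'}$ factors coming from $\oldomega(b_{n'}^\ell)$, which cancel in $\alpha_n^\ell\,\overline{\alpha_n^\ell}=1$.
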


\begin{proof}
In view of \cref{integrablemodules}, it suffices to check this when $V = V(-\ell)$ for $\ell \in \N$. 
Take $-\ell \leq k \leq \ell$ with $k \equiv \ell\pmod{2}$ and set $n := \frac{\ell+k}{2}$ and $n' := \frac{\ell-k}{2}$, so $n'+n=\ell$ and $n'-n=k$.
The space $1_{-k} V(-\ell)$ is spanned by 
$b_{n}^\ell$ and $1_{k} V(-\ell)$ is spanned by $b_{n'}^\ell$.
Since $\F^{(d)} b_n^\ell = 0$ for $d > n$, we have by the definition in the statement of the theorem that
$\TT(b_{n}^\ell) = u b_{n}^\ell$ where
\begin{align*}
u &:= \sum_{d = \max(0,-k)}^n (-q)^{d} \E^{(k+d)} \F^{(d)} \in U_{q,\pi}(\sl_2).
\end{align*}
In the next paragraph, we show that 
\begin{equation}\label{rain2}
u b_n^\ell = (-1)^n \pi^{\binom{n}{2}+nn'} q^{n+n n'} b_{n'}^\ell.
\end{equation}
Assuming this, the proof can be completed as follows.
Applying $\oldomega$ to \cref{rain2} 
using \cref{omegaV,intertwiner}, we also have that
\begin{equation}\label{rain}
\oldomega(u) b_{n'}^\ell = (-1)^n\pi^{\binom{n}{2}+nn'}q^{-n-nn'} b_{n}^\ell.
\end{equation}
From \cref{rain,rain2}, it follows that $\oldomega(u) u b_{n}^\ell = b_{n}^\ell$ and $u \oldomega(u) b_{n'}^\ell = b_{n'}^\ell$.
Hence, $\TT:1_{-k} V(-\ell) \rightarrow 1_{k} V(-\ell)$ is an isomorphism with inverse $\TT^{-1}$ defined by multiplication by $\oldomega(u)$. Finally we observe that $\overline{\E}^{(d)} b_{n'}^\ell = 0$ for $d > n$ so
$$
\oldomega(u) b_{n'}^\ell = \sum_{d \geq \max(0,-k)} (-q)^{-d} 
\overline{\F}^{(k+d)} \overline{\E}^{(d)} b_{n'}^\ell,
$$
which agrees with the formula for $\TT^{-1}(b_{n'}^\ell)$ 
in the statement of the theorem.

It remains to prove \cref{rain2}. 
First we make some elementary computations using \cref{pork}:
\begin{align*}
u b_{n}^\ell &=
\sum_{d=\max(0,n-n')}^n (-q)^{d} 
\E^{(n'-n+d)} \F^{(d)}b_{n}^\ell\\
&=\sum_{d=\max(0,n-n')}^n
\pi^{\binom{d}{2}+(n-d)d}(-q)^d\sqbinom{n'+d}{n'}_{q,\pi} 
\E^{(n'-n+d)} 
b_{n-d}^\ell\\
&=\sum_{d=\max(0,n-n')}^n \pi^{\binom{d}{2}+(n-d)d} (-q)^{d} \sqbinom{n'+d}{n'}_{q,\pi} \sqbinom{n'}{n\!-\!d}_{q,\pi} b_{n'}^\ell\\
&=
\sum_{d = 0}^{n} \pi^{\binom{d}{2}+(n-d)d} (-q)^{d} \sqbinom{n'+d}{n'}_{q,\pi} \sqbinom{n'}{n-d}_{q,\pi} b_{n'}^\ell,
\end{align*}
noting in the last step that $\sqbinom{n'}{n-d}_{q,\pi} = 0$ if $d< n-n'$ so that we can remove the restriction on the summation. Then we 
switch to another variable $s := n-d$ and sum instead over $d,s \geq 0$ with $d+s=n$ to get that
\begin{align*}
u b_{n}^\ell &=
\sum_{d+s=n} \pi^{\binom{n-s}{2} + s(n-s)} (-q)^{n-s} \sqbinom{n'+d}{n'}_{q,\pi} \sqbinom{n'}{s}_{q,\pi} b_{n'}^\ell
 =
\pi^{\binom{n}{2}}(-q)^{n} \sum_{d+s=n} \pi^{\binom{s}{2}} (-q)^{-s} \sqbinom{n'+d}{d,s}_{q,\pi} v_{n',n}.
\end{align*}
Now an application of \cref{bin}(3) completes the proof of 
\cref{rain2}.
\end{proof}

\begin{remark}\label{specializing}
The specialization of $U_{q,\pi}(\sl_2)$ at
$\pi=1$, that is, the algebra $U_q(\sl_2) := U_{q,\pi}(\sl_2)
\otimes_{\Q(q)^\pi} \Q(q)$ where $\Q(q)$ is viewed here as a $\Q(q)^\pi$-module so
that $\pi$ acts as $1$, is the usual quantized enveloping algebra of
$\mathfrak{sl}_2$. Theorem~\ref{oddreflection} is well known in this case.
The specialization at $\pi = -1$ is the quantized enveloping algebra
$U_q(\osp_{1|2})$ of Clark and Wang \cite{CW}.
\end{remark}

The algebra $U_{q,\pi}(\sl_2)$ has a $\Z[q,q^{-1}]^\pi$-form we
denote by
$\mathbf{U}_{q,\pi}(\sl_2)$, namely, 
the $\Z[q,q^{-1}]^\pi$-algebra generated by the divided powers
$\E^{(r)} 1_k, \F^{(r)} 1_k$ for $r \geq 1, k \in \Z$.
The module $V(-\ell)$ is also defined over $\Z[q,q^{-1}]^\pi$, with its integral form $\mathbf{V}(-\ell)$ being the $\Z[q,q^{-1}]^\pi$-submodule of $V(-\ell)$ generated by the basis vectors chosen above.

\begin{theorem}[{\cite[Lem.~3.5]{Clark}}]
The algebra $\mathbf{U}_{q,\pi}(\sl_2)$ is free as a
$\Z[q,q^{-1}]^\pi$-module with basis given by
the monomials
$\left\{\F^{(r)} \E^{(s)} 1_k\:\big|\:r,s \geq 0, k \in \Z\right\}$.
\end{theorem}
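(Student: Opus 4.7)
The plan is to follow the standard PBW template, with spanning handled by a commutation identity and linear independence handled by the faithful action on the integrable modules $V(-\ell)$ from \cref{integrablemodules}.

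First I would establish spanning. The generators of $\mathbf{U}_{q,\pi}(\sl_2)$ are the divided powers $\E^{(r)}1_k$ and $\F^{(s)}1_k$, so it suffices to show that an arbitrary product $\E^{(r)} \F^{(s)} 1_k$ can be rewritten as a $\Z[q,q^{-1}]^\pi$-linear combination of monomials $\F^{(s-t)} \E^{(r-t)} 1_k$ for $0 \leq t \leq \min(r,s)$. The key is the commutation identity
\begin{equation*}
\E^{(r)} \F^{(s)} 1_k = \sum_{t=0}^{\min(r,s)} \pi^{rs+\binom{t+1}{2}} \overline{\sqbinom{r-s+k}{t}}_{q,\pi} \F^{(s-t)} \E^{(r-t)} 1_k,
\end{equation*}
which is alluded to in the excerpt following \cref{pork}. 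I would prove this by induction on $r+s$, with the base case $r=s=1$ being the defining relation \cref{fridaylate} rewritten as $\E\F 1_k = \pi \F \E 1_k + \overline{[k]}_{q,\pi} 1_k$. The induction step uses \cref{bin}(1) applied to $\overline{\sqbinom{r-s+k}{t}}_{q,\pi}$ together with the recursions $\E \cdot \E^{(r-1)} = [r]_{q,\pi} \E^{(r)}$ and its analog for $\F$. Sign tracking requires care, since \cref{snowier} and \cref{baringpi} introduce $\pi$-factors whenever divided powers are shuffled past one another.

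Next I would address linear independence. Suppose we have a finite relation
\begin{equation*}
\sum_{r,s \geq 0,\; k \in \Z} c_{r,s,k}\, \F^{(r)} \E^{(s)} 1_k = 0 \qquad (c_{r,s,k} \in \Z[q,q^{-1}]^\pi).
\end{equation*}
Since the $1_k$ are mutually orthogonal, we may fix $k$ and work with $\sum_{r,s} c_{r,s} \F^{(r)} \E^{(s)} 1_k = 0$. For any integer $m$, the operator $\F^{(r)} \E^{(r+m)}$ shifts weight by $2m$, so grouping by $m := s-r$ gives, for each $m$ separately, a relation $\sum_{r \geq 0} c_{r,r+m}\, \F^{(r)} \E^{(r+m)} 1_k = 0$. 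I would evaluate this at a weight-$k$ basis vector $b_n^\ell \in V(-\ell)$ with $2n-\ell = k$; by \cref{pork},
\begin{equation*}
\F^{(r)} \E^{(r+m)} b_n^\ell = \pi^{\binom{r}{2}+(n+m)r} \sqbinom{n+r+m}{r+m}_{q,\pi} \sqbinom{\ell-n-m}{r}_{q,\pi}\, b_{n+m}^\ell,
\end{equation*}
so the relation forces $\sum_r c_{r,r+m} \pi^{\binom{r}{2}+(n+m)r} \sqbinom{n+r+m}{r+m}_{q,\pi} \sqbinom{\ell-n-m}{r}_{q,\pi} = 0$ in $\Z[q,q^{-1}]^\pi$ for every admissible $\ell$.

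The concluding step is to see that this system has only the trivial solution. Because the original sum is finite, there exists $R$ such that $c_{r,r+m} = 0$ for $r > R$. Letting $\ell$ range over $\{k+2r_0 : r_0 = 0, 1, \ldots, R\}$ (adjusted so that $n = r_0 + (k+\ell)/2$ makes sense and $n+m \leq \ell$) produces an $(R+1) \times (R+1)$ linear system whose coefficient matrix has entries involving $\sqbinom{r_0-m}{r}_{q,\pi}$. This matrix is upper triangular in $r$ after suitable reordering, with diagonal entries that are non-zero in $\Q(q)^\pi$ (each diagonal entry is a product of non-vanishing $(q,\pi)$-integers and a power of $\pi$), so the determinant is a unit in $\Q(q)^\pi$ and all $c_{r,r+m}$ vanish. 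The main obstacle is precisely this last triangularity verification: one must check that the vanishing pattern of $\sqbinom{\ell-n-m}{r}_{q,\pi}$ as $\ell$ varies really gives a non-degenerate system, which is where the convention that $[n]_{q,\pi}$ is invertible in $\Q(q)^\pi$ for $n \neq 0$ (noted just after \cref{qinteger}) is essential.
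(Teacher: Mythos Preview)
The paper does not prove this theorem; it simply cites \cite[Lem.~3.5]{Clark}. So there is no argument in the paper to compare against, and your proposal stands on its own.

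Your spanning argument is essentially correct, though you should also record $\E^{(a)}\E^{(b)}1_k=\sqbinom{a+b}{a}_{q,\pi}\E^{(a+b)}1_k$ and its $\F$-analogue, so that arbitrary words in the divided-power generators, not just single products $\E^{(r)}\F^{(s)}1_k$, straighten to the claimed spanning set.

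Your linear-independence argument has a genuine gap. With $\ell=k+2r_0$ one has $n=(k+\ell)/2=k+r_0$ (your displayed formula $n=r_0+(k+\ell)/2$ is a slip), and the relevant factor is $\sqbinom{n-k-m}{r}_{q,\pi}=\sqbinom{r_0-m}{r}_{q,\pi}$, which vanishes for $r>r_0-m\geq 0$. For the $(R{+}1)\times(R{+}1)$ system in $c_0,\dots,c_R$ to be triangular with nonzero diagonal you need $r_0-m$ to sweep through $0,\dots,R$, hence $r_0\in\{m,\dots,m+R\}$; since $n=k+r_0$ must be $\geq 0$, this forces $k+m\geq 0$. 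When $k+m<0$ the smallest admissible row already has several nonzero entries and triangularity fails. A clean repair uses the involution $\oldomega$: it sends $\F^{(r)}\E^{(r+m)}1_k$ to a unit multiple of $\E^{(r)}\F^{(r+m)}1_{-k}$, and your commutation identity rewrites the latter as a triangular combination (leading coefficient $\pi^{r(r+m)}$) of the monomials $\F^{(r'')}\E^{(r''+m')}1_{k'}$ with $k'=-k$ and $m'=-m$; now $k'+m'=-(k+m)>0$ and your argument applies directly. A still shorter route avoids modules altogether: since $\mathbf{U}_{q,\pi}(\sl_2)\subset U_{q,\pi}(\sl_2)$, it suffices to check independence over $\Q(q)^\pi\cong\Q(q)\times\Q(q)$, and via $\pi\mapsto\pm 1$ this reduces to the classical PBW theorems for $U_q(\sl_2)$ and $U_q(\osp_{1|2})$ (cf.\ \cref{specializing}).
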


We say that a $\Z[q,q^{-1}]^\pi$-free $\mathbf{U}_{q,\pi}(\sl_2)$-module $\mathbf{V}$ is {\em integrable} if any weight vector $v \in 1_k\mathbf{V}$ is annihilated by $\E^{(n)} 1_k$ and $\F^{(n)}1_k$ for $n \gg 0$ (depending on $v$). Equivalently, the $\Q(q)^\pi$-free $U_{q,\pi}(\sl_2)$-module $\Q(q)^\pi \otimes_{\Z[q,q^{-1}]^\pi}\mathbf{V}$ is integrable in the earlier sense. 
It is clear that the automorphism $\TT$ from \cref{oddreflection} descends to an automorphism of any integrable $\mathbf{U}_{q,\pi}(\sl_2)$-module that is free as a $\Z[q,q^{-1}]^\pi$-module.

\section{Odd symmetric functions}

This section is largely an exposition of results from \cite{EK,EKL}, and assumes the reader is already familiar with the classical theory of symmetric functions as in \cite{Mac}. However, we have made one substantial modification to the setup: instead of the elementary odd symmetric functions denoted $\eps_r$ in \cite{EKL},
we usually prefer to work with the renormalized odd elementary symmetric functions $e_r := (-1)^{\binom{r}{2}} \eps_r$. We will explain the implications of this more thoroughly as we proceed.
We also warn the reader that in \cite{EK} the notation $e_r$ is used
for the same thing as the element denoted 
$\eps_r$ in \cite{EKL}, so the $e_r$ of \cite{EK} is {\em not} 
the one here.

The algebra $\OSym$ of {\em odd symmetric functions}
is the graded superalgebra over the ground field $\k$ 
generated by elements $h_r\:(r \geq 1)$ of degree $2r$ and parity $r\pmod{2}$ subject to the relations of \cite[Cor.~2.13]{EK}:
\begin{align}
h_{r} h_{s}&= h_{s} h_{r}&&\text{if $r \equiv s \pmod{2}$}\label{osym1b}\\
h_r h_s+ (-1)^r h_s h_r &= (-1)^r h_{r+1}h_{s-1} + h_{s-1} h_{r+1}&&\text{if $r \not\equiv s \pmod{2}$}\label{osym2b}
\end{align}
for $r \geq 0, s \geq 1$, interpreting $h_0$ as $1$. We also define elements $e_r\:(r \geq 0)$ so that the {\em infinite Grassmannian relation}
\begin{equation}\label{oddgrassmannian}
\sum_{s=0}^r (-1)^s e_s h_{r-s} = \delta_{r,0}
\end{equation}
holds for all $r \geq 0$. 
The element $h_r$ is exactly the $r$th {\em complete odd symmetric function}
from \cite{EK}. We call $e_r$ the $r$th {\em elementary odd symmetric function}.

In \cite[Cor.~2.13, Prop.~2.10]{EK}, it is shown that their elements 
$\{\eps_r\:|\:r \geq 1\}$ 
generate $\OSym$ subject to exactly the same relations as the $h_r$.
Noting that $\binom{r}{2}+\binom{s}{2} \equiv \binom{r+1}{2} +\binom{s-1}{2}\pmod{2}$ when $r \not\equiv s\pmod{2}$, this means 
that our elements $\{e_r\:|\:r \geq 1\}$ also generate $\OSym$ subject to the same relations
\begin{align}
e_{r} e_{s}&= e_{s} e_{r}&&\text{if $r \equiv s \pmod{2}$}\label{osym1}\\
e_r e_s + (-1)^r e_s e_r &= (-1)^r e_{r+1} e_{s-1} +e_{s-1}e_{r+1}&&\text{if $r \not\equiv s \pmod{2}$}\label{osym2}
\end{align}
for $r \geq 0, s \geq 1$, again interpreting $e_0$ as $1$.
There are also mixed relations, which are
derived in \cite[Prop.~2.11]{EK}. These look slightly different with our modified odd elementary symmetric functions:
\begin{align}
e_r h_s &= h_s e_r&&\text{if $r \equiv s \pmod{2}$}\label{osym3}\\
e_r h_s + (-1)^r h_s e_r &= e_{r+1} h_{s-1} + (-1)^r h_{s-1} e_{r+1} &&\text{if $r \not \equiv s \pmod{2}$}\label{osym4}
\end{align}
for $r \geq 0, s \geq 1$.
The following is equivalent to \cite[(2.6)]{EK}:
\begin{equation}\label{etoh}
e_r = 
\operatorname{det} \left(h_{i-j+1}\right)_{i,j=1,\dots,r}
\end{equation}
where $\det$ should be interpreted as the usual Laplace expansion of determinant ordering monomials in the same way as the elements appear in the rows of the matrix.
For example: 
\begin{align*}
e_0 &= 1, &e_1 &= h_1, & e_2 &= h_1^2-h_2,& e_3 &= h_1^3 - h_1 h_2 - h_2 h_1 + h_3 = h_1^3-h_3.
\end{align*}
In fact, \cref{etoh} is a formal consequence of the infinite
Grassmannian relation which does not require any commutativity. The same thing holds for ordinary symmetric functions, indeed, \cref{oddgrassmannian} is the same relation as for the algebra $\Sym$ of symmetric functions from 
\cite[(I.2.6')]{Mac}, and \cref{etoh} is \cite[Ex.~I.2.8]{Mac}.

It is often useful to work with the generating functions
\begin{align}\label{genfuncs}
e(t)&= \sum_{r \geq 0} (-1)^r e_r t^{-r},&h(t)&= \sum_{r \geq 0} h_r t^{-r},
\end{align}
which are elements of $\OSym\llbracket t^{-1} \rrbracket$ for a formal even variable $t$. 
Now the infinite Grassmannian relation becomes the first of the following:
\begin{align}\label{powergr}
e(t) h(t) &= 1,&
h(t) e(t) &= 1.
\end{align}
Since $h(t)$ is invertible in the formal power series ring,
its left inverse $e(t)$ is also its right inverse, proving the second equality.
In other words, we have that 
\begin{equation}\label{oddgrassmannian2}
\sum_{s=0}^r (-1)^s h_{s} e_{r-s} = \delta_{r,0}
\end{equation}
for all $r \geq 0$. 
Consequently,
\begin{equation}\label{etoh2}
h_r = 
\operatorname{det} \left(e_{i-j+1}\right)_{i,j=1,\dots,r}.
\end{equation}
The evident symmetry between complete and elementary odd symmetric functions is best
expressed in terms of the algebra automorphism
\begin{align}\label{psidef}
\psi:&\OSym\rightarrow\OSym,
&
h_r &\mapsto (-1)^r e_r.
\end{align}
Extending $\psi$ trivially to $\OSym\llbracket t^{-1} \rrbracket$,
we have that $\psi(h(t)) = e(t)$.
As $e(t)$ is the two-sided inverse of $h(t)$,
it follows that $\psi(e(t)) = \psi^2(h(t))$ is the two-sided inverse of $\psi(h(t)) = e(t)$.
Hence, 
$\psi^2(h(t)) = h(t)$, and we have shown that $\psi$ is an involution.
So we also have that
\begin{equation}
\psi(e_r) = (-1)^r h_r.
\end{equation}

For $\lambda\in \Par$, we let 
\begin{align}
h_\lambda &:= h_{\lambda_1} h_{\lambda_2} \cdots,&e_\lambda &:= e_{\lambda_1}e_{\lambda_2}\cdots.
\end{align}
Similarly, we define $h_\alpha$ and $e_\alpha$
for a composition $\alpha \in \Comp(k,n)$.
As in \cite[(2.25)]{EKL}, the relations \cref{osym1b,osym2b} imply for $r < s$ that
\begin{align}\label{straighteningruleh}
h_r h_s &= \left\{
\begin{array}{ll}
h_s h_r&\text{if $r$ and $s$ have the same parity}\\
\displaystyle  h_s h_r+2\sum_{t=1}^r (-1)^{\binom{t}{2}} h_{s+t} h_{r-t}&\text{if $r$ is even and $s$ is odd}\\
\displaystyle -h_s h_r-2\sum_{t=1}^r (-1)^{\binom{t+1}{2}} h_{s+t} h_{r-t}&\text{if $r$ is odd and $s$ is even.}
\end{array}\right.
\end{align}
Similarly, by \cref{osym1,osym2}, we have for $r < s$ that
\begin{align}\label{straighteningrulee}
e_r e_s &= \left\{
\begin{array}{ll}
e_s e_r&\text{if $r$ and $s$ have the same parity}\\
\displaystyle  e_s e_r+2\sum_{t=1}^r (-1)^{\binom{t}{2}} e_{s+t} e_{r-t}&\text{if $r$ is even and $s$ is odd}\\
\displaystyle -e_s e_r-2\sum_{t=1}^r (-1)^{\binom{t+1}{2}} e_{s+t} e_{r-t}&\text{if $r$ is odd and $s$ is even.}
\end{array}\right.
\end{align}
Consequently, any monomial $h_\alpha$ 
or $e_\alpha$ for $\alpha \in \Comp(k,n)$
can be rearranged into decreasing order modulo a linear combination of lexicographically greater monomials of the same degree. This proves the easy spanning part of the next theorem. 

\begin{theorem}[{\cite[Cor.~2.12]{EK}}]\label{symbasis}
The set $\{h_\lambda\:|\:\lambda \in \Par\}$ is a linear basis for $\OSym$.
Equivalently, applying $\psi$, the set $\{e_\lambda\:|\:\lambda \in \Par\}$ is a basis.
\end{theorem}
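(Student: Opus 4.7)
The spanning is already settled by the straightening rules \eqref{straighteningruleh} immediately preceding the statement; what remains is linear independence of $\{h_\lambda : \lambda \in \Par\}$. The plan is to construct, for each $n \geq 1$, a homomorphism $\phi_n : \OSym \to \OPol_n$ sending each $h_r$ to an $r$-th complete odd symmetric polynomial in $n$ variables, and to show that these homomorphisms are jointly faithful on the spanning set $\{h_\lambda\}$.

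To define $\phi_n$, I would first specify $h_r^{(n)} \in \OPol_n$ by the generating-function identity $e^{(n)}(t) h^{(n)}(t) = 1$ in $\OPol_n\llbracket t^{-1}\rrbracket$, where $e^{(n)}(t) := \sum_r (-1)^r e_r^{(n)} t^{-r}$ is built from explicit odd elementary polynomials $e_r^{(n)}$ (with a fixed sign convention coming from the ordering of the $x_i$'s). The existence of $\phi_n$ then amounts to verifying that $h_r^{(n)}$ satisfies the defining relations \eqref{osym1b}--\eqref{osym2b} inside $\OPol_n$, which is the content of \cite[Prop.~2.10, Cor.~2.13]{EK}.

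The heart of the argument is a leading-monomial analysis in $\OPol_n$. Equipping the monomial basis $\{x^\kappa : \kappa \in \N^n\}$ with the lexicographic ordering, I would identify the leading monomial of $\phi_n(h_\lambda)$ for each partition $\lambda$ with $\ell(\lambda) \leq n$ and check that distinct such $\lambda$ yield distinct leading monomials with nonzero leading coefficients. Standard triangularity then shows that $\{\phi_n(h_\lambda) : \ell(\lambda) \leq n\}$ is linearly independent in $\OPol_n$. Given any putative finite relation $\sum_\lambda c_\lambda h_\lambda = 0$ in $\OSym$, choose $n$ exceeding every height appearing; applying $\phi_n$ produces a relation among linearly independent elements of $\OPol_n$, forcing $c_\lambda = 0$ for all $\lambda$. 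The equivalent statement for $\{e_\lambda\}$ then follows by applying the algebra involution $\psi$ from \eqref{psidef}, which maps $\{h_\lambda\}$ to a signed scalar multiple of $\{e_\lambda\}$.

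The main technical obstacle lies in the leading-term computation. Because $\OPol_n$ is noncommutative ($x_j x_i = -x_i x_j$ for $i < j$), expanding a product $\phi_n(h_{\lambda_1}) \cdots \phi_n(h_{\lambda_k})$ yields many signed contributions, so one must argue combinatorially that a distinguished leading term survives. I would expect the correct convention to make the leading monomial of $\phi_n(h_\lambda)$ be $x_1^{\lambda_1} x_2^{\lambda_2} \cdots x_n^{\lambda_n}$ (with $\lambda_i = 0$ for $i > \ell(\lambda)$), so that distinct partitions of bounded height yield distinct leading monomials; the sign bookkeeping should work out through iterated application of the noncommutative Grassmannian identity \eqref{powergr}.
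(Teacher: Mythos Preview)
Your leading-monomial claim is wrong, and this breaks the argument. The leading monomial of $h_r^{(n)}$ in the lexicographic order (with $x_1$ largest) is $x_1^r$, not $x_1 \cdots x_r$; consequently the leading monomial of $\phi_n(h_\lambda) = h_{\lambda_1}^{(n)} h_{\lambda_2}^{(n)}\cdots$ is $x_1^{|\lambda|}$, not $x_1^{\lambda_1}\cdots x_n^{\lambda_n}$. Distinct partitions of the same size therefore share the same leading monomial, and the triangularity argument collapses. (This failure already occurs in the commutative case; for a concrete check in $\OPol_2$ one has $h_{(2)}^{(2)} = x_1^2 - x_1x_2 + x_2^2$ and $h_{(1,1)}^{(2)} = x_1^2 + x_2^2$, both with leading term $x_1^2$.)

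The paper's route is quite different: linear independence is obtained from the bilinear form $(\cdot,\cdot)^-$ via the semi-orthogonality $(h_\lambda, e_\mu)^- = \pm\delta_{\lambda,\mu^\transpose} + (\text{lex-lower terms})$ recorded in \eqref{semiorthogonality}; this is an intrinsic argument inside $\OSym$ that needs no polynomial realization. Your strategy \emph{can} be salvaged, but you must run it on the $e$-side rather than the $h$-side: for the elementary polynomials one has $e_{\lambda^\transpose}^{(n)} = (-1)^{NE(\lambda)} x^\lambda + (\text{lower})$ whenever $\h(\lambda) \le n$ (this is \eqref{sourceofne} in the paper), so distinct partitions of bounded height do yield distinct leading monomials, and the argument goes through. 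The paper in fact uses exactly this later to establish \cref{whatekldo}, but that comes \emph{after} \cref{symbasis}, whose proof it attributes to the form-based argument in \cite{EK}.
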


From the construction in \cite[Sec.~2.1]{EL}, there is a comultiplication $\Delta^-:\OSym\rightarrow \OSym\otimes \OSym$, denoted simply by $\Delta$ in \cite{EL},
making $\OSym$ into a graded Hopf superalgebra such that
\begin{align}\label{deltah}
\Delta^-(h_r) &= \sum_{s=0}^r h_{s} \otimes h_{r-s}
\end{align}
for all $r \geq 0$. 
This can be written more concisely in terms of generating functions as
\begin{align}
\Delta^-(h(t)) &= h(t) \otimes h(t).
\end{align}
By \cite[Prop.~2.17]{EK}, the antipode $S^-:\OSym\rightarrow\OSym$, which we remind the reader is both a superalgebra anti-automorphism and a cosuperalgebra anti-automorphism, 
satisfies $S^-(h_r)= (-1)^r e_r$ or, equivalently, $S^-(h(t))= e(t)$. 

So far, apart from a lack of commutativity, 
there have been many similarities between $\OSym$ and the ordinary theory for $\Sym$,
but now some more significant differences come into view.
Unlike in the ordinary theory, $S^-$ is {\em not} an involution; indeed,
we have that $S^-(h_1) = -h_1$ and $S^-(h_2) = h_1^2-h_2$,
hence, $S^{-n}(h_2) = (-1)^n(h_2- n h_1^2)$ for any $n \geq 0$.
Another important point is that $\OSym$ is {\em not} a cocommutative cosuperalgebra,
e.g., $\Delta^-(h_2) = h_2 \otimes 1 + h_1 \otimes h_1 + 1 \otimes h_2$
is not invariant under the braiding $\br_{\OSym,\OSym}$. 
So the opposite comultiplication
\begin{align}
\Delta^+&:= \br_{\OSym,\OSym}\circ \Delta^-:\OSym \rightarrow \OSym \otimes \OSym
\end{align}
gives a second graded Hopf superalgebra structure on $\OSym$ (the multiplication is the same as before).
Remembering that our $e_r$ is $(-1)^{\binom{r}{2}} \eps_r$,
\cite[Prop.~2.5]{EK} implies that
\begin{equation}\label{nhs}
\Delta^+(e_r) = \sum_{s=0}^r e_{s} \otimes e_{r-s}
\end{equation}
or, equivalently,
\begin{equation}\label{nhs2}
\Delta^+(e(t)) = e(t) \otimes e(t).
\end{equation}
It follows that 
\begin{align}\label{psidelta}
\Delta^- \circ \psi &= (\psi \otimes \psi) \circ \Delta^+,&
\Delta^+ \circ \psi &= (\psi \otimes \psi) \circ \Delta^-,
\end{align}
because both sides of the left hand equation
agree on $e(t)$ and both sides of the right hand equation
agree on $h(t)$. This shows that $\psi$ is a cosuperalgebra {\em anti-}involution.
The antipode $S^+$ for the second Hopf superalgebra structure
is the inverse of $S^-$, so it takes $e_r$ to $(-1)^r h_r$.

We will use two more useful symmetries
\begin{align}\label{sillyboris1}
\smiley:&\OSym \rightarrow \OSym, & e_r &\mapsto (-1)^{\binom{r}{2}} e_r,\\
*:&\OSym \rightarrow \OSym,&  e_r &\mapsto e_r,\label{sillyboris2}
\end{align}
the first of which is an algebra involution, and the second is a
superalgebra {\em anti}-involution.
It is a routine check using \cref{osym1,osym2} to see that these make sense.
Note in particular that $\smiley$ takes our $e_r$ to the $\eps_r$
of \cite{EK, EKL}.
The symmetries $\smiley$ and $*$ commute.
Neither $\smiley$ nor $*$ commutes with $\psi$, but it is still 
true that
$*\circ \smiley$ commutes with $\psi$; see \cref{tigers} 
below for the proof of this.
We have that
\begin{align}\label{golly1}
\Delta^-\circ \smiley &=(\smiley\otimes \smiley) \circ \Delta^+,
&\Delta^+\circ \smiley &=(\smiley\otimes \smiley) \circ \Delta^-,\\
\Delta^+\circ * &=(*\otimes *) \circ \Delta^+,&
\Delta^-\circ * &=(*\otimes *) \circ \Delta^-.
\label{golly2}\end{align}
To justify these, it suffices to check the left hand equations,
then the right hand ones follow because
$\br_{\OSym,\OSym} \circ (\smiley \otimes \smiley) = (\smiley \otimes \smiley)\circ \br_{\OSym,\OSym}$
and $\br_{\OSym,\OSym} \circ (* \otimes *) = (* \otimes *)\circ \br_{\OSym,\OSym}$.
To check the left hand equation from \cref{golly1},
one instead shows that
$\br_{\OSym,\OSym} \circ \Delta^+ \circ \smiley = (\smiley \otimes \smiley) \circ \Delta^+$ by checking that both sides do the same thing on $e_r$.
The left hand equation form \cref{golly2} holds because both sides 
take $e(t)$ to $e(t)\otimes e(t)$.

\begin{lemma}\label{babies}
For $\lambda \in \Par$, 
$\smiley(e_\lambda)^* = (-1)^{dN(\lambda)+dE(\lambda)}
e_\lambda + $(a $\Z$-linear combination of $e_\mu$ for $\mu >_\lex \lambda$).
\end{lemma}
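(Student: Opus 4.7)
The plan is to compute $\smiley(e_\lambda)^*$ by unwinding $\smiley$ and $*$ directly on the product $e_\lambda = e_{\lambda_1} \cdots e_{\lambda_k}$, and then to straighten the result into the basis $\{e_\mu\}_{\mu\in\Par}$ using \cref{straighteningrulee}. Since $\smiley$ is an algebra involution with $\smiley(e_r) = (-1)^{\binom{r}{2}} e_r$, one immediately obtains $\smiley(e_\lambda) = (-1)^{dE(\lambda)} e_\lambda$ from $dE(\lambda) = \sum_i\binom{\lambda_i}{2}$. Since $*$ is a superalgebra anti-involution fixing each $e_r$ and $\parity(e_r)\equiv r\pmod 2$, reversing the product picks up the standard supercommutation sign, giving $(e_\lambda)^* = (-1)^{N(\lambda)} e_{\lambda^\reverse}$. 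Combining, $\smiley(e_\lambda)^* = (-1)^{dE(\lambda)+N(\lambda)}\,e_{\lambda^\reverse}$.

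The next step is to bubble-sort the composition $\lambda^\reverse = (\lambda_k,\ldots,\lambda_1)$ back into $\lambda$ by iterating \cref{straighteningrulee} at adjacent positions. Inspecting its three cases, the sign acquired on swapping $e_r e_s \to e_s e_r$ (for $r<s$) is $+1$ in all cases except when $r$ is odd and $s$ is even, where it equals $-1$; each correction term replaces the adjacent pair $(r,s)$ by $(s+t,r-t)$ for some $1\le t\le r$. The operation $(r,s)\mapsto(s+t,r-t)$ moves $t\geq 1$ boxes up in the Young diagram, which strictly raises the associated sorted partition in the dominance order and hence in $\leq_\lex$. An induction on the number of inversions in the intermediate composition shows that all basis elements $e_\mu$ coming from the correction terms have $\mu >_\lex \lambda$, with integer coefficients.

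It remains to identify the leading sign. The bubble sort of $\lambda^\reverse$ into $\lambda$ executes exactly one strict swap for every pair $(i,j)$ with $i<j$ and $\lambda_i>\lambda_j$, so the coefficient of $e_\lambda$ in $e_{\lambda^\reverse}$ equals $(-1)^M$, where
\[
M := \#\bigl\{(i,j)\;\big|\;i<j,\ \lambda_i\text{ even},\ \lambda_j\text{ odd}\bigr\};
\]
note that for such pairs $\lambda_i\neq\lambda_j$ because of opposite parities, so $\lambda_i>\lambda_j$ is automatic from $\lambda\in\Par$. Thus the coefficient of $e_\lambda$ in $\smiley(e_\lambda)^*$ is $(-1)^{dE(\lambda)+N(\lambda)+M}$, and what remains is the parity identity $N(\lambda)+M\equiv dN(\lambda)\pmod 2$. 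Writing $dN(\lambda) = \sum_{i<j}\lambda_j$, one computes
\[
dN(\lambda)-N(\lambda) \equiv \sum_{i<j}\lambda_j(\lambda_i-1) \pmod{2},
\]
and $\lambda_j(\lambda_i-1)$ is odd precisely when $\lambda_j$ is odd and $\lambda_i$ is even, i.e.\ precisely when $(i,j)$ contributes to $M$. Hence $dN - N \equiv M\pmod 2$, and the leading coefficient simplifies to $(-1)^{dN(\lambda)+dE(\lambda)}$, as required. The main obstacle is exactly this parity bookkeeping---reconciling the supercommutation signs for $*$ with the signs coming from the straightening rule and matching them to $dN+dE$; once the identity $dN-N\equiv M\pmod 2$ is isolated, everything else is routine.
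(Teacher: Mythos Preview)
Your proof is correct and follows essentially the same approach as the paper. The only cosmetic difference is that the paper combines the $\smiley$ and $*$ steps into a single sign $(-1)^{\binom{|\lambda|}{2}}$ in front of $e_{\lambda_k}\cdots e_{\lambda_1}$ (using $\binom{|\lambda|}{2}=dE(\lambda)+N(\lambda)$), whereas you track the two contributions $(-1)^{dE(\lambda)}$ and $(-1)^{N(\lambda)}$ separately before straightening; the remaining sign analysis via \cref{straighteningrulee} and the parity identity $dN-N\equiv M\pmod 2$ are identical.
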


\begin{proof}
Let $k := \h(\lambda)$.
By the definitions, 
we have that $\smiley(e_\lambda)^* = (-1)^{\binom{|\lambda|}{2}}e_{\lambda_k} \cdots e_{\lambda_1}$.
Then we use \cref{straighteningrulee} to rewrite $e_{\lambda_k} \cdots e_{\lambda_1}$
as $\pm e_\lambda$ plus a sum of lexicographically higher $e_\mu$.
It remains to compute the sign. We get a sign change 
each time we commute $e_{\lambda_j}$ with $e_{\lambda_i}$ 
for $1 \leq i < j \leq k$ such that $\lambda_j$ is odd and $\lambda_i$ is even.
So the overall sign is 
$(-1)^{\binom{|\lambda|}{2}+
\sum_{1 \leq i < j \leq k} (\lambda_i-1) \lambda_j}$.
This simplifies to $(-1)^{dN(\lambda)+dE(\lambda)}$.
\end{proof}

\begin{remark}\label{eknotation}
In \cite[Sec.~2.3]{EK}, symmetries denoted $\psi_1,\psi_2$ and $\psi_3$ are introduced. 
These are related to our $\psi, \smiley$ and $*$ by
 $\psi_1 = \smiley\circ \operatorname{p}\circ \,\psi$ 
 (because the latter takes $h_r$
to $\eps_r = (-1)^{\binom{r}{2}} e_r$),
$\psi_2 = \psi \circ \operatorname{p}\circ\, \smiley \circ \psi$ 
(because the latter sends $h_r$ to $(-1)^{\binom{r+1}{2}} h_r$), and $\psi_3 = \psi
 \circ * \circ \psi$ (because
the latter is a superalgebra anti-involution taking $h_r$ to $h_r$). 
We emphasize that our $\psi=\psi_1\circ\psi_2$ is an involution, whereas $\psi_1$ is not.
\end{remark}

In \cite{EK}, the definition of $\OSym$ is motivated by the definition
of a non-degenerate symmetric\footnote{We really do mean symmetric
rather than supersymmetric here!} bilinear form
$(\cdot,\cdot)^-:\OSym\otimes \OSym \rightarrow \k$. Extending
the bilinear form $(\cdot,\cdot)^-$ on $\OSym$ to
$\OSym \otimes \OSym$ so that $(a_1 \otimes a_2, b_1 \otimes b_2)^- =
(a_1,b_1)^- (a_2,b_2)^-$, the form
is characterized uniquely by the following properties:
\begin{align}\label{charit}
(h_r, h_s)^- &= \delta_{r,s},& (ab,c)^- &= (a \otimes b, \Delta^-(c))^-
\end{align}
for $r,s \geq 0, a,b,c \in \OSym$. 
For symmetry's sake, one can also consider a form
$(\cdot,\cdot)^+$ which is defined in a similar way 
so that
\begin{align}\label{charit2}
(e_r, e_s)^+ &= \delta_{r,s},& (ab,c)^+ &= (a \otimes b, \Delta^+(c))^+
\end{align}
for $r,s \geq 0, a,b,c \in \OSym$. 
The forms $(\cdot,\cdot)_\pm$ are related by the first of the following properties:
\begin{align}\label{silverton}
\big(\psi(a),\psi(b))^{\pm} &= (a,b)^{\mp},&
\big(\smiley(a^*), \smiley(b^*)\big)^{\pm} &= (a, b)^\pm
\end{align}
for any $a,b \in \OSym$. This and the second property are both checked by induction on degree,
using \cref{psidef,psidelta,sillyboris1,sillyboris2,golly1,golly2}.
The first of the next two properties follows from \cite[(2.10)]{EK}, 
then the second follows by applying $\psi$:
\begin{align}\label{britain}
(e_r,e_s)^- &= (-1)^{\binom{r}{2}} \delta_{r,s},&
(h_r,h_s)^+ &= (-1)^{\binom{r}{2}} \delta_{r,s}.
\end{align}
The following allows $(h_\lambda,e_\mu)^\pm$ to be computed:
\begin{align}\label{brew}
(h_\lambda, e_r)^- &=(-1)^{\binom{r}{2}} \delta_{\lambda, (1^r)},
&
(h_r, e_\lambda)^+ &=(-1)^{\binom{r}{2}} \delta_{\lambda, (1^r)}.
\end{align}
The first equality here is established in \cite[Prop.~2.5]{EK}, again remembering that 
our normalization of the elements $e_r$ is different; then the second follows on
applying $\psi$.
In particular, \cref{brew} can be used to show that
\begin{align}\label{semiorthogonality}
\big(h_\lambda,e_{\mu}\big)^-
&= \left\{\begin{array}{ll}
(-1)^{NE(\lambda)+dN(\lambda)}&\text{if $\lambda = \mu^\transpose$}\\
0&\text{if $\lambda\not\leq_{\lex} \mu^\transpose$}
\end{array}\right.&
\big(h_\lambda,e_\mu\big)^+
&= \left\{\begin{array}{ll}
(-1)^{NE(\mu)+dN(\mu)}&\text{if $\lambda = \mu^\transpose$}\\
0&\text{if $\lambda\not\leq_{\lex} \mu^\transpose$}
\end{array}\right.
\end{align}
for $\lambda,\mu \in \Par$; see \cite[Prop.~2.14]{EK} for the first equality. 
This ``semi-orthogonality'' is used to complete the proof of \cref{symbasis} in \cite{EK}.

Recall that $\OPol_n$ is the algebra of odd
polynomials from \cref{opol}.
Define a
superalgebra involution $\smiley_n$ and a superalgebra anti-involution $*$
of $\OPol_n$
by
\begin{align}\label{borehamA}
\smiley_n:\OPol_n &\rightarrow \OPol_n,&
x_i &\mapsto x_{n+1-i},\\
*:\OPol_n &\rightarrow \OPol_n,&
x_i &\mapsto x_i.
\label{newtildeA}
\end{align}
The following theorem gives 
another way to motivate the definition of $\OSym$, as was explained originally in \cite{EKL}.

\begin{theorem}\label{mainfact}
There is a graded superalgebra homomorphism $\pi_n:\OSym \rightarrow \OPol_n$
taking $e_r$ and $h_r$ to the polynomials
\begin{align}\label{epoly}
e_r(x_1,\dots,x_n) &:= \sum_{1 \leq i_1 < \dots < i_r \leq n}
  x_{i_1} \cdots x_{i_r}\\
h_r(x_n,\dots,x_1) &:= \sum_{n \geq i_r \geq \cdots \geq i_1 \geq 1}
  x_{i_r} \cdots x_{i_1},\label{hpoly}
\end{align}
respectively.
Moreover, $\pi_n$ intertwines 
the involution $\smiley$ of $\OSym$ with the algebra involution
$\smiley_n$ of $\OPol_n$ from \cref{borehamA},
and it intertwines the 
anti-involution $*$ of $\OSym$ with the 
superalgebra anti-involution $*$ of $\OPol_n$ from \cref{newtildeA}.
Finally, if $n=a+b$, the diagram
\begin{equation}
\label{tensorconvention}
\begin{tikzcd}
\arrow[d,"\pi_n" left]\OSym\arrow[r,"\Delta^+" above]&\OSym\otimes \OSym\arrow[d,"\pi_a\otimes\pi_b" right]\\
\OPol_n\arrow[r,equal]&\OPol_a \otimes \OPol_b
\end{tikzcd}
\end{equation}
commutes,
where the identification at the bottom is as explained after \cref{opol}.
\end{theorem}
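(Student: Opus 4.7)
My plan is to construct $\pi_n$ inductively as an iterated coproduct, exploiting the bialgebra structure of $\OSym$ with respect to $\Delta^+$, rather than verifying the defining relations of $\OSym$ directly on the polynomials $e_r(x_1,\dots,x_n)$. The base case $\pi_1 : \OSym \to \OPol_1$ is defined on generators by $e_1 \mapsto x$ and $e_r \mapsto 0$ for $r \geq 2$. The relations \cref{osym1} and \cref{osym2} hold trivially in $\OPol_1 = \k[x]$ for these images (only $e_0 = 1$ and $e_1 = x$ are nonzero and $x$ self-commutes), so $\pi_1$ is a well-defined graded superalgebra homomorphism. By \cref{powergr}, $\pi_1(h(t))$ is forced to be the two-sided inverse of $\pi_1(e(t)) = 1 - xt^{-1}$, giving $\pi_1(h_r) = x^r = h_r(x_1)$.

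Next, I set $\pi_n := \pi_1^{\otimes n} \circ (\Delta^+)^{(n-1)}$, where $(\Delta^+)^{(n-1)}: \OSym \to \OSym^{\otimes n}$ denotes the iterated coproduct. This is a superalgebra homomorphism since $\Delta^+$ is one (by the bialgebra property) and $\pi_1^{\otimes n}$ is one. Iterating \cref{nhs} yields $(\Delta^+)^{(n-1)}(e_r) = \sum_{k_1 + \cdots + k_n = r} e_{k_1} \otimes \cdots \otimes e_{k_n}$; after applying $\pi_1^{\otimes n}$, only summands with each $k_i \in \{0, 1\}$ survive, producing $\pi_n(e_r) = \sum_{i_1 < \cdots < i_r} x_{i_1} \cdots x_{i_r} = e_r(x_1, \dots, x_n)$ as required by \cref{epoly}. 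For $h_r$, I use $\pi_n(h(t)) = \pi_n(e(t))^{-1}$ from \cref{powergr} and compute the inverse in $\OPol_n \llbracket t^{-1} \rrbracket$ as $(1 - x_n t^{-1})^{-1} \cdots (1 - x_1 t^{-1})^{-1} = \sum_{k_n,\dots,k_1 \geq 0} x_n^{k_n} \cdots x_1^{k_1} t^{-(k_1+\cdots+k_n)}$, whose $t^{-r}$-coefficient is exactly $h_r(x_n, \dots, x_1)$ from \cref{hpoly}.

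Since all remaining assertions are identities between superalgebra (anti)homomorphisms, it suffices to verify them on the generators $e_r$. For $\smiley$: the left-hand side equals $(-1)^{\binom{r}{2}} e_r(x_1,\dots,x_n)$ by \cref{sillyboris1}, while $\smiley_n(e_r(x_1,\dots,x_n)) = e_r(x_n,\dots,x_1)$ also equals $(-1)^{\binom{r}{2}} e_r(x_1,\dots,x_n)$ via the identity $x_{i_1} \cdots x_{i_r} = (-1)^{\binom{r}{2}} x_{i_r} \cdots x_{i_1}$ for $i_1 < \cdots < i_r$ (from $\binom{r}{2}$-fold supercommutation of odd generators). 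For $*$: $*(e_r(x_1,\dots,x_n)) = (-1)^{\binom{r}{2}} \sum_{i_1 < \cdots < i_r} x_{i_r} \cdots x_{i_1} = e_r(x_1,\dots,x_n)$, where the initial sign comes from the super anti-involution applied to a product of $r$ odd elements and is cancelled upon reversing back to increasing order; this matches $\pi_n(*(e_r)) = \pi_n(e_r)$ by \cref{sillyboris2}. Finally, the commutativity of \cref{tensorconvention} follows immediately from coassociativity: $(\Delta^+)^{(n-1)} = ((\Delta^+)^{(a-1)} \otimes (\Delta^+)^{(b-1)}) \circ \Delta^+$ for $n = a + b$, whence $\pi_n = (\pi_a \otimes \pi_b) \circ \Delta^+$.

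The only real care required is the sign bookkeeping in the odd polynomial ring: computing the inverse of the ordered product in $\OPol_n \llbracket t^{-1}\rrbracket$, applying the super anti-involution $*$ to products of odd generators, and interpreting the tensor product identification $\OPol_a \otimes \OPol_b = \OPol_n$ under the super sign convention for multiplication. These are all routine given the basic formula for reversing a product of anticommuting odd elements together with the coassociativity of $\Delta^+$.
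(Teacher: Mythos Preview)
Your proof is correct and takes a genuinely different route from the paper's. The paper establishes the existence of $\pi_n$ by citing \cite[Lem.~2.3]{EKL} to verify that the explicit polynomials $e_r(x_1,\dots,x_n)$ satisfy the defining relations \cref{osym1,osym2}; it then obtains the formula for $\pi_n(h_r)$ by invoking \cite[Lem.~2.8]{EKL} for a Grassmannian-type identity between $e_s(x_1,\dots,x_n)$ and $h_{r-s}(x_1,\dots,x_n)$ and applying $\smiley_n$; and it checks the square \cref{tensorconvention} on generators using \cref{nhs}. Your approach instead \emph{defines} $\pi_n$ as $\pi_1^{\otimes n}\circ(\Delta^+)^{(n-1)}$, reducing all relation-checking to the trivial rank-one case and getting the commutativity of \cref{tensorconvention} for free from coassociativity of $\Delta^+$; the formula \cref{hpoly} then drops out of the factorization $\pi_n(e(t))=(1-x_1t^{-1})\cdots(1-x_nt^{-1})$ and the uniqueness of two-sided inverses in $\OPol_n\llbracket t^{-1}\rrbracket$. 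The intertwining checks for $\smiley$ and $*$ are essentially identical in both arguments. Your route is more self-contained---it needs no external citation to \cite{EKL}---and makes the compatibility with $\Delta^+$ structural rather than a verification; the paper's route is perhaps more concrete in that it works directly with the target polynomials, and it has the minor expository advantage of not presupposing that $\Delta^+$ is coassociative (a standard fact about opposite comultiplications that the paper only states in passing).
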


\begin{proof}
Note our $x_i$ is the variable $\tilde x_i = (-1)^{i-1} x_i$ in the notation of \cite{EKL}, hence, our $e_r(x_1,\dots,x_n)$ is the polynomial denoted $\eps_r(x_1,\dots,x_n)$ in \cite{EKL}. With this in mind, \cite[Lem.~2.3]{EKL} checks that the polynomials $e_r(x_1,\dots,x_n) \in \OPol_n$ satisfy the defining relations of $\OSym$ from \cref{osym1,osym2}. Hence, there is a unique homomorphism $\pi_n:\OSym\rightarrow \OPol_n$ 
such that $\pi_n(e_r) = e_r(x_1,\dots,x_n)$ for all $r \geq 0$.

The involution $\smiley$ of $\OSym$ takes $e_r$ to $(-1)^{\binom{r}{2}} e_r$.
The involution $\smiley_n$ of $\OPol_n$ takes $e_r(x_1,\dots,x_n)$ to
$$
e_r(x_n,\dots,x_1) = \sum_{n \geq i_r > \cdots > i_1 \geq 1}
x_{i_n} \cdots x_{i_2} x_{i_1}.
$$
Rearranging these monomials into increasing order of $x_i$ produces a sign of
$(-1)^{\binom{r}{2}}$. Hence, $\smiley_n$ takes $e_r(x_1,\dots,x_n)$
to $(-1)^{\binom{r}{2}} e_r(x_1,\dots,x_n)$.
This checks that $\pi_n \circ \smiley = \smiley_n \circ \pi_n$.
Similarly, we see that $\pi_n \circ * = * \circ \pi_n$ because
$*$ on $\OSym$ fixes $e_r$ and $*$ on $\OPol_n$ fixes $e_r(x_1,\dots,x_n)$.

In \cite[Lem.~2.8]{EKL}, again using that our $x_i$ is $\tilde x_i$ in \cite{EKL},
it is checked that the polynomials 
$$
h_r(x_1,\dots,x_n) = \sum_{1 \leq i_1 \leq \cdots \leq i_r \leq n}
x_{i_1} x_{i_2} \cdots x_{i_n}
$$
satisfy
$\sum_{s=0}^r (-1)^{\binom{s+1}{2}} e_s(x_1,\dots,x_n) 
h_{r-s}(x_1,\dots,x_n) = \delta_{r,s}$
for all $r \geq 0$.
Applying $\smiley_n$, it follows that
$\sum_{s=0}^r (-1)^{\binom{s+1}{2}} e_s(x_n,\dots,x_1)
h_{r-s}(x_n,\dots,x_1)=\delta_{r,s}.$
We already know that $e_s(x_n,\dots,x_1) = (-1)^{\binom{s}{2}} \pi_n(e_s)$,
so this shows that 
$\sum_{s=0}^r (-1)^s \pi_n(e_s) h_{r-s}(x_n,\dots,x_1)=\delta_{r,s}.$
Comparing with \cref{oddgrassmannian}, 
this proves that $\pi_n(h_r) = h_r(x_n,\dots,x_1)$.

Finally, to see that \cref{tensorconvention} commutes, use 
\cref{nhs} and the definition of $e_r(x_1,\dots,x_n)$.
\end{proof}

Now we {\em define} $\OSym_n$, the algebra of {\em odd symmetric polynomials}, to be the subalgebra of $\OPol_n$ that is the image of the homomorphism $\pi_n$ from \cref{mainfact}.
For any $a \in \OSym$, we use the notation $a^{(n)}$
to denote its canonical image in $\OSym_n$.
Note from \cref{epoly} that $e_r^{(n)} = 0$ for $r > n$.
For $\lambda \in \Par$, we have that
\begin{align}\label{sourceofne}
e_{\lambda^\transpose}^{(n)} &= 
\left\{
\begin{array}{ll}
(-1)^{NE(\lambda)} x^\lambda+\text{(a $\Z$-linear combination of $x^\kappa$ for
$\kappa \in \N^n$ with $\kappa < \lambda$)}&\text{if $\h(\lambda)\leq n$}\\
0&\text{if $\h(\lambda)> n$,}
\end{array}\right.
\end{align}
where $x^\kappa=x_1^{\kappa_1}\cdots x_n^{\kappa_n}$ and
$x^\lambda$ is defined similarly, identifying $\lambda\in \Par$ with $\h(\lambda) \leq n$ with $(\lambda_1,\dots,\lambda_n) \in \N^n$.
This is easily checked from the definition and gives the clearest explanation for the sign $NE(\lambda)$.

\begin{theorem}\label{whatekldo}
The set
$\Big\{e_{\lambda^\transpose}^{(n)}\:\Big|\:\lambda \in \SPar{n}\Big\}$ is a basis for $\OSym_n$.
\end{theorem}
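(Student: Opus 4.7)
The plan is to use the basis $\{e_\mu : \mu \in \Par\}$ of $\OSym$ from \cref{symbasis} to produce a spanning set for $\OSym_n$, and then extract linear independence from the unitriangularity statement \cref{sourceofne}. By definition $\OSym_n = \pi_n(\OSym)$, so the set $\{e_\mu^{(n)} : \mu \in \Par\}$ spans $\OSym_n$.

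Now I would cut this spanning set down. Since $e_r^{(n)} = e_r(x_1,\dots,x_n) = 0$ for $r > n$ (there is no strictly increasing tuple $1 \leq i_1 < \cdots < i_r \leq n$), the product $e_\mu^{(n)} = e_{\mu_1}^{(n)} e_{\mu_2}^{(n)} \cdots$ vanishes whenever any part of $\mu$ exceeds $n$. Hence $\OSym_n$ is already spanned by $\{e_\mu^{(n)} : \mu_1 \leq n\}$. The condition $\mu_1 \leq n$ is equivalent to $\h(\mu^\transpose) \leq n$, so reindexing by $\lambda := \mu^\transpose \in \SPar{n}$, we obtain that $\{e_{\lambda^\transpose}^{(n)} : \lambda \in \SPar{n}\}$ spans $\OSym_n$.

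For linear independence, I would invoke \cref{sourceofne}: for each $\lambda \in \SPar{n}$,
\[
e_{\lambda^\transpose}^{(n)} = (-1)^{NE(\lambda)} x^\lambda + \sum_{\kappa < \lambda} c_{\lambda,\kappa}\, x^\kappa
\]
for some scalars $c_{\lambda,\kappa} \in \Z$, where $x^\lambda$ is viewed as an element of the monomial basis $\{x^\kappa : \kappa \in \N^n\}$ of $\OPol_n$. As $\lambda$ varies over $\SPar{n}$, the leading monomials $x^\lambda$ are distinct elements of this basis, so the elements $e_{\lambda^\transpose}^{(n)}$ are linearly independent in $\OPol_n$ by a standard triangularity argument. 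Combining with the spanning statement from the previous paragraph completes the proof.

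There is no real obstacle here: the content is entirely in the background results. The spanning step rests on \cref{symbasis} (proved in \cite{EK}) and the obvious vanishing $e_r^{(n)} = 0$ for $r > n$; the independence step is immediate from \cref{sourceofne} together with the fact, recorded after \cref{opol}, that the monomials $x^\kappa$ form a basis of $\OPol_n$.
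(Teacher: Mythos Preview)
Your proof is correct and follows essentially the same approach as the paper: spanning from \cref{symbasis} together with $e_r^{(n)}=0$ for $r>n$, and linear independence from the unitriangularity \cref{sourceofne}. The paper's version is more terse but the argument is identical.
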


\begin{proof}
The set
$\Big\{e_{\lambda^\transpose}^{(n)}\:\Big|\:\lambda \in \SPar{n}\Big\}$
spans $\OSym_n$ since 
the 
images of all other $e_\mu$ in the basis for $\OSym$ from \cref{symbasis} 
are zero.
Linear independence is clear from \cref{sourceofne}.
\end{proof}

\begin{corollary}
The quotient maps $\pi_n:\OSym \twoheadrightarrow \OSym_n$ induce an isomorphism
 $$\OSym \stackrel{\sim}{\rightarrow} \varprojlim \OSym_n,$$
where on the right we have the inverse limit of the inverse system
$\cdots \twoheadrightarrow \OSym_1 \twoheadrightarrow \OSym_0$ 
taken in the category of graded superalgebras,
with the map $\OSym_{n+1} \twoheadrightarrow \OSym_n$ taking $e_r^{(n+1)}$ to $e_r^{(n)}$. Moreover, $\OSym_n$ may be identified with the quotient
$\OSym \:\big/\: \langle e_r\:|\:r > n \rangle.$
\end{corollary}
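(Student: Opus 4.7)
The plan is to reduce both assertions to the two basis results already in hand: Theorem~\ref{symbasis} exhibits $\{e_\mu\:|\:\mu\in\Par\}$ as a basis of $\OSym$, while Theorem~\ref{whatekldo}, after rewriting $\lambda\in\SPar{n}$ in terms of its transpose $\mu=\lambda^\transpose$ (so that $\h(\lambda)\leq n$ becomes $\mu_1\leq n$), exhibits $\{e_\mu^{(n)}\:|\:\mu\in\Par,\,\mu_1\leq n\}$ as a basis of $\OSym_n$. Both are phrased in terms of the same elementary monomials $e_\mu$, which makes the identification of $\ker\pi_n$ essentially mechanical. Setting $I_n:=\langle e_r\:|\:r>n\rangle$, my first goal is to prove $\ker\pi_n=I_n$, from which the quotient description $\OSym_n\cong\OSym/I_n$ is immediate; the inverse limit statement then drops out of a routine grading argument. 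The only substantive step is the reverse inclusion $\ker\pi_n\subseteq I_n$, which is the ``main obstacle'' if there is one.

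For the kernel computation, the inclusion $I_n\subseteq\ker\pi_n$ is immediate, since $e_r(x_1,\dots,x_n)$ is an empty sum for $r>n$. For the reverse, I would split the basis of Theorem~\ref{symbasis} into the two subsets $\{e_\mu:\mu_1\leq n\}$ and $\{e_\mu:\mu_1>n\}$, and write $V_-$ for the span of the second. Theorem~\ref{whatekldo} says that $\pi_n$ sends the first subset bijectively to a basis of $\OSym_n$ and (since $e_r^{(n)}=0$ for $r>n$) annihilates the second; hence $\ker\pi_n=V_-$. On the other hand, any $e_\mu$ with $\mu_1>n$ factors as $e_\mu=e_{\mu_1}e_{\mu_2}\cdots$ with leftmost factor $e_{\mu_1}\in I_n$, so $e_\mu\in I_n$. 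Thus $V_-\subseteq I_n\subseteq\ker\pi_n=V_-$, and all three coincide.

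For the inverse limit, the containment $I_{n+1}\subseteq I_n$ produces the stated surjective transition maps $\OSym_{n+1}\twoheadrightarrow\OSym_n$ sending $e_r^{(n+1)}\mapsto e_r^{(n)}$ (with $e_{n+1}^{(n)}=0$), and the universal property yields a graded superalgebra homomorphism $\Phi:\OSym\to\varprojlim\OSym_n$. I would verify that $\Phi$ is an isomorphism degree by degree. Since $\deg(e_\mu)=2|\mu|$, the homogeneous component $(\OSym)_d$ is spanned by the finitely many $e_\mu$ with $|\mu|=d/2$, and each such $\mu$ automatically satisfies $\mu_1\leq|\mu|=d/2$. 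Consequently, for every $n\geq d/2$ none of these basis elements lies in $V_-$, so $\pi_n$ restricts to an isomorphism $(\OSym)_d\stackrel{\sim}{\to}(\OSym_n)_d$ between finite-dimensional spaces. The inverse system in each fixed degree is therefore eventually constant, so $\varprojlim(\OSym_n)_d=(\OSym)_d$, making $\Phi$ an isomorphism in each degree and hence a graded superalgebra isomorphism, as required.
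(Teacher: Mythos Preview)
Your proof is correct. The paper states this corollary without proof, treating it as an immediate consequence of Theorems~\ref{symbasis} and~\ref{whatekldo}; your argument spells out exactly the details one would supply, namely that $\ker\pi_n$ coincides with both $I_n$ and the span of $\{e_\mu:\mu_1>n\}$, and that the inverse system stabilises degree by degree.
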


\begin{corollary}[{\cite[(2.20)]{EKL}}]\label{bakeoffOSYM}
$\displaystyle \gsdim \OSym_n = \prod_{r=1}^n \frac{1}{1-(\pi q^2)^r}$.
\end{corollary}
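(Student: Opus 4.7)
The plan is to compute the graded super dimension directly from the basis provided by \cref{whatekldo}. Since $\SPar{n}$ is the indexing set for that basis, we have
\[
\gsdim \OSym_n = \sum_{\lambda \in \SPar{n}} q^{\deg(e_{\lambda^\transpose}^{(n)})} \pi^{\parity(e_{\lambda^\transpose}^{(n)})}.
\]
The generator $e_r$ is homogeneous of degree $2r$ and parity $r \pmod 2$, so the monomial $e_{\lambda^\transpose}^{(n)} = e_{\lambda^\transpose_1}^{(n)} e_{\lambda^\transpose_2}^{(n)} \cdots$ has degree $2|\lambda^\transpose| = 2|\lambda|$ and parity $|\lambda^\transpose| = |\lambda| \pmod 2$. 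Consequently the contribution of $e_{\lambda^\transpose}^{(n)}$ to the generating function is exactly $(\pi q^2)^{|\lambda|}$, giving
\[
\gsdim \OSym_n = \sum_{\lambda \in \SPar{n}} (\pi q^2)^{|\lambda|}.
\]

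It remains to identify this sum with the claimed product. For a formal variable $t$, the classical Euler-type identity
\[
\sum_{\lambda \in \SPar{n}} t^{|\lambda|} = \prod_{r=1}^n \frac{1}{1-t^r}
\]
expresses the fact that partitions with at most $n$ parts are in bijection (via transposition of Young diagrams) with partitions whose parts all lie in $\{1,2,\dots,n\}$; the latter are enumerated by the product on the right by the usual expansion $\prod_r (1-t^r)^{-1} = \prod_r \sum_{m \geq 0} t^{rm}$. Specializing $t = \pi q^2$, which is allowed in the ring $\Z\llbracket q \rrbracket^\pi$ where all these generating series live, yields
\[
\sum_{\lambda \in \SPar{n}} (\pi q^2)^{|\lambda|} = \prod_{r=1}^n \frac{1}{1-(\pi q^2)^r},
\]
completing the proof. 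No obstacle arises: the only content beyond the basis theorem is the bookkeeping of degree and parity, and the classical partition identity.
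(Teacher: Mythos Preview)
Your proof is correct and takes essentially the same approach as the paper: both use the basis from \cref{whatekldo}, observe that each basis element contributes $(\pi q^2)^{|\lambda|}$, and then invoke the standard partition generating function identity. The paper phrases this last step as comparing with a commutative polynomial algebra on generators $x_1,\dots,x_n$ of degrees $2r$ and parities $r\pmod 2$, which is just another way of packaging the same Euler product.
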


\begin{proof}
\cref{whatekldo} shows that $\OSym_n$ has the same graded 
superdimension as a commutative polynomial algebra with generators $x_1,\dots,x_n$ such that $x_r$ is of degree $2r$ and parity $r\pmod{2}$.
\end{proof}

\begin{corollary}\label{bakeoffOPOL}
$\displaystyle \gsdim \OPol_n =  \gsdim \OSym_n\times q^{\binom{n}{2}} [n]^!_{q,\pi} = \gsdim \OSym_n\times \sum_{w \in \S_n} (\pi q^2)^{\ell(w)}$.
\end{corollary}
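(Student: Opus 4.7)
The plan is to simply combine the graded superdimension formulas already at our disposal with the classical Poincar\'e polynomial identity for the symmetric group. Set $t := \pi q^2$ for brevity; all the generating functions involved lie in $\Z\llbracket q\rrbracket^\pi$ and may be manipulated as formal power series in $t$.

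First I would note that the second equality is immediate from \cref{poincare}, which gives
\begin{equation*}
q^{\binom{n}{2}}[n]^!_{q,\pi} \;=\; \sum_{w \in \S_n}(\pi q^2)^{\ell(w)} \;=\; \sum_{w \in \S_n} t^{\ell(w)}.
\end{equation*}
So the only real content is the first equality.

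For the first equality, I would invoke \cref{dimensionformula} and \cref{bakeoffOSYM} to rewrite
\begin{equation*}
\gsdim \OPol_n = \frac{1}{(1-t)^n}, \qquad \gsdim \OSym_n = \prod_{r=1}^n \frac{1}{1-t^r}.
\end{equation*}
Thus the claim is equivalent to the well-known $q$-factorial identity
\begin{equation*}
\sum_{w \in \S_n} t^{\ell(w)} = \prod_{r=1}^n \frac{1-t^r}{1-t},
\end{equation*}
which is nothing but the standard factorization of the Poincar\'e polynomial of $\S_n$ already quoted in \cref{poincare}. Multiplying this out shows that $\gsdim\OSym_n\times q^{\binom{n}{2}}[n]^!_{q,\pi}$ equals $(1-t)^{-n}=\gsdim\OPol_n$, finishing the proof.

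There is no real obstacle here — both sides have been computed explicitly earlier in the section, and the argument is a one-line manipulation of rational functions once one substitutes $t=\pi q^2$. The only thing to double-check is that the formal generating function identities make sense in $\Z\llbracket q\rrbracket^\pi$, which is clear since each $(1-t^r)^{-1}=\sum_{k\geq 0}t^{rk}$ is a well-defined element of this ring.
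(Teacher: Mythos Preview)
Your proposal is correct and is essentially the same argument as the paper's. The paper unwinds the $(q,\pi)$-factorial explicitly via $[r]_{q,\pi} = \frac{(\pi q)^r - q^{-r}}{\pi q - q^{-1}}$ to cancel against $\prod_{r=1}^n(1-(\pi q^2)^r)$, whereas you substitute $t=\pi q^2$ and invoke the classical Poincar\'e polynomial factorization $\sum_{w\in\S_n}t^{\ell(w)}=\prod_{r=1}^n\frac{1-t^r}{1-t}$ directly; these are the same identity, and both versions rely on exactly the facts already recorded in \cref{dimensionformula}, \cref{poincare}, and \cref{bakeoffOSYM}.
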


\begin{proof}
The second equality follows from the first by \cref{poincare}.
To obtain the first equality, we use \cref{bakeoffOSYM} to see that
\begin{align*}
\gsdim \OSym_n\times 
q^{\binom{n}{2}} [n]^!_{q,\pi} = &q^{\binom{n}{2}} [n]^!_{q,\pi} \prod_{r=1}^n\frac{1}{1-(\pi q^2)^r} = \;\frac{ q^{\binom{n}{2}} [n]^!_{q,\pi}}{(1-\pi q^{2})^n} \prod_{r=1}^n\frac{1-\pi q^{2}}{1-(\pi q^2)^r}\\
=\; &\frac{[n]^!_{q,\pi}}{(1-\pi q^2)^n} \prod_{r=1}^n \frac{\pi
      q-q^{-1}}{(\pi q)^r - q^{-r}}
=\;\frac{1}{(1-\pi q^2)^n} 
\stackrel{\cref{dimensionformula}}{=} \gsdim \OPol_n.
\end{align*}
\end{proof}

The next technical 
lemma about relations in $\OSym_{n+1}$
will be needed at a key place later on; see \cref{violins}
which is used to prove \cref{cupsandcaps}.

\begin{lemma}\label{badday}
For any $0 \leq p,q,k \leq n$ with $p+q\leq n$, we have that
$$
\sum_{m = 1}^{n-p-q} (-1)^{(n+k)(m+p+k)} e^{(n+1)}_{n-k+m} e^{(n+1)}_{n-p-q-m}
=\sum_{m = 1}^{n-p-q} (-1)^{(n+k)(m+q)} e^{(n+1)}_{n-p-q-m} e^{(n+1)}_{n-k+m}
$$
in $\OSym_{n+1}$.
\end{lemma}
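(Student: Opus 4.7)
The plan is to prove the identity in the ambient algebra $\OSym$ itself (rather than just in $\OSym_{n+1}$), and then deduce the stated identity by applying the surjection $\pi_{n+1}$ of \cref{mainfact}. Set $N := n-p-q$, $r_m := n-k+m$, $s_m := n-p-q-m$, and write $a_m := e_{r_m} e_{s_m}$, $b_m := e_{s_m} e_{r_m}$. The two sides to be compared become $L := \sum_{m=1}^N \alpha_m a_m$ and $R := \sum_{m=1}^N \beta_m b_m$, where $\alpha_m := (-1)^{(n+k)(m+p+k)}$ and $\beta_m := (-1)^{(n+k)(m+q)}$. The key combinatorial observation is $r_m - s_m = 2m + (p+q-k)$, so the parity of $r_m - s_m$ is constant in $m$ and agrees with that of $p+q-k$.

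Case 1 ($p+q-k$ even) is immediate: \cref{osym1} gives $a_m = b_m$, and since $p+k-q = (p+q-k)+2(k-q)$ is also even, $\alpha_m = \beta_m$, so $L = R$ term by term. In Case 2 ($p+q-k$ odd) we have $r_m \not\equiv s_m \pmod 2$, so for each $m = 1, \ldots, N-1$ (so that $s_m \geq 1$), \cref{osym2} becomes, after noting $r_m + 1 = r_{m+1}$ and $s_m - 1 = s_{m+1}$, the recursion
\begin{equation*}
a_m + \epsilon_m b_m = \epsilon_m a_{m+1} + b_{m+1}, \qquad \epsilon_m := (-1)^{r_m},\ \ \epsilon_{m+1} = -\epsilon_m. \quad (\dagger)
\end{equation*}
The boundary values $a_N = b_N = e_{r_N}$ hold since $s_N = 0$ and $e_0 = 1$.

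We conclude Case 2 by subdividing on the parity of $n+k$. If $n+k$ is even, then a parity check gives $N$ odd, $\alpha_m = \beta_m = 1$, and $\epsilon_m = (-1)^m$. Applying $(\dagger)$ at odd $m$ yields $d_m + d_{m+1} = 0$ for $d_m := a_m - b_m$, so pairing $m = 1, 3, \ldots, N-2$ with $m+1$ gives $\sum_{m=1}^{N-1} d_m = 0$, and the remaining term $d_N = a_N - b_N = 0$ leaves $L - R = 0$. If $n+k$ is odd, then $N$ is even, $\alpha_m = \sigma(-1)^m$, $\beta_m = \tau(-1)^m$ with $\sigma := (-1)^{p+k}$ and $\tau := (-1)^q$ satisfying $\sigma = -\tau$ (since $p+k-q$ is odd), and $\epsilon_m = -(-1)^m$. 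Applying $(\dagger)$ at odd $m$ now gives $s_m = s_{m+1}$ for $s_m := a_m + b_m$, and the expression $L - R = -\tau \sum_{m=1}^N (-1)^m s_m$ telescopes to zero upon pairing $m = 1, 3, \ldots, N-1$ with $m+1$. The main obstacle here is purely bookkeeping: keeping straight the three parities ($n+k$, $p+q-k$, $N$) and verifying that each subcase has the expected parity of $N$ for the pairing to close up correctly; no deeper structural input beyond the defining relations \cref{osym1,osym2} is needed.
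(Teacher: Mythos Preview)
Your proof is correct and follows essentially the same approach as the paper's: split first on the parity of $p+q+k$ (your $p+q-k$, equivalently), handling the even case term-by-term via \cref{osym1}, then in the odd case subdivide on the parity of $n+k$ and pair consecutive terms using \cref{osym2}. Your organization is slightly cleaner—introducing the recursion $(\dagger)$ and the shorthand $d_m = a_m - b_m$, $s_m = a_m + b_m$—but the underlying argument and the pairing structure are identical to the paper's.
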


\begin{proof}
If $p+q+k$ is even then $(-1)^{(n+k)(m+p+k)} = (-1)^{(n+k)(m+q)}$,
and 
$e^{(n+1)}_{n-k+m}$ commutes with $e^{(n+1)}_{n-p-q-m}$
by the relation \cref{osym1}. 
The result obviously follows in this situation
since corresponding terms on each side are equal.

Next, assume that $p+q+k$ is odd and $n \equiv p+q\pmod{2}$,
in which case $n+k$ is odd.
There are an even number of terms in the summations in the identity we are trying to prove. It suffices to show that sums of consecutive pairs of terms on each side are equal, i.e.,
\begin{multline*}
(-1)^{(n+k)(m+p+k)} e^{(n+1)}_{n-k+m} e^{(n+1)}_{n-p-q-m}
+(-1)^{(n+k)(m+1+p+k)} e^{(n+1)}_{n-k+m+1} e^{(n+1)}_{n-p-q-m-1}
=\\
(-1)^{(n+k)(m+q)} e^{(n+1)}_{n-p-q-m} e^{(n+1)}_{n-k+m}
+(-1)^{(n+k)(m+1+q)} e^{(n+1)}_{n-p-q-m-1} e^{(n+1)}_{n-k+m+1}
\end{multline*}
for every odd $m$ with $1 \leq m < n-p-q$.
Multiplying both sides by $(-1)^{(n+k)(m+p+k)} = (-1)^{(n+k)(m+q+1)}$, this simplifies to
$$
e^{(n+1)}_{n-k+m} e^{(n+1)}_{n-p-q-m}
- e^{(n+1)}_{n-k+m+1} e^{(n+1)}_{n-p-q-m-1}
=
- e^{(n+1)}_{n-p-q-m} e^{(n+1)}_{n-k+m}
+ e^{(n+1)}_{n-p-q-m-1} e^{(n+1)}_{n-k+m+1}.
$$
This follows because
when $m$ is odd, $n-k+m$ is even, so we have that
$$
e_{n-k+m} e_{n-p-q-m}
+ e_{n-p-q-m} e_{n-k+m}
=
e_{n-k+m+1} e_{n-p-q-m-1}
+ e_{n-p-q-m-1} e_{n-k+m+1}
$$
by \cref{osym2}.

Finally we treat the case that $p+q+k$ is odd and $n \not\equiv p+q\pmod{2}$,
when $n+k$ is even. Now the signs on both sides of 
the identity we are trying to prove
are always $+$ so can be omitted.
There are an odd number of terms in the summations. The $m=n-p-q$ terms in both
summations
are equal, indeed, they both equal 
$e^{(n+1)}_{2n-p-q-k}$ as $e^{(n+1)}_0 = 1$.
To see that the remaining terms in the summations are equal, we 
show that sums of consecutive pairs of terms are equal like in the previous paragraph, i.e.,
$$
e^{(n+1)}_{n-k+m} e^{(n+1)}_{n-p-q-m}
+ e^{(n+1)}_{n-k+m+1} e^{(n+1)}_{n-p-q-m-1}
=
 e^{(n+1)}_{n-p-q-m} e^{(n+1)}_{n-k+m}
+ e^{(n+1)}_{n-p-q-m-1} e^{(n+1)}_{n-k+m+1}
$$
for each odd $m$ with $1 \leq m < n-p-q-1$.
This follows because when $n-k+m$ is odd we have that
$$
e_{n-k+m} e_{n-p-q-m}
- e_{n-p-q-m} e_{n-k+m}
=
-e_{n-k+m+1} e_{n-p-q-m-1}+e_{n-p-q-m-1} e_{n-k+m+1}
$$
by \cref{osym2} again.
\end{proof}

It is time to say a little more about 
variants of the odd complete and elementary symmetric functions. The following lemma, which is another application of \cref{mainfact}, is helpful to understand the possibilities.

\begin{lemma}\label{tigers}
We have that
$\smiley(h_r) = (-1)^{\binom{r}{2}} h_r^*$ and
$\smiley(e_r) = (-1)^{\binom{r}{2}} e_r^*$
for any $r \geq 0$.
Hence, $*\circ \smiley \circ \psi = \psi \circ *\circ \smiley$.
\end{lemma}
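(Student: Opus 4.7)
The plan is to reduce everything to a direct computation in $\OPol_n$ via the homomorphism $\pi_n$ of \cref{mainfact}, exploiting the fact that the composed map $\OSym \twoheadrightarrow \varprojlim \OSym_n$ is an isomorphism, so two elements of degree $2r$ in $\OSym$ coincide once they have the same image in $\OSym_n$ for all $n \geq r$.

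The identity $\smiley(e_r) = (-1)^{\binom{r}{2}} e_r^*$ is trivial: by definition \cref{sillyboris1}, $\smiley(e_r) = (-1)^{\binom{r}{2}} e_r$, and by \cref{sillyboris2}, $e_r^* = e_r$. So I would only need to work to prove $\smiley(h_r) = (-1)^{\binom{r}{2}} h_r^*$. For this, I would apply $\pi_n$ to both sides. Using the intertwining properties in \cref{mainfact}, this reduces to showing
\[
\smiley_n\bigl(h_r(x_n,\dots,x_1)\bigr) = (-1)^{\binom{r}{2}} \, *\bigl(h_r(x_n,\dots,x_1)\bigr)
\]
in $\OPol_n$. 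On the left, the substitution $x_i \mapsto x_{n+1-i}$ turns the monomial sum $\sum_{n \geq i_r \geq \cdots \geq i_1 \geq 1} x_{i_r}\cdots x_{i_1}$ into $\sum_{1 \leq j_1 \leq \cdots \leq j_r \leq n} x_{j_1}\cdots x_{j_r}$, which is simply $h_r(x_1,\dots,x_n)$ in the standard form. On the right, since $*$ is a superalgebra anti-involution fixing each $x_i$ and the $x_i$ are odd, we have $*(x_{j_1}\cdots x_{j_r}) = (-1)^{\binom{r}{2}} x_{j_r}\cdots x_{j_1}$; reversing the decreasing indices to weakly increasing then yields the same sum $h_r(x_1,\dots,x_n)$ up to the global sign $(-1)^{\binom{r}{2}}$. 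Comparing, the two sides match. Since this holds for every $n$, and $\smiley(h_r), h_r^*$ lie in the degree $2r$ part of $\OSym$ where $\pi_n$ is injective for $n \geq r$ (by \cref{symbasis} and \cref{whatekldo}), the identity in $\OSym$ follows.

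For the final assertion $* \circ \smiley \circ \psi = \psi \circ * \circ \smiley$, I would observe that both sides are graded superalgebra \emph{anti}-homomorphisms of $\OSym$, so it suffices to check them on the generators $h_r$. Applying the left side: $\psi(h_r) = (-1)^r e_r$, then $\smiley$ multiplies by $(-1)^{\binom{r}{2}}$, and $*$ fixes $e_r$, giving $(-1)^{r+\binom{r}{2}} e_r$. Applying the right side: $\smiley(h_r) = (-1)^{\binom{r}{2}} h_r^*$ by the just-proved lemma, and $*$ is an involution, so $* \circ \smiley(h_r) = (-1)^{\binom{r}{2}} h_r$; then $\psi$ sends this to $(-1)^{r+\binom{r}{2}} e_r$. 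The two results agree.

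The only slightly delicate step is the sign book-keeping in $*(h_r(x_n,\dots,x_1))$, where one must use the super anti-involution rule $*(ab) = (-1)^{\parity(a)\parity(b)} b^* a^*$ applied $\binom{r}{2}$ times to a word of $r$ odd generators; everything else is routine.
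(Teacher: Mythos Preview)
Your proof is correct and follows essentially the same approach as the paper: reduce the identity for $h_r$ to the explicit polynomial realization via \cref{mainfact}, and then verify the commutation of $*\circ\smiley$ with $\psi$ on generators. The only cosmetic difference is that the paper checks the final commutativity on the generators $e_r$ rather than $h_r$, which is an equally valid choice.
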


\begin{proof}
It is immediate from the definitions that $\smiley(e_r) = (-1)^{\binom{r}{2}} e_r^*$.
To see the analogous thing for $h_r$,
it suffices to show that
$\smiley_n\big(h^{(n)}_r\big)^* = (-1)^{\binom{r}{2}} h_r^{(n)}$
for all $r \geq 0$.
This follows from 
the explicit descriptions of these polynomials and maps given in 
\cref{mainfact}.
To deduce finally that
$* \circ \smiley$ and $\psi$ commute, it suffices to check that
$(*\circ\smiley\circ\psi)(e_r) = (\psi\circ *\circ\smiley)(e_r)$
for all $r \geq 0$, which is clear at this point.
\end{proof}

As we have said before, our odd complete symmetric function
$h_r$ is the same as the $h_r$
in \cite{EK, EKL}, but our odd elementary symmetric function $e_r$
is different from the one there, which is 
\begin{equation}\label{toe}
\eps_r := \smiley(e_r) = (-1)^{\binom{r}{2}} e_r^*=
(-1)^{\binom{r}{2}} e_r,
\end{equation}
where the non-trivial equality follows by \cref{tigers}.
There is also a natural variant on the odd complete symmetric function $h_r$, namely,
\begin{equation}\label{tee}
\gamm_r := \smiley(h_r) = (-1)^{\binom{r}{2}} h_r^*.
\end{equation}
Since $h_r^* \neq h_r$
for $r > 1$, it is {\em not} the case that $\gamm_r = (-1)^{\binom{r}{2}} h_r$.
We call $\eps_r$ and $\gamm_r$ the 
{\em dual odd elementary and complete symmetric functions}.
Applying $\smiley$ to \cref{powergr} gives that
\begin{align}\label{powergr2}
\eps(t) \gamm(t) &= \gamm(t) \eps(t) = 1.
\end{align}
where $\eps(t):= \sum_{r \geq 0} (-1)^r \eps_r t^{-r}$ and 
$\eta(t) := \sum_{r \geq 0} \eta_r t^{-r}$.
These should make it clear that
$e$ and $h$ belong together as do $\eps$ and $\gamm$. It is
not so easy to relate $e$ to $\gamm$ or $h$ to $\eps$ in terms of
generating functions; cf. \cref{magic}. 

Consider again the truncation $\OSym_n$.
Let $\eps_r^{(n)}$ and $\gamm_r^{(n)}$ 
be the images $\eps_r$ and $\gamm_r$
in $\OSym_n$.
From \cref{mainfact}, it is clear that
\begin{align}\label{epolybetter}
  e_r^{(n)} &= \sum_{1 \leq i_1 < \dots < i_r \leq n}
  x_{i_1} \cdots x_{i_r}&
h^{(n)}
_r &= \sum_{n \geq i_r \geq \cdots \geq i_1 \geq 1}
  x_{i_r} \cdots x_{i_1}\\
 \eps_r^{(n)} &= \sum_{n \geq i_r > \dots > i_1 \geq 1}
  x_{i_r} \cdots x_{i_1},&
  \gamm^{(n)}_r &= \sum_{1 \leq i_1 \leq \cdots \leq i_r\leq n}
  x_{i_1} \cdots x_{i_r}.
\end{align}
This gives an explanation for the existence of the four basic familes of
odd symmetric functions $e_r, h_r, \eps_r$ and 
$\gamm_r$.
When working in $\OSym_n$ with generating functions, we prefer to modify
the definitions slightly, working not exactly with the images 
of $e(t), h(t), \eps(t)$ and $\gamma(t)$ 
in $\OSym_n\llbracket t^{-1}\rrbracket$, but incorporating a shift in $t$:
\begin{align}\label{clever}
e^{(n)}(t)&:= \sum_{r=0}^n (-1)^r e^{(n)}_r t^{n-r},
&h^{(n)}(t)&:= \sum_{r \geq 0} h^{(n)}_r t^{-n-r},\\
\eps^{(n)}(t)&:= \sum_{r=0}^n (-1)^r\eps^{(n)}_r t^{n-r},
&\gamm^{(n)}(t)&:= \sum_{r \geq 0} \gamm^{(n)}_r t^{-n-r}.\label{cleverer}
\end{align}
The advantage of this is that $e^{(n)}(t)$ 
and $\eps^{(n)}(t)$ are {\em polynomials} $\OSym_n[t]$,
indeed, we have that
\begin{align}\label{stupid}
e^{(n)}(t) &= (t-x_1) \cdots (t-x_n),
&
\eps^{(n)}(t) &= (t-x_n)\cdots (t-x_1).\\\intertext{Also,
noting that $(t-x)^{-1} = t^{-1}+x t^{-2}+x^2 t^{-3}+\cdots
\in \k[x]\llbracket t^{-1}\rrbracket$,
we have that}
h^{(n)}(t) &= (t-x_n)^{-1}\cdots (t-x_1)^{-1},
&
\gamm^{(n)}(t) &= (t-x_1)^{-1}\cdots (t-x_n)^{-1}\label{littlerock}
\end{align}
in $\OSym_n\llbracket t^{-1}\rrbracket$.
Now we have that
\begin{align}\label{newigr}
h^{(n)}(t)e^{(n)}(t) &= e^{(n)}(t) h^{(n)}(t) = 1,&
\gamm^{(n)}(t)\eps^{(n)}(t) &= \eps^{(n)}(t)\gamm^{(n)}(t) = 1,
\end{align}
equality in the ring
$\OSym_n\lround t^{-1} \rround$
of formal Laurent series in $t^{-1}$.

The next result is elementary but does not appear in the existing literature. Observe by \cref{oddgrassmannian} that
\begin{equation}\label{osym6}
z_{2r} := \sum_{s=0}^r e_{2s} h_{2r-2s} = \delta_{r,0}+\sum_{s=0}^{r-1} e_{2s+1}h_{2r-2s-1}.
\end{equation}
The element $z_{2r}$ is {\em central}: it commutes with all even $e_t$ by the first form of the definition, and it commutes with all odd $e_t$ by the second one. Also let
omicron be the special element
\begin{equation}\label{delement}
\o := e_1 = h_1,
\end{equation}
noting that $z_2=\o^2$. The relations \cref{osym2,osym2b} imply that
\begin{align}\label{osym5}
e_{2r+1}  &= \textstyle\frac{1}{2}\big(\o e_{2r} + e_{2r} \o\big),& h_{2r+1}  &= \textstyle\frac{1}{2}\big(\o h_{2r} + h_{2r} \o\big),
\end{align}
so that $\OSym$ is generated already by $\o$ and all even $e_{2r}\:(r \geq 1)$.

\begin{theorem}\label{tory}
The graded superalgebra $\OSym$ is generated by $\o$ and $e_{2r}\:(r \geq 1)$ subject only to the relations
\begin{align}
[e_{2r}, e_{2s}] &= 0\label{goingon}\\
[\o^2, e_{2r}] &= 0\label{goingon1}\\
[\o, e_{2r+2}]&=\textstyle\left[\frac{1}{2}\big(\o e_{2r}+e_{2r}\o\big), e_2\right]\label{goingon2}
\end{align}
for $r, s \geq 1$.
\end{theorem}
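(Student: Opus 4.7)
The plan is to exhibit the presentation as an isomorphism between two algebras by constructing a map in each direction. Let $\AA$ denote the graded superalgebra presented by the generators and relations on the right-hand side of the theorem, with $\o$ odd of degree $2$ and $e_{2r}$ even of degree $4r$.

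First I would verify that all three relations hold in $\OSym$, thereby obtaining a surjection $\pi:\AA\twoheadrightarrow\OSym$ sending $\o\mapsto e_1$ and $e_{2r}\mapsto e_{2r}$. Relation \cref{goingon} is the even-even case of \cref{osym1}; relation \cref{goingon1} is the observation, made immediately after \cref{osym6}, that $\o^2=z_2$ is central; and relation \cref{goingon2} follows by substituting \cref{osym5} into the instance $r=1,\,s=2r+2$ of \cref{osym2}, which yields $[e_1,e_{2r+2}]=[e_2,e_{2r+1}]$. Surjectivity of $\pi$ is immediate from \cref{osym5}, which expresses each $e_{2r+1}$ as an element of the subalgebra generated by $\o$ and the $e_{2r}$'s.

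Next I would construct a section $\sigma:\OSym\to\AA$ by defining $e_1:=\o$ and $e_{2r+1}:=\tfrac12(\o\, e_{2r}+e_{2r}\,\o)\in\AA$ for $r\geq 1$, and then checking that the full set of defining relations \cref{osym1,osym2} holds for these elements in $\AA$. The four cases are: \textbf{(a)} the even-even case of \cref{osym1} is just \cref{goingon}; \textbf{(b)} the even-odd case of \cref{osym2} is a direct expansion after substituting the definition of $e_{2s+1}$, where the two $\o$'s can be slid past the commuting even generators using \cref{goingon}; \textbf{(c)} the odd-even case of \cref{osym2} reduces to the identity $\tfrac12\{[\o,e_{2s}],e_{2r}\}=\tfrac12\{[\o,e_{2r+2}],e_{2s-2}\}$, which follows because \cref{goingon2} rewrites each $[\o,e_{2k}]$ as $\tfrac12\{[\o,e_2],e_{2k-2}\}$, and the resulting nested anticommutators $\{\{[\o,e_2],e_{2a}\},e_{2b}\}$ are symmetric in $a,b$ thanks to the commutativity $[e_{2a},e_{2b}]=0$ from \cref{goingon}; \textbf{(d)} the odd-odd case of \cref{osym1} is the main obstacle and is treated below.

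For case \textbf{(d)} (commutativity of $e_{2r+1}$ and $e_{2s+1}$), setting $f_{2k}:=[\o,e_{2k}]$ and expanding both sides, I would reduce
$$4\,[e_{2r+1},e_{2s+1}]\;=\;\o\bigl(f_{2s}e_{2r}-f_{2r}e_{2s}\bigr)+\bigl(e_{2r}f_{2s}-e_{2s}f_{2r}\bigr)\o$$
after using \cref{goingon} and \cref{goingon1} to cancel the $\o^2$-terms. The key auxiliary identity is $\{\o,f_{2k}\}=0$, which is immediate from $\o f_{2k}+f_{2k}\o=[\o^2,e_{2k}]=0$ by \cref{goingon1}; this handles the boundary case $r=0$. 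For general $r,s\geq 1$ one uses this anticommutation together with $f_{2k}=\tfrac12\{f_2,e_{2k-2}\}$ from \cref{goingon2} to recast the expression as $[\o e_{2r}\o,e_{2s}]-[\o e_{2s}\o,e_{2r}]$, and the required symmetry of $[\o e_{2a}\o,e_{2b}]$ in $a,b$ then follows by a direct manipulation using \cref{goingon} together with the recursive formula for $f_{2k}$, bootstrapping from the trivial case $a=b$.

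With \cref{osym1,osym2} verified in $\AA$, the universal property of $\OSym$ produces a homomorphism $\sigma:\OSym\to\AA$ with $\sigma(e_r)=e_r^\AA$. Both compositions $\pi\circ\sigma$ and $\sigma\circ\pi$ fix the respective generators (the latter because $\sigma(e_1)=\o$ and $\sigma(e_{2r})=e_{2r}$), so both are the identity and $\pi$ is an isomorphism. The main difficulty, as indicated, is the bookkeeping in case \textbf{(d)}; everything else is a short algebraic manipulation using only the three stated relations.
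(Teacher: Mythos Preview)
Your overall strategy is identical to the paper's: define the abstract algebra $\AA$, build a surjection $\pi:\AA\to\OSym$ by checking the three relations in $\OSym$, then build a section $\sigma:\OSym\to\AA$ by verifying \cref{osym1}--\cref{osym2} for the elements $e_{2r+1}:=\tfrac12\{\o,e_{2r}\}$. Your cases (a), (b), (c) and the boundary case $r=0$ of (d) are all correct and essentially the same calculations as in the paper.

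The gap is in the general case of (d). Your reduction
\[
4[e_{2r+1},e_{2s+1}]=[\o e_{2r}\o,e_{2s}]-[\o e_{2s}\o,e_{2r}]
\]
is correct, but the asserted symmetry of $[\o e_{2a}\o,e_{2b}]$ does not follow by a ``direct manipulation\dots\ bootstrapping from $a=b$''. If you push the computation through using $\{\o,f_{2k}\}=0$ and the Jacobi-type identity $[e_{2a},f_{2b}]=[e_{2b},f_{2a}]$, you find that the symmetry is \emph{equivalent} to $[f_{2a},f_{2b}]=0$, which is just case (d) again in disguise. The recursive formula $f_{2k}=\tfrac12\{f_2,e_{2k-2}\}$ alone does not break this circularity, and the case $a=b$ gives no leverage.

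The paper closes this by a short induction on $s$ that uses the already-established case (c) (your identity $[e_{2a+1},e_{2b}]=[e_{2b-1},e_{2a+2}]$) to shift indices: from $[e_{2r+1},\o]=0$ one gets
\[
2[e_{2r+1},e_{2s+1}]=\{\o,[e_{2r+1},e_{2s}]\}=\{\o,[e_{2s-1},e_{2r+2}]\}=2[e_{2s-1},e_{2r+3}],
\]
so $[e_{2r+1},e_{2s+1}]=-[e_{2r+3},e_{2s-1}]$, and induction on $s$ reduces to the base case $s=0$. You have all the ingredients for this argument; you just need to run it instead of appealing to an unspecified bootstrap.
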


\begin{proof}
Let $A$ be the graded superalgebra generated by an odd element $\o$ 
of degree 2 and even elements
$e_{2r}\:(r \geq 1)$ of degree $4r$ subject to the relations \cref{goingon,goingon1,goingon2}. 
For $r \geq 0$, we set $e_1 := \o$ and  $e_{2r+1} := \frac{1}{2}\big(\o e_{2r} + e_{2r} \o\big) \in A$ for $r \geq 1$; cf. \cref{osym5}. 

We first construct a homomorphism $\alpha:A\rightarrow \OSym$ by mapping $\o \mapsto \o$ and $e_r \mapsto e_r$. To see that this makes sense, we need to check that the relations \cref{goingon,goingon1,goingon2} hold in $\OSym$. The first is immediate, and the second follows because we have observed already
that $z_2=\o^2$ is central in $\OSym$. For the third, in $\OSym$, we have that $[\o, e_{2r+2}] = [e_{2r+1},e_2]$ by \cref{osym2}, and
also $\frac{1}{2}( \o e_{2r} + e_{2r} \o) = e_{2r+1}$ by \cref{osym5}. Now the relation is clear.

Next we construct a homomorphism $\beta:\OSym \rightarrow A$ in the other direction so that on generators it sends $e_r \mapsto e_r$ for each $r \geq 1$. To show this is well defined, we must again check relations, this time showing that \cref{osym1,osym2} hold in $A$. The first one is immediate if $r$ and $s$ are both even.
When $r$ is odd and $s$ is even, \cref{osym2} is equivalent to the relation
\begin{equation}\label{osym2a}
[e_{2r-1}, e_{2s}] = [e_{2s-1},e_{2r}]
\end{equation}
for $r, s \geq 1$. To check it holds in $A$, we must show that the expression $[e_{2r-1},e_{2s}]\in A$ is symmetric in $r$ and $s$. We have that
\begin{align*}
4[e_{2r-1}, e_{2s}] &= 2[\o e_{2r-2} + e_{2r-2} \o, e_{2s}] \\
&= 2\o e_{2r-2} e_{2s} + 2e_{2r-2} \o e_{2s} - 2e_{2s} \o e_{2r-2} - 2e_{2s} e_{2r-2} \o\\
&= 2(\o e_{2s}- e_{2s} \o) e_{2r-2}  + 2e_{2r-2} (\o e_{2s} - e_{2s} \o)\\
&= 2[\o,e_{2s}] e_{2r-2}  + 2e_{2r-2} [\o,e_{2s}]\\
&= [\o e_{2s-2}+e_{2s-2}\o,e_2] e_{2r-2}  + e_{2r-2} [\o e_{2s-2}+e_{2s-2} \o,e_2]\\
&= [\o,e_2] e_{2s-2} e_{2r-2} + e_{2s-2} [\o,e_2] e_{2r-2} + e_{2r-2}[\o,e_2] e_{2s-2} + e_{2r-2} e_{2s-2} [\o,e_2],
\end{align*}
which is indeed symmetric in $r$ and $s$. When $r$ is even and $s$ is odd, \cref{osym2} is equivalent to the relation
\begin{equation}\label{osym2bb}
[e_{2r},e_{2s+1}]_+ = [e_{2s},e_{2r+1}]_+
\end{equation}
for $r, s \geq 0$, where $[x,y]_+$ here denotes $xy+yx$. So again we must show that $[e_{2r},e_{2s+1}]_+ \in A$ is symmetric in $r$ and $s$. We have that
$$
2[e_{2r}, e_{2s+1}]_+ = e_{2r}  \o e_{2s} + e_{2r} e_{2s} \o + \o e_{2s} e_{2r} + e_{2s} \o e_{2r}.
$$
This is symmetric in $r$ and $s$ because $e_{2r} e_{2s} = e_{2s} e_{2r}$. It remains to check the relation \cref{osym1} when $r$ and $s$ are both odd. Equivalently, we show that $[e_{2r+1},e_{2s+1}] = 0$ for $r,s \geq 0$ by induction on $s$. The base case $s=0$ follows because
$$
2[e_{2r+1}, \o] =  [\o e_{2r} + e_{2r} \o, \o]
= \o e_{2r} \o + e_{2r} \o^2 - \o^2 e_{2r} - \o e_{2r} \o = -\left[\o^2,e_{2r}\right] = 0.
$$
The following establishes the induction step: for $s > 0$ we have in $A$ that
\begin{align*}
2 [e_{2r+1}, e_{2s+1}]&= [e_{2r+1}, \o e_{2s} + e_{2s} \o]=\o [e_{2r+1}, e_{2s}] + [e_{2r+1}, e_{2s}] \o\\
&=\o [e_{2s-1}, e_{2r+2}] + [e_{2s-1}, e_{2r+2}] \o= [e_{2s-1}, \o e_{2r+2}+ e_{2r+2}\o]\\&= 2[e_{2s-1}, e_{2r+3}]=-2[e_{2r+3}, e_{2s-1}]= 0,
\end{align*}
using the $s=0$ case for the second and fourth equalities, \cref{osym2a} for the third equality, and the induction hypothesis for the final equality.

It remains to observe that $\alpha$ and $\beta$ are two-sided inverses. This is obviously the case on generators by the way we have defined the maps.
\end{proof}

In the corollaries, we use the following notation:
\begin{itemize}
\item
$\Sym$ is the algebra of symmetric
functions over $\k$ as in 
\cite{Mac} viewed as a graded superalgebra so that the $r$th elementary and complete symmetric functions are even of degree $4r$;
\item
$\Sym_n$ is the usual algebra of symmetric
polynomials in $n$ variables, i.e., it is the quotient of $\Sym$ obtained by setting the $r$th even elementary symmetric polynomials
to zero for all $r > n$;
\item
$\Sym[\c]$ and $\Sym_n[\c]$ are the supercommutative graded superalgebras
obtained from $\Sym$ and $\Sym_n$
by adjoining an odd element $\c$ of degree 2
with $\c^2 = 0$.
\end{itemize}

\begin{corollary}\label{dumbc1}
The largest supercommutative quotient
of $\OSym$ is the graded superalgebra
\begin{equation}\label{dumbr1}
R := \OSym \big/ \big\langle o^2, [o,e_2]\big\rangle.
\end{equation}
Writing $\dot a$ for the canonical image of $a \in \OSym$
in $R$, 
we have that
\begin{align}\label{tiger}
\dot e_{2r+1} &= \dot e_{2r} \dot o,&
\dot h_{2r+1} &= \dot h_{2r} \dot o,&
\dot \eps_{2r+1} &= \dot \eps_{2r} \dot o,&
\dot \gamm_{2r+1} &= \dot \gamm_{2r} \dot o,&
\dot z_{2r} &= \delta_{r,0}
\end{align}
for all $r \geq 0$.
Moreover,
there is an isomorphism of graded superalgebras
$\theta:
R \stackrel{\sim}{\rightarrow} \Sym[\c]$
taking $\dot o$ to $\c$, $\dot \eps_{2r} = (-1)^r \dot e_{2r}$ 
to the
$r$th even elementary symmetric function and $\dot h_{2r} = (-1)^r \dot \gamm_{2r}$ to the
$r$th even complete symmetric function.
\end{corollary}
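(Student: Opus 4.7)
The strategy is to first establish that $R$ is supercommutative via an induction based on the presentation of $\OSym$ from \cref{tory}, and then identify $R$ with $\Sym[\c]$ by constructing mutually inverse homomorphisms.

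To show $R$ is supercommutative, recall from \cref{tory} that $\OSym$ is generated by $\o$ and $\{e_{2r}\}_{r\geq 1}$, with the $e_{2r}$ pairwise commuting. Since in $R$ we have $\dot\o^2 = 0$ and $[\dot\o,\dot e_2] = 0$ by definition, it remains to show $[\dot\o,\dot e_{2r}] = 0$ for all $r \geq 1$, which I would prove by induction on $r$. The inductive step follows from \cref{goingon2}: by hypothesis $\dot\o\dot e_{2r} = \dot e_{2r}\dot\o$, so the right-hand side $[\tfrac{1}{2}(\dot\o\dot e_{2r}+\dot e_{2r}\dot\o),\dot e_2]$ reduces to $[\dot e_{2r}\dot\o,\dot e_2] = \dot e_{2r}[\dot\o,\dot e_2] + [\dot e_{2r},\dot e_2]\dot\o = 0$. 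Together with $\dot\o^2 = 0$, this makes all generators of $R$ supercommute pairwise, so $R$ is supercommutative. The universal property is then immediate, since $o^2$ and $[o,e_2]$ must vanish in any supercommutative quotient (using $\operatorname{char}\k\neq 2$ for the former), so $\langle o^2,[o,e_2]\rangle$ lies in the kernel of any such quotient.

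The identities $\dot e_{2r+1} = \dot e_{2r}\dot\o$ and $\dot h_{2r+1} = \dot h_{2r}\dot\o$ follow immediately from \cref{osym5} by absorbing the symmetrizations. The analogous formulas for $\dot\eps_{2r+1}$ and $\dot\gamm_{2r+1}$ follow from \cref{toe}, \cref{tee}, and the parity identity $\binom{2r+1}{2} \equiv \binom{2r}{2} \pmod 2$. For $\dot z_{2r} = \delta_{r,0}$, I would apply \cref{osym6} to write $\dot z_{2r} = \delta_{r,0} + \sum_{s=0}^{r-1} \dot e_{2s+1}\dot h_{2r-2s-1}$, observing that in $R$ each cross term equals $\dot e_{2s}\dot\o\,\dot h_{2r-2s-2}\dot\o$, which vanishes because $\dot\o$ supercommutes with the even element $\dot h_{2r-2s-2}$ and $\dot\o^2 = 0$.

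For the isomorphism, I would define $\OSym \to \Sym[\c]$ on generators by $\o \mapsto \c$ and $e_{2r}\mapsto (-1)^r e_r$. This is well-defined because the three relations of \cref{tory} are trivially satisfied in the supercommutative $\Sym[\c]$ (where $\c^2=0$), and it descends to $\theta\colon R \to \Sym[\c]$ since $o^2$ and $[o,e_2]$ map to zero. For the inverse, $\Sym[\c]$ is the free supercommutative graded $\k$-superalgebra on an even generator for each $e_r$ together with an odd generator $\c$ satisfying $\c^2 = 0$; since $R$ is supercommutative with commuting generators $\dot e_{2r}$ (even) and $\dot\o$ (odd, square zero), there is a well-defined homomorphism $\Sym[\c] \to R$ sending $e_r \mapsto (-1)^r\dot e_{2r}$, $\c \mapsto \dot\o$. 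Since the two maps are mutually inverse on generators, they are mutually inverse everywhere. The image of $\dot\eps_{2r} = (-1)^r\dot e_{2r}$ is $e_r$ by inspection; the identity $\dot h_{2r} = (-1)^r\dot\gamm_{2r}$ follows from \cref{tigers} once one checks that the anti-involution $*$ acts as the identity on the supercommutative $R$ (because reversing a product of $\dot e_{r_i}$'s in $R$ produces exactly the sign that cancels the one built into superalgebra anti-multiplication). The main obstacle is the inductive verification $[\dot\o,\dot e_{2r}]=0$; everything else is essentially formal.
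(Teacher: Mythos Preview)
Your argument is correct and follows essentially the paper's approach; the only real difference is that you prove $\theta$ is bijective by constructing an explicit inverse via the universal property of $\Sym[\c]$ as a free supercommutative algebra, whereas the paper uses a spanning-set argument (showing that the images of the $\dot e_\lambda$ and $\dot e_\lambda\dot\o$ for even-parted $\lambda$ form a basis of $\Sym[\c]$). One small omission: you verify $\dot h_{2r}=(-1)^r\dot\gamm_{2r}$ but do not check that $\theta(\dot h_{2r})$ is the $r$th complete symmetric function---this follows at once from your identity $\dot z_{2r}=\sum_{s=0}^r\dot e_{2s}\dot h_{2r-2s}=\delta_{r,0}$ by applying $\theta$ and comparing with the infinite Grassmannian relation in $\Sym$, exactly as the paper does.
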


\begin{proof}
In any supercommutative quotient of $\OSym$, we must have that $\o^2 = 0$ and $[\o,e_2] = 0$. Now we let $I$ be the two-sided ideal of $\OSym$ generated by $\o^2$ and $[\o,e_2]$ and show that $\OSym / I$ is supercommutative.
Since $\OSym$ is generated by the elements 
$e_{2r}\:(r \geq 1)$ and $\o$, the proof of this reduces at once to checking that 
$[\o, e_{2r}] \in I$ for all $r \geq 1$, which holds because
$$
2 [\o, e_{2r}]  = 2 [e_{2r-1}, e_2] = [e_{2r-2} \o + \o e_{2r-2}, e_2]= e_{2r-2} [\o,e_2] + [\o,e_2] e_{2r-2} \in I.
$$
Thus, we have shown that $R := \OSym / I$ is the largest supercommutative quotient of $\OSym$.
Next we observe that
$\dot e_{2r+1} = \dot e_{2r} \dot \o$,
$\dot h_{2r+1}= \dot h_{2r} \dot \o$
and $\dot z_{2r} = \delta_{r,0}$ for all $r \geq 0$.
The first two of these follow from \cref{osym5} and the supercommutativity of $\OSym / I$,
then the final equality follows using the first two together with the second form of the definition of $z_{2r}$ in  \cref{osym6}.
The superalgebra 
anti-involution $*:\OSym \rightarrow \OSym$ induces an anti-involution of
$R$. Since it fixes the generators 
$\dot e_r\:(r \geq 1)$ 
and $R$ is supercommutative, this induced anti-involution is
actually the identity. 
So by \cref{toe,tee},
we have that
$\dot \eps_{r} = (-1)^{\binom{r}{2}} \dot e_{r}$ 
and $\dot \gamm_{r} = (-1)^{\binom{r}{2}} \dot h_{r}$.
The remaining identities in \cref{tiger} follow 
using this.

Finally, we construct the isomorphism $\theta$.
We start by observing that there is a homomorphism
$\OSym \rightarrow \Sym[\c]$ taking $\o \mapsto \c$ and $\eps_{2r} = (-1)^r e_{2r}$ to the $r$th even
elementary symmetric function for $r \geq 1$. 
To see this, we apply \cref{tory} to reduce to checking that the relations \cref{goingon,goingon1,goingon2} all hold in $\Sym[\c]$, which is clear because it is supercommutative. 
Now this homomorphism factors through the quotient to induce $\theta:R \rightarrow \Sym[\c]$. 
Moreover, $R$ is spanned by the monomials $\dot e_\lambda$ and $\dot e_\lambda \dot \o$ for partitions $\lambda$ with all parts even. The images under $\theta$ of these elements give a linear basis for $\Sym[\c]$. This shows that $\theta$ is an isomorphism.
It just remains to check that $\theta$ takes $\dot h_{2r}
= (-1)^r \dot \gamm_{2r}$ to the $r$th 
even complete symmetric function. This follows from the usual 
infinite Grassmannian relation relating even complete symmetric functions to even elementary symmetric functions in $\Sym$
providing we can show that
\begin{equation}\label{baldy}
\sum_{s=0}^r \dot e_{2s} \dot h_{2r-2s} = \delta_{r,0}
\end{equation}
for all $r \geq 0$. 
This is true because the sum on the left hand side is $\dot z_{2r}$
by the first form of the definition  \cref{osym6},
which we have already shown is zero for $r \geq 1$.
\end{proof}

\begin{corollary}\label{dumbc3}
The largest supercommutative quotient of $\OSym_n$ is
\begin{equation}\label{dumbr3}
R_n := \OSym_n\: \Big/\: \Big\langle \big(o^{(n)}\big)^2, \big[o^{(n)},e_2^{(n)}\big]\Big\rangle.
\end{equation}
The isomorphism $\theta$ from \cref{dumbc1} induces an isomorphism $\theta_n$ from
$R_n$
to
$\Sym_{(n-1)/2}[\c]$ if $n$ is odd,
or to the quotient of $\Sym_{n/2}[\c]$ obtained by
setting the product of $\c$ and the $(n/2)$th even elementary symmetric polynomial to zero if $n$ is even.
\end{corollary}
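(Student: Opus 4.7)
The plan is to reduce to \cref{dumbc1} by realizing $R_n$ as a further quotient of $R$ and transporting the extra relations across the isomorphism $\theta$. The starting point is the presentation
\[
\OSym_n = \OSym\:\big/\:\langle e_r\:|\:r > n\rangle
\]
established in the corollary just after \cref{whatekldo}. Combining this with the definition of $R$ in \cref{dumbr1} yields
\[
R_n = R\:\big/\:\langle \dot e_r\:|\:r > n\rangle,
\]
so $R_n$ is certainly supercommutative. For the universal property, any supercommutative quotient of $\OSym_n$ pulls back to a supercommutative quotient of $\OSym$ in which $e_r$ vanishes for every $r > n$, and hence factors through $R_n$; this establishes the first claim of the corollary.

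For the structure claim, I would transport the ideal $\langle \dot e_r\:|\:r > n\rangle$ through $\theta:R \stackrel{\sim}{\rightarrow} \Sym[\c]$. Writing $f_s$ for the $s$th (even) elementary symmetric function in $\Sym$, \cref{dumbc1} gives $\theta(\dot e_{2s}) = (-1)^s f_s$ and $\theta(\dot o) = \c$; combined with the identity $\dot e_{2s+1} = \dot e_{2s}\dot o$ from \cref{tiger}, this yields $\theta(\dot e_{2s+1}) = (-1)^s f_s\c$.

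When $n = 2k+1$ is odd, the inequalities $2s > n$ and $2s+1 > n$ both amount to $s \geq k+1$, so the image of the ideal is generated by $\{f_s, f_s\c : s \geq k+1\}$; the odd-index generators are absorbed by the even ones, leaving $\Sym[\c]\:\big/\:\langle f_s\:|\:s \geq k+1\rangle = \Sym_{(n-1)/2}[\c]$. When $n = 2k$ is even, the even constraint still forces $s \geq k+1$, but the odd constraint $2s+1 > 2k$ is already satisfied at $s = k$, contributing the additional relation $f_k\c = 0$ which does not follow from $f_s = 0$ for $s > k$. Hence $\theta$ identifies $R_n$ with $\Sym_{n/2}[\c]\:\big/\:\langle f_{n/2}\c\rangle$, matching the statement.

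No serious obstacle arises; once the quotient presentation of $\OSym_n$ in terms of $\OSym$ is in hand, the corollary reduces to bookkeeping on top of \cref{dumbc1}. The only step deserving a word of care is the universal-property identification of $R_n$ as the \emph{largest} supercommutative quotient of $\OSym_n$, which is why I would spell out the pullback argument in the first paragraph.
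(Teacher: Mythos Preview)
Your proposal is correct and follows the same approach as the paper, which simply states that the result follows from \cref{dumbc1} because a supercommutative quotient of $\OSym_n$ is a supercommutative quotient of $\OSym$ in which the images of all $e_r$ for $r>n$ vanish. You have spelled out in detail what the paper leaves implicit, in particular the explicit computation of $\theta(\dot e_r)$ for even and odd $r$ via \cref{tiger} and the resulting case analysis on the parity of $n$; this is exactly the bookkeeping the paper's one-line proof suppresses.
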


\begin{proof}
This follows from \cref{dumbc1} since a supercommutative quotient of $\OSym_n$ is a supercommutative quotient of $\OSym$
in which the images of all $e_r\:(r > n)$ are zero. 
\end{proof}

\section{Odd nil-Hecke algebras}\label{secONH}

This section is largely an exposition of results from \cite{EKL}. 
The {\em odd nil-Hecke algebra} is the graded superalgebra $\ONH_n$ with generators
$x_i\:(i=1,\dots,n)$ and $\tau_j\:(j=1,\dots,n-1)$
which are odd of degrees 2 and $-2$, respectively,
subject to the following relations:
\begin{align}
  x_i x_j &= -x_j x_i&&(i \neq j)\label{ONH1}\\
  \tau_i \tau_j &= -\tau_j \tau_i&&(|i-j| > 1)\label{ONH3}\\
  x_i \tau_j &= -\tau_j x_i&&(i \neq j, j+1)\label{ONH5}\\
  \tau_j^2 &= 0\label{ONH2}\\
  \tau_j \tau_{j+1}\tau_j &= -\tau_{j+1}\tau_j\tau_{j+1}\label{ONH4}\\
  x_i \tau_i - \tau_i x_{i+1} &= 1 = \tau_i x_i - x_{i+1}\tau_i.\label{ONH6}
\end{align}
We warn the reader that the above is {\em not} the standard form of the presentation for this algebra which appears in all of the existing literature. The difference is in the relations \cref{ONH4,ONH6}, in which our minus signs become plus signs in the standard
presentation. To obtain the above presentation from the standard one, note that our generators $x_i$ and $\tau_j$ are equal to the elements denoted $(-1)^{i-1} x_i$ and $(-1)^{j-1} \tau_j$ elsewhere in the literature. This change certainly impacts many other formulae below, but it is usually straightforward to make the appropriate adaptation. 
One advantage of our modified sign convention can already be seen in the definitions \cref{epoly,hpoly} above---the corresponding formulae in \cite{EKL} involve some additional signs.

Let $\S_n$ act on the left on 
$\OPol_n$ by graded superalgebra automorphisms so that
${^w}\!x_i = (-1)^{\ell(w)+w(i)-i} x_{w(i)}$ for $w \in \S_n, 1 \leq i \leq n$.
In particular:
\begin{equation}
{^{s_j}}x_i = \left\{\begin{array}{ll}
 x_{j+1}&\text{if $i = j$}\\
 x_j&\text{if $i = j+1$}\\
-x_i&\text{otherwise.}
\end{array}\right.
\end{equation}
The {\em odd Demazure operator}
$\partial_j:\OPol_n \rightarrow \OPol_n$
is the linear map defined on $f \in \OPol_n$ by
\begin{equation}
  \partial_j(f) = \frac{(x_{j}+x_{j+1}) f -
    \left({^{s_j}}f\right)(x_{j}+x_{j+1})}{x_{j}^2-x_{j+1}^2},
\end{equation}
which makes sense because the denominator is central.
This formula first appeared in \cite[(4.10)]{KKO1} remembering, of course, our modified choice of signs. We actually never use this form of the definition of $\partial_j$, preferring the following
recursive definition: $\partial_j$ is the unique odd linear map of degree $-2$ such that
\begin{align}
\partial_j(x_i) &= \delta_{i,j} - \delta_{i,j+1},&
\partial_j(fg) &= \partial_j(f) g + \left({^{s_j}}f\right) \partial_j(g)
\label{betteract}
\end{align}
for $f, g \in \OPol_n$.
Now we make the graded vector superspace $\OPol_n$ 
into a left $\ONH_n$-supermodule so that  $x_i \in \ONH_n$ acts on $f \in \OPol_n$ by $x_i \cdot f := x_i f$,  and $\tau_j \in \ONH_n$ acts by $\tau_j \cdot f := \partial_j(f)$.
A tedious relation check shows that this definition makes sense.
It is straightforward to show by induction on $r$ that
 \begin{align}\label{rank22}
\tau_i \cdot x_{i}^{r+1} &= \sum_{s=0}^r x_{i+1}^{s} x_i^{r-s},
&\tau_i \cdot x_{i+1}^{r+1} &= -\sum_{s=0}^r
x_i^s x_{i+1}^{r-s}.
\end{align}
One can rewrite
\cref{rank22} as the generating function identities:
\begin{align}\label{rank23}
\tau_i \cdot (t-x_i)^{-1} &= (t-x_{i+1})^{-1} 
(t-x_i)^{-1},
&
\tau_i \cdot (t-x_{i+1})^{-1} &= -(t-x_i)^{-1} (t-x_{i+1})^{-1},
\end{align}
equalities in $\OPol_n\llbracket t^{-1}\rrbracket$.
From the former, we get that
\begin{equation}\label{slickproof}
\tau_{n-1} \cdots \tau_1 \cdot (t-x_1)^{-1}
= 
(t-x_n)^{-1} \cdots (t-x_1)^{-1}.
\end{equation}
Hence, recalling \cref{littlerock}, we get that
\begin{equation}\label{yesIneedit}
\tau_{n-1} \cdots \tau_1 \cdot x_1^{n+r-1} = h^{(n)}_r
\end{equation}
on computing $t^{-n-r}$-coefficients.

By the relations \cref{ONH1,ONH2,ONH3,ONH4,ONH5,ONH6}, $\ONH_n$ admits an algebra involution $\smiley_n$ and a superalgebra anti-involution $*$
defined by
\begin{align}\label{boreham}
\smiley_n:\ONH_n &\rightarrow \ONH_n,&
x_i &\mapsto x_{n+1-i}, & \tau_j &\mapsto -\tau_{n-j},\\
*:\ONH_n &\rightarrow \ONH_n,&
x_i &\mapsto x_i, & \tau_j &\mapsto -\tau_j.
\label{newtilde}
\end{align}
These definitions are consistent with \cref{borehamA,newtildeA}.
Note also that $\smiley_n$ and $*$ commute with each other.
Mirroring the notation for symmetric groups from {\em General conventions}, there is also a homomorphism
\begin{equation}\label{sigman}
\sig_n:\ONH_{n'} \rightarrow \ONH_{n+n'},\qquad x_i \mapsto x_{i+n}, \tau_j \mapsto \tau_{j+n}.
\end{equation}
We will show shortly that this is injective, but this is not yet clear.
Similarly, there is a homomorphism
$\sig_n:\OPol_{n'} \rightarrow \OPol_{n+n'},
x_i \mapsto x_{i+n}$, which is obviously injective. 
We have that
\begin{align}\label{smileyanddot}
\smiley_n(a) \cdot \smiley_n(f) &= \smiley_n(a\cdot f),&
\sig_n(a) \cdot \sig_n(f) &= \sig_n(a\cdot f),
\end{align}
for $a \in \ONH_n$, $f \in \OPol_n$,
or $a \in \ONH_{n'}$, $f \in \OPol_{n'}$, respectively.

For each $w \in \S_n$, we pick a reduced expression $w =
s_{j_1} \cdots s_{j_l}$ then set $\tau_w := \tau_{j_1} \cdots
\tau_{j_l}$. For the longest
element $w_n$, we choose the reduced expression
$(s_{n-1}s_{n-2}\cdots s_1) (s_{n-1} s_{n-2} \cdots s_2)\cdots (s_{n-1} s_{n-2}) s_{n-1}$,
and adopt the shorthands
\begin{align}\label{myshorthands}
\omega_n &:= \tau_{w_n}
= \tau_{n-1} \tau_{n-2}\cdots \tau_1 \sig_1(\omega_{n-1}),&
\chi_n &:= x_{n-1} x_{n-2}^2\cdots x_1^{n-1} = \sig_1(\chi_{n-1})x_1^{n-1}.
\end{align}
These elements have the following desirable property.

\begin{lemma}\label{normalization}
$\omega_n \cdot \chi_n = 1$.
\end{lemma}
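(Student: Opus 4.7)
The plan is to proceed by induction on $n$. The base case $n = 1$ is immediate since $\omega_1$ and $\chi_1$ are both empty products, so $\omega_1 \cdot \chi_1 = 1 \cdot 1 = 1$.

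For the inductive step with $n \geq 2$, I will unwind the recursive definitions from \eqref{myshorthands} to write
\[
\omega_n \cdot \chi_n = \tau_{n-1}\tau_{n-2}\cdots\tau_1 \cdot \bigl(\sigma_1(\omega_{n-1}) \cdot \bigl(\sigma_1(\chi_{n-1})\, x_1^{n-1}\bigr)\bigr).
\]
The key observation is that $\sigma_1(\omega_{n-1})$ is a product of generators $\tau_j$ with $j \geq 2$, and for each such $j$ the skew Leibniz rule \eqref{betteract} together with $\partial_j(x_1) = 0$ gives $\partial_j(x_1^k) = 0$ by induction on $k$. Applying \eqref{betteract} once more, $\partial_j(F \cdot x_1^{n-1}) = \partial_j(F)\, x_1^{n-1} + ({^{s_j}}F)\, \partial_j(x_1^{n-1}) = \partial_j(F)\, x_1^{n-1}$ for any $F \in \OPol_n$, so iterating this across all the $\tau_j$'s in $\sigma_1(\omega_{n-1})$ yields
\[
\sigma_1(\omega_{n-1}) \cdot \bigl(\sigma_1(\chi_{n-1})\, x_1^{n-1}\bigr) = \bigl(\sigma_1(\omega_{n-1}) \cdot \sigma_1(\chi_{n-1})\bigr)\, x_1^{n-1}.
\]

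By the compatibility \eqref{smileyanddot} and the inductive hypothesis, $\sigma_1(\omega_{n-1}) \cdot \sigma_1(\chi_{n-1}) = \sigma_1(\omega_{n-1} \cdot \chi_{n-1}) = \sigma_1(1) = 1$. It then remains to check that $\tau_{n-1}\cdots\tau_1 \cdot x_1^{n-1} = 1$, which is precisely the $r = 0$ case of \eqref{yesIneedit} since $h_0^{(n)} = 1$. Since each step is either a definitional unwinding or a direct appeal to an already-established identity, there is no serious obstacle; the only subtlety worth double-checking is that passing each $\tau_j$ (for $j \geq 2$) past $x_1^{n-1}$ produces no sign correction, which holds because the potentially dangerous term $({^{s_j}}F)\,\partial_j(x_1^{n-1})$ vanishes identically.
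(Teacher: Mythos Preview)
Your proof is correct and follows essentially the same inductive strategy as the paper: unwind the recursive definitions \eqref{myshorthands}, use the compatibility $\sig_1(a)\cdot\sig_1(f)=\sig_1(a\cdot f)$ from \eqref{smileyanddot} together with the inductive hypothesis, and finish with the $r=0$ case of \eqref{yesIneedit}. The paper compresses into one line the step you spell out via the Leibniz rule (that $\sig_1(\omega_{n-1})$ can be passed past the factor $x_1^{n-1}$), but the argument is the same.
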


\begin{proof}
When $n=1$, this is clear as $\omega_n = 1 = \chi_n$.
The result for $n > 1$ follows by induction:
\begin{align*}
\omega_n \cdot \xi_n &= \tau_{n-1} \cdots \tau_1
\sig_1(\omega_{n-1})\cdot \sig_1(\xi_{n-1}) x_1^{n-1}\\
&= \tau_{n-1} \cdots \tau_1
\cdot \sig_1(\omega_{n-1}\cdot \xi_{n-1}) x_1^{n-1}
= \tau_{n-1}\cdots \tau_1 \cdot x_1^{n-1} = 1,
\end{align*}
using \cref{yesIneedit} for the final equality.
\end{proof}

It is also clear that
\begin{align}\label{smileysign}
\smiley_n(\omega_n) &= \zeta_n \omega_n
\end{align}
for some $\zeta_n \in \{\pm 1\}$. 
One can verify explicitly
that $\zeta_n = (-1)^{\binom{n+1}{3}}$; the calculation is similar to the proof of \cite[Lem.~3.2]{EKL}. However, the only place this sign is used is in the proof of \cref{frobenius}, and in that place we actually do not need to know its acual value.
 
An important role will be played by the {\em odd Schubert polynomials}
\begin{equation}\label{oddschubert}
p^{(n)}_w := \tau_{w^{-1}w_n} \cdot \chi_n \in \OPol_n.
\end{equation}
For example, we have that
$p^{(3)}_{1} = 1$, 
$p^{(3)}_{s_1} = -x_1$,
$p^{(3)}_{s_2} = x_1+x_2$,
$p^{(3)}_{s_2 s_1} = x_1^2$,
$p^{(3)}_{s_1 s_2}= -x_2 x_1$ and
$p^{(3)}_{s_2 s_1 s_2} = x_2 x_1^2$.
In general, $p^{(n)}_w$ depends up to sign on the choice of reduced expression for $w^{-1} w_n$,
but we always have that $p^{(n)}_{w_n} = \chi_n$ and $p^{(n)}_{1} = 1$ thanks to \cref{normalization}.
Note also that $\deg\left(p^{(n)}_w\right) = 2\ell(w)$ and $\parity\left(p^{(n)}_w\right) \equiv \ell(w)\pmod{2}$.

\begin{theorem}[{\cite[Prop.~2.11]{EKL}}]\label{ONHbasis}
The elements $\big\{x^\kappa \tau_w =
x_1^{\kappa_1}\cdots x_n^{\kappa_n} \tau_w\:\big|\:w \in \S_n, \kappa \in \N^n\big\}$ give a basis for $\ONH_n$. Moreover, $\OPol_n$ is a faithful $\ONH_n$-module.
\end{theorem}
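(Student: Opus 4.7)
The plan is to establish spanning first, then deduce linear independence and faithfulness simultaneously using the action on $\OPol_n$.

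For spanning, I would proceed by induction on word length in the generators. The commutation relations \cref{ONH1}, \cref{ONH3}, \cref{ONH5} let one move generators past each other up to sign; the key relation \cref{ONH6}, rewritten as $x_i \tau_i = \tau_i x_{i+1} + 1$ and $x_{i+1}\tau_i = \tau_i x_i - 1$, allows one to push $x$'s leftward through $\tau$'s at the cost of producing terms of strictly lower $\tau$-length; and the signed braid relation \cref{ONH4} together with $\tau_j^2 = 0$ allows any $\tau$-word to be straightened to $\pm \tau_w$ for the chosen reduced expression for some $w \in \S_n$ (by the usual Matsumoto-type argument, which goes through verbatim in the signed setting: a non-reduced word produces a $\tau_j^2$ after signed braid moves and hence vanishes).

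For linear independence, the key computational input is the following formula, proved by induction on $\ell(w)$ using \cref{ONH4} and \cref{ONH2}:
$$
\tau_w \cdot p^{(n)}_v = \begin{cases} \epsilon_{v,w}\, p^{(n)}_{vw^{-1}} & \text{if } \ell(v w^{-1}) = \ell(v) - \ell(w),\\[2pt] 0 & \text{otherwise,}\end{cases}
$$
for some sign $\epsilon_{v,w} \in \{\pm 1\}$. Since $p^{(n)}_v = \tau_{v^{-1} w_n}\cdot \chi_n$, one has $\tau_w \cdot p^{(n)}_v = \tau_w\tau_{v^{-1}w_n}\cdot \chi_n$. When $\ell(w) + \ell(v^{-1}w_n) = \ell(wv^{-1}w_n)$ (equivalently $\ell(vw^{-1}) = \ell(v) - \ell(w)$, using that $w_n$ is longest), the product $\tau_w \tau_{v^{-1} w_n}$ equals $\pm \tau_{w v^{-1} w_n}$ by signed braid moves, and applying to $\chi_n$ gives $\pm p^{(n)}_{vw^{-1}}$; otherwise any signed-braid reduction of $\tau_w \tau_{v^{-1} w_n}$ produces a $\tau_j^2 = 0$ factor. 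In particular, $\tau_v \cdot p^{(n)}_v = \pm p^{(n)}_e = \pm 1$.

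Now suppose $\sum_{w,\kappa} c_{w,\kappa}\, x^\kappa \tau_w$ acts as zero on $\OPol_n$; I will prove by induction on $k \geq 0$ that $c_{w,\kappa} = 0$ whenever $\ell(w) \leq k$. For the inductive step, fix $v \in \S_n$ with $\ell(v) = k$ and apply the zero operator to $p^{(n)}_v$. Contributions from $w$ with $\ell(w) < k$ vanish by the inductive hypothesis; contributions from $w$ with $\ell(w) > k$ vanish for length reasons in the displayed formula; and among $w$ with $\ell(w) = k$, the length condition forces $w = v$. What remains is $\pm \sum_\kappa c_{v,\kappa}\, x^\kappa = 0$ in $\OPol_n$, which forces all $c_{v,\kappa} = 0$ because the monomials $x^\kappa$ form a $\k$-basis of $\OPol_n$. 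This simultaneously shows linear independence of the spanning set (hence the basis claim) and faithfulness of the module $\OPol_n$.

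The most delicate point will be tracking signs in the Schubert-polynomial formula: the sign $\epsilon_{v,w}$ depends on choices of reduced expressions and on the sign from \cref{ONH4}, and the braid-move reduction needs to be handled carefully to see that non-length-additive products vanish. However, only the up-to-sign statement enters the final induction, so the combinatorics of the signs never needs to be made explicit.
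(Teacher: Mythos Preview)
Your proof is correct and follows essentially the same approach as the paper. The paper also proves spanning from the relations and then deduces linear independence together with faithfulness by acting on the odd Schubert polynomials $p_w^{(n)}$; the only cosmetic differences are that the paper picks $w$ of \emph{minimal} length with a nonzero coefficient and derives a contradiction in one step (rather than your upward induction on length), and it only uses the special cases $\tau_w \cdot p_w^{(n)} = \pm 1$ and $\tau_{w'}\cdot p_w^{(n)} = 0$ for $\ell(w') \geq \ell(w)$, $w' \neq w$, rather than your full formula for $\tau_w \cdot p_v^{(n)}$.
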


\begin{proof}
First one shows using \cref{ONH2,ONH3,ONH4} 
that any word in $\tau_j\:(j=1,\dots,n-1)$ can be reduced to $0$ or $\pm \tau_w$ for some $w \in \S_n$. It follows that the set in \cref{ONHbasis} spans $\ONH_n$. Then to establish the linear independence, 
suppose that we have some non-trivial linear relation 
$$
\sum_{w \in \S_n} \sum_{\kappa \in \N^n} c_{w,\kappa} x^\kappa \tau_w = 0$$ 
between the elements of this set.
Pick $w$ of minimal length such that $c_{w,\kappa} \neq 0$ for some $\kappa$.
Then we act on $p^{(n)}_{w}$.
For $w' \neq w$ with $\ell(w') \geq \ell(w)$, we have that $\tau_{w'} \cdot p^{(n)}_w = 0$
by the relations \cref{ONH2,ONH3,ONH4}, and $\tau_w \cdot p^{(n)}_{w} = \pm 1$ by \cref{normalization}.
So we deduce that $\sum_{\kappa \in \N^n} c_{w,\kappa} x^\kappa = 0$,
which is a contradiction. This also shows $\OPol_n$ is faithful.
\end{proof}

\begin{corollary}\label{bakeoffONH}
$\displaystyle\gsdim \ONH_n =\gsdim \OSym_n\times q^{\binom{n}{2}}[n]^!_{q,\pi}\times q^{-\binom{n}{2}}\overline{[n]}_{q,\pi}^!,$
\end{corollary}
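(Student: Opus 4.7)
The plan is to combine the PBW-type basis for $\ONH_n$ given in \cref{ONHbasis} with the dimension formula for $\OPol_n$ from \cref{bakeoffOPOL}. By \cref{ONHbasis}, the elements $\{x^\kappa \tau_w \mid w \in \S_n,\ \kappa \in \N^n\}$ form a linear basis of $\ONH_n$. Since the $x^\kappa$ run through the monomial basis of $\OPol_n$, there is a decomposition of graded vector superspaces
$$\ONH_n = \bigoplus_{w \in \S_n} \OPol_n \cdot \tau_w,$$
and each summand is a grading/parity shift of $\OPol_n$ determined by $\deg(\tau_w)$ and $\parity(\tau_w)$. As each generator $\tau_j$ is odd of degree $-2$, the element $\tau_w$ has degree $-2\ell(w)$ and parity $\ell(w)\pmod 2$, so
$$\gsdim \ONH_n = \gsdim \OPol_n \times \sum_{w \in \S_n} (\pi q^{-2})^{\ell(w)}.$$

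The remaining task is to identify the symmetric group sum with $q^{-\binom{n}{2}} \overline{[n]}^{\,!}_{q,\pi}$. This follows directly from the Poincaré polynomial factorization \cref{poincare}, which rearranges to $\sum_{w \in \S_n}(\pi q^2)^{\ell(w)} = q^{\binom{n}{2}}[n]^!_{q,\pi}$. Applying the bar involution (which fixes $\pi$ and sends $q \mapsto q^{-1}$) transforms the left-hand side into $\sum_{w \in \S_n}(\pi q^{-2})^{\ell(w)}$ and the right-hand side into $q^{-\binom{n}{2}}\overline{[n]^!}_{q,\pi}$ by the second identity in \cref{baringpi}, giving the required equality.

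Substituting this together with $\gsdim \OPol_n = \gsdim \OSym_n \times q^{\binom{n}{2}}[n]^!_{q,\pi}$ from \cref{bakeoffOPOL} yields exactly the asserted formula. The only delicate point is the bookkeeping of degrees and parities for $\tau_w$: since $\tau_w$ is a priori defined only up to sign depending on the chosen reduced expression for $w$, one must observe that the one-dimensional subspace $\k \tau_w \subseteq \ONH_n$ nevertheless has a well-defined degree $-2\ell(w)$ and parity $\ell(w)\pmod 2$, as sign ambiguities affect neither invariant. Beyond this, the argument is just a concatenation of \cref{ONHbasis}, \cref{bakeoffOPOL}, and the Poincaré identity \cref{poincare} together with the bar involution formulae \cref{baringpi}.
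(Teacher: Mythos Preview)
Your proof is correct and follows essentially the same approach as the paper: both use the basis from \cref{ONHbasis} to factor $\gsdim \ONH_n$ as $\gsdim \OPol_n$ times $\sum_{w\in\S_n}(\pi q^{-2})^{\ell(w)}$, then invoke \cref{bakeoffOPOL} and the Poincar\'e identity \cref{poincare} to rewrite each factor. The paper's version is slightly terser (it simply writes $(\pi q^2)^{-\ell(w)}$ and cites \cref{poincare} without explicitly mentioning the bar involution), but the substance is identical.
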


\begin{proof}
The theorem gives that 
$$
\gsdim \ONH_n = \gsdim \OPol_n \times \sum_{w \in \S_n}  (\pi q^2)^{-\ell(w)}.
$$
Now replace $\gsdim \OPol_n$ by the first formula for it from \cref{bakeoffOPOL},
and replace the summation by a product using \cref{poincare}.
\end{proof}

\cref{ONHbasis}
implies that the obvious homomorphism $\OPol_n \rightarrow \ONH_n$ is injective. 
Henceforth, we identify $\OPol_n$ 
with a subalgebra of $\ONH_n$ via this map.
Another application of the basis theorem shows that the
homomorphism $\ONH_n \hookrightarrow \ONH_{n+1}$ taking $x_i$ to $x_i$ and $\tau_j$ to $\tau_j$ is injective. 
Thus, we have a tower of graded superalgebras
$\ONH_0 \subset \ONH_1 \subset \ONH_2 \subset\cdots$.
The basis theorem also shows that the homomorphism
$\sig_n:\ONH_{n'}\rightarrow \ONH_{n+n'}$
from \cref{sigman} is injective, as promised earlier.
For $\alpha\in\Comp(k,n)$, we let $\ONH_\alpha$ be the subalgebra
$\big\{x^\kappa\tau_w\:\big|\:w \in \S_\alpha, \kappa \in \N^n\big\}$ of $\ONH_n$.
Finally,
let 
$\ONH^{\fin}_n$ be the subalgebra of $\ONH_n$ with basis $\{\tau_w \:|\:w \in \S_n\}$. As an algebra, $\ONH^\fin_n$ is generated by the elements
$\tau_j\:(j=1,\dots,n-1)$ subject just to \cref{ONH2,ONH3,ONH4}. There is a unique way to make the ground field $\k$
into a purely even graded left $\ONH^\fin_n$-supermodule concentrated in degree 0; each $\tau_j$ acts as zero.
There is then a canonical isomorphism of graded $\ONH_n$-supermodules
\begin{equation}\label{polisinduced}
\ONH_n \otimes_{\ONH^\fin_n} \k \stackrel{\sim}{\rightarrow} \OPol_n,\qquad
x^\kappa \otimes 1 \mapsto x^\kappa.
\end{equation}
This isomorphism explains the origin of the polynomial representation of $\ONH_n$.

Now recall the subalgebra $\OSym_n$ of $\OPol_n$ which was defined just after \cref{mainfact}---it is the subalgebra of $\OPol_n$ generated by the odd symmetric polynomials $e_r^{(n)}$ from \cref{epoly}.
A different formulation of the definition of $\OSym_n$ was adopted in \cite{EKL}, 
where $\OSym_n$ was defined from the outset to be $\bigcap_{i=1}^{n-1} \ker \partial_i$, which is a subalgebra of $\OPol_n$. 
We will deduce the equality of $\OSym_n$ with this subalgebra in \cref{aaaah}, but one containment is obvious:
we have that
\begin{equation}\label{easyinc}
\displaystyle\OSym_n \subseteq \bigcap_{i=1}^{n-1} \ker \partial_i.
\end{equation}
To see this, it suffices to check that $\partial_i(e_r^{(n)}) = 0$ for all $i$ and $r=1,\dots,n$, which follows  from the definitions since $\partial_i(x_i+x_{i+1}) = \partial_i(x_i x_{i+1}) = 0$.

\begin{theorem}[{\cite[Prop.~2.13, Cor.~2.14]{EKL}}]\label{firstkeythm}
The graded right $\OSym_n$-supermodule $\OPol_n$ is free of
graded rank $q^{\binom{n}{2}} [n]^!_{q,\pi}$ with basis 
$\big\{p^{(n)}_w\:|\:w \in \S_n\}$
given by the odd Schubert polynomials from \cref{oddschubert}. So we
have that
\begin{equation}\label{bubbleandsqueek}
\OPol_n =\bigoplus_{w \in \S_n} p_w^{(n)} \OSym_n
\qquad\text{ with }\qquad p_w^{(n)} \OSym_n \simeq (\Pi Q^2)^{\ell(w)} \OSym_n
\end{equation}
as graded right $\OSym_n$-supermodules. Moreover, the action of $\ONH_n$ on $\OPol_n$ induces a graded superalgebra isomorphism
\begin{equation}\label{matrixiso}
\ONH_n \stackrel{\sim}{\rightarrow} \End_{\dash\OSym_n}(\OPol_n).
\end{equation}
\end{theorem}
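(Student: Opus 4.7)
The plan is to combine a triangularity argument using the Demazure action with graded superdimension calculations from \cref{bakeoffOSYM,bakeoffOPOL,bakeoffONH}.

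First I would check that right multiplication by $\OSym_n$ on $\OPol_n$ commutes with the action of $\ONH_n$. For $a \in \OSym_n$ and $f \in \OPol_n$, the twisted Leibniz rule \cref{betteract} gives $\partial_j(fa) = \partial_j(f)a + ({}^{s_j}f)\partial_j(a)$. Since $\partial_j(e_r^{(n)}) = 0$ (noted after \cref{easyinc}), iterating this rule shows $\partial_j$ annihilates all of $\OSym_n$, whence $\partial_j(fa) = \partial_j(f) a$. With the paper's convention that right $\OSym_n$-supermodule morphisms acquire no parity sign \cite[Ex.~1.8]{BE}, this says exactly that each $\tau_j$ acts as a (parity-odd) right $\OSym_n$-supermodule endomorphism, so we obtain a graded superalgebra homomorphism $\Phi\colon \ONH_n \to \End_{-\OSym_n}(\OPol_n)$.

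Next I would prove the direct sum decomposition by triangularity. Suppose $\sum_{w \in \S_n} p_w^{(n)} a_w = 0$ with $a_w \in \OSym_n$ not all zero, and pick $v$ of maximal length among those $w$ with $a_w \neq 0$. Applying $\tau_v$ and using the right $\OSym_n$-linearity just established gives $\sum_w (\tau_v \cdot p_w^{(n)}) a_w = 0$. As already noted in the proof of \cref{ONHbasis} (a consequence of \cref{ONH2,ONH3,ONH4}), $\tau_v \cdot p_w^{(n)} = 0$ whenever $w \neq v$ and $\ell(w) \leq \ell(v)$, while $\tau_v \cdot p_v^{(n)} = \tau_v \tau_{v^{-1}w_n} \cdot \chi_n = \pm \omega_n \cdot \chi_n = \pm 1$ by \cref{normalization}. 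Terms with $\ell(w) > \ell(v)$ vanish by maximality. This leaves $\pm a_v = 0$, a contradiction. Since $\OPol_n$ has no zero divisors and $p_w^{(n)} \neq 0$, the map $a \mapsto p_w^{(n)} a$ is injective; combined with $\deg p_w^{(n)} = 2\ell(w)$ and $\parity p_w^{(n)} \equiv \ell(w) \pmod 2$, this yields $p_w^{(n)} \OSym_n \simeq (\Pi Q^2)^{\ell(w)}\OSym_n$. The equality $\bigoplus_w p_w^{(n)} \OSym_n = \OPol_n$ then follows by graded superdimension: $\gsdim\bigoplus_w p_w^{(n)}\OSym_n = \bigl(\sum_w (\pi q^2)^{\ell(w)}\bigr)\gsdim\OSym_n = q^{\binom{n}{2}}[n]^!_{q,\pi}\gsdim\OSym_n = \gsdim\OPol_n$ by \cref{poincare,bakeoffOPOL}.

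Finally, for the isomorphism \cref{matrixiso}, injectivity of $\Phi$ is the faithfulness of $\OPol_n$ as an $\ONH_n$-module from \cref{ONHbasis}, so it remains to compare graded superdimensions. Using the free decomposition just established, $\gsdim \End_{-\OSym_n}(\OPol_n) = \bigl(\sum_{v}(\pi q^2)^{-\ell(v)}\bigr)\bigl(\sum_w (\pi q^2)^{\ell(w)}\bigr)\gsdim\OSym_n$, which by \cref{poincare,baringpi} equals $q^{-\binom{n}{2}} \overline{[n]^!}_{q,\pi} \cdot q^{\binom{n}{2}} [n]^!_{q,\pi} \cdot \gsdim \OSym_n = \gsdim \ONH_n$ by \cref{bakeoffONH}. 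The main technical subtlety lies in the first paragraph: it is crucial that in the super sign convention adopted by the paper, right actions are treated without parity signs, so the odd operators $\tau_j$ genuinely commute with right multiplication by (possibly odd) elements of $\OSym_n$; without this, one would have to work with something weaker than right $\OSym_n$-linear endomorphisms.
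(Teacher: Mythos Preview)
Your proof is correct and follows essentially the same approach as the paper: linear independence of the $p_w^{(n)}$ over $\OSym_n$ via a maximal-length triangularity argument with $\tau_v$, spanning by matching graded superdimensions via \cref{bakeoffOPOL}, and the isomorphism \cref{matrixiso} by combining faithfulness from \cref{ONHbasis} with a graded superdimension count using \cref{bakeoffONH}. Your version is in fact slightly more explicit than the paper's in two places: you spell out why the $\ONH_n$-action commutes with right multiplication by $\OSym_n$ (which the paper uses implicitly when writing $\tau_w \cdot p^{(n)}_{w'} b_{w'} = (\tau_w \cdot p^{(n)}_{w'}) b_{w'}$), and you correctly keep the factor $\gsdim\OSym_n$ in the endomorphism-algebra dimension count, which the paper's displayed formula omits though its conclusion is right.
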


\begin{proof}
We claim that the polynomials $p^{(n)}_w\:(w \in \S_n)$ are linearly independent over $\OSym_n$. To see this, take a non-trivial linear relation 
$$
\sum_{w \in \S_n} p^{(n)}_w b_w = 0
$$ 
for $b_w \in \OSym_n$. Choose $w$ of maximal length such that $b_w \neq 0$. Then we act with $\tau_{w}$.
We have that $\tau_{w} \cdot p^{(n)}_{w'} b_{w'} = 0$ for $w' \neq w$ by the relations \cref{ONH2,ONH3,ONH4}, 
and $\tau_{w} \cdot p^{(n)}_w b_w = \pm b_w$, so we deduce that 
$b_w = 0$, a contradiction.
The claim implies that the $\OSym_n$-submodule of $\OPol_n$ generated by $p^{(n)}_w\:(w \in \S_n)$
is of graded superdimension $\gsdim \OSym_n \times \sum_{w \in \S_n} (\pi q^{2})^{\ell(w)}$, which is equal to
$\gsdim \OPol_n$ by \cref{bakeoffOPOL}. 
Hence, the $p^{(n)}_w\:(w\in \S_n)$ also span $\OPol_n$ as an $\OSym_n$-module, 
and we have proved \cref{bubbleandsqueek}.

To establish \cref{matrixiso}, we first note by \cref{bubbleandsqueek} that
\begin{align*}
\gsdim \End_{\dash\OSym_n}(\OPol_n) &= \sum_{x,y \in \S_n} (\pi q^2)^{\ell(x)-\ell(y)}
= 
\left(\sum_{x \in \S_n} (\pi q^2)^{\ell(x)}\right)
\left(\sum_{y \in \S_n} (\pi q^{2})^{-\ell(y)}\right)
=q^{\binom{n}{2}}[n]^!_{q,\pi} \times q^{-\binom{n}{2}}\overline{[n]}^!_{\!q,\pi},
\end{align*}
applying \cref{poincare}.
The homomorphism $\rho:\ONH_n
\rightarrow \End_{\dash\OSym_n}(\OPol_n)$ is injective by
\cref{ONHbasis}.
Therefore it is an isomorphism because the graded superdimensions
are the same thanks to \cref{bakeoffONH}.
\end{proof}

\begin{corollary}\label{aaaah}
We have that $\displaystyle\OSym_n = \bigcap_{i=1}^{n-1} \ker \partial_i = \bigcap_{i=1}^{n-1} \im \partial_i.$
\end{corollary}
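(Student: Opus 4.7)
The plan is first to prove $\OSym_n = \bigcap_i \ker \partial_i$ by computing the action of each $\partial_i$ on the right $\OSym_n$-basis from \cref{firstkeythm}, and then to observe that the computation simultaneously yields $\ker \partial_i = \im \partial_i$ for each $i$, from which the second equality follows.

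The key fact I would use throughout is that $\partial_i:\OPol_n \rightarrow \OPol_n$ is right-$\OSym_n$-linear. This follows from the Leibniz rule \cref{betteract}: for $f \in \OPol_n$ and $b \in \OSym_n$, $\partial_i(fb) = \partial_i(f)\, b + ({^{s_i}f})\,\partial_i(b) = \partial_i(f)\, b$, using the easy inclusion $\OSym_n \subseteq \ker \partial_i$ from \cref{easyinc}.

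Next I would compute $\partial_i$ on the basis $\big\{p_w^{(n)}\:\big|\:w \in \S_n\big\}$. By definition $\partial_i\, p_w^{(n)} = \tau_i \tau_{w^{-1} w_n}\cdot \chi_n$. A standard length argument (of the sort already used in the proof of \cref{firstkeythm}) shows: when $ws_i < w$ the product $\tau_i\tau_{w^{-1}w_n}$ is reduced, and since $s_i w^{-1} w_n = (ws_i)^{-1} w_n$, it equals $\pm \tau_{(ws_i)^{-1} w_n}$, yielding $\partial_i\, p_w^{(n)} = \pm\, p_{ws_i}^{(n)}$; when $ws_i > w$, the product is non-reduced so reduces to zero via \cref{ONH2,ONH3,ONH4}. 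Combining this with right-$\OSym_n$-linearity and the direct-sum decomposition \cref{bubbleandsqueek}, and reindexing via $v = ws_i$ for the image (so the condition $ws_i < w$ on $w$ becomes $vs_i > v$ on $v$), both the kernel and the image take the same form:
\begin{equation*}
\ker \partial_i \;=\; \bigoplus_{\substack{w \in \S_n\\ ws_i > w}} p_w^{(n)} \OSym_n \;=\; \im \partial_i.
\end{equation*}

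Intersecting over $i$ then gives $\bigcap_i \ker \partial_i = \bigcap_i \im \partial_i = \bigoplus_{w} p_w^{(n)} \OSym_n$, where the sum is over those $w \in \S_n$ satisfying $ws_i > w$ for every $i$. The only such $w$ is $w = 1$ (any $w \neq 1$ has $ws_j < w$ for the last letter $s_j$ of a reduced expression), so this intersection equals $p_1^{(n)} \OSym_n = \OSym_n$. The one step requiring actual bookkeeping is the explicit evaluation of $\partial_i\, p_w^{(n)}$, but this is essentially Coxeter combinatorics already embedded in the proof of \cref{firstkeythm}, so no serious obstacle is expected.
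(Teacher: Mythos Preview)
Your proof is correct and takes a genuinely different route from the paper's. The paper argues the reverse inclusion by contradiction: it expands an element $f$ of $\bigcap_i \ker\partial_i$ in the Schubert basis, picks $w$ of maximal length with $b_w\neq 0$, and applies $\tau_w$ to obtain $\pm b_w = \tau_w\cdot f = 0$ (using that $\tau_w$ ends in some $\tau_i$ with $f\in\ker\partial_i$). For the equality $\ker\partial_i=\im\partial_i$ the paper just says ``easy'' --- the slick observation being that $\partial_i(x_i f)=f$ whenever $\partial_i(f)=0$, via the Leibniz rule.

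Your approach instead computes $\partial_i$ on the whole Schubert basis and reads off $\ker\partial_i=\im\partial_i=\bigoplus_{ws_i>w}p_w^{(n)}\OSym_n$ directly, then intersects. This is more explicit --- you get a precise right-$\OSym_n$-module description of each individual kernel, not just of the intersection --- at the cost of a small amount of extra bookkeeping in verifying $\partial_i p_w^{(n)}=\pm p_{ws_i}^{(n)}$ when $ws_i<w$. Both arguments rest on the same Coxeter length combinatorics already used in \cref{firstkeythm}, so neither is deeper than the other; yours just unpacks more structure along the way.
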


\begin{proof}
It is easy to see that $\ker \partial_i = \im \partial_i$ for each $i$, hence, the second equality holds. 
For the first one,  we have already noted in \cref{easyinc} that $\OSym_n \subseteq \bigcap_{i=1}^{n-1} \ker \partial_i$.
Conversely, take $f \in \bigcap_{i=1}^{n-1} \ker\partial_i$ and write it as
$f = \sum_{w \in \S_n} p^{(n)}_w b_w$ for $b_w \in \OSym_n$.
We need to show that $b_w = 0$ except when $w=1$. 
Suppose for a contradiction that this is not the case, and pick $w$ of maximal length such that $b_{w} \neq 0$. Then we act on $f$ with $\tau_{w}$ 
to see that $b_w = 0$, contradiction.
\end{proof}

\begin{remark}\label{aswell}
As well as the basis $F := \big\{p_w^{(n)}\:\big|\:w \in \S_n\big\}$ of odd Schubert polynomials from \cref{firstkeythm}, the monomials $G := \big\{x_n^{\kappa_n}\cdots x_1^{\kappa_1}\:\big|\:\kappa \in \N^n\text{ with }0 \leq \kappa_i \leq n-i\big\}$
form a basis for $\OPol_n$ as a free right $\OSym_n$-module. To see this, it suffices to show that $\k F = \k G$.
The elements of $F$ are linearly independent over $\k$ by \cref{firstkeythm}, so $\dim \k F = n!$. Also $\dim \k G = n!$ obviously. So we are reduced to checking that $\k F \subseteq \k
G$. To see this, we note first that $\k G$ is invariant under the 
action of each $\tau_i$, as may be seen directly using \cref{rank22}
plus $\tau_i \cdot x_{i+1}^r x_i^r = 0$.
Since $\chi_n \in \k G$, it follows that $p_w^{(n)} = \tau_{w^{-1}w_n} \cdot \chi_n \in \k G$ for each $w \in \S_n$ as claimed.
\end{remark}

Let $M_{q^{\binom{n}{2}}[n]^!_{q,\pi}}(\OSym_n)$ denote the usual algebra of matrices $A = (a_{w,w'})_{w,w' \in \S_n}$ with entries in $\OSym_n$ viewed as a graded superalgebra so that the matrix with $a \in (\OSym_n)_{i,p} $ in its $(w,w')$-entry and zeros elsewhere is of degree $i+2\ell(w)-2\ell(w')$ and parity $p+\ell(w)-\ell(w')\pmod{2}$. This graded superalgebra may be identified with $\End_{\dash \OSym_n}(\OPol_n)$ so that the matrix $A$ just described corresponds to the unique right $\OSym_n$-supermodule endomorphism of $\OPol_n$ taking $p_{w'}^{(n)}$ to $\sum_{w \in \S_n} p_w^{(n)} a_{w,w'}$ for every $w' \in \S_n$. Thus, \cref{firstkeythm} shows that $\ONH_n \cong M_{q^{\binom{n}{2}}[n]^!_{q,\pi}}(\OSym_n)$. It follows that the graded superfunctors
\begin{align}\label{Psin}
-\otimes_{\ONH_n} \OPol_n&:
\doMs\ONH_n \rightarrow
\doMs\OSym_n,\\
\Hom_{\ONH_n}(\OPol_n,-)&:\ONH_n\sMod \rightarrow 
\OSym_n\sMod\label{altPsin}
\end{align}
are equivalences of graded $(Q,\Pi)$-supercategories. 

\begin{theorem}[{\cite[Prop.~2.15]{EKL}}]\label{thecenter} 
The even center $Z(\ONH_n)_{\0}$ of $\ONH_n$ is the 
graded algebra  consisting of symmetric polynomials in $x_1^2,\dots,x_n^2$. This coincides with the even center $Z(\OSym_n)_{\0}$ of $\OSym_n$ embedded into $\ONH_n$ in the natural way.
\end{theorem}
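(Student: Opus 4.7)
My plan proceeds in three stages.

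First, I would check the elementary containment $\k[x_1^2,\dots,x_n^2]^{\S_n}\subseteq Z(\ONH_n)_{\0}\cap Z(\OSym_n)_{\0}$. Because $x_ix_j=-x_jx_i$ for $i\neq j$, each $x_i^2$ is central in $\OPol_n$, so any polynomial in the $x_i^2$'s is central in $\OPol_n$. If such an $f$ is $\S_n$-symmetric, the twisted action ${}^{s_j}$ restricts to the ordinary transposition on $\k[x_1^2,\dots,x_n^2]$, so $\partial_j$ reduces to the classical Demazure operator and $\partial_j f=0$, whence $f\in\OSym_n$ by \cref{aaaah}. On the polynomial representation, the commutator of left multiplication by $f$ with $\tau_j$ sends $g$ to $({}^{s_j}f-f)\partial_j(g)+\partial_j(f)g=0$, so $f$ commutes with $\tau_j$ in $\ONH_n$.

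Second, I would apply the matrix algebra isomorphism $\ONH_n\cong\End_{-\OSym_n}(\OPol_n)$ of \cref{firstkeythm}. By standard reasoning for this graded super matrix algebra over $\OSym_n$, its even center is identified with $Z(\OSym_n)_{\0}$ via the map $z\mapsto f:=z\cdot 1$, with $z$ acting on $\OPol_n$ as right multiplication by $f$. Concretely, $[z,\tau_j]=0$ gives $\partial_j f=0$ so $f\in\OSym_n$; $[z,x_i]=0$ for all $i$ makes $z$ commute with every left multiplication on $\OPol_n$, forcing $z\cdot g=g\,(z\cdot 1)=gf$; and right-$\OSym_n$-linearity of right multiplication by $f$ requires $f\in Z(\OSym_n)_{\0}$. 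To match the resulting $z\in Z(\ONH_n)_{\0}$ with the image of $f$ under the natural embedding $\OSym_n\hookrightarrow\OPol_n\hookrightarrow\ONH_n$ via left multiplication, the operators of left and right multiplication by $f$ on $\OPol_n$ must coincide, equivalently $f\in Z(\OPol_n)_{\0}$. A direct monomial computation in $\OPol_n$---writing $z=\sum c_\kappa x^\kappa$, imposing $[z,x_i]=0$ for all $i$, and using that $z$ is even---forces each $\kappa_i$ appearing to have the same parity as $|\kappa|$, and even parity of $|\kappa|$ then forces every $\kappa_i$ even. Hence $Z(\OPol_n)_{\0}=\k[x_1^2,\dots,x_n^2]$, and the theorem reduces to the containment $Z(\OSym_n)_{\0}\subseteq\k[x_1^2,\dots,x_n^2]$.

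This last containment is the main obstacle. I would prove it by exhibiting generators of $Z(\OSym_n)_{\0}$ lying in $\k[x_1^2,\dots,x_n^2]$. An elementary calculation in $\OPol_n$ yields $\bigl(e_r^{(n)}\bigr)^2=(-1)^{\binom{r}{2}}e_r(x_1^2,\dots,x_n^2)$: the diagonal contributions expand via $(x_{i_1}\cdots x_{i_r})^2=(-1)^{\binom{r}{2}}x_{i_1}^2\cdots x_{i_r}^2$ to give $(-1)^{\binom{r}{2}}$ times the classical $r$-th elementary symmetric polynomial in the $x_i^2$'s, while the cross terms cancel pairwise by anti-commutativity. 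Consequently the $\k$-subalgebra generated by $\bigl(e_1^{(n)}\bigr)^2,\dots,\bigl(e_n^{(n)}\bigr)^2$ equals $\k[x_1^2,\dots,x_n^2]^{\S_n}$ by the classical theory of symmetric polynomials. That this exhausts $Z(\OSym_n)_{\0}$ follows from a graded dimension count, combining \cref{bakeoffOSYM} with the matrix algebra isomorphism to match the Poincar\'e series of $Z(\OSym_n)_{\0}$ with $\prod_{r=1}^n 1/(1-q^{4r})$.
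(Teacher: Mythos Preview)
Your first two stages are sound and indeed parallel the paper's reasoning, just in a different order: you first pass from $Z(\ONH_n)_{\0}$ to $Z(\OSym_n)_{\0}$ via the matrix algebra and then try to pin down the latter, whereas the paper works directly in $\ONH_n$ to show that any even central element already lies in $\OPol_n$, and in fact in $\k[x_1^2,\dots,x_n^2]$. Your reduction to the containment $Z(\OSym_n)_{\0}\subseteq\k[x_1^2,\dots,x_n^2]$ is legitimate.

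However, your third stage contains a genuine error. The formula $\bigl(e_r^{(n)}\bigr)^2=(-1)^{\binom{r}{2}}e_r(x_1^2,\dots,x_n^2)$ is false for $r\geq 2$ and $n\geq 4$. The cross terms $x_Ix_J+x_Jx_I$ with $I\neq J$ do \emph{not} always cancel: moving $x_J$ past $x_I$ produces a sign of $(-1)^{r^2-|I\cap J|}=(-1)^{|I\setminus J|}$, so the pair cancels only when $|I\setminus J|$ is odd. For $n=4$, $r=2$, a direct computation gives
\[
\bigl(e_2^{(4)}\bigr)^2 = -e_2(x_1^2,x_2^2,x_3^2,x_4^2) + 2\,x_1x_2x_3x_4,
\]
and $x_1x_2x_3x_4\notin\k[x_1^2,\dots,x_4^2]$. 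In particular $\bigl(e_2^{(4)}\bigr)^2$ is not even central in $\ONH_4$, so it cannot be a generator of $Z(\OSym_4)_{\0}$.

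There is a second gap: even granting some corrected set of generators, your proposed ``graded dimension count'' is circular. Neither \cref{bakeoffOSYM} nor the matrix isomorphism gives you the Poincar\'e series of $Z(\OSym_n)_{\0}$ independently; that series is precisely what is at stake. The paper avoids this by a direct argument inside $\ONH_n$: writing $z=\sum_w f_w\tau_w$ in the PBW basis and commuting with suitable $x_i$'s forces $f_w=0$ for $w\neq 1$, so $z\in\OPol_n$; then commuting $z$ with each $x_i$ and using evenness of $z$ forces all exponents to be even, placing $z$ in $\k[x_1^2,\dots,x_n^2]$, after which the $\tau_j$'s give symmetry. You need some such direct argument to close the gap.
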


\begin{proof}
This is proved in \cite{EKL} but 
we give a slightly different argument since there are some minor issues in the first paragraph of the original proof, which does not restrict attention to the {\em even} center.
Take $z \in Z(\ONH_n)$. Using \cref{ONHbasis}, we have that
$$
z = \sum_{w \in \S_n} f_w \tau_w
$$
for unique $f_w \in \OPol_n$. The first step is to show that $f_w = 0$ unless $w=1$. To see this, suppose for a contradiction that  it is not the case. Let $w$ be of maximal length such that $f_{w} \neq 0$. 
Pick $i \in \{1,\dots,n\}$ such that $j := w(i) \neq i$. We have that
$$
x_i z =  \sum_{w \in \S_n} x_i f_w \tau_w.
$$
Now we use the relations to express $z x_i$ as a linear combination $\sum_{v \in \S_n} g_v \tau_v$ for $g_v \in \OPol_n$.
Since $\tau_w x_i  = \pm x_j \tau_w $ plus a linear combination of $\tau_{w'}$ for $w'$ with $\ell(w') < \ell(w)$, 
we see that $g_w = \pm f_w x_j$. Thus, we must have that $x_i f_w = \pm f_w x_j$.
Since $i \neq j$, it is easy to see that this implies that $f_w=0$. So now we have proved that $z \in \OPol_n$.
Next, {\em assuming also that $z$ is even},  we show that $z$ is in fact in $\k[x_1^2,\dots,x_n^2]$ essentially following the idea from the proof in \cite{EKL}. Take any $1 \leq i \leq n$ and suppose that $z = \sum_{k \geq 0} f_k x_i^k$ for $f_k$ belonging to the subalgebra of $\OPol_n$ generated by $x_1,\dots,x_{i-1},x_{i+1},\dots,x_n$. Since $x_i z = z x_i$, we get that each $f_k$ must be even.
Since $z$ is even too it follows that $f_k = 0$ unless $k$ is even. This shows that $z$ only involves even powers of $x_i$. This is true for each $i$, so $z \in \k[x_1^2,\dots,x_n^2]$ as claimed.
To complete the proof that $z$ is actually a symmetric polynomial in $x_1^2,\dots,x_n^2$, and to show that any such polynomial is central, we can now refer the reader to the argument given in the second two paragraphs of the proof of \cite[Prop.~2.15]{EKL}.

Finally we explain how to see that $Z(\ONH_n)_{\0}$ coincides with $Z(\OSym_n)_{\0}$.
The supercenter of the matrix algebra $M_{q^{\binom{n}{2}}[n]^!_{q,\pi}}(\OSym_n)$ is isomorphic to $Z(\OSym_n)$ via the map
taking $z \in Z(\OSym_n)$ to the matrix 
$\diag(z,\dots,z)$. It follows that the even centers are isomorphic too. 
Given $z$ in the even center of $\ONH_n$, we have just shown that it is a polynomial
in $x_1^2,\dots,x_n^2$, so we have that $z p_w^{(n)} = p_w^{(n)} z$ for all $w \in \S_n$. It follows that $z$ acts on $\OPol_n$ in the same way as the matrix $\diag(z,\dots,z)$ under the identification of $\ONH_n$ with matrices described above. This shows that the natural embedding of $\OSym_n$ into $\ONH_n$
restricts to give an isomorphism between the even centers of $\OSym_n$ and $\ONH_n$.
\end{proof}

The idempotents in $\ONH_n$ corresponding to the diagonal matrix units $e_{w,w} \in M_{q^{\binom{n}{2}}[n]^!_{q,\pi}}(\OSym_n)$,
that is, the elements which act on $\OPol_n$ as the projections onto the indecomposable summands in \cref{bubbleandsqueek}, give a complete set of primitive idempotents in $\ONH_n$. 
It is clear from \cref{ONHbasis} that the component of $\ONH_n$ of
smallest degree is 1-dimensional
spanned by $\omega_n$. Since $\omega_n \chi_n \omega_n$ is of the same degree as $\omega_n$, it follows that 
$\omega_n \chi_n \omega_n$ is a scalar multiple of $\omega_n$.
Moreover, both $\omega_n \chi_n \omega_n$ and $\omega_n$
map $\chi_n$ to $1$ by \cref{normalization}, hence, we actually have that
\begin{equation}\label{skitime}
\omega_n \chi_n \omega_n = \omega_n.
\end{equation}
From this it follows that the following are both idempotents:
\begin{align}\label{idempotents}
(\chi\omega)_n &:= \chi_n \omega_n,
&(\omega\chi)_n &:= \omega_n \chi_n.
\end{align}
The first of these, $(\chi\omega)_n$, 
is exactly the matrix unit $e_{w_n,w_n}$ which projects $\OPol_n$ onto the top degree component $p_{w_n}^{(n)} \OSym_n$. 
This follows almost immediately since \cref{normalization} shows that
$\chi_n \omega_n\cdot p_{w_n}^{(n)} = p_{w_n}^{(n)}$ and $\chi_n
\omega_n\cdot  p_w^{(n)} = 0$ for all other $w \in \S_n$ as $\omega_n \cdot p_w^{(n)}= 0$ by degree considerations. 
In particular, this shows that $(\chi\omega)_n$ is a primitive idempotent.
The second one, $(\omega\chi)_n$, is also primitive
since we have that
\begin{equation}\label{idempotentstar}
(\omega\chi)_n=(\chi\omega)_n^*.
\end{equation}
To see this, it is clear from the definitions that
$(\chi\omega)_n^* = \pm (\omega\chi)_n$, and the sign must be plus since 
$(\omega\chi)_n$ is an idempotent.
Note also that $(\omega\chi)_n \OPol_n = \OSym_n$.
To see this, every $\partial_i$ annihilates 
$(\omega\chi_n) \cdot \OPol_n$, so $(\omega\chi)_n \cdot \OPol_n \subseteq
\OSym_n$ thanks to \cref{aaaah}, and 
it is easy to see directly that
$(\omega\chi)_n \cdot f = f$ for any $f \in \OSym_n$
giving the other containment.
Thus, we have shown that 
\begin{align}\label{newsnow}
(\chi\omega)_n \cdot \OPol_n &= \chi_n \OSym_n,& (\omega\chi)_n \cdot \OPol_n &= \OSym_n.
\end{align}
For $n \geq 2$,  $(\omega\chi)_n$ is {\em not} the idempotent corresponding to the matrix unit $e_{1,1}$ in the matrix algebra $M_{q^{\binom{n}{2}}[n]^!_{q,\pi}}(\OSym_n)$,
i.e., it is a projection of $\OPol_n$ onto $\OSym_n$, but along a different direct sum decomposition to \cref{bubbleandsqueek}.
It is convenient to work with since
left multiplication by $\omega_n$ defines a homogeneous isomorphism $(\chi\omega)_n \cdot \OPol_n \stackrel{\sim}{\rightarrow} (\omega\chi)_n \cdot \OPol_n$, with inverse defined by left multiplication by $\chi_n$.

\begin{lemma}\label{borisagain}
We have that
$\displaystyle ONH_n \simeq
\bigoplus_{w \in \S_n}  (\Pi Q^2)^{\ell(w)} \:(\omega\chi)_n \ONH_n
\simeq\bigoplus_{w \in \S_n}  (\Pi Q^2)^{-\ell(w)} \:(\chi\omega)_n \ONH_n$ as a graded right $\ONH_n$-supermodule.
\end{lemma}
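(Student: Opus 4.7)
The plan is to reduce everything to the right-module decomposition of $\OPol_n$ over $\OSym_n$ already established in \cref{bubbleandsqueek}, using the Morita-type equivalence $\Psi_n := -\otimes_{\ONH_n}\OPol_n:\doMs\ONH_n \xrightarrow{\sim} \doMs\OSym_n$ from \cref{Psin}. Since $\Psi_n$ is an equivalence of graded $(Q,\Pi)$-supercategories, it preserves direct sums and $(\Pi Q^2)$-shifts, so it suffices to verify the two asserted decompositions after applying $\Psi_n$.

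First I will identify the images of the three modules appearing in the statement. The right regular module satisfies $\Psi_n(\ONH_n) = \ONH_n\otimes_{\ONH_n}\OPol_n \cong \OPol_n$. For the idempotent truncations, multiplication gives $\Psi_n(e \ONH_n) \cong e\cdot \OPol_n$ for any idempotent $e$. Combined with \cref{newsnow}, this yields
\begin{align*}
\Psi_n\big((\omega\chi)_n \ONH_n\big) &\simeq (\omega\chi)_n\cdot\OPol_n = \OSym_n,\\
\Psi_n\big((\chi\omega)_n \ONH_n\big) &\simeq (\chi\omega)_n\cdot\OPol_n = \chi_n \OSym_n.
\end{align*}

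Next I will translate the target decompositions of $\OPol_n$. By \cref{bubbleandsqueek} we already have
$\OPol_n \simeq \bigoplus_{w\in\S_n}(\Pi Q^2)^{\ell(w)} \OSym_n$
as graded right $\OSym_n$-supermodules; applying $\Psi_n^{-1}$ to this isomorphism produces the first asserted decomposition $\ONH_n\simeq \bigoplus_w (\Pi Q^2)^{\ell(w)} (\omega\chi)_n \ONH_n$. For the second decomposition, left multiplication by $\chi_n$ defines a right $\OSym_n$-supermodule homomorphism $\OSym_n\to\chi_n \OSym_n$ which is surjective by definition and injective because $\OPol_n$ has no non-zero zero divisors. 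Since $\chi_n$ is homogeneous of degree $2\ell(w_n)$ and parity $\ell(w_n)\pmod 2$, this gives $\chi_n \OSym_n \simeq (\Pi Q^2)^{\ell(w_n)} \OSym_n$. Substituting and reindexing by the bijection $w\mapsto w_n w$ on $\S_n$ (which satisfies $\ell(w_n w)=\ell(w_n)-\ell(w)$), we obtain
\begin{equation*}
\OPol_n \simeq \bigoplus_{w\in\S_n}(\Pi Q^2)^{\ell(w)-\ell(w_n)}\chi_n\OSym_n = \bigoplus_{w\in\S_n}(\Pi Q^2)^{-\ell(w)}\chi_n\OSym_n,
\end{equation*}
and applying $\Psi_n^{-1}$ gives the second isomorphism $\ONH_n\simeq \bigoplus_w (\Pi Q^2)^{-\ell(w)}(\chi\omega)_n\ONH_n$.

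There is no substantive obstacle here beyond careful bookkeeping of the grading and parity shifts; the result is essentially a restatement of \cref{firstkeythm} transported through $\Psi_n^{-1}$ and rewritten in terms of the two primitive idempotents $(\omega\chi)_n$ and $(\chi\omega)_n$.
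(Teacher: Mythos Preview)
Your proof is correct and follows essentially the same approach as the paper: both reduce to \cref{bubbleandsqueek} via the equivalence \cref{Psin}, using \cref{newsnow} to identify the images of the idempotent truncations. The only cosmetic difference is that the paper derives the second decomposition from the first by observing directly that left multiplication by $\omega_n$ gives $(\chi\omega)_n \ONH_n \simeq (\Pi Q^2)^{\binom{n}{2}} (\omega\chi)_n \ONH_n$ at the $\ONH_n$-module level, whereas you make the equivalent observation $\chi_n\OSym_n \simeq (\Pi Q^2)^{\binom{n}{2}}\OSym_n$ after passing through $\Psi_n$; the reindexing by $w\mapsto w_n w$ is then the same in both arguments.
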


\begin{proof}
Left multiplication by $\omega_n$ defines an isomorphism $(\chi\omega)_n \ONH_n \simeq (\Pi Q^2)^{\binom{n}{2}} (\omega\chi)_n \ONH_n$ with inverse given by left multiplication by $\chi_n$. Therefore it suffices to prove the first isomorphism. Since \cref{Psin} is a graded superequivalence, we can apply it to reduce the problem to proving that
$$
\OPol_n \simeq 
\bigoplus_{w \in \S_n}  (\Pi Q^2)^{\ell(w)} \:\OSym_n
$$
as graded right $\OSym_n$-supermodules,  where we have used that $(\omega\chi)_n \OPol_n = \OSym_n$ by \cref{newsnow}.
This follows from \cref{bubbleandsqueek}.
\end{proof}

\begin{lemma}\label{imathjmath}
The map $\imath:\OPol_{n} \rightarrow \ONH_n (\omega\chi)_n,
f \mapsto f (\omega\chi)_n$ is an even degree 0 isomorphism of graded left $\ONH_n$-supermodules.
The map $\jmath:\OSym_n \rightarrow (\omega\chi)_n \ONH_n (\omega\chi)_n$
defined by the composition of the natural inclusion of $\OSym_n$ into $\ONH_n$ followed by 
the projection $a \mapsto (\omega\chi)_n a (\omega\chi)_n$
is a graded superalgebra isomorphism.
Moreover, we have that $\imath(fa) = \imath(f) \jmath(a)$ for all $f \in \OPol_n$ and $a \in \OSym_n$.
\end{lemma}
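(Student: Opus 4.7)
The plan is to address the three assertions in order, with the real work concentrated in showing $\imath$ is an isomorphism of left $\ONH_n$-supermodules. First note that $\omega_n=\tau_{w_n}$ has degree $-2\binom{n}{2}$ and parity $\binom{n}{2}\pmod{2}$, while $\chi_n$ has opposite degree and matching parity, so $(\omega\chi)_n$ is even of degree $0$; in particular $\imath$ preserves both gradings. To see $\imath$ is a left $\ONH_n$-module map, the case of the generator $x_i$ is immediate, while for $\tau_j$ I would first record the Leibniz-type identity
\begin{equation*}
\tau_j f=\partial_j(f)+({}^{s_j}f)\tau_j\qquad(f\in\OPol_n)
\end{equation*}
in $\ONH_n$, which is routine to check on $f=x_i$ using \cref{ONH5,ONH6} and then extended to all $f$ by induction using the Leibniz rule for $\partial_j$. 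Then $\tau_j\,\imath(f)=\partial_j(f)(\omega\chi)_n+({}^{s_j}f)\tau_j(\omega\chi)_n$, reducing compatibility to the single vanishing $\tau_j(\omega\chi)_n=0$. This is a graded-degree argument: by \cref{ONHbasis} the minimum degree in $\ONH_n$ is $-2\binom{n}{2}$, so $\tau_j\omega_n$, having degree $-2-2\binom{n}{2}$, must be zero; the same argument shows $\tau_w\omega_n=0$ for every $w\neq 1$.

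Bijectivity of $\imath$ follows by exhibiting an explicit two-sided inverse, the evaluation map $\ev:\ONH_n(\omega\chi)_n\to\OPol_n,\,b\mapsto b\cdot 1$. The identity $(\omega\chi)_n\cdot 1=\omega_n\cdot\chi_n=1$ from \cref{normalization} gives $\ev\circ\imath=\id_{\OPol_n}$. For the other composite, any $b\in\ONH_n(\omega\chi)_n$ has the form $a(\omega\chi)_n$ with $a=\sum c_{\kappa,w}x^\kappa\tau_w$ (expanding via \cref{ONHbasis}); the vanishing $\tau_w\omega_n=0$ for $w\neq 1$ collapses the sum to $b=\sum_\kappa c_{\kappa,1}x^\kappa(\omega\chi)_n$, whence $\imath(\ev(b))=b$.

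Finally, applying the module map $\imath$ to the equation $(\omega\chi)_n\cdot a=a$, valid for $a\in\OSym_n$ since $(\omega\chi)_n$ acts as projection onto $\OSym_n$ by \cref{newsnow}, yields the key commutation identity
\begin{equation*}
a(\omega\chi)_n=(\omega\chi)_n a(\omega\chi)_n\qquad\text{for all }a\in\OSym_n
\end{equation*}
in $\ONH_n$. From this everything else is formal: the computation $\jmath(a)\jmath(b)=(\omega\chi)_na(\omega\chi)_nb(\omega\chi)_n=(\omega\chi)_nab(\omega\chi)_n=\jmath(ab)$, together with $\jmath(1)=(\omega\chi)_n$, makes $\jmath$ a graded superalgebra homomorphism; restricting $\ev$ supplies a two-sided inverse $(\omega\chi)_n\ONH_n(\omega\chi)_n\to\OSym_n$ (landing in $\OSym_n$ again by \cref{newsnow}); and the compatibility $\imath(fa)=fa(\omega\chi)_n=f(\omega\chi)_na(\omega\chi)_n=\imath(f)\jmath(a)$ is immediate from the commutation identity. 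The sole technical step is really the degree-bound vanishing $\tau_j\omega_n=0$; everything else amounts to straightforward bookkeeping.
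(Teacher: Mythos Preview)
Your proof is correct and follows essentially the same route as the paper's. Both arguments rest on the vanishing $\tau_j(\omega\chi)_n=0$ by degree, and both use the evaluation map $b\mapsto b\cdot 1$ as the inverse to $\imath$. The only real difference is in how the left $\ONH_n$-equivariance of $\imath$ is established: you verify it by hand via the Leibniz-type identity $\tau_j f=\partial_j(f)+({}^{s_j}f)\tau_j$ in $\ONH_n$, whereas the paper sidesteps this computation by invoking \cref{polisinduced}, which identifies $\OPol_n$ with the induced module $\ONH_n\otimes_{\ONH_n^{\fin}}\k$; since $\tau_j(\omega\chi)_n=0$, the universal property of induction yields the module map for free. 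For $\jmath$, the paper simply observes that $\imath$ restricts to an isomorphism $(\omega\chi)_n\cdot\OPol_n\stackrel{\sim}{\to}(\omega\chi)_n\ONH_n(\omega\chi)_n$ and identifies this restriction with $\jmath$ via the same commutation identity $a(\omega\chi)_n=(\omega\chi)_n a(\omega\chi)_n$ that you derive; your explicit verification of the algebra axioms is a bit more spelled out but amounts to the same thing.
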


\begin{proof}
Since $\tau_j (\omega\chi)_n = 0$ by degree considerations, there is a unique graded left $\ONH_n$-supermodule homomorphism $\OPol_n \rightarrow \ONH_n (\omega\chi)_n$ taking $1$ to $(\omega\chi)_n$ thanks to \cref{polisinduced}.
This is $\imath$. Also $(\omega\chi)_n \cdot 1 = 1$, so there is a supermodule homomorphism
$\ONH_n (\omega\chi)_n \rightarrow \OPol_n, a \mapsto a \cdot 1$. These two maps are mutual inverses, hence, 
$\imath$ is an isomorphism. 

The restriction of $\imath$ gives an isomorphism $(\omega\chi)_n \cdot \OPol_n \stackrel{\sim}{\rightarrow} (\omega\chi)_n \OPol_n (\omega\chi)_n$. 
Since $(\omega \chi)_n a (\omega\chi)_n = a (\omega\chi)_n$ for $a \in \OSym_n$ and $(\omega\chi)_n \cdot \OPol_n = \OSym_n$ by \cref{newsnow}, this restriction is the isomorphism $\jmath$ 
from the statement of the lemma. 
\end{proof}

\begin{corollary}\label{altPsi}
Using $\jmath$ to identify $\OSym_n$ with $(\omega\chi)_n \ONH_n (\omega\chi)_n$, the superfunctors 
$-\otimes_{\ONH_n} \OPol_n$ 
and $\Hom_{\ONH_n}(\OPol_n,-)$
from \cref{Psin,altPsin} are isomorphic to the idempotent truncation functors defined by right and left multiplication by the idempotent
$(\omega\chi)_n$, respectively.
\end{corollary}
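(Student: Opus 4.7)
The plan is to deduce both statements directly from \cref{imathjmath}, which we reinterpret as saying that $\imath:\OPol_n \stackrel{\sim}{\rightarrow} \ONH_n (\omega\chi)_n$ is an even degree 0 isomorphism of graded $(\ONH_n, \OSym_n)$-superbimodules, where $\OSym_n$ acts on $\ONH_n(\omega\chi)_n$ on the right through $\jmath$, that is, via the identification $\OSym_n \simeq (\omega\chi)_n \ONH_n (\omega\chi)_n$. Indeed, the bimodule compatibility is precisely the relation $\imath(fa) = \imath(f)\jmath(a)$.

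Granted this bimodule isomorphism, both statements become standard identifications for modules of the form $\ONH_n e$ where $e = (\omega\chi)_n$. For the first: there is an even degree 0 natural isomorphism
\begin{equation*}
M \otimes_{\ONH_n} \OPol_n \xrightarrow{\;\sim\;} M \otimes_{\ONH_n} \ONH_n (\omega\chi)_n \xrightarrow{\;\sim\;} M (\omega\chi)_n
\end{equation*}
for any graded right $\ONH_n$-supermodule $M$, where the first map is induced by $\imath$ and the second sends $m \otimes a(\omega\chi)_n \mapsto ma(\omega\chi)_n$. Both maps are right $\OSym_n$-linear after identifying the right $\OSym_n$-action on the middle term with right multiplication by $(\omega\chi)_n \ONH_n (\omega\chi)_n$ via $\jmath$.

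For the second, for any graded left $\ONH_n$-supermodule $N$ there is an even degree 0 natural isomorphism
\begin{equation*}
\Hom_{\ONH_n\dash}(\OPol_n, N) \xrightarrow{\;\sim\;} \Hom_{\ONH_n\dash}(\ONH_n (\omega\chi)_n, N) \xrightarrow{\;\sim\;} (\omega\chi)_n N,
\end{equation*}
where the first map is precomposition with $\imath^{-1}$ and the second sends $\phi \mapsto \phi((\omega\chi)_n)$ (with inverse $v \mapsto (a(\omega\chi)_n \mapsto av)$). Under the identification of $\OSym_n$ with $(\omega\chi)_n \ONH_n (\omega\chi)_n$, the left $\OSym_n$-module structure on $\Hom_{\ONH_n\dash}(\ONH_n(\omega\chi)_n, N)$ coming from the right $\OSym_n$-action on the first argument corresponds exactly to left multiplication by $(\omega\chi)_n \ONH_n(\omega\chi)_n$ on $(\omega\chi)_n N$, since for $\phi \in \Hom_{\ONH_n\dash}(\ONH_n(\omega\chi)_n,N)$ and $a \in \OSym_n$ we have $(a\cdot\phi)((\omega\chi)_n) = \phi(\jmath(a)) = \jmath(a)\phi((\omega\chi)_n)$ (up to the Koszul sign built into the definition of morphisms in $\ONH_n\sMod$, which matches on both sides). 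The only point requiring care is the sign bookkeeping for the right module action on Hom, but this is automatic from the conventions of \cite[Ex.~1.8]{BE} adopted in \cref{sgradedsuperalgebra}. No deeper step is needed beyond \cref{imathjmath}, so there is no substantive obstacle.
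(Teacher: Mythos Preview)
Your proposal is correct and is exactly the argument the paper has in mind: the corollary is stated in the paper without proof, as an immediate consequence of \cref{imathjmath}, and your derivation via the bimodule isomorphism $\imath:\OPol_n\stackrel{\sim}{\rightarrow}\ONH_n(\omega\chi)_n$ together with the standard identifications $M\otimes_{\ONH_n}\ONH_n e\simeq Me$ and $\Hom_{\ONH_n\dash}(\ONH_n e,N)\simeq eN$ is precisely how one unpacks that.
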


We note finally that there is also a {\em right} action
of $\ONH_n$ on $\OPol_n$, and everything in this section
could be reformulated in terms of $\OPol_n$ viewed as an $(\OSym_n,\ONH_n)$-superbimdule.
This right action may be defined succinctly from
\begin{equation}\label{opolright}
f \cdot a := (-1)^{\parity(a)\parity(f)} \big(a^* \cdot f^*\big)^*
\end{equation}
for $f \in \OPol_n, a \in \ONH_n$.
The following more explicit description similar to \cref{betteract} 
can easily be derived from this:
\begin{align}
x_i \cdot \tau_j &= \delta_{i,j} - \delta_{i,j+1},&
(fg) \cdot \tau_j &= f (g \cdot \tau_j) + (f \cdot \tau_j) \left({^{s_j}}g\right).
\label{worseact}
\end{align}
for $f, g \in \OPol_n$.
The right action of $\ONH_n$
on $\OPol_n$ obviously commutes with the natural 
action of $\OSym_n$ by left multiplication.
\cref{firstkeythm} and \cref{idempotentstar} imply that
\begin{equation}\label{bubbleandsqueekier}
\OPol_n \simeq \bigoplus_{w \in S_n}
(\Pi Q^2)^{\ell(w)} \OSym_n
\end{equation}
as a graded left $\OSym_n$-supermodule,
with the ``bottom" summand that is $\OSym_n$ itself
being the image of the 
idempotent $(\chi\omega)_n$ acting on the right.
We stress that the action \cref{opolright} is different 
from the right action defined via
$f \cdot a := (-1)^{\parity(a) \parity(f)} a^* \cdot f$; the latter
action does not commute with the left action of $\OSym_n$.
When we talk about $\OPol_n$ as a right $\ONH_n$-supermodule,
we always mean the action defined via \cref{opolright}.

\begin{lemma}\label{trulycrazy}
$\displaystyle (t+x_2)^{-1} x_1^r \cdot \tau_1 = 
(t+x_2)^{-1} x_2^r \Big(t+(-1)^r x_1\Big)^{-1}
+ \sum_{q=0}^{r-1} (-1)^{qr}
(t+x_2)^{-1}x_2^{q}x_1^{r-q-1}.$
\end{lemma}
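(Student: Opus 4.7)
The plan is to apply the right-action Leibniz rule from \cref{worseact} to the factorization $f g = (t+x_2)^{-1}\cdot x_1^r$, giving
\begin{equation*}
(t+x_2)^{-1}x_1^r\cdot\tau_1 = (t+x_2)^{-1}\big(x_1^r\cdot\tau_1\big) + \big((t+x_2)^{-1}\cdot\tau_1\big)\,{^{s_1}}\!x_1^r.
\end{equation*}
The three inputs $x_1^r\cdot\tau_1$, $(t+x_2)^{-1}\cdot\tau_1$ and ${^{s_1}}\!x_1^r$ can each be computed separately, after which it will just be a matter of rearranging using $x_2 x_1 = -x_1 x_2$.

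First I would compute $x_1^r\cdot\tau_1 = \sum_{i=0}^{r-1}x_1^i x_2^{r-1-i}$ by induction on $r$ using \cref{worseact}, with base case $x_1\cdot\tau_1=1$ and ${^{s_1}}\!x_1 = x_2$; the same recipe also yields ${^{s_1}}\!x_1^r = x_2^r$. The key step, which I expect to be the only trick in the argument, is the computation of $(t+x_2)^{-1}\cdot\tau_1$. Rather than summing a series, I would apply \cref{worseact} to the trivial product $(t+x_2)^{-1}(t+x_2)=1$: using $1\cdot\tau_1=0$, $(t+x_2)\cdot\tau_1 = -1$ and ${^{s_1}}(t+x_2) = t+x_1$, this yields $\big((t+x_2)^{-1}\cdot\tau_1\big)(t+x_1) = (t+x_2)^{-1}$, and inverting $(t+x_1)$ on the right in $\OPol_2\lround t^{-1}\rround$ gives
\begin{equation*}
(t+x_2)^{-1}\cdot\tau_1 = (t+x_2)^{-1}(t+x_1)^{-1}.
\end{equation*}

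Substituting these three ingredients into the Leibniz identity above yields
\begin{equation*}
(t+x_2)^{-1}x_1^r\cdot\tau_1 = (t+x_2)^{-1}\sum_{i=0}^{r-1}x_1^i x_2^{r-1-i} + (t+x_2)^{-1}(t+x_1)^{-1}x_2^r,
\end{equation*}
and two routine rearrangements finish the job. Expanding $(t+x_1)^{-1} = \sum_{l\geq 0} (-1)^l x_1^l t^{-l-1}$ and commuting each $x_1^l$ past $x_2^r$ using $x_1 x_2 = -x_2 x_1$ gives $(t+x_1)^{-1}x_2^r = x_2^r(t+(-1)^r x_1)^{-1}$, producing the first summand on the right-hand side of the statement. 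For the remaining sum, the substitution $i = r-1-q$ combined with $x_1^i x_2^{r-1-i} = (-1)^{i(r-1-i)}x_2^{r-1-i}x_1^i$ and the observation that $(r-1-q)q \equiv qr \pmod 2$ (since $q(q+1)$ is even) converts $\sum_{i=0}^{r-1}x_1^i x_2^{r-1-i}$ into $\sum_{q=0}^{r-1}(-1)^{qr}x_2^q x_1^{r-q-1}$, completing the match with the right-hand side of the lemma.
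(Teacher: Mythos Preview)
Your proposal is correct and follows essentially the same approach as the paper: compute $(t+x_2)^{-1}\cdot\tau_1$ and $x_1^r\cdot\tau_1$ separately, combine them via the right-action Leibniz rule \cref{worseact}, then commute the $x_2$'s to the left of the $x_1$'s to produce the stated signs. The only minor difference is that the paper obtains $(t+x_2)^{-1}\cdot\tau_1=(t+x_2)^{-1}(t+x_1)^{-1}$ by analogy with \cref{rank23}, whereas you derive it by applying \cref{worseact} to the identity $(t+x_2)^{-1}(t+x_2)=1$, which is a clean alternative.
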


\begin{proof}
Similarly to \cref{rank22,rank23}, one shows that $(t+x_2)^{-1} \cdot \tau_1
= (t+x_2)^{-1} (t+x_1)^{-1}$ and $
x_1^r \cdot \tau_1 = \sum_{q=0}^{r-1} x_1^{r-q-1}x_2^q$.
These combine using \cref{worseact}
to give the final formula; one also needs to commute all $x_2$
to the left of all $x_1$ producing some additional signs.
\end{proof}

\section{Odd Schur polynomials}

Another important basis of $\OSym$ is introduced in \cite[Sec.~3.3]{EK}: the basis of
{\em odd Schur functions} $\{s_\lambda\:|\:\lambda \in \Par\}$.
As explained after \cite[Cor.~3.9]{EK},
this is the basis of $\OSym$ characterized uniquely by the properties
that $(s_\lambda, h_\mu)^- = 0$ if $\mu >_{\lex} \lambda$
and $s_\lambda = h_\lambda + $(a $\Z$-linear combination of
other $h_\mu$ for $\mu >_{\lex} \lambda$).
Some examples can be found in the appendix of \cite{EK}.
The key property of odd Schur functions is that they are 
signed-orthonormal: 

\begin{theorem}[{\cite[Cor.~3.9]{EK}}]\label{schurform}
For $\lambda,\mu \in \Par$, we have that
$(s_\lambda,s_\mu)^- = (-1)^{dN(\lambda)}\delta_{\lambda,\mu}$.
\end{theorem}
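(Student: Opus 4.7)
The plan is to first establish off-diagonal vanishing, then reduce the self-pairing to $(s_\lambda, h_\lambda)^-$, and finally evaluate this via an auxiliary computation of $(s_\lambda, e_{\lambda^\transpose})^-$.

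For off-diagonal vanishing, assume $\mu >_\lex \lambda$. The defining expansion $s_\mu = h_\mu + \sum_{\nu >_\lex \mu} c_{\mu\nu} h_\nu$ only involves $h_\kappa$'s with $\kappa \geq_\lex \mu >_\lex \lambda$, so the defining property $(s_\lambda, h_\kappa)^- = 0$ for $\kappa >_\lex \lambda$ gives $(s_\lambda, s_\mu)^- = 0$ term by term; symmetry of the form handles the case $\mu <_\lex \lambda$. Expanding the right-hand $s_\lambda$ in the $h$-basis inside $(s_\lambda, s_\lambda)^-$ and using the same defining property yields the reduction $(s_\lambda, s_\lambda)^- = (s_\lambda, h_\lambda)^-$.

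For the self-pairing proper, I would compute $(s_\lambda, e_{\lambda^\transpose})^-$ in two ways. Expanding $s_\lambda$ in the $h$-basis and applying \cref{semiorthogonality}, which gives $(h_\mu, e_{\lambda^\transpose})^- = 0$ for $\mu >_\lex \lambda$ and $(h_\lambda, e_{\lambda^\transpose})^- = (-1)^{NE(\lambda)+dN(\lambda)}$, only the leading $h_\lambda$-term contributes, so
\begin{equation*}
(s_\lambda, e_{\lambda^\transpose})^- = (-1)^{NE(\lambda)+dN(\lambda)}.
\end{equation*}
On the other hand, expanding $e_{\lambda^\transpose} = \sum_\nu A_\nu s_\nu$ in the $s$-basis and invoking the off-diagonal orthogonality already established gives the alternative expression $(s_\lambda, e_{\lambda^\transpose})^- = A_\lambda (s_\lambda, s_\lambda)^-$.

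The main obstacle is to pin down the coefficient $A_\lambda = (-1)^{NE(\lambda)}$. My plan here is to pass to the polynomial realization $\OSym_n$ for $n \geq \h(\lambda)$ and compare leading monomials in lex order: \cref{sourceofne} provides
\begin{equation*}
e_{\lambda^\transpose}^{(n)} = (-1)^{NE(\lambda)} x^\lambda + \text{(a $\Z$-linear combination of lex-lower monomials)},
\end{equation*}
and an analogous leading-monomial analysis for $s_\lambda^{(n)}$, running induction downward in lex order on partitions $\lambda$ and using the defining unitriangular expansion in the $h$-basis together with the explicit leading monomials of the $h_\mu^{(n)}$, should identify the coefficient of $x^\lambda$ in $s_\lambda^{(n)}$ as $1$. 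Equating $x^\lambda$-coefficients on both sides of $e_{\lambda^\transpose}^{(n)} = \sum_\nu A_\nu s_\nu^{(n)}$ then forces $A_\lambda = (-1)^{NE(\lambda)}$, and combining with the earlier display yields $(s_\lambda, s_\lambda)^- = (-1)^{dN(\lambda)}$ as claimed.
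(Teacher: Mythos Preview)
The paper does not prove this statement; it is cited from \cite[Cor.~3.9]{EK}. Evaluating your argument on its own merits: your off-diagonal vanishing, the reduction $(s_\lambda,s_\lambda)^- = (s_\lambda,h_\lambda)^-$, and the first computation $(s_\lambda, e_{\lambda^\transpose})^- = (-1)^{NE(\lambda)+dN(\lambda)}$ via \cref{semiorthogonality} are all correct.

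The gap is in your determination of $A_\lambda$. The downward lex induction using the $h$-expansion and ``explicit leading monomials of the $h_\mu^{(n)}$'' does not work as stated: every $h_\mu^{(n)}$ with $|\mu|=N$ has the \emph{same} leading monomial $x_1^{N}$ with coefficient $1$, so unitriangularity of the $s$-to-$h$ transition gives no grip on the $x^\lambda$-coefficient of $s_\lambda^{(n)}$. (Even in the commutative theory the $x^\lambda$-coefficient of $h_\lambda$ is the number of $\N$-matrices with row and column sums $\lambda$, typically larger than $1$.) If you push the linear algebra further---using the lower-triangularity of $\big((s_\nu,e_{\mu^\transpose})^-\big)_{\nu,\mu}$ together with \cref{sourceofne}---you find only that the $x^\lambda$-coefficient of $s_\lambda^{(n)}$ equals $(-1)^{dN(\lambda)}(s_\lambda,s_\lambda)^-$. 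So the leading coefficient and the self-pairing determine one another, but neither is fixed by your argument alone.

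The missing ingredient is the action of $\psi$ on odd Schur functions, \cref{niall}. Applying $\psi$ to the defining $h$-expansion of $s_{\lambda^\transpose}$ and using $\psi(h_\mu)=(-1)^{|\mu|}e_\mu$ gives $s_\lambda = (-1)^{NE(\lambda)} e_{\lambda^\transpose} + (\text{a combination of }e_\mu\text{ with }\mu>_{\lex}\lambda^\transpose)$; inverting this unitriangular relation yields $A_\lambda = (-1)^{NE(\lambda)}$ directly, after which your two computations of $(s_\lambda,e_{\lambda^\transpose})^-$ combine to give $(s_\lambda,s_\lambda)^- = (-1)^{dN(\lambda)}$. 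This is precisely the route by which the paper later derives \cref{domnutlex} and \cref{triangularity}.
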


The {\em odd Kostka matrix} $(K_{\lambda,\mu})_{\lambda,\mu \in \Par}$ 
is the transition matrix defined from
\begin{equation}\label{kostka}
h_\mu = \sum_{\lambda\in\Par} K_{\lambda,\mu} s_\lambda.
\end{equation} 
There is an explicit formula for the entries of this matrix
derived in \cite[(3.7)]{EK}, as follows.
For a $\lambda$-tableau $T$ ($=$a function from the Young diagram of $\lambda$ to $\Z$), we let $N(T)$ be the number of pairs of boxes $(A,B)$ such that $B$ is strictly 
north of $A$ and also $T(B) \geq T(A)$.
For example, if $T$ is the 
unique semistandard $\lambda$-tableau of content $\lambda$ (so all entries on row $i$ are equal to $i$) then $N(T)=0$.
Then
\begin{equation}
K_{\lambda,\mu} = \sum_{T} (-1)^{N(T)}
\end{equation}
summing over semistandard $\lambda$-tableaux $T$ of content $\mu$.
Note from this description
that $K_{\lambda,\mu} = 0$ unless $\lambda \geq \mu$ in the dominance order.
So we actually have that
\begin{equation}\label{domnotlex}
s_\lambda = h_\lambda + \text{(a $\Z$-linear combination of
other $h_\mu$ for $\mu > \lambda$)}
\end{equation}
in the dominance rather than merely lexicographic ordering.

Since the involution $\psi_1 \psi_2$ in \cite{EK}
is our $\psi$ by \cref{eknotation}, 
\cite[Lem.~3.11]{EK} shows that
\begin{equation}
\label{niall}
\psi(s_\lambda) = (-1)^{NE(\lambda)+|\lambda|} s_{\lambda^\transpose}
\end{equation}
for any $\lambda \in \Par$. 
Hence, applying $\psi$ to \cref{domnotlex}, we have that
\begin{equation}\label{domnutlex}
s_{\lambda^\transpose} = (-1)^{NE(\lambda)} e_\lambda + \text{(a $\Z$-linear combination of
other $e_\mu$ for $\mu > \lambda$)}.
\end{equation}
From \cref{domnotlex,domnutlex}, we see in particular that
\begin{align}\label{hes}
s_{(r)} &= h_r,
&
s_{(1^r)}&=e_r.
\end{align}
Using also \cref{niall,silverton}, 
\cref{schurform} implies:

\begin{corollary}\label{schurformalt}
For $\lambda,\mu \in \Par$, we have that
$(s_\lambda,s_\mu)^+ = (-1)^{dE(\lambda)} \delta_{\lambda,\mu}$.
\end{corollary}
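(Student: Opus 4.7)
The plan is to deduce this from \cref{schurform} by applying the involution $\psi$. By \cref{silverton}, we have the identity $(a,b)^+ = (\psi(a),\psi(b))^-$ for all $a,b \in \OSym$, so
\[
(s_\lambda, s_\mu)^+ = \big(\psi(s_\lambda), \psi(s_\mu)\big)^-.
\]
Next I would invoke \cref{niall}, which gives $\psi(s_\lambda) = (-1)^{NE(\lambda)+|\lambda|} s_{\lambda^\transpose}$ and likewise for $\mu$. Pulling out the signs by bilinearity yields
\[
(s_\lambda, s_\mu)^+ = (-1)^{NE(\lambda)+|\lambda|+NE(\mu)+|\mu|}\, (s_{\lambda^\transpose}, s_{\mu^\transpose})^-.
\]

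Now I would apply \cref{schurform} to the transposed partitions: this inner product vanishes unless $\lambda^\transpose = \mu^\transpose$, i.e., $\lambda = \mu$, in which case it equals $(-1)^{dN(\lambda^\transpose)}$. Using the identity $dN(\lambda^\transpose) = dE(\lambda)$ recorded in the \emph{General conventions}, we conclude
\[
(s_\lambda, s_\mu)^+ = (-1)^{2NE(\lambda)+2|\lambda|+dE(\lambda)}\,\delta_{\lambda,\mu} = (-1)^{dE(\lambda)}\,\delta_{\lambda,\mu},
\]
as required. The doubled sign contributions are even and therefore drop out, leaving only the $dE(\lambda)$ contribution, which is precisely the expected sign. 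There is no real obstacle here; the only subtlety is the bookkeeping of signs, and it is essentially forced once one notices the identity $dN(\lambda^\transpose) = dE(\lambda)$ between the combinatorial statistics attached to $\lambda$ and its transpose.
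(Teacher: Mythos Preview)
Your proof is correct and follows exactly the approach indicated in the paper, which deduces the corollary from \cref{schurform} via \cref{niall} and \cref{silverton}. The sign bookkeeping you carry out, including the identification $dN(\lambda^\transpose)=dE(\lambda)$, is precisely what is needed.
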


Applying \cref{niall} one more time, 
this time combined with the first identity from \cref{silverton}, 
it follows that 
$s_\lambda$ can also be characterized as the unique element of
$\OSym$ such that $\big(s_\lambda, e_\mu\big)^+ = 0$ for $\mu >_{\lex} \lambda^\transpose$
and $s_\lambda = (-1)^{NE(\lambda)} e_{\lambda^\transpose}+$(a $\Z$-linear combination of $e_{\mu}$ for $\mu >_\lex \lambda^\transpose$).
This characterization plus \cref{babies} and 
the second identity from \cref{silverton} implies that
\begin{equation}\label{bears}
\smiley(s_\lambda)^* = (-1)^{dN(\lambda)+dE(\lambda)} s_\lambda.
\end{equation}
We define the {\em dual odd Schur function} 
\begin{equation}\label{littlebears}
\sigma_\lambda := \smiley(s_\lambda) = (-1)^{dN(\lambda)+dE(\lambda)} s_\lambda^*.
\end{equation}
In particular, applying 
$\smiley$ to \cref{hes} and using the definitions \cref{toe,tee}, we have that
\begin{align}\label{hes2}
\sigma_{(r)} &= \eta_r,
&
\sigma_{(1^r)}&=\eps_r.
\end{align}
The {\em odd Schur polynomial}
 $s^{(n)}_\lambda$ and 
the {\em dual odd Schur polynomial} 
 $\sigma^{(n)}_\lambda$
 are the images of $s_\lambda$
and $\sigma_\lambda$
under the quotient map $\pi_n:\OSym \rightarrow \OSym_n$, respectively.
The dual odd Schur polynomials coincide with the polynomials 
introduced in \cite[Def.~4.10]{EKL} and 
play an important role in \cref{frobenius} below.

\begin{theorem}\label{alreadybasis}
The set $\big\{s^{(n)}_\lambda\:\big|\:\lambda \in \SPar{n}\big\}$ is a basis for $\OSym_n$. Moreover, for any $\lambda \in \Par$, we have that
\begin{equation}\label{triangularity}
s_{\lambda}^{(n)} = 
\left\{
\begin{array}{ll}
x^\lambda+\text{(a $\Z$-linear combination of $x^\kappa$ for
$\kappa \in \N^n$ with $\kappa < \lambda$)}&\text{if $\h(\lambda)\leq n$}\\
0&\text{if $\h(\lambda)> n$.}
\end{array}\right.
\end{equation}
\end{theorem}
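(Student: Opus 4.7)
The plan is to deduce both claims from the expansion of odd Schur functions in terms of odd elementary symmetric functions already recorded in the excerpt, combined with \cref{whatekldo} and \cref{sourceofne}.

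Recall from \cref{domnutlex} (applied with $\lambda$ replaced by $\lambda^\transpose$) that
\[
s_\lambda \;=\; (-1)^{NE(\lambda^\transpose)} e_{\lambda^\transpose} \;+\; \sum_{\mu > \lambda^\transpose} c_\mu\, e_\mu,
\]
the sum being over partitions $\mu$ strictly greater than $\lambda^\transpose$ in the dominance order. My first step is to apply $\pi_n$ to this identity. For the vanishing claim, assume $\h(\lambda) > n$, so $\lambda^\transpose_1 = \h(\lambda) > n$. Any $\mu \geq \lambda^\transpose$ in dominance satisfies $\mu_1 \geq \lambda^\transpose_1 > n$, hence contains a part exceeding $n$; by \cref{epoly} this forces $e_\mu^{(n)} = 0$. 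Thus $s_\lambda^{(n)} = 0$.

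Now assume $\h(\lambda) \leq n$. By \cref{sourceofne}, $e_\nu^{(n)}$ (whenever it is nonzero) equals $(-1)^{NE(\nu^\transpose)} x^{\nu^\transpose}$ plus $\Z$-combinations of monomials $x^\kappa$ with $\kappa < \nu^\transpose$ in dominance on $\N^n$. The leading monomial of the principal term $(-1)^{NE(\lambda^\transpose)} e_{\lambda^\transpose}^{(n)}$ is therefore $(-1)^{NE(\lambda^\transpose)+NE(\lambda)} x^\lambda$. A short combinatorial observation---the bijection $(A,B) \leftrightarrow (B',A')$ between boxes in $\lambda$ and in $\lambda^\transpose$ interchanges the NE relation with its reverse---shows $NE(\lambda) = NE(\lambda^\transpose)$, so this leading coefficient is $+1$. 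For each remaining $\mu > \lambda^\transpose$ with $e_\mu^{(n)} \neq 0$, transposition reverses dominance, so $\mu^\transpose < \lambda$; hence the leading monomial $\pm x^{\mu^\transpose}$ and all lower monomials of $e_\mu^{(n)}$ are $< \lambda$. This proves the triangularity statement \cref{triangularity}.

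Finally, part (1) follows from part (2) by a standard triangularity argument. The elements $s_\lambda^{(n)}$ for $\lambda \in \SPar{n}$ are linearly independent in $\OPol_n$ because their leading monomials $x^\lambda$ are distinct, and they are evidently homogeneous of the same bidegree as the corresponding basis elements $e_{\lambda^\transpose}^{(n)}$ from \cref{whatekldo}. Since $|\SPar{n}|$ equals the cardinality of the basis in \cref{whatekldo} and the graded superdimensions of $\OSym_n$ in each bidegree match, the $s_\lambda^{(n)}$ $(\lambda \in \SPar{n})$ also form a basis. The only mildly delicate point is the parity check $NE(\lambda) \equiv NE(\lambda^\transpose) \pmod 2$, which is the combinatorial content of the argument.
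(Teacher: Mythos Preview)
Your proof is correct and follows essentially the same approach as the paper, which simply cites \cref{domnutlex}, \cref{whatekldo}, and \cref{sourceofne} without further comment. You have filled in the details appropriately, including the necessary combinatorial observation $NE(\lambda) = NE(\lambda^\transpose)$ needed to pin down the leading coefficient of $s_\lambda^{(n)}$ as exactly $+1$ rather than merely $\pm 1$.
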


\begin{proof}
This follows from \cref{domnutlex,whatekldo} plus \cref{sourceofne}.
\end{proof}

\begin{corollary}\label{rangoon}
The set $\big\{\sigma^{(n)}_\lambda\:\big|\:\lambda \in \SPar{n}\big\}$ is a basis for $\OSym_n$. Moreover, 
$\sigma^{(n)}_\lambda = 0$ for $\lambda \in \Par$ with $\h(\lambda) > n$.
\end{corollary}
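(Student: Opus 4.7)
The plan is to derive this corollary as a direct consequence of \cref{alreadybasis} by transporting everything along the involution $\smiley$.

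First, I would observe that $\smiley_n$ restricts to an automorphism of $\OSym_n$. Indeed, \cref{mainfact} states that $\pi_n \circ \smiley = \smiley_n \circ \pi_n$, so
\[
\smiley_n(\OSym_n) = \smiley_n(\pi_n(\OSym)) = \pi_n(\smiley(\OSym)) = \pi_n(\OSym) = \OSym_n,
\]
using that $\smiley$ is an involution of $\OSym$. Hence the restriction $\smiley_n|_{\OSym_n}$ is a graded superalgebra automorphism of $\OSym_n$.

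Next, by the definition $\sigma_\lambda := \smiley(s_\lambda)$ from \cref{littlebears} and the intertwining relation from \cref{mainfact}, we have
\[
\sigma_\lambda^{(n)} = \pi_n(\smiley(s_\lambda)) = \smiley_n(\pi_n(s_\lambda)) = \smiley_n\bigl(s_\lambda^{(n)}\bigr).
\]
Both claims now follow by applying the automorphism $\smiley_n$ to the conclusions of \cref{alreadybasis}: for $\h(\lambda) > n$ the vanishing $s_\lambda^{(n)} = 0$ forces $\sigma_\lambda^{(n)} = 0$; and for $\lambda \in \SPar{n}$, the automorphism sends the basis $\{s_\lambda^{(n)}\}$ to the set $\{\sigma_\lambda^{(n)}\}$, which is therefore also a basis of $\OSym_n$.

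There is no substantial obstacle here; the only thing to verify is the initial observation that $\smiley_n$ preserves the subalgebra $\OSym_n$, and this is immediate from the intertwining property established in \cref{mainfact}.
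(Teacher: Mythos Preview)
Your proof is correct and takes exactly the same approach as the paper, which simply says ``Apply $\smiley_n$ to the results established in the theorem.'' You have helpfully spelled out the one point that requires checking, namely that $\smiley_n$ preserves the subalgebra $\OSym_n$, which the paper takes for granted.
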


\begin{proof}
Apply $\smiley_n$ to the results established in the theorem.
\end{proof}

\begin{corollary}
The set $\big\{h^{(n)}_\lambda\:\big|\:\lambda \in \SPar{n}\big\}$ is a basis for $\OSym_n$.
\end{corollary}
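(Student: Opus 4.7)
My plan is to exhibit a unitriangular change of basis relating $\big\{h_\lambda^{(n)}\:\big|\:\lambda \in \SPar{n}\big\}$ to the basis $\big\{s_\lambda^{(n)}\:\big|\:\lambda \in \SPar{n}\big\}$ already established in \cref{alreadybasis}, then invoke invertibility of the transition matrix.

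I would begin by using the defining identity \cref{kostka} for the odd Kostka matrix, together with two of its stated properties: $K_{\lambda,\mu}=0$ unless $\lambda \geq \mu$ in dominance order, and $K_{\mu,\mu}=1$. The latter holds by the tableau formula, since the only semistandard $\mu$-tableau of content $\mu$ is the one with row $i$ filled by $i$'s, and in the English convention this tableau has $N(T)=0$ (a box $B$ strictly north of $A$ has row index smaller than $A$'s, hence strictly smaller entry). This yields, for every $\mu \in \Par$, the expansion
\begin{equation*}
h_\mu = s_\mu + \sum_{\lambda > \mu} K_{\lambda,\mu}\, s_\lambda
\end{equation*}
in $\OSym$, with sum over partitions $\lambda$ of $|\mu|$ with $\lambda > \mu$ in dominance.

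The essential observation is that dominance order decreases height: if $\lambda \geq \mu$ in dominance then $\lambda^\transpose \leq \mu^\transpose$, and comparing the first parts gives $\h(\lambda) = \lambda^\transpose_1 \leq \mu^\transpose_1 = \h(\mu)$. In particular, for $\mu \in \SPar{n}$, every $\lambda$ contributing nontrivially to the expansion above satisfies $\h(\lambda) \leq n$, i.e. $\lambda \in \SPar{n}$. Applying $\pi_n:\OSym\twoheadrightarrow\OSym_n$ therefore produces
\begin{equation*}
h_\mu^{(n)} = s_\mu^{(n)} + \sum_{\substack{\lambda > \mu \\ \lambda \in \SPar{n}}} K_{\lambda,\mu}\, s_\lambda^{(n)}
\qquad (\mu \in \SPar{n}).
\end{equation*}

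With respect to any linear refinement of dominance order on $\SPar{n}$, this is a unitriangular $\Z$-linear change of basis, hence invertible. Combined with \cref{alreadybasis}, the set $\big\{h_\mu^{(n)}\:\big|\:\mu \in \SPar{n}\big\}$ is then a basis of $\OSym_n$. There is no substantive obstacle in this proof: the only step requiring care is the compatibility of dominance with height, which is just dominance-reversal under transposition applied to the first part.
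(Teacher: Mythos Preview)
Your proof is correct and takes essentially the same approach as the paper: both use the unitriangular transition between the $h_\lambda$ and $s_\lambda$ with respect to dominance order, together with the key observation that $\lambda > \mu \in \SPar{n}$ forces $\lambda \in \SPar{n}$. The paper runs the triangularity in the other direction (expressing $s_\lambda$ in terms of $h_\mu$'s via \cref{domnotlex}) to get spanning and then appeals to graded dimension, whereas you invert the Kostka matrix directly; this is a cosmetic difference.
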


\begin{proof}
By graded dimension considerations, it suffices to show that
$\big\{h^{(n)}_\lambda\:\big|\:\lambda \in \SPar{n}\big\}$
spans $\OSym_n$. This follows from \cref{alreadybasis}
using \cref{domnotlex}
and also the observation that
$\mu > \lambda \in \SPar{n}\Rightarrow \mu \in \SPar{n}$.
\end{proof}

The next result was originally formulated as a conjecture in \cite[Conj.~5.3]{EKL}, and the conjecture was proved in \cite[Th.~3.8]{E}. 
However, we also need to reformulate it using our sign conventions, and for this we need a preliminary lemma.

\begin{lemma}\label{whysohard}
For $f \in \OSym_{n-1}$, $m \geq 0$
and $k=1,\dots,n$, we have that
$$
\tau_{k-1} \cdots \tau_1 x_1^{m+k-1} \cdot \sig_1(f)
= \sum_{i=0}^{k-1} \sig_i\big(h_{m+i}^{(k-i)}\big)
\tau_i \cdots \tau_1 \cdot \sig_1(f),
$$
equality in the $\ONH_n$-supermodule $\OPol_n$.
\end{lemma}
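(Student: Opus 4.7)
The plan is to package all $m \geq 0$ into a single generating function identity in $\OPol_n \lround t^{-1} \rround$ and establish it by induction on $k$. Using the formulas $h^{(n)}(t) = (t-x_n)^{-1}\cdots(t-x_1)^{-1}$ from \cref{littlerock} so that $h_r^{(n)}$ is the coefficient of $t^{-n-r}$, one sees that $\sig_i\bigl(h^{(k-i)}(t)\bigr) = (t-x_k)^{-1}\cdots(t-x_{i+1})^{-1}$ has $t^{-m-k}$-coefficient equal to $\sig_i\bigl(h_{m+i}^{(k-i)}\bigr)$. The lemma is therefore equivalent to the identity
\begin{equation*}
\tau_{k-1}\cdots\tau_1\cdot\bigl[(t-x_1)^{-1}\sig_1(f)\bigr]
= \sum_{i=0}^{k-1}\sig_i\bigl(h^{(k-i)}(t)\bigr)\,\tau_i\cdots\tau_1\cdot\sig_1(f),
\end{equation*}
after which the desired formula follows by extracting the $t^{-m-k}$-coefficient.

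The base case $k=1$ is immediate. For the inductive step, I would apply the induction hypothesis and then hit the result with $\tau_{k-1}$, expanding each summand $\sig_i\bigl(h^{(k-1-i)}(t)\bigr)\cdot y_i$ (where $y_i := \tau_i\cdots\tau_1\cdot\sig_1(f)$) using the twisted Leibniz rule \cref{betteract}. For the first Leibniz term, factor $\sig_i\bigl(h^{(k-1-i)}(t)\bigr) = (t-x_{k-1})^{-1}\cdot h$ where $h = (t-x_{k-2})^{-1}\cdots(t-x_{i+1})^{-1}$ involves only $x_{i+1},\dots,x_{k-2}$ (read as $1$ when $i = k-2$). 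Since $\partial_{k-1}(x_j) = 0$ for $j \neq k-1,k$, a routine Leibniz induction shows $\tau_{k-1}\cdot h = 0$, so applying $\tau_{k-1}$ to the factored form and invoking \cref{rank23} yields $(t-x_k)^{-1}(t-x_{k-1})^{-1} h\cdot y_i = \sig_i\bigl(h^{(k-i)}(t)\bigr)\cdot y_i$, which supplies the $i = 0,\dots,k-2$ summands of the target.

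The delicate point is the second Leibniz term $\bigl({}^{s_{k-1}}\sig_i\bigl(h^{(k-1-i)}(t)\bigr)\bigr)\cdot(\tau_{k-1}\cdot y_i)$: for $i \leq k-3$ the anticommutation relations \cref{ONH3} give $\tau_{k-1}\cdot y_i = (-1)^i \tau_i\cdots\tau_1\cdot(\tau_{k-1}\cdot\sig_1(f))$, and this vanishes because for $f \in \OSym_{n-1}$ the identities of \cref{aaaah,smileyanddot} give $\tau_{k-1}\cdot \sig_1(f) = \sig_1(\tau_{k-2}\cdot f) = 0$ when $3 \leq k \leq n$. Hence only the $i = k-2$ term survives, and there ${}^{s_{k-1}}(t-x_{k-1})^{-1} = (t-x_k)^{-1} = \sig_{k-1}\bigl(h^{(1)}(t)\bigr)$, producing exactly the missing $i = k-1$ summand. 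The main technical obstacle—and the reason the sum on the right does not collapse to a single term—is that $\sig_1(f)$ is killed by $\tau_2,\dots,\tau_{n-1}$ but generally \emph{not} by $\tau_1$; the generating function recursion is what cleanly tracks how this ``leakage'' of $\tau_1$ into $\sig_1(f)$ accumulates into the nontrivial $i > 0$ summands as $k$ grows.
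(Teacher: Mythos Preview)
Your proof is correct and follows essentially the same argument as the paper: both proceed by induction on $k$, apply the twisted Leibniz rule \cref{betteract}, and use the vanishing $\tau_j\cdot\sig_1(f)=0$ for $j\geq 2$ coming from $f\in\OSym_{n-1}$ via \cref{aaaah,smileyanddot}. The only difference is packaging: you bundle all $m$ into a generating function and appeal to \cref{rank23} and the factorization $\sig_i(h^{(k-i)}(t))=(t-x_k)^{-1}\cdots(t-x_{i+1})^{-1}$, whereas the paper works coefficient-by-coefficient and invokes the identity $\tau_{k-i}\cdot h_{m+i+1}^{(k-i)}=h_{m+i}^{(k-i+1)}$ derived from \cref{rank22}; these are the same computation viewed at two levels.
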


\begin{proof}
We prove this by induction on $k$, the case $k=1$ being trivial. 
For the induction step, we have by induction that
$
\tau_{k-1}\cdots \tau_1 x_1^{m+k} \cdot \sig_1(f)=
\sum_{i=0}^{k-1}\sig_i\big(h_{m+i+1}^{(k-i)}\big)\tau_i\cdots \tau_1 \cdot \sig_1(f).
$
Applying $\tau_k$ to both sides, we deduce that
$$
\tau_{k}\cdots \tau_1 x_1^{m+k}\cdot \sig_1(f)=
\sum_{i=0}^{k-1}\left(\tau_k \cdot \sig_i\big(h_{m+i+1}^{(k-i)}\big)\right)\tau_i\cdots \tau_1 \cdot \sig_1(f)
+
\sum_{i=0}^{k-1}
{^{s_k}}\sig_i\big(h_{m+1+i}^{(k-i)}\big)\tau_k \tau_i\cdots \tau_1 \cdot \sig_1(f).
$$
By \cref{rank22}, we have that $\tau_{k-i} \cdot h_{m+i+1}^{(k-i)}
= h_{m+i}^{(k-i+1)}$, so the $i$th term in the first summation becomes
$$
\sig_i\big(\tau_{k-i} \cdot h_{m+i+1}^{(k-i)}\big)
\tau_i\cdots \tau_1 \cdot \sig_1(f) = \sig_i \big(h_{m+i}^{(k+1-i)}\big)\tau_i \cdots \tau_1 \cdot \sig_1(f).
$$
The second summation gives zero except when $i=k-1$, when it gives
$\sig_k\big(h_{m+k}^{(1)}\big) \tau_k \cdots \tau_1 \cdot \sig_1(f)$.
In total, we obtain the desired $\sum_{i=0}^k \sig_i\big(h_{m+i}^{(k+1-i)}\big)\tau_i \cdots \tau_1 \cdot \sig_1(f)$.
\end{proof}

Recall for
$\lambda \in \N^n$ 
that $x^\lambda$ denotes $x_1^{\lambda_1} \cdots x_n^{\lambda_n}$.

\begin{theorem}[{\cite[Th.~3.8]{E}}]\label{schurtheorem}
For $\lambda \in \SPar{n}$, we have that
$s^{(n)}_\lambda = (\omega \chi)_n \cdot x^\lambda$.
\end{theorem}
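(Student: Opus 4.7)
The strategy is to induct on $n$. The base case $n=1$ is immediate since $(\omega\chi)_1 = 1$, and $s^{(1)}_{(r)} = \pi_1(h_r) = x_1^r$ by \cref{hes} and \cref{hpoly}.

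For the inductive step, I apply the recursion $\omega_n\chi_n = \tau_{n-1}\cdots\tau_1 \cdot \sig_1((\omega\chi)_{n-1}) \cdot x_1^{n-1}$ from \cref{myshorthands}. The crucial observation is that $(\omega\chi)_{n-1}$, being an idempotent in a graded superalgebra, is automatically even, so by \cref{ONH1,ONH5} the element $\sig_1((\omega\chi)_{n-1}) \in \sig_1(\ONH_{n-1})$ commutes with $x_1^k$ for every $k$. Combined with \cref{smileyanddot} and the induction hypothesis applied to $\lambda' := (\lambda_2,\ldots,\lambda_n) \in \SPar{n-1}$, this gives
\[
(\omega\chi)_n \cdot x^\lambda \;=\; \tau_{n-1}\cdots\tau_1 \cdot \bigl(x_1^{\lambda_1+n-1}\,\sig_1(s^{(n-1)}_{\lambda'})\bigr).
\]
Now \cref{whysohard} with $k=n$, $m=\lambda_1$ and $f=s^{(n-1)}_{\lambda'}$ produces the expansion
\begin{equation*}
(\omega\chi)_n \cdot x^\lambda \;=\; \sum_{i=0}^{n-1} \sig_i\bigl(h^{(n-i)}_{\lambda_1+i}\bigr) \cdot \tau_i\cdots\tau_1 \cdot \sig_1(s^{(n-1)}_{\lambda'}). \tag{$\ast$}
\end{equation*}

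To identify $(\ast)$ with $s^{(n)}_\lambda$, I match monomial expansions in $\OPol_n$. Both sides lie in $\OSym_n$, the left by \cref{newsnow} and the right by construction, so we may apply \cref{alreadybasis}. A direct analysis shows $(\ast) = x^\lambda + (\text{combination of }x^\kappa\text{ with }\kappa<\lambda\text{ in dominance})$: the $i=0$ summand $h^{(n)}_{\lambda_1} \cdot \sig_1(s^{(n-1)}_{\lambda'})$ contributes the leading monomial $x^\lambda$ with coefficient $1$, obtained by multiplying the dominance-maximal factors $x_1^{\lambda_1}$ (from $h^{(n)}_{\lambda_1}$) and $x_2^{\lambda_2}\cdots x_n^{\lambda_n}$ (from $\sig_1(s^{(n-1)}_{\lambda'})$, by induction and \cref{triangularity}) in standard order without sign; while for $i\geq 1$ the factor $\sig_i(h^{(n-i)}_{\lambda_1+i})$ carries no $x_1$-degree and $\tau_i\cdots\tau_1 \cdot \sig_1(s^{(n-1)}_{\lambda'})$ has $x_1$-degree bounded by $\lambda_2-1<\lambda_1$. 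This bound holds because $\tau_j$ for $j\geq 2$ preserves $x_1$-degree (by the Leibniz rule \cref{betteract} since $^{s_j}\!x_1 = -x_1$ annihilates no $x_1$-content), while $\tau_1$ acting on a polynomial of $x_2$-degree at most $\lambda_2$ introduces $x_1$ to degree at most $\lambda_2 - 1$ via \cref{rank22}. Hence all $i\geq 1$ monomials satisfy $\kappa_1<\lambda_1$, so $\kappa<\lambda$ in dominance.

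The hard part is that the leading-term analysis alone does not pin down $s^{(n)}_\lambda$ uniquely among elements of $\OSym_n$ whose monomial expansion starts with $x^\lambda$; the strictly-lower terms could in principle differ. The final identification $(\ast) = s^{(n)}_\lambda$ thus requires matching all monomial coefficients, established by expanding $h^{(n)}_{\lambda_1} \cdot \sig_1(s^{(n-1)}_{\lambda'})$ via an odd Pieri decomposition and comparing to the inverse Kostka expansion $s_\lambda = h_\lambda + \sum_{\mu>\lambda} K_{\lambda,\mu}^{-1} h_\mu$ implicit in \cite[Cor.~3.9]{EK}, with the $i\geq 1$ correction terms in $(\ast)$ supplying the off-diagonal contributions. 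The delicate part of this combinatorial matching is bookkeeping the signs introduced by iterating the odd Leibniz rule \cref{betteract} on $\sig_1(s^{(n-1)}_{\lambda'})$ and reconciling them with our normalization $e_r = (-1)^{\binom{r}{2}}\epsilon_r$ that distinguishes this paper's conventions from those of \cite{EK,EKL} (cf. \cref{eknotation}).
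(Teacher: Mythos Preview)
Your inductive setup and leading-term analysis are essentially identical to what the paper does: the recursion $(\omega\chi)_n \cdot x^\lambda = \tau_{n-1}\cdots\tau_1 x_1^{\lambda_1+n-1}\cdot\sig_1\big((\omega\chi)_{n-1}\cdot x^{\lambda'}\big)$, the application of \cref{whysohard}, and the observation that only the $i=0$ summand contributes a monomial with $x_1$-exponent $\lambda_1$ all appear verbatim in the paper's proof. So the overlap is substantial.

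The crucial difference is what this leading-term computation is used for. The paper does \emph{not} attempt to prove the identity directly by induction. Instead, it first invokes the main theorem of \cite{E}, which establishes the original \cite{EKL} formula \cref{theirs}. The paper then argues via $\smiley_n$ and \cref{smileyanddot,smileysign} that the right-hand side of \cref{theirs} agrees with $(\omega\chi)_n\cdot x^\lambda$ up to an undetermined global sign. This already gives $(\omega\chi)_n\cdot x^\lambda = \pm s^{(n)}_\lambda$. The leading-term analysis is then used \emph{only} to pin down this sign as $+1$: knowing the answer is $\pm s^{(n)}_\lambda$, it suffices by \cref{triangularity} to check that the $x^\lambda$-coefficient is $1$, and the $i=0$ argument does exactly that.

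Your proposal tries to bypass the citation to \cite{E} entirely, which would be a genuinely independent proof. But you correctly identify in your final paragraph that the leading-term analysis is insufficient on its own: an element of $\OSym_n$ with leading monomial $x^\lambda$ is not uniquely determined, since $s^{(n)}_\lambda + c\, s^{(n)}_\mu$ for $\mu < \lambda$ with $|\mu|=|\lambda|$ has the same leading monomial. Your proposed fix---matching all monomial coefficients via an odd Pieri expansion against inverse Kostka numbers---is not actually carried out; you only describe it as ``delicate bookkeeping.'' This is precisely the content of \cite[Th.~3.8]{E}, so what you are proposing amounts to reproving that result. As written, the argument has a genuine gap at exactly the point you flag.
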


\begin{proof}
The original formula from \cite{EKL} took the form 
\begin{equation}\label{theirs}
s_\lambda = (-1)^{\binom{n}{3}} \left[\partial_{w_0}\Big(x_1^{\lambda_1} \cdots  x_n^{\lambda_n} x_1^{n-1}\cdots x_{n-2}^2 x_{n-1}\Big)\right]^{w_0},
\end{equation}
using their notation everywhere. 
The result was proved 
in \cite{E} with exactly this in place of our $(\omega\chi)_n \cdot x^\lambda$.
In \cref{theirs}, the conjugation by $w_0$ corresponds up to a sign to an application of our
involution $\smiley_n$, which commutes with the action of $\partial_{w_0}$, again up to a sign,
due to \cref{smileyanddot,smileysign}.
This shows that the right hand side of \cref{theirs}
is equal to $(\omega \chi)_n \cdot  x^\lambda$ up to a sign.
Hence, $(\omega \chi)_n \cdot x^\lambda = \pm s^{(n)}_\lambda$.
Presumably, one could see that the sign is actually a plus by carefully keeping track of all of the sign changes in this translation. However, this is rather prone to error, so we give an alternative approach. It suffices by \cref{triangularity} 
to check that the $x^\lambda$-coefficient of 
$(\omega\chi)_n \cdot x^\lambda$ is 1.
From \cref{myshorthands}, 
we have that $\omega_n = \tau_{n-1} \cdots \tau_1 \sig_1(\omega_{n-1})$
and
$\chi_n = \sig_1(\chi_{n-1}) x_1^{n-1}$.
Also $x^\lambda = x_1^{\lambda_1} \sig_1(x^{\mu})$
where $\mu = (\lambda_2,\dots,\lambda_n)$.
Using these and induction on $n$, we get that
\begin{align*}
(\omega \chi)_n \cdot  x^\lambda
&=
\tau_{n-1} \cdots \tau_1 x_1^{\lambda_1+n-1}\cdot \sig_1\big(\omega_{n-1} \chi_{n-1} \cdot x^{\mu}\big)\\
&=
\tau_{n-1} \cdots \tau_1  x_1^{\lambda_1+n-1} \cdot\sig_1\big(s_{\mu}^{(n-1)}\big)
= 
\sum_{i=0}^{n-1} \sig_i\big(h_{\lambda_1+i}^{(n-i)}\big)
\tau_i \cdots \tau_1 \cdot \sig_1\big(s_{\mu}^{(n-1)}\big),
\end{align*}
the last equality being an application of \cref{whysohard}.
Now we express this in terms of the monomial basis for $\OPol_n$.
The only place a monomial whose $x_1$-exponent is $\geq \lambda_1$
can arise is from the $i=0$ term, which is $h_{\lambda_1}^{(n)} \sig_1\big(s_{\mu}^{(n-1)}\big)$.
This has leading term exactly $x^\lambda$, as required.
\end{proof}

Now we are going to discuss a graded superalgebra which may be interpreted as the odd analog of the equivariant cohomology algebra of the Grassmannian.
We set things up initially in greater generality.
Switching our default choice of variable from $n$ to $\ell$ for reasons that will become clear shortly, suppose that $\alpha \in \Comp(k,\ell)$. This represents the ``shape" of a partial flag variety,
Grassmannians being the special case that $k=2$. Let
\begin{equation}
\OSym_\alpha := \bigcap_{\substack{i \in \{1,\dots,\ell\}\\ i\notin\{\alpha_1,\alpha_1+\alpha_2,\dots,\alpha_1+\cdots+\alpha_k\}}} \ker \partial_i=
\bigcap_{\substack{i \in \{1,\dots,\ell\}\\ i \notin\{\alpha_1,\alpha_1+\alpha_2,\dots,\alpha_1+\cdots+\alpha_k\}}} \im \partial_i,
\end{equation}
which is a subalgebra of $\OPol_\ell$ containing $\OSym_\ell$.
We think of $\OSym_\alpha$ as being the odd analog of the ring of ``partial" invariants $\k[x_1,\dots,x_\ell]^{\S_\alpha}$. For example, we have that
$\OSym_\alpha = \OSym_\ell$ if $\alpha = (\ell)$,
and $\OSym_\alpha = \OPol_\ell$ if $\alpha = (1^\ell)$.
Note also that the
superalgebra anti-involution $*$ of $\OPol_\ell$
leaves $\OSym_\alpha$ invariant, whereas the
involution $\smiley_\ell$ takes $\OSym_\alpha$ to $\OSym_{w_k(\alpha)}$
where $w_k(\alpha) = (\alpha_k,\dots,\alpha_1)$ is the reversed composition.

Consider the following diagram:
\begin{equation}\label{gymnight}
\begin{tikzcd}
\arrow[d,twoheadrightarrow,"\pi_\ell" left]\OSym \arrow[rr,"{\Delta^+_{k}}" above]&& \overbrace{\OSym\otimes\cdots\otimes \OSym}^{k\text{ times}}
\arrow[d,twoheadrightarrow,"\pi_{\alpha_1}\otimes\cdots\otimes \pi_{\alpha_k}" right]\\
\OSym_\ell\arrow[r,hookrightarrow]\arrow[d,hookrightarrow]&\OSym_\alpha
\arrow[r,equals] &\OSym_{\alpha_1}\otimes\cdots\otimes\OSym_{\alpha_k}\arrow[d,hookrightarrow]\\
\OPol_\ell\arrow[rr,equals]&&\OPol_{\alpha_1}\otimes\cdots\otimes \OPol_{\alpha_k}
\end{tikzcd}
\end{equation}
The top horizontal map 
$\Delta^+_{k}$ is the $(k-1)$th iteration of the comultiplication
$\Delta^+:\OSym \rightarrow \OSym \otimes \OSym$.
The bottom equality is the canonical identification
explained just after \cref{opol}, and the outside square commutes thanks
to \cref{tensorconvention}.
In view of \cref{aaaah}, the subalgebra $\OSym_\alpha$ of $\OPol_\ell$ is identified with
the subalgebra $\OSym_{\alpha_1}\otimes\cdots\otimes \OSym_{\alpha_k}$
of $\OPol_{\alpha_1}\otimes\cdots\otimes \OPol_{\alpha_k}$.
This shows that the natural inclusion of
$\OSym_\ell$ into $\OSym_\alpha$ is induced by the comultiplication $\Delta^+$.

Recall
that $w_\alpha$ is the longest element of $\S_\alpha$ and $w^\alpha$ is the longest element of $[\S_\ell / \S_\alpha]_{\min}$, so that $w_\ell = w^\alpha w_\alpha$.
Noting that $w_\alpha = w_{\alpha_1}\sig_{\alpha_1}\big(w_{\beta}\big)$
where $\beta := (\alpha_2,\dots,\alpha_k)$, we recursively define
\begin{align}\label{pudding}
\omega_\alpha &:=  \omega_{\alpha_1}\sig_{\alpha_1}\big(\omega_{\beta}\big) 
\:\:(\:= \pm \tau_{w_\alpha}),&
\chi_\alpha &:= \sig_{\alpha_1}\big(\chi_{\beta}\big)\chi_{\alpha_1}.
\end{align}
We get from \cref{normalization} and induction on $k$ that
\begin{equation}\label{lunch}
\omega_\alpha \cdot \chi_\alpha = 1
\end{equation}
for any $\alpha$.
The following identity is proved in the same way as \cref{skitime}:
\begin{equation}
\omega_\alpha \chi_\alpha \omega_\alpha = \omega_\alpha.
\end{equation}
Similarly to \cref{idempotents}, it follows that the elements
\begin{align}\label{idempotents2}
(\chi\omega)_\alpha &:= \chi_\alpha \omega_\alpha,
&
(\omega\chi)_\alpha &:= \omega_\alpha\chi_\alpha
\end{align}
are primitive idempotents in $\ONH_\alpha$ such that
\begin{align}
(\chi\omega)_\alpha \cdot \OPol_\ell &= \chi_\alpha \OSym_\alpha,&
(\omega\chi)_\alpha \cdot \OPol_\ell &= \OSym_\alpha.
\end{align}
Also let
$\omega^\alpha := \pm \tau_{w^\alpha}$ for the particular sign chosen so that
\begin{equation}\label{omegafact}
\omega_\ell = \omega^\alpha \omega_\alpha
\end{equation}
and let
\begin{equation}\label{chimudef}
\chi^\alpha := \omega_\alpha \cdot \chi_\ell\in \OSym_\alpha.
\end{equation}
We have that
\begin{align}\label{chifact}
\omega^\alpha \cdot \chi^\alpha &= 1,
&\chi_\ell = \chi_\alpha\, \chi^\alpha.
\end{align}
The first of these equalities follows because $\omega^\alpha \cdot \chi^\alpha 
= \omega^\alpha \omega_\alpha \cdot \chi_\ell = \omega_\ell \cdot \chi_\ell = 1$
by \cref{omegafact,normalization}.
To establish the second, one first checks that
$\chi_\alpha \chi^\alpha = \pm \chi_\ell$ for some choice of sign,
and the sign is plus because $\omega_\ell \cdot 
\chi_\alpha \chi^\alpha = \omega^\alpha \omega_\alpha \cdot \chi_\alpha \chi^\alpha
= \omega^\alpha \cdot \chi^\alpha = 1 = \omega_\ell \cdot \chi_\ell$.
Finally, we have that
\begin{equation}\label{cloudy}
\omega_\alpha \chi_\alpha \omega_\ell = \omega_\ell = \omega_\ell \chi_\alpha \omega_\alpha.
\end{equation}
This follows because all three expressions act
in the same way on $\chi_\ell \in \OPol_\ell$ due to \cref{normalization,lunch,chimudef,chifact}.

\begin{theorem}\label{ordered}
For $\alpha \in \Comp(k,\ell)$, the graded superalgebra
$\OSym_\alpha$ is free 
as a right $\OSym_\ell$-supermodule
with basis 
$\big\{p^{(\ell)}_w\:|\:w \in [\S_\ell / \S_\alpha]_{\min}\}$.
Each $p^{(\ell)}_w$ in this basis belongs to
the subalgebra $\OSym_\alpha \cap \OPol_{\ell-\alpha_k}$.
\end{theorem}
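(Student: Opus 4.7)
The statement has two parts—freeness of $\OSym_\alpha$ as a right $\OSym_\ell$-supermodule with the indicated basis, and the refinement that each basis element lies in $\OPol_{\ell-\alpha_k}$. My plan is to address these in three stages: first verify the containment $p^{(\ell)}_w\in\OSym_\alpha$ for $w\in[\S_\ell/\S_\alpha]_{\min}$; then upgrade to a basis statement by a graded-dimension comparison using \cref{multiidentity} and \cref{bakeoffOSYM}; and finally refine the containment to $\OPol_{\ell-\alpha_k}$, which I expect to be the most delicate step.

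For the first stage I would use the partial analogue of \cref{aaaah}, $\OSym_\alpha=\bigcap_{s_i\in\S_\alpha}\ker\partial_i$, proved exactly as in that corollary. Since $p^{(\ell)}_w=\tau_{w^{-1}w_\ell}\cdot\chi_\ell$, we have $\partial_i p^{(\ell)}_w=\tau_i\tau_{w^{-1}w_\ell}\cdot\chi_\ell$, and the standard identity $\tau_i\tau_v=0$ whenever $\ell(s_iv)<\ell(v)$ (obtained by starting a reduced expression of $v$ with $s_i$ and applying $\tau_i^2=0$) reduces matters to checking that $s_i(w^{-1}w_\ell)<w^{-1}w_\ell$. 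Multiplying on the right by $w_\ell$, which reverses lengths, translates this to $ws_i>w$, which is precisely the minimality condition defining $[\S_\ell/\S_\alpha]_{\min}$ when $s_i\in\S_\alpha$.

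For the second stage, linear independence of the set $\{p^{(\ell)}_w\mid w\in[\S_\ell/\S_\alpha]_{\min}\}$ over $\OSym_\ell$ is inherited from \cref{firstkeythm}. To see that the submodule $\bigoplus_w p^{(\ell)}_w\OSym_\ell$ already fills up $\OSym_\alpha$, I compare graded superdimensions: by \cref{multiidentity} the submodule has $\gsdim$ equal to $\sum_{w\in[\S_\ell/\S_\alpha]_{\min}}(\pi q^2)^{\ell(w)}\gsdim\OSym_\ell=q^{N(\alpha)}\sqbinom{\ell}{\alpha}_{q,\pi}\gsdim\OSym_\ell$, while the identification $\OSym_\alpha\simeq\OSym_{\alpha_1}\otimes\cdots\otimes\OSym_{\alpha_k}$ from \cref{gymnight} combined with \cref{bakeoffOSYM} and the formula $[n]^!_{q,\pi}=q^{-\binom{n}{2}}\prod_{r=1}^n(1-(\pi q^2)^r)/(1-\pi q^2)^n$ produces the same expression. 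Equality of dimensions then forces equality of modules.

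The last stage is the hardest. For the longest basis element $\chi^\alpha=p^{(\ell)}_{w^\alpha}$ I plan to peel off the last block via the factorization $\omega_\alpha=\pm\omega_{\alpha^-}\sig_{\ell-\alpha_k}(\omega_{\alpha_k})$ with $\alpha^-=(\alpha_1,\dots,\alpha_{k-1})$, obtained from \cref{pudding} together with the anticommutativity of $\tau$'s with distant indices. Pairing this with the matching factorization $\chi_\ell=\sig_{\ell-\alpha_k}(\chi_{\alpha_k})\cdot\chi^{\mathrm{low}}$ where $\chi^{\mathrm{low}}\in\OPol_{\ell-\alpha_k}$, and invoking \cref{normalization} and \cref{smileyanddot}, one finds $\sig_{\ell-\alpha_k}(\omega_{\alpha_k})\cdot\chi_\ell=\chi^{\mathrm{low}}$; then $\omega_{\alpha^-}$ preserves $\OPol_{\ell-\alpha_k}$ since it only involves $\tau_j$ with $j<\ell-\alpha_k$, yielding $\chi^\alpha\in\OPol_{\ell-\alpha_k}$. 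For a general $w\in[\S_\ell/\S_\alpha]_{\min}$, the length-additive factorization $w^{-1}w_\ell=(w^{-1}w^\alpha)w_\alpha$ (length additivity verified from $\ell(w^{-1}w^\alpha)\geq\ell(w^\alpha)-\ell(w)$) gives $p^{(\ell)}_w=\pm\tau_v\cdot\chi^\alpha$ with $v=w^{-1}w^\alpha$. Since stage~1 already forces $p^{(\ell)}_w$ to be odd-symmetric in $x_{\ell-\alpha_k+1},\dots,x_\ell$, the remaining task is to rule out any $x_\ell$-dependence—a genuine obstacle, because $\tau_v$ typically introduces higher variables at intermediate stages. I expect the required cancellation can be extracted either by induction on $\ell(w^\alpha)-\ell(w)$ reducing back to $\chi^\alpha$, or by first handling the Grassmannian subcase $\alpha=(\ell-\alpha_k,\alpha_k)$—where \cref{schurtheorem} identifies $p^{(\ell)}_w$ with an odd Schur polynomial $s^{(\ell-\alpha_k)}_\lambda$ in $x_1,\dots,x_{\ell-\alpha_k}$—and then factoring $w$ through the intermediate parabolic $\S_\ell\supseteq\S_{(\ell-\alpha_k,\alpha_k)}\supseteq\S_\alpha$ to bootstrap to the general case.
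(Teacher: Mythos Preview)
Your first two stages are correct and match the paper's argument essentially verbatim: the containment $p^{(\ell)}_w\in\OSym_\alpha$ via the left-descent check, linear independence inherited from \cref{firstkeythm}, and the dimension count via \cref{multiidentity} and \cref{bakeoffOSYM}.

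Stage three, however, contains a genuine error. Your claimed length-additive factorization $w^{-1}w_\ell=(w^{-1}w^\alpha)\,w_\alpha$ is \emph{not} length-additive. Take $\ell=3$, $\alpha=(1,2)$, $w=s_1$: then $w^\alpha=s_2s_1$, so $w^{-1}w^\alpha=s_1s_2s_1$ has length $3$ and $w_\alpha=s_2$ has length $1$, but $w^{-1}w_\ell=s_2s_1$ has length only $2$. Hence $\tau_{w^{-1}w_\ell}\neq\pm\tau_{w^{-1}w^\alpha}\,\omega_\alpha$, and you cannot write $p^{(\ell)}_w=\pm\tau_v\cdot\chi^\alpha$. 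The inequality $\ell(w^{-1}w^\alpha)\geq\ell(w^\alpha)-\ell(w)$ that you cite is the triangle inequality and does not yield length additivity; indeed it goes the wrong way here.

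The paper instead factors on the \emph{other} side: since $w\in[\S_\ell/\S_\alpha]_{\min}$, every $s_i\in\S_\alpha$ is a left descent of $w^{-1}w_\ell$, so the standard parabolic decomposition gives $w^{-1}w_\ell=w_\alpha\,w'$ with $w'\in[\S_\alpha\backslash\S_\ell]_{\min}$ length-additively, whence $\tau_{w^{-1}w_\ell}=\pm\sig_{\ell-\alpha_k}(\omega_{\alpha_k})\,\tau_{w''}$ for some $w''$ (peeling the last block off the \emph{left}). Then one invokes \cref{aswell}: the span $\k G$ of monomials $x_\ell^{\kappa_\ell}\cdots x_1^{\kappa_1}$ with $0\leq\kappa_i\leq\ell-i$ is stable under every $\tau_j$ and contains $\chi_\ell$, so $\tau_{w''}\cdot\chi_\ell\in\k G$. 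Applying $\sig_{\ell-\alpha_k}(\omega_{\alpha_k})$ to such a monomial kills the top block $x_\ell^{\kappa_\ell}\cdots x_{\ell-\alpha_k+1}^{\kappa_{\ell-\alpha_k+1}}$ to a scalar (by \cref{normalization} and degree), leaving something in $\OPol_{\ell-\alpha_k}$. This bypasses the obstacle you identified entirely, and in particular avoids the appeal to \cref{schurtheorem} that your fallback plan~(b) would require.
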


\begin{proof}
By \cref{bakeoffOPOL} and \cref{multiidentity}, we have that
\begin{align}
\frac{\gsdim \OSym_\alpha}{\gsdim \OSym_\ell}
&=
\frac{q^{\binom{\ell}{2}}[\ell]^!_{q,\pi}}{
\prod_{i=1}^k q^{\binom{\alpha_i}{2}} [\alpha_i]^!_{q,\pi}}
=
q^{N(\alpha)}
\sqbinom{\ell}{\alpha}_{q,\pi}
= \sum_{w \in [\S_n / \S_\alpha]_{\min}}
(\pi q^2)^{\ell(w)}.\label{maybeuseful}
\end{align}
This is the graded rank of a free graded right
$\OSym_\ell$-supermodule with basis
$\big\{p^{(\ell)}_w\:|\:w \in [\S_\ell / \S_\alpha]_{\min}\}$.
So, to prove the theorem, it just remains to show that 
the elements  $p^{(\ell)}_w \:\left(w \in [\S_\ell / \S_\alpha]_{\min}\right)$
belong to $\OSym_{\alpha} \cap \OPol_{\ell-\alpha_k}$
and are linearly independent over $\OSym_\ell$.
The linear independence is immediate from \cref{firstkeythm}.

To show that $p_w^{(\ell)} \in \OSym_\alpha$, we 
need to show that $\partial_i(p^{(\ell)}_w) = 0$
 for all $i$ such that $s_i \in \S_\alpha$.
 We have that $w^{-1} w_\ell = w_\alpha w'$
for some $w' \in [\S_\alpha \backslash \S_\ell]_{\min}$.
So
$p_w^{(\ell)} = 
\tau_{w^{-1}w_\ell}\cdot \chi_\ell = \pm \omega_\alpha \tau_{w'}
\cdot \chi_\ell$.
Since $\ell(s_i w_\alpha) < \ell(w_\alpha)$ when
$s_i \in \S_\alpha$, the relations in $\ONH_\ell$
now imply that $\tau_i \cdot p_w^{(\ell)} = 0$.

To show that $p^{(\ell)}_w \in \OPol_{\ell-\alpha_k}$,
we again use $w^{-1} w_\ell = w_\alpha w'$
to deduce that
$\tau_{w^{-1}w_\ell} = 
\pm \sig_{\ell-\alpha_k}(\omega_{\alpha_k}) 
\tau_{w''}$ for some $w'' \in \S_\ell$.
By the argument explained in the last sentence of \cref{aswell}, it follows that
$p^{(\ell)}_w$ is a linear combination of terms of the form
$\sig_{\ell-\alpha_k}(\omega_{\alpha_k}) \cdot x^\kappa$
 for $\kappa \in \N^\ell$
with $0 \leq \kappa_i \leq \ell-i$ for all $i$.
It is now clear that $p^{(\ell)}_w \in \OPol_{\ell-\alpha_k}$
since $\sig_{\ell-\alpha_k}(\omega_{\alpha_k})
\cdot x_\ell^{\kappa_\ell} \cdots x_{\ell-\alpha_k+1}^{\kappa_{\ell-\alpha_k+1}}$ is a scalar by \cref{normalization} and
degree considerations.
\end{proof}

\begin{corollary}\label{frontandback}
Suppose that $\alpha \in \Comp(k,\ell)$ for $k \geq 1$.
\begin{enumerate}
\item
The graded superalgebra $\OSym_\alpha$ is a free as a graded right $\OSym_{(\alpha_1,\ell-\alpha_1)}$-supermodule
with basis $\Big\{\sig_{\alpha_1}(p_w^{(\ell-\alpha_1)})\:\Big|\:w \in (\S_{\ell-\alpha_1} / \S_{(\alpha_2,\dots,\alpha_k)})_{\operatorname{min}}\Big\}$.
\item
The graded superalgebra $\OSym_\alpha$ is free as a graded right $\OSym_{(\ell-\alpha_k,\alpha_k)}$-supermodule
with basis $\Big\{\smiley_{\ell-\alpha_k}\big(p_w^{(\ell-\alpha_k)}\big) \:\Big|\:w \in (\S_{\ell-\alpha_k} / \S_{(\alpha_{k-1},\dots,\alpha_{1})})_{\operatorname{min}}\Big\}$.
\end{enumerate}
All vectors in the bases described in (1)--(2) belong to the subalgebra
$\sig_{\alpha_1}\big(\OSym_{(\alpha_2,\dots,\alpha_{k-1})}\big)$.
\end{corollary}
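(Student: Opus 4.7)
The plan is to derive both parts from \cref{ordered} by exploiting the tensor factorization $\OPol_\ell=\OPol_{\alpha_1}\otimes\OPol_{\ell-\alpha_1}$ supplied by the bottom row of \cref{gymnight}, together with the involution $\smiley_\ell$ for part (2). For (1), under this identification we have $\OSym_\alpha=\OSym_{\alpha_1}\otimes\OSym_{(\alpha_2,\dots,\alpha_k)}$ and $\OSym_{(\alpha_1,\ell-\alpha_1)}=\OSym_{\alpha_1}\otimes\OSym_{\ell-\alpha_1}$, the second tensor factor in each case viewed inside $\OPol_{\ell-\alpha_1}$. Applying \cref{ordered} to the composition $(\alpha_2,\dots,\alpha_k)\in\Comp(k-1,\ell-\alpha_1)$ shows that $\OSym_{(\alpha_2,\dots,\alpha_k)}$ is free as a right $\OSym_{\ell-\alpha_1}$-supermodule on the basis $\big\{p^{(\ell-\alpha_1)}_w\:\big|\:w\in[\S_{\ell-\alpha_1}/\S_{(\alpha_2,\dots,\alpha_k)}]_{\min}\big\}$. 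Tensoring with the common factor $\OSym_{\alpha_1}$ (the super signs that arise when commuting the $1$-factor past this common factor merely relabel the coefficient elements and so cause no harm) produces the desired basis $\big\{1\otimes p^{(\ell-\alpha_1)}_w\big\}_w$, which under the identification becomes $\big\{\sig_{\alpha_1}(p^{(\ell-\alpha_1)}_w)\big\}_w$. To avoid any super-sign subtlety, one can alternatively just check the graded rank matches and linear independence follows from \cref{ordered} restricted to the right-hand tensor factor, exactly as in the proof of \cref{ordered} itself.

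For (2), we apply (1) to the reversed composition $\beta:=(\alpha_k,\alpha_{k-1},\dots,\alpha_1)\in\Comp(k,\ell)$, obtaining that $\OSym_\beta$ is free as a right $\OSym_{(\alpha_k,\ell-\alpha_k)}$-supermodule with basis $\big\{\sig_{\alpha_k}(p^{(\ell-\alpha_k)}_w)\big\}_{w\in[\S_{\ell-\alpha_k}/\S_{(\alpha_{k-1},\dots,\alpha_1)}]_{\min}}$. The superalgebra involution $\smiley_\ell$ of $\OPol_\ell$ sends $\OSym_\beta$ onto $\OSym_\alpha$ and $\OSym_{(\alpha_k,\ell-\alpha_k)}$ onto $\OSym_{(\ell-\alpha_k,\alpha_k)}$, and transports the basis above to the stated basis of $\OSym_\alpha$ over $\OSym_{(\ell-\alpha_k,\alpha_k)}$. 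The only nontrivial identification is that $\smiley_\ell\circ\sig_{\alpha_k}=\smiley_{\ell-\alpha_k}$ as maps $\OPol_{\ell-\alpha_k}\to\OPol_\ell$; indeed, both send $x_i\mapsto x_{\ell-\alpha_k+1-i}$, as is immediate from \cref{boreham} and the definition of $\sig_{\alpha_k}$.

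For the final claim, \cref{ordered} guarantees that each $p^{(\ell-\alpha_1)}_w$ lies in $\OSym_{(\alpha_2,\dots,\alpha_k)}\cap\OPol_{(\ell-\alpha_1)-\alpha_k}$. The key observation is that this intersection equals $\OSym_{(\alpha_2,\dots,\alpha_{k-1})}$ (viewed as a subalgebra of $\OPol_{\ell-\alpha_1-\alpha_k}$): symmetry in the last block of size $\alpha_k$ is vacuous for a polynomial that does not involve the variables indexed by that block, since the relevant $\partial_i$ annihilate such a polynomial automatically (using $\partial_i(1)=0$ and the twisted Leibniz rule \cref{betteract} with ${^{s_i}}x_j=-x_j$ for $j\neq i,i+1$). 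Therefore $\sig_{\alpha_1}(p^{(\ell-\alpha_1)}_w)\in\sig_{\alpha_1}(\OSym_{(\alpha_2,\dots,\alpha_{k-1})})$. The analogous statement for (2) follows by the same argument applied to $\beta$: one obtains $p^{(\ell-\alpha_k)}_w\in\OSym_{(\alpha_{k-1},\dots,\alpha_2)}\cap\OPol_{\ell-\alpha_k-\alpha_1}\subseteq\OPol_{\ell-\alpha_k}$, and then $\smiley_{\ell-\alpha_k}$ sends this into the image of $\sig_{\alpha_1}$ (since reversing the $\ell-\alpha_k$ variables sends the first $\ell-\alpha_k-\alpha_1$ onto the last $\ell-\alpha_k-\alpha_1$) and preserves symmetry in the blocks $(\alpha_{k-1},\dots,\alpha_2)$ read in the new order $(\alpha_2,\dots,\alpha_{k-1})$, landing inside $\sig_{\alpha_1}(\OSym_{(\alpha_2,\dots,\alpha_{k-1})})$.

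No step is genuinely difficult; the only place requiring care is the bookkeeping for the super signs appearing when one tensors the Theorem ordered basis with the common $\OSym_{\alpha_1}$ factor in (1), which is handled cleanly by instead comparing graded ranks via \cref{maybeuseful} and invoking linear independence from \cref{ordered}.
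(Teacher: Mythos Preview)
Your proof is correct and follows the same approach as the paper: part (1) comes from applying \cref{ordered} to $(\alpha_2,\dots,\alpha_k)\in\Comp(k-1,\ell-\alpha_1)$ inside the second tensor factor, and part (2) from applying $\smiley_\ell$ to part (1) for the reversed composition. The paper's proof is extremely terse (two sentences), whereas you have filled in the details, including the identification $\smiley_\ell\circ\sig_{\alpha_k}=\smiley_{\ell-\alpha_k}$ and the verification of the final membership claim via $\OSym_{(\alpha_2,\dots,\alpha_k)}\cap\OPol_{\ell-\alpha_1-\alpha_k}=\OSym_{(\alpha_2,\dots,\alpha_{k-1})}$, which the paper leaves implicit.
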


\begin{proof}
(1) This follows immediately from the theorem.

\vspace{1mm}
\noindent
(2) This follows by applying the involution $\smiley_\ell$
to the the result from (1) with $\alpha$ replaced by
the reverse composition $\alpha^\reverse$.
\end{proof}

Continuing with $\alpha \in \Comp(k,\ell)$, we need a few more pieces of notation. For $i=1,\dots,k$, we 
define
\begin{align}
h_r^{(\alpha;i)}&:= \sig_{\alpha_1+\cdots+\alpha_{i-1}}\big(h_r^{(\alpha_i)}\big),&
e_r^{(\alpha;i)}&:= \sig_{\alpha_1+\cdots+\alpha_{i-1}}\big(e_r^{(\alpha_i)}\big).
\end{align}
Under the identication of $\OSym_\alpha$
with $\OSym_{\alpha_1}\otimes\cdots\otimes\OSym_{\alpha_k}$ from \cref{gymnight}, these 
are
$1^{\otimes(i-1)}\otimes h_r^{(\alpha_i)}\otimes 1^{\otimes(k-i)}$
and 
$1^{\otimes(i-1)}\otimes e_r^{(\alpha_i)}\otimes 1^{\otimes(k-i)}$, respectively.
We use similar notation for other elements of $\OSym_\alpha$
such as $e_\lambda^{(\alpha;i)}$, $h_\lambda^{(\alpha;i)}$ and $s_\lambda^{(\alpha;i)}$
for $\lambda \in \Par$. 
From \cref{epoly,hpoly}, we get that
\begin{align}\label{fabric}
e^{(\ell)}_r &= \sum_{\substack{r_1,\dots,r_k \geq 0 \\ r_1+\cdots+r_k = r}}
e^{(\alpha;1)}_{r_1} \cdots e^{(\alpha;k)}_{r_k},
&h^{(\ell)}_r &= \sum_{\substack{r_1,\dots,r_k \geq 0 \\ r_1+\cdots+r_k = r}}
h^{(\alpha;k)}_{r_k} \cdots h^{(\alpha;1)}_{r_1}.
\end{align}
These are more convenient when written in terms of
the generating functions
\begin{align}
e^{(\alpha;i)}(t)&:= 
\sum_{r = 0}^{\alpha_i} (-1)^r e^{(\alpha;i)}_r t^{\alpha_i-r},
&
h^{(\alpha;i)}(t)&:= 
\sum_{r \geq 0} h^{(\alpha;i)}_r t^{-\alpha_i-r}.
\end{align}
Now the identities \cref{fabric} become
\begin{align}\label{pie}
e^{(\ell)}(t)
&:= 
e^{(\alpha;1)}(t) e^{(\alpha;2)}(t) \cdots e^{(\alpha;k)}(t),&h^{(\ell)}(t)
&:= 
h^{(\alpha;k)}(t) \cdots h^{(\alpha;2)}(t) h^{(\alpha;1)}(t).
\end{align}
These identities, which generalize \cref{stupid,littlerock}, together with the infinite Grassmannian relation \cref{newigr}
are useful when moving between different families of generators,
as illustrated by the following lemma.

\begin{lemma}\label{notsoeasypeasy}
Suppose that $\ell = n+n'$ and $r \geq 0$.
\begin{enumerate}
\item We have that
$\displaystyle\sum_{s=0}^r (-1)^s h_{r-s}^{(n)} e_s^{(\ell)} 
= (-1)^r \sig_n\big(e_r^{(n')}\big)$,
which is zero for $r > n'$.
\item We have that
$\displaystyle\sum_{s=0}^r (-1)^s e_s^{(\ell)} \sig_n\big(h_{r-s}^{(n')}\big)
= (-1)^r e_r^{(n)}$,
which is zero for $r > n$.
\end{enumerate}
\end{lemma}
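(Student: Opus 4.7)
The plan is to prove both identities by manipulating generating functions, applying the factorization \cref{pie} for the two-part composition $\alpha = (n,n')$ together with the infinite Grassmannian relation \cref{newigr}. Specifically, \cref{pie} gives the identities $e^{(\ell)}(t) = e^{(n)}(t)\,\sig_n\!\bigl(e^{(n')}(t)\bigr)$ and $h^{(\ell)}(t) = \sig_n\!\bigl(h^{(n')}(t)\bigr)\,h^{(n)}(t)$ in $\OSym_\ell\lround t^{-1}\rround$, and \cref{newigr} says that $h^{(m)}(t) e^{(m)}(t) = e^{(m)}(t) h^{(m)}(t) = 1$ for every $m$. No nontrivial commutativity between elements of $\OSym_n$ and $\sig_n(\OSym_{n'})$ is needed, since $t$ is central.

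For (1), I will multiply $e^{(\ell)}(t) = e^{(n)}(t)\,\sig_n(e^{(n')}(t))$ on the left by $h^{(n)}(t)$ and cancel the prefix $h^{(n)}(t) e^{(n)}(t) = 1$ to obtain $h^{(n)}(t) e^{(\ell)}(t) = \sig_n(e^{(n')}(t))$. Extracting the $t^{n'-r}$ coefficient on the left gives $\sum_{s=0}^{r} (-1)^s h_{r-s}^{(n)} e_s^{(\ell)}$, while extracting it on the right gives $(-1)^r \sig_n(e_r^{(n')})$, which is zero whenever $r > n'$ since $\sig_n(e^{(n')}(t))$ is a polynomial in $t$ of degree $n'$. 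This yields (1).

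For (2), I will multiply the same factorization $e^{(\ell)}(t) = e^{(n)}(t)\,\sig_n(e^{(n')}(t))$ on the right by $\sig_n(h^{(n')}(t))$ and cancel the suffix $\sig_n(e^{(n')}(t) h^{(n')}(t)) = 1$ to obtain $e^{(\ell)}(t)\,\sig_n(h^{(n')}(t)) = e^{(n)}(t)$. Extracting the $t^{n-r}$ coefficient on the left gives $\sum_{s=0}^{r} (-1)^s e_s^{(\ell)}\,\sig_n(h_{r-s}^{(n')})$, while on the right it gives $(-1)^r e_r^{(n)}$, which vanishes for $r > n$. This yields (2).

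There is no real obstacle; the only thing worth double-checking is the bookkeeping of $t$-exponents (the shift by $-\alpha_i$ in $h^{(\alpha;i)}(t)$ versus $+\alpha_i$ in $e^{(\alpha;i)}(t)$ is precisely what makes both cancellations land in the correct range). The argument thereby avoids any direct combinatorial manipulation of the monomial expansions of $e_r^{(\ell)}$ and $h_r^{(n)}$, which in the non-commutative odd setting would be significantly more delicate.
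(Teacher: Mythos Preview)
Your proposal is correct and follows essentially the same approach as the paper: the paper derives $\sig_n\bigl(e^{(n')}(t)\bigr) = h^{(n)}(t)\,e^{(\ell)}(t)$ from \cref{pie} with $\alpha=(n,n')$ together with \cref{newigr}, then equates $t^{n'-r}$-coefficients, and says part~(2) is similar. Your write-up spells out the coefficient bookkeeping more explicitly, but the argument is the same.
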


\begin{proof}
(1)
The first identity 
from \cref{pie} when $\alpha = (n,n')$ plus \cref{newigr}
gives that $$
\sig_n\big(e^{(n')}(t)\big) = h^{(n)}(t)e^{(\ell)}(t).
$$
Now
equate the coefficients of $t^{n'-r}$ on both sides.

\vspace{1mm}
\noindent
(2)
Similar.
\end{proof}

\begin{lemma}\label{neweasypeasy}
Suppose that $n \geq 0$. 
The following hold in $\OPol_{n+1}$ for any $m \geq 0$:
\begin{enumerate}
\item
$\displaystyle x_{n+1}^{m+n+1} = -\sum_{q=0}^{n} \left(\sum_{r=0}^m (-1)^{m+n+1-q-r} h_r^{(n+1)} e_{m+n+1-q-r}^{(n+1)}
\right) x_{n+1}^{q}$;
\item
$\displaystyle x_1^{m+n+1} = -\sum_{p=0}^{n} x_1^{p} \left(\sum_{s=0}^m
(-1)^{m+n+1-p-s} e^{(n+1)}_{m+n+1-p-s} h_s^{(n+1)} \right).$
\end{enumerate}
\end{lemma}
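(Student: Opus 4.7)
The plan is to derive both identities from the generating function form of the infinite Grassmannian relation \cref{newigr}, combined with \cref{pie} and \cref{notsoeasypeasy}. The key observation is that each identity amounts to a coefficient extraction from a ``partially inverted'' product of generating functions for odd elementary and complete symmetric polynomials.

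For part (1), I would apply \cref{pie} with $\alpha = (n,1)$ to obtain $e^{(n+1)}(t) = e^{(n)}(t)(t-x_{n+1})$ inside $\OPol_{n+1}[t]$, where $e^{(n)}(t) = (t-x_1)\cdots(t-x_n)$ is viewed in $\OPol_{n+1}$ via the natural inclusion. Combined with $h^{(n+1)}(t)\, e^{(n+1)}(t) = 1$ from \cref{newigr}, this yields
\[ h^{(n+1)}(t)\, e^{(n)}(t) = (t-x_{n+1})^{-1} \]
in $\OPol_{n+1}\lround t^{-1}\rround$. Extracting the coefficient of $t^{-m-n-2}$ on both sides gives the intermediate identity $x_{n+1}^{m+n+1} = \sum_{j=0}^n (-1)^j h_{m+n+1-j}^{(n+1)} e_j^{(n)}$. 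Next, \cref{notsoeasypeasy}(2) applied with $n'=1$ (using $\sig_n(h_{r-s}^{(1)}) = x_{n+1}^{r-s}$) inverts to give $e_j^{(n)} = \sum_{s=0}^j (-1)^{j-s} e_s^{(n+1)} x_{n+1}^{j-s}$. Substituting this expression and reindexing via $q := j-s$, $r := m+n+1-j$ yields a double sum of the expected shape, but with inner index $r$ ranging over $\{m+1, \dots, m+n+1-q\}$. A final application of $\sum_{r+j=k}(-1)^j h_r^{(n+1)} e_j^{(n+1)} = \delta_{k,0}$, valid because $k := m+n+1-q \geq m+1 \geq 1$ for $q \leq n$, converts that inner sum into $-\sum_{r=0}^m$ of the same summand, matching the stated formula.

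Part (2) follows by a completely parallel argument: use \cref{pie} with $\alpha = (1,n)$ to deduce $\sig_1(e^{(n)}(t))\, h^{(n+1)}(t) = (t-x_1)^{-1}$, extract the coefficient of $t^{-m-n-2}$, invoke \cref{notsoeasypeasy}(1) with $n=1$ and $\ell=n+1$ to rewrite $\sig_1(e_r^{(n)}) = \sum_{s=0}^r (-1)^{r-s} x_1^{r-s} e_s^{(n+1)}$, and perform the same reindexing followed by the infinite Grassmannian relation in the form $\sum_{j+r=k}(-1)^j e_j^{(n+1)} h_r^{(n+1)} = \delta_{k,0}$; the $x_1^p$ monomials then appear on the left as required. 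The main obstacle is not conceptual but combinatorial: carefully tracking signs and summation ranges across the two reindexings and the two applications of the infinite Grassmannian relation (one in $he$-order, one in $eh$-order). Every other step is a direct quotation of material already established earlier in the section.
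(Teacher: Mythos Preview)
Your proof is correct, but it takes a different route from the paper's. The paper proves (1) by induction on $m$: the base case $m=0$ follows from \cref{notsoeasypeasy}(2) (with $\ell,n,n',r$ replaced by $n+1,n,1,n+1$), and the induction step multiplies the known identity for $m$ on the right by $x_{n+1}$, then rewrites the resulting $q=n$ term using the infinite Grassmannian relation \cref{oddgrassmannian2} together with the $m=0$ case, and finally reindexes. Part (2) is then obtained by the parallel induction or by applying $\smiley_{n+1}\circ *$ to (1).

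Your generating-function argument bypasses the induction entirely: the single identity $h^{(n+1)}(t)\,e^{(n)}(t)=(t-x_{n+1})^{-1}$ encodes the whole family $x_{n+1}^{m+n+1}=\sum_{j=0}^n(-1)^j h_{m+n+1-j}^{(n+1)}e_j^{(n)}$ at once, and the substitution via \cref{notsoeasypeasy}(2) followed by the range-flip using \cref{oddgrassmannian2} (in the form $\sum_{r+s=k}(-1)^s h_r^{(n+1)}e_s^{(n+1)}=\delta_{k,0}$) delivers the target formula directly. The same holds for (2) with $e^{(n+1)}(t)h^{(n+1)}(t)=1$. Your approach is more uniform in $m$ and makes the structure of the identity transparent; the paper's inductive proof is more elementary step-by-step but requires tracking a somewhat intricate reindexing in the induction step. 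Both proofs rely on exactly the same ingredients (\cref{pie}, \cref{newigr}, \cref{notsoeasypeasy}, and the infinite Grassmannian relations), so neither is more economical in its hypotheses.
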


\begin{proof}
(1)
Induction on $m$.
The base case $m=0$ follows as $\sum_{q=0}^{n+1} (-1)^{n+1-q} e_{n+1-q}^{(n+1)} x_{n+1}^{q} = 0$
due to \cref{notsoeasypeasy}(2) with $\ell,n,n'$ and $r$ replaced with $n+1,n,1$ and $n+1$,
respectively.
Now take the identity we are trying to prove for some $m \geq 0$
and multiply on the right by $x_{n+1}$ to obtain
\begin{equation}\label{friday}
x_n^{m+1+n+1} = -\sum_{q=0}^{n} \left(\sum_{r=0}^m (-1)^{m+n+1-q-r} h_r^{(n+1)} e_{m+n+1-q-r}^{(n+1)}
\right) x_{n+1}^{q+1}.
\end{equation}
The $q=n$ term of the summation here is
$-\sum_{r=0}^m (-1)^{m+1-r} h_r^{(n+1)} e_{m+1-r}^{(n+1)} x_{n+1}^{n+1}$, which
equals $h_{m+1}^{(n+1)} x_{n+1}^{n+1}$ by the infinite Grassmannian relation 
\cref{oddgrassmannian2}. Using the $m=0$ case of the identity we are proving
this can then be rewritten as
$$
-\sum_{q=0}^{n} (-1)^{n+1-q} h_{m+1}^{(n+1)} e_{n+1-q}^{(n+1)} x_{n+1}^{q}.
$$
For the terms of \cref{friday} with $0 \leq q \leq n-1$, we reindex the summation 
replacing $q$ by $q-1$ to obtain
$$-\sum_{q=1}^{n} \left(\sum_{r=0}^m (-1)^{m+1+n+1-q-r} h_r^{(n+1)} e_{m+1+n+1-q-r}^{(n+1)}
\right) x_{n+1}^{q}.
$$
The expression in brackets is zero for $q=0$ since
$e_{m+1+n+1-r}^{(n+1)} = 0$ for all $0 \leq r \leq m$, so we can sum instead from $q=0$ to $n$.
Thus, we have shown that
\begin{align*}
x_{n+1}^{m+1+n+1}
&=
-\sum_{q=0}^{n} (-1)^{n+1-q} h_{m+1}^{(n+1)} e_{n+1-q}^{(n+1)} x_{n+1}^{q}
-\sum_{q=0}^{n} \left(\sum_{r=0}^m (-1)^{m+1+n+1-q-r} h_r^{(n+1)} e_{m+1+n+1-q-r}^{(n+1)}
\right) x_{n+1}^{q}\\
&=
-\sum_{q=0}^{n} 
\left((-1)^{n+1-q} h_{m+1}^{(n+1)} e_{n+1-q}^{(n+1)} +\sum_{r=0}^m (-1)^{m+1+n+1-q-r} h_r^{(n+1)} e_{m+1+n+1-q-r}^{(n+1)}
\right) x_{n+1}^{q}\\&=
-\sum_{q=1}^{n+1} 
\left(\sum_{r=0}^{m+1} (-1)^{m+1+n+1-q-r} h_r^{(n+1)} e_{m+1+n+1-q-r}^{(n+1)}
\right) x_{n+1}^{q}.
\end{align*}
This is just what is needed for the induction step.

\vspace{1mm}
\noindent
(2)
This is similar, or may be proved by applying $\smiley_{n+1}\circ *$ to (1).
\end{proof}

Now we focus on the most important case $k=2$, so 
$\alpha = (n,n') \in \Comp(2,\ell)$ for some $n,n' \geq 0$.
Then $\chi_\alpha = \chi_{(n,n')} = \sig_n(\chi_{n'}) \chi_n$.

\begin{theorem}\label{frobenius}
Suppose that $\ell=n+n'$ for $n,n' \geq 0$.
Then $\OSym_{(n,n')}$ has the following two bases as a free graded 
right $\OSym_\ell$-supermodule:
\begin{enumerate}
\item $\left\{s^{(n)}_{\lambda}\:\big|\:\lambda \in \GPar{n}{n'}\right\}$;
\item $\left\{\sig_n\big(\sigma^{(n')}_\mu\big)\:\big|\:\mu\in\GPar{n'}{n}\right\}$.
\end{enumerate}
Also let $\Tr:\OSym_{(n,n')} \rightarrow \OSym_\ell$ be the linear map
$a\mapsto \omega_\ell\: \chi_{(n,n')} \cdot a$.
This map is a homogeneous homomorphism of graded right $\OSym_\ell$-supermodules
of degree $-2nn'$ and parity $nn'\pmod{2}$, 
and the bases (1)--(2) satisfy
\begin{equation}\label{mainwork}
\Tr\left (s^{(n)}_{\lambda} \sig_n\big(\sigma^{(n')}_\mu\big)\right)
= \left\{
\begin{array}{ll}
\sign(\mu)&\text{if $\mu^\transpose_i = n'-\lambda_{n+1-i}\text{ for }i=1,\dots,n$}\\
0&\text{otherwise}
\end{array}\right.
\end{equation}
for $\sign(\mu) \in \{\pm 1\}$ with $\sign(\varnothing)=1$; see \cref{LR} below for a formula for $\sign(\mu)$ for general $\mu$.
\end{theorem}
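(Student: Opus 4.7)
The proof naturally splits into three pieces: establishing that $\Tr$ is a well-defined homogeneous right $\OSym_\ell$-module map of the stated degree and parity, reducing the basis statements (1)--(2) to the pairing identity \cref{mainwork}, and verifying \cref{mainwork}.

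\emph{Well-definedness of $\Tr$.} The image of $\Tr$ lies in $\OSym_\ell$ because $\tau_i\omega_\ell=0$ in $\ONH_\ell$ for every generator $\tau_i$ (since $\ell(s_iw_\ell)<\ell(w_\ell)$ and by the braid relations \cref{ONH2,ONH3,ONH4}), whence $\omega_\ell\cdot\OPol_\ell\subseteq\bigcap_i\ker\partial_i=\OSym_\ell$ by \cref{aaaah}. For right $\OSym_\ell$-linearity I use the odd Leibniz rule \cref{betteract}: if $b\in\OSym_\ell$ then $\partial_j(b)=0$ for every $j$, so $\tau_j\cdot(fb)=(\tau_j\cdot f)\,b$ for any $f\in\OPol_\ell$; iterating through a reduced word for $w_\ell$ yields $\omega_\ell\cdot(fb)=(\omega_\ell\cdot f)\,b$, and hence $\Tr(ab)=\Tr(a)\,b$. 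The degree and parity then follow from $|\omega_\ell|=-2\binom{\ell}{2}$, $|\chi_{(n,n')}|=2\binom{n}{2}+2\binom{n'}{2}$, and the identity $\binom{n+n'}{2}=\binom{n}{2}+\binom{n'}{2}+nn'$.

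\emph{Reduction to \cref{mainwork}.} By \cref{ordered} combined with \cref{multiidentity,qbinomialformula}, $\OSym_{(n,n')}$ is a free graded right $\OSym_\ell$-supermodule of graded rank $\sum_{\lambda\in\GPar{n}{n'}}(\pi q^2)^{|\lambda|}$. The candidate sets (1) and (2) have exactly this graded size, because $\deg(s_\lambda^{(n)})=2|\lambda|$, $\parity(s_\lambda^{(n)})\equiv|\lambda|\pmod{2}$, and likewise for $\sig_n(\sigma_\mu^{(n')})$, while $\mu^\transpose_i=n'-\lambda_{n+1-i}$ sets up a bijection $\GPar{n'}{n}\stackrel{\sim}{\rightarrow}\GPar{n}{n'}$ with $|\mu|+|\lambda|=nn'$. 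To show that (1) is linearly independent over $\OSym_\ell$, suppose $\sum_\lambda s_\lambda^{(n)}a_\lambda=0$ with $a_\lambda\in\OSym_\ell$. For each $\mu\in\GPar{n'}{n}$, applying $\Tr(\sig_n(\sigma_\mu^{(n')})\cdot-)$ and using that $s_\lambda^{(n)}$ only involves $x_1,\dots,x_n$ while $\sig_n(\sigma_\mu^{(n')})$ only involves $x_{n+1},\dots,x_\ell$ (so they supercommute in $\OPol_\ell$ with sign $(-1)^{|\lambda||\mu|}$), the right $\OSym_\ell$-linearity of $\Tr$ gives
\[
0=\sum_\lambda (-1)^{|\lambda||\mu|}\Tr\bigl(s_\lambda^{(n)}\sig_n(\sigma_\mu^{(n')})\bigr)\,a_\lambda.
\]
Assuming \cref{mainwork}, the only surviving term is the one with $\lambda=\lambda(\mu)$ complementary to $\mu$, whose coefficient $\sign(\mu)\in\{\pm1\}$ is a unit; thus $a_{\lambda(\mu)}=0$, and varying $\mu$ kills every $a_\lambda$. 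The argument for (2) is symmetric.

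\emph{The main identity.} Half the vanishing is automatic: since $\deg(\Tr)=-2nn'$, the value $\Tr(s_\lambda^{(n)}\sig_n(\sigma_\mu^{(n')}))$ lies in degree $2(|\lambda|+|\mu|)-2nn'$, which is zero whenever $|\lambda|+|\mu|<nn'$. The real content is that the trace equals $\sign(\mu)$ when $\mu^\transpose_i=n'-\lambda_{n+1-i}$ (forcing $|\lambda|+|\mu|=nn'$) and vanishes for all other pairs. The approach is to write $s_\lambda^{(n)}=\omega_n\cdot(\chi_n x^\lambda)$ via \cref{schurtheorem} and, applying $\smiley_{n'}$ (using \cref{smileysign}), obtain a companion formula for $\sigma_\mu^{(n')}$; using the factorization $\omega_\ell=\omega^{(n,n')}\omega_n\sig_n(\omega_{n'})$ and the derivation rule, $\Tr(s_\lambda^{(n)}\sig_n(\sigma_\mu^{(n')}))$ reduces to extracting the $\chi_\ell$-coefficient of an explicit polynomial in $\OPol_\ell$. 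Since $\chi_\ell=x_{\ell-1}x_{\ell-2}^2\cdots x_1^{\ell-1}$ has exponent vector $(\ell-1,\dots,0)$, the $\lambda$-side contribution (exponents $(n-i)+\lambda_i$ on $x_i$) and the $\mu$-side contribution (exponents $(n'-j)+\mu^\transpose_{n'+1-j}$ on $x_{n+j}$ after duality) fit together to reproduce $(\ell-1,\dots,0)$ precisely when $\mu^\transpose_i=n'-\lambda_{n+1-i}$. The non-vanishing in the complementary case is then a leading-term calculation pinned down by the base case $\Tr(s_{(n')^n}^{(n)})=1$ which establishes $\sign(\emptyset)=1$, whereas vanishing for non-complementary pairs with $|\lambda|+|\mu|\geq nn'$ requires iterated straightening using \cref{whysohard} (which unfolds $\tau_{k-1}\cdots\tau_1x_1^{m+k-1}\cdot\sig_1(f)$) and \cref{neweasypeasy} (which reduces high powers of border variables), combined with the relations of \cref{badday}, to telescope away every off-diagonal contribution. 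I expect the principal obstacle to be exactly this off-diagonal cancellation: leading terms match only the extremal pair $((n')^n,\emptyset)$, so the remaining cancellations rest on the subtle odd Pieri and Littlewood--Richardson identities for odd Schur polynomials advertised in the introduction.
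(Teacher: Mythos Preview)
Your treatment of the well-definedness of $\Tr$ and the reduction of the basis statements to the pairing identity \cref{mainwork} is correct and matches the paper's argument closely; the paper is actually terser than you on the first point, merely asserting that right $\OSym_\ell$-linearity is ``clear from the definition.'' Your linear-independence argument via the pairing is identical to the paper's.

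For the main identity \cref{mainwork}, your initial reduction is the same as the paper's: using \cref{schurtheorem} for $s_\lambda^{(n)}$ and its $\smiley_{n'}$-twisted version for $\sigma_\mu^{(n')}$, together with the factorization $\omega_\ell=\omega^{(n,n')}\omega_{(n,n')}$ and \cref{cloudy}, the paper arrives at exactly your expression $\omega_\ell$ applied to an explicit monomial in $\OPol_\ell$ (the paper's displayed equation \cref{scary}, up to the sign $\zeta_{n'}$ from \cref{smileysign}). The paper also computes the base case $\lambda=({n'}^n),\mu=\varnothing$ just as you suggest, to normalize $\sign(\varnothing)=1$.

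However, your plan for the off-diagonal vanishing is where a gap appears. The paper does \emph{not} attempt to prove this directly; instead it cites \cite[Prop.~4.11]{EKL} and \cite[Lem.~4.9]{EKL}, which establish (in the original conventions) that $\omega_\ell$ applied to such a monomial vanishes unless the exponent sequence is a permutation of $(\ell-1,\dots,1,0)$, in which case it gives $\pm 1$. Your proposal to obtain this via ``iterated straightening using \cref{whysohard} and \cref{neweasypeasy}, combined with the relations of \cref{badday}'' does not work: those three lemmas serve entirely different purposes in the paper (\cref{whysohard} is used to prove \cref{schurtheorem} itself, \cref{neweasypeasy} feeds into \cref{simpson}, and \cref{badday} is used only in the proof of \cref{violins}). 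None of them addresses the combinatorics of when $\omega_\ell\cdot x^\kappa$ vanishes. If you want a self-contained argument, you would need to prove the odd analog of the classical fact that the divided-difference operator for the longest word annihilates monomials with repeated exponents---this is what \cite[Lem.~4.9]{EKL} does, and it is genuinely delicate in the odd setting. The paper's approach of citing \cite{EKL} and only tracking the sign normalization is the efficient route here.
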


\begin{proof}
The main work here is to prove \cref{mainwork}.
This turns out to be significantly harder than the analogous 
formula in the ordinary even theory;
see \cite[Rem.~4.12]{EKL} for an illuminating example.
Fortunately, the details are already worked out in 
\cite[Prop.~4.11]{EKL} up to an undetermined 
sign since our conventions are different.
To keep track of this sign, we repeat 
the first few steps of the proof in \cite{EKL} in our set up.
By \cref{schurtheorem,smileysign,smileyanddot,littlebears,cloudy}, we have that
\begin{align}\notag
\Tr\left (s^{(n)}_{\lambda} \sig_n\big(\sigma^{(n')}_\mu\big)\right)
&= 
\omega_\ell\; \chi_{(n,n')}\cdot s^{(n)}_{\lambda} \sig_n\Big(\smiley_{n'}\big(s^{(n')}_\mu\big)\Big)\\\notag
&=
\omega_\ell\;\chi_{(n,n')}
\cdot \big(\omega_n \chi_n \cdot x^\lambda\big)\;
\sig_n\big(\smiley_{n'}(\omega_{n'} \chi_{n'} \cdot x^\mu)\big)\\\notag
&=
 \zeta_{n'} \omega_\ell\;\chi_{(n,n')} \omega_{(n,n')}
\cdot \;
\sig_n\big(\smiley_{n'}(\chi_{n'})\big)
\chi_n x^\lambda
\sig_n\big(\smiley_{n'}(x^\mu)\big)\\\label{scary}
&=\zeta_{n'} \omega_\ell
 \cdot
\sig_n\big(\smiley_{n'}(\chi_{n'})\big) \chi_n x^\lambda \sig_n\big(\smiley_{n'}(x^\mu)\big).
\end{align}
Up to another sign, the monomial appearing after the $\cdot$ in \cref{scary} is as
considered in \cite[Lem.~4.9]{EKL}, so applying that lemma gives
that $\Tr\left (s^{(n)}_{\lambda} \sig_n\big(\sigma^{({n'})}_\mu\big)\right)$
is $\pm 1$ if $\mu^\transpose_i = n'-\lambda_{n+1-i}\text{ for }i=1,\dots,n$,
and it is zero otherwise.
It remains to check that \cref{scary} equals $+1$ in the special case that
$\lambda = ({n'}^n)$ and 
$\mu = \varnothing$.
To see this, one first checks that
$\chi_n x_1^{n'} \cdots x_n^{n'} = x_n^{n'} x_{n-1}^{n'+1}\cdots x_1^{n+n'-1}$. 
Hence,
letting $\omega_\ell = \tau \sig_n(\omega_{n'})$ for 
$\tau \in \ONH_\ell$,
\cref{scary} simplifies in this case to give
\begin{align*}
\Tr\left (s^{(n)}_{({n'}^n)}\right)
&= \zeta_{n'} \tau \sig_n(\omega_{n'}) \cdot
\sig_n\big(\smiley_{n'}(\chi_{n'})\big) x_n^{n'} x_{n-1}^{n'+1}\cdots x_1^{n+n'-1}
= \tau
\sig_n\big(\smiley_{n'}(\omega_{n'} \cdot \chi_{n'})\big)
\cdot x_n^{n'} x_{n-1}^{n'+1}\cdots x_1^{n'+n-1}\\
&= \tau
\cdot x_n^{n'} x_{n-1}^{n'+1}\cdots x_1^{n+n'-1}
=
\tau
\sig_n(\omega_{n'}) \cdot \sig_n(\chi_{n'})
x_n^{n'} x_{n-1}^{n'+1}\cdots x_1^{n+n'-1}
= \omega_\ell \cdot \chi_\ell = 1.
\end{align*}
Now \cref{mainwork} is proved.

It is clear from the definition that $\Tr$ is a homogeneous
homomorphism of graded right $\OSym_\ell$-supermodules
of degree $-2nn'$ and parity $nn'\pmod{2}$. 
It remains to show that the elements (1) and (2) are bases.
To see that the elements (1) are linearly independent over $\OSym_\ell$,
take a linear relation $\sum_{\lambda} s_\lambda^{(n)} a_\lambda$ for
$a_\lambda \in \OSym_\ell$. To see that $a_\lambda = 0$ for any given $\lambda$,
let $\mu\in\Par$ be defined so that 
$\mu^\transpose_i = n'-\lambda_{n+1-i}$ for $i=1,\dots,n$
and $\mu^\transpose_i = 0$ for $i > n$.
Then we have using \cref{mainwork} that 
$$
0=\Tr\bigg( \sig_n\big(\sigma_\mu^{(n')}\big)\sum_{\lambda'}s_{\lambda'}^{(n)} a_{\lambda'}
\bigg)=
\sum_{\lambda'}(-1)^{|\lambda'||\mu|}\Tr\Big(s_{\lambda'}^{(n)} \sig_n\big(\sigma_\mu^{(n')}\big)\Big) a_{\lambda'}=
(-1)^{|\lambda||\mu|} \sign(\mu) a_\lambda.
$$
This establishes the linear independence.
As $s_\lambda^{(n)}$ is of degree $2|\lambda|$ and parity $|\lambda|\pmod{2}$,
we deduce from \cref{qbinomialformula} that the elements (1) generate
a free graded right $\OSym_\ell$-supermodule of graded rank
$q^{nn'} \sqbinom{\ell}{n}_{q,\pi}$.
In view of \cref{ordered}, it follows that this submodule is
all of $\OSym_{(n,n')}$.
This proves that (1) is a basis. 
A similar argument gives that (2) is a basis too.
\end{proof}

\begin{corollary}\label{frontandback2}
Suppose that $\ell=n+d+n'$ for $n,d,n' \geq 0$.
Then
$\OSym_{(n,d,n')}$ is a free right $\OSym_{(n,n'+d)}$-supermodule
with basis
$\Big\{\sig_n\big(s_\lambda^{(d)}\big)\:\Big|\:
\lambda \in \GPar{d}{n'}\Big\}$,
and it is a free right $\OSym_{(n+d,n')}$-supermodule
with basis $\Big\{\sig_n\big(\sigma_\lambda^{(d)}\big)\:\Big|\:
\lambda \in \GPar{d}{n}\Big\}$.
\end{corollary}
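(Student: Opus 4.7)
The plan is to deduce both assertions directly from \cref{frobenius} by exploiting the tensor product structure of $\OSym_\alpha$ supplied by the diagram \cref{gymnight}. My first step will be to record the general principle that if $A' \subseteq A$ are graded superalgebras and $A$ is free as a graded right $A'$-supermodule on a basis $\{v_i\}$, then for any graded superalgebra $B$ the graded tensor product $A \otimes B$ is free as a right $(A' \otimes B)$-supermodule on $\{v_i \otimes 1\}$, and symmetrically on the other side. This follows from a one-line calculation with the graded tensor product multiplication rule, using that the unit $1$ is even of degree zero so no super signs appear.

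For the first assertion, I will use \cref{gymnight} applied to both compositions $(n, n'+d)$ and $(n, d, n')$ to identify
$$
\OSym_{(n, n'+d)} \,=\, \OSym_n \,\otimes\, \sig_n\big(\OSym_{d+n'}\big), \qquad \OSym_{(n,d,n')} \,=\, \OSym_n \,\otimes\, \sig_n\big(\OSym_{(d,n')}\big)
$$
as subalgebras of $\OPol_n \otimes \sig_n(\OPol_{d+n'}) = \OPol_\ell$. Then I will apply \cref{frobenius}(1) to the composition $(d, n')$ of $d+n'$, which provides that $\OSym_{(d,n')}$ is free over $\OSym_{d+n'}$ with basis $\{s_\lambda^{(d)} : \lambda \in \GPar{d}{n'}\}$; pushing this basis through $\sig_n$ and tensoring on the left with $\OSym_n$ via the principle above yields the asserted basis $\{\sig_n(s_\lambda^{(d)}) : \lambda \in \GPar{d}{n'}\}$ of $\OSym_{(n,d,n')}$ over $\OSym_{(n, n'+d)}$.

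The second assertion will be handled symmetrically. I will use \cref{gymnight} with compositions $(n+d, n')$ and $(n, d, n')$ to write
$$
\OSym_{(n+d, n')} \,=\, \OSym_{n+d}\,\otimes\,\sig_{n+d}(\OSym_{n'}), \qquad \OSym_{(n,d,n')} \,=\, \OSym_{(n,d)}\,\otimes\,\sig_{n+d}(\OSym_{n'}),
$$
as subalgebras of $\OPol_{n+d} \otimes \sig_{n+d}(\OPol_{n'}) = \OPol_\ell$. Then I will apply \cref{frobenius}(2) to the composition $(n,d)$ of $n+d$, which gives that $\OSym_{(n,d)}$ is free over $\OSym_{n+d}$ with basis $\{\sig_n(\sigma_\mu^{(d)}) : \mu \in \GPar{d}{n}\}$, and finally tensor with $\OSym_{n'}$ on the right to obtain the claimed basis of $\OSym_{(n,d,n')}$ over $\OSym_{(n+d,n')}$.

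I do not anticipate any real obstacle in this argument — it is essentially a base-change step from \cref{frobenius} — and the only mild care needed is in tracking the various $\sig$-shifts so that the basis elements are labelled consistently as elements of the ambient algebra $\OPol_\ell$.
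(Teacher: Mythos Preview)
Your proof is correct and is essentially the argument the paper intends: the corollary is stated immediately after \cref{frobenius} with no proof, and deducing it by combining the tensor decomposition from \cref{gymnight} with the bases of \cref{frobenius}(1) and (2) in the two cases is the natural way to see it. Your bookkeeping with the $\sig$-shifts and the base-change principle for graded superalgebra tensor products is accurate.
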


\section{The odd analog of cohomology of Grassmannians}

Continue with $\ell=n+n'$. 
In the purely even theory, when all of the algebras involved are commutative,
the analog of the map $\Tr$ from \cref{frobenius} is actually a graded bimodule
homomorphism, so that it gives a trace making $\Sym_{(n,n')}$
into a graded Frobenius algebra over $\Sym_{\ell}$. 
However, in the odd case, $\OSym_{(n,n')}$ is usually 
{\em not} a Frobenius extension of $\OSym_{\ell}$, 
e.g., it is already false in the case $n=n'=1$ since 
one can check directly that the subalgebra
$\OSym_2$ of $\OPol_2$ has no complement as a graded $(\OSym_2,\OSym_2)$-superbimodule.
This is a significant obstruction to the development of the odd theory.
At this point in \cite[Sec.~5]{EKL}, the obstruction is avoided by passing 
to the finite-dimensional graded superalgebra
\begin{equation}\label{turn}
\ROH_n^\ell := \OSym_n \:\big/\: \big\langle h_r^{(n)}\:\big|\:r > n' \big\rangle
\:\,\left(\:= \OSym \:\big/\:\big\langle h_r, e_s\:\big|\:r > n', s > n\big\rangle\;\right).
\end{equation}
This is called the {\em odd Grassmannian cohomology algebra}
since it is an odd analog of the cohomology algebra $H^*\big(\operatorname{Gr}_n^\ell;\k\big)$ 
of the Grassmannian $\operatorname{Gr}_n^\ell$ of $n$-dimensional subspaces of $\C^\ell$.

We denote the image of $a \in \OSym_n$ in $\ROH_n^\ell$ by $\bar a$.
The first part of following theorem is
\cite[Prop.~5.4]{EKL}, but we give a different argument which gives
extra information.

\begin{theorem}\label{OH}
Suppose that $\ell = n+n'$.
The odd Schur polynomials
$\bar s_\lambda^{(n)}$ for
$\lambda \in \GPar{n}{n'}$
give a linear basis for $\ROH_n^\ell\,$,
and all other $\bar s_\lambda^{(n)}$ are zero. 
Moreover, viewing $\k$ as
a graded $\OSym_\ell$-supermodule in the obvious way,
there is a commuting diagram
\begin{equation}\label{OHCD}
\begin{tikzcd}
\ROH^\ell_n\arrow[rrr,"\bar a \mapsto a \otimes 1"  above]
\arrow[d,"\overline{\psi}_n^\ell" left]
&&&\OSym_{(n,n')} \otimes_{\OSym_\ell}\k\\
\ROH^\ell_{n'}\arrow[rrr,"\bar a \mapsto (-1)^{n\parity(a)} 1 \otimes \sig_n(a)" below]
&&&
\k\otimes_{\OSym_\ell} \OSym_{(n,n')}\arrow[u,"1\otimes a\mapsto a^*\otimes 1" right]
\end{tikzcd}
\end{equation}
of isomorphisms in which
\begin{enumerate}
\item
the top map is an even degree 0 isomorphism
of graded left $\OSym_n$-supermodules;
\item
the bottom map is an even degree 0
isomorphism
of graded right $\OSym_{n'}$-supermodules
for the action 
on 
$\k\otimes_{\OSym_\ell} \OSym_{(n,n')}$
defined by restriction along $\sig_n\circ\operatorname{p}^n:\OSym_{n'}
\rightarrow \OSym_{(n,n')}$;
\item
the right hand map is an even linear isomorphism of degree 0;
\item
the left hand map $\overline{\psi}_n^\ell$ is the graded superalgebra isomorphism induced
by the involution $\psi\circ\operatorname{p}^n$ of $\OSym$.
\end{enumerate}
\end{theorem}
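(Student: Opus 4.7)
The plan is to organize the proof in three stages: establish the basis of $\ROH^\ell_n$ together with the top arrow, install the bottom, right, and left arrows by symmetry, and then verify commutativity of the square.

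First I would set up the top arrow. The map $\phi\colon \OSym_n \to \OSym_{(n,n')}\otimes_{\OSym_\ell}\k$, $a \mapsto a\otimes 1$, is left $\OSym_n$-linear. Writing $\OSym_\ell^+$ for the augmentation ideal of positive-degree elements, \cref{notsoeasypeasy}(1) gives $h_r^{(n)} \equiv (-1)^r \sig_n(e_r^{(n')}) \pmod{\OSym_{(n,n')}\cdot \OSym_\ell^+}$, and the right-hand side vanishes for $r>n'$; so $\phi$ descends to $\bar\phi\colon \ROH^\ell_n \to \OSym_{(n,n')}\otimes_{\OSym_\ell}\k$. By \cref{frobenius}(1) the target has $\k$-basis $\{s_\lambda^{(n)}\otimes 1 : \lambda\in\GPar{n}{n'}\}$, and $\bar\phi$ sends the corresponding $\bar s_\lambda^{(n)}$ to these basis vectors. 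Hence those $\bar s_\lambda^{(n)}$ are linearly independent in $\ROH^\ell_n$. Conversely, the triangular expansion \cref{domnotlex} shows that if $\lambda_1>n'$ then every $h_\mu^{(n)}$ occurring in $s_\lambda$ satisfies $\mu_1\geq\lambda_1>n'$ and therefore vanishes in $\ROH^\ell_n$; together with $s_\lambda^{(n)}=0$ for $\h(\lambda)>n$ (\cref{alreadybasis}), this forces $\bar s_\lambda^{(n)}=0$ whenever $\lambda\notin\GPar{n}{n'}$. This yields both the basis claim and the top isomorphism.

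The bottom arrow is produced by the identical argument with $n$ and $n'$ swapped, using \cref{frobenius}(2). The sign $(-1)^{n\parity(a)}$ appears because, under the identification $\OSym_{(n,n')}\equiv\OSym_n\otimes\sig_n(\OSym_{n'})$, restriction of the natural right $\sig_n(\OSym_{n'})$-action to the outer algebra $\OSym_{n'}$ must be precomposed with $\sig_n\circ\operatorname{p}^n$ to account for the $n$ tensor factors of $\OSym_n$ that the action passes. The right arrow is induced by the superalgebra anti-involution $*$ of $\OSym_{(n,n')}$, which stabilizes $\OSym_\ell$ and hence exchanges the left ideal $\OSym_\ell^+\cdot\OSym_{(n,n')}$ with the right ideal $\OSym_{(n,n')}\cdot\OSym_\ell^+$. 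The left arrow $\overline{\psi}_n^\ell$ descends from $\psi\circ\operatorname{p}^n$ on $\OSym$, which interchanges the defining ideals $\langle h_r, e_s: r>n', s>n\rangle$ and $\langle h_r, e_s: r>n, s>n'\rangle$.

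Finally, commutativity can be tested on the basis $\{\bar s_\lambda^{(n)}\}_{\lambda\in\GPar{n}{n'}}$. The clockwise image is $s_\lambda^{(n)}\otimes 1$. Running counterclockwise, applying \cref{niall} together with $\psi\circ\operatorname{p}=\operatorname{p}\circ\psi$ (so $\operatorname{p}^n\circ\psi\circ\operatorname{p}^n = \psi$), and then \cref{littlebears} combined with the transposition identities $dN(\lambda^\transpose)=dE(\lambda)$, $dE(\lambda^\transpose)=dN(\lambda)$, all signs telescope and the counterclockwise composite becomes $(-1)^{\NE(\lambda)}\sig_n(\sigma_{\lambda^\transpose}^{(n')})\otimes 1$. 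Commutativity therefore reduces to the identity
\begin{equation*}
s_\lambda^{(n)}\otimes 1 \;=\; (-1)^{\NE(\lambda)}\;\sig_n(\sigma_{\lambda^\transpose}^{(n')})\otimes 1 \quad \text{in } \OSym_{(n,n')}\otimes_{\OSym_\ell}\k,
\end{equation*}
which is the main obstacle. My plan is to expand $\sig_n(\sigma_{\lambda^\transpose}^{(n')})=\sum_\mu s_\mu^{(n)}d_{\lambda,\mu}$ in the right $\OSym_\ell$-basis of \cref{frobenius}(1); each coefficient $d_{\lambda,\mu}\in\OSym_\ell$ is homogeneous of degree $2(|\lambda|-|\mu|)\geq 0$, so after tensoring with $\k$ only $\mu$ with $|\mu|=|\lambda|$ survive the augmentation. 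To further isolate $\mu=\lambda$ and compute the scalar as $(-1)^{\NE(\lambda)}$, I would combine the triangular expansion \cref{domnutlex} for $s_{\lambda^\transpose}$ in the $e$-basis, the leading-monomial formula \cref{sourceofne}, and the relation \cref{bears} between $s$ and $\sigma$. The Frobenius trace identity \cref{mainwork} supplies an independent cross-check, since pairing both sides against the various $\sig_n(\sigma_\nu^{(n')})$ and applying $\eps\circ\Tr$ must yield the same scalar.
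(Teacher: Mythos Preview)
Your treatment of parts (1)--(4) matches the paper's closely: the well-definedness of the top map via \cref{notsoeasypeasy}(1), the vanishing of $\bar s_\lambda^{(n)}$ for $\lambda_1>n'$ via \cref{domnotlex}, linear independence via \cref{frobenius}(1), and the descriptions of the left and right arrows are all the same as in the paper.

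The difference is in the commutativity check. Your reduction to the Schur-polynomial identity
\[
s_\lambda^{(n)}\otimes 1 \;=\; (-1)^{\NE(\lambda)}\,\sig_n\big(\sigma_{\lambda^\transpose}^{(n')}\big)\otimes 1
\]
is correct and the sign computation is right, but this identity is precisely \cref{hard}, which the paper \emph{deduces from} the commutativity of \cref{OHCD} rather than proving independently. Your plan to establish it directly --- by expanding in the $\OSym_\ell$-basis of \cref{frobenius}(1), surviving the augmentation, then isolating $\mu=\lambda$ via triangularity and leading monomials --- is underdeveloped at the key step: nothing in \cref{domnutlex}, \cref{sourceofne}, or \cref{bears} rules out contributions from other $\mu$ with $|\mu|=|\lambda|$, since $\sig_n(\sigma_{\lambda^\transpose}^{(n')})$ involves $x_{n+1},\dots,x_\ell$ while the $s_\mu^{(n)}$ involve $x_1,\dots,x_n$, so leading-monomial comparison is not directly applicable. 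The trace cross-check via \cref{mainwork} would require controlling the undetermined signs $\sign(\mu)$, which are only pinned down later (\cref{LR}) via the odd Littlewood--Richardson rule.

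The paper's route is much shorter: it checks commutativity on monomials $\bar h_{r_1}^{(n)}\cdots\bar h_{r_k}^{(n)}$, which span $\ROH_n^\ell$. The single relation $f\,h_r^{(n)}\otimes 1=(-1)^{r+r\parity(f)}\sig_n(e_r^{(n')})\,f\otimes 1$ (from \cref{notsoeasypeasy}(1)) lets one inductively rewrite $h_{r_1}^{(n)}\cdots h_{r_k}^{(n)}\otimes 1$ as $(-1)^{r_1+\cdots+r_k}\sig_n(e_{r_1}^{(n')}\cdots e_{r_k}^{(n')})^*\otimes 1$, which is exactly the counterclockwise image. This avoids the Schur identity entirely and uses only the generator-level relation already in hand.
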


\begin{remark}
The inclusion of the parity involution $\operatorname{p}^n$
in the definition of the left and bottom maps in \cref{OHCD} is 
hard to justify at this point---it could simply be omitted in both places
and the simplified result is also true. The signs have been included for consistency with \cref{EOH} below, in
which they are essential.
\end{remark}

\begin{proof}[Proof of \cref{OH}]
(1)
To construct the top map so that it is a homomorphism
of graded left $\OSym_n$-supermodules, we must show that
$\OSym_{(n,n')}\otimes_{\OSym_\ell} \k$ can be made into a 
graded left $\ROH^\ell_n$-supermodule
so that $\bar a \cdot (b \otimes 1) = ab \otimes 1$ for all $a \in \OSym_n,
b \in \OSym_{(n,n')}$.
Since it is already a graded left $\OSym_n$-supermodule,
and $\ROH^\ell_n$ is the quotient of $\OSym_n$
by the relations $h_r^{(n)} = 0$ for $r > n'$,
it suffices to check that
$h_r^{(n)}$ acts as zero on $\OSym_{(n,n')}\otimes_{\OSym_\ell} \k$
for all $r > n'$.
By \cref{frobenius}(2), any homogeneous element of
$\OSym_{(n,n')} \otimes_{\OSym_\ell} \k$ can be written as
$\sig_n(b) \otimes 1$ for $b \in \OSym_{n'}$.
Now we must show that
$h_r^{(n)} \sig_n(b) \otimes 1 = 0$ for $r > n'$.
This follows from the calculation
\begin{align*}
h_r^{(n)} \sig_n(b) \otimes 1 &=
(-1)^{r\parity(b)} \sig_n(b) h_r^{(n)} \otimes 1
= (-1)^{r \parity(b)} \sig_n(b) \sum_{s=0}^r (-1)^{s} h_{r-s}^{(n)}
e_{s}^{(\ell)}
\otimes 1\\
&= (-1)^{r \parity(b)} \sig_n(b) \sig_n\Big((-1)^r e_r^{(n')}\Big)
\otimes 1 = 0.
\end{align*}
The second equality here is just the observation that
$e^{(\ell)}_s \otimes 1$ is zero in $\OSym_{(n,n')}\otimes_{\OSym_\ell}\k$ for $s > 0$.
The penultimate equality
follows from \cref{notsoeasypeasy}(1).

Now consider $\bar s^{(n)}_\lambda \in \ROH^\ell_n$.
If $\h(\lambda) > n$, we already know this is zero by \cref{alreadybasis}.
We have by \cref{kostka} that $\bar s^{(n)}_\lambda = 
\bar h^{(n)}_\lambda + ($a linear combination of $\bar h^{(n)}_\mu$
for $\mu > \lambda$).
We deduce that $\bar s^{(n)}_\lambda = 0$ if $\lambda_1 > n'$
since $\bar h^{(n)}_\lambda$ and all of the $\bar h^{(n)}_\mu$ appearing in this expansion are zero by the defining relations of $\ROH^\ell_n$.
This shows that $\ROH^\ell_n$ is spanned by the elements
$\bar s^{(n)}_\lambda\:\big(\lambda \in \GPar{n}{n'}\big)$.
To see that these elements are linearly independent, hence, a basis
for $\ROH^\ell_n$, we act on the vector $1\otimes 1 \in\OSym_{(n,n')}\otimes_{\OSym_\ell} \k$ to obtain
the vectors
$s^{(n)}_\lambda\otimes 1\:(\lambda \in \GPar{n}{n'})$
which constitute a basis for
$\OSym_{(n,n')}\otimes_{\OSym_\ell} \k$ by \cref{frobenius}(1).
This argument also shows that the map (1) is an isomorphism.

\vspace{1mm}
\noindent
(2)
Similarly,
to construct the bottom map, we must make
$\k\otimes_{\OSym_\ell} \OSym_{(n,n')}$ into a graded right
$\ROH_{n'}^\ell$-supermodule
so that $(1 \otimes b) \cdot \bar a = (-1)^{n\parity(a)} 1 \otimes b\sig_n(a)$
for $b \in \OSym_{(n,n')}$ and $a \in \OSym_{n'}$.
To do this,
one first applies $*$ to \cref{frobenius} to deduce that $\k \otimes_{\OSym_\ell} \OSym_{(n,n')}$ is
spanned by vectors of the form $1 \otimes a$ for $a \in \OSym_n$.
This plus \cref{notsoeasypeasy}(2)
are then used
establish the well-definedness of the action. 
The fact that the bottom map 
is an isomorphism could be deduced using \cref{frobenius}(2) like in the previous paragraph, 
but it also follows 
once we have checked the commutativity of the diagram using that the other three maps (1), (3) and (4) are all isomorphisms.

\vspace{1mm}
\noindent
(3)
To obtain the map (3), we start with 
the isomorphism $\OSym_{(n,n')} \stackrel{\sim}{\rightarrow} \OSym_{(n,n')},
a \mapsto a^*$ where $*$ here is the restriction of the 
superalgebra anti-involution $*:\OPol_\ell \rightarrow \OPol_\ell$.
Since we have that $(ab)^* = (-1)^{\parity(a)\parity(b)}b^* a^*$ for any
$b \in \OSym_{(n,n')}$ and $a \in \OSym_\ell$
with $a^* \in \OSym_\ell$ again,
this induces the desired isomorphism $\k \otimes_{\OSym_\ell} \OSym_{(n,n')} \stackrel{\sim}{\rightarrow}
\OSym_{(n,n')}\otimes_{\OSym_\ell}\k$.

\vspace{1mm}
\noindent
(4)
By definition, $\ROH^\ell_n$ is the quotient of $\OSym$
by the two-sided ideal generated by
$\{e_r\:|\:r > n\}\cup\{h_r\:|\:r > n'\}$
and $\ROH^\ell_{n'}$ is the quotient of $\OSym$
by the two-sided ideal generated by
$\{h_r\:|\:r > n\}\cup\{e_r\:|\:r > n'\}$.
The involution $\psi\circ \operatorname{p}^n$ 
interchanges these two ideals
so it factors through the quotients to induce an isomorphism $\overline{\psi}_n^\ell:\ROH^\ell_n\stackrel{\sim}{\rightarrow}
\ROH^\ell_{n'}$. This gives the graded superalgebra isomorphism (4).

\vspace{1mm}
To complete the proof, it just remains to show that the diagram commutes.
Consider $\bar h_{r_1}^{(n)} \cdots \bar h_{r_k}^{(n)} \in \ROH^\ell_n$
for $r_1,\dots, r_k > 0$ and $k \geq 0$.
The map (1) takes it to
$h_{r_1}^{(n)} \cdots h_{r_k}^{(n)} \otimes 1$.
As in the opening paragraph of the proof, we have that
$$
f h_{r}^{(n)}\otimes 1 = (-1)^{r} f \sig_n\big(e_{r}^{(n')}\big) \otimes 1= (-1)^{r+\parity(f)r} \sig_n\big(e_{r}^{(n')}\big) f \otimes 1
$$ 
for any $r$ and $f \in \OSym_n$.
By induction, it follows that 
$$
h_{r_1}^{(n)} \cdots h_{r_k}^{(n)} \otimes 1
= (-1)^{r_1+\cdots+r_k+\sum_{i < j} r_i r_j} 
\sig_n\Big(e^{(n')}_{r_k} \cdots e^{(n')}_{r_1}\Big)\otimes 1
= 
(-1)^{r_1+\cdots+r_k}\sig_n\big(e^{(n')}_{r_1}\cdots e^{(n')}_{r_k}\big)^*\otimes 1.
$$
This is the same as the image of $\bar h_{r_1}^{(n)} \cdots \bar h_{r_k}^{(n)}$ going around the other three sides of the square.
\end{proof}

\begin{corollary}\label{zalatorispre}
For $\ell=n+n'$,
$\ROH_n^\ell$ is of graded superdimension
$q^{n n'} \sqbinom{\ell}{n}_{q,\pi}$.
\end{corollary}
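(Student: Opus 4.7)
The plan is to compute $\gsdim \ROH^\ell_n$ directly from the basis provided by \cref{OH} and then recognize the resulting generating function via \cref{qbinomialformula}.

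First, I would recall that by \cref{OH}, the set $\{\bar s_\lambda^{(n)} \mid \lambda \in \GPar{n}{n'}\}$ is a linear basis for $\ROH^\ell_n$. Each odd Schur polynomial $s_\lambda^{(n)}$ is homogeneous of degree $2|\lambda|$ and parity $|\lambda| \pmod 2$ (this is immediate from the triangularity statement \cref{triangularity}, since the leading monomial $x^\lambda$ has these degree and parity, or equally from the fact that $s_\lambda$ is expressed as a $\Z$-linear combination of $h_\mu$ for $\mu$ of the same size as $\lambda$). Consequently each basis vector contributes $(\pi q^2)^{|\lambda|}$ to the graded superdimension, giving
\begin{equation*}
\gsdim \ROH^\ell_n = \sum_{\lambda \in \GPar{n}{n'}} (\pi q^2)^{|\lambda|}.
\end{equation*}

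Second, I would apply \cref{qbinomialformula} with $r = n$ and (noting $n'=\ell-n$) observe that the right-hand side above is exactly $q^{(\ell-n)n} \sqbinom{\ell}{n}_{q,\pi} = q^{nn'} \sqbinom{\ell}{n}_{q,\pi}$. This completes the proof.

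There is no real obstacle here: the hard work has already been done in establishing the odd Schur basis of $\ROH^\ell_n$ (\cref{OH}) and in the combinatorial identity for $(q,\pi)$-binomials in terms of sums over partitions fitting in a rectangle (\cref{qbinomialformula}). The statement is essentially a bookkeeping corollary matching these two facts.
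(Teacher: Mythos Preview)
Your proposal is correct and follows exactly the same approach as the paper's proof, which simply states that the result follows from the basis described in \cref{OH} plus \cref{qbinomialformula}. You have supplied the straightforward details of that deduction.
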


\begin{proof}
This follows from the basis described in \cref{OH} plus \cref{qbinomialformula}.
\end{proof}

\begin{corollary}\label{hard}
In $\OSym_{(n,n')}\otimes_{\OSym_\ell} \k$,
we have that 
$\sig_n\big(\sigma_\mu^{(n')}\big)\otimes 1=
(-1)^{\NE(\mu)}  s_{\mu^\transpose}^{(n)} \otimes 1$
for every $\mu \in \Par$.
\end{corollary}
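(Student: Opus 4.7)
The plan is to deduce the identity by chasing the element $\bar s_{\mu^\transpose}^{(n)} \in \ROH_n^\ell$ around the commuting square \eqref{OHCD} of \cref{OH}. The top edge of the diagram sends it directly to $s_{\mu^\transpose}^{(n)} \otimes 1$, which is the target of the right-hand side of the claim. My aim is to traverse the other three sides and read off the sign.

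First I would apply $\overline{\psi}_n^\ell$. Since $\psi$ commutes with $\operatorname{p}$ (the elements $h_r$ and $\psi(h_r) = (-1)^r e_r$ share parity $r\pmod{2}$), and since \eqref{niall} gives $\psi(s_\lambda) = (-1)^{NE(\lambda)+|\lambda|} s_{\lambda^\transpose}$, the image will be $(-1)^{n|\mu|+NE(\mu^\transpose)+|\mu|}\bar s_\mu^{(n')}$. Next, the bottom map contributes a further $(-1)^{n|\mu|}$ (canceling the first $(-1)^{n|\mu|}$) and produces $1 \otimes \sig_n(s_\mu^{(n')})$. Finally, the right-hand map applies $*$; I would note that $*$ commutes with $\sig_n$ (both fix the relevant generators, and $*$ is an anti-involution that respects the tensor factorization), so that $\sig_n(s_\mu^{(n')})^* = \sig_n(s_\mu^{(n')*})$, and then use \eqref{littlebears}, equivalently $s_\lambda^* = (-1)^{dN(\lambda)+dE(\lambda)}\sigma_\lambda$, to convert $s_\mu^*$ into $(-1)^{dN(\mu)+dE(\mu)}\sigma_\mu$. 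The net result of the three-edge journey is $(-1)^{NE(\mu^\transpose)+|\mu|+dN(\mu)+dE(\mu)}\sig_n(\sigma_\mu^{(n')})\otimes 1$.

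Equating this with the top edge's output gives the identity up to determining this sign. To reconcile with $(-1)^{\NE(\mu)}$, I would invoke two facts: first, $NE(\mu^\transpose)=NE(\mu)$, since transposition sends a pair $(A,B)$ with $B$ strictly northeast of $A$ to the pair $(B^\transpose,A^\transpose)$ with $A^\transpose$ strictly northeast of $B^\transpose$, merely relabeling the pairs in the count; second, the identity $\NE(\mu)=|\mu|+dN(\mu)+dE(\mu)+NE(\mu)$ from the \emph{General conventions}. These together rewrite the exponent as $\NE(\mu)\pmod 2$, finishing the argument in the generic case.

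For partitions $\mu\notin\GPar{n'}{n}$, where the left-hand side $\sig_n(\sigma_\mu^{(n')})$ vanishes by \cref{rangoon}, I would observe that this is consistent: the vanishing loci on both sides of \eqref{OHCD} correspond under $\overline{\psi}_n^\ell$ (since $\bar s_{\mu^\transpose}^{(n)}=0$ in $\ROH_n^\ell$ precisely when $\mu_1>n$ or $\h(\mu)>n'$, matching the vanishing of $\bar s_\mu^{(n')}$ in $\ROH_{n'}^\ell$), so the diagram chase degenerates to $0=0$. Thus there is no real obstacle: the entire argument is just a controlled sign computation, and the main thing to be careful about is tracking the parity-involution twists built into the left and bottom maps of \eqref{OHCD}.
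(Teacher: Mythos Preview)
Your proof is correct and follows essentially the same diagram chase around \eqref{OHCD} as the paper. The paper pre-multiplies by $(-1)^{\NE(\mu)}$ and simplifies as it goes, while you carry $\bar s_{\mu^\transpose}^{(n)}$ through and collect the sign at the end; you are also more explicit about $NE(\mu^\transpose)=NE(\mu)$, which the paper uses without comment. Your final paragraph on $\mu\notin\GPar{n'}{n}$ is unnecessary: the diagram chase is valid for all $\mu\in\Par$ and simply degenerates to $0=0$ automatically.
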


\begin{proof}
Note that $(-1)^{\NE(\mu)} 
s_{\mu^\transpose}^{(n)} \otimes 1$ is the image of
$(-1)^{\NE(\mu)} \bar s_{\mu^\transpose}^{(n)}$ under the top map in the commuting square \cref{OHCD}. Now we compute the image of $(-1)^{\NE(\mu)}
\bar s_{\mu^\transpose}^{(n)}$ around the other three edges of this square. 
Using that
$\NE(\mu) = |\mu|+dN(\mu)+dE(\mu)+NE(\mu)$,
it maps first to 
$(-1)^{dN(\mu)+dE(\mu)+n|\mu|}\bar s_{\mu}^{(n')}$
thanks to \cref{niall}, then to
$(-1)^{dN(\mu)+dE(\mu)} 1 \otimes \sig_n\big( s_\mu^{(n')}\big)$,
then to
$\sig_n\big(\sigma_\mu^{(n')}\big) \otimes 1$
thanks to \cref{littlebears}.
\end{proof}

\begin{corollary}\label{trace}
For $\ell = n+n'$,
there is a unique (up to scalars)
trace map 
$\overline{\tr}:\ROH_n^\ell\rightarrow \k$ making
$\ROH^\ell_n$ into a graded Frobenius superalgebra over $\k$ of degree
 $2nn'$ and parity $nn'\pmod{2}$.
Moreover, normalizing $\overline{\tr}$ so that 
$\overline{\tr}\big(\bar s_{({n'}^n)}^{(n)}\big)= 1$
and recalling the definition of $\Tr$ from \cref{frobenius},
we have that 
$\Tr(a) \otimes 1=1\otimes \overline{\tr}(\bar a)$
in $\OSym_{(n,n')} \otimes_{\OSym_\ell} \k$
for any $a \in \OSym_n$.
\end{corollary}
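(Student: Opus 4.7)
The plan is to build $\overline{\tr}$ as the composition of the right $\OSym_\ell$-linear map $\Tr$ from \cref{frobenius} with the augmentation $\epsilon\colon\OSym_\ell\twoheadrightarrow\k$ killing everything of positive degree. Since $\Tr$ is a homogeneous right $\OSym_\ell$-supermodule homomorphism of degree $-2nn'$ and parity $nn'$, the composite $\epsilon\circ\Tr\colon\OSym_{(n,n')}\to\k$ factors through $\OSym_{(n,n')}\otimes_{\OSym_\ell}\k$, and composing with the even degree-$0$ isomorphism $\ROH_n^\ell\xrightarrow{\sim}\OSym_{(n,n')}\otimes_{\OSym_\ell}\k$ from \cref{OH} yields a well-defined $\overline{\tr}\colon\ROH_n^\ell\to\k$ of the required degree and parity. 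The identity $\Tr(a)\otimes 1=1\otimes\overline{\tr}(\bar a)$ is then tautological because in the tensor product $\Tr(a)\otimes 1=1\otimes\epsilon(\Tr(a))=1\otimes\overline{\tr}(\bar a)$. Normalization is the $\mu=\varnothing$ special case of \cref{mainwork}, which gives $\Tr\bigl(s_{({n'}^n)}^{(n)}\bigr)=\sign(\varnothing)=1$, hence $\overline{\tr}\bigl(\bar s_{({n'}^n)}^{(n)}\bigr)=\epsilon(1)=1$.

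The main obstacle is to verify that $\overline{\tr}$ makes $\ROH_n^\ell$ into a graded Frobenius superalgebra, which reduces to showing the pairing $(\bar a,\bar b)\mapsto\overline{\tr}(\bar a\bar b)$ is non-degenerate on the basis $\bigl\{\bar s_\lambda^{(n)}\:\bigl|\:\lambda\in\GPar{n}{n'}\bigr\}$ of \cref{OH}. Fix $\lambda,\nu\in\GPar{n}{n'}$. Applying \cref{hard} with $\mu=\nu^\transpose$ gives
\begin{equation*}
s_\nu^{(n)}\otimes 1 \;=\; (-1)^{\NE(\nu^\transpose)}\,\sig_n\bigl(\sigma_{\nu^\transpose}^{(n')}\bigr)\otimes 1 \quad\text{in }\OSym_{(n,n')}\otimes_{\OSym_\ell}\k,
\end{equation*}
so we can write $s_\nu^{(n)}-(-1)^{\NE(\nu^\transpose)}\sig_n\bigl(\sigma_{\nu^\transpose}^{(n')}\bigr)=\sum_i b_i c_i$ with $b_i\in\OSym_{(n,n')}$ and $c_i\in\ker\epsilon\subseteq\OSym_\ell$. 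Left-multiplying by $s_\lambda^{(n)}$, applying $\Tr$ (using right $\OSym_\ell$-linearity to slide each $c_i$ past $\Tr$), and then applying $\epsilon$ kills the error terms, leaving
\begin{equation*}
\overline{\tr}\bigl(\bar s_\lambda^{(n)}\,\bar s_\nu^{(n)}\bigr) \;=\; (-1)^{\NE(\nu^\transpose)}\,\epsilon\Bigl(\Tr\bigl(s_\lambda^{(n)}\,\sig_n(\sigma_{\nu^\transpose}^{(n')})\bigr)\Bigr).
\end{equation*}
Now \cref{mainwork} of \cref{frobenius} evaluates the inner trace to $\sign(\nu^\transpose)\in\{\pm 1\}$ when $\nu_i=n'-\lambda_{n+1-i}$ for all $i$, and to zero otherwise. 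Writing $\lambda^c\in\GPar{n}{n'}$ for the partition with $\lambda^c_i:=n'-\lambda_{n+1-i}$, this yields $\overline{\tr}(\bar s_\lambda^{(n)}\bar s_\nu^{(n)})=\pm\delta_{\nu,\lambda^c}$, and since $\lambda\mapsto\lambda^c$ is an involution on $\GPar{n}{n'}$, the Gram matrix is a signed permutation matrix, hence invertible; this proves the Frobenius property and also supplies explicit dual bases realising \cref{traceprops1}--\cref{traceprops2}.

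For uniqueness, any homogeneous linear map $\tau\colon\ROH_n^\ell\to\k$ of degree $-2nn'$ and parity $nn'$ vanishes on all components of $\ROH_n^\ell$ of degree strictly less than $2nn'$ (as $\k$ lives in degree $0$), and by \cref{OH} together with \cref{zalatorispre} the top degree component is one-dimensional, spanned by $\bar s_{({n'}^n)}^{(n)}$ (the unique element of $\GPar{n}{n'}$ of size $nn'$), so $\tau$ is determined by $\tau\bigl(\bar s_{({n'}^n)}^{(n)}\bigr)$; this scalar must be non-zero for $\tau$ to yield a non-degenerate pairing, so every Frobenius trace on $\ROH_n^\ell$ is a non-zero scalar multiple of our $\overline{\tr}$. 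I expect no further obstacles beyond carefully tracking the single sign $(-1)^{\NE(\nu^\transpose)}$ from \cref{hard} through the reduction.
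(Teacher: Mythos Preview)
Your proposal is correct and takes essentially the same approach as the paper. The paper defines $\overline{\tr}$ directly by the characterizing identity $\Tr(a)\otimes 1=1\otimes\overline{\tr}(\bar a)$ (your definition via $\epsilon\circ\Tr$ just makes this explicit), and then writes down the dual bases $b_r^\vee=\bar s_\lambda^{(n)}$, $b_r=(-1)^{\NE(\mu)}\sign(\mu)\,\bar s_{\mu^\transpose}^{(n)}$ for $\mu^\transpose_i=n'-\lambda_{n+1-i}$, verifying \cref{traceprops1}--\cref{traceprops2} from \cref{hard} and \cref{mainwork}; this is exactly your Gram-matrix computation repackaged. For uniqueness the paper appeals to the general remark after \cref{traceprops2} (valid since $(\ROH_n^\ell)_{0,\bar 0}=\k$), whereas your direct top-degree argument is an equally valid alternative.
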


\begin{proof}
If it exists, the trace map is unique up to a non-zero scalar; cf. the discussion after \cref{traceprops2}.
Now we {\em define} $\overline{\tr}:\ROH_n^\ell\rightarrow \k$ so that
$\Tr(a) \otimes 1=1\otimes \overline{\tr}(\bar a)$ and check that is a trace 
map sending $\bar s_{({n'}^n)}^{(n)}$ to $1$. The latter statement follows because
we know in \cref{mainwork} that $\sign(\varnothing) = 1$.
To show that $\overline{\tr}$ is a trace map, 
we need to show that there exist linear bases
$b_1,\dots,b_m$ and $b_1^\vee,\dots,b_m^\vee$ for $\ROH_n^\ell$
satisfying \cref{traceprops1,newcond}.
We take $b^\vee_1,\dots,b^\vee_m$
and $b_1,\dots,b_m$ to be the elements
$\bar s_\lambda^{(n)}$ and
$(-1)^{\NE(\mu)}\sign(\mu) \bar s_{\mu^\transpose}^{(n)}$
for $\lambda\in \GPar{n}{n'}$ and $\mu \in \GPar{n'}{n}$, 
respectively,
enumerated  
so that 
$b^\vee_r = \bar s_\lambda^{(n)}$
and $b_r = (-1)^{\NE(\mu)}\sign(\mu) \bar s_{\mu^\transpose}^{(n)}$
if and only if $\mu^\transpose_i = n'-\lambda_{n+1-i}$ for each $i=1,\dots,n$.
The properties \cref{traceprops1} obviously hold,
and we have a pair of dual bases as in
\cref{newcond} thanks to \cref{hard,frobenius}.
\end{proof}

\begin{corollary}\label{LR}
Let $\lambda, \mu \in \Par$ be related as in the first case of \cref{mainwork}.
Then $\sign(\mu) = (-1)^{\NE(\mu)} LR_{\lambda,\mu^\transpose}^{\nu}$
where $\nu := ({n'}^n)$ and
 $LR_{\lambda,\mu^\transpose}^\nu$ denotes the odd Littlewood-Richardson coefficient,
 that is, the coefficient of $s_\nu$ when
$s_\lambda s_{\mu^\transpose}$ is expanded in terms of odd Schur functions.
\end{corollary}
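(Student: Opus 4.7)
The plan is to use the trace formula from Corollary \ref{trace} to convert the statement about $\sign(\mu)$ into an assertion about the trace form on the odd Grassmannian cohomology algebra $\ROH_n^\ell$, where Schur polynomials form an explicit basis and the trace has a very simple description on that basis.

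First I would observe that when $\mu^\transpose_i = n' - \lambda_{n+1-i}$, the element $s_\lambda^{(n)}\sig_n(\sigma_\mu^{(n')}) \in \OSym_{(n,n')}$ is homogeneous of degree exactly $2nn'$, so $\Tr(s_\lambda^{(n)}\sig_n(\sigma_\mu^{(n')}))$ lives in the degree-0 piece of $\OSym_\ell$, i.e., is the scalar $\sign(\mu)$. Let $\varphi:\OSym_{(n,n')}\to\ROH_n^\ell$ denote the composition of the projection $\OSym_{(n,n')}\twoheadrightarrow \OSym_{(n,n')}\otimes_{\OSym_\ell}\k$ with the inverse of the top isomorphism of \cref{OHCD}; by construction $\varphi$ is a morphism of graded left $\OSym_n$-supermodules. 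Using \cref{frobenius}(1) to write any $b\in\OSym_{(n,n')}$ as $\sum_\kappa s_\kappa^{(n)} c_\kappa$ with $c_\kappa\in\OSym_\ell$ and applying $\OSym_\ell$-linearity of $\Tr$ together with Corollary \ref{trace}, one obtains $\varepsilon(\Tr(b))=\overline{\tr}(\varphi(b))$ for every $b\in\OSym_{(n,n')}$, where $\varepsilon:\OSym_\ell\twoheadrightarrow\k$ is the augmentation.

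Next I would compute $\varphi(s_\lambda^{(n)}\sig_n(\sigma_\mu^{(n')}))$. Since $\varphi$ is left $\OSym_n$-linear, this equals $\bar s_\lambda^{(n)}\cdot\varphi(\sig_n(\sigma_\mu^{(n')}))$, and by Corollary \ref{hard} the latter factor is $(-1)^{\NE(\mu)}\bar s_{\mu^\transpose}^{(n)}$. Putting this together gives
\[
\sign(\mu) \;=\; \varepsilon\bigl(\Tr\bigl(s_\lambda^{(n)}\sig_n(\sigma_\mu^{(n')})\bigr)\bigr)
\;=\; (-1)^{\NE(\mu)}\,\overline{\tr}\bigl(\bar s_\lambda^{(n)}\bar s_{\mu^\transpose}^{(n)}\bigr).
\]
Now expand the odd Schur product in $\OSym$ via $s_\lambda s_{\mu^\transpose} = \sum_\kappa LR_{\lambda,\mu^\transpose}^\kappa s_\kappa$, where the Littlewood-Richardson coefficients are defined. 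Projecting to $\ROH_n^\ell$, by \cref{OH} the terms with $\kappa\notin\GPar{n}{n'}$ vanish; moreover $\overline{\tr}$ is homogeneous of degree $-2nn'$ and is normalized so that $\overline{\tr}\bigl(\bar s_{({n'}^n)}^{(n)}\bigr)=1$, so by degree considerations $\overline{\tr}\bigl(\bar s_\kappa^{(n)}\bigr)=\delta_{\kappa,({n'}^n)}$ for all $\kappa\in\GPar{n}{n'}$. Hence $\overline{\tr}\bigl(\bar s_\lambda^{(n)}\bar s_{\mu^\transpose}^{(n)}\bigr) = LR_{\lambda,\mu^\transpose}^{({n'}^n)}$, yielding the claimed identity.

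There is really no significant obstacle here once Corollaries \ref{trace} and \ref{hard} are in hand; the only point that requires some care is the extension of the trace identity of Corollary \ref{trace} from $\OSym_n$ to all of $\OSym_{(n,n')}$, which is handled cleanly by decomposing along the basis of \cref{frobenius}(1) and exploiting $\OSym_\ell$-linearity of $\Tr$.
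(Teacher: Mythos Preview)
Your proof is correct and follows essentially the same approach as the paper: both use Corollary~\ref{hard} to rewrite $\sig_n(\sigma_\mu^{(n')})\otimes 1$ as $(-1)^{\NE(\mu)}s_{\mu^\transpose}^{(n)}\otimes 1$, then invoke Corollary~\ref{trace} and the normalization $\overline{\tr}(\bar s_{({n'}^n)}^{(n)})=1$ to extract the Littlewood--Richardson coefficient. The only difference is that you make explicit the map $\varphi$ and the extension of the trace identity from $\OSym_n$ to all of $\OSym_{(n,n')}$, which the paper leaves implicit in its two-line computation.
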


\begin{proof}
By the previous two corollaries and the definition \cref{mainwork},
we have that 
$$
\sign(\mu) = (-1)^{\NE(\mu)}
\overline{\tr}\Big(\bar s_\lambda^{(n)} \bar s^{(n)}_{\mu^\transpose}\Big) = (-1)^{\NE(\mu)} LR_{\lambda,\mu^\transpose}^\nu.
$$
\end{proof}

Some very special odd Littlewood-Richardson coefficients arise in the odd analog of the {\em Pieri formula}
proved in \cite[(2.72)]{EKL}:
\begin{equation}\label{rowpieri}
s_\lambda h_r = \sum_{\mu} (-1)^{NE(\lambda)+NE(\mu)+S(\lambda,\mu)} s_{\mu}.
\end{equation}
The sum here is over all partitions $\mu$ whose Young diagram is obtained by adding one box to the bottom of $r$ 
different columns of the Young diagram of $\lambda$, and
$S(\lambda,\mu):=\sum_{1 \leq j \leq r} \sum_{k = i_j+1}^{\lambda_1} \lambda_k^\transpose$
assuming these columns are indexed by $i_1 < \cdots < i_r$.
The ghastly signs appearing in \cref{rowpieri} and in the next lemma fortunately play no significant role.

\begin{lemma}\label{thepierineededlater}
The inclusion $\OSym_n \hookrightarrow \OSym_{(n-1,1)}$
maps $$
s^{(n)}_\mu \mapsto
\sum_{r, \lambda}
(-1)^{NE(\lambda)+dE(\lambda)+NE(\mu)+
dE(\mu)+S(\lambda,\mu)+\binom{r}{2}} 
s^{(n-1)}_\lambda x_n^r
$$
where the sum is over all $r \geq 0$
and partitions $\lambda$ whose Young diagram is obtained by removing one box from the bottom of
$r$ different columns of the Young diagram of $\mu$,
including all boxes from its $n$th row since 
$s^{(n-1)}_\lambda = 0$ if $\lambda_n > 0$.
\end{lemma}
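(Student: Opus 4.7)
The plan is to exploit the commutative diagram \cref{tensorconvention} with $k=2$, $\alpha_1 = n-1$, $\alpha_2 = 1$, which gives
$$s^{(n)}_\mu \;=\; (\pi_{n-1} \otimes \pi_1)\bigl(\Delta^+(s_\mu)\bigr) \;\in\; \OSym_{n-1} \otimes \OPol_1 \;=\; \OSym_{(n-1,1)} \;\subseteq\; \OPol_n,$$
and to compute the right-hand side by expanding $\Delta^+(s_\mu)$ in the basis $\{s_\lambda \otimes s_\nu\}$ of $\OSym\otimes\OSym$. The key observation is that $\pi_1(s_\nu) = 0$ unless $\nu = (r)$ is a single-row partition: indeed $\OPol_1 = \k[x]$ is spanned by monomials in one odd variable so $\pi_1(e_s) = 0$ for $s>1$, and then $\pi_1(s_\nu)$ vanishes by \cref{domnutlex} unless $\nu^\transpose$ has only one row, i.e.\ $\nu=(r)$, in which case $\pi_1(s_{(r)}) = \pi_1(h_r) = x_n^r$ by \cref{hes}.

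Writing $\Delta^+(s_\mu) = \sum_{\lambda,\nu} c^\mu_{\lambda,\nu}\, s_\lambda\otimes s_\nu$, only the terms with $\nu = (r)$ contribute to the image in $\OSym_{(n-1,1)}$. To extract $c^\mu_{\lambda,(r)}$, I would use duality with respect to the symmetric bilinear form $(\cdot,\cdot)^+$: by \cref{schurformalt} the basis $\{s_\lambda \otimes s_\nu\}$ is signed-orthogonal with $(s_\lambda \otimes s_\nu, s_\lambda \otimes s_\nu)^+ = (-1)^{dE(\lambda)+dE(\nu)}$, while the adjointness property $(ab,c)^+ = (a\otimes b, \Delta^+(c))^+$ from \cref{charit2} gives
$$c^\mu_{\lambda,(r)} \;=\; (-1)^{dE(\lambda)+\binom{r}{2}}\, (s_\lambda\otimes s_{(r)},\, \Delta^+(s_\mu))^+ \;=\; (-1)^{dE(\lambda)+\binom{r}{2}}\,(s_\lambda h_r,\, s_\mu)^+,$$
using that $dE((r)) = \binom{r}{2}$ and $s_{(r)} = h_r$.

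Next I would apply the odd Pieri formula \cref{rowpieri} to expand $s_\lambda h_r$ in the Schur basis; matched against $s_\mu$ via \cref{schurformalt}, this yields
$$(s_\lambda h_r,\, s_\mu)^+ \;=\; (-1)^{dE(\mu)+NE(\lambda)+NE(\mu)+S(\lambda,\mu)}$$
precisely when the Young diagram of $\lambda$ is obtained from that of $\mu$ by removing one box from the bottom of each of $r$ distinct columns, and zero otherwise. Combining the signs yields exactly $c^\mu_{\lambda,(r)} = (-1)^{NE(\lambda)+dE(\lambda)+NE(\mu)+dE(\mu)+S(\lambda,\mu)+\binom{r}{2}}$ with the Pieri-type support condition stated in the lemma. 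Under the identification $\OSym_{n-1}\otimes\OPol_1 \cong \OSym_{(n-1,1)}\subseteq\OPol_n$ from the discussion after \cref{opol}, the basis vector $s^{(n-1)}_\lambda \otimes x_n^r$ goes to $s^{(n-1)}_\lambda x_n^r$ (no sign is introduced since the right factor of $s^{(n-1)}_\lambda \otimes 1$ is trivial), producing the asserted formula.

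The main obstacle is keeping track of the signs: namely, verifying that the self-pairing of $s_{(r)}$ under $(\cdot,\cdot)^+$ contributes exactly $(-1)^{\binom{r}{2}}$ (which follows from $s_{(r)}=h_r$ and \cref{britain}), and that the Pieri sign from \cref{rowpieri} combines cleanly with the $dE$ normalizations. The restriction $s^{(n-1)}_\lambda = 0$ for $\h(\lambda) > n-1$ (from \cref{alreadybasis}) automatically forces the bottom box of row $n$ of $\mu$ to be among those removed whenever $\mu_n > 0$, accounting for the parenthetical remark in the statement.
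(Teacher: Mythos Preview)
Your proposal is correct and follows essentially the same approach as the paper's own proof: identify the inclusion with $(\pi_{n-1}\otimes\pi_1)\circ\Delta^+$ via the commutative square \cref{gymnight}/\cref{tensorconvention}, extract the coefficient of $s_\lambda\otimes h_r$ in $\Delta^+(s_\mu)$ using the signed orthogonality \cref{schurformalt} and the adjointness \cref{charit2}, then apply the odd Pieri rule \cref{rowpieri}. The paper compresses this into three sentences, but your expanded version with the explicit justification that only $\nu=(r)$ survives under $\pi_1$ and the bookkeeping of $dE((r))=\binom{r}{2}$ is exactly the intended argument.
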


\begin{proof}
From \cref{gymnight}, it follows that 
the coefficient of $s^{(n-1)}_\lambda x_n^r$
when $s^{(n)}_\mu$ is expanded in terms of the Schur basis
for $\OSym_{(n-1,1)}$ 
is equal to the $s_\lambda \otimes h_r$-coefficient of
$\Delta^+(s_\mu)$.
Using \cref{schurformalt,hes}, this is $$
(-1)^{dE(\lambda)+\binom{r}{2}} (s_\lambda \otimes h_r, \Delta^+(s_\mu))^+
\stackrel{\cref{charit2}}{=}
(-1)^{dE(\lambda)+\binom{r}{2}} (s_\lambda h_r, s_\mu)^+.
$$
Now use \cref{rowpieri} plus \cref{schurformalt} once again.
\end{proof}

\begin{remark}\label{discussionhere}
A general formula for odd Littlewood-Richardson coefficients 
is derived in \cite[Th.~4.8]{E}, showing that they can be computed
by counting the same set of
semi-standard skew tableaux that appear in the ordinary Littlewood-Richardson rule, but counting each one with a sign $\pm$.
A useful consequence of this is that if an ordinary even Littlewood-Richardson coefficient is zero then so is the corresponding odd Littlewood-Richardson coefficient. This, together with the odd Pieri rule, is all that we actually use below.
\end{remark}

\section{Equivariant odd Grassmannian cohomology algebras}

Recall from \cref{dumbc3} that 
$R_\ell$ denotes
the largest supercommutative quotient of $\OSym_\ell$.
We are now going to work over this as our base ring.
We use the notation $\dot c$ to denote the image of $c \in \OSym_\ell$
in $R_\ell$.
Note that $\OSym_n \otimes R_\ell$ is a graded $R_\ell$-superalgebra
with structure map $\eta:R_\ell \rightarrow \OSym_n\otimes R_\ell,
\dot c \mapsto 1 \otimes \dot c$.

\begin{definition}\label{minuets}
For $\ell = n+n'$, the
{\em equivariant odd Grassmannian cohomology algebra}
 is
the graded $R_\ell$-superalgebra
\begin{align}\label{blue}
\EOH_n^\ell &:= 
\OSym_{n} \otimes R_\ell
 \:\Bigg/\: \Bigg\langle \sum_{s=0}^r (-1)^s h_{r-s}^{(n)} \otimes
 \dot e^{(\ell)}_{s}\:\Bigg|\:r > n'\Bigg\rangle.
\end{align}
For $a \in \OSym_n$
and $c \in \OSym_\ell$, we denote the canonical image 
of $a\otimes \dot c \in \OSym_n \otimes R_\ell$ 
in the quotient $\EOH_n^\ell$ by $a \hatotimes \dot c$.\end{definition}

As the name suggests, this is
an odd analog of the $GL_\ell(\C)$-equivariant cohomology algebra
of the Grassmannian of $n$-dimensional subspaces of $\C^\ell$.
Given any graded 
supercommutative $R_\ell$-superalgebra
$A$, one can specialize to obtain
the graded $A$-superalgebra $\EOH_n^\ell \otimes_{R_\ell} A$.
In particular, the ordinary
odd Grassmannian cohomology algebra $\ROH_n^\ell$ from \cref{turn} is naturally identified with the specialization $\EOH_n^\ell \otimes_{R_\ell} \k$.

\begin{example}
We have that
$\EOH_1^2 \cong 
\OPol_2 \,\big/\, \langle x_1^2+x_2^2,  x_1^3+x_1^2 x_2 \rangle$
via the isomorphism $h_r^{(1)}\hatotimes 1\mapsto x_1^r$,
$1 \hatotimes \dot e_1^{(2)}\mapsto x_1+x_2$,
$1\hatotimes \dot e_2^{(2)}\mapsto x_1x_2$.
A linear basis 
is given by the elements $\{x_1^r, x_2, x_1 x_2\:|\:r \geq 0\}$.
\end{example}

\begin{lemma}\label{alphamap}
For $\ell = n+n'$, there is a surjective graded superalgebra homomorphism
$\alpha_n^\ell:\EOH_n^\ell \twoheadrightarrow R_{n'}$ taking
$a \hatotimes 1$ to zero for $a \in \OSym_n$ of positive degree
and $1 \hatotimes \dot c^{(\ell)}$ to $\dot c^{(n')}$ for $c \in \OSym$.
\end{lemma}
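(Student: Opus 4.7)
The plan is to build $\alpha_n^\ell$ in three steps: first construct an auxiliary map $\beta:R_\ell\to R_{n'}$ between the supercommutative base rings, then lift this to an algebra map $\phi:\OSym_n\otimes R_\ell\to R_{n'}$, and finally check that $\phi$ annihilates the defining relations of $\EOH_n^\ell$.

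First I would construct $\beta$. Applying \cref{dumbc1,dumbc3}, both $R_\ell$ and $R_{n'}$ are naturally quotients of $\OSym$: the kernel of $\OSym\twoheadrightarrow R_m$ is the two-sided ideal generated by $\{e_r\:|\:r>m\}$ together with $o^2$ and $[o,e_2]$. Since $n'\leq\ell$, the kernel of $\OSym\twoheadrightarrow R_\ell$ is contained in that of $\OSym\twoheadrightarrow R_{n'}$, so there is a unique surjective graded superalgebra homomorphism $\beta:R_\ell\twoheadrightarrow R_{n'}$ with $\beta(\dot c^{(\ell)})=\dot c^{(n')}$ for all $c\in\OSym$.

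Next I would define the auxiliary map on the tensor product. Let $\eps:\OSym_n\to\k$ be the augmentation sending $h_r^{(n)}\mapsto 0$ for all $r\geq 1$; this is a well-defined graded superalgebra homomorphism because $\OSym_n$ is connected graded (generated by positive-degree elements with $(\OSym_n)_{0,\0}=\k$). Set
\begin{equation*}
\phi:\OSym_n\otimes R_\ell\longrightarrow R_{n'},\qquad a\otimes \dot c\,\longmapsto\,\eps(a)\,\beta(\dot c).
\end{equation*}
This is a graded superalgebra homomorphism: the Koszul signs from the graded tensor product multiplication cause no issue, because $\eps$ vanishes on all elements of positive degree, and in particular on all odd elements, so the sign factor $(-1)^{\parity(b)\parity(a')}$ appearing in $(a\otimes b)(a'\otimes b')=(-1)^{\parity(b)\parity(a')}aa'\otimes bb'$ is always $+1$ on any product where both factors survive.

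Then I would check that $\phi$ descends to the quotient $\EOH_n^\ell$. For each $r>n'$, the defining relation maps to
\begin{equation*}
\phi\!\left(\sum_{s=0}^r(-1)^s h_{r-s}^{(n)}\otimes\dot e_s^{(\ell)}\right)=\sum_{s=0}^r(-1)^s\eps(h_{r-s}^{(n)})\,\beta(\dot e_s^{(\ell)})=(-1)^r\dot e_r^{(n')}=0,
\end{equation*}
using $\eps(h_{r-s}^{(n)})=\delta_{r,s}$ and the fact that $e_r^{(n')}=0$ in $\OSym_{n'}$ for $r>n'$. Hence $\phi$ factors through $\EOH_n^\ell$ to give the desired $\alpha_n^\ell$, which sends $a\hatotimes 1\mapsto\eps(a)$ (so to $0$ when $\deg a>0$) and $1\hatotimes\dot c^{(\ell)}\mapsto\dot c^{(n')}$. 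Surjectivity is immediate since $R_{n'}$ is generated as an algebra by the elements $\dot e_r^{(n')}$ for $r\geq 1$, each of which lies in the image by construction. There is no real obstacle in this argument; the only subtle point is the well-definedness of $\beta$, which is handled by comparing the explicit presentations of $R_\ell$ and $R_{n'}$ from \cref{dumbc1,dumbc3}.
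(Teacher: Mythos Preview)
Your proof is correct and follows essentially the same approach as the paper, which simply states that the result ``is clear from the nature of the defining relations.'' You have carefully unpacked the details: constructing the map on $\OSym_n\otimes R_\ell$ via the augmentation and the canonical surjection $R_\ell\twoheadrightarrow R_{n'}$, then verifying that the defining relations of $\EOH_n^\ell$ map to $(-1)^r\dot e_r^{(n')}=0$ for $r>n'$. One minor remark: your argument that the Koszul signs cause no trouble is correct but unnecessary, since $\eps\otimes\beta$ is automatically a graded superalgebra homomorphism (tensor products of even degree-zero homomorphisms are homomorphisms of graded superalgebras).
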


\begin{proof}
This is clear from the nature of the defining relations \cref{blue}.
\end{proof}

\begin{lemma}\label{face}
The two-sided ideal
$\Big\langle \sum_{s=0}^r (-1)^s h_{r-s}^{(n)} \otimes
\dot e^{(\ell)}_{s}\:\Big|\:r > n'\Big\rangle$
of $\OSym_n \otimes R_\ell$ contains
the elements $\sum_{s=0}^r h_{r-s}^{(n)} \otimes \dot e^{(\ell)}_s$
for all $r > n'+1$.
Also the two-sided ideal
$\Big\langle \sum_{s=0}^r (-1)^{(r-n')s} h_{r-s}^{(n)} \otimes
\dot e^{(\ell)}_{s}\:\Big|\:r > n'\Big\rangle$
contains
the elements $\sum_{s=0}^r (-1)^{(r-n')s+s}h_{r-s}^{(n)} \otimes \dot e^{(\ell)}_s$
for all $r > n'+1$.
Hence, these two ideals are equal.
\end{lemma}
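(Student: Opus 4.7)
The plan is to derive enlargements of both ideals by right multiplication of the given generators by the single odd element $1 \otimes \dot o^{(\ell)} = 1 \otimes \dot e_1^{(\ell)}$, exploiting the relations coming from \cref{tiger} together with $(\dot o^{(\ell)})^2 = 0$ in $R_\ell$: namely, $\dot e_s^{(\ell)} \dot o^{(\ell)} = \dot e_{s+1}^{(\ell)}$ when $s$ is even, and $\dot e_s^{(\ell)} \dot o^{(\ell)} = 0$ when $s$ is odd.

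For the first assertion, denote the given generators by $f_r := \sum_{s=0}^r (-1)^s h_{r-s}^{(n)} \otimes \dot e_s^{(\ell)}$ and the target elements by $g_r := \sum_{s=0}^r h_{r-s}^{(n)} \otimes \dot e_s^{(\ell)}$. Because $1 \in \OSym_n$ is even, the super tensor product multiplication introduces no sign in $(h_{r-1-s}^{(n)} \otimes \dot e_s^{(\ell)})(1 \otimes \dot o^{(\ell)})$, so the relations above give
\[
f_{r-1} \cdot (1 \otimes \dot o^{(\ell)}) = \sum_{s=0}^{r-1} (-1)^s h_{r-1-s}^{(n)} \otimes \dot e_s^{(\ell)} \dot o^{(\ell)} = \sum_{\substack{1 \leq t \leq r\\ t\text{ odd}}} h_{r-t}^{(n)} \otimes \dot e_t^{(\ell)}
\]
after relabeling $t=s+1$. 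Combined with the trivial identity $g_r - f_r = 2\sum_{t \text{ odd}} h_{r-t}^{(n)} \otimes \dot e_t^{(\ell)}$, this yields $g_r = f_r + 2 f_{r-1}(1 \otimes \dot o^{(\ell)})$, which belongs to the first ideal whenever both $f_r$ and $f_{r-1}$ are generators, i.e., whenever $r > n'+1$.

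For the second assertion, let $F_r := \sum_s (-1)^{(r-n')s} h_{r-s}^{(n)} \otimes \dot e_s^{(\ell)}$ denote the new generators and $G_r := \sum_s (-1)^{(r-n')s+s} h_{r-s}^{(n)} \otimes \dot e_s^{(\ell)}$ the new targets. The same computation (noting that $(-1)^{(r-1-n')s} = 1$ for $s$ even) again produces $F_{r-1}(1 \otimes \dot o^{(\ell)}) = \sum_{t \text{ odd}} h_{r-t}^{(n)} \otimes \dot e_t^{(\ell)}$, and from $G_r - F_r = -2(-1)^{r-n'}\sum_{s \text{ odd}} h_{r-s}^{(n)} \otimes \dot e_s^{(\ell)}$ we conclude that $G_r = F_r - 2(-1)^{r-n'} F_{r-1}(1 \otimes \dot o^{(\ell)})$ lies in the second ideal for $r > n'+1$. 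Finally, for the equality of the two ideals, parity bookkeeping shows that $F_r = f_r$ when $r-n'$ is odd and $F_r = g_r$ when $r-n'$ is even; symmetrically $G_r = g_r$ when $r-n'$ is odd and $G_r = f_r$ when $r-n'$ is even. Hence every generator of either ideal either coincides with a generator of the other or is among the elements already shown to lie in it, so the ideals coincide. The only (minor) obstacle is sign bookkeeping in the super tensor product, which is rendered painless by the choice to multiply only on the right by the element $1 \otimes \dot o^{(\ell)}$.
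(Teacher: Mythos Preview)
Your proof is correct and takes a genuinely different route from the paper. The paper multiplies the generator $a = f_{r-1}$ on the left and right by $h_1^{(n)} \otimes 1$, using the commutation relation $h_1 h_m = -h_m h_1 + 2 h_{m+1}$ for even $m$ from \cref{straighteningruleh}; the combination $(-1)^r (h_1 \otimes 1) a - a(h_1 \otimes 1)$ is then shown to equal $\sum_{s} (1 - (-1)^{r-s}) h_{r-s}^{(n)} \otimes \dot e_s^{(\ell)}$, and adding $(-1)^r f_r$ gives $g_r$. You instead multiply on the right by $1 \otimes \dot o^{(\ell)}$ and use the relations $\dot e_s^{(\ell)} \dot o^{(\ell)} = \dot e_{s+1}^{(\ell)}$ (even $s$) and $\dot e_s^{(\ell)} \dot o^{(\ell)} = 0$ (odd $s$) coming from \cref{tiger} together with $(\dot o^{(\ell)})^2 = 0$ in $R_\ell$. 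Both routes isolate the odd-$s$ part of the sum, which is the key point; your argument has the advantage of working entirely on the supercommutative $R_\ell$ side, avoiding the sign bookkeeping in the non-commutative $\OSym_n$ and the super-tensor commutations needed in the paper's computation. The final deduction that the two ideals coincide is essentially identical in both arguments.
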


\begin{proof}
For the proof, we denote these two-sided ideals by $I$ and $J$, respectively.
By \cref{straighteningruleh}, we have that $h_1 h_r = h_r h_1$
if $r$ is odd and $h_1 h_r = - h_r h_1 + 2 h_{r+1}$ if $r$ is even.
Suppose that $r > n'+1$.
The first ideal $I$ contains
$a := \sum_{s=0}^{r-1} (-1)^s h_{r-1-s}^{(n)} \otimes \dot e^{(\ell)}_s$.
Hence, it contains 
$$
(-1)^r (h_1 \otimes 1) a - a (h_1 \otimes 1) = 
\sum_{s=0}^{r} (1-(-1)^{r-s}) h_{r-s}^{(n)} \otimes \dot e^{(\ell)}_s.
$$
Adding this to $\sum_{s=0}^r (-1)^{r-s} h_{r-s}^{(n)} \otimes \dot e^{(\ell)}_s$, which is also in $I$, gives the claimed elements
for the first assertion of the lemma.
The assertion about the second ideal $J$ is proved similarly.
To deduce that $I=J$, the facts established so far show that both are generated by the lowest degree generator
$\sum_{s=0}^{n'+1} (-1)^s h_{n'+1-s}^{(n)} \otimes \dot e^{(\ell)}_s$
together with the higher degree generators
$\sum_{s=0}^r (-1)^s h_{r-s}^{(n)} \otimes \dot e^{(\ell)}_s$
and 
$\sum_{s=0}^r h_{r-s}^{(n)} \otimes \dot e^{(\ell)}_s$
for all $r > n'+1$.
\end{proof}

For any $\alpha \in \Comp(k,\ell)$,
$\OSym_\alpha\otimes_{\OSym_\ell} R_\ell$
is a graded left $\OSym_\alpha$-supermodule, and it is a graded 
$R_\ell$-supermodule for the left action that is induced by the natural right action. Thus,
for $a \in \OSym_\alpha$ and 
$c \in \OSym_\ell$, we have that 
\begin{equation}\label{likehere}
ac \otimes 1  = a \otimes \dot c = (a \otimes 1) \cdot \dot c
= (-1)^{\parity(a)\parity(c)} \dot c \cdot (a \otimes 1).
\end{equation}
However this is in general {\em not} equal to $ca \otimes 1$.
Note also that $\OSym_\alpha\otimes_{\OSym_\ell} R_\ell$
is not itself a graded superalgebra in any apparent way.
Indeed, $R_\ell$ is the quotient of $\OSym_\ell$ by
the two-sided ideal $I_\ell$ generated by $\big(o^{(\ell)}\big)^2$
and $\big[o^{(\ell)},e_2^{(\ell)}\big]$,
so $$
\OSym_\alpha \otimes_{\OSym_\ell} R_\ell
=\OSym_\alpha\otimes_{\OSym_\ell} \OSym_\ell / I_\ell 
\simeq \OSym_\alpha / \OSym_\alpha I_\ell.
$$
However, in general, 
$\OSym_\alpha I_\ell$ is merely a left ideal, not
a two-sided ideal of $\OSym_\alpha$.
Similar remarks apply to $R_\ell\otimes_{\OSym_\ell} \OSym_\alpha$,
which is a graded right $\OSym_\alpha$-supermodule, and a graded
$R_\ell$-supermodule for the right action that
is induced by the natural left action.

\begin{theorem}\label{EOH}
For $\ell = n+n'$,
$\EOH_n^\ell$ is free as a graded $R_\ell$-supermodule with basis
given by the odd Schur polynomials
$s_\lambda^{(n)}\hatotimes 1$ for
$\lambda \in \GPar{n}{n'}$.
Moreover, 
there is a commuting diagram
\begin{equation}\label{EOHCD}
\begin{tikzcd}
\EOH^\ell_n\arrow[rrrr,"a\hatotimes \dot c \mapsto a \otimes \dot c"  above]
\arrow[d,"\psi_n^\ell" left]
&&&&\OSym_{(n,n')} \otimes_{\OSym_\ell}R_\ell\\
\EOH^\ell_{n'}\arrow[rrrr,"a \hatotimes \dot c \mapsto (-1)^{\parity(a)(n+\parity(c))} \dot c \otimes \sig_n(a)" below]
&&&&
R_\ell \otimes_{\OSym_\ell} \OSym_{(n,n')}\arrow[u,"\dot c \otimes a \mapsto (-1)^{\parity(a)\parity(c)}a^*\otimes \dot c" right]
\end{tikzcd}
\end{equation}
of isomorphisms in which
\begin{enumerate}
\item
the top map is an even degree 0 isomorphism
of graded $(\OSym_n, R_\ell)$-superbimodules;
\item
the bottom map is an even degree 0
isomorphism
of graded  $(R_\ell,\OSym_{n'})$-superbimodules
for the right action of $\OSym_{n'}$ on $R_\ell \otimes_{\OSym_\ell} \OSym_{(n,n')}$ defined by restricting
the natural right action of $\OSym_{(n,n')}$ along
$\sig_n \circ \operatorname{p}^n:\OSym_{n'} \rightarrow
\OSym_{(n,n')}$;
\item
the right hand map is an even degree 0
graded $R_\ell$-supermodule isomorphism;
\item
the left hand map $\psi_n^\ell$
is a graded $R_\ell$-superalgebra isomorphism 
such that 
\begin{align}
\label{newiphone}
\psi_n^\ell\big(a \hatotimes \dot c\big)
&=
\sum_{i=1}^p
a_i \hatotimes \dot c_i
\end{align}
for $a \in \OSym_n$, $c \in \OSym_\ell$
with
$ac=\sum_{i=1}^p (-1)^{n\parity(a_i)} \sig_n(a_i)^* c_i$
for
$a_i \in \OSym_{n'}, c_i
\in \OSym_\ell$.
\end{enumerate}
\end{theorem}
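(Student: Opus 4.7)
The plan is to establish the top map (1) and the basis assertion together via graded Nakayama, then construct the other three maps in the diagram, verify commutativity, and finally upgrade $\psi_n^\ell$ to a graded superalgebra isomorphism.

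First I would show the candidate top map $a \hatotimes \dot c \mapsto a \otimes \dot c$ is well-defined. Under it, the defining relation $\sum_{s=0}^r (-1)^s h_{r-s}^{(n)} \otimes \dot e_s^{(\ell)}$ of $\EOH_n^\ell$ (for $r > n'$) becomes $\sum_s (-1)^s h_{r-s}^{(n)} \otimes \dot e_s^{(\ell)}$ in $\OSym_{(n,n')} \otimes_{\OSym_\ell} R_\ell$. Sliding each $e_s^{(\ell)} \in \OSym_\ell$ across the balanced tensor reduces this to $\sum_s (-1)^s h_{r-s}^{(n)} e_s^{(\ell)} \otimes 1$, which equals $(-1)^r \sig_n(e_r^{(n')}) \otimes 1 = 0$ for $r > n'$ by \cref{notsoeasypeasy}(1). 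The resulting map is manifestly an $(\OSym_n, R_\ell)$-superbimodule homomorphism.

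To prove the basis assertion and injectivity of (1), I would use graded Nakayama. Since $R_\ell$ is a connected graded ring and $\EOH_n^\ell$ is a bounded-below graded $R_\ell$-module with finite-dimensional graded pieces, a $\k$-basis of $\EOH_n^\ell \otimes_{R_\ell} \k = \ROH_n^\ell$ lifts to an $R_\ell$-module generating set for $\EOH_n^\ell$. By \cref{OH} this gives the generating set $\{s_\lambda^{(n)} \hatotimes 1 \mid \lambda \in \GPar{n}{n'}\}$. On the other side, \cref{frobenius}(1) (after base change along $\OSym_\ell \twoheadrightarrow R_\ell$) shows that the target $\OSym_{(n,n')} \otimes_{\OSym_\ell} R_\ell$ is a free right $R_\ell$-module with basis $\{s_\lambda^{(n)} \otimes 1 \mid \lambda \in \GPar{n}{n'}\}$. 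The top map bijects generators with this basis, so the generators are $R_\ell$-linearly independent, form an $R_\ell$-basis of $\EOH_n^\ell$, and the top map is an isomorphism.

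For (3) I would verify that $\dot c \otimes a \mapsto (-1)^{\parity(a)\parity(c)} a^* \otimes \dot c$ descends to the tensor product balanced over $\OSym_\ell$: this is a short sign check using that $*$ restricts to an anti-involution of $\OSym_\ell$ and acts trivially on the supercommutative quotient $R_\ell$ by \cref{dumbc1}. For (2), the argument parallels (1): the signs in the defining formula are arranged so that \cref{notsoeasypeasy}(2) yields well-definedness, and the same Nakayama-plus-\cref{frobenius} approach produces a basis and the isomorphism. To check that the square commutes, it suffices to trace a generator $a \hatotimes 1 \in \EOH_n^\ell$ around it: expanding $a \in \OSym_n \subseteq \OSym_{(n,n')}$ as $\sum_i (-1)^{n\parity(a_i)} \sig_n(a_i)^* c_i$ (which is possible since $*$ applied to \cref{frobenius}(2) decomposes $\OSym_{(n,n')}$ as a left $\OSym_\ell$-module generated by elements of the form $\sig_n(a_i)^*$), a careful sign calculation shows both routes produce the same element $\sum_i a_i \hatotimes \dot c_i$ in $\EOH_{n'}^\ell$.

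The commuting square then forces $\psi_n^\ell$ to be an $R_\ell$-supermodule isomorphism satisfying the formula (4). The remaining task—and the main obstacle of the proof—is verifying that $\psi_n^\ell$ respects multiplication rather than merely $R_\ell$-module structure. I would check multiplicativity on pairs of generators $a \hatotimes 1, b \hatotimes 1$: comparing the expansions of $a$, $b$, and $ab$ of the form $\sum (-1)^{n\parity(\cdot)} \sig_n(\cdot)^* (\cdot)$, and using the supercentrality of $R_\ell$ together with the defining relations of $\EOH_{n'}^\ell$, the identity $\psi_n^\ell(ab \hatotimes 1) = \psi_n^\ell(a \hatotimes 1)\psi_n^\ell(b \hatotimes 1)$ reduces to the fact that $\psi \circ \operatorname{p}^n$ is an algebra involution of $\OSym$, modulo sign bookkeeping involving $(-1)^{\parity(a)\parity(c)}$, $(-1)^{\parity(a)(n+\parity(c))}$, and $(-1)^{n\parity(a_i)}$. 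The delicate interplay of these signs—and the fact that $\psi \circ \operatorname{p}^n$ does \emph{not} preserve the defining ideal of $\EOH_n^\ell$ inside $\OSym \otimes R_\ell$ in any direct way, so the expansion $a = \sum_i (-1)^{n\parity(a_i)} \sig_n(a_i)^* c_i$ is essential rather than cosmetic—is the principal combinatorial difficulty of the proof.
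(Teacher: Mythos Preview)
Your overall structure matches the paper's, and the graded Nakayama argument for spanning is a clean alternative to the paper's direct induction on $|\lambda|$. There is, however, a genuine gap in your well-definedness argument for the top map.

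You check that each \emph{generator} $r_\theta := \sum_s (-1)^s h_{r-s}^{(n)} \otimes \dot e_s^{(\ell)}$ of the defining two-sided ideal maps to zero in $\OSym_{(n,n')} \otimes_{\OSym_\ell} R_\ell$. But the map $\phi:\OSym_n \otimes_\k R_\ell \to \OSym_{(n,n')} \otimes_{\OSym_\ell} R_\ell$ is only an $(\OSym_n, R_\ell)$-\emph{bimodule} homomorphism---the target is not an algebra (the paper notes this explicitly just before \cref{EOH}), so $\ker\phi$ is only a left ideal, not two-sided. Thus $\phi(r_\theta)=0$ does not immediately give $\phi(r_\theta \cdot (a\otimes 1))=0$ for $a\in\OSym_n$: when you try to slide $\dot e_s^{(\ell)}$ across the balanced tensor in $\phi(r_\theta \cdot (a\otimes 1))=\sum_s(-1)^{s+s\parity(a)}h_{r-s}^{(n)}a\, e_s^{(\ell)}\otimes 1$, you cannot simply commute $e_s^{(\ell)}$ past $a$, since $\OSym_\ell$ is not supercentral in $\OSym_{(n,n')}$.

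The paper circumvents this by instead defining a left $\OSym_n\otimes R_\ell$-\emph{action} on the target and checking that each $r_\theta$ annihilates the full set of $R_\ell$-module generators $\sig_n(b)\otimes 1$ for $b\in\OSym_{n'}$ (available by \cref{frobenius}(2)). The crucial point is that $h_{r-s}^{(n)}\in\OSym_n$ supercommutes with $\sig_n(b)\in\sig_n(\OSym_{n'})$ because they involve disjoint variable sets, which lets one pull $\sig_n(b)$ to the left and then apply \cref{notsoeasypeasy}(1). Once you adopt this module-action viewpoint the rest of your outline (Nakayama plus \cref{frobenius}(1) for the basis, the analogous argument for (2), the $*$-construction for (3), and the explicit multiplicativity check for $\psi_n^\ell$) goes through as in the paper.
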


\begin{proof}
(1)
To construct the top map,
we must show that $\OSym_{(n,n')}\otimes_{OSym_\ell} R_\ell$ can be 
made into a graded left $\EOH_n^\ell$-supermodule so that
$$
a \hatotimes \dot c\cdot (b \otimes 1) = (-1)^{\parity(c)\parity(b)}ab \otimes \dot c
$$ 
for $a \in \OSym_n, b \in \OSym_{(n,n')}$ and $c \in \OSym_\ell$. 
To see this is well defined, we know by
\cref{frobenius}(2) that $\OSym_{(n,n')} \otimes_{\OSym_\ell} R_\ell$
is generated as a $R_\ell$-supermodule
by vectors of the form $\sig_n(b) \otimes 1$
for $b \in \OSym_{n'}$, so it suffices to check that
$\sum_{s=0}^r (-1)^s h_{r-s}^{(n)} \hatotimes \dot e_{s}^{(\ell)}$
acts as zero on $\sig_n(b) \otimes 1$ for $b \in \OSym_{n'}$
and $r > n'$.
This follows by
\cref{notsoeasypeasy}(1):
\begin{align*}
\sum_{s=0}^r (-1)^s h_{r-s}^{(n)} \hatotimes \dot e_{s}^{(\ell)}
\cdot (\sig_n(b) \otimes 1) &= 
\sum_{s=0}^r (-1)^{s+s\parity(b)} h_{r-s}^{(n)} \sig_n(b) \hatotimes \dot e_{s}^{(\ell)}\\ &= 
(-1)^{r \parity(b)}
\sig_n(b) \sum_{s=0}^r  (-1)^s h_{r-s}^{(n)}
e_s^{(\ell)}\otimes 1=
0.
\end{align*}
Thus, we have defined the top map.

Next, we show that the elements $\big\{s^{(n)}_\lambda\hatotimes 1\:\big|\:
\lambda \in \GPar{n}{n'}\big\}$ 
generate $\EOH^\ell_n$ as a graded $R_\ell$-supermodule.
This is not quite as easy as before since it is no longer the case that 
$s^{(n)}_\lambda \hatotimes 1= 0$ in $\EOH^\ell_n$ when $\lambda_1>n$.
 Instead, one shows by induction on $|\lambda|$
 that any $s^{(n)}_\lambda\hatotimes 1$ for
 $\lambda$ with $\lambda_1>n$ can be written as an $R_\ell$-linear
 combination of other $s^{(n)}_\mu \hatotimes 1$ for 
$\mu$ with $\mu_1 \leq n$ and $|\mu| < |\lambda|$.

Now the proof of the first part of the theorem can be completed.
The spanning set for $\EOH_n^\ell$
just constructed 
is also linearly independent
because it becomes the basis for $OSym_{(n,n')}\otimes_{\OSym_\ell}R_\ell$ arising from \cref{frobenius}(1) when we act on the vector $1 \otimes 1$.
This also shows that the top map (1) is an isomorphism.

\vspace{1mm}
\noindent
(2)
We need to make 
$R_\ell \otimes_{\OSym_\ell} \OSym_{(n,n')}$
into a graded right $\EOH_{n'}^\ell$-supermodule so that $$
(1 \otimes b)\cdot a\hatotimes  \dot c = 
(-1)^{(\parity(a)+\parity(b))\parity(c)+n\parity(a)}
\dot c \otimes b \sig_n(a)
$$
for $a \in \OSym_{n'}$,
$b \in \OSym_{(n,n')}$ and
$c \in \OSym_\ell$.
To check that this action is well defined, 
we know from \cref{face} that the ideal defining
$\EOH_{n'}^\ell$ is generated by the elements
$\sum_{s=0}^r (-1)^{(r-n)s} h_{r-s}^{(n')} \otimes \dot e_s^{(\ell)}$
for $r > n$.
It suffices to check that the image of each such element 
in $\EOH_{n'}^\ell$ acts as zero on
$1 \otimes b$ for $b \in \OSym_n$. 
This follows by
\cref{notsoeasypeasy}(2):
\begin{align*}
(1 \otimes b)\cdot
\sum_{s=0}^r (-1)^{(r-n)s} h_{r-s}^{(n')} \hatotimes \dot e_s^{(\ell)}
&= (-1)^{rn} \sum_{s=0}^r (-1)^{s+s\parity(b)}
\dot e_s^{(\ell)} \otimes b \sig_n\big(h_{r-s}^{(n')}\big)\\
&= (-1)^{r(\parity(b)+n)} 1 \otimes 
\sum_{s=0}^r (-1)^{s}
e_s^{(\ell)} \sig_n\big(h_{r-s}^{(n')}\big)b = 0.
\end{align*}

Applying $*$ to the basis from \cref{frobenius}(2), we deduce that $R_\ell
\otimes_{\OSym_\ell} \OSym_{(n,n')}$ is a free graded 
$R_\ell$-supermodule with basis $\Big\{1 \otimes \sig_n(s^{(n')}_{\mu})\:\Big|\:\mu \in \GPar{n'}{n}\Big\}$. Also 
the elements $\big\{s^{(n')}_{\mu}\hatotimes 1\:\big|\:\mu \in \GPar{n'}{n}\big\}$ span $\EOH_{n'}^\ell$ as in the previous paragraph. Acting on $1\otimes 1$
shows finally that these elements form a basis 
for $\EOH_{n'}^\ell$ and that the bottom map is an isomorphism.

\vspace{1mm}
\noindent
(3)
To construct the right hand map, we
start with the map
$$
R_\ell \otimes \OSym_{(n,n')}
\rightarrow \OSym_{(n,n')} \otimes_{\OSym_\ell} R_\ell,\qquad
\dot c \otimes a\mapsto
(-1)^{\parity(a)\parity(c)}a^* \otimes \dot c
$$ 
for $c \in \OSym_\ell, a \in \OSym_{(n,n')}$.
Using that $R_\ell$ is supercommutative, 
this is easily checked to be a graded
$R_\ell$-supermodule homomorphism.
Also this map is balanced. To see this, we need to show that the images of
$\dot c_1 \dot c_2 \otimes a$ and $\dot c_1 \otimes c_2 a$ are the same 
for $a \in \OSym_{(n,n')}$ and $c_1,c_2 \in \OSym_\ell$.
Note that the superalgebra anti-involution $*$ of $\OSym_\ell$
descends to a superalgebra anti-involution $*$ of $R_\ell$.
Since $R_\ell$ is supercommutative and each of its generators
$\dot e_r^{(\ell)}\:(r \geq 1)$ is fixed by $*$, this induced
anti-involution is equal to the identity.
So $\dot c_2 = \dot c_2^*$, and
the image of $\dot c_1\dot c_2 \otimes a$ is
\begin{align*}
(-1)^{\parity(a)(\parity(c_1)+\parity(c_2))}
a^* \otimes \dot c_1 \dot c_2
&=
(-1)^{\parity(a)\parity(c_1)+\parity(a)\parity(c_2)+\parity(c_1)\parity(c_2)}
a^* \otimes \dot c_2^* \dot c_1\\
&=(-1)^{\parity(a)\parity(c_1)+\parity(a)\parity(c_2)+\parity(c_1)\parity(c_2)}
a^* c_2^* \otimes \dot c_1\\
&=
(-1)^{(\parity(a)+\parity(c_2))\parity(c_1)}
(c_2 a)^* \otimes \dot c_1
\end{align*}
which is equal to the image of
$\dot c_1 \otimes c_2 a$.
So this map induces the required
graded $R_\ell$-supermodule isomorphism.

\vspace{1mm}
\noindent
(4)
We define $\psi_n^\ell$ to be the unique 
graded $R_\ell$-supermodule isomorphism making the diagram commute. 
If $ac = \sum_{i=1}^p (-1)^{n\parity(a_i)} \sig_n(a_i)^*c_i$ for 
$a_i \in \OSym_{n'}, c_i \in \OSym_\ell$
then the image of $a\hatotimes \dot c$ under the top map is
equal to $\sum_{i=1}^p (-1)^{n\parity(a_i)} 
\sig_n(a_i)^*\otimes\dot c_i$.
It follows that $\psi_n^\ell(a\hatotimes 1) 
= \sum_{i=1}^p a_i \otimes \dot c_i$
because the latter expression also maps to
$\sum_{i=1}^p (-1)^{n\parity(a_i)} \sig_n(a_i)^*\otimes\dot c_i$
when the bottom map followed by the right hand map is applied.

We still need to show that $\psi_n^\ell$ is actually a graded
$R_\ell$-superalgebra isomorphism.
For this, we take $a, b\in \OSym_n$
such that $a = \sum_{i=1}^p (-1)^{n\parity(a_i)} \sig_n(a_i)^* c_i$
and $b = \sum_{j=1}^q (-1)^{n\parity(b_j)} \sig_n(b_j)^* d_j$
for $a_i,b_j \in \OSym_{n'}$ and $c_i,d_j \in \OSym_\ell$.
We have that
\begin{align*}
ab &= \sum_{j=1}^q (-1)^{n\parity(b_j)} a \sig_n(b_j)^* d_j
= \sum_{j=1}^q(-1)^{(\parity(a)+n)\parity(b_j)} \sig_n(b_j)^*
a d_j\\
&=
\sum_{i=1}^p\sum_{j=1}^q (-1)^{n\parity(a_i)+(\parity(a_i)+\parity(c_i)+n)\parity(b_j)}
\sig_n(b_j)^* \sig_n(a_i)^* c_i d_j\\
&=
\sum_{i=1}^p\sum_{j=1}^q (-1)^{n(\parity(a_i)+\parity(b_j))+\parity(c_i)\parity(b_j)}
\sig_n(a_i b_j)^* c_i d_j.
\end{align*}
So
$$
\psi_n^\ell(ab) = 
\sum_{i=1}^p \sum_{j=1}^q
(-1)^{\parity(c_i) \parity(b_j)} a_i b_j\otimes \dot c_i \dot d_j
=
\Big(
\sum_{i=1}^p a_i \hatotimes \dot c_i\Big)\Big(\sum_{j=1}^q
b_j\hatotimes \dot d_j\Big)
= \psi_n^\ell(a) \psi_n^\ell(b).
$$
\end{proof}

\begin{corollary}\label{zalatoris}
For $\ell=n+n'$,
$\EOH_n^\ell$ is 
a free graded $R_\ell$-supermodule
of graded rank
$q^{n n'} \sqbinom{\ell}{n}_{q,\pi}$.
\end{corollary}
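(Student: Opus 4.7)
The plan is to simply combine the basis statement from \cref{EOH} with the combinatorial identity in \cref{qbinomialformula}.

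By \cref{EOH}, the graded $R_\ell$-supermodule $\EOH_n^\ell$ is free with basis given by the odd Schur polynomials $\big\{s_\lambda^{(n)} \hatotimes 1 \:\big|\: \lambda \in \GPar{n}{n'}\big\}$. From \cref{alreadybasis}, each basis vector $s_\lambda^{(n)} \hatotimes 1$ is homogeneous of degree $2|\lambda|$ and parity $|\lambda| \pmod 2$, so it contributes a factor of $(\pi q^2)^{|\lambda|}$ to the graded rank. Thus the graded rank of $\EOH_n^\ell$ as an $R_\ell$-supermodule equals
\begin{equation*}
\sum_{\lambda \in \GPar{n}{n'}} (\pi q^2)^{|\lambda|}.
\end{equation*}

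To finish, I would apply \cref{qbinomialformula} with the roles of $n$ and $r$ in that corollary played here by $\ell$ and $n$, respectively (so that $n-r$ becomes $\ell - n = n'$). This yields
\begin{equation*}
q^{n n'}\sqbinom{\ell}{n}_{q,\pi} = \sum_{\lambda \in \GPar{n}{n'}} (\pi q^2)^{|\lambda|},
\end{equation*}
which matches the graded rank computed above. There is no real obstacle — the entire content of the corollary is a bookkeeping consequence of the two cited results, and the only thing to be a little careful about is matching the indexing conventions in \cref{qbinomialformula} (where $\GPar{r}{n-r}$ appears) with the rectangle $\GPar{n}{n'}$ parametrizing the basis in \cref{EOH}.
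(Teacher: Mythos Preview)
Your proof is correct and follows essentially the same approach as the paper's own proof, which simply cites the basis statement from \cref{EOH} together with \cref{qbinomialformula}. The only extra detail you make explicit is the degree and parity of each basis vector $s_\lambda^{(n)}\hatotimes 1$, which is indeed $(\pi q^2)^{|\lambda|}$ as you say.
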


\begin{proof}
This follows by the basis theorem that is the first assertion of
\cref{EOH} plus \cref{qbinomialformula}.
\end{proof}

\begin{corollary}\label{Etrace}
For $\ell=n+n'$, there is a unique (up to scalars)
trace map 
$\tr:\EOH_n^\ell\rightarrow R_\ell$ making
$\EOH^\ell_n$ into a graded Frobenius superalgebra over 
$R_\ell$ of degree
 $2nn'$ and parity $nn'\pmod{2}$.
Moreover, normalizing $\tr$ so that 
$\tr\big(s_{({n'}^n)}^{(n)}\hatotimes 1\big)= 1$
and recalling the definition of $\Tr$ from \cref{frobenius},
we have that 
$\Tr(a) \otimes 1=1\otimes \tr(a\hatotimes 1)$
in $\OSym_{(n,n')} \otimes_{\OSym_\ell} R_\ell$
for any $a \in \OSym_n$.
\end{corollary}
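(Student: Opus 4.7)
The plan is to mimic the proof of \cref{trace} in the equivariant setting, using \cref{EOH} in place of \cref{OH} and the pairing computation of \cref{frobenius} as the source of the dual-basis identities. First I would define $\tr:\EOH_n^\ell\to R_\ell$ as the composition of the top isomorphism of \cref{EOHCD} with the map $\Tr\otimes_{\OSym_\ell}\mathrm{id}_{R_\ell}:\OSym_{(n,n')}\otimes_{\OSym_\ell}R_\ell\to R_\ell$; concretely, for $a\in\OSym_n\hookrightarrow\OSym_{(n,n')}$ and $c\in\OSym_\ell$, this sends $a\hatotimes \dot c$ to the image of $\Tr(a)c$ in $R_\ell$. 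This is well defined because $\Tr$ is a right $\OSym_\ell$-supermodule homomorphism. The formula $\Tr(a)\otimes 1 = 1\otimes \tr(a\hatotimes 1)$ then holds by construction, and the normalization $\tr\big(s_{({n'}^n)}^{(n)}\hatotimes 1\big)=1$ follows from the $\mu=\varnothing$ case of \cref{mainwork}, which gives $\Tr\big(s_{({n'}^n)}^{(n)}\big)=1$.

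Next I would verify that $\tr$ is a graded $(R_\ell,R_\ell)$-superbimodule homomorphism of degree $-2nn'$ and parity $nn'\pmod 2$. The degree and parity inherit directly from $\Tr$ by \cref{frobenius}. Since $R_\ell$ is supercommutative, the sign rule for multiplication in the tensor product superalgebra $\OSym_n\otimes R_\ell$ shows that $\eta(R_\ell)$ is supercentral in $\EOH_n^\ell$, so the manifestly right-$R_\ell$-linear map $\tr$ is automatically a bimodule map; an easy sign check using this supercentrality confirms the identity $\tr(\dot c\cdot b)=(-1)^{nn'\,\parity(c)}\dot c\cdot\tr(b)$.

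To establish the Frobenius extension property, I would produce explicit dual bases. By \cref{EOH}, $\{b^\vee_\lambda:=s_\lambda^{(n)}\hatotimes 1\mid \lambda\in\GPar{n}{n'}\}$ is a graded $R_\ell$-basis of $\EOH_n^\ell$. For each $\lambda$, let $\mu(\lambda)\in\GPar{n'}{n}$ be the unique partition with $\mu(\lambda)^\transpose_i=n'-\lambda_{n+1-i}$ (this is a bijection $\GPar{n}{n'}\to\GPar{n'}{n}$), and set $b_\lambda:=\sign(\mu(\lambda))\,\beta_{\mu(\lambda)}$, where $\beta_\mu\in\EOH_n^\ell$ is the preimage of $\sig_n\big(\sigma_\mu^{(n')}\big)\otimes 1$ under the top isomorphism of \cref{EOHCD}. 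By \cref{frobenius}(2), $\{\beta_\mu\}$ is another $R_\ell$-basis of the target of the top isomorphism, hence $\{b_\lambda\}$ is an $R_\ell$-basis of $\EOH_n^\ell$. Because the top isomorphism is a left $\OSym_n$-supermodule map, the product $b^\vee_{\lambda'}\cdot b_\lambda$ corresponds to $\sign(\mu(\lambda))\,s_{\lambda'}^{(n)}\sig_n\big(\sigma^{(n')}_{\mu(\lambda)}\big)\otimes 1$, and applying $\tr$ yields $\sign(\mu(\lambda))^2\,\delta_{\lambda,\lambda'}=\delta_{\lambda,\lambda'}$ by \cref{mainwork}. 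The degree and parity relations in \cref{traceprops1} follow from $|\lambda|+|\mu(\lambda)|=nn'$, and the identities \cref{traceprops2} are then standard consequences of the dual basis relation together with a sign bookkeeping exercise. Uniqueness up to a non-zero scalar is a general property of Frobenius extensions by \cite[Prop.~4.7]{PS}.

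The main technical obstacle I anticipate is the sign bookkeeping in the pairing computation, since the top isomorphism of \cref{EOHCD} is stated only as a bimodule isomorphism over $(\OSym_n,R_\ell)$ rather than over the full superalgebra $\EOH_n^\ell$. This suffices, however, because left multiplication by $b^\vee_{\lambda'}=s_{\lambda'}^{(n)}\hatotimes 1$ acts through its $\OSym_n$-component on which the isomorphism is equivariant, so the only signs that appear are those already appearing in \cref{mainwork} and in the definition of the transported $\EOH_n^\ell$-action given in the proof of \cref{EOH}(1).
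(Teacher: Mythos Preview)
Your proposal is correct and follows essentially the same route as the paper. The paper also defines $\tr$ via $\Tr$ (requiring $\Tr(a)\otimes 1=1\otimes\tr(a\hatotimes 1)$, which is your composition written differently), takes the same $b^\vee_\lambda=s_\lambda^{(n)}\hatotimes 1$, and your dual basis element $b_\lambda=\sign(\mu(\lambda))\beta_{\mu(\lambda)}$ agrees with the paper's: the paper writes $b_r=\big(\psi_n^\ell\big)^{-1}\big((-1)^{dN(\mu)+dE(\mu)+n|\mu|}\sign(\mu)s_\mu^{(n')}\hatotimes 1\big)$ and then uses the commutative square \cref{EOHCD} together with \cref{littlebears} to see that its image under the top isomorphism is $\sign(\mu)\,\sig_n\big(\sigma_\mu^{(n')}\big)\otimes 1$, exactly your $\beta_\mu$ up to the sign factor.
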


\begin{proof}
The uniqueness follows from the general principles discussed after 
\cref{traceprops2} since $\big(\EOH_n^\ell\big)_{0} = \k$.
For existence, define $\tr:\EOH_n^\ell \rightarrow R_\ell$
to be the unique graded $R_\ell$-supermodule homomorphism
such that
$\Tr(a) \otimes 1=1\otimes \tr(a\hatotimes 1)$
for $a \in \OSym_n$.
Noting that $\binom{n}{2}+\binom{n'}{2} - \binom{\ell}{2} = -nn'$,
the definition of $\Tr$ implies that
this is a homogeneous linear map
of degree $-2nn'$ and parity $nn'\pmod{2}$ such that
$\tr\big(s_{({n'}^n)}^{(n)}\hatotimes 1\big)= 1$.
It remains to check \cref{traceprops1,newcond}.
We take $b^\vee_1,\dots,b_m^\vee, b_1,\dots,b_m \in \EOH_n^\ell$
to be the elements
$s_\lambda^{(n)}\hatotimes 1$ 
and 
$
\big(\psi_n^\ell\big)^{-1}\Big((-1)^{dN(\mu)+dE(\mu)+n|\mu|}\sign(\mu)s_{\mu}^{(n')}\hatotimes 1\Big)$
for $\lambda\in \GPar{n}{n'}$ and $\mu \in \GPar{n'}{n}$, 
respectively,
enumerated in such a way that 
$b^\vee_r = s_\lambda^{(n)}\hatotimes 1$
and $\psi_n^\ell(b_r) = 
(-1)^{dN(\mu)+dE(\mu)+n|\mu|}\sign(\mu) s_{\mu}^{(n')}\hatotimes 1$
if and only if $\mu^\transpose_i = n'-\lambda_{n+1-i}$ for each $i=1,\dots,n$.
By \cref{littlebears} and the commutativity of the diagram \cref{EOHCD},
the top horizontal map in the diagram takes
$b^\vee_r$ to $s_\lambda^{(n)} \otimes 1$
and $b_r$ to 
$(-1)^{dN(\mu)+dE(\mu)}\sign(\mu)\sig_n\Big(s_{\mu}^{(n')}\Big)^* \otimes 1=
\sign(\mu)\sig_n\big(\sigma_{\mu}^{(n')}\big) \otimes 1$.
Now \cref{frobenius} implies that
$b^\vee_1,\dots,b_m^\vee$ and $b_1,\dots,b_m$ give dual bases for
$\EOH_n^\ell$ as a free graded $R_\ell$-supermodule,
as required for \cref{newcond}.
\end{proof}

The next lemma investigates the
graded $R_\ell$-superalgebra isomorphism $\psi_n^\ell:\EOH_n^\ell\stackrel{\sim}{\rightarrow}
\EOH_{n'}^\ell$ constructed in \cref{EOH}(4).

\begin{lemma}\label{uglyc}
For $\ell=n+n'$, the isomorphism
$\psi_n^\ell:\EOH_n^\ell\rightarrow \EOH_{n'}^\ell$
maps
\begin{align}\label{po1}
h^{(n)}_r\hatotimes 1
&\mapsto \sum_{s=0}^r (-1)^{(n+1)s} e_{s}^{(n')} \hatotimes
\dot h_{r-s}^{(\ell)},&
e^{(n)}_r\hatotimes 1
&\mapsto \sum_{s=0}^r (-1)^{(n+r)s} h_{s}^{(n')} \hatotimes
\dot e_{r-s}^{(\ell)}
\end{align}
for $r \geq 0$.
The inverse isomorphism
$\big(\psi_n^\ell\big)^{-1}:\EOH_{n'}^\ell\rightarrow \EOH_{n}^\ell$
maps
\begin{align}\label{po2}
h^{(n')}_r\hatotimes 1
&\mapsto (-1)^{(n+r)r}\sum_{s=0}^r (-1)^{rs} e_{r-s}^{(n)} \hatotimes
\dot h_{s}^{(\ell)},&
e^{(n')}_r\hatotimes 1
&\mapsto (-1)^{(n+1)r}\sum_{s=0}^r (-1)^{s} h_{r-s}^{(n)} \hatotimes
\dot e_{s}^{(\ell)}
\end{align}
for $r \geq 0$.
\end{lemma}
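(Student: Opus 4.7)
The plan is to apply the characterization of $\psi_n^\ell$ from \cref{EOH}(4) directly to the generators $h_r^{(n)}$ and $e_r^{(n)}$ of $\EOH_n^\ell$, which amounts to rewriting each of these elements in $\OSym_{(n,n')}$ in the specific form $\sum_i (-1)^{n\parity(a_i)} \sig_n(a_i)^* c_i$ with $a_i \in \OSym_{n'}$ and $c_i \in \OSym_\ell$. The two required rewritings come from two different sources, one from \cref{pie} and one from \cref{notsoeasypeasy}.

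For $h_r^{(n)}$, the starting point is the generating function factorization $h^{(\ell)}(t) = \sig_n(h^{(n')}(t))\,h^{(n)}(t)$ from \cref{pie}. Inverting $h^{(n')}(t)$ using \cref{newigr} gives $h^{(n)}(t) = \sig_n(e^{(n')}(t))\,h^{(\ell)}(t)$, and comparing $t^{-n-r}$-coefficients yields $h_r^{(n)} = \sum_{s=0}^r (-1)^{r-s} \sig_n(e_{r-s}^{(n')})\,h_s^{(\ell)}$. Because $*$ fixes each $e_r$ and commutes with $\sig_n$ (both send $x_j$ to $x_{n+j}$), we have $\sig_n(e_{r-s}^{(n')})^* = \sig_n(e_{r-s}^{(n')})$, so setting $a_s = e_{r-s}^{(n')}$ and $c_s = (-1)^{(n+1)(r-s)} h_s^{(\ell)}$ matches the required form and gives the first identity in \cref{po1}. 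For $e_r^{(n)}$, I would apply the anti-involution $*$ to the identity of \cref{notsoeasypeasy}(2) and use that $*$ fixes $e_r^{(n)}, e_s^{(\ell)}$ and commutes with $\sig_n$; the result is $e_r^{(n)} = \sum_{s=0}^r (-1)^{r(s+1)} \sig_n(h_{r-s}^{(n')})^*\,e_s^{(\ell)}$, which fits the required form with $a_s = h_{r-s}^{(n')}$ and $c_s = (-1)^{(n+r)(r-s)} e_s^{(\ell)}$, giving the second identity in \cref{po1}.

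For the inverse formulas in \cref{po2}, I would unravel the diagram \cref{EOHCD} to extract the analogous characterization of $(\psi_n^\ell)^{-1}$: namely, $(\psi_n^\ell)^{-1}(b \hatotimes 1) = (-1)^{n\parity(b)}\sum_i a_i \hatotimes \dot c_i$ for $b \in \OSym_{n'}$ whenever $\sig_n(b)^* = \sum_i a_i c_i$ in $\OSym_{(n,n')}$ with $a_i \in \OSym_n$ and $c_i \in \OSym_\ell$. For $b = e_r^{(n')}$ the identity \cref{notsoeasypeasy}(1) already reads $\sig_n(e_r^{(n')}) = \sum_s (-1)^{r+s} h_{r-s}^{(n)} e_s^{(\ell)}$, in the required form (and $*$ fixes this since it fixes both sides). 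For $b = h_r^{(n')}$ I would dualize the generating function argument from the first paragraph: from \cref{pie} and \cref{newigr}, $\sig_n(h^{(n')}(t)) = h^{(\ell)}(t)\,e^{(n)}(t)$, giving $\sig_n(h_r^{(n')}) = \sum_s (-1)^{r-s} h_s^{(\ell)} e_{r-s}^{(n)}$; applying $*$ reverses the order and puts this in the required form, producing $c_s = (-1)^{(\ldots)} (h_s^{(\ell)})^*$. At this point I would use that $R_\ell$ is supercommutative with $*$-fixed generators $\dot e_r^{(\ell)}$, so that $*$ acts trivially on $R_\ell$ and hence $\dot{(h_s^{(\ell)})^*} = \dot h_s^{(\ell)}$, recovering the formulas in \cref{po2}.

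The main obstacle is purely bookkeeping: the anti-involution $*$ introduces a sign $(-1)^{\parity(a)\parity(b)}$ each time one permutes elements of different parities, and these must be combined with the factor $(-1)^{n\parity(a_i)}$ built into the characterization of $\psi_n^\ell$ and with the signs $(-1)^r, (-1)^{r-s}$ from \cref{notsoeasypeasy} and from the generating function coefficients. All these reductions are routine modulo 2 using only the identities $r^2 \equiv r$, $s^2 \equiv s$ and the factored forms $(n+1)(r-s)$, $(n+r)(r-s)+ns$ found in the statement of the lemma.
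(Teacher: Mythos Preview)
Your proposal is correct and follows the same core strategy as the paper: both derive \cref{po1} by feeding explicit expressions of the form $\sum_i (-1)^{n\parity(a_i)}\sig_n(a_i)^* c_i$ into the characterization \cref{newiphone} of $\psi_n^\ell$. For $h_r^{(n)}$ your argument is identical to the paper's. For $e_r^{(n)}$ you take a slightly more direct route via \cref{notsoeasypeasy}(2) and $*$, whereas the paper passes through the dual generators $\eps_r,\gamm_r$ using $\eps^{(n)}(t)=\sig_n(\gamm^{(n')}(t))\eps^{(\ell)}(t)$ and then converts back with \cref{toe,tee}; your way avoids that detour at the cost of one application of $*$. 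For \cref{po2}, the paper simply inverts \cref{po1} using the infinite Grassmannian relation in $R_\ell$, while you redo the computation from scratch with the explicit description of $(\psi_n^\ell)^{-1}$; the paper's route is shorter but yours makes the origin of the extra $(-1)^{ns}$ factors (coming from $\parity(b)=r$) completely transparent. Both are fine.
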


\begin{proof}
We first observe that
$h^{(n)}(t) = 
\sig_n\Big(e^{(n')}\big(t\big)\Big) h^{(\ell)}(t)$ and
$\eps^{(n)}(t) = 
\sig_n\Big(\gamm^{(n')}\big(t\big)\Big) \eps^{(\ell)}(t)$
in $\OSym_{(n,n')}$. 
This follows from \cref{newigr} 
using the identities
$h^{(\ell)}(t) = \sig_n\big(h^{(n')}(t)\big) h^{(n)}(t)$
and
$\eps^{(\ell)}(t) = \sig_n\big(\eps^{(n')}(t)\big)\eps^{(n)}(t)$.
Hence, using \cref{newiphone}, we deduce that $\psi_n^\ell$ maps
\begin{align*}
h^{(n)}(t) \hatotimes 1&\mapsto (-1)^{nn'} e^{(n')}\big((-1)^{n} t\big)^*\hatotimes \dot h^{(\ell)}(t),
&\eps^{(n)}(t) \hatotimes 1&\mapsto (-1)^{n n'} \gamm^{(n')}\big((-1)^{n}t\big)^*\hatotimes \dot \eps^{(\ell)}(t).
\end{align*}
The first formula in \cref{po1} follows from the first of these identities 
on equating $t^{-n-r}$-coefficients,
remembering that $e^{(n')}_r$ is fixed by $*$.
Similarly, equating $t^{n-r}$-coefficients in the second identity
gives that
$$
\psi_n^\ell\Big(\eps^{(n)}_r\hatotimes 1\Big)
= \sum_{s=0}^r (-1)^{(n+1)(s)} \big(\gamm_s^{(n')}\big)^* \hatotimes
\dot \eps_{r-s}^{(\ell)}.
$$
Replacing $\eps^{(n)}_{r}$ with $(-1)^{\binom{r}{2}}
e^{(n)}_{r}$,
$\big(\gamm^{(n')}_{s}\big)^*$
with $(-1)^{\binom{s}{2}} h^{(n')}_{s}$
and $\dot \eps_{r-s}^{(\ell)}$ with $(-1)^{\binom{r-s}{2}} \dot e_{r-s}^{(\ell)}$
using \cref{toe,tee}
gives the second formula in \cref{po1}.
The formulae in \cref{po2} are easily deduced from \cref{po1} together
with the infinite Grassmannian relation in $R_\ell$.
\end{proof}

The following composition defines a graded $R_\ell$-superalgebra automorphism:
\begin{equation}\label{deltadef}
\delta_n^\ell := \psi_{n'}^\ell \circ
\psi_n^\ell:\EOH_n^\ell 
\stackrel{\sim}{\rightarrow}
\EOH_n^\ell.
\end{equation}
Using \cref{uglyc}, this can be described explicitly on generators, as follows.

\begin{corollary}\label{overchicago}
The automorphism $\delta_n^\ell$
maps
\begin{align}\label{yuk}
h^{(n)}_r \hatotimes 1&\mapsto
(-1)^{\ell r} h^{(n)}_r \hatotimes 1
+(-1)^{\ell(r-1)} \big(1+(-1)^{n+r}\big) h_{r-1}^{(n)}\hatotimes \dot o^{(\ell)},\\
e^{(n)}_r \hatotimes 1&\mapsto
(-1)^{\ell r} e^{(n)}_r \hatotimes 1
+(-1)^{(\ell+1)(r-1)} \big(1+(-1)^{n+r}\big) e_{r-1}^{(n)}\hatotimes
                        \dot o^{(\ell)},\label{yuky}\\\label{wuss}
\eps^{(n)}_r \hatotimes 1&\mapsto
(-1)^{\ell r} \eps^{(n)}_r \hatotimes 1
+(-1)^{\ell(r-1)} \big(1+(-1)^{n+r}\big) \eps_{r-1}^{(n)}\hatotimes \dot o^{(\ell)},\\
\eta^{(n)}_r \hatotimes 1&\mapsto
(-1)^{\ell r} \eta^{(n)}_r \hatotimes 1
+(-1)^{(\ell+1)(r-1)} \big(1+(-1)^{n+r}\big) \eta_{r-1}^{(n)}\hatotimes \dot o^{(\ell)}
\label{yuky2}
\end{align}
for any $r \geq 1$.
In particular, $\delta_n^\ell\big(o^{(n)}\hatotimes 1\big)
= (-1)^\ell o^{(n)}\hatotimes 1 + \big(1-(-1)^n\big) 1 \hatotimes \dot o^{(\ell)}$.
\end{corollary}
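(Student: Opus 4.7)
The proof is by direct computation: compose the two applications of $\psi$ from \cref{uglyc}, then simplify the resulting double sum using the supercommutative structure of $R_\ell$. I will treat \cref{yuk} in detail; the other three formulas follow by the same method, and the final assertion is \cref{yuk} specialized at $r = 1$.

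For \cref{yuk}, apply the first formula of \cref{po1}, then (noting that $\psi_{n'}^\ell$ is an $R_\ell$-algebra map) the second formula of \cref{po2} (with $n \leftrightarrow n'$); after reindexing via $k := r - s - t$, we obtain
\[
\delta_n^\ell\big(h_r^{(n)} \hatotimes 1\big) = \sum_{k=0}^r (-1)^{(\ell+1+k)k}\, h_k^{(n)} \hatotimes S_{r-k}^{(k)}, \qquad S_m^{(k)} := \sum_{t=0}^m (-1)^{(n+1+k)t}\, \dot h_{m-t}^{(\ell)}\, \dot e_t^{(\ell)} \in R_\ell,
\]
where the outer sign has been simplified modulo~$2$. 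Splitting $S_m^{(k)}$ by the parity of $t$ and applying the infinite Grassmannian relation $\sum_{t=0}^m (-1)^t \dot h_{m-t}^{(\ell)} \dot e_t^{(\ell)} = \delta_{m,0}$ yields $S_m^{(k)} = \big(1 + (-1)^{n+k+1}\big) E_m - (-1)^{n+k+1}\delta_{m,0}$, where $E_m := \sum_{t \text{ even}} \dot h_{m-t}^{(\ell)} \dot e_t^{(\ell)}$. The supercommutativity of $R_\ell$ lets us rearrange $E_m$ for $m$ even into $\sum_j \dot e_{2j}^{(\ell)} \dot h_{m-2j}^{(\ell)} = \dot z_m^{(\ell)} = \delta_{m,0}$ via \cref{osym6} and $\dot z_{2r}^{(\ell)} = \delta_{r,0}$ (from \cref{tiger}); for $m$ odd, the factorization $\dot h_{m-t}^{(\ell)} = \dot h_{m-t-1}^{(\ell)} \dot o^{(\ell)}$ (again from \cref{tiger}) reduces to the even case, giving $E_m = \delta_{m,1} \dot o^{(\ell)}$. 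Hence only $k = r$ and $k = r-1$ survive: the $k = r$ term always contributes $(-1)^{\ell r} h_r^{(n)} \hatotimes 1$, while the $k = r-1$ term is nonzero exactly when $n + r$ is even, contributing $2(-1)^{\ell(r-1)} h_{r-1}^{(n)} \hatotimes \dot o^{(\ell)}$; encoding the indicator that $n+r$ is even as $\tfrac12(1+(-1)^{n+r})$ yields \cref{yuk}.

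\cref{yuky} is proved identically, with the two formulas of \cref{po1}/\cref{po2} swapped and the twin inner sum $\sum_{t \text{ even}} \dot e_{m-t}^{(\ell)} \dot h_t^{(\ell)}$ exhibiting the same collapse. \cref{wuss} follows instantly from \cref{yuky} using the identity $\eps_r^{(n)} = (-1)^{\binom{r}{2}} e_r^{(n)}$ in $\OSym_n$---a direct consequence of the monomial formulas in \cref{epolybetter} together with the anticommutativity of the $x_i$---once one checks that $\binom{r}{2} + (\ell+1)(r-1) + \binom{r-1}{2} \equiv \ell(r-1) \pmod 2$. For \cref{yuky2}, the derivation parallels that of \cref{yuk}: one first obtains a generating-function formula for $\psi_n^\ell(\gamm^{(n)}(t) \hatotimes 1)$ by repeating the argument in the proof of \cref{uglyc} based on the factorization $\gamm^{(n)}(t) = \gamm^{(\ell)}(t)\sig_n(\eps^{(n')}(t))$ in $\OSym_{(n,n')}\lround t^{-1}\rround$ (immediate from \cref{newigr,littlerock}), or equivalently by inverting $\delta_n^\ell(\eps^{(n)}(t) \hatotimes 1)$ in $\EOH_n^\ell\lround t^{-1}\rround$ using $\big(\dot o^{(\ell)}\big)^2 = 0$; the same collapse in $R_\ell$, now invoking $\dot \gamm_{2j+1}^{(\ell)} = \dot \gamm_{2j}^{(\ell)} \dot o^{(\ell)}$ from \cref{tiger}, yields \cref{yuky2}. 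Finally, specializing \cref{yuk} at $r = 1$ with $o = h_1$, $h_0 = 1$, and $1 + (-1)^{n+1} = 1 - (-1)^n$ gives the formula for $\delta_n^\ell(o^{(n)} \hatotimes 1)$. The main obstacle is sign bookkeeping: the final formulas look deceptively clean precisely because essentially all middle terms of the double sums annihilate in $R_\ell$, and extracting the surviving terms with the correct signs requires careful parity analysis.
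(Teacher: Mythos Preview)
Your proof is correct and follows essentially the same route as the paper: compose the two formulas from \cref{uglyc}, collapse the resulting double sum in $R_\ell$ using the infinite Grassmannian relation together with $\dot z_{2r}=\delta_{r,0}$ and $\dot h_{2r+1}=\dot h_{2r}\dot o$ (equivalently, the identity $\sum_t \dot e_{s-t}\dot h_t=\delta_{s,0}+2\delta_{s,1}\dot o$ that the paper isolates), deduce \cref{wuss} from \cref{yuky} via $\eps_r=(-1)^{\binom{r}{2}}e_r$, and obtain \cref{yuky2} by inverting the generating function for $\eps^{(n)}(t)$ using $(\dot o^{(\ell)})^2=0$; the paper does exactly this last step explicitly.

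One small bookkeeping point: the map you apply second is $\psi_{n'}^\ell$, so the correct citation is the second formula of \cref{po1} with $n$ replaced by $n'$, not ``\cref{po2} with $n\leftrightarrow n'$'' (the latter gives $(\psi_{n'}^\ell)^{-1}$, which goes the wrong way). Relatedly, the raw computation produces $\dot e_t\,\dot h_{m-t}$ rather than the $\dot h_{m-t}\,\dot e_t$ in your $S_m^{(k)}$; the supercommutation sign $(-1)^{t(m-t)}$ is not identically zero, but it vanishes on the only surviving terms $m\in\{0,1\}$, and your collapse argument applies verbatim to either ordering, so the conclusion is unaffected.
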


\begin{proof}
We first prove \cref{yuk}.
Using \cref{po1}, one gets that
$$
(\psi_{n'}^\ell \circ \psi_n^\ell)\big(h_r^{(n)}\hatotimes 1\big)
=
\sum_{s=0}^r 
(-1)^{\ell r+(n'+r)s}
h_{r-s}^{(n)} \hatotimes
\bigg(
\sum_{t=0}^{s}
(-1)^{(n+r-s+1)t}
\dot e_{s-t}^{(\ell)} \dot h_{t}^{(\ell)}
\bigg)\;.
$$
Now we claim that the expression in the brackets here is equal to
$\delta_{s,0}$ if $n+r-s+1$ is odd, 
and it is equal to $\delta_{s,0}+2\delta_{s,1}\dot o^{(\ell)}$
is $n+r-s+1$ is even.
The formula \cref{yuk} is easily deduced using this claim.
To prove the claim, it follows immediately for odd $n+r-s+1$ by the
 infinite
Grassmannian relation. When $n+r-s+1$ is even, it follows from the relation
\begin{equation}
\sum_{t=0}^s \dot e_{s-t} \dot h_t = \delta_{s,0}+2\delta_{s,1} \dot o
\end{equation}
in $R$, which is easily checked in the cases $s=0$ and $s=1$
and follows for $s \geq 2$ using \cref{tiger,baldy}.

The proof of \cref{yuky} is a similar calculation. 

Then \cref{wuss} follows easily
since $\eps_r^{(n)} = (-1)^{\binom{r}{2}} e_r^{(n)}$ and
$\eps_{r-1}^{(n)} = (-1)^{\binom{r}{2}+r+1} e_{r-1}^{(n)}$.

Finally, to deduce \cref{yuky2}, we first write \cref{wuss} in terms
of generating functions:
\begin{equation}\label{eyes}
\delta_n^\ell\big(\eps^{(n)}(t)\hatotimes 1\big)
=
(-1)^{\ell n} \eps^{(n)}\big((-1)^\ell t\big) \hatotimes 1
-
(-1)^{\ell n} t^{-1} \Big[\eps^{(n)}\big((-1)^\ell
t\big)-\eps^{(n)}\big((-1)^{\ell+1} t\big)\Big] \hatotimes \dot o^{(\ell)}.
\end{equation}
This can be checked by equating coefficients of $t^{n-r}$ on both
sides.
Then we formally invert both sides of \cref{eyes} using \cref{newigr} to
obtain the identity
\begin{equation}\label{eyes2}
\delta_n^\ell\big(\eta^{(n)}(t)\hatotimes 1\big)
=
(-1)^{\ell n} \eta^{(n)}\big((-1)^\ell t\big) \hatotimes 1
+
(-1)^{(\ell-1) n} t^{-1} \Big[\eta^{(n)}\big((-1)^{\ell+1}
t\big)-\eta^{(n)}\big((-1)^{\ell} t\big)\Big] \hatotimes \dot o^{(\ell)}.
\end{equation}
To see that the right hand of this is indeed the inverse of the right
hand side of \cref{eyes}, one just has to multiply together
using $\big(\dot o^{(\ell)}\big)^2 = 0$ 
then simplify the result to see that it equals 1, which is straightforward. 
Finally, we equate coefficients of $t^{-n-r}$ on both sides of
\cref{eyes2} to obtain \cref{yuky2}.
\end{proof}

\section{Deformed odd cyclotomic nil-Hecke algebras}

The {\em odd cyclotomic nil-Hecke algebra} $\RONH_n^\ell$ is the quotient of $\ONH_n$ by the two-sided ideal generated by the element $x_1^{\ell}$. 
This algebra was introduced originally in \cite[Sec.~5]{EKL}.
In particular, in \cite[Prop.~5.2]{EKL}, it is shown that
$\RONH_n^\ell$ is zero unless $0 \leq n \leq \ell$, in which case it
is isomorphic to the graded matrix superalgebra
$M_{q^{n/2}[n]^!_{q,\pi}}\big(\ROH_n^\ell\big)$,
notation as explained after \cref{aswell}.

\begin{definition}\label{defo}
The  {\em deformed odd cyclotomic nil-Hecke algebra} is the quotient algebra
\begin{equation}\label{list}
\EONH_n^\ell := 
\ONH_n \otimes R_\ell \:\Bigg/\:
\left\langle \sum_{r=0}^\ell (-1)^r x_1^{\ell-r} \otimes \dot e_r^{(\ell)}
\right\rangle
\end{equation}
for $n > 0$.
We also let $\EONH_0^\ell := \ONH_0 \otimes R_\ell \cong R_\ell$
so that $\EONH_n^\ell$ makes sense for all $n \geq 0$.
We denote the image of $a \otimes \dot c$ in $\EONH_n^\ell$
by $a \hatotimes \dot c$. 
\end{definition}

The graded $R_\ell$-superalgebra $\EONH_n^\ell$  is
the odd analog of the algebras
defined for $\sl_2$ in \cite[Sec.~5.2.1]{Rou}
and for other Cartan types in \cite[Sec.~4.4.1]{Rou2};
our definition \cref{list} looks more like the latter formulation.

\begin{theorem}\label{geordie}
The deformed odd cyclotomic nil-Hecke algebra $\EONH^\ell_n$ is zero unless
$0 \leq n \leq \ell$. Assuming $0 \leq n \leq \ell$,
the natural left action of $\ONH_n \otimes R_\ell$ on $\OPol_n\otimes_{\OSym_n} \EOH_n^\ell$
factors through the quotient $\EONH_n^\ell$
to make $\OPol_n\otimes_{\OSym_n} \EOH_n^\ell$ into a
graded $(\EONH_n^\ell,\EOH_n^\ell)$-superbimodule.
The associated representation
$$
\rho:\EONH_n^\ell\rightarrow
  \End_{\dash\EOH_n^\ell}\left(\OPol_n\otimes_{\OSym_n} \EOH_n^\ell\right)
$$
is an isomorphism of graded superalgebras.
Moreover, 
$\OPol_n \otimes_{\OSym_n} \EOH_n^\ell$
is free as a graded right $\EOH_n^\ell$-supermodule
with basis $\big\{x_n^{\kappa_n} \cdots x_1^{\kappa_1} \hatotimes 1\:\big|\:\kappa \in K_n\}$ where
\begin{equation*}
K_n := \big\{\kappa = (\kappa_1,\dots,\kappa_n)
 \in \N^n\:\big|\: 0 \leq \kappa_i \leq n-i\text{ for }i=1,\dots,n\big\}.
\end{equation*}
\end{theorem}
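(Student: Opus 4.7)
The proof proceeds in three phases. First, verify the cyclotomic element $C := \sum_{r=0}^\ell (-1)^r x_1^{\ell-r} \otimes \dot e_r^{(\ell)}$ acts as zero on $M := \OPol_n \otimes_{\OSym_n} \EOH_n^\ell$, so the action factors through $\EONH_n^\ell$. Second, establish the basis and show $\rho$ is an isomorphism for $0 \leq n \leq \ell$. Third, deduce $\EONH_n^\ell = 0$ for $n > \ell$. The main tool is an $\ONH_n \otimes R_\ell$-linear embedding
\begin{equation*}
\iota\colon M \;\cong\; \OPol_n \otimes_{\OSym_n} \OSym_{(n,n')} \otimes_{\OSym_\ell} R_\ell \;\hookrightarrow\; N := \OPol_\ell \otimes_{\OSym_\ell} R_\ell,
\end{equation*}
obtained by combining the identification $\EOH_n^\ell \cong \OSym_{(n,n')} \otimes_{\OSym_\ell} R_\ell$ from \cref{EOH}(1) with the multiplication map $\OPol_n \otimes_{\OSym_n} \OSym_{(n,n')} \hookrightarrow \OPol_\ell$; injectivity of the latter is preserved by $-\otimes_{\OSym_\ell} R_\ell$ because the cokernel $\OPol_n \otimes \sig_n(\OPol_{n'}/\OSym_{n'})$ is free over $\OSym_{(n,n')}$ and hence over $\OSym_\ell$ by \cref{frobenius}. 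The chain is $\ONH_n \otimes R_\ell$-linear since the odd Demazure operators $\partial_i$ act trivially on $\OSym_{(n,n')}$ for $i < n$.

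To show $C$ acts trivially on $M$, by linearity of $\iota$ it suffices to verify this on $N$. For $g \otimes 1 \in N$,
\begin{equation*}
C \cdot (g \otimes 1) = \sum_{r=0}^\ell (-1)^{r(1+\parity(g))} x_1^{\ell-r} g\, e_r^{(\ell)} \otimes 1 = \Bigl(\sum_{r=0}^\ell (-1)^r x_1^{\ell-r} e_r^{(\ell)}\Bigr)\, g \otimes 1,
\end{equation*}
using the super-tensor action sign together with absorption of $\dot e_r^{(\ell)}$ into $\OPol_\ell$ via $\otimes_{\OSym_\ell}$ in the first equality, and super-commutation of $g$ past $e_r^{(\ell)}$ in the second (the two signs $(-1)^{r\,\parity(g)}$ cancel exactly). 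The inner sum vanishes by \cref{notsoeasypeasy}(1) applied with $n=1, n'=\ell-1$, $r=\ell$.

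The basis and surjectivity of $\rho$ follow quickly: \cref{aswell} gives $\OPol_n$ as a free graded right $\OSym_n$-supermodule with basis $\{x_n^{\kappa_n}\cdots x_1^{\kappa_1} : \kappa \in K_n\}$, so tensoring yields the claimed free $\EOH_n^\ell$-basis for $M$ and identifies $\End_{-\EOH_n^\ell}(M) \cong M_{q^{\binom{n}{2}}[n]^!_{q,\pi}}(\EOH_n^\ell)$. The image of $\rho$ contains $\rho(\ONH_n) \cong M_{q^{\binom{n}{2}}[n]^!_{q,\pi}}(\OSym_n)$ (via the iso from \cref{firstkeythm}) along with the action of $R_\ell$; since $\OSym_n$ and $R_\ell$ jointly generate $\EOH_n^\ell$ by the presentation in \cref{minuets}, these together produce the full matrix superalgebra.

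For injectivity together with the vanishing at $n > \ell$, I exhibit a spanning set for $\EONH_n^\ell$ of the correct graded dimension. Inductively on $j = 1, \ldots, n$, the relations express $x_j^{\ell-j+1}$ as an $R_\ell$-combination of $x^\mu \tau_w \hatotimes \dot c$'s with $\mu_j < \ell-j+1$: the base $j=1$ is the cyclotomic relation itself, and for $j > 1$ one applies $\tau_{j-1}$ to the identity for $j-1$ and uses $\tau_{j-1} x_{j-1} = x_j \tau_{j-1} + 1$ from \cref{ONH6} together with super-anticommutation of the $x_i$. This yields the spanning set $\{x^\kappa \tau_w \hatotimes \dot c : 0 \leq \kappa_j \leq \ell-j, w \in \S_n, \dot c \text{ in a basis of } R_\ell\}$, whose graded cardinality equals $\gsdim M_{q^{\binom{n}{2}}[n]^!_{q,\pi}}(\EOH_n^\ell)$ via \cref{zalatoris,poincare}, completing the isomorphism for $0 \leq n \leq \ell$. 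When $n > \ell$, the index set $\{0, \ldots, \ell-j\}$ is empty at $j = \ell+1$, so the spanning set is empty and $\EONH_n^\ell = 0$. The main technical obstacle is carrying out the inductive propagation of the cyclotomic relation to all $j > 1$ while tracking the super-signs.
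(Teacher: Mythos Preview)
Your Phase 1 contains a genuine error. The second equality in your displayed computation of $C\cdot(g\otimes 1)$ asserts that $g\,e_r^{(\ell)}=(-1)^{r\,\parity(g)}e_r^{(\ell)}\,g$ in $\OPol_\ell$, but $\OSym_\ell$ does \emph{not} lie in the supercenter of $\OPol_\ell$. For instance, with $\ell=2$, $g=x_1$, $r=1$ we have $x_1 e_1^{(2)}=x_1^2+x_1x_2$ while $-e_1^{(2)}x_1=-x_1^2+x_1x_2$. Consequently, the identity you write down fails term by term; in the example just given your formula would predict $C\cdot(x_1\otimes 1)=0$ immediately, whereas the honest computation gives $(2x_1^3+2x_1^2x_2)\otimes 1=2x_1^2\otimes\dot e_1^{(2)}$, which vanishes only after invoking the nontrivial relations $(\dot o^{(2)})^2=0$ and $[\dot o^{(2)},\dot e_2^{(2)}]=0$ in $R_2$. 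So the conclusion that $C$ annihilates $N$ is correct, but your justification is not, and proving it properly requires understanding how the matrix entries of $C$ (computed in the $\OSym_\ell$-basis of $\OPol_\ell$) land in the ideal $I_\ell=\ker(\OSym_\ell\twoheadrightarrow R_\ell)$.

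The paper takes a quite different route that sidesteps this difficulty entirely: rather than checking the factorisation directly, it uses the Morita correspondence between two-sided ideals of $A=\ONH_n\otimes R_\ell\cong M_{[n]^!}(B)$ and two-sided ideals of $B=\OSym_n\otimes R_\ell$. The cyclotomic ideal $I$ corresponds to an ideal $J\trianglelefteq B$, and the whole theorem reduces to showing that $J$ equals the defining ideal of $\EOH_n^\ell$ (or all of $B$ if $n>\ell$). This is done via an explicit computation of the matrix of $x_1$ in the Schubert basis (\cref{companion,comppower}), giving the matrix entries of $C$ directly. In particular, the factorisation of the action, the isomorphism $\rho$, and the vanishing for $n>\ell$ all fall out simultaneously, without the inductive propagation of cyclotomic relations you sketch in Phase 3 (which, as you note yourself, is where the real technical work would lie in your approach).
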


\cref{geordie} shows that 
$\EONH_n^\ell$ is isomorphic to the graded matrix superalgebra $M_{q^{\binom{n}{2}}[n]^!_{\!q,\pi}}\Big(\EOH_n^\ell\Big)$.
We will prove the theorem later in the section, 
ultimately deducing it from \cref{firstkeythm} which showed that
 $\ONH_n$ is isomorphic to
$M_{q^{\binom{n}{2}}[n]^!_{\!q,\pi}}\Big(\OSym_n\Big)$.
First, we state some corollaries.
The first is the analog of \cref{altPsi} for $\EONH_n^\ell$.

\begin{corollary}\label{veryslow}
The element $(\overline{\omega\chi})_n := (\omega\chi)_n \hatotimes 1$ 
is a primitive idempotent in $\EONH_n^\ell$. 
Moreover, the maps $\imath$ and $\jmath$ from \cref{imathjmath} 
induce an isomorphism
$\EOH_n^\ell \cong (\overline{\omega\chi})_n \EONH_n^\ell (\overline{\omega\chi})_n$,
of graded superalgebras and an isomorphism
$\OPol_n\otimes_{\OSym_n} \EOH_n^\ell\simeq 
\EONH_n^\ell (\overline{\omega\chi})_n$ of graded $(\EONH_n^\ell, \EOH_n^\ell)$-superbimodules. Making these identifications, the idempotent truncation functor
$$
(\overline{\omega\chi})_n-:
\EONH^\ell_n\sMod
\rightarrow \EOH_n^\ell\sMod 
$$
is an equivalence of graded $(Q,\Pi)$-supercategories.  
\end{corollary}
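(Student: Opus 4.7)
The plan is to deduce this corollary as a formal consequence of \cref{geordie}, in complete parallel with the way \cref{altPsi} was deduced from \cref{firstkeythm}. First, $(\overline{\omega\chi})_n = (\omega\chi)_n \hatotimes 1$ is idempotent in $\EONH_n^\ell$ since $(\omega\chi)_n$ is idempotent in $\ONH_n$ by \cref{idempotents} and the natural map $\ONH_n \otimes R_\ell \twoheadrightarrow \EONH_n^\ell$ is a surjective homomorphism of graded superalgebras.

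Next, I would invoke \cref{geordie} to identify $\EONH_n^\ell$ with $\End_{\dash \EOH_n^\ell}(M)$ where $M := \OPol_n \otimes_{\OSym_n} \EOH_n^\ell$. Under this identification, the idempotent $(\overline{\omega\chi})_n$ acts on $M$ by left multiplication by $(\omega\chi)_n$ on the first tensor factor. Since $(\omega\chi)_n \cdot \OPol_n = \OSym_n$ by \cref{newsnow}, the image is $\OSym_n \otimes_{\OSym_n} \EOH_n^\ell \cong \EOH_n^\ell$ (via $a \otimes x \mapsto ax$, using the left $\OSym_n$-action on $\EOH_n^\ell$), which is a direct summand of $M$ as a graded right $\EOH_n^\ell$-supermodule. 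Standard identifications for idempotent truncations of endomorphism rings of direct sums then give
$$
(\overline{\omega\chi})_n \EONH_n^\ell (\overline{\omega\chi})_n \cong \End_{\dash \EOH_n^\ell}\!\big(\EOH_n^\ell\big) \cong \EOH_n^\ell
$$
via left multiplication, and
$$
\EONH_n^\ell (\overline{\omega\chi})_n \cong \Hom_{\dash \EOH_n^\ell}\!\big(\EOH_n^\ell, M\big) \cong M
$$
via evaluation at $1$. A short diagram chase then verifies that these are indeed the maps induced by the deformed analogs $\bar\jmath(a \hatotimes \dot c) := (\overline{\omega\chi})_n (a \hatotimes \dot c)(\overline{\omega\chi})_n$ and $\bar\imath(f \otimes x) := (f \hatotimes 1)\bar\jmath(x)$ of $\jmath$ and $\imath$ from \cref{imathjmath}, well-definedness of $\bar\jmath$ being guaranteed by the superalgebra isomorphism just obtained.

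Primitivity of $(\overline{\omega\chi})_n$ in the graded super sense then follows because $\EOH_n^\ell \cong (\overline{\omega\chi})_n \EONH_n^\ell (\overline{\omega\chi})_n$ is a connected graded superalgebra: its degree-$0$, even-parity component is just $\k$, inherited from the corresponding components of $\OSym_n$ and $R_\ell$ together with the observation that the defining relations of $\EOH_n^\ell$ lie in strictly positive degree, so $0$ and $1$ are the only even degree-$0$ idempotents of $\EOH_n^\ell$. Finally, since \cref{geordie} exhibits $\EONH_n^\ell$ as a graded matrix superalgebra over $\EOH_n^\ell$ in which $(\overline{\omega\chi})_n$ corresponds to a rank-one projection, the idempotent truncation $(\overline{\omega\chi})_n -$ is the standard graded super Morita equivalence (with quasi-inverse $\EONH_n^\ell(\overline{\omega\chi})_n \otimes_{\EOH_n^\ell} -$), and compatibility with the $(Q,\Pi)$-structures is automatic. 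I anticipate no serious technical obstacle beyond routine bookkeeping of degrees and parities once \cref{geordie} is in hand; all the real combinatorial difficulty was absorbed into the proof of that theorem.
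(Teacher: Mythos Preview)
Your proposal is correct and matches the paper's intent: the corollary is stated without proof, immediately after \cref{geordie}, precisely because it follows from that theorem by the same formal argument that gave \cref{altPsi} from \cref{firstkeythm}. Your elaboration of the details (idempotent truncation of the endomorphism ring, primitivity via $\big(\EOH_n^\ell\big)_{0,\bar 0}=\k$, and Morita equivalence from the matrix-algebra description) is exactly what the paper leaves implicit.
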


\begin{corollary} \label{geordiecor}
For $0 \leq n \leq \ell$, 
$\EONH_n^\ell$ is a free graded $R_\ell$-supermodule of graded rank
$$
q^{n(\ell-n)} [\ell]^!_{q,\pi} \overline{[n]^!}_{\!q,\pi} 
\big/  [\ell-n]^!_{q,\pi}.
$$
\end{corollary}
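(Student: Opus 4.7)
The plan is to read off the graded rank of $\EONH_n^\ell$ as an $R_\ell$-supermodule by combining two already-established structural facts: the matrix-algebra description of $\EONH_n^\ell$ over $\EOH_n^\ell$ coming out of \cref{geordie}, and the graded rank of $\EOH_n^\ell$ over $R_\ell$ recorded in \cref{zalatoris}. The computation that follows is essentially a transcription of the one that gives $\gsdim\ONH_n$ in \cref{bakeoffONH}, adapted to the relative setting over $R_\ell$.

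First I would note that \cref{geordie} identifies $\EONH_n^\ell$ with $\End_{\dash\EOH_n^\ell}(V)$ where $V:=\OPol_n\otimes_{\OSym_n}\EOH_n^\ell$ is free as a graded right $\EOH_n^\ell$-supermodule on the monomial basis indexed by $K_n$. Since each generator $x_n^{\kappa_n}\cdots x_1^{\kappa_1}\hatotimes 1$ contributes a summand $Q^{2|\kappa|}\Pi^{|\kappa|}\EOH_n^\ell$, we have
\begin{equation*}
V\simeq \bigoplus_{\kappa\in K_n} Q^{2|\kappa|}\Pi^{|\kappa|}\,\EOH_n^\ell,
\end{equation*}
and
$\sum_{\kappa\in K_n}(\pi q^2)^{|\kappa|} = \prod_{i=1}^n\sum_{k=0}^{n-i}(\pi q^2)^k = \prod_{i=1}^n q^{n-i}[n-i+1]_{q,\pi} = q^{\binom{n}{2}}[n]^!_{q,\pi}$, so the graded rank of $V$ over $\EOH_n^\ell$ is $f:=q^{\binom{n}{2}}[n]^!_{q,\pi}$.

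Next I would decompose $\End_{\dash\EOH_n^\ell}(V)$ as a graded right $\EOH_n^\ell$-supermodule into the direct sum over $(\kappa,\kappa')\in K_n\times K_n$ of the corresponding Hom-spaces; each such Hom-space is $Q^{2|\kappa|-2|\kappa'|}\Pi^{|\kappa|+|\kappa'|}\EOH_n^\ell$, contributing $(\pi q^2)^{|\kappa|}(\pi q^{-2})^{|\kappa'|}$ to the graded rank. This splits into a product, giving
\begin{equation*}
\gsrank_{\dash\EOH_n^\ell}\bigl(\EONH_n^\ell\bigr) = f\cdot\bar f = \bigl(q^{\binom{n}{2}}[n]^!_{q,\pi}\bigr)\bigl(q^{-\binom{n}{2}}\overline{[n]^!}_{q,\pi}\bigr) = [n]^!_{q,\pi}\,\overline{[n]^!}_{q,\pi},
\end{equation*}
where for the second factor one computes $\sum_{\kappa'\in K_n}(\pi q^{-2})^{|\kappa'|}=q^{-\binom{n}{2}}\overline{[n]^!}_{q,\pi}$ exactly as in the first paragraph, using $\sum_{k=0}^{m-1}(\pi q^{-2})^k = q^{1-m}\overline{[m]}_{q,\pi}$.

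Finally, since this decomposition exhibits $\EONH_n^\ell$ as a direct sum of finitely many shifts of $\EOH_n^\ell$, and \cref{zalatoris} says each $\EOH_n^\ell$ is free over $R_\ell$ of graded rank $q^{n(\ell-n)}\sqbinom{\ell}{n}_{q,\pi}$, we conclude that $\EONH_n^\ell$ is free over $R_\ell$ with graded rank
\begin{equation*}
[n]^!_{q,\pi}\,\overline{[n]^!}_{q,\pi}\cdot q^{n(\ell-n)}\sqbinom{\ell}{n}_{q,\pi} = q^{n(\ell-n)}\,[\ell]^!_{q,\pi}\,\overline{[n]^!}_{q,\pi}\big/[\ell-n]^!_{q,\pi},
\end{equation*}
using $\sqbinom{\ell}{n}_{q,\pi}=[\ell]^!_{q,\pi}/\bigl([n]^!_{q,\pi}[\ell-n]^!_{q,\pi}\bigr)$. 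There is no serious obstacle here; the only thing to be careful about is the bookkeeping of grading and parity shifts in the matrix algebra computation, which is exactly parallel to the computation already done in the proof of \cref{bakeoffONH}.
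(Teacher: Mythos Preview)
Your proof is correct and follows essentially the same approach as the paper's, which simply says ``This follows using the final part of the theorem and \cref{zalatoris}.'' You have expanded this into a full computation of graded ranks via the matrix-algebra description, which is exactly what that one-line proof has in mind. One small wording issue: $\End_{\dash\EOH_n^\ell}(V)$ is not naturally an $\EOH_n^\ell$-supermodule in any canonical way, so when you ``decompose as a graded right $\EOH_n^\ell$-supermodule'' you really mean as an $R_\ell$-supermodule into summands each isomorphic to a shift of $\EOH_n^\ell$ (which is what you correctly use in the final step); this does not affect the validity of the argument.
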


\begin{proof}
This follows using the final part of the theorem and \cref{zalatoris}.
\end{proof}

\begin{corollary}\label{hucor}
For $0 \leq n \leq \ell$, the monomials $$
\big\{ x^\kappa \tau_w \hatotimes 1 \:\big|\:w \in \S_n, \kappa \in \N^n \text{ with }0 \leq  \kappa_i \leq \ell-i\text{ for }i=1,\dots,n\big\}
$$ 
form a basis for $\EONH_n^\ell$ as a free $R_\ell$-supermodule.
\end{corollary}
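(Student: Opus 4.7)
\emph{Plan.} The strategy is to show that the proposed set $B = \{x^\kappa \tau_w \hatotimes 1 : w \in S_n,\ 0 \leq \kappa_i \leq \ell-i\}$ spans $\EONH_n^\ell$ over $R_\ell$ and has graded $R_\ell$-rank matching the rank of $\EONH_n^\ell$ from \cref{geordiecor}; since a surjective $R_\ell$-linear map between graded free $R_\ell$-supermodules of equal finite graded rank is bijective, this forces $B$ to be an $R_\ell$-basis.

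For the rank comparison, a direct calculation gives that $B$ has graded $R_\ell$-rank
$$
\bigg(\sum_{w \in S_n} (\pi q^2)^{-\ell(w)}\bigg) \prod_{i=1}^n \bigg( \sum_{j=0}^{\ell-i}(\pi q^2)^j\bigg),
$$
and applying $\sum_{j=0}^{m-1}(\pi q^2)^j = q^{m-1}[m]_{q,\pi}$ together with the Poincar\'e polynomial identity \cref{poincare}, this rearranges to $q^{n(\ell-n)}\, [\ell]^!_{q,\pi}\, \overline{[n]^!}_{q,\pi}\, /\, [\ell-n]^!_{q,\pi}$, matching \cref{geordiecor}. For the spanning claim, \cref{ONHbasis} provides $\{x^\kappa \tau_w \otimes 1 : \kappa \in \N^n, w \in S_n\}$ as a free $R_\ell$-basis of $\ONH_n \otimes R_\ell$, so its image $R_\ell$-spans $\EONH_n^\ell$. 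It thus suffices to reduce any such monomial with some $\kappa_i > \ell - i$ modulo the defining relations to an $R_\ell$-combination of elements of $B$. We induct on the least index $i$ at which $\kappa_i > \ell - i$: for $i = 1$ this follows immediately from the defining relation $x_1^\ell \hatotimes 1 = \sum_{r=1}^\ell (-1)^{r-1} x_1^{\ell-r} \hatotimes \dot e_r^{(\ell)}$, applied iteratively to reduce any power of $x_1$ above $\ell-1$ and to commute the resulting scalars from $R_\ell$ past $\tau_w$ using the supercenter property.

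The main obstacle is the inductive step for $i \geq 2$: deriving from an available cyclotomic-type reduction for $x_{i-1}$ an analogous one for $x_i^{\ell-i+1}$. The key tool is the identity $\tau_{i-1} x_{i-1}^m = \partial_{i-1}(x_{i-1}^m) + x_i^m \tau_{i-1}$ in $\ONH_n$, with $\partial_{i-1}(x_{i-1}^m)$ computed by \cref{rank22}, together with $\tau_{i-1}^2 = 0$. Starting from the inductive relation on $x_{i-1}^{\ell-i+2}$, left-multiplication by $\tau_{i-1}$ produces a relation involving $x_i^{\ell-i+2}\tau_{i-1}$ modulo polynomial error terms, and a subsequent right-multiplication by $\tau_{i-1}$ combined with $\tau_{i-1}^2 = 0$ isolates the desired reduction on $x_i^{\ell-i+1}$ up to terms already of the form in $B$. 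Tracking the signs carefully and verifying that the successive reductions terminate---each strictly decreasing a suitable well-founded invariant of $\kappa$, for instance the reverse-lexicographic ordering---is the combinatorial heart of the proof; once spanning is in hand, the rank match from the previous paragraph delivers the basis property.
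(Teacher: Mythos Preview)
Your graded-rank computation is correct and matches the paper's. The substantive divergence is in the second half: you aim to prove \emph{spanning} via an explicit straightening algorithm, whereas the paper proves \emph{linear independence}. Both combine with the rank match from \cref{geordiecor} to give the basis, so either route is valid in principle.

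The paper's linear-independence argument is short and avoids all of the straightening combinatorics you identify as the ``heart'' of your approach. It first treats the extremal case $n=\ell$ (where $\EOH_\ell^\ell = R_\ell$) by exploiting the faithful action of $\EONH_\ell^\ell$ on $\OPol_\ell \otimes_{\OSym_\ell} R_\ell$ from \cref{geordie}: given a putative relation $\sum_{w,\kappa} x^\kappa \tau_w \hatotimes \dot c_{w,\kappa} = 0$, one picks $w$ of minimal length with some $\dot c_{w,\kappa}\neq 0$ and applies the relation to the Schubert vector $p_w^{(\ell)}\hatotimes 1$, exactly as in the proof of \cref{ONHbasis}, to force $\dot c_{w,\kappa}=0$. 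For general $n$, the natural map $\EONH_n^\ell \to \EONH_\ell^\ell$ sends your proposed monomials to a subset of the monomials just shown to be linearly independent, and linear independence transfers back.

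Your spanning approach is the one taken in \cite{HS} (see \cref{choices}), and your identity $\tau_{i-1} x_{i-1}^m = \partial_{i-1}(x_{i-1}^m) + x_i^m \tau_{i-1}$ is correct (it follows from the twisted-derivation property of $\partial_{i-1}$). But your inductive step as written does not yet close: after left-multiplying by $\tau_{i-1}$ you obtain a relation containing both $x_i^{\ell-i+1}$ (from the top term of $\partial_{i-1}(x_{i-1}^{\ell-i+2})$) and $x_i^{\ell-i+2}\tau_{i-1}$, and your proposed right-multiplication by $\tau_{i-1}$ kills the latter via $\tau_{i-1}^2=0$ but also multiplies the former by $\tau_{i-1}$, so you only reduce $x_i^{\ell-i+1}\tau_{i-1}$ rather than $x_i^{\ell-i+1}$ itself. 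A genuine straightening procedure must also control how the $\tau$-word changes during the reduction and why the terminating invariant actually decreases; this is exactly the intricate bookkeeping carried out in \cite{HS}. The paper's faithfulness argument sidesteps all of this.
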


\begin{proof}
The free graded $R_\ell$-supermodule
with basis given by elements of the same degrees and parities as these monomials 
is graded rank
$q^{n(\ell-n)} [\ell]^!_{q,\pi} \overline{[n]}^!_{q,\pi} /
[\ell-n]^!_{q,\pi}$, which is the same as the graded rank
of $\EONH_n^\ell$ according to \cref{geordiecor}.
Therefore it suffices to show that the monomials
$\big\{ x^\kappa \tau_w \hatotimes 1\:\big|\:w \in \S_n, \kappa \in \N^n \text{ with }0 \leq  \kappa_i \leq \ell-i\text{ for }i=1,\dots,n\big\}$ 
are linearly independent over $R_\ell$.

We first prove this linear independence in the special case that $n=\ell$,
in which case we have simply that $\EOH_\ell^\ell = R_\ell$.
Suppose we have a linear relation
$\sum_{w, \kappa} x^\kappa \tau_w \hatotimes \dot c_{w,\kappa} = 0$
in $\EONH_\ell^\ell$
for $\dot c_{w,\kappa} \in \OSym_\ell$ that are not all zero,
summing over $w \in S_\ell, \kappa \in \N^\ell$
with $0 \leq \kappa_i \leq \ell-i$ for all $i=1,\dots,\ell$.
Pick $w$ of minimal length such that $\dot c_{w,\kappa} \neq 0$ for some $\kappa$.
Then we act on the vector 
$p_w^{(\ell)} \hatotimes 1 \in \OPol_\ell \otimes_{\OSym_\ell}
R_\ell$ to deduce as in the proof of \cref{ONHbasis}
that $\sum_\kappa (-1)^{\parity(c_{w,\kappa}) \ell(w)} 
x^\kappa \hatotimes \dot c_{w,\kappa} = 0$. 
The elements $x^\kappa\hatotimes 1$ for 
$\kappa \in \N^\ell$
with $0 \leq \kappa_i \leq \ell-i$ for all $i=1,\dots,\ell$
are linearly independent over $R_\ell$ thanks to \cref{aswell}.
So this implies that $\dot c_{w,\kappa} = 0$ for all $\kappa$, which is a contradiction.

Now we treat the general case.
The inclusion $\ONH_n\otimes R_\ell \hookrightarrow \ONH_\ell
\otimes R_\ell$
induces an $R_\ell$-superalgebra homomorphism
$\imath:\EONH^\ell_n \rightarrow \EONH^\ell_\ell$.
The monomials in $\EONH^\ell_n$ which we are trying to show are linearly independent map to a subset of the monomials shown to be linearly independent in the previous paragraph. This completes the proof (and also shows that 
$\imath$ is injective).
\end{proof}

\begin{corollary}\label{buckley}
For $0 \leq n < \ell$,
the graded superalgebra homomorphism $\EONH_n^\ell \rightarrow \EONH_{n+1}^\ell$
induced by the natural embedding 
$\ONH_n\otimes R_\ell \hookrightarrow \ONH_{n+1}\otimes R_\ell$ is injective.
\end{corollary}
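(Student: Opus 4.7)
The plan is to deduce injectivity directly from the basis theorem \cref{hucor} by identifying the image of an $R_\ell$-basis of $\EONH_n^\ell$ as a subset of an $R_\ell$-basis of $\EONH_{n+1}^\ell$. The key arithmetic fact that makes this work is the hypothesis $n<\ell$, which guarantees that the constraint $\kappa_{n+1}\leq \ell-(n+1)$ is compatible with $\kappa_{n+1}=0$.

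First I would recall from \cref{hucor} that $\EONH_n^\ell$ has $R_\ell$-basis
\[
\mathcal{B}_n:=\Big\{x^\kappa\tau_w\hatotimes 1\;\Big|\;w\in\S_n,\ \kappa\in\N^n\text{ with }0\leq\kappa_i\leq\ell-i\text{ for }i=1,\dots,n\Big\},
\]
and similarly $\EONH_{n+1}^\ell$ has basis $\mathcal{B}_{n+1}$ indexed by pairs $(w',\kappa')$ with $w'\in\S_{n+1}$ and $\kappa'\in\N^{n+1}$ subject to $0\leq\kappa'_i\leq\ell-i$ for $i=1,\dots,n+1$. The natural embedding $\ONH_n\hookrightarrow\ONH_{n+1}$, which is injective by \cref{ONHbasis}, sends $x_i\mapsto x_i$ and $\tau_j\mapsto\tau_j$; using the same reduced expression for $w\in\S_n$ regarded as an element of $\S_{n+1}$, the element $\tau_w$ computed in $\ONH_n$ equals the element $\tau_w$ computed in $\ONH_{n+1}$. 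Hence the induced map $\EONH_n^\ell\to\EONH_{n+1}^\ell$ sends each basis element $x^\kappa\tau_w\hatotimes 1\in\mathcal{B}_n$ to the element $x^{\kappa'}\tau_{w'}\hatotimes 1\in\EONH_{n+1}^\ell$, where $w'=w\in\S_n\subset\S_{n+1}$ and $\kappa'=(\kappa_1,\dots,\kappa_n,0)\in\N^{n+1}$.

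Next I would verify that these images lie in $\mathcal{B}_{n+1}$. For $i=1,\dots,n$ the inequality $0\leq\kappa'_i\leq\ell-i$ is exactly the constraint already satisfied in $\mathcal{B}_n$. For $i=n+1$ we need $0=\kappa'_{n+1}\leq\ell-(n+1)$, which holds precisely because $n<\ell$. Thus the map $\mathcal{B}_n\to\mathcal{B}_{n+1}$ is an injection of index sets.

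Finally, since $\mathcal{B}_{n+1}$ is $R_\ell$-linearly independent in $\EONH_{n+1}^\ell$ by \cref{hucor}, any subset of it is also $R_\ell$-linearly independent. Hence the image of $\mathcal{B}_n$ is $R_\ell$-linearly independent in $\EONH_{n+1}^\ell$, and the canonical $R_\ell$-linear homomorphism $\EONH_n^\ell\to\EONH_{n+1}^\ell$ is therefore injective. There is no substantive obstacle here: once \cref{hucor} is in hand, the entire argument is an index comparison which hinges only on the numerical condition $n<\ell$.
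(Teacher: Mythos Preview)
Your proposal is correct and is exactly the argument the paper intends: the paper's proof is the single line ``This follows immediately from the basis theorem in the previous corollary,'' and you have simply spelled out that immediate deduction in full detail. The key point—that the embedding sends each basis monomial of $\mathcal{B}_n$ to a basis monomial of $\mathcal{B}_{n+1}$ because $\kappa'_{n+1}=0\leq \ell-(n+1)$ when $n<\ell$—is precisely what makes the deduction immediate.
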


\begin{proof}
This follows immediately from the basis theorem in the previous corollary.
\end{proof}

\begin{corollary}\label{forconsistency}
The graded superalgebra 
$\EONH_n^\ell$ is a graded Frobenius superalgebra over $R_\ell$ of degree
$2n(\ell-n)$ and parity $n(\ell-n) \pmod{2}$.
\end{corollary}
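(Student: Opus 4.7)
The plan is to deduce this from the Frobenius structure on $\EOH_n^\ell$ established in \cref{Etrace}, via the matrix algebra presentation of $\EONH_n^\ell$ provided by \cref{geordie}. By \cref{geordie}, there is a graded superalgebra isomorphism $\rho:\EONH_n^\ell \stackrel{\sim}{\rightarrow} \End_{\dash\EOH_n^\ell}(P)$, where $P := \OPol_n \otimes_{\OSym_n} \EOH_n^\ell$ is a free graded right $\EOH_n^\ell$-supermodule with basis $\{v_\kappa := x_n^{\kappa_n}\cdots x_1^{\kappa_1}\hatotimes 1 \mid \kappa \in K_n\}$, each $v_\kappa$ being of degree $2|\kappa|$ and parity $|\kappa|\pmod{2}$. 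For homogeneous $f \in \EONH_n^\ell$, write $f \cdot v_\kappa = \sum_\lambda v_\lambda f_{\lambda,\kappa}$ for unique $f_{\lambda,\kappa} \in \EOH_n^\ell$, and let $E_{\lambda,\kappa} \in \EONH_n^\ell$ denote the matrix units characterized by $E_{\lambda,\kappa} \cdot v_\mu = \delta_{\mu,\kappa} v_\lambda$.

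First I would define a matrix trace $T:\EONH_n^\ell \rightarrow \EOH_n^\ell$ by $T(f) := \sum_{\kappa \in K_n} (-1)^{|\kappa|\parity(f)} f_{\kappa,\kappa}$, and check that, via the embedding $\EOH_n^\ell \hookrightarrow \EONH_n^\ell$ from \cref{veryslow}, $T$ is a homogeneous graded $(\EOH_n^\ell,\EOH_n^\ell)$-superbimodule homomorphism of degree $0$ and parity $0$. Next I would verify that the family $\{E_{\lambda,\kappa}\}_{\kappa,\lambda \in K_n}$ together with its appropriately signed dual $\{(-1)^{|\kappa||\lambda|} E_{\kappa,\lambda}\}_{\kappa,\lambda \in K_n}$ satisfies the identities \cref{traceprops1,traceprops2} making $\EONH_n^\ell$ into a Frobenius extension of $\EOH_n^\ell$ of degree $0$ and parity $0$. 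This is a direct computation using the explicit matrix description.

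Finally, I would compose $T$ with the trace $\tr:\EOH_n^\ell \rightarrow R_\ell$ from \cref{Etrace}, which is homogeneous of degree $-2n(\ell-n)$ and parity $n(\ell-n)\pmod{2}$. The composite $\tr \circ T:\EONH_n^\ell \rightarrow R_\ell$ is then a trace map of the required degree and parity, and dual bases for $\EONH_n^\ell$ over $R_\ell$ are obtained by multiplying the dual bases of $\EONH_n^\ell$ over $\EOH_n^\ell$ constructed above with the dual bases of $\EOH_n^\ell$ over $R_\ell$ exhibited in the proof of \cref{Etrace}. This is an instance of the standard transitivity of Frobenius extensions; see \cite[Sec.~4]{PS}.

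The hard part will be the bookkeeping of Koszul signs in the definition of $T$ (to ensure it is genuinely a bimodule homomorphism and not merely a left module one) and in the verification of the dual basis identities, since the vectors $v_\kappa$ have parities depending on $|\kappa|\pmod{2}$. I expect no conceptual difficulty beyond this sign tracking.
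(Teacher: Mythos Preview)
Your approach is correct and is precisely the argument the paper leaves implicit: the one-line proof there reads ``This follows from \cref{geordie} and \cref{Etrace}'', relying on the standard fact that a graded matrix superalgebra over a Frobenius superalgebra is again Frobenius over the same base, with trace given by composing the matrix trace with the Frobenius trace. You have simply unpacked this.

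One small point: the embedding you want is not the one from \cref{veryslow}. That corollary identifies $\EOH_n^\ell$ with the idempotent truncation $(\overline{\omega\chi})_n\,\EONH_n^\ell\,(\overline{\omega\chi})_n$, which is not a unital subalgebra and does not give $\EONH_n^\ell$ an $(\EOH_n^\ell,\EOH_n^\ell)$-bimodule structure. For your matrix trace $T$ to be a bimodule homomorphism you should instead use the scalar-matrix embedding $a \mapsto a\cdot\id_P$ coming directly from the identification $\EONH_n^\ell \simeq \End_{\dash\EOH_n^\ell}(P)$ in \cref{geordie}. Under this embedding the composite $R_\ell \to \EOH_n^\ell \to \EONH_n^\ell$ agrees with the structure map $R_\ell \to Z(\EONH_n^\ell)$, so transitivity from \cite{PS} applies exactly as you describe. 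Alternatively, you can bypass the intermediate algebra entirely: the composite $\tr\circ T:\EONH_n^\ell\to R_\ell$ is manifestly $R_\ell$-bilinear since $R_\ell$ is supercentral, and the tensor product of dual bases works without reference to any embedding of $\EOH_n^\ell$.
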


\begin{proof}
This follows from \cref{geordie} and \cref{Etrace}.
\end{proof}

\begin{remark} \label{choices}
Since
$\RONH_n^\ell \cong \EONH_n^\ell \otimes_{R_\ell} \k$
and $\ROH_n^\ell \cong \EOH_n^\ell \otimes_{R_\ell} \k$,
\cref{geordiecor} implies that $\RONH_n^\ell$
is isomorphic to the graded matrix superalgebra $M_{q^{\binom{n}{2}}[n]^!_{q,\pi}}\Big(\ROH_n^\ell\Big)$,
recovering \cite[Prop.~5.2]{EKL}.
Also \cref{hucor} implies that $\RONH_n^\ell$
has basis
given by the canonical images of the monomials
$$
\big\{ x^\kappa \tau_w\:\big|\:w \in \S_n, \kappa \in \N^n \text{ with }0 \leq  \kappa_i \leq \ell-i\text{ for }i=1,\dots,n\big\},
$$
recovering \cite[Th.~4.10]{HS}. The proof that these monomials span given
in \cite{HS} gives an explicit algorithm to ``straighten" arbitrary monomials into this form.
\end{remark}

In the remainder of the section, we prove \cref{geordie}. 
The approach is based on the proof of \cite[Prop.~5.2]{EKL} (the result which we are generalizing).
First we record some preliminary lemmas.

\begin{lemma}\label{companion}
Suppose that $n \geq 1$.
Let $y$ be a non-zero homogeneous element of $\sig_1(\OPol_{n-1})$
and consider the (free) right $\OSym_n$-submodule of $\OPol_n$ with 
basis
$v_1,\dots,v_n$ defined from
$v_i := y x_1^{i-1}$.
The matrix of the endomorphism of this subspace defined by the left action of $(-1)^{\parity(y)} x_1$ is
equal to the (non-commutative) companion matrix
\begin{equation}\label{thematrix}
\begin{pmatrix}
0&0&\cdots&0&(-1)^{n-1}e^{(n)}_n\\
1&0&\cdots&0&(-1)^{n-2} e^{(n)}_{n-1}\\
\vdots&\ddots&\ddots          &\vdots&\vdots\\
0&\cdots&1&0&-e^{(n)}_{2}\\
0&\cdots&0&1&e^{(n)}_1
\end{pmatrix}
\end{equation}
of the polynomial $(t-x_1) \cdots (t-x_n) \in \OPol_n[t]$.
\end{lemma}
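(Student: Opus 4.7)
The plan is to compute the left action of $T := (-1)^{\parity(y)} x_1$ on each $v_j = y x_1^{j-1}$ directly and match it against the claimed companion matrix. Since $y \in \sig_1(\OPol_{n-1})$ is a polynomial in $x_2,\ldots,x_n$, we have $x_1 y = (-1)^{\parity(y)} y x_1$, so for $1 \leq j < n$,
\[
T(v_j) \;=\; (-1)^{\parity(y)} x_1 \cdot y x_1^{j-1} \;=\; y x_1^{j} \;=\; v_{j+1},
\]
which already matches the first $n-1$ columns of \cref{thematrix}.

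The substantive step is the last column: I must express $T(v_n) = y x_1^n$ as a right $\OSym_n$-linear combination of $v_1,\ldots,v_n$. The heart of the proof is the identity
\[
\sum_{r=0}^n (-1)^r x_1^{n-r} e^{(n)}_r \;=\; 0 \qquad \text{in } \OPol_n,
\]
which is the correct order-sensitive odd analog of the statement that $x_1$ is a root of $e^{(n)}(t) = (t-x_1)\cdots(t-x_n)$. Granting this identity, one solves for $x_1^n$, multiplies on the left by $y$, and reindexes $i = n-r+1$ to obtain
\[
y x_1^n \;=\; \sum_{r=1}^n (-1)^{r-1} y x_1^{n-r} e^{(n)}_r \;=\; \sum_{i=1}^n v_i \cdot (-1)^{n-i} e^{(n)}_{n-i+1},
\]
which is exactly the last column of the companion matrix.

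To prove the displayed identity, I will use the elementary splitting $e^{(n)}_r = x_1 e'_{r-1} + e'_r$, where $e'_r := \sig_1(e^{(n-1)}_r)$ denotes the $r$th elementary odd symmetric polynomial in $x_2,\dots,x_n$ (with the conventions $e'_{-1} = 0$ and $e'_n = 0$). Substituting into the left-hand side produces two sums; after reindexing the first by $s = r-1$ (and using $e'_n = 0$ to trim the second), the two sums agree up to sign and cancel term by term. This is the main obstacle: non-commutativity means one cannot naively substitute $t = x_1$ into $e^{(n)}(t)$ and is forced to choose the side on which $x_1$ is placed; indeed the reversed expression $\sum_r (-1)^r e^{(n)}_r x_1^{n-r}$ is in general nonzero.

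Finally, the freeness of $\{v_1,\dots,v_n\}$ as a right $\OSym_n$-module is a brief aside: any relation $\sum_i v_i a_i = 0$ with $a_i \in \OSym_n$ reads $y \cdot \sum_i x_1^{i-1} a_i = 0$, and since $\OPol_n$ has no nonzero zero divisors and $y \neq 0$, we obtain $\sum_i x_1^{i-1} a_i = 0$. The elements $\{1, x_1, \ldots, x_1^{n-1}\}$ appear within the $\OSym_n$-basis of $\OPol_n$ described in \cref{aswell} (as the monomials $x_n^{\kappa_n}\cdots x_1^{\kappa_1}$ with $\kappa_i = 0$ for $i > 1$), so each $a_i = 0$.
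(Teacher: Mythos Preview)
Your proof is correct and follows essentially the same line as the paper: both handle the first $n-1$ columns via the anticommutation $(-1)^{\parity(y)} x_1 y = y x_1$, and both obtain the last column from the vanishing $\sum_{r=0}^n (-1)^r x_1^{n-r} e_r^{(n)} = 0$. The only difference is that the paper cites \cref{notsoeasypeasy}(1) (with $n=1$, $n'=n-1$, $r=n$, noting $h^{(1)}_{r-s}=x_1^{r-s}$) for this identity, whereas you prove it directly via the elementary splitting $e_r^{(n)} = x_1 e'_{r-1} + e'_r$ and a telescoping cancellation; your argument is self-contained, while the paper's reuses earlier generating-function machinery.
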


\begin{proof}
We have that $(-1)^{\parity(y)} x_1 y x_1^{i-1} = y x_1^i$. 
This gives all but the last column of the matrix already.
For the last column, use 
the identity $\sum_{r=0}^n (-1)^s x_1^{n-s} e_s^{(n)}=0$, which is
\cref{notsoeasypeasy}(1) taking $n$ and $n'$
there to be $1$ and $n-1$ in the current setup.
\end{proof}

\begin{lemma}\label{comppower}
Let $C$ be the $n \times n$ companion matrix from \cref{thematrix}.
For any $1 \leq i,j \leq n$ and $k \geq 0$, the 
$(i,j)$-entry of $C^k$ is equal to 
\begin{equation}\label{expression}
c_{i,j;k} := \sum_{t=0}^{\min(k+j-i,n-i)}
(-1)^t e_t^{(n)}h_{k+j-i-t}^{(n)},
\end{equation}
which is zero if $k< i-j$ and $1$ if $k=i-j$.
\end{lemma}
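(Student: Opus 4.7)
The plan is to prove $(C^k)_{i,j} = c_{i,j;k}$ by induction on $k$, using standard matrix multiplication $C^{k+1} = C^k \cdot C$. Standard (non-super) matrix multiplication applies because the submodule of \cref{companion} is a right $\OSym_n$-module, so the endomorphism $\phi(v) = (-1)^{\parity(y)} x_1 \cdot v$ satisfies $\phi(va) = \phi(v) a$ with no intervening sign. The base case $k=0$ and the two stated special values ($c_{i,j;k}=0$ for $k < i - j$ and $c_{i,j;i-j}=1$) all follow at once from the infinite Grassmannian relation \cref{oddgrassmannian}: when $0 \leq j-i \leq n-i$ the defining sum is exactly $\sum_{t=0}^{j-i}(-1)^t e_t^{(n)} h_{j-i-t}^{(n)} = \delta_{j-i,0}$; when $j-i < 0$ the sum is empty; when $j - i = 0$ the sum reduces to $e_0^{(n)} h_0^{(n)} = 1$.

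For the inductive step I would split on the column $j$ of $C$, which by \cref{companion} is either a standard unit vector (for $j < n$) or the vector $\big((-1)^{n-m} e_{n-m+1}^{(n)}\big)_m$ (for $j = n$). When $j < n$ one has $(C^{k+1})_{i,j} = (C^k)_{i,j+1} = c_{i,j+1;k}$, and direct substitution shows this equals $c_{i,j;k+1}$. The substantial case is $j = n$: after substituting $s := n - m + 1$, writing $N := k + n + 1 - i$, and swapping the order of summation (keeping the outer $e_t^{(n)}$ pinned on the left), one obtains
\begin{equation*}
(C^{k+1})_{i,n} = \sum_{t=0}^{n-i} (-1)^t e_t^{(n)} \left(\sum_{s=1}^{\min(n, N-t)} (-1)^{s-1} h_{N-t-s}^{(n)} e_s^{(n)}\right).
\end{equation*}
The inner bracketed sum is precisely the rearrangement of \cref{oddgrassmannian2} which reads $h_r^{(n)} = \sum_{s=1}^{\min(r,n)} (-1)^{s-1} h_{r-s}^{(n)} e_s^{(n)}$ for $r \geq 1$ (the range is freely extended to $s \leq n$ using that $h_{r-s}^{(n)}=0$ for $s > r$, and cut off at $n$ using that $e_s^{(n)} = 0$ for $s > n$ in $\OSym_n$). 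Since $t \leq n - i$ forces $N - t \geq k+1 \geq 1$, the bracket collapses to $h_{N-t}^{(n)}$, delivering $(C^{k+1})_{i,n} = \sum_{t=0}^{n-i} (-1)^t e_t^{(n)} h_{N-t}^{(n)} = c_{i,n;k+1}$.

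The main delicacy is the non-commutativity of $\OSym_n$: the formula for $c_{i,j;k}$ is order-sensitive, with $e^{(n)}$ positioned on the left of $h^{(n)}$. Accordingly, in the induction step the outer $e_t^{(n)}$ must be kept pinned on the left, and the Grassmannian identity is applied only inside the $h^{(n)} \cdot e^{(n)}$ product (where $h$ sits on the left, matching \cref{oddgrassmannian2}). A secondary technicality is that the range $1 \leq s \leq n$ inherited from the matrix product does not literally match the range $1 \leq s \leq r$ appearing in the Grassmannian identity, but the discrepancies consist entirely of zero terms, so the identification is exact in $\OSym_n$. Beyond this bookkeeping, the proof involves no serious obstruction; the main work is simply to arrange the summations so that the Grassmannian relation can be applied in the correct order.
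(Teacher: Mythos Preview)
Your proof is correct. Both you and the paper argue by induction on $k$ with the same base case via the infinite Grassmannian relation, but the inductive steps are mirror images of one another.

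The paper writes $C^{k+1}=C\cdot C^k$ and exploits that each \emph{row} of $C$ has at most two nonzero entries (the subdiagonal $1$ and the last-column entry). This places the new $e_{n+1-i}^{(n)}$ factor naturally on the left, and since $c_{n,j;k}$ collapses to a single $h$ the extra term is absorbed by a short case analysis on whether the upper limit of the sum hits $n+1-i$. You instead write $C^{k+1}=C^k\cdot C$ and exploit that each \emph{column} of $C$ with $j<n$ is a unit vector, which trivializes those cases to the shift $c_{i,j+1;k}=c_{i,j;k+1}$; for $j=n$ you get an inner sum of the form $\sum_s(-1)^{s-1}h_{r-s}^{(n)}e_s^{(n)}$ sitting to the right of the outer $e_t^{(n)}$, which you collapse using the second Grassmannian relation \cref{oddgrassmannian2}. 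Your care about the ordering (keeping $e_t^{(n)}$ pinned on the left and applying the identity only to the $h\cdot e$ block) is exactly what makes this work in the non-commutative setting. The two arguments are essentially dual; yours trades the paper's endpoint case split for one appeal to \cref{oddgrassmannian2}.
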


\begin{proof}
This goes by induction on $k=0,1,\dots$. When $k=0$, we have that
$$
c_{i,j;k} = \sum_{t=0}^{j-i} (-1)^t e_t^{(n)} h_{j-i-t}^{(n)}
$$ 
which is zero if $i > j$ as it is the empty sum,
and it is $\delta_{i,j}$ if $i \leq j$ by the infinite Grassmannian relation.
This checks the induction base. Then we take $k \geq 0$ and consider 
the $(i,j)$-entry of
$C^{k+1} = C C^k$.
Since $C$ has at most two non-zero entries in its $i$th row,
namely, its $(i,i-1)$-entry $1$ if $i > 1$ and
its $(i,n)$-entry $(-1)^{n-i} e_{n+1-i}^{(n)}$, 
we get by induction that
the $(i,j)$-entry of $C^{k+1}$ is equal to
$c_{i-1,j;k} + (-1)^{n-i} e_{n+1-i}^{(n)}
c_{n,j;k}$ where the first term should be omitted in the case that $i=1$.
This is equal to
$$
\sum_{t=0}^{\min(k+1+j-i,n+1-i)}
(-1)^t e_t^{(n)}h_{k+1+j-i-t}^{(n)}
+
(-1)^{n-i} e_{n+1-i}^{(n)} h_{k+j-n}^{(n)},
$$
interpreting $h_{k+j-n}^{(n)}$ as zero if $k < n-j$ and
noting in the case that $i=1$ that the first term here is zero by the infinite Grassmannian relation (so there
is no need to omit it). 
To complete the proof, we need to show that
$$
\sum_{t=0}^{\min(k+1+j-i,n-i)}
(-1)^t e_t^{(n)}h_{k+1+j-i-t}^{(n)}=
\sum_{t=0}^{\min(k+1+j-i,n+1-i)}
(-1)^t e_t^{(n)}h_{k+1+j-i-t}^{(n)}
+
(-1)^{n-i} e_{n+1-i}^{(n)} h_{k+j-n}^{(n)}.
$$
If $k+1+j-i \leq n-i$ both sums are over $0 \leq t \leq k+1+j-i$ and the second term on the right hand side is zero by convention since $k+j-n< 0$, so the equality is true.
If $k +1+j-i > n-i$ then the sum on the right hand side has one extra term
when $t = n+1-i$ compared to the sum on the left hand side. But this extra term
is $(-1)^{n+1-i} e_{n+1-i}^{(n)} h_{k+j-n}^{(n)}$ which cancels with the final term on the right hand side.
\end{proof}

\begin{proof}[Proof of \cref{geordie}]
Let $A := \ONH_n\otimes R_\ell$,
$B := \OSym_n \otimes R_\ell$ and
$V := \OPol_n \otimes R_\ell$,  
which is a graded $(A,B)$-superbimodule.
By \cref{aswell}, $V$ is free as a graded right $B$-supermodule
with basis $\big\{x_n^{\kappa_n}\cdots x_1^{\kappa_1}
\otimes 1\:\big|\:\kappa \in K_n\big\}$.
Also, by \cref{firstkeythm}, $A \cong \End_{\dash B}(V)$ so that
$A$ can be identified with the graded matrix superalgebra
consisting of matrices $(a_{\kappa,\kappa'})_{\kappa,\kappa' \in K_n}$,
for $a_{\kappa,\kappa'} \in B$, this matrix representing the endomorphism
$x^{\kappa'} \otimes 1 \mapsto \sum_{\kappa \in K_n}
(x^\kappa \otimes 1)a_{\kappa,\kappa'}$.
In this situation, Morita theory implies that
there are bijections between the sets of graded superideals of $A$,
graded sub-superbimodules of $V$
and graded superideals of $B$
so that $I \leftrightarrow I V = V J \leftrightarrow J$.
For $I \unlhd A$ corresponding to $J \unlhd B$ in this way,
a set of generators for $J$ is given by the matrix entries of a set of generators of $I$, and 
we have that $A / I \cong \End_{\dash B/J} (V / IV)$.
Thus, to prove the theorem, we 
start from the two-sided ideal $I$ of $A$ from \cref{list}, 
which may be described equivalently as the two-sided ideal generated
by the elements 
$\sum_{s=0}^{r} (-1)^s x_1^{r-s} \otimes \dot e^{(\ell)}_{s}\:(r \geq \ell)$.
We must show that the two-sided ideal $J$ of $B$ generated
by the matrix entries of the generators of $I$ is equal to 
$B$ if $n > \ell$ and it is equal to the two-sided ideal
of $B$ from \cref{blue} if $n \leq \ell$.

Consider the matrix associated to the generator 
$\sum_{s=0}^r (-1)^s x_1^{r-s} \otimes \dot e^{(\ell)}_s$ of $I$
for $r \geq \ell$.
\cref{companion} 
implies that it is a block diagonal matrix with $n \times n$ blocks
parametrized by all $\kappa \in K_n$ with $\kappa_1 = 0$,
the $(i,j)$-entry of this block being the $x_n^{\kappa_n} \cdots x_2^{\kappa_2} x_1^{i-1}\otimes 1$-coefficient
of $\sum_{s=0}^r (-1)^s x_1^{r-s} \otimes \dot e_s^{(\ell)} \cdot (x_n^{\kappa_n}\cdots x_2^{\kappa_2} x_1^{j-1} \otimes 1)$ for $1 \leq i,j \leq n$.
Denote this matrix entry by $f_{i,j;r}^{(\kappa)}$.
Using \cref{companion,comppower}, 
we have that
\begin{equation}\label{thenastything}
f_{i,j;r}^{(\kappa)}
 = 
(-1)^{r |\kappa|} \sum_{s=0}^r (-1)^{sj} \sum_{t=0}^{\min(r+j-i-s,n-i)}
(-1)^t e_t^{(n)} h_{r+j-i-s-t}^{(n)} \otimes \dot e_s^{(\ell)}.
\end{equation}
Apart from the leading sign which is irrelevant for the problem in hand, 
this does not depend on $\kappa$, so we may as well assume
from now on that $\kappa = \mathbf{0}=(0,\dots,0)$.
If $n > \ell$, we take $r=\ell,j=1$ and $i=\ell+1$, in which case 
the summations collapse and we deduce that $f_{\ell+1,1;\ell}^{(\mathbf{0})} =1 \in J$.
So $J = B$ as required in this case.

Now assume that $n \leq \ell$ and set $n' := \ell-n$.
It remains to show that 
the elements 
\begin{align*}F &:= \left\{f_{i,j;r}^{(\mathbf{0})}\:\Big|\:1 \leq i,j \leq n, r \geq \ell\right\},&
G &:= \left\{\sum_{s=0}^r (-1)^s h_{r-s}^{(n)} \otimes \dot e_s^{(\ell)}\:\Bigg|\:r > n'\right\}
\end{align*}
generate the same two-sided ideal of $B$.
We switch the summations in \cref{thenastything} to deduce that
$$
f_{i,j;r}^{(\mathbf{0})}=
\sum_{t=0}^{\min(r+j-i,n-i)}
(-1)^t e_t^{(n)}
\left(\sum_{s=0}^{\min(r,r+j-i-t)}
(-1)^{sj} 
 h_{r+j-i-t-s}^{(n)} \otimes \dot e_s^{(\ell)}\right).
 $$
 We have that $r+j-i \geq \ell + 1 - i > n-i$ so the first summation is over
 $0 \leq t \leq n-i$.
 Since $\dot e_s^{(\ell)} = 0$ for $s > \ell$
 we can change the second summation so that it 
 is over $0 \leq s \leq r+j-i-t$.
 Taking $i=n$ and $j=1$ gives us the elements
 $\sum_{s=0}^{r-n+1} (-1)^{s} h_{r-n+1-s}^{(n)} \otimes \dot e_s^{(\ell)}$
 for all $r \geq \ell$. Since we have all $r$ so that $r-n+1 > n'$,
these already give us all of the elements of $G$, demonstrating one containment.
 It remains to show that all $f_{i,j;r}^{(\mathbf{0})}$
 for $1 \leq i,j \leq n$ and $r\geq \ell$ also lie in $\langle G \rangle$.
 In fact, given that $t \leq n-i$, we have that
$$
\sum_{s=0}^{r+j-i-t}
(-1)^{sj} 
 h_{r+j-i-t-s}^{(n)} \otimes \dot e_s^{(\ell)} \in \langle G \rangle
 $$
 because 
 $r+j-i-t \geq n' + j$,
the sign is $(-1)^s$ if $j \geq 1$ is odd or $1$ if $j \geq 2$ is
even,
and we know all of these elements lie in $\langle G \rangle$ by \cref{face}.
 \end{proof}

\section{The 2-category 
\texorpdfstring{$\OGBim_\ell$}{}
of odd Grassmannian bimodules}

Throughout the section, $\ell$ is fixed.
We will work always over the ground ring $R_\ell$, 
but note that everything in this section 
also makes sense 
on base change from $R_\ell$ to any graded
supercommutative $R_\ell$-superalgebra, including the most important case when the ground ring is the field $\k$.
For $0 \leq n \leq \ell$, we denote the element of $\EOH_n^\ell$
formerly denoted
by $a \hatotimes 1$ simply by $\bar a\:\:(a \in \OSym_n)$, and identify
$R_\ell$ with a subalgebra of $\EOH_n^\ell$ via the embedding
$R_\ell\hookrightarrow Z(\EOH_n^\ell), \dot c
\mapsto1\hatotimes \dot c\:(c \in \OSym_\ell)$. 

Suppose that $n,d,n' \geq 0$ with $n+d+n' =\ell$ and 
$\alpha \in \Comp(k,d)$.
The cases $\alpha = (d)$ and $\alpha = (1^d)$ 
will be particularly important. 
Let
\begin{align}\label{vbim}
\tV_{n;\alpha}^\ell &:= \OSym_{(n,\alpha_1,\dots,\alpha_k,n')} \otimes_{\OSym_\ell} R_\ell,&
U_{\alpha;n}^\ell&:= 
R_\ell \otimes_{\OSym_\ell} \OSym_{(n',\alpha_1,\dots,\alpha_k,n)}.
\end{align}
These are graded $R_\ell$-supermodules, with the action of $R_\ell$ on
$\tV_{n;\alpha}^\ell$ coming from the natural right action
and the action of $R_\ell$
on $U_{\alpha;n}^\ell$ coming from the natural left action.
We refer to $\tV^\ell_{n;\alpha}$ and $U^\ell_{\alpha;n}$ as {\em odd Grassmannian bimodules}.
According to the following lemma, they are graded superbimodules over equivariant odd Grassmannian cohomology algebras.

\begin{lemma}\label{lemma0}
Let $\ell=n+d+n'$ and $\alpha \in \Comp(k,d)$ be fixed as above.
\begin{enumerate}
\item
There is a unique way to make $\tV^\ell_{n;\alpha}$ into a
graded $\big(\EOH^\ell_n,\EOH_{n+d}^\ell\big)$-superbimodule
so that the left action of $\bar a\:\:(a \in \OSym_n)$
is defined by $\bar a (b \otimes 1) := ab  \otimes 1$
for $b \in \OSym_{(n,\alpha_1,\dots,\alpha_k,n')}$, 
and the right action of
$\bar a\:\:(a \in \OSym_{n+d})$
is defined by $(b \otimes 1) \bar a := ba \otimes 1$ 
for $b \in \OSym_{(n,\alpha_1,\dots,\alpha_k)}$.
Moreover:
\begin{enumerate}
\item
$\tV^\ell_{n;\alpha}$
has basis $\Big\{\smiley_{n+d}\big(p_w^{(n+d)}\big)\otimes 1\:\Big|\:w \in [\S_{n+d} / \S_{(\alpha_k,\dots,\alpha_1,n)}]_{\min}\Big\}$
as a right $\EOH_{n+d}^\ell$-supermodule;
\item
$\tV^\ell_{n;\alpha}$ has basis
$\Big\{\sig_{n}\big(p_w^{(n'+d)}\big)\otimes 1\:\Big|\:w \in [\S_{n'+d} / \S_{(\alpha_1,\dots,\alpha_k,n')}]_{\min}\Big\}$
as a left $\EOH^\ell_{n}$-supermodule.
\end{enumerate}
\item
There is a unique way to make $U^\ell_{\alpha;n}$ into a graded
$\big(\EOH_{n+d}^\ell,\EOH_{n}^\ell\big)$-superbimodule
so that the right action of $\bar a\:\:(a \in \OSym_{n})$
is defined by $(1 \otimes b) \bar a := (-1)^{(n'+d)\parity(a)} 1 \otimes b \sig_{n'+d}(a)$
for $b \in \OSym_{(n',\alpha_1,\dots,\alpha_k,n)}$, 
and the left action of $\bar a\:\:(a \in \OSym_{n+d})$ is defined by
$\bar a (1 \otimes \sig_{n'}(b)) := (-1)^{n'\parity(a)} 1\otimes \sig_{n'}(ab)$
for $b \in \OSym_{(\alpha_1,\dots,\alpha_k,n)}$.
Moreover:
\begin{enumerate}
\item
$U_{\alpha;n}^\ell$ 
has basis
 $\Big\{1\otimes \smiley_{n'+d}\big(p_w^{(n'+d)}\big)^*
\:\Big|\:w \in [\S_{n'+d} / \S_{(\alpha_k,\dots,\alpha_1,n')}]_{\min}\Big\}$
as a right $\EOH_{n}^\ell$-supermodule;
\item
$U_{\alpha;n}^\ell$ 
has basis
$\Big\{1\otimes\sig_{n'}\big(p_w^{(n+d)}\big)^*\:\Big|\:w \in [\S_{n+d} / \S_{(\alpha_1,\dots,\alpha_k,n)}]_{\min}\Big\}$
as a left $\EOH^\ell_{n+d}$-supermodule.
\end{enumerate}
\end{enumerate}
\end{lemma}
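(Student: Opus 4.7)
The plan is to derive both parts of the lemma from the base case $k=0$ (essentially \cref{EOH}) combined with the decomposition results in \cref{frontandback} for the algebras $\OSym_\alpha$.

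For part~(1), I begin by observing that the algebra $\OSym_{(n,\alpha_1,\dots,\alpha_k,n')}$ carries commuting actions by left multiplication by its subalgebra $\OSym_n$ and right multiplication by its subalgebra $\OSym_{n+d}$, via the containments $\OSym_n \subseteq \OSym_{(n,d+n')}\subseteq \OSym_{(n,\alpha,n')}$ and $\OSym_{n+d}\subseteq \OSym_{(n+d,n')} \subseteq \OSym_{(n,\alpha,n')}$. Both commute with right multiplication by $\OSym_\ell \subseteq \OSym_{(n+d,n')}$, so they descend to $\tV^\ell_{n;\alpha}$ after tensoring over $\OSym_\ell$ with $R_\ell$. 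I must then verify that each extends to an action of the corresponding equivariant odd Grassmannian cohomology algebra: the defining relation $\sum_{s=0}^r (-1)^s h_{r-s}^{(n)} \hatotimes \dot e_s^{(\ell)}$ (for $r > d+n'$) applied to $b \otimes 1$ produces an expression which, via \cref{notsoeasypeasy}(1) together with careful tracking of how $e_s^{(\ell)}$ commutes past $b$ modulo the generators of $I_\ell$, reduces to $(-1)^r b\cdot\sig_n\bigl(e_r^{(d+n')}\bigr) \otimes 1 = 0$; the corresponding check for the right $\EOH_{n+d}^\ell$-action is handled similarly using \cref{notsoeasypeasy}(2).

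The two basis statements then follow by applying \cref{frontandback} to the composition $(n,\alpha_1,\dots,\alpha_k,n')$ of $\ell$. Specifically, \cref{frontandback}(2) gives $\OSym_{(n,\alpha,n')} = \bigoplus_w \smiley_{n+d}\bigl(p_w^{(n+d)}\bigr)\cdot \OSym_{(n+d,n')}$ as a free right $\OSym_{(n+d,n')}$-supermodule; tensoring over $\OSym_\ell$ with $R_\ell$ and invoking the identification $\OSym_{(n+d,n')} \otimes_{\OSym_\ell} R_\ell \cong \EOH_{n+d}^\ell$ from the top map of the commuting square in \cref{EOH} yields basis~(a) together with the stated right $\EOH_{n+d}^\ell$-action formula in one stroke. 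Symmetrically, \cref{frontandback}(1) applied together with \cref{EOH} (with ``$n'$'' there replaced by $d+n'$) provides a decomposition $\tV^\ell_{n;\alpha} = \bigoplus_w \sig_n\bigl(p_w^{(n'+d)}\bigr) \cdot \EOH_n^\ell$ as a right $\EOH_n^\ell$-module; since the basis vectors $\sig_n\bigl(p_w^{(n'+d)}\bigr)$ involve only the variables $x_{n+1},\dots,x_\ell$, they supercommute with every element of $\OSym_n$ inside $\OPol_\ell$, and this sign-conversion upgrades the right module structure to the desired left $\EOH_n^\ell$-module structure with basis~(b). The compatibility of the left and right actions as a bimodule is then automatic from associativity of multiplication in $\OSym_{(n,\alpha,n')}$.

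Part~(2) is proved by a completely parallel argument, using the reversed composition $(n',\alpha_k,\dots,\alpha_1,n)$ of $\ell$ in place of $(n,\alpha_1,\dots,\alpha_k,n')$; the signs $(-1)^{(n'+d)\parity(a)}$ and $(-1)^{n'\parity(a)}$ arise from passing elements through the shift operators $\sig_{n'+d}$ and $\sig_{n'}$, and alternatively (2) can be transferred from (1) along the algebra anti-involution $*\circ\smiley_\ell$ of $\OPol_\ell$, which interchanges the two compositions. The main obstacle is the bookkeeping of signs: verifying that the cyclotomic relations of $\EOH_{n+d}^\ell$ act as zero on $\tV^\ell_{n;\alpha}$ on the right requires carefully tracking how $\dot e_s^{(\ell)}$ commutes past an arbitrary $b \in \OSym_{(n,\alpha,n')}$ modulo the generators of $I_\ell$, and the conversion of the right $\EOH_n^\ell$-module structure from \cref{frontandback}(1) into the stated left $\EOH_n^\ell$-module structure must be carried out with consistent sign conventions across all summands.
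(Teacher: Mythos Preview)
Your second paragraph is essentially the paper's proof and is correct: decompose via \cref{frontandback}, identify each summand with a copy of $\EOH^\ell_{n+d}$ (resp.\ $\EOH^\ell_n$) through \cref{EOH}(1), and transport the action. This gives well-definedness of the $\EOH$-actions and the bases simultaneously. Part~(2) via $*\circ\smiley_\ell$ is also how the paper handles it.

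The gap is in your first paragraph. The direct verification you sketch there does not work: you cannot commute $e_s^{(\ell)}$ past an arbitrary $b\in\OSym_{(n,\alpha,n')}$ ``modulo the generators of $I_\ell$''. The paper explicitly warns, just after \cref{likehere}, that for $a\in\OSym_\alpha$ and $c\in\OSym_\ell$ one has $ac\otimes 1 = a\otimes\dot c$ but this is in general \emph{not} equal to $ca\otimes 1$; in other words, $\OSym_\ell$ does not become supercentral in $\OSym_\alpha\otimes_{\OSym_\ell}R_\ell$. So the expression $\sum_s(-1)^{s+s\parity(b)}h_{r-s}^{(n)}\,b\,e_s^{(\ell)}\otimes 1$ cannot be reduced to $\pm b\cdot\sig_n(e_r^{(d+n')})\otimes 1$ for general $b$, because $h_{r-s}^{(n)}$ and $b$ do not supercommute. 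The computation in the proof of \cref{EOH} only succeeds because there one first reduces to vectors $\sig_n(b')\otimes 1$ with $b'\in\OSym_{n'}$ (via \cref{frobenius}(2)), which do supercommute with $h^{(n)}_{r-s}$.

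The obstacle you flag at the end is therefore real, but the right response is not to overcome it --- it is to sidestep it. Your second paragraph already does this: the \cref{frontandback} decomposition lets you \emph{define} the $\EOH$-action by transport from the base case, so no direct relation check on arbitrary $b$ is ever needed. Drop the first paragraph and your proof matches the paper's.
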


\begin{proof}
(1)
By \cref{ordered}, $\OSym_{(n,\alpha_1,\dots,\alpha_k,n')}$
is generated by the elements of $\OSym_{(n,\alpha_1,\dots,\alpha_k)}$
as a right $\OSym_\ell$-supermodule.
Hence, $\tV^\ell_{n;\alpha}$ is generated as an $R_\ell$-supermodule
by elements of the form $b \otimes 1$
for $b \in \OSym_{(n,\alpha_1,\dots,\alpha_k)}$.
In view of this, provided that it is well defined, there is a unique
way to make $\tV^\ell_{n;\alpha}$ into a graded right $\EOH_{n+d}^\ell$-supermodule
such that $(b \otimes 1) \bar a = ba \otimes 1$
for all $b \in \OSym_{(n,\alpha_1,\dots,\alpha_k)}$ and $a \in \OSym_{n+d}$.
It is also clear that the 
right action of $\EOH_{n+d}^\ell$ defined in this way 
and the left action of $\EOH_{n}^\ell$ from the statement of the lemma commute with each other, again assuming that both actions are well defined.

To see that the right action is well defined,
we have that
$$
\tV^\ell_{n;\alpha} \simeq \OSym_{(n,\alpha_1,\dots,\alpha_k,n')} \otimes_{\OSym_{(n+d,n')}} \OSym_{(n+d,n')}\otimes_{\OSym_\ell} R_\ell.
$$
By \cref{frontandback}(2), we deduce that
$$
\tV^\ell_{n;\alpha} = \bigoplus_{w \in \big[\S_{n+d} / \S_{(\alpha_k,\dots,\alpha_1,n)}\big]_{\min}}
\smiley_{n+d}\big(p_w^{(n+d)}\big) \otimes \big(\OSym_{(n+d,n')}\otimes_{\OSym_\ell} R_\ell\big)
$$
with each summand being a
copy of $\OSym_{(n+d,n')} \otimes_{\OSym_\ell} R_\ell$
shifted in degree and parity. Hence,
each of these subspaces is isomorphic to
$\EOH_{n+d}^\ell$
via the isomorphism from \cref{EOH}(1).
The right action of $\EOH_{n+d}^\ell$ 
we are defining
is just the natural right action of $\EOH_{n+d}^\ell$
on itself transported through these isomorphisms.
So it is well defined.
We have also proved (1a).

For the left action,
we have that 
$$
\tV_{n;\alpha}^\ell \simeq
\OSym_{(n,\alpha_1,\dots,\alpha_k,n')} \otimes_{\OSym_{(n,n'+d)}}
\OSym_{(n,n'+d)} \otimes_{\OSym_\ell} R_\ell.
$$
By \cref{frontandback}(1), we deduce that
$$
\tV_{n;\alpha}^\ell = \bigoplus_{w \in \big[\S_{n'+d} / \S_{(\alpha_1,\dots,\alpha_k,n')}\big]_{\min}}
\sig_{n}\Big(p_w^{(n'+d)}\Big) \otimes \big(\OSym_{(n,n'+d)} \otimes_{\OSym_\ell} R_\ell\big)$$
with each summand being
a graded left $\OSym_{n} \otimes R_\ell$-submodule isomorphic to $\OSym_{(n,n'+d)}\otimes_{\OSym_\ell} R_\ell$
(shifted in parity and degree).
It remains to apply \cref{EOH}(1) to see that the left action of 
$\EOH_{n}^\ell$ is well defined. This also establishes (1b).

\vspace{2mm}
\noindent
(2)
This is similar to the proof of (1), using
the isomorphism from \cref{EOH}(2) in place of the one from \cref{EOH}(1), and the left supermodule analogs of
\cref{ordered,frontandback} obtained by
applying $\smiley_\ell \circ *$ to those assertions.
\end{proof}

More often than not, we will work with a degree- and parity-shifted version of $\tV_{n;\alpha}^\ell$
and, very occasionally, of $U_{\alpha;n}^\ell$. 
These have been chosen to ensure that the adjunctions in
\cref{cupsandcaps,secondadjunction} below are even of degree 0, and
to eliminate any additional shifts in the definition of the singular
Rouqiuer complex in the next section.
Recall that
$n\# d$ denotes $n + (n+1) + \cdots + (n+d-1)$.
For $\ell=n+d+n'$
and $\alpha \in \Comp(k,d)$ as before, we define
\begin{align}\label{evil}
V_{n;\alpha}^\ell &:= (\Pi Q^{-2})^{n\# d}
\tV_{n;\alpha}^\ell,&
\tU_{\alpha;n}^\ell &:= (\Pi Q^{-2})^{n'\# d}
U_{\alpha;n}^\ell,
\end{align}
recalling \cref{clarifyparify}.
We also refer to these as odd Grassmannian bimodules.
Of course, the bases for $\tV_{n;\alpha}^\ell$ and
$U_{\alpha;n}^\ell$ from \cref{lemma0} are also bases for
$V_{n;\alpha}^\ell$ and $\tU_{\alpha;n}^\ell$.

We proceed to develop the properties of odd Grassmannian 
bimodules in a systematic way.
Take $\ell=n+d+n'$ and $\alpha \in \Comp(k,d)$.
There
are even degree 0 isomorphisms of graded $R_\ell$-supermodules
\begin{align}
*:\tV^\ell_{n;\alpha}&\stackrel{\sim}{\rightarrow}U^\ell_{\alpha;n'}\qquad\text{and}
&*:V^\ell_{n;\alpha}&\stackrel{\sim}{\rightarrow}\tU^\ell_{\alpha;n'},&
a\otimes \dot c&\mapsto (-1)^{\parity(a)\parity(c)} \dot c  \otimes a^*,
\label{starry2}\\
*:U^\ell_{\alpha;n}&\stackrel{\sim}{\rightarrow}\tV^\ell_{n';\alpha}\,\qquad\text{and}
&*:\tU^\ell_{\alpha;n}&\stackrel{\sim}{\rightarrow} V^\ell_{n';\alpha},&
\dot c \otimes a &\mapsto  (-1)^{\parity(a)\parity(c)} a^* \otimes \dot c.
\label{starry1}
\end{align}
The first pair of these with the roles of $n$ and $n'$ switched
are two-sided inverses of the second pair.

\begin{lemma}\label{lemma0b}
Continue with $\ell=n+d+n'$ and $\alpha \in \Comp(k,d)$.
\begin{enumerate}
\item
The isomorphisms from \cref{starry2} 
satisfy
$$
\left(\bar a_1 v \bar a_2\right)^* = \psi_n^\ell(\bar a_1) v^* \psi_{n+d}^\ell(\bar a_2)
$$
for $a_1 \in \OSym_n$, $a_2 \in \OSym_{n+d}$ and $v \in
\tV^\ell_{n;\alpha}$ or $v\in V^\ell_{n;\alpha}$.
\item
The isomorphisms from \cref{starry1} 
satisfy
$$
\left(\bar a_1 u \bar a_2\right)^* = \big(\psi_{n'}^\ell\big)^{-1} (\bar a_1) u^* \big(\psi_{n'+d}^\ell\big)^{-1}(\bar a_2)
$$
for $a_1 \in \OSym_n$, $a_2 \in \OSym_{n+d}$ and $u \in
U^\ell_{\alpha;n}$ or
$u\in\tU^\ell_{\alpha;n}$.
\end{enumerate}
\end{lemma}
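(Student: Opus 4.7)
The plan is to prove part (1) directly, and then deduce part (2) from it by applying (1) with the roles of $n$ and $n'$ interchanged, substituting $v = u^*$, setting $\bar{a}_1 := \psi_{n'}^\ell(\bar{b}_1)$ and $\bar{a}_2 := \psi_{n'+d}^\ell(\bar{b}_2)$, and using that the two pairs of $*$-maps in \cref{starry2,starry1} are mutual inverses (easily checked from the formulas). For (1), by associativity it suffices to establish the two halves
\[
(\bar{a}_1 v)^* = \psi_n^\ell(\bar{a}_1)\,v^*, \qquad (v\,\bar{a}_2)^* = v^*\,\psi_{n+d}^\ell(\bar{a}_2)
\]
separately, and these are proved by symmetric arguments, so I only sketch the first.

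By $R_\ell$-bilinearity, reduce to $v = b\otimes 1$ with $b\in \OSym_{(n,\alpha_1,\dots,\alpha_k,n')}$, and then using the left $\EOH_n^\ell$-module basis from \cref{lemma0}(1)(b), further reduce to $v = \sig_n(p)\otimes 1$ where $p := p_w^{(n'+d)}\in \OSym_{(\alpha_1,\dots,\alpha_k,n')}$. This choice is strategic: on the $U_{\alpha;n'}^\ell$ side, $v^* = 1\otimes \sig_n(p^*)$ is now exactly of the form to which the explicit left-action formula of \cref{lemma0}(2) (applied with $n$ and $n'$ interchanged) directly applies. On the other side, by \cref{EOH}(4) every $a_1 \in \OSym_n$ decomposes as $a_1 = \sum_i (-1)^{n\parity(a_{1,i})}\sig_n(a_{1,i})^*\,c_{1,i}$ with $a_{1,i}\in \OSym_{n'+d}$ and $c_{1,i}\in \OSym_\ell$, whence $\psi_n^\ell(\bar{a}_1) = \sum_i \bar{a}_{1,i}\hatotimes \dot{c}_{1,i}$.

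With these reductions, I compute both sides on a single summand. The key structural fact is that elements of $\OSym_n\subset \OPol_\ell$ (polynomials in $x_1,\dots,x_n$) \emph{supercommute} in $\OPol_\ell$ with elements of $\sig_n(\OSym_{n'+d})$ (polynomials in $x_{n+1},\dots,x_\ell$), since distinct odd generators of $\OPol_\ell$ anticommute. Using this, the LHS $1\otimes (a_1\sig_n(p))^*$ simplifies to $1\otimes a_1^*\sig_n(p^*)$ (the two factors of $(-1)^{\parity(a_1)\parity(p)}$ from the anti-involution and from supercommutation cancel), and substituting the decomposition of $a_1^*$ yields $(-1)^{n\parity(a_{1,i})+\parity(a_{1,i})\parity(c_{1,i})}\,1\otimes c_{1,i}^*\sig_n(a_{1,i}\,p^*)$. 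The scalar factor $c_{1,i}^*\in \OSym_\ell$ is then slid across $\otimes_{\OSym_\ell}$ into $R_\ell$, using that the anti-involution $*$ descends to the identity on the supercommutative quotient $R_\ell$ (noted in the proof of \cref{EOH}(3)). The RHS is evaluated by applying the left-action formula of \cref{lemma0}(2) to $\bar{a}_{1,i}(1\otimes \sig_n(p^*))$ together with centrality of $\dot c_{1,i}$ in $\EOH_{n'+d}^\ell$. Both sides collapse to the common expression
\[
(-1)^{n\parity(a_{1,i}) + \parity(a_{1,i})\parity(c_{1,i})}\;\dot{c}_{1,i}\otimes \sig_n(a_{1,i}\,p^*).
\]

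The principal obstacle throughout is sign bookkeeping: the factor $(-1)^{n\parity(\cdot)}$ built into the description of $\psi_n^\ell$ in \cref{EOH}(4), the signs $(-1)^{n\parity(\cdot)}$ and $(-1)^{(n+d)\parity(\cdot)}$ in the bimodule action formulas of \cref{lemma0}(2), and the Koszul signs from the superalgebra anti-involution $*$ must all cancel consistently, and the proof hinges on the observation that two of the natural Koszul signs exactly annihilate because of supercommutativity between the subalgebras $\OSym_n$ and $\sig_n(\OSym_{n'+d})$ of $\OPol_\ell$. Once the single-summand identity is checked, $R_\ell$-linearity gives it for general $a_1\in \OSym_n$; the symmetric treatment of the right action, using the right-module basis $\{\smiley_{n+d}(p_w^{(n+d)})\otimes 1\}$ from \cref{lemma0}(1)(a) together with the analogous decomposition of $a_2\in \OSym_{n+d}$ via \cref{EOH}(4), completes the proof of (1).
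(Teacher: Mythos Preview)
Your proposal is correct and follows essentially the same approach as the paper: for (1) you reduce to generators of the form $v=\sig_n(b)\otimes 1$ (resp.\ $b\otimes 1$ with $b\in\OSym_{(n,\alpha_1,\dots,\alpha_k)}$), invoke the decomposition of $a_1$ (resp.\ $a_2$) from \cref{EOH}(4), exploit the supercommutativity of $\OSym_n$ with $\sig_n(\OSym_{n'+d})$ inside $\OPol_\ell$, and push $c_i^*$ across the tensor using $\dot c^*=\dot c$ in $R_\ell$; for (2) you deduce it from (1) by swapping $n\leftrightarrow n'$ and inverting. The paper's proof does exactly this, with the only cosmetic difference that it applies $*$ at the end of the LHS computation rather than at the start.
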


\begin{proof}
(1) 
It suffices to prove this for $v \in \tV_{n;\alpha}^\ell$, then the
identity for $v$ viewed instead as a vector in $V_{n;\alpha}^\ell$ 
follows as the same parity shifts are used to define
$V_{n;\alpha}^\ell$ 
from $\tV_{n;\alpha}^\ell$
as $\tU_{\alpha;n'}^\ell$ from $U_{\alpha;n'}^\ell$.
We first consider right actions. Take $a \in \OSym_{n+d}$.
By \cref{newiphone},
we have that 
$\psi_{n+d}^\ell(\bar a) = \sum_{i=1}^p \bar a_i \dot c_i$
where $a = \sum_{1=1}^p (-1)^{(n+d)\parity(a_i)}\sig_{n+d}(a_i)^* c_i$.
We saw in the proof of \cref{lemma0}(1) that $\tV^\ell_{n;\alpha}$ is spanned as an $R_\ell$-supermodule
by vectors of the form $b\otimes 1$ for 
$b \in \OSym_{(n,\alpha_1,\dots,\alpha_k)}$.
So we may assume that $v = b \otimes 1$ for such a $b$.
We have that
\begin{align*}
(v \bar a)^* &= 
(ba \otimes 1)^* = \left(\sum_{i=1}^p (-1)^{(n+d)\parity(a_i)} 
b \sig_{n+d}(a_i)^* c_i \otimes 1\right)^*
= 
\left(\sum_{i=1}^p (-1)^{(n+d+\parity(b))\parity(a_i)}\sig_{n+d}(a_i)^* b \otimes \dot c_i\right)^*\\
&=\sum_{i=1}^p (-1)^{(n+d)\parity(a_i)+(\parity(b)+\parity(a_i))\parity(c_i)} \dot c_i
\otimes b^* \sig_{n+d}(a_i) =
(1 \otimes b^*) \cdot \sum_{i=1}^p \bar a_i \dot c_i
=v^* \psi^\ell_{n+d}(\bar a).
\end{align*}
For left actions,
take $a \in \OSym_{n}$ with
$a = \sum_{i=1}^p (-1)^{n\parity(a_i)}
\sig_{n}(a_i)^* c_i$, 
so that $\psi^\ell_{n}(\bar a) = \sum_{i=1}^p \bar a_i \dot c_i$.
We may assume that $v =\sig_n(b)\otimes 1$
for $b \in \OSym_{(\alpha_1,\dots,\alpha_k,n')}$.
Then we have that
\begin{align*}
(\bar a v)^* &= 
(a \sig_n(b) \otimes 1)^*
= (-1)^{\parity(a)\parity(b)} (\sig_n(b) a \otimes 1)^*\\
&=
\left(
\sum_{i=1}^p (-1)^{(\parity(a_i)+\parity(c_i))\parity(b) + n \parity(a_i)}
\sig_n(b a_i^*) \otimes \dot c_i
\right)^*=
\sum_{i=1}^p (-1)^{\parity(a_i) (n+ \parity(c_i))}
\dot c_i \otimes \sig_n(a_i b^*)\\
&= \left(\sum_{i=1}^p a_i \dot c_i\right)
\cdot (1 \otimes \sig_n(b^*))
= \psi_n^\ell(\bar a) v^*.
\end{align*}

\vspace{1mm}
\noindent
(2)
By (1), we have that $\Big[\Big(\big(\phi_{n'}^\ell\big)^{-1}(\bar
a_1))\Big) u^* \Big(\big(\phi_{n'+d}^\ell\big)^{-1}(\bar a_2)\Big)\Big]^*
= \bar a_1 u \bar a_2$ for $a_1 \in \OSym_n, a_2 \in \OSym_{n+d}$ and
$u \in U_{\alpha;n}^\ell$ or $u \in \tU_{\alpha;n}^\ell$. Now apply $*$ to both sides.
\end{proof}

Assuming still that
$\ell = n+d+n'$ and $\alpha \in \Comp(k,d)$, we next introduce a
convenient shorthand for special 
elements of odd Grassmannian bimodules.
Recall that $\alpha^\reverse$ denotes the reversed composition
$(\alpha_k,\dots,\alpha_1)$,
and note that the involution $\smiley_d:\OSym_d\rightarrow \OSym_d$
interchanges the subalgebras
$\OSym_\alpha$ and $\OSym_{\alpha^\reverse}$. 
For $f \in \OSym_\alpha$ 
and $g \in \OSym_{\alpha^\reverse}$, we let
\begin{align}
\label{fancynotation1}
\tv_{n;\alpha}(f) &:= 
\sig_n(f) \otimes 1 \in \tV^\ell_{n;\alpha},
&
v_{n;\alpha}(f) &:= 
\sig_n(f) \otimes 1 \in V^\ell_{n;\alpha},\\
\label{fancynotation2}
u_{\alpha;n}(g) &:= (-1)^{n' \parity(g)} 1 \otimes \sig_{n'}\big(\smiley_d(g)\big) \in U_{\alpha;n}^\ell,
&\tu_{\alpha;n}(g) &:= (-1)^{n' \parity(g)} 1 \otimes \sig_{n'}\big(\smiley_d(g)\big) \in \tU_{\alpha;n}^\ell.
\end{align}
The vectors $\tv_{n;\alpha}(f)$ and $u_{\alpha;n}(g)$ are
of the same degrees and parities as $f$ and $g$, respectively.
The vector $v_{n;\alpha}(f)$ is equal to $\tv_{n;\alpha}(f)$ but
it is being viewed as an element of a different superbimodule---the 
left action of $\EOH_n^\ell$ is different due to the parity shift in \cref{evil}.
Also, $v_{n;\alpha}(f)$ is of degree $\deg(f) - 2(n\# d)$
and parity $\parity(f)+n\# d\pmod{2}$.
Similarly,
$\tu_{\alpha;n}(g)$ is equal to $u_{\alpha;n}(g)$ but with
a different left action of $\EOH_{n+d}^\ell$, and
$\tu_{\alpha;n}(g)$ is of degree $\deg(g)-2 (n'\# d)$ and parity
$\parity(g)+n'\#d\pmod{2}$.
The isomorphisms $*$ from \cref{starry2,starry1} satisfy
\begin{align}
\tv_{n;\alpha}(f)^* &= (-1)^{n \parity(f)}
                           u_{\alpha;n'}\big(\smiley_d(f)^*\big),&
v_{n;\alpha}(f)^* &= (-1)^{n \parity(f)} \tu_{\alpha;n'}\big(\smiley_d(f)^*\big),\label{reallystarry1}\\\label{reallystarry2}
u_{\alpha;n}(g)^* &=(-1)^{n'\parity(g)}\tv_{n';\alpha}
\big(\smiley_d(g)^*\big),
&\tu_{\alpha;n}(g)^* &=(-1)^{n'\parity(g)} v_{n';\alpha}
\big(\smiley_d(g)^*\big).
\end{align}
We point out also that the basis vectors in both of the bases constructed in \cref{lemma0}(1)
are
of the form $\tv_n(f)$ for $f \in \OSym_\alpha$.
So the vectors $\tv_n(f)$ (resp., $v_n(f)$) for all $f \in \OSym_\alpha$ 
generate $\tV_{n;\alpha}^\ell$ (resp., $V_{n;\alpha}^\ell$) either as a left
$\EOH_n^\ell$-supermodule or as a right $\EOH_{n+d}^\ell$-supermodule.
Similarly, the bases vectors in \cref{lemma0}(2) are of the form
$u_n(g)$
for $g \in \OSym_{\alpha^\rev}$. 
So the vectors $u_n(g)$ (resp., $\tu_n(g)$) for all $g \in
\OSym_{\alpha^\rev}$ 
generate
$U_{\alpha;n}^\ell$
(resp., $\tU_{\alpha;n}^\ell$) either as a left
$\EOH_{n+d}^\ell$-supermodule or as a right $\EOH_n^\ell$-supermodule.

\begin{lemma}\label{lemma1}
Let $\ell=n+d+d'+n'$, $\alpha\in \Comp(k,d)$
and $\alpha' \in \Comp(k',d')$.
\begin{enumerate}
\item
There are unique isomorphisms
of graded $\big(\EOH^\ell_n, \EOH^\ell_{n+d+d'}\big)$-superbimodules
\begin{align}
\tc_{\alpha,\alpha'}:\tV^\ell_{n;\alpha\sqcup\alpha'}
&\stackrel{\sim}{\rightarrow}
\tV^\ell_{n;\alpha} \otimes_{\EOH^\ell_{n+d}} \tV_{n+d;\alpha'}^\ell,\label{itchy}\\\notag
\tv_{n;\alpha\sqcup\alpha'}\big(f\sig_d(f')\big)&\mapsto
\tv_{n;\alpha}(f)\otimes \tv_{n+d;\alpha'}(f')\\\intertext{and}
\label{itchyandscratchy}
c_{\alpha,\alpha'}: V^\ell_{n;\alpha\sqcup\alpha'}
&\stackrel{\sim}{\rightarrow}
V^\ell_{n;\alpha} \otimes_{\EOH^\ell_{n+d}} V_{n+d;\alpha'}^\ell,\\\notag
v_{n;\alpha\sqcup\alpha'}\big(f\sig_d(f')\big)
&\mapsto
(-1)^{((n+d)\# d')\parity(f)} 
v_{n;\alpha}(f)\otimes v_{n+d;\alpha'}(f')
\end{align}
for $f \in \OSym_\alpha, f' \in \OSym_{\alpha'}$.
\item
There are unique isomorphisms
of 
graded $\big(\EOH^\ell_{n+d+d'}, \EOH^\ell_{n}\big)$-superbimodules
\begin{align}\label{scratchy}
b_{\alpha',\alpha}:
U^\ell_{\alpha'\sqcup\alpha;n}
&\stackrel{\sim}{\rightarrow}
U^\ell_{\alpha';n+d} \otimes_{\EOH^\ell_{n+d}} U^\ell_{\alpha;n}\\\notag
u_{\alpha'\sqcup\alpha;n}\big(\sig_d(f') f\big)&\mapsto (-1)^{d' \parity(f)} u_{\alpha';n+d}(f') \otimes u_{\alpha;n}(f)\\\intertext{and}\label{scratchyitchy}
\tb_{\alpha',\alpha}:
\tU^\ell_{\alpha'\sqcup\alpha;n}
&\stackrel{\sim}{\rightarrow}
\tU^\ell_{\alpha';n+d} \otimes_{\EOH^\ell_{n+d}} \tU^\ell_{\alpha;n}\\\notag
\tu_{\alpha'\sqcup\alpha;n}\big(\sig_d(f') f\big)&\mapsto
                                                                                           (-1)^{d'
                                                                                           \parity(f)+((n'+d')\#
                                                        d) \parity(f')}
                                                                                           \tu_{\alpha';n+d}(f')
                                                                                           \otimes
                                                                                           \tu_{\alpha;n}(f)
\end{align}
for 
$f \in \OSym_{\alpha^\reverse}, f' \in \OSym_{(\alpha')^\reverse}$.
\end{enumerate}
\end{lemma}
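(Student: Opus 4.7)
The plan is: construct the isomorphism $\tc_{\alpha,\alpha'}$ in (1) by reducing to a clean tensor-decomposition isomorphism at the level of the underlying odd symmetric polynomial algebras, derive $c_{\alpha,\alpha'}$ by adjusting for the parity-degree shifts in \cref{evil}, and obtain part (2) by applying the isomorphisms $*$ from \cref{starry2,starry1} together with \cref{lemma0b}.

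First I would construct $\tc_{\alpha,\alpha'}$. Using \cref{gymnight} iteratively, identify
\begin{align*}
\OSym_{(n,\alpha,n'+d')} &\simeq \OSym_n \otimes \OSym_\alpha \otimes \OSym_{n'+d'},\\
\OSym_{(n+d,\alpha',n')} &\simeq \OSym_{n+d} \otimes \OSym_{\alpha'} \otimes \OSym_{n'},\\
\OSym_{(n+d,n'+d')} &\simeq \OSym_{n+d} \otimes \OSym_{n'+d'},\\
\OSym_{(n,\alpha,\alpha',n')} &\simeq \OSym_n \otimes \OSym_\alpha \otimes \OSym_{\alpha'} \otimes \OSym_{n'}
\end{align*}
as graded superalgebras, compatibly with the natural embeddings. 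Under these identifications the multiplication map
\begin{equation*}
\mu:\OSym_{(n,\alpha,n'+d')} \otimes_{\OSym_{(n+d,n'+d')}} \OSym_{(n+d,\alpha',n')} \longrightarrow \OSym_{(n,\alpha,\alpha',n')}
\end{equation*}
becomes the obvious tensor-associativity isomorphism of $(\OSym_{(n,\alpha,n'+d')},\OSym_{(n+d,\alpha',n')})$-superbimodules. Tensoring $\mu$ with $R_\ell$ over $\OSym_\ell$ from the right, invoking the associativity of tensor products, and using the identification $\EOH_{n+d}^\ell \simeq \OSym_{(n+d,n'+d')}\otimes_{\OSym_\ell} R_\ell$ from \cref{EOH}(1), I obtain an isomorphism
\begin{equation*}
\tV^\ell_{n;\alpha} \otimes_{\EOH^\ell_{n+d}} \tV^\ell_{n+d;\alpha'}\stackrel{\sim}{\rightarrow} \tV^\ell_{n;\alpha\sqcup\alpha'}
\end{equation*}
of $(\EOH^\ell_n,\EOH^\ell_{n+d+d'})$-superbimodules whose inverse takes $\tv_{n;\alpha\sqcup\alpha'}(f\sig_d(f')) = \sig_n(f)\sig_{n+d}(f')\otimes 1$ to $\tv_{n;\alpha}(f) \otimes \tv_{n+d;\alpha'}(f')$; this inverse is $\tc_{\alpha,\alpha'}$.

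To deduce $c_{\alpha,\alpha'}$, I need only insert the parity-degree shifts from \cref{evil}. The source $V^\ell_{n;\alpha\sqcup\alpha'}$ carries the shift $(\Pi Q^{-2})^{n\#(d+d')}$, while the target carries $(\Pi Q^{-2})^{n\#d}$ on the first tensor factor and $(\Pi Q^{-2})^{(n+d)\#d'}$ on the second; these match via the identity $n\#(d+d') = n\#d + (n+d)\#d'$. Moving the outer shift $(\Pi Q^{-2})^{(n+d)\#d'}$ past an element $v_{n;\alpha}(f)$ of parity $\parity(f)+n\#d\pmod 2$ produces the sign $(-1)^{((n+d)\#d')\parity(f)}$ via \cref{clarifyparify}, yielding the formula in \cref{itchyandscratchy}.

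Part (2) follows by applying $*$ from \cref{starry2} to $c_{\alpha,\alpha'}$ (with $n$ and $n'$ interchanged). The relation $(xy)^*=(-1)^{\parity(x)\parity(y)}y^*x^*$ reverses the tensor order, \cref{lemma0b} matches the bimodule actions through $\psi^\ell$-twists on both sides, and \cref{reallystarry1,reallystarry2} translate $v_{n;\alpha}(f)$ and $v_{n+d;\alpha'}(f')$ to their $\tu$-counterparts with appropriate signs; collecting these contributions gives the sign $(-1)^{d'\parity(f)+((n'+d')\#d)\parity(f')}$ in \cref{scratchyitchy}. The isomorphism $b_{\alpha',\alpha}$ is obtained in the same way from $\tc_{\alpha,\alpha'}$. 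The main obstacle will be the careful bookkeeping of signs for $\tb_{\alpha',\alpha}$: the anti-involution sign, the parity shifts of \cref{evil}, and the Koszul sign from reordering tensor factors all contribute, and the combinatorial factor $(n'+d')\#d$ in particular arises from the cumulative shift conventions on the $\tU$-side, so verifying the final sign consistency is the delicate step.
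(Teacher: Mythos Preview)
Your overall strategy for part (1)—building $\tc_{\alpha,\alpha'}$ from the multiplication isomorphism $\mu$ at the $\OSym$-level, then descending to $R_\ell$—is a reasonable alternative to the paper's route, but the step ``invoking the associativity of tensor products, and using the identification $\EOH_{n+d}^\ell \simeq \OSym_{(n+d,n'+d')}\otimes_{\OSym_\ell} R_\ell$'' does not go through as written. The identification from \cref{EOH}(1) is only an isomorphism of $(\OSym_{n+d},R_\ell)$-bimodules, \emph{not} of algebras; in particular the left $\EOH^\ell_{n+d}$-action on $\tV^\ell_{n+d;\alpha'}$ (where $\dot c\in R_\ell$ acts through the right $\OSym_\ell$-structure) does not agree with the left $\OSym_{(n+d,n'+d')}$-action by left multiplication, since $c\in\OSym_\ell$ and elements of $\OSym_{(n+d,\alpha',n')}$ need not supercommute. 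So you cannot directly trade $\otimes_{\OSym_{(n+d,n'+d')}}$ for $\otimes_{\EOH^\ell_{n+d}}$. The gap is fillable: using \cref{ordered} one has $\OSym_{(n,\alpha,n'+d')}\simeq \OSym_{(n,\alpha)}\otimes_{\OSym_{n+d}}\OSym_{(n+d,n'+d')}$ as right $\OSym_{(n+d,n'+d')}$-modules, which reduces both tensor products to $\OSym_{(n,\alpha)}\otimes_{\OSym_{n+d}}\tV^\ell_{n+d;\alpha'}$, and on $\OSym_{n+d}$ the two actions \emph{do} coincide. But this extra reduction is the substance of the argument and should be made explicit.

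By contrast, the paper avoids this issue entirely: it writes down the $\k$-linear map $\OSym_\gamma\to \tV^\ell_{n;\alpha}\otimes_{\EOH^\ell_{n+d}}\tV^\ell_{n+d;\alpha'}$ on the tensor decomposition, checks right $\OSym_\ell$-linearity, descends to $R_\ell$, and then proves bijectivity by a graded-rank comparison using \cref{lemma0}(1), \cref{zalatoris} and \cref{multiidentity}; it also verifies the $(\EOH^\ell_n,\EOH^\ell_{n+d+d'})$-bimodule property by an explicit calculation. Your derivation of $c_{\alpha,\alpha'}$ is in the same spirit as the paper's commuting-square argument, but your sign justification uses the parity of $v_{n;\alpha}(f)$ (which is $\parity(f)+n\#d$) where the relevant parity is that of $\tv_{n;\alpha}(f)$ (which is $\parity(f)$); the Koszul sign arises from the odd map $\id:\tV^\ell_{n+d;\alpha'}\to V^\ell_{n+d;\alpha'}$ passing $\tv_{n;\alpha}(f)$, not $v_{n;\alpha}(f)$. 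For part (2), your reduction via $*$ and \cref{lemma0b} matches the paper's approach; the paper obtains $b_{\alpha',\alpha}$ from $\tc_{\alpha',\alpha}$ and then $\tb_{\alpha',\alpha}$ from $b_{\alpha',\alpha}$ by the same parity-shift bookkeeping used for $c_{\alpha,\alpha'}$, with the explicit sign verification carried out rather than left implicit.
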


\begin{proof}
In each case, the uniqueness is clear since the vectors specified are
superbimodule generators.

\vspace{1mm}
\noindent
(1)
Let $\gamma := (n,\alpha_1,\dots,\alpha_k,\alpha'_1,\dots,\alpha'_{k'},n')$.
Consider the surjective $\k$-linear map
$$
\OSym_{\gamma}
\rightarrow \tV_{n;\alpha}^\ell \otimes_{\EOH_{n+d}^\ell} 
\tV^\ell_{n+d;\alpha'},
\qquad
b_1 \sig_{n+d}(b_2) \mapsto (b_1 \otimes 1) \otimes (\sig_{n+d}(b_2) \otimes 1)
$$
for
$b_1 \in \OSym_{(n,\alpha_1,\dots,\alpha_k)}, b_2 \in \OSym_{(\alpha'_1,\dots,\alpha'_{k'},n')}$.
It is easy to check that it is a right $\OSym_\ell$-supermodule homomorphism,
so it induces a surjective $R_\ell$-supermodule homomorphism
$\tc_{\alpha,\alpha'}:\OSym_\gamma\otimes_{\OSym_\ell} R_\ell
\rightarrow \tV_{n;\alpha}^\ell \otimes_{\EOH^\ell_{n+d}} \tV_{n+d;\alpha'}^\ell$.
This is the map \cref{itchy}.

The domain and range of $\tc_{\alpha,\alpha'}$ are free graded $R_\ell$-supermodules, so 
to see that $\tc_{\alpha,\alpha'}$ is an isomorphism it suffices to check that they have the same graded ranks.
By \cref{lemma0}(1a) and \cref{multiidentity}, 
$\tV_{n;\alpha}^\ell$ is a free graded right $\EOH_{n+d}^\ell$-supermodule
of graded rank
$q^{N(\alpha)+nd}
\sqbinom{n+d}{(n,\alpha_1,\dots,\alpha_k)}_{q,\pi}$.
By \cref{lemma0}(1b) and \cref{multiidentity},
$\tV_{n+d;\alpha'}^\ell$ is a free graded left $\EOH_{n+d}^\ell$-supermodule
of graded rank
$q^{N(\alpha')+n'd'}
\sqbinom{n'+d'}{(\alpha_1',\dots,\alpha'_{k'},n')}_{q,\pi}$.
By \cref{zalatoris}, $\EOH_{n+d}^\ell$ is a free graded $R_\ell$-supermodule
of graded rank $q^{(n+d)(n'+d')} \sqbinom{\ell}{n+d}_{q,\pi}$.
Multiplying these together and using the identity
$$
N(\gamma) = N(\alpha)+N(\alpha')+nd+n'd'+(n+d)(n'+d')
$$
gives that
$\tV_{n;\alpha}^\ell \otimes_{\EOH_{n+d}^\ell} \tV^\ell_{n+d;\alpha'}$ is a free graded $R_\ell$-supermodule
of graded rank
$q^{N(\gamma)} \sqbinom{\ell}{\gamma}$.
This is also the graded rank of
$\tV^\ell_{n;\alpha\sqcup\alpha'}$ as 
a free graded $R_\ell$-supermodule, as follows from \cref{ordered} and \cref{maybeuseful}.

We still need to show that $\tc_{\alpha,\alpha'}$
is a graded $\big(\EOH_n^\ell,
\EOH_{n+d+d'}^\ell\big)$-supermodule homomorphism. 
We just go through the details for the right action whose definition is slightly more
complicated than the left action. We restrict to
considering just to vectors $b_1 \sig_{n+d}(b_2)\otimes 1 \in \tV_{n;\alpha\sqcup \alpha'}^\ell$
for $b_1 \in \OSym_{(n,\alpha_1,\dots,\alpha_{k})}$ and $b_2 \in \OSym_{(\alpha'_1,\dots,\alpha'_{k'})}$.
We can do this because these vectors generate $\tV_{n;\alpha\sqcup\alpha'}^\ell$ as an $R_\ell$-supermodule.
Then we take $a \in \OSym_{n+d+d'}$, write it as 
$a = \sum_{i=1}^p a_i' \sig_{n+d}(a_i'')$
for $a_i' \in \OSym_{(n,\alpha_1,\dots,\alpha_k)}$ and $a_i'' \in \OSym_{(\alpha'_1,\dots,\alpha'_{k'})}$,
and calculate:
\begin{align*}
\tc_{\alpha,\alpha'}\left(
(b_1 \sig_{n+d}(b_2) \otimes 1) \bar a
\right)
&=\sum_{i=1}^p
\tc_{\alpha,\alpha'}\left(b_1 \sig_{n+d}(b_2) a'_i\sig_{n+d}(a_i'')\otimes 1\right)\\
&=\sum_{i=1}^p(-1)^{\parity(b_2)\parity(a_i')}\tc_{\alpha,\alpha'}\left(b_1 a_i'  \sig_{n+d}(b_2a_i''))\otimes 1\right)\\
&=\sum_{i=1}^p
(-1)^{\parity(b_2)\parity(a_i')}(b_1 a_i' \otimes 1) \otimes (  \sig_{n+d}(b_2)\sig_{n+d}(a_i'')\otimes 1),\\
\tc_{\alpha,\alpha'}\left(b_1 \sig_{n+d}(b_2) \otimes 1\right)
\bar a &=
((b_1 \otimes 1) \otimes (\sig_{n+d}(b_2) \otimes 1)) 
\bar a
= \sum_{i=1}^p (b_1 \otimes 1) \otimes (\sig_{n+d}(b_2) a_i' \sig_{n+d}(a_i'') \otimes 1)\\
&= \sum_{i=1}^p (-1)^{\parity(b_2)\parity(a_i')}
(b_1 \otimes 1) \otimes \bar a_i' (\sig_{n+d}(b_2) \sig_{n+d}(a_i'') \otimes 1)\\
&= \sum_{i=1}^p(-1)^{\parity(b_2)\parity(a_i')}
(b_1 \otimes 1) \bar a_i'\otimes (\sig_{n+d}(b_2) \sig_{n+d}(a_i'') \otimes 1)\\
&= \sum_{i=1}^p(-1)^{\parity(b_2)\parity(a_i')}
(b_1 a_i' \otimes 1)\otimes (\sig_{n+d}(b_2) \sig_{n+d}(a_i'') \otimes 1).
\end{align*}
These are equal so $\tc_{\alpha,\alpha'}$ is a right $\EOH_{n+d+d'}^\ell$-supermodule homomorphism.
We have now established the existence of \cref{itchy}. 

To obtain \cref{itchyandscratchy},
we define a superbimodule isomorphism $c_{\alpha,\alpha'}$
so that the following diagram commutes:
$$
\begin{tikzcd}
V^\ell_{n;\alpha\sqcup\alpha'}\arrow[d,"\id" left]
\arrow[r,"c_{\alpha,\alpha'}" above]&V^\ell_{n;\alpha}\otimes_{\EOH_{n+d}^\ell}
V^\ell_{n+d;\alpha'}\\
\tV^\ell_{n;\alpha\sqcup\alpha'}\arrow[r,"\tc_{\alpha,\alpha'}" below]&
\tV^\ell_{n;\alpha}\otimes_{\EOH_{n+d}^\ell}
\tV^\ell_{n+d;\alpha'}
\arrow[u,"\id \otimes \id" right]\end{tikzcd}
$$
The vertical maps here arise from identity maps on the underlying 
vector spaces, which are graded superbimodule isomorphisms but they are not 
even of degree 0. 
It remains to compute $c_{\alpha,\alpha'}\big(v_{n;\alpha\sqcup\alpha'}(f\sig_d(f'))\big)$
explicitly by tracing it around the other three sides of the square
to see that it is exactly the map $c_{\alpha,\alpha'}$ from \cref{itchyandscratchy}.
The complicated sign arises because the right hand map takes
$\tv_{n;\alpha}(f) \otimes \tv_{n+d;\alpha'}(f')$
to $(-1)^{((n+d)\#d')\parity(f)} v_{n;\alpha}(f) \otimes v_{n+d;\alpha'}(f')$
since $\id:\tV_{n+d;\alpha'}^\ell \rightarrow V_{n+d;\alpha'}^\ell$
is of parity $(n+d)\# d'$.

\vspace{1mm}
\noindent
(2)
Writing $*$ for the appropriate one of the isomorphisms
from \cref{starry1,starry2},
we define $b_{\alpha',\alpha}$ to be
the composition $(*\otimes *) \circ \tc_{\alpha',\alpha}\circ *$.
The appropriate diagram is
$$
\begin{tikzcd}
U_{\alpha'\sqcup\alpha;n}^\ell\arrow[r,"b_{\alpha',\alpha}" above]
\arrow[d,"*" left]&
U_{\alpha';n+d}^\ell \otimes_{\EOH_{n+d}^\ell}
U_{\alpha;n}^\ell\\
\tV_{n';\alpha'\sqcup\alpha}^\ell
\arrow[r,"\tc_{\alpha',\alpha}" below]
&\tV_{n';\alpha'}^\ell \otimes_{\EOH_{n'+d'}^\ell} \tV_{n'+d';\alpha}^\ell
\arrow[u,"* \otimes *"]
\end{tikzcd}
$$
The resulting isomorphism $b_{\alpha',\alpha}$ is a 
graded $\big(\EOH^\ell_{n+d+d'}, \EOH^\ell_{n}\big)$-superbimodule homomorphism
thanks to \cref{lemma0b}(1).
It remains to compute $b_{\alpha',\alpha}\big(\sig_d(f') f\big)$
for $f \in \OSym_{\alpha^\reverse}, f' \in \OSym_{(\alpha')^\reverse}$.
Note that $\sig_d(f') f = (-1)^{\parity(f)\parity(f')}
f \sig_d(f') \in \OSym_{(\alpha'\sqcup\alpha)^\reverse}$.
Using \cref{reallystarry1,reallystarry2}, we have that
\begin{align*}
\tc_{\alpha',\alpha}\left(u_{\alpha'\sqcup\alpha;n}\big(\sig_d(f') f\big)^*\right)^{*\otimes *}
&=
(-1)^{n' \parity(f)+n'\parity(f') + \parity(f)\parity(f')}
\tc_{\alpha',\alpha}\left(\tv_{n';\alpha'\sqcup\alpha}\Big(\smiley_{d+d'}\big(
f^* \sig_d((f')^*)\big)\Big)\right)^{* \otimes *}\\
&=
(-1)^{n' \parity(f)+n'\parity(f')}
\tc_{\alpha',\alpha}\left(\tv_{n';\alpha'\sqcup\alpha}\Big(\smiley_{d'}(f')^*
\sig_{d'}\big(\smiley_{d}(f)^*\big)\Big)\right)^{* \otimes *}\\
&=
(-1)^{n' \parity(f)+n'\parity(f')}
\left(
\tv_{n';\alpha'}\big(\smiley_{d'}(f')^*\big)\otimes 
\tv_{n'+d';\alpha}\big(\smiley_d(f)^*\big)
\right)^{*\otimes *}\\
&=
(-1)^{d' \parity(f)} u_{\alpha';n+d}(f')\otimes u_{\alpha;n}(f),
\end{align*}
which is the formula for
$b_{\alpha',\alpha}\Big(u_{\alpha'\sqcup\alpha;n}\big(\sig_d(f') f\big)\Big)$
from \cref{scratchy}. This establishes the existence of
$b_{\alpha',\alpha}$.
Finally, the existence of \cref{scratchyitchy} can now be deduced 
in the same way that \cref{itchyandscratchy} was
deduced from \cref{itchy} in the proof of (1).
\end{proof}

The next lemma gives ``Schur bases" for $V_{n;(d)}^\ell$
and $U_{(d);n}^\ell$, and for various specializations
in which we are viewing $\k$ as a graded supermodule 
concentrated in degree 0 and even parity in the unique possible way.
Similar statements hold for $\tV_{n;(d)}^\ell$ and
$\tU_{(d);n}^\ell$, but these will not be needed.

\begin{lemma}\label{lemma01}
Suppose that $\ell =n+d+n'$.
\begin{enumerate}
\item
The  supermodule $V_{n;(d)}^\ell$ has basis
$\Big\{v_{n;(d)}\big(s_\lambda^{(d)}\big)\:\Big|\:\lambda \in \GPar{d}{n'}\Big\}$ as a free left $\EOH_n^\ell$-supermodule, 
and basis $\Big\{v_{n;(d)}\big(\sigma_\lambda^{(d)}\big)\:\Big|\:
\lambda \in \GPar{d}{n}\Big\}$
as a free right $\EOH_{n+d}^\ell$-supermodule.
Hence, the vectors
$\Big\{v_{n;(d)}\big(\sigma_\lambda^{(d)}\big)\otimes 1\:\Big|\:\lambda \in \GPar{d}{n}\Big\}$ give a linear basis for
$V_{n;(d)}^\ell\otimes_{\EOH_{n+d}^\ell}\k$.
Moreover, for $\lambda \in \Par$ and any $f \in \OSym_d$,
we have that
\begin{equation}\label{trains}
v_{n;(d)}\big(f \sigma_\lambda^{(d)}\big)\otimes 1
=
(-1)^{\NE(\lambda)+|\lambda|(\parity(f)+n\#d)} \bar s^{(n)}_{\lambda^\transpose}
v_{n;(d)}(f) \otimes 1
\end{equation}
in $V_{n;(d)}^\ell \otimes_{\EOH_{n+d}^\ell} \k$.
In particular,
$v_{n;(d)}\big(\sigma_\lambda^{(d)}\big) \otimes 1 = 0$
unless $\lambda \in \GPar{d}{n}$.
\item
The supermodule $U_{(d);n}^\ell$ has basis
$\Big\{u_{(d);n}\big(s_\lambda^{(d)}\big)\:\Big|\:
\lambda \in \GPar{d}{n}\Big\}$
as a free left $\EOH_{n+d}^\ell$-supermodule,
and basis 
$\Big\{u_{(d);n}\big(\sigma_\lambda^{(d)}\big)\:\:\Big|\:\:
\lambda \in \GPar{d}{n'}
\Big\}$ 
as a free right $\EOH_{n}^\ell$-supermodule.
Hence,
the vectors
$\Big\{1 \otimes u_{(d);n}\big(s_\lambda^{(d)}\big)
\:\Big|\:\lambda \in \GPar{d}{n}\Big\}$ give a linear basis for
$\k\otimes_{\EOH_{n+d}^\ell} U_{(d);n}^\ell$.
Moreover, for $\lambda \in \Par$ and any $f \in \OSym_d$,
we have that
\begin{equation}\label{trains2}
1\otimes u_{(d);n}\big(s_\lambda^{(d)} f\big)
=
(-1)^{\NE(\lambda)+|\lambda|(\parity(f)+d)} 1\otimes 
u_{(d);n}(f) \bar s^{(n)}_{\lambda^\transpose}
\end{equation}
in $\k\otimes_{\EOH_{n+d}^\ell} U_{(d);n}^\ell$.
In particular,
$1\otimes u_{(d);n}\big(s_\lambda^{(d)}\big) = 0$
unless $\lambda \in \GPar{d}{n}$.
\end{enumerate}
\end{lemma}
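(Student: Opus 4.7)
The plan is to deduce the identities \cref{trains,trains2} from \cref{hard} by reducing the tensor products $V_{n;(d)}^\ell \otimes_{\EOH_{n+d}^\ell} \k$ and $\k \otimes_{\EOH_{n+d}^\ell} U_{(d);n}^\ell$ to the simpler settings $\OSym_{(n,d)} \otimes_{\OSym_{n+d}} \k$ and $\k \otimes_{\OSym_{n+d}} \OSym_{(d,n)}$ respectively, where \cref{hard} (or its $*$-dual) applies directly. Everything else reduces to bookkeeping of the Frobenius decompositions from \cref{frobenius,frontandback} and the associated signs.

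First I would establish the basis assertions. For the right $\EOH_{n+d}^\ell$-basis of $V_{n;(d)}^\ell$ in part (1), I would rewrite $\tV_{n;(d)}^\ell \cong \OSym_{(n,d,n')} \otimes_{\OSym_{(n+d,n')}} \EOH_{n+d}^\ell$ using \cref{EOH}(1), and apply \cref{frontandback}(2) which exhibits $\{\sig_n(\sigma_\lambda^{(d)}) : \lambda \in \GPar{d}{n}\}$ as a free right $\OSym_{(n+d,n')}$-basis for $\OSym_{(n,d,n')}$. For the left $\EOH_n^\ell$-basis, I would use the super-tensor decomposition $\OSym_{(n,d,n')} = \OSym_n \otimes \sig_n(\OSym_{(d,n')})$ inside $\OPol_\ell$ together with the Frobenius basis of $\OSym_{(d,n')}$ as a free right $\OSym_{d+n'}$-supermodule from \cref{frobenius}, noting that $\OSym_n$ and $\sig_n(\OSym_{d+n'})$ supercommute. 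Tensoring the right basis with $\k$ produces the claimed basis of $V_{n;(d)}^\ell \otimes_{\EOH_{n+d}^\ell} \k$ as an immediate consequence.

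The heart of the proof is the identity \cref{trains}. The crux is the chain of isomorphisms
\[
V_{n;(d)}^\ell \otimes_{\EOH_{n+d}^\ell} \k \;\cong\; \OSym_{(n,d,n')} \otimes_{\OSym_{(n+d,n')}} \k \;\cong\; \OSym_{(n,d)} \otimes_{\OSym_{n+d}} \k
\]
(modulo the uniform parity/degree shift $(\Pi Q^{-2})^{n\# d}$); the first step uses \cref{EOH}(1) together with the augmentation $R_\ell \twoheadrightarrow \k$, while the second uses the super-tensor decomposition $\OSym_{(n,d,n')} = \OSym_{(n,d)} \otimes \OSym_{n'}$ which cancels the common $\OSym_{n'}$ factor. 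This chain is $\EOH_n^\ell$-linear: $\OSym_n$ acts by left multiplication on both ends, and $R_\ell$ acts trivially via augmentation. Now \cref{hard} applied with $\ell = n+d$ gives $\sig_n(\sigma_\lambda^{(d)}) \otimes 1 = (-1)^{\NE(\lambda)} s_{\lambda^\transpose}^{(n)} \otimes 1$ in $\OSym_{(n,d)} \otimes_{\OSym_{n+d}} \k$. Left-multiplying by $\sig_n(f) \in \sig_n(\OSym_d)$ and supercommuting it past $s_{\lambda^\transpose}^{(n)}$ (which lives in a disjoint set of variables in $\OPol_\ell$, producing the sign $(-1)^{\parity(f)|\lambda|}$) converts the identity into one involving $f\sigma_\lambda^{(d)}$. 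Finally, passing from $\tV_{n;(d)}^\ell$ back to $V_{n;(d)}^\ell$ via \cref{clarifyparify} contributes the extra sign $(-1)^{(n\# d)|\lambda|}$, assembling into the total $(-1)^{\NE(\lambda) + |\lambda|(\parity(f) + n\# d)}$.

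The vanishing of $v_{n;(d)}(\sigma_\lambda^{(d)}) \otimes 1$ for $\lambda \notin \GPar{d}{n}$ is immediate from \cref{trains} with $f = 1$: the right-hand side vanishes since either $s_{\lambda^\transpose}^{(n)} = 0$ (when $\lambda_1 > n$) or $\sigma_\lambda^{(d)} = 0$ (when $\h(\lambda) > d$, by \cref{rangoon}). For part (2), I would mirror the arguments of part (1) using the tensor decomposition $\OSym_{(n',d,n)} = \OSym_{n'} \otimes \sig_{n'}(\OSym_{(d,n)})$, reducing $\k \otimes_{\EOH_{n+d}^\ell} U_{(d);n}^\ell \cong \k \otimes_{\OSym_{n+d}} \OSym_{(d,n)}$. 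The role of \cref{hard} is played by its $*$-dual identity $1 \otimes \sigma_\lambda^{(d)} = (-1)^{\NE(\lambda)} 1 \otimes \sig_d(s_{\lambda^\transpose}^{(n)})$ in the left quotient, which follows from \cref{hard} by applying $*$ together with the relation $\sigma_\lambda^* = (-1)^{dN(\lambda)+dE(\lambda)} s_\lambda$ from \cref{bears} and the symmetry $\NE(\lambda^\transpose) = \NE(\lambda)$. The main obstacle throughout is sign bookkeeping, particularly in part (2): the built-in twists $(-1)^{n'\parity(\cdot)}$ and $(-1)^{(n'+d)\parity(\cdot)}$ in the definition of $u_{(d);n}$ and of the left and right $\EOH$-actions on $U_{(d);n}^\ell$, the supercommutation signs from disjoint variables, and the behaviour of $\smiley_d$ on products $s_\lambda^{(d)} f$ all conspire, with the cross-term $2d|\lambda|$ silently vanishing modulo $2$, to yield precisely the claimed sign $(-1)^{\NE(\lambda) + |\lambda|(\parity(f) + d)}$.
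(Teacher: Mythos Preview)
Your proposal is correct and follows essentially the same route as the paper. The paper also reduces \cref{trains} to \cref{hard} applied in $\OSym_{(n,d)}\otimes_{\OSym_{n+d}}\k$, then multiplies by $\sig_n(f)$ and tracks the parity shift; for \cref{trains2} it likewise derives the $*$-dual identity $1\otimes\sigma_\lambda^{(d)}=(-1)^{\NE(\lambda)}1\otimes\sig_d(s_{\lambda^\transpose}^{(n)})$ in $\k\otimes_{\OSym_{n+d}}\OSym_{(d,n)}$ (starting from \cref{hard} in the $(d,n)$-setting rather than applying $*$ to the $(n,d)$-version, but this comes to the same thing) and then multiplies on the right by $\smiley_d(f)$. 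Two small differences: for the basis assertions the paper cites \cref{frontandback2} (the Schur-polynomial version) rather than \cref{frontandback}, and for the bases in part (2) the paper deduces them from part (1) by applying $*$ via \cref{lemma0b,reallystarry1,bears} rather than rerunning the argument directly---your direct approach also works.
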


\begin{proof}
(1)
The existence of the two families of 
Schur bases for $V_{n;(d)}^\ell$ 
follow in the same way as the bases in \cref{lemma0}(1)
were constructed, using \cref{frontandback2} in place of \cref{frontandback}.
To prove \cref{trains},
we have in $\OSym_{(n,d)} \otimes_{\OSym_{n+d}}\k$
that 
$\sig_n\big(\sigma_\lambda^{(d)}\big) \otimes 1 =
(-1)^{\NE(\lambda)} s_{\lambda^\transpose}^{(n)} \otimes 1$
thanks to \cref{hard}.
Multiplying this identity on the left by $\sig_n(f)$
for $f\in \OSym_d$
gives that
$$
\sig_n\big(f \sigma_\lambda^{(d)}\big) \otimes 1 =
(-1)^{\NE(\lambda)+|\lambda|\parity(f)} 
s_{\lambda^\transpose}^{(n)} \sig_n(f) \otimes 1.
$$
This implies \cref{trains}.

For the final assertion, it remains to observe that if $\lambda \notin \GPar{d}{n}$
then we either have that $\lambda_1 > n$, in which case
$s_{\lambda^\transpose}^{(n)} = 0$ by \cref{alreadybasis},
or $\h(\lambda) > d$, in which case
$\sigma_{\lambda}^{(d)} = 0$ by \cref{rangoon}.
Hence,
$\tv_{n;(d)}\big(\sigma_\lambda^{(d)}\big)\otimes 1 = 
\pm
\tv_{n;(d)}(1) \otimes 1 =
0$
for such $\lambda$.

\vspace{1mm}
\noindent
(2)
The existence of the two families of bases follows
by applying $*$ to the bases in (1),
using also \cref{lemma0b}(1), \cref{reallystarry1} and \cref{bears}.
To prove \cref{trains2} (which is {\em not} what one gets by applying
$*$ to \cref{trains}),
we start from the identity
$s_\lambda^{(d)} \otimes 1= (-1)^{\NE(\lambda)} 
\sig_d\big(\sigma_{\lambda^\transpose}^{(n)}\big) \otimes 1$
in $\OSym_{d,n} \otimes_{\OSym_{n+d}} \k$
from \cref{hard}.
Applying the isomorphism $*$
that is the right hand map of \cref{OHCD}
gives
$1 \otimes \sigma_\lambda^{(d)}= (-1)^{\NE(\lambda)} 
1 \otimes \sig_d\big(s_{\lambda^\transpose}^{(n)}\big)$
in $\k\otimes_{\OSym_{n+d}} \OSym_{d,n}$.
This we multiply on the right by $\smiley_d(f) \in \OSym_d$ to obtain
$1 \otimes \sigma_\lambda^{(d)} \smiley_d(f) = (-1)^{\NE(\lambda) + 
\parity(f)|\lambda|} 
1 \otimes \smiley_d(f) \sig_d\big(s_{\lambda^\transpose}^{(n)}\big)$.
This implies that
$$
1 \otimes \sig_{n'}\big(\sigma_\lambda^{(d)} \smiley_d(f)\big)
= (-1)^{\NE(\lambda)+\parity(f)|\lambda|}
1 \otimes \sig_{n'}(\smiley_d(f)) \sig_{n'+d}\big(s_{\lambda^\transpose}^{(n)}\big)
$$
in $\k \otimes_{\EOH_{n+d}^\ell} U^\ell_{(d);n}$.
Using the definition of the right action in \cref{lemma0}(2) and \cref{fancynotation2}, 
this implies \cref{trains2}.

The final assertion follows as in (1).
\end{proof}

From this point onwards, we will 
denote $\tV_{n;(1)}^\ell, V_{n;(1)}^\ell, U_{(1);n}^\ell$
and $\tU_{(1);n}^\ell$ by
$\tV_n^\ell, V_n^\ell,
U_n^\ell$ and $\tU_n^\ell$, respectively.
We denote the generators
$\tv_{n;(1)}(f), v_{n;(1)}(f), u_{(1);n}(g)$
and $\tu_{(1);n}(g)$ from \cref{fancynotation1,fancynotation2}
by $\tv_n(f), v_n(f), u_n(g)$ and $\tu_n(g)$ for $f,g \in \OSym_1$. 
It is also convenient
to write simply $x$ in place of $x_1$ when working in rank 1, i.e.,
we identify $\OSym_1$ with the graded polynomial
superalgebra $\k[x]$ generated by the variable $x$ 
that is odd of degree 2 so that $x_1 \in \OSym_1$
is identified with $x \in \k[x]$.
So, for $f(x) \in \k[x]$, we have that
\begin{align}\label{thisisnotbad1}
\tv_n\big(f(x)\big)
&=f(x_{n+1}) \otimes 1 \in \tV^\ell_{n},
&v_n\big(f(x)\big)
&=f(x_{n+1}) \otimes 1 \in V^\ell_{n},\\
u_{n}\big(f(x)\big)&=
(-1)^{n'\parity(f)} 1 \otimes f(x_{n'+1}) \in U^\ell_{n},&
\tu_{n}\big(f(x)\big)&=
(-1)^{n'\parity(f)} 1 \otimes f(x_{n'+1}) \in \tU^\ell_{n}\label{thisisnotbad2}
\end{align}
for $n'$ defined from $\ell=n+1+n'$. For further motivation for the significance of the graded
$\big(\EOH_{n},\EOH_{n+1}\big)$-superbimodule $V_n^\ell$ and the 
graded  $\big(\EOH_{n+1},\EOH_n\big)$-superbimodule $U_n^\ell$, see \cref{indresrel,indresrel2}.

Applying a sequence of the isomorphisms from
\cref{itchyandscratchy,scratchy} 
in any way that makes sense, we obtain even degree 0 isomorphisms
\begin{align}\label{ukraine3}
c_{(1)^d}:
V_{n;(1^d)}^\ell&\stackrel{\sim}{\rightarrow}
V_{n}^\ell\otimes_{\EOH_{n+1}^\ell}
V_{n+1}^\ell\otimes_{\EOH_{n+2}^\ell}
\cdots \otimes_{\EOH_{n+d-1}^\ell}
V_{n+d-1}^\ell
\\\notag
v_{n;(1^d)}\Big(x_1^{\kappa_1} \cdots x_d^{\kappa_d}\Big)
&\mapsto 
(-1)^{\sum_{i=1}^d ((n+i)\#(d-i))\kappa_i}
v_n\big(x^{\kappa_1}\big)\otimes\cdots\otimes v_{n+d-1}\big(x^{\kappa_d}\big)
\\
\intertext{
of $\big(\EOH_n^\ell,\EOH_{n+d}^\ell\big)$-superbimodules, and}
b_{(1)^d}:
U_{(1^d);n}^\ell
&\stackrel{\sim}{\rightarrow}
U_{n+d-1}^\ell\otimes_{\EOH_{n+d-1}^\ell}
\cdots 
\otimes_{\EOH_{n+2}^\ell}
U_{n+1}^\ell
\otimes_{\EOH_{n+1}^\ell}
U_{n}^\ell
\label{ukraine2}\\\notag
u_{(1^d);n}\Big(x_d^{\kappa_d} \cdots x_1^{\kappa_1}\Big)&\mapsto
(-1)^{\sum_{i=1}^d (d-i) \kappa_i} 
u_{n+d-1}\big(x^{\kappa_d}\big)
\otimes\cdots\otimes u_n\big(x^{\kappa_1}\big)
\end{align}
of $\big(\EOH_{n+d}^\ell,\EOH_{n}^\ell\big)$-superbimodules.

The parity shifts incorporated into the definitions of the actions in
the next lemma have been
included to ensure that the actions agree with \cref{fromonh,fromonh2} below.

\begin{lemma}\label{lemma2}
Suppose that $\ell = n+d+n'$.
\begin{enumerate}
\item
There is a left action of $\ONH_d$ on $V_{n;(1^d)}^\ell$ 
making it into an $\big(\EOH_n^\ell\otimes\ONH_d,\EOH_{n+d}^\ell\big)$-superbimodule
so that $a \cdot v_{n;(1^d)}(f) = (-1)^{(n\#(d-1))\parity(a)}
v_{n;(1^d)}(a\cdot f)$ for $a \in \ONH_d$ and
$f \in \OPol_d$.
Moreover, the inclusion $\OSym_d \hookrightarrow \OPol_d$ induces an isomorphism
$V_{n;(d)}^\ell \stackrel{\sim}{\rightarrow}
(\omega\chi)_d \cdot \tV_{n;(1^d)}^\ell$
of $\big(\EOH_n^\ell,\EOH_{n+d}^\ell\big)$-superbimodules
taking $v_{n;(d)}(f) \mapsto v_{n;(1^d)}(f)$ for $f \in \OSym_d$. Hence, we have that
\begin{equation}\label{rescue}
V_{n;(1^d)}^\ell \simeq \bigoplus_{w \in S_d} (\Pi Q^2)^{\ell(w)}
V_{n;(d)}^\ell
\end{equation}
as graded $(\EOH_{n}^\ell, \EOH_{n+d}^\ell)$-superbimodules.
\item
There is a right action of $\ONH_d$ on $U_{(1^d);n}^\ell$ 
making it an
$\big(\EOH_{n+d}^\ell,\EOH_{n}^\ell\otimes \ONH_d\big)$-superbimodule
so that $u_{(1^d);n}(f) \cdot a = (-1)^{(d-1)\parity(a)}
u_{(1^d);n}(f \cdot a)$ for $a \in \ONH_d$ and $f \in \OPol_d$; the right action of
$\ONH_d$ on $\OPol_d$ being used here is the one from \cref{opolright}. Moreover,
the inclusion $\OSym_d \hookrightarrow \OPol_d$ induces 
an isomorphism $U_{(d);n}^\ell \stackrel{\sim}{\rightarrow}
U_{(1^d);n}^\ell
\cdot (\chi\omega)_d$ taking $u_{(d);n}(f) \mapsto u_{(1^d);n}(f)$
for $f \in \OSym_n$. Hence,
we have that
\begin{equation}\label{mission}
U_{(1^d);n}^\ell \simeq \bigoplus_{w \in S_d} (\Pi Q^2)^{\ell(w)}
U_{(d);n}^\ell
\end{equation}
as graded $(\EOH_{n+d}^\ell, \EOH_n^\ell)$-superbimodules.
\end{enumerate}
\end{lemma}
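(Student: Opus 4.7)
The plan is as follows. Start with part (1). The key structural observation is that under the identification from \cref{gymnight}, the subalgebra $\OSym_{(n,1^d,n')}$ of $\OPol_\ell$ is the image of $\OSym_n \otimes \OPol_d \otimes \OSym_{n'}$, so $\ONH_d$ acts on $\OSym_{(n,1^d,n')}$ via the polynomial representation on the middle factor, transported through $\sig_n$. Concretely, viewing $\sig_n(\tau) \in \ONH_\ell$ for $\tau \in \ONH_d$, this action is the restriction of the odd Demazure/polynomial action on $\OPol_\ell$. I will first verify that this action preserves $\OSym_{(n,1^d,n')}$: the operators $\sig_n(\tau_j)=\tau_{n+j}$ anticommute with each $\partial_i$ for $i \in \{1,\dots,n-1\} \cup \{n+d+1,\dots,\ell-1\}$ by the defining relations of $\ONH_\ell$, so they preserve the intersection of kernels defining $\OSym_{(n,1^d,n')}$. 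I next check that the action descends modulo right multiplication by $\OSym_\ell$: since $\OSym_\ell = \bigcap_{i=1}^{\ell-1}\ker\partial_i \subset \OSym_{n+d}$ by \cref{aaaah}, for $g \in \OSym_\ell$ the Leibniz rule \cref{betteract} gives $\tau_{n+j}(\sig_n(f)g) = \tau_{n+j}(\sig_n(f))g + {}^{s_{n+j}}(\sig_n(f)) \cdot 0 = \sig_n(\tau_j f)g$. This produces the required action on $\tV_{n;(1^d)}^\ell$, and then the parity-shifted action on $V_{n;(1^d)}^\ell$ is defined by the stated formula $a \cdot v_{n;(1^d)}(f) := (-1)^{(n\#(d-1))\parity(a)}v_{n;(1^d)}(a \cdot f)$; the specific normalization constant $n\#(d-1)$ is a choice consistent with \cref{fromonh} below and compatible with the parity-shift convention of \cref{clarifyparify} twisted by a uniform $(-1)^{(n+d-1)\parity(a)}$.

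Next, I will verify the bimodule axioms. Associativity of the $\ONH_d$ action is immediate from the action on $\OPol_d$ and the fact that any uniform sign twist $(-1)^{m\parity(a)}$ preserves associativity. Commutation with the right $\EOH_{n+d}^\ell$ action follows from the Leibniz calculation above (noting $\OSym_{n+d}$ is also in the common kernel of the relevant $\partial_i$'s). For the Koszul commutation with the left $\EOH_n^\ell$ action, I will use that $\partial_{n+j}$ annihilates $\OSym_n \subseteq \OPol_n$ and that $s_{n+j}$ acts on $\OPol_n$ by $(-1)^{\parity}$, so $\tau_{n+j} \cdot (b_0 \sig_n(f)) = (-1)^{\parity(b_0)} b_0 \sig_n(\tau_j f)$, which after incorporating the parity-shift signs is exactly the required commutation $\tau \cdot b \cdot v = (-1)^{\parity(\tau)\parity(b)} b \cdot \tau \cdot v$.

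For the second assertion of (1), note that $(\omega\chi)_d \in \ONH_d$ is even of degree $0$ and projects $\OPol_d$ onto $\OSym_d$ by \cref{newsnow}. The inclusion $\OSym_d \hookrightarrow \OPol_d$ therefore induces a map $v_{n;(d)}(f) \mapsto v_{n;(1^d)}(f)$ whose image is precisely $(\omega\chi)_d \cdot V_{n;(1^d)}^\ell$; it is a bimodule homomorphism because $(\omega\chi)_d$ is central for left and right multiplication by $\OSym_n$ and $\OSym_{n+d}$ (which act trivially in the middle variables), and it is bijective because $(\omega\chi)_d \cdot \OPol_d = \OSym_d$. The decomposition \cref{rescue} is then the application of \cref{borisagain} — the decomposition $\ONH_d \simeq \bigoplus_{w\in \S_d} (\Pi Q^2)^{\ell(w)}(\omega\chi)_d \ONH_d$ as graded right $\ONH_d$-supermodules — combined with the $\ONH_d$-module structure on $V_{n;(1^d)}^\ell$ and the identification $(\omega\chi)_d V_{n;(1^d)}^\ell \simeq V_{n;(d)}^\ell$.

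For part (2), I will deduce everything from part (1) by transferring via the isomorphism $*$ from \cref{starry2,starry1}. Using \cref{opolright} to relate the right action of $\ONH_d$ on $\OPol_d$ to the left action via $*$, together with \cref{lemma0b}, all the structures on $U_{(1^d);n}^\ell$ are obtained from their counterparts on $\tV_{n';(1^d)}^\ell$, and the idempotent $(\chi\omega)_d = (\omega\chi)_d^*$ replaces $(\omega\chi)_d$. The main obstacle will be bookkeeping: correctly tracking the signs coming from the parity-shift convention \cref{clarifyparify}, the Koszul signs of the tensor product of superalgebras, and the normalization factors $n\#(d-1)$ and $d-1$ so that all the compatibility relations (associativity, bimodule commutations, and compatibility of $*$ with both actions) hold simultaneously. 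The verification is essentially bookkeeping once the central computation $\tau_{n+j}(\sig_n(f)g) = \sig_n(\tau_j f)g$ for $g \in \OSym_\ell$ is in hand.
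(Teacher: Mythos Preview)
Your proposal is correct and, for part (1), follows essentially the same route as the paper: identify $\OSym_{(n,1^d,n')}$ with $\OSym_n \otimes \OPol_d \otimes \OSym_{n'}$, act with $\ONH_d$ on the middle factor, twist by an overall parity sign, and then pass to $V_{n;(1^d)}^\ell$ via the shift $\Pi^{n\#d}$. The paper simply asserts the bimodule compatibilities from the tensor factorization, whereas you verify them more concretely via Demazure--Leibniz computations in the ambient $\OPol_\ell$; these are equivalent. For the direct-sum decomposition you invoke \cref{borisagain} (decomposing $\ONH_d$ as a right module over itself, then tensoring), while the paper uses \cref{firstkeythm} directly (decomposing $\OPol_d$ over $\OSym_d$); the two are interchangeable since \cref{borisagain} is itself derived from \cref{firstkeythm}.

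For part (2) your route genuinely differs: you transfer everything from (1) via the isomorphism $*$ of \cref{starry1}, whereas the paper runs a direct parallel of (1) using the right $\ONH_d$-action \cref{opolright} on $\OPol_d$ twisted by $\operatorname{p}^{d-1}$, together with $\OPol_d \cdot (\chi\omega)_d = \OSym_d$ and \cref{bubbleandsqueekier}. Your transfer works (indeed, the paper originally drafted the corresponding compatibility $(a\cdot \tv)^* = \pm\, \tv^* \cdot \smiley_d(a)^*$ before suppressing it), but note that $*$ lands in $\tV_{n';(1^d)}^\ell$ rather than $V_{n';(1^d)}^\ell$, so you must use the intermediate action on $\tV$ with sign $(-1)^{(n'+d-1)\parity(a)}$, and the $\smiley_d$ in \cref{reallystarry2} will enter the bookkeeping. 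The paper's direct argument avoids this and is shorter; your duality argument buys a cleaner conceptual explanation of why $(\chi\omega)_d=(\omega\chi)_d^*$ is the correct idempotent on the $U$-side.
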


\begin{proof}
(1) 
Since $\tV^\ell_{n;(1^d)}=\OSym_{(n,1^d,n')} \otimes_{\OSym_\ell} R_\ell$
and $\OSym_{(n,1^d,n')} = \OSym_n \otimes \OPol_d \otimes \OSym_{n'}$,
the left action of $\ONH_d$ on $\OPol_d$ 
from \cref{secONH}
twisted by the automorphism $\operatorname{p}^{n+d-1}:\ONH_d\rightarrow \ONH_d$
induces a left action of $\ONH_d$ on $\tV^\ell_{n;(1^d)}$
such that $a \cdot v_{n;(1^d)}(f) = (-1)^{(n+d-1)\parity(a)}
v_{n;(1^d)}(a\cdot f)$ for $a \in \ONH_d$ and
$f \in \OPol_d$.
This supercommutes with the left action of $\EOH_n^\ell$
and commutes with the right action of $\EOH_{n+d}^\ell$,
so it makes $\tV^\ell_{n;(1^d)}$ into an
$\big(\EOH_n^\ell\otimes\ONH_d,\EOH_{n+d}^\ell\big)$-superbimodule.
We get the action of $\ONH_d$ on $V^\ell_{n;(1^d)}$ in the statement
of the lemma on incorporating the
additional sign of $(-1)^{(n\# d) \parity(f)}$ into the action, which comes from
the parity shift $\Pi^{n\# d}$ in the definition \cref{evil} of $V_{n;(1^d)}^\ell$.
Since $\OSym_d = (\omega\chi)_d \cdot \OPol_d$, 
the inclusion
$\OSym_d \hookrightarrow \OPol_d$ induce an isomorphism
$V^\ell_{n;(d)} \stackrel{\sim}{\rightarrow}
(\omega\chi)_d\cdot V^\ell_{n;(1^d)}$.
The decomposition \cref{rescue} follows from \cref{firstkeythm}.

\vspace{1mm}
\noindent
(2)
This is similar, starting from the right action of $\ONH_d$
on $\OPol_d$ discussed at the end of \cref{secONH}
composed with $\operatorname{p}^{d-1}$. 
The last assertions in (2)
follow because $\OSym_d = \OPol_d \cdot (\chi\omega)_d$,
and \cref{mission} follows from \cref{bubbleandsqueekier}.
\end{proof}

In view of \cref{lemma1,lemma2}, all of the odd Grassmannian bimodules
$\tV^\ell_{n;\alpha}$,
$V^\ell_{\alpha;n}$, $U^\ell_{\alpha;n}$  and $\tU^\ell_{\alpha;n}$
are isomorphic to 1-morphisms in the 2-supercategory $\OGBim_\ell$
introduced in the next important definition.

\begin{definition}\label{shy}
The category $\OGBim_\ell$
of {\em odd Grassmannian bimodules} is the full additive graded sub-$(Q,\Pi)$-2-supercategory
of the (weak) graded $(Q,\Pi)$-2-supercategory of 
graded superalgebras, graded superbimodules and superbimodule homomorphisms
consisting of objects that are the graded superalgebras $\EOH_n^\ell$
for  $0 \leq n \leq \ell$ plus a distinguished object that is a trivial (zero)
graded superalgebra,
and $1$-morphisms that are generated by the odd Grassmannian bimodules $V_{n}^\ell \in \Hom_{\OGBim_\ell}(\EOH_{n+1}^\ell,\EOH_{n}^\ell)$
and $U_{n}^\ell\in
\Hom_{\OGBim_\ell}(\EOH_{n}^\ell,\EOH_{n+1}^\ell)$ for $0 \leq n < \ell$.
\end{definition}

\begin{remark}\label{howstrict}
Although not a strict 2-supercategory, we often work with
$\OGBim_\ell$ as though it was in fact strict. More formally, when we do this, we 
are replacing it with its strictification. The latter can be realized as a 2-subcategory of the
strict graded 2-supercategory of graded supercategories, graded superfunctors and graded supernatural transformations
in such a way that the graded
2-superequivalence from $\OGBim_\ell$ takes
the graded
superalgebra $\EOH_n^\ell$
to the graded supercategory $\EOH_n^\ell\sMod$,
a graded $\big(\EOH_m^\ell,\EOH_n^\ell\big)$-superbimodule $M$
in $\OGBim_\ell$
to the graded superfunctor $M \otimes_{\EOH_n} -:
\EOH_n^\ell\sMod \rightarrow \EOH_m^\ell\sMod$,
and a graded superbimodule endomorphism
$f:M \rightarrow M'$ 
to the graded supernatural transformation 
$f \otimes \id:M\otimes_{\EOH_n}-\Rightarrow M' \otimes_{\EOH_n}-$.
\end{remark}

The next lemma gives explicit presentations for the
generating odd Grassmannian bimodules
$V^\ell_{n}$ and $U_{n}^\ell$. In formulating the result, 
we also incorporate an indeterminate $t$ into our notation,
working in the $R_\ell\lround t^{-1}\rround$-supermodules
$V^\ell_{n}\lround t^{-1}\rround$ and
$U^\ell_{n} \lround t^{-1}\rround$,
this being a natural extension of the generating function formalism developed already for odd symmetric functions.

\begin{lemma}\label{lemma3}
Suppose that $\ell=n+1+n'$.
\begin{enumerate}
\item
Let 
$V := \EOH_n^\ell \otimes_{R_\ell} R_\ell[x] \otimes_{R_\ell} \EOH_{n+1}^\ell$,  which is
the free graded $\big(\EOH_n^\ell,\EOH_{n+1}^\ell\big)$-superbimodule
on the graded $R_\ell$-supermodule $R_\ell[x]$.
For $f(x) \in \k[x] \subset R_\ell[x]$, we denote $1 \otimes f(x) \otimes 1 \in V$ by $v\big(f(x)\big)$.
Let $T$ be the sub-bimodule of $V$ generated by either of the following
equivalent relations\footnote{We mean the relations obtained by equating coefficients of powers of $t$ on both sides.}:
\begin{align}\label{newrels1}
v\big(x^r\big) \bar e^{(n+1)}(t)&=
(-1)^{n(r+1)} \bar e^{(n)}\big((-1)^{n+r} t\big)v\big((t-x) x^r\big),\\
\bar\eps^{(n)}(t) v\big(x^r\big) &= 
(-1)^{n(r+1)}  v\big(((-1)^{n+r} t-x)^{-1} x^r\big)\,
\bar \eps^{(n+1)}\big((-1)^{n+r}t\big),\label{newrelsfancy2} 
\end{align}
for $r \geq 0$.
There is an isomorphism of graded $\big(\EOH_n^\ell,\EOH_{n+1}^\ell\big)$-superbimodules 
 \begin{equation}\label{withlove}
V \:\big/\: T
 \stackrel{\sim}{\rightarrow} V_{n}^\ell,
 \qquad
 v\big(f(x)\big)+T\mapsto v_{n}\big(f(x)\big).
 \end{equation}
Moreover:
\begin{enumerate}
\item 
$V_{n}^\ell$ is free 
as a graded left $\EOH_n^\ell$-supermodule
with basis
$\{v_n(x^r)\:|\:0\leq r \leq n'\}$;
\item $V_{n}^\ell$ is free 
as a graded right $\EOH_{n+1}^\ell$-supermodule
with
basis
$\{v_n(x^r)\:|\:0\leq r \leq n\}$;
\item the vector $v_n(1)$ generates
$V_{n}^\ell$ as a graded $\big(\EOH_n^\ell,\EOH_{n+1}^\ell\big)$-superbimodule.
\end{enumerate}
\item
Let 
$U := \EOH_{n+1}^\ell \otimes_{R_\ell} R_\ell[x] \otimes_{R_\ell} \EOH_{n}^\ell$,  which is
the free graded $\big(\EOH_{n+1}^\ell,\EOH_{n}^\ell\big)$-superbimodule
on the graded $R_\ell$-supermodule $R_\ell[x]$.
For $f \in \k[x]\subseteq R_\ell[x]$, we denote $1 \otimes f \otimes 1 \in U$ by $u(f)$.
Let $S$ be the sub-bimodule of $U$ generated by either of the following
equivalent relations:
\begin{align}\label{rels1b}
\bar e^{(n+1)}(t)u\big(x^r\big) &=
(-1)^{n(r+1)} u\big((t-x) x^r\big) \bar e^{(n)}\big((-1)^{r+1}t\big),\\
\label{relsfancy3}
u\big(x^r\big) \bar\eps^{(n)}(t)&= 
(-1)^{n(r+1)} \bar\eps^{(n+1)}\big((-1)^{r+1} t\big)
u\big(((-1)^{r+1}t-x)^{-1} x^r\big),
\end{align}
for $r \geq 0$.
There is an isomorphism of graded $\big(\EOH_{n+1}^\ell,\EOH_{n}^\ell\big)$-superbimodules 
 \begin{equation}\label{withlove2}
U \:\big/\: S
 \stackrel{\sim}{\rightarrow} U_{n}^\ell,
 \qquad
u\big(f(x)\big)+S\mapsto u_{n}\big(f(x)\big).
 \end{equation}
Moreover:
\begin{enumerate}
\item $U_{n}^\ell$ is free 
as a right $\EOH_{n}^\ell$-supermodule
with
basis
$\{u_{n}(x^r)\:|\:0\leq r \leq n'\}$;
\item 
$U_{n}^\ell$ is free 
as a graded left $\EOH_{n+1}^\ell$-supermodule
with basis
$\{u_{n}(x^r)\:|\:0\leq r \leq n\}$;
\item the vector $u_{n}(1)$ generates
$U_{n}^\ell$ as a graded $\big(\EOH_{n+1}^\ell,\EOH_{n}^\ell\big)$-superbimodule.
\end{enumerate}
\end{enumerate}
\end{lemma}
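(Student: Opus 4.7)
The plan is to prove (1) in detail; part (2) follows by a parallel argument, or equivalently by transporting through the isomorphisms $*$ of \cref{starry1,starry2} and using \cref{lemma0b}. I would proceed in three stages.

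First, I would establish the bases (a) and (b) directly from \cref{frontandback2} applied with $d = 1$. Noting that $s_{(r)}^{(1)} = h_r^{(1)} = x^r$ by \cref{hes} and the formulas following \cref{cleverer}, and that $\sigma_{(r)}^{(1)} = \eta_r^{(1)} = x^r$ by \cref{hes2}, the cited result gives that $\OSym_{(n,1,n')}$ is free as a right $\OSym_{(n+1,n')}$-supermodule on $\{x_{n+1}^r : 0 \le r \le n\}$ and free as a right $\OSym_{(n,1+n')}$-supermodule on $\{x_{n+1}^r : 0 \le r \le n'\}$. Tensoring with $R_\ell$ over $\OSym_\ell$ and invoking the isomorphisms $\EOH_{n+1}^\ell \cong \OSym_{(n+1,n')}\otimes_{\OSym_\ell} R_\ell$ and $\EOH_n^\ell \cong \OSym_{(n,1+n')}\otimes_{\OSym_\ell} R_\ell$ supplied by \cref{EOH}(1), yields (b) and (a) respectively. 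Statement (c) will then drop out from Step 3 below.

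Second, I would check that \cref{newrels1,newrelsfancy2} hold in $V_n^\ell$ after substituting $v \mapsto v_n$. Starting from the polynomial factorization $e^{(n+1)}(t) = e^{(n)}(t)(t-x_{n+1})$ (\cref{stupid,pie}) and the supercommutation $x_{n+1} e_s^{(n)} = (-1)^s e_s^{(n)} x_{n+1}$, a direct calculation gives the identity $x_{n+1}^r e^{(n+1)}(t) = (-1)^{nr} e^{(n)}((-1)^r t)(t-x_{n+1})x_{n+1}^r$ in $\OPol_{n+1}$. Passing to $V_n^\ell = (\Pi Q^{-2})^n \tV_n^\ell$, the parity shift $\Pi^n$ (via \cref{clarifyparify}) converts the naive left action of $e_s^{(n)}$ into the action with an extra sign $(-1)^{ns}$; combining these sign contributions yields \cref{newrels1}. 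The second relation \cref{newrelsfancy2} follows by formally inverting generating functions using \cref{newigr}, which also shows that \cref{newrels1} and \cref{newrelsfancy2} generate the same sub-bimodule $T$ of $V$.

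Third, Step 2 produces a well-defined surjection $\phi: V/T \twoheadrightarrow V_n^\ell$ sending $v(f(x))+T \mapsto v_n(f(x))$. To prove injectivity I would show that $V/T$ is spanned as a right $\EOH_{n+1}^\ell$-module by the $n+1$ elements $\{v(x^r)+T : 0 \le r \le n\}$; once this is known, $\phi$ maps this spanning set to the free right basis supplied by (b), so $\phi$ is an isomorphism, and (c) then follows since each $v(x^r)$ for $0 \le r \le n$ is a right-module multiple of $v(1)$ after iterated use of the relation. For the spanning statement, I would equate the leading $t^{n+1}$-coefficients on both sides of \cref{newrels1}: the right-hand side is, up to sign, $v(x^{r+1})$ plus a right $\EOH_{n+1}^\ell$-linear combination of $v(x^s)$ with $s \le r$, while the left-hand side takes the form $(\bar a\cdot v(x^r))\cdot\bar b$; iterating in $r$ reduces every $v(x^r)$ with $r > n$ to an $\EOH_{n+1}^\ell$-combination of $\{v(x^s):0\le s\le n\}$. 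The principal obstacle throughout is sign bookkeeping: the factors $(-1)^{n(r+1)}$ and the substitutions $t\mapsto(-1)^{n+r}t$ combine three independent sources — the parity shift $\Pi^{n\# d}$ in \cref{evil}, the super-anticommutativity of $x_{n+1}$ with the odd components of $e^{(n)}(t)$, and (when passing from \cref{newrels1} to \cref{newrelsfancy2}) the dependence of $(t-x)^{-1}$ on the sign of $t$ once $x$ is odd. The cleanest route is to work systematically with generating functions so that all signs are encoded as substitutions $t \mapsto \pm t$.
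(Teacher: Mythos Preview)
Your Steps 1 and the direct verification of \cref{newrels1} are fine and match the paper. The real gap is your claim that \cref{newrelsfancy2} ``follows by formally inverting generating functions using \cref{newigr}.'' The infinite Grassmannian relation \cref{newigr} gives $e^{(n)}(t)^{-1}=h^{(n)}(t)$ and $\eps^{(n)}(t)^{-1}=\gamma^{(n)}(t)$, but it does \emph{not} relate the $e$-family to the $\eps$-family. Multiplying \cref{newrels1} by $\bar h^{(n+1)}(t)$ on the right or by $\bar h^{(n)}(\pm t)$ on the left produces a relation involving $h$, not $\eps$, so you do not land on \cref{newrelsfancy2}. The paper's equivalence argument is genuinely nontrivial: it passes through an intermediate relation and then invokes the identity $\eps^{(n)}(t)=\frac{\sqi^{1-n}e^{(n)}(\sqi t)+\sqi^{n}e^{(n)}(-\sqi t)}{1+\sqi}$ with $\sqi=\sqrt{-1}$ (displayed as \cref{magic}) to convert between the two families. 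The equivalence of the two relation families is part of the statement of the lemma, so this must be addressed.

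This gap then undermines your Step 3. Your sketch uses the $t^{n+1}$-coefficient (presumably you mean $t^n$) of \cref{newrels1}, but that only yields $v(x^{r+1})$ as a \emph{bimodule} combination of $v(x^r)$ --- it involves a left action by $\bar e_?^{(n)}$ --- which is what gives (1c), not the right-module spanning you need. The paper's spanning argument uses \cref{newrelsfancy2} in an essential way: first, \cref{newrelsfancy2} has the shape $\bar\eps^{(n)}(t)v(x^r)=v(\cdots)\bar\eps^{(n+1)}(\cdots)$, allowing any left action $\bar a\,v(f(x))$ to be rewritten as a right combination; second, since $\bar\eps^{(n)}(t)$ is a polynomial in $t$, equating the $t^{-1}$-coefficients of \cref{newrelsfancy2} gives $0\equiv \pm v(x^{n+1+r})+(\text{lower})$ modulo $T$, which drives the induction. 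Without the equivalence of \cref{newrels1} and \cref{newrelsfancy2}, or an independent argument producing \cref{newrelsfancy2}, your right-module spanning claim is unsupported.
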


\begin{proof}
Note to start with that (1a)--(1b) and (2a)--(2b) follow immediately from 
\cref{lemma01}.

\vspace{1mm}
\noindent
(1) 
In this paragraph, we prove the equivalence of the relations \cref{newrels1}
and \cref{newrelsfancy2} by some formal manipulation with power series.
Replacing $t$ by $(-1)^{n+r} t$, the relations \cref{newrels1} are
equivalent to
\begin{align}\label{polpol}
\bar e^{(n)}\big(t\big)v\big(\big((-1)^{n+r}t-x\big) x^r\big)&=
(-1)^{n(r+1)} v\big(x^r\big) \bar e^{(n+1)}\big((-1)^{n+r} t\big)
\end{align}
for all $r \geq 0$.
We first show that the relations \cref{polpol} are equivalent to the relations
\begin{align}\label{relsbad}
\bar e^{(n)}(t) v\big(x^r\big)
&= 
(-1)^{(n+1)r} v\big(t(t^2+x^2)^{-1} x^r\big) \bar e^{(n+1)}\big((-1)^{n+r}
                                 t\big)
-(-1)^{nr} v\big(x(t^2+x^2)^{-1} x^r\big) \bar e^{(n+1)}\big((-1)^{n+r+1}
                                 t\big)
\end{align}
for all $r \geq 0$.
To deduce \cref{polpol} from \cref{relsbad}, we take the left hand
side of \cref{polpol}, which equals
$(-1)^{n+r}\bar e^{(n)}(t)v\big(tx^r\big)-\bar e^{(n)}(t)v\big(x^{r+1}\big)$.
Then we use \cref{relsbad} to commute $\bar e^{(n)}(t)$ to the right
to obtain
\begin{multline*}
(-1)^{n(r+1)} v\big(t^2(t^2+x^2)^{-1}x^r\big) \bar  e^{(n+1)}\big((-1)^{n+r}
t \big)
+ (-1)^{(n+1)(r+1)} v\big(tx(t^2+x^2)^{-1}x^r\big) \bar
e^{(n+1)}\big((-1)^{n+r+1} t\big)\\
- (-1)^{(n+1)(r+1)} v\big(tx(t^2+x^2)^{-1}x^r\big) \bar
e^{(n+1)}\big((-1)^{n+r+1} t\big)+
(-1)^{n(r+1)} v\big(x^2(t^2+x^2)^{-1}x^r\big) \bar  e^{(n+1)}\big((-1)^{n+r}
t \big).
\end{multline*}
After making obvious cancellations, this is equal to the right hand
side of \cref{polpol}.
The reverse implication, that is, the deduction of \cref{relsbad} from
\cref{polpol} is similar: one starts with the right hand side of
\cref{relsbad} then uses \cref{polpol} to commute the $\bar
e^{(n+1)}(\pm t)$ term to the left.
So the relations \cref{newrels1} and
\cref{relsbad} are equivalent. Then we prove that \cref{relsbad} and
\cref{newrelsfancy2} are equivalent using the identities
\begin{align}\label{magic}
\eps^{(n)}(t) &= \frac{\sqi^{1-n} e^{(n)}(\sqi t)+\sqi^{n} e^{(n)}(-\sqi t)}{1 + \sqi},&
e^{(n)}(t) &= \frac{\sqi^{1-n} \eps^{(n)}(\sqi t)+\sqi^{n} \eps^{(n)}(-\sqi t)}{1 + \sqi}
\end{align}
where $\sqi\in \k$ denotes a square root of $-1$; these may be verified by equating coefficients on each side.
To pass from \cref{relsbad} to \cref{newrelsfancy2},
we replace $\eps^{(n)}(t)$ in \cref{newrelsfancy2} with this
linear combination of
$e^{(n)}(\pm \sqi t)$, then use \cref{relsbad} 
with $t$ replaced by $\pm \sqi t$ to
commute $e^{(n)}(\pm \sqi t)$ to the right. At the end,
a linear factor in the denominator
$t^2-x^2 = (t-x)(t+x)$ cancels, and after that one converts
back to $\eps^{(n)}(\pm t)$ using \cref{magic} again.
This calculation is quite lengthy but elementary. 
The argument can be reversed to obtain \cref{relsbad} from
\cref{newrelsfancy2}, hence, the equivalence.

Next, 
we first check that the images of the relations \cref{newrels1} and
\cref{newrelsfancy2} hold\footnote{Since \cref{newrels1} and
  \cref{newrelsfancy2} are equivalent, 
we really only need to check one of these. We check
  both because it is easy and explains how we discovered the
  relations in the first place.} for the
actions of $\EOH_n^\ell$ and $\EOH_{n+1}^\ell$ on $V_n^\ell$.
For \cref{newrels1}, it suffices to check that
$v_n(1) \bar e^{(n+1)}(t) = (-1)^n \bar e^{(n)}\big((-1)^n t\big)
v_n(t-x)$, i.e., the $r=0$ case,
for then we can act on the left with $x_1^r \in \ONH_1$ using 
\cref{lemma2} to deduce the more general formulae.
This follows because
\begin{equation}\label{old1}
\tv_n(1) \bar e^{(n+1)}(t) = \bar e^{(n)}(t) \tv_n(t-x)
\end{equation}
in $\tV_n^\ell[t] = 
\OSym_{(n,1,n')} \otimes_{\OSym_\ell} R_\ell[t]$
since, by the definition of the actions and \cref{stupid}, both sides 
are equal to $(t-x_1)\cdots (t-x_n) (t-x_{n+1}) \otimes 1$.
Similarly, for \cref{newrelsfancy2}, it suffices
to check that
$\bar \eps^{(n)}(t) v_n(1) = (-1)^n v_n\big(\big((-1)^n
t-x\big)^{-1}\big) \bar \eps^{(n+1)}\big((-1)^n t\big)$
or, equivalently,
$(-1)^n \bar \eps^{(n)}\big((-1)^n t\big) v_n(1) =
v_n\big((t-x)^{-1}\big)
\bar \eps^{(n+1)}(t)$.
This follows because
\begin{equation}\label{old2}
\bar \eps^{(n)} \tv_n(1) = \tv_n\big((t-x)^{-1}\big) \bar
\eps^{(n+1)}(t)
\end{equation}
in $\tV_n^\ell[t] = 
\OSym_{(n,1,n')} \otimes_{\OSym_\ell} R_\ell[t]$
since both sides are equal to $(t-x_n)\cdots (t-x_1) \otimes 1$ thanks to \cref{stupid}.


The relations check made in the previous paragraph 
implies that there is a well-defined
graded superbimodule homomorphism
$V / T
\rightarrow V_{n}^\ell$ taking $v\big(f(x)\big)+T$ to $v_{n}\big(f(x)\big)$
for all $f(x)\in \k[x]$. 
Moreover, this map is surjective by (1a)--(1b).
To show that it is an isomorphism, 
it suffices to show that
 $V/T$
 is generated as a right $\EOH_{n+1}^\ell$-module
 by the vectors $v(1)+T,v(x)+T,\dots, v(x^n)+T$.
 It is generated as a superbimodule by all 
 $ v(x^r)+T\:(r \geq 0)$.
 The relation \cref{newrelsfancy2} implies that 
 any $\bar a v\big(f(x)\big)$ for $a \in \OSym_n, f(x) \in \k[x]$ can
 be expanded as a linear combination of vectors of the form $v(g(x))
 \bar b$ for $b \in \OSym_{n+1},
 g(x) \in \k[x]$. 
 Hence, $V/T$ is generated just as a right $\EOH_{n+1}^\ell$-supermodule
 by the vectors $v(x^r)+T\:(r \geq 0)$.
 Now we let $V'$ be the right $\EOH_{n+1}^\ell$-submodule of $V$
 generated by $T$ and
the vectors $ v(1), v(x),\dots, v(x^n)$,
and complete the argument by
showing
that $v(x^{n+r}) \in V'$
by induction on $r=0,1,2,\dots$.
 The base $r=0$ is vacuous. 
 For the induction step, take $r > 0$. Consider the relation arising from the $t^{-1}$-coefficients in \cref{newrelsfancy2}. The left hand side is a polynomial, so this coefficient is zero on the 
 left hand side.
 Hence, the $t^{-1}$-coefficient of the right hand side belongs to
 $T\subseteq V'$.
Working out this coefficient explicitly reveals that it equals
 $\pm v(x^{n+1+r})$ plus a linear combination of terms of the form
$ v(x^s) \bar a$ for $0 \leq s \leq n+r$ and $a \in \OSym_{n+1}$
of positive degree. All of these ``lower terms" are in $V'$ by induction, hence, $v(x^{n+1+r}) \in V'$.
 
Finally, to establish (1c),
looking at the $t^n$-coefficients of \cref{newrels1}
shows that $v(x^{r+1})+T$ lies in the sub-bimodule generated
by $v(x^r)+T$ for any $r \geq 0$.
Hence, by another induction on $r$, the sub-bimodule 
of $V / T$ generated by $v(1)+T$ contains all $v(x^r)+T$.

\vspace{1mm}
\noindent
(2)
This follows by a similar argument to the proof of (1). We just
explain how to see that the relations \cref{rels1b,relsfancy3} both hold
for the actions of $\EOH_n^\ell$ and $\EOH_{n+1}^\ell$
on $U_n^\ell$.
For \cref{rels1b}, it suffices to check it in the case that $r=0$, then one can act on the right with $x^r$ 
using \cref{lemma2} to get the general result.
To prove it when $r=0$, it suffices to show that
$\bar e^{(n+1)}(t) u_n(1) = (-1)^n u_n(t-x)
\bar e^{(n)}(-t)$.
This follows because, in view of the signs in the definition
of the left and right actions in \cref{lemma0}(2) and also \cref{fancynotation2},
both sides are equal to
$\big(t-(-1)^{n'}x_{n'+1}\big) 
\big(t-(-1)^{n'}x_{n'+2}\big)
\cdots \big(t-(-1)^{n'}x_{\ell}\big)$.
Similarly, to prove \cref{relsfancy3},
it suffices to check the case
$r=0$, which amounts to showing that
$u_n(1) \bar \eps^{(n)}(t) = 
(-1)^{n+1} \bar\eps^{(n+1)}(-t) u_n\big((t+x)^{-1}\big)$.
This follows because both sides are equal to
$\big(t+(-1)^{n'} x_{\ell}\big)\cdots
\big(t+(-1)^{n'} x_{n'+2}\big)$.
\end{proof}

The final lemma in this section is an application of
the presentation for $V_n^\ell$ derived in \cref{lemma3}(1).
Recall the graded $R_\ell$-superalgebra automorphism
$\delta_n^\ell:\EOH_n^\ell \rightarrow
\EOH_n^\ell$
from \cref{deltadef}.
In terms of generating functions, we have that
\begin{align}
\delta_n^\ell\big(\bar e^{(n)}(t)\big)
&= (-1)^{\ell n} \bar e^{(n)}\big((-1)^\ell t\big) - (-1)^{\ell n}
t^{-1} \dot o^{(\ell)}\bar e^{(n)}\big((-1)^\ell t\big)  
+(-1)^{\ell n} t^{-1} \dot o^{(\ell)}\bar e^{(n)}\big((-1)^{\ell+1} t\big).\label{puky}
\end{align}
This follows from \cref{yuky} on
equating coefficients of $t$.

\begin{lemma}\label{crazycrazy}
There is a unique even degree 0 graded $R_\ell$-supermodule automorphism
$\phi_n^\ell:V_n^\ell \stackrel{\sim}{\rightarrow}
V_n^\ell$ such that
$\phi_n^\ell\big(v_n(1)\big) = v_n(1)$ and
$\phi_n^\ell\big(a v b\big) 
=\delta_n^\ell(a) \phi_n^\ell(v) \delta_{n+1}^\ell(b)$
for all $a \in \EOH_n^\ell, b \in \EOH_{n+1}^\ell$
and $v \in V_n^\ell$.
Moreover, we have that
\begin{align}\label{events}
\phi_n^\ell\big(v_n(x^r)\big) &= 
(-1)^{\ell r}v_n(x^r) + (-1)^{(\ell+1)(r-1)} \big(1-(-1)^r\big) \dot
                                o^{(\ell)} v_n(x^{r-1})
\end{align}
for all $r \geq 0$.
\end{lemma}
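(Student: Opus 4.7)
The plan is to leverage the presentation of $V_n^\ell$ from Lemma \ref{lemma3}(1) together with the generating-function calculus developed in its proof. Uniqueness is immediate: by Lemma \ref{lemma3}(1c), the vector $v_n(1)$ generates $V_n^\ell$ as an $(\EOH_n^\ell, \EOH_{n+1}^\ell)$-superbimodule, so the intertwining condition together with $\phi_n^\ell(v_n(1)) = v_n(1)$ forces $\phi_n^\ell(a\, v_n(1)\, b) = \delta_n^\ell(a)\, v_n(1)\, \delta_{n+1}^\ell(b)$, and this determines the $R_\ell$-linear map $\phi_n^\ell$ on every element of $V_n^\ell$.

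For existence, I would first define a candidate $\tilde\phi: V \to V$ on the free bimodule $V = \EOH_n^\ell \otimes_{R_\ell} R_\ell[x] \otimes_{R_\ell} \EOH_{n+1}^\ell$ of Lemma \ref{lemma3}(1) by the $R_\ell$-linear formula
\begin{equation*}
\tilde\phi\big(a \otimes x^r \otimes b\big) := \delta_n^\ell(a) \otimes P(x^r) \otimes \delta_{n+1}^\ell(b),
\end{equation*}
where $P(x^r) := (-1)^{\ell r} x^r + (-1)^{(\ell+1)(r-1)}\big(1 - (-1)^r\big) \dot o^{(\ell)}\, x^{r-1}$ matches (\ref{events}). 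This is well defined because $V$ is a free $(\EOH_n^\ell, \EOH_{n+1}^\ell)$-superbimodule on $R_\ell[x]$ and $\delta_n^\ell, \delta_{n+1}^\ell$ are superalgebra automorphisms. I would then show that $\tilde\phi$ preserves the sub-bimodule $T \subseteq V$ generated by the relations (\ref{newrels1}), so that it descends to the desired map $\phi_n^\ell: V_n^\ell \to V_n^\ell$ satisfying the intertwining property and (\ref{events}) by construction.

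The main step is the verification $\tilde\phi(T) \subseteq T$, which amounts to checking, for each $r \geq 0$, the congruence
\begin{equation*}
P(x^r) \,\delta_{n+1}^\ell\big(\bar e^{(n+1)}(t)\big) \equiv (-1)^{n(r+1)} \,\delta_n^\ell\big(\bar e^{(n)}((-1)^{n+r} t)\big)\, P\big((t-x)\, x^r\big) \pmod{T}
\end{equation*}
in $V[t]$. As in the proof of Lemma \ref{lemma3}(1), by acting with $x^r$ (viewed in $\ONH_1$) the general case reduces to the single case $r = 0$, where the identity should follow, after expanding $\delta_n^\ell$ and $\delta_{n+1}^\ell$ using (\ref{puky}) and pushing $v(1)$ across each $\bar e$-factor via (\ref{newrels1}), from the relation $\big(\dot o^{(\ell)}\big)^2 = 0$ in $R_\ell$. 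The hard part will be the sign bookkeeping: one must juggle the substitution $t \mapsto (-1)^\ell t$ appearing in (\ref{puky}), the independent substitution $t \mapsto (-1)^n t$ already in (\ref{newrels1}), and the parity signs coming from the oddness of $\dot o^{(\ell)}$.

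Finally, $\phi_n^\ell$ is an automorphism because the same construction applied with $(\delta_n^\ell)^{-1}$ and $(\delta_{n+1}^\ell)^{-1}$ in place of $\delta_n^\ell$ and $\delta_{n+1}^\ell$ (which are again $R_\ell$-superalgebra automorphisms) yields an $R_\ell$-linear map $\psi_n^\ell: V_n^\ell \to V_n^\ell$ fixing $v_n(1)$ and satisfying the corresponding twisted intertwining with $(\delta_n^\ell)^{-1}, (\delta_{n+1}^\ell)^{-1}$. Both compositions $\phi_n^\ell \circ \psi_n^\ell$ and $\psi_n^\ell \circ \phi_n^\ell$ are then $R_\ell$-linear maps sending every $a\, v_n(1)\, b$ to itself, and hence equal the identity of $V_n^\ell$.
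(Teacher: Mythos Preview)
Your overall framework is exactly the one used in the paper: define the map on the free superbimodule $V$ of Lemma~\ref{lemma3}(1), check that the relation sub-bimodule $T$ is preserved, and descend. The uniqueness argument via cyclicity is identical, and your argument that $\phi_n^\ell$ is invertible (by constructing the inverse from $(\delta_n^\ell)^{-1},(\delta_{n+1}^\ell)^{-1}$) is a clean justification of a point the paper leaves implicit.

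There is, however, a genuine gap in your main step. The reduction to $r=0$ does \emph{not} work here. In the proof of Lemma~\ref{lemma3}(1), the relation $R_r$ in $V_n^\ell$ is obtained from $R_0$ by acting with $x_1^r\in\ONH_1$, because this action is by superbimodule endomorphisms. But your map $\tilde\phi$ does not intertwine the $x$-action: formula~\cref{events} itself shows that
\[
x_1\cdot\tilde\phi\big(v(x^r)\big)
=(-1)^{\ell r}v(x^{r+1})-(-1)^{(\ell+1)(r-1)}\big(1-(-1)^r\big)\dot o^{(\ell)}v(x^r),
\]
whereas
\[
\tilde\phi\big(v(x^{r+1})\big)
=(-1)^{\ell(r+1)}v(x^{r+1})+(-1)^{(\ell+1)r}\big(1-(-1)^{r+1}\big)\dot o^{(\ell)}v(x^r),
\]
and these differ already in the leading term whenever $\ell$ is odd. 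So knowing $\tilde\phi(R_0)\in T$ gives no direct control over $\tilde\phi(R_r)$ for $r>0$. The paper accordingly does \emph{not} reduce: it checks the identity \cref{goodtodo} directly for every $r\ge 0$, expanding both sides via \cref{puky} and \cref{events}, multiplying out using $\big(\dot o^{(\ell)}\big)^2=0$, commuting each $\bar e^{(n+1)}(\pm t)$ past the $v_n$ terms with \cref{newrels1}, and matching the resulting four terms on the left against the five on the right. You will need to carry out this same computation for general $r$; the $r=0$ shortcut is illusory.
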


\begin{proof}
The uniqueness is clear since $V_n^\ell$ is a cyclic
superbimodule thanks to \cref{lemma3}(1c).
To prove existence,
consider the $\big(\EOH_n^\ell,
\EOH_{n+1}^\ell\big)$-superbimodule $V$ that is
$V_n^\ell$ with the left and right actions of $\EOH_n^\ell$ and $\EOH_{n+1}^\ell$
defined by twisting the usual actions with
the automorphisms $\delta_n^\ell$ and $\delta_{n+1}^\ell$,
respectively. We are trying to show that there is a superbimodule homomorphism
$\phi_n^\ell:V_n^\ell \rightarrow V$
satisfying \cref{events}.
We can check this using the presentation for the superbimodule 
$V_n^\ell$ from \cref{lemma3}(1) with the defining
relations \cref{newrels1}.
This reduces the proof to checking that the identity
\begin{equation}\label{goodtodo}
\phi_n^\ell\big(v_n(x^r)\big) \delta_{n+1}^\ell\big( \bar
e^{(n+1)}(t)\big)
=  (-1)^{n(r+1)}
\delta_n^\ell\Big(\bar e^{(n)}\big((-1)^{n+r} t \big) \Big)
\phi_n^\ell\big(v_n\big((t-x)x^r\big)\Big)
\end{equation}
is satisfied in the superbimodule 
$V_n^\ell$
for any $r \geq 0$.
Substituting the formulae from \cref{puky,events} into \cref{goodtodo} and
cancelling $(-1)^{\ell r+\ell (n+1)}$ from both sides,
this expands to the equation
\begin{multline*}
\Big(v_n(x^r) - (-1)^{\ell + r} \big(1-(-1)^r\big) \dot o^{(\ell)}v_n(x^{r-1}) 
\Big)
\Big(
\bar e^{(n+1)}\big((-1)^\ell t\big) -t^{-1} \dot o^{(\ell)} \bar e^{(n+1)}\big((-1)^\ell t\big)
+ t^{-1} \dot o^{(\ell)}\bar e^{(n+1)}\big((-1)^{\ell+1} t\big) 
\Big)\\\quad= 
\Big((-1)^{n(r+1)}\bar e^{(n)}\big((-1)^{\ell+n+r} t\big) -(-1)^{(n+1)r} t^{-1}\dot o^{(\ell)} \bar e^{(n)}\big((-1)^{\ell+n+r} t\big)+(-1)^{(n+1)r} t^{-1} \dot o^{(\ell)}\bar e^{(n)}\big((-1)^{\ell+n+r+1} t\big) 
\Big)\\\times\:
\Big(v_n\big(\big((-1)^\ell t - x)x^r\big) - (-1)^{r}  \big(1-(-1)^r\big) t \dot o^{(\ell)}
v_n(x^{r-1})
- (-1)^{\ell+r} \big(1+(-1)^r\big) \dot o^{(\ell)} v_n(x^r)
\Big).
\end{multline*}
Now we expand both sides using that
$\big(\dot o^{(\ell)}\big)^2 = 0$. Commuting $\dot o^{(\ell)}$ then
$\bar e^{(n+1)}(\pm t)$ to the left with \cref{newrels1},
the left hand side contributes the sum of the following four terms:
\begin{align*}
v_n(x^r)\bar e^{(n+1)}\big((-1)^\ell t\big)&=(-1)^{n(r+1)}
\bar e^{(n)}\big((-1)^{\ell+n+r} t\big)v_n\big(\big((-1)^\ell t - x)x^r\big), \\
 - (-1)^{\ell + r} \big(1-(-1)^r\big) \dot o^{(\ell)}
  v_n(x^{r-1})\bar e^{(n+1)}\big((-1)^\ell t\big)&=
-(-1)^{(n+1)r+\ell} \big(1-(-1)^r\big) \dot o^{(\ell)}
\bar e^{(n)}\big((-1)^{\ell+n+r+1} t\big)\\&\hspace{50mm}\times v_n\big(\big((-1)^\ell t -
                                                   x\big) x^{r-1}\big),\\
-(-1)^{n+r} t^{-1}\dot o^{(\ell)} v_n(x^r) \bar e^{(n+1)}\big((-1)^\ell
  t\big)&=
-(-1)^{(n+1)r} t^{-1}\dot o^{(\ell)}         \bar e^{(n)}\big((-1)^{\ell+n+r}
          t\big)v_n\big(((-1)^\ell t - x)x^r\big),
\\
(-1)^{n+r} t^{-1} \dot o^{(\ell)} v_n(x^r)  \bar e^{(n+1)}\big((-1)^{\ell+1} t\big)
 &
= -(-1)^{(n+1)r} t^{-1} \dot o^{(\ell)} \bar
                   e^{(n)}\big((-1)^{\ell+n+r+1} t\big)
                   v_n\big(((-1)^\ell t + x) x^r\big).
\end{align*}
Commuting $\dot o^{(\ell)}$ to the left, the right hand side
contributes the sum of the following five terms:
\begin{align*}
(-1)^{n(r+1)}
\bar e^{(n)}\big((-1)^{\ell+n+r} t\big)v_n\big(\big((-1)^\ell t - x)x^r\big),&\\
-(-1)^{(n+1)r} t^{-1}\dot o^{(\ell)} \bar e^{(n)}\big((-1)^{\ell+n+r} t\big)
v_n\big(\big((-1)^\ell t - x)x^r\big),&\\
(-1)^{(n+1)r} t^{-1} \dot o^{(\ell)} \bar e^{(n)}\big((-1)^{\ell+n+r+1}
  t\big) v_n\big(\big((-1)^\ell t - x)x^r\big),\\
-(-1)^{(n+1)r}  \big(1-(-1)^r\big) t \dot o^{(\ell)}\bar
  e^{(n)}\big((-1)^{\ell+n+r+1} t\big)v_n(x^{r-1}),
&\\
-(-1)^{(n+1)r+\ell}\big(1+(-1)^r\big) \dot o^{(\ell)} \bar
  e^{(n)}\big((-1)^{\ell+n+r+1} t\big) v_n(x^r).&
\end{align*}
From this, without making any  further commutations, it follows that the two sides are equal.
\end{proof}

\section{Rigidity \texorpdfstring{of $\OGBim_\ell$}{}}

In this section, we prove that the $2$-supercategory $\OGBim_\ell$
from \cref{shy} is  
{\em rigid} in the sense that all of its 1-morphisms have
left and right duals.
Given a formal Laurent series $f(t)$, we use the notation $\big[f(t)\big]_{t^r}$ 
to denote its $t^r$-coefficient.
Similarly, we use $\big[f(t) \big]_{t^{\leq r}}$,
$\big[f(t) \big]_{t^{\geq r}}$, etc. for the 
formal Laurent series obtained by keeping only the terms with the 
specified powers of $t$.
Note also the following elementary identity:
we have that
\begin{align}\label{why}
f(x) &= \left[(t- x)^{-1} f(t) \right]_{t^{-1}}
\end{align}
for any polynomial $f(x) \in \k[x]$.
For the first lemma, recall the notation \cref{thisisnotbad1,thisisnotbad2}.

\begin{lemma}\label{simpson}
Suppose that $\ell=n+1+n'$
and $0 \leq r,s \leq n$.
\begin{enumerate}
\item
$\displaystyle
v_n\big((t-x)^{-1} x^r\big)
= 
\sum_{p=r}^n v_n(x^p) \left[\bar\eps^{(n+1)}(t)\right]_{> t^p}
\bar\gamm^{(n+1)}(t) t^{r-p-1}
-\sum_{p=0}^{r-1} v_n(x^p) \left[\bar\eps^{(n+1)}(t)\right]_{\leq t^p}
\bar\gamm^{(n+1)}(t) t^{r-p-1}$.
\item
$\displaystyle
u_n\big((t-x)^{-1} x^s\big)
= 
\sum_{q=s}^n \bar\gamm^{(n+1)}(t)  \left[\bar\eps^{(n+1)}(t)\right]_{> t^q}
u_n(x^q)t^{s-q-1}
-\sum_{q=0}^{s-1} \bar\gamm^{(n+1)}(t) \left[\bar\eps^{(n+1)}(t)\right]_{\leq t^q}
u_n(x^q) t^{s-q-1}$.
\end{enumerate}
\end{lemma}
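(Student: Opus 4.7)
My plan for (1) is to multiply both sides on the right by $\bar\eps^{(n+1)}(t)$, which, since $\bar\gamm^{(n+1)}(t)\bar\eps^{(n+1)}(t) = 1$ by \cref{newigr}, reduces the lemma to a cleaner equation. The LHS becomes $v_n\big((t-x)^{-1} x^r\big)\bar\eps^{(n+1)}(t)$, which I would simplify using the factorization $\eps^{(n+1)}(t) = (t-x_{n+1})\eps^{(n)}(t)$ in $\OSym_{(n,1,n')}[t]$ (from \cref{stupid}), together with the elementary identity $(t-x_{n+1})^{-1} x_{n+1}^r (t-x_{n+1}) = x_{n+1}^r$, which holds because $x_{n+1}$ commutes with itself. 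This collapses the LHS to $x_{n+1}^r \eps^{(n)}(t) \otimes 1$.

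Next I would simplify the RHS by writing $[\bar\eps^{(n+1)}(t)]_{>t^p} = \bar\eps^{(n+1)}(t) - [\bar\eps^{(n+1)}(t)]_{\leq t^p}$ and isolating the resulting ``polynomial part''. Using the same factorization of $\eps^{(n+1)}(t)$, the sum $\sum_{p=r}^n v_n(x^p)\bar\eps^{(n+1)}(t) t^{r-p-1}$ telescopes neatly to $\big(x_{n+1}^r \eps^{(n)}(t) - x_{n+1}^{n+1}\eps^{(n)}(t) t^{r-n-1}\big)\otimes 1$. After cancellation with the simplified LHS, the lemma reduces to the $r$-independent polynomial identity
\begin{equation*}
x_{n+1}^{n+1}\eps^{(n)}(t) = -\sum_{p=0}^n x_{n+1}^p[\eps^{(n+1)}(t)]_{\leq t^p}t^{n-p} \quad \text{in } \OPol_{n+1}[t].
\end{equation*}

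Equating coefficients of $t^{n-m}$ for $0 \leq m \leq n$, this in turn reduces to the single polynomial identity
\begin{equation*}
\sum_{s=0}^m (-1)^s x_{n+1}^{m-s}\eps_s^{(n+1)} = (-1)^m\eps_m^{(n)}
\end{equation*}
in $\OPol_{n+1}$ (where $\eps_m^{(n)}:=0$ for $m > n$). I would prove this by induction on $m$: the base $m=0$ is immediate, and the inductive step uses the recursion $\eps_s^{(n+1)} = \eps_s^{(n)} - (-1)^s \eps_{s-1}^{(n)} x_{n+1}$ (obtained by expanding $\eps^{(n+1)}(t) = (t-x_{n+1})\eps^{(n)}(t)$ and comparing $t^{n+1-s}$-coefficients) together with the anticommutation rule $x_{n+1}\eps_{s-1}^{(n)} = (-1)^{s-1}\eps_{s-1}^{(n)}x_{n+1}$. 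The specialization $m = n+1$ recovers the ``left evaluation of $\eps^{(n+1)}(t)$ at $t = x_{n+1}$'' identity $\sum_{s=0}^{n+1}(-1)^s x_{n+1}^{n+1-s}\eps_s^{(n+1)} = 0$, which is also used implicitly to extend the identity from $m\leq n$ to the larger values of $m$ that arise in the above reduction.

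Part (2) follows by an entirely analogous argument, now multiplying both sides on the \emph{left} by $\bar\eps^{(n+1)}(t)$ and using the right-hand factorization $\sig_{n'}(\eps^{(n+1)}(t)) = \sig_{n'+1}(\eps^{(n)}(t))(t-x_{n'+1})$ in $\OSym_{(n',1,n)}[t]$, so that the factor $(t-x_{n'+1})$ sits immediately to the left of $f(x_{n'+1}) = (t-x_{n'+1})^{-1}x_{n'+1}^s$ and cancels. The hard part throughout is the sign bookkeeping in non-commutative $\OPol_{n+1}$: it is essential to work consistently with the \emph{left}-factor form $\eps^{(n+1)}(t) = (t-x_{n+1})\eps^{(n)}(t)$ (for part (1)) and the right-factor form (for part (2)), rather than mixing with $e^{(n+1)}(t)$, so that the telescoping proceeds without spurious signs coming from moving $x_{n+1}$ past the symmetric coefficients of $\eps^{(n)}(t)$.
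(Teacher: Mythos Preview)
Your approach for (1) is correct and takes a genuinely different route from the paper's. The paper instead expands $v_n(x^{m+n+1})$ for each $m\geq 0$ in the basis $\{v_n(x^p)\}_{p=0}^n$ using \cref{neweasypeasy}(2) (after applying $\smiley_{n+1}$), multiplies by $t^{r-m-n-2}$ and sums over $m$ to reassemble the tail of $v_n\big((t-x)^{-1}x^r\big)$, then inserts $1=\bar\eps^{(n+1)}(t)\bar\gamm^{(n+1)}(t)$ into the head $\sum_{p=r}^n v_n(x^p)t^{r-p-1}$ to reorganize. Your method---clearing $\bar\gamm^{(n+1)}(t)$ by right-multiplying by $\bar\eps^{(n+1)}(t)$ and telescoping via $\eps^{(n+1)}(t)=(t-x_{n+1})\eps^{(n)}(t)$---is slicker: it bypasses the auxiliary lemma entirely and reduces the statement to a finite polynomial identity in $\OSym_{(n,1)}[t]$ (you write $\OPol_{n+1}[t]$, but all terms actually lie in the smaller $\OSym_{(n,1)}[t]$, which is what maps to $V_n^\ell$).

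Two minor remarks. First, your ``single polynomial identity'' is literally the $t^{-m}$-coefficient of $(t-x_{n+1})^{-1}\eps^{(n+1)}(t)=\eps^{(n)}(t)$ in $\OPol_{n+1}\llbracket t^{-1}\rrbracket$, so no separate induction is required---it is the factorization you already used. Second, equating $t^{n-m}$-coefficients in your reduced identity does not give your formula in one step: what drops out is $(-1)^m x_{n+1}^{n+1}\eps_m^{(n)}=-\sum_{s=m+1}^{n+1}(-1)^s x_{n+1}^{n+1+m-s}\eps_s^{(n+1)}$. As you note, the $m=n+1$ case bridges the two (and $\OPol_{n+1}$ has no zero divisors, so the common factor $x_{n+1}^{n+1}$ cancels). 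For (2), be aware that the left action of $\bar a\in\EOH_{n+1}^\ell$ on $U_n^\ell$ carries the twist $(-1)^{n'\parity(a)}$ (see \cref{lemma0}(2)), so the analogue of your factorization and telescoping picks up extra signs depending on $n'$; the scheme still goes through, and the paper likewise only sketches (2).
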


\begin{proof}
(1)
Remembering \cref{thisisnotbad1} and the definition of the right action of $\EOH_{n+1}^\ell$ from \cref{lemma0}(1), the identity obtained by applying
$\gamma_{n+1}$ to \cref{neweasypeasy}(2) implies for any $m \geq 0$ that
$$
v_n\big(x^{m+n+1}\big)
=
-\sum_{p=0}^n v_n\big(x^p\big)
\sum_{s=0}^m (-1)^{m+n+1-p-s} \bar\eps_{m+n+1-p-s}^{(n+1)}
\bar\gamm_s^{(n+1)}.
$$
We multiply this by $t^{r-m-n-2}$ and sum over $m \geq 0$ to obtain
$$
\sum_{m \geq 0} v_n\big(x^{m+n+1}\big)
t^{r-m-n-2}
= 
-\sum_{p=0}^n 
v_n\big(x^p\big)
\left(
\sum_{m \geq 0}
\sum_{s=0}^m (-1)^{m+n+1-p-s} \bar\eps^{(n+1)}_{m+n+1-p-s} \bar\eta^{(n+1)}_s t^{p-m-n-1}
\right)
t^{r-p-1}.
$$
The expression in brackets is equal to $\left[\bar\eps^{(n+1)}(t)\right]_{\leq t^p} \bar\gamm^{(n+1)}(t)$,
as may be checked by comparing $t^{p-m-n-1}$-coefficients for all $m \in \Z$.
Using this and \cref{newigr}, we obtain
\begin{align*}
v_n\big((t-x)^{-1} x^r\big)
&= \sum_{p=r}^n v_n\big(x^p\big) t^{r-p-1}
+ \sum_{m \geq 0} v_n\big(x^{m+n+1}\big)t^{r-m-n-2}\\
&= 
\sum_{p=r}^n v_n\big(x^p\big) \bar\eps^{(n+1)}(t) \bar \gamm^{(n+1)}(t) t^{r-p-1}
-\sum_{p=0}^n 
v_n\big(x^p\big)\left[\bar\eps^{(n+1)}(t)\right]_{\leq t^p} \bar\gamma^{(n+1)}(t)
t^{r-p-1}.
\end{align*}
It remains to write the first
$\bar\eps^{(n+1)}(t)$
as
$\left[\bar\eps^{(n+1)}(t)\right]_{> t^p}
+\left[\bar\eps^{(n+1)}(t)\right]_{\leq t^p}$
and make some obvious cancellations to obtain the desired formula.

\vspace{1mm}
\noindent
(2)
Remembering the sign in \cref{thisisnotbad2} 
and the matching sign in the definition of the left action of $\EOH_{n+1}^\ell$ from \cref{lemma0}(2), the identity obtained by
applying $\operatorname{p}^{n'} \circ \sig_{n'} \circ \smiley_{n+1}$ to 
\cref{neweasypeasy}(1)
gives
$$
u_n\big(x^{m+n+1}\big)
=
-\sum_{q=0}^n 
\sum_{r=0}^m 
(-1)^{m+n+1-q-r} 
\bar\gamm_r^{(n+1)}\bar\eps_{m+n+1-q-r}^{(n+1)}
u_n\big(x^q\big)
$$
for any $m \geq 0$.
We multiply this by $t^{s-m-n-2}$ and sum over $m \geq 0$ to obtain
$$
\sum_{m \geq 0} u_n\big(x^{m+n+1}\big)
t^{s-m-n-2}
= 
-\sum_{q=0}^n 
\left(
\sum_{m \geq 0}
\sum_{r=0}^m (-1)^{m+n+1-q-r} \bar\gamm^{(n+1)}_r \bar\eps^{(n+1)}_{m+n+1-q-r} t^{q-m-n-1}
\right)
u_n\big(x^q\big)
t^{s-q-1}.
$$
The expression in brackets is equal to $\bar\gamma^{(n+1)}(t)\left[\bar\eps^{(n+1)}(t)\right]_{\leq t^q} $.
Now the proof is completed as in (1).
\end{proof}

\begin{lemma}\label{violins}
Suppose that $\ell=n+1+n'$.
The element
$$
z :=
\sum_{\substack{r,s \geq 0 \\ r+s \leq n}} (-1)^{r+s}
v_n\big(x^{r}\big) \bar \eps^{(n+1)}_{n-r-s} \otimes u_n\big(x^{s}\big)
\in V_n^\ell\otimes_{\EOH_{n+1}^\ell} U_n^\ell
$$
is central in the sense that $az = za$ for all $a \in \EOH_n^\ell$.
\end{lemma}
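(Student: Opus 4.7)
Since $R_\ell \hookrightarrow Z(\EOH_n^\ell)$, every element of $R_\ell$ already commutes with $z$. It therefore suffices to verify $\bar a z = z\bar a$ for $\bar a$ in a generating set of $\EOH_n^\ell$ as a graded $R_\ell$-superalgebra; taking the generators $\bar\eps_k^{(n)}$ $(k\geq 1)$, this reduces to the generating function identity $\bar\eps^{(n)}(s)z = z\bar\eps^{(n)}(s)$, one $s$-coefficient at a time.

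The starting point is a generating function expression for $z$. Expanding $v_n((t-x)^{-1}) = \sum_{r\geq 0}v_n(x^r)t^{-r-1}$, $u_n((t-x)^{-1}) = \sum_{s\geq 0}u_n(x^s)t^{-s-1}$, and $\bar\eps^{(n+1)}(t) = \sum_{k=0}^{n+1}(-1)^k\bar\eps_k^{(n+1)}t^{n+1-k}$, and observing that $(-1)^{r+s}=(-1)^{n+k}$ whenever $r+s+k=n$, one finds
$$z = (-1)^n\bigl[v_n((t-x)^{-1})\,\bar\eps^{(n+1)}(t)\otimes u_n((t-x)^{-1})\bigr]_{t^{-1}}.$$

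Next, use \cref{newrelsfancy2} to commute $\bar\eps^{(n)}(s)$ past $v_n((t-x)^{-1})$: the result expresses $\bar\eps^{(n)}(s)\,v_n((t-x)^{-1})$ as a sum over $r$ in which $v_n((\pm s-x)^{-1}x^r)$ appears on the left and $\bar\eps^{(n+1)}((-1)^{n+r}s)$ on the right, weighted by the sign $(-1)^{n(r+1)}$. By \cref{relsfancy3}, the analogous manoeuvre on the other side expresses $u_n((t-x)^{-1})\,\bar\eps^{(n)}(s)$ as a sum over $s'$ involving $\bar\eps^{(n+1)}((-1)^{s'+1}s)$ on the left of $u_n$. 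Since $\bar\eps^{(n+1)}(\pm s)$ and $\bar\eps^{(n+1)}(t)$ both live in $\EOH_{n+1}^\ell$, they pass freely across the tensor product, and the formal even variable $t$ commutes with everything.

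Comparing $\bar\eps^{(n)}(s)z$ with $z\bar\eps^{(n)}(s)$ after extracting the $t^{-1}$-coefficient, the identity reduces to a family of signed commutation relations between coefficients of $\bar\eps^{(n+1)}(\pm s)$ and $\bar\eps^{(n+1)}(t)$ inside $\EOH_{n+1}^\ell$. Splitting the sums over $r$ and $s'$ according to parity (which is what the signs $(-1)^{n+r}$ and $(-1)^{s'+1}$ in the arguments of $\bar\eps^{(n+1)}$ track) and translating $\bar\eps_a^{(n+1)} = (-1)^{\binom{a}{2}}\bar e_a^{(n+1)}$, the resulting identities are precisely the relations supplied by \cref{badday}. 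The principal obstacle is bookkeeping the parity-dependent signs so that the matching reduces to the exact form of \cref{badday}; once the parity split is made cleanly, no further input is required.
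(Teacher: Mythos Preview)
Your strategy is the same as the paper's: reduce the centrality of $z$ to checking $\bar\eps^{(n)}(t)z=z\bar\eps^{(n)}(t)$, commute $\bar\eps^{(n)}(t)$ through $v_n$ (resp.\ $u_n$) using \cref{newrelsfancy2} (resp.\ \cref{relsfancy3}), and identify the residual identity in $\EOH_{n+1}^\ell$ with \cref{badday}. The generating function expression for $z$ you write is exactly \cref{sundoor1}.

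However, there is a genuine gap between your commutation step and the appeal to \cref{badday}. After applying \cref{newrelsfancy2}, the left side involves $v_n\big(((-1)^{n+r}t-x)^{-1}x^r\big)$, an infinite formal series in $t^{-1}$ for each $r$; on the right side it is instead the $u_n$-factor that gets replaced by $u_n\big(((-1)^{s'+1}t-x)^{-1}x^{s'}\big)$. These two expressions live in structurally different forms, and you cannot compare them merely by commuting coefficients of $\bar\eps^{(n+1)}(\pm t)$ in $\EOH_{n+1}^\ell$. The paper handles this by first invoking the basis fact that every element of $V_n^\ell\otimes_{\EOH_{n+1}^\ell}U_n^\ell$ is uniquely of the form $\sum_{p,q=0}^n v_n(x^p)\,f_{p,q}\otimes u_n(x^q)$ (from \cref{lemma3}), and then using \cref{simpson} to rewrite $v_n((t-x)^{-1}x^r)$ and $u_n((t-x)^{-1}x^s)$ in terms of these bases, with coefficients involving truncations of $\bar\eps^{(n+1)}(t)\bar\gamm^{(n+1)}(t)$. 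Only after this reduction does one obtain explicit polynomials $L_{p,q}(t)$ and $R_{p,q}(t)$ in $\EOH_{n+1}^\ell[t]$ whose $t^k$-coefficients can be compared term by term, and that comparison is what matches \cref{badday}.

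So your sketch omits the essential use of \cref{simpson} (or an equivalent) to pass from the formal series $v_n((t-x)^{-1}x^r)$, $u_n((t-x)^{-1}x^s)$ back to the finite bases. Without that step, the sentence ``the identity reduces to a family of signed commutation relations between coefficients of $\bar\eps^{(n+1)}(\pm s)$ and $\bar\eps^{(n+1)}(t)$'' is not justified: the two sides are not yet in a common form where such a reduction is visible.
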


\begin{proof}
By \cref{lemma3}(b), 
any element of $V_n^\ell \otimes_{\EOH_{n+1}^\ell} U_n^\ell$
can be written as $\sum_{p,q =0}^n v_n\big(x^p) f_{p,q} \otimes u_n\big(x^q\big)$ for unique $f_{p,q} \in \EOH_{n+1}^\ell$.
So there are unique 
$L_{p,q}(t) = \sum_{k=0}^n L_{p,q;k} t^k, 
R_{p,q}(t) =\sum_{k=0}^n R_{p,q;k} t^k 
\in \EOH_{n+1}^\ell [t]$
such that
\begin{align*}
\bar\eps^{(n)}(t) z &= 
\sum_{p,q =0}^n (-1)^{p+q} v_n\big(x^p) L_{p,q}(t) \otimes u_n\big(x^q\big),&
z \bar\eps^{(n)}(t)
&= 
\sum_{p,q =0}^n (-1)^{p+q} v_n\big(x^p) R_{p,q}(t) \otimes u_n\big(x^q\big).
\end{align*}
To prove the lemma,
it suffices to show that
$\bar\eps^{(n)}(t) z = z \bar\eps^{(n)}(t)$,
which we do by computing $L_{p,q}(t)$ and
$R_{p,q}(t)$ explicitly then checking that
$L_{p,q;k} = R_{p,q;k}$ for all $0 \leq p,q,k \leq n$.
For brevity, we adopt the convention that
$\eps^{(n+1)}_r = 0$ for $r < 0$; this allows the
restriction $r+s \leq n$ on the summation in the definition
of $z$ to be omitted.

To compute $L_{p,q}(t)$, we expand
$\bar\eps^{(n)}(t) z$ using \cref{newrelsfancy2}, \cref{simpson}(1) and \cref{newigr}:
\begin{align*}
\bar\eps^{(n)}(t) z &= 
\sum_{r,q \geq 0} (-1)^{r+q}
\bar\eps^{(n)}(t) v_n\big(x^{r}\big) \bar \eps^{(n+1)}_{n-r-q} \otimes u_n\big(x^{q}\big)\\
&=
\sum_{r,q \geq 0}(-1)^{r+q+n(r+1)} v_n\big(((-1)^{n+r}t- x)^{-1} x^r\big)\,
\bar \eps^{(n+1)}\big((-1)^{n+r}t\big)\bar \eps^{(n+1)}_{n-r-q} \otimes u_n\big(x^{q}\big)\\
&=
\sum_{r,q \geq 0}\sum_{p=r}^n v_n(x^p)\left(
(-1)^{r+q+n(r+1)+(n+r)(r-p-1)}
\left[\bar\eps^{(n+1)}\big((-1)^{n+r}t\big)\right]_{> t^p}
\bar \eps^{(n+1)}_{n-r-q}t^{r-p-1}\right) \otimes u_n\big(x^{q}\big)\\
&\qquad-
\sum_{r,q \geq 0}
\sum_{p=0}^{r-1} 
v_n(x^p)
\left(
(-1)^{r+q+n(r+1)+(n+r)(r-p-1)} \left[\bar\eps^{(n+1)}\big((-1)^{n+r}t\big)\right]_{\leq t^p}
\bar \eps^{(n+1)}_{n-r-q}t^{r-p-1}\right) \otimes u_n\big(x^{q}\big).
\end{align*}
Hence, using $r+q+n(r+1)+(n+r)(r-p-1)+p+q\equiv  
pr+pn+p+r\pmod{2}$ to simplify the sign,
we have that
\begin{multline}\label{adele}
L_{p,q}(t)
=
\sum_{r=0}^{p}
(-1)^{pr+pn+p+r}
\left[\bar\eps^{(n+1)}\big((-1)^{n+r}t\big)\right]_{> t^p}
\bar \eps^{(n+1)}_{n-r-q}t^{r-p-1}\\-
\sum_{r\geq p+1}
(-1)^{pr+pn+p+r}
\left[\bar\eps^{(n+1)}\big((-1)^{n+r}t\big)\right]_{\leq t^p}
\bar \eps^{(n+1)}_{n-r-q}t^{r-p-1}
\end{multline}
for $0 \leq p,q \leq n$.
Similarly, we compute $R_{p,q}(t)$ using \cref{relsfancy3}, \cref{simpson}(2) and \cref{newigr}:
\begin{align*}
z\bar\eps^{(n)}(t) &= 
\sum_{p,s \geq 0} (-1)^{p+s}
v_n\big(x^{p}\big) \bar \eps^{(n+1)}_{n-p-s} \otimes u_n\big(x^{s}\big)
\bar\eps^{(n)}(t) \\
&=
\sum_{p,s \geq 0 } (-1)^{p+s+n(s+1)} 
v_n\big(x^{p}\big) \bar \eps^{(n+1)}_{n-p-s} 
\bar\eps^{(n+1)}\big((-1)^{s+1} t\big)\otimes 
u_n\big(((-1)^{s+1}t-x)^{-1} x^s\big)\\
&=
\sum_{p,s \geq 0} \sum_{q=s}^n v_n\big(x^{p}\big) \left((-1)^{p+s+n(s+1)+(s+1)(s-q-1)} 
\bar \eps^{(n+1)}_{n-p-s} 
\left[\bar\eps^{(n+1)}\big((-1)^{s+1}t\big)\right]_{> t^q}
t^{s-q-1}\right)\otimes  u_n(x^q)
\\
&\qquad-\sum_{p,s \geq 0} \sum_{q=0}^{s-1} 
v_n\big(x^{p}\big) \left((-1)^{p+s+n(s+1)+(s+1)(s-q-1)} \bar \eps^{(n+1)}_{n-p-s} 
\left[\bar\eps^{(n+1)}\big((-1)^{s+1}t\big)\right]_{\leq t^q}
 t^{s-q-1}\right)\otimes u_n(x^q).
\end{align*}
From this, we get that
\begin{multline}\label{sinead}
R_{p,q}(t) = 
\sum_{s=0}^{q}
(-1)^{ns+qs+n+1} 
\bar \eps^{(n+1)}_{n-p-s} 
\left[\bar\eps^{(n+1)}\big((-1)^{s+1}t\big)\right]_{> t^q}
t^{s-q-1}\\
-
\sum_{s\geq q+1} (-1)^{ns+qs+n+1} 
\bar \eps^{(n+1)}_{n-p-s}
\left[\bar\eps^{(n+1)}\big((-1)^{s+1}t\big)\right]_{\leq t^q}
 t^{s-q-1}
\end{multline}
for $0 \leq p,q \leq n$.

Now we use \cref{adele,sinead} to compute the $t^k$-coefficients 
$L_{p,q;k}$ 
and $R_{p,q;k}$ for $0 \leq k \leq n$:
\begin{align}\label{rumour}
L_{p,q;k} &=
\sum_{r=0}^k (-1)^{(n+k)(r+k)} \bar\eps^{(n+1)}_{n+r-p-k} \bar\eps^{(n+1)}_{n-r-q}
-\sum_{r\geq p+1} (-1)^{(n+k)(r+k)} \bar\eps^{(n+1)}_{n+r-p-k} \bar\eps^{(n+1)}_{n-r-q},\\
R_{p,q;k} &= \sum_{s=0}^k (-1)^{(n+k)s} \bar\eps^{(n+1)}_{n-p-s} \bar\eps^{(n+1)}_{n+s-q-k}-
\sum_{s\geq q+1} (-1)^{(n+k)s} \bar\eps^{(n+1)}_{n-p-s} \bar\eps^{(n+1)}_{n+s-q-k}.\label{hasit}
\end{align}
The details of these two computations are very similar, so we just elaborate
on the first one.
Note that
$$
(-1)^{pr+pn+p+r}
\bar\eps^{(n+1)}\big((-1)^{n+r}t\big)
\bar \eps^{(n+1)}_{n-r-q}t^{r-p-1}
= 
\sum_{j\in\Z}
(-1)^{pr+pn+p+r+(n+r)j+n+1+j}
\bar\eps^{(n+1)}_{n+1-j}
\bar \eps^{(n+1)}_{n-r-q}t^{j+r-p-1}.
$$
To get a contribution of $t^k$ from this, 
we must have that $j = k+p-r+1$,
in which case
$$
(-1)^{pr+pn+p+r+(n+r)j+n+1+j}
\bar\eps^{(n+1)}_{n+1-j}
\bar \eps^{(n+1)}_{n-r-q}=
(-1)^{(n+k)(r+k)} 
\bar\eps^{(n+1)}_{n+r-p-k}
\bar \eps^{(n+1)}_{n-r-q}.
$$
Using these observations, it follows
 that
the $t^k$-coefficients from the
first summation in \cref{adele}
contribute
$\sum
(-1)^{(n+k)(r+k)}
\bar\eps^{(n+1)}_{n+r-p-k}
\bar \eps^{(n+1)}_{n-r-q}$
summing over $r$ with $0 \leq r \leq p$
such that $j := k+p-r+1$ satisfies $j > p$,
i.e., $0 \leq r \leq \min(k,p)$.
Similarly, the $t^k$-coefficients from the second
summation in \cref{adele} contribute
$-\sum
(-1)^{(n+k)(r+k)}
\bar\eps^{(n+1)}_{n+r-p-k}
\bar \eps^{(n+1)}_{n-r-q}$
summing over $r$ with $r \geq p+1$
such that $j := k+p-r+1$ satisfies $j \leq p$,
i.e., $r \geq \max(k,p)+1$.
Thus, we have shown that
$$
L_{p,q;k} =
\sum_{r=0}^{\min(k,p)} (-1)^{(n+k)(r+k)} \bar\eps^{(n+1)}_{n+r-p-k} \bar\eps^{(n+1)}_{n-r-q}
-\sum_{r\geq \max(k,p)+1} 
(-1)^{(n+k)(r+k)} \bar\eps^{(n+1)}_{n+r-p-k} \bar\eps^{(n+1)}_{n-r-q}.
$$
This is exactly as in \cref{rumour} when $k \leq p$.
To see that it is also 
equal to \cref{rumour} when $k > p$, one just has to cancel
the overlapping terms 
when $p+1 \leq r \leq k$
in the first and second summations from \cref{rumour}.

It remains to see that $L_{p,q;k} = R_{p,q;k}$ for every $0 \leq p,q,k \leq n$.
In fact, the first summation in \cref{rumour} is equal to the first summation in 
\cref{hasit}. This is easily seen on making the substitution $s=k-r$ in one of them. To see that the second summation in \cref{rumour} is equal to the second summation in \cref{hasit}, we substitute $m=r-p$ in \cref{rumour} and $m=s-q$ in \cref{hasit}, and the problem reduces to showing that
$$
\sum_{m \geq 1} (-1)^{(n+k)(m+p+k)} \bar\eps^{(n+1)}_{n-k+m} \bar\eps^{(n+1)}_{n-p-q-m}
=\sum_{m \geq 1} (-1)^{(n+k)(m+q)} \bar\eps^{(n+1)}_{n-p-q-m} \bar\eps^{(n+1)}_{n-k+m}
$$
for all $0 \leq p,q,k \leq n$.
In fact this equality already holds in $\OSym_{n+1}$ thanks to 
\cref{badday} or, rather, the identity obtained from that by applying the involution $\smiley_{n+1}$.
\end{proof}

\begin{theorem}\label{cupsandcaps}
Suppose that $\ell=n+1+n'$.
There are unique even degree 0 superbimodule homomorphisms
\begin{align}
\coev_n&:\EOH_n^\ell \rightarrow V_{n}^\ell \otimes_{\EOH_{n+1}^\ell}
U_{n}^\ell\notag\\\label{notsobad}
1 &\mapsto 
\sum_{\substack{r,s \geq 0 \\ r+s \leq n}} (-1)^{n-r-s}
v_n\big(x^{r}\big) \bar \eps^{(n+1)}_{n-r-s} \otimes u_n\big(x^{s}\big)\\\intertext{and}
\ev_n&:U_{n}^\ell \otimes_{\EOH_n^\ell}
V_{n}^\ell \rightarrow \EOH_{n+1}^\ell\notag
\\\label{reallynot}
u_n\big(x^{r}\big) \otimes v_n\big(x^{s}\big)
&\mapsto
\left\{
\begin{array}{ll}
\bar \gamm^{(n+1)}_{r+s-n}&\text{if $r+s\geq n$}\\
0&\text{otherwise,}
\end{array}\right.
\end{align}
the latter being true for all $r,s \geq 0$.
Moreover, the following compositions are identities:
\begin{equation}\label{zigzag1}
\begin{tikzcd}
U_n^\ell
\arrow[r,"\operatorname{can}"]
&
U_n^\ell\otimes_{\EOH_n^\ell} \EOH_n^\ell
\arrow[r,"\id\otimes\coev_n"]&
U_n^\ell\otimes_{\EOH_n^\ell} V_n^\ell \otimes_{\EOH_{n+1}^\ell}
U_n^\ell
\arrow[r,"\ev_n\otimes\id"]
&
\EOH_{n+1}^\ell\otimes_{EOH_{n+1}^\ell}
U_n^\ell \arrow[r,"\operatorname{can}"]&
U_n^\ell,
\end{tikzcd}
\end{equation}\begin{equation}\label{zigzag2}
\begin{tikzcd}
V_n^\ell
\arrow[r,"\operatorname{can}"]
&
\EOH_n^\ell\otimes_{\EOH_n^\ell} V_n^\ell
\arrow[r,"\coev_n\otimes\id"]&
V_n^\ell\otimes_{\EOH_{n+1}^\ell} U_n^\ell \otimes_{\EOH_{n}^\ell}
V_n^\ell
\arrow[r,"\id\otimes\ev_n"]
&
V_n^\ell\otimes_{\EOH_{n+1}^\ell}
\EOH_{n+1}^\ell
\arrow[r,"\operatorname{can}"]&
V_n^\ell.
\end{tikzcd}
\end{equation}
Hence, $\coev_n$ and $\ev_n$ are the unit and counit of an adjunction making $\left(U_{n}^\ell, V_{n}^\ell\right)$
into a dual pair 
of 1-morphisms in $\OGBim_\ell$.
\end{theorem}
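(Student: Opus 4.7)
\noindent\emph{Proof proposal.}
The plan is to dispose of uniqueness first, then construct $\coev_n$ using Lemma~\ref{violins}, then construct $\ev_n$ (the main obstacle), and finally verify the two zigzag identities. Uniqueness of $\coev_n$ is immediate since $\EOH_n^\ell$ is generated by $1$ as a bimodule over itself. Uniqueness of $\ev_n$ follows from Lemma~\ref{lemma3}(1a)--(2a), which shows that the elements $u_n(x^r) \otimes v_n(x^s)$ with $0 \leq r \leq n'$ and $0 \leq s \leq n$ generate $U_n^\ell \otimes_{\EOH_n^\ell} V_n^\ell$ as an $\EOH_{n+1}^\ell$-superbimodule.

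For $\coev_n$, observe that the formula in \cref{notsobad} specifies $\coev_n(1) = (-1)^n z$, where $z \in V_n^\ell \otimes_{\EOH_{n+1}^\ell} U_n^\ell$ is the element from Lemma~\ref{violins} (since $(-1)^{n-r-s} = (-1)^n (-1)^{r+s}$). Since Lemma~\ref{violins} established that $az = za$ for every $a \in \EOH_n^\ell$, the required superbimodule homomorphism exists. A direct tally of the degrees ($2r-2n$ for $v_n(x^r)$, $2(n-r-s)$ for $\bar\eps^{(n+1)}_{n-r-s}$, and $2s$ for $u_n(x^s)$) and of the parities ($r+n, n-r-s, s \pmod 2$, summing to $2n \equiv 0$) confirms that each summand is even of degree zero.

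The main obstacle is the existence of $\ev_n$. I propose to give a uniform description as
$$\ev_n\bigl(u_n(f(x)) \otimes v_n(g(x))\bigr) \;=\; \bigl[\bar\gamm^{(n+1)}(t)\, f(t)\, g(t)\bigr]_{t^{-1}}$$
for $f, g \in \k[x]$, which agrees with \cref{reallynot} on pure monomials since $[\bar\gamm^{(n+1)}(t) \cdot t^{r+s}]_{t^{-1}} = \bar\gamm^{(n+1)}_{r+s-n}$ when $r+s \geq n$ and vanishes otherwise. Well-definedness then reduces to three checks, to be verified via the presentations of Lemma~\ref{lemma3}: that the defining relations \cref{newrels1}--\cref{newrelsfancy2} of $V_n^\ell$ and \cref{rels1b}--\cref{relsfancy3} of $U_n^\ell$ are respected, and that the result is $\EOH_n^\ell$-balanced. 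These verifications should closely parallel the proof of Lemma~\ref{violins}, commuting generating functions past one another using \cref{relsfancy3}, \cref{newrelsfancy2} and Lemma~\ref{simpson}, then collapsing via the Grassmannian identity $\bar\gamm^{(n+1)}(t)\bar\eps^{(n+1)}(t) = 1$. The balancing check is the most delicate point where the bookkeeping of signs is apt to be heaviest.

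Once $\ev_n$ is constructed, the zigzag identities follow by a clean direct computation. For \cref{zigzag1}, applying the composition to $u_n(x^t)$ (for $0 \leq t \leq n'$) and using the already-verified balancing over $\EOH_{n+1}^\ell$ yields $\sum_{r,s:\,r+s \leq n,\,t+r \geq n} (-1)^{n-r-s} \bar\gamm^{(n+1)}_{t+r-n} \bar\eps^{(n+1)}_{n-r-s} u_n(x^s)$. Regrouping by $m := n - r - s$ and $j := t + r - n$, this telescopes via the coefficient identity
$$\sum_{j+k=m} (-1)^k \bar\gamm^{(n+1)}_j\, \bar\eps^{(n+1)}_k = \delta_{m,0},$$
which is exactly $\bar\gamm^{(n+1)}(t)\bar\eps^{(n+1)}(t) = 1$ from \cref{newigr}, and collapses to $u_n(x^t)$. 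The identity \cref{zigzag2} is verified by the mirror computation using $\bar\eps^{(n+1)}(t)\bar\gamm^{(n+1)}(t) = 1$. Hence no genuinely new combinatorics intervenes beyond the construction of $\ev_n$.
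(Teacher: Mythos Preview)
Your construction of $\coev_n$ via Lemma~\ref{violins} and your zigzag computations are correct and match the paper's approach (the paper phrases the zigzags in generating-function language via \cref{ev1}--\cref{ev2} and \cref{why}, but the content is the same Grassmannian telescoping you describe).

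The substantive difference is in how $\ev_n$ is built. You propose to define it on all of $U_n^\ell\otimes_{\EOH_n^\ell}V_n^\ell$ at once and then directly verify three compatibilities against the presentations of Lemma~\ref{lemma3}: the $U$-relations, the $V$-relations, and the $\EOH_n^\ell$-balancing. Each of these is a genuine computation comparable to Lemma~\ref{violins}, and you leave them undone; in particular the left $\EOH_{n+1}^\ell$-compatibility (hidden in your ``relations of $U_n^\ell$'' check) is the crux and is not obviously a parallel of the Lemma~\ref{violins} argument. The paper sidesteps this entirely by a bootstrapping trick: it first defines $\ev_n$ \emph{only} as a right $\EOH_{n+1}^\ell$-module map using the basis $\{u_n(x^r)\otimes v_n(x^s):0\le r\le n',\,0\le s\le n\}$ from Lemma~\ref{lemma3}(1b),(2a). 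It then extends \cref{reallynot} by induction to all $s\ge 0$ (keeping $r\le n'$), which is enough to verify \cref{zigzag1}---crucially, $\ev_n\otimes\id$ in \cref{zigzag1} only needs $\ev_n$ to be a right module map. With \cref{zigzag1} in hand, the paper then \emph{deduces} left $\EOH_{n+1}^\ell$-compatibility in two lines by projecting the identity $(\ev_n\otimes\id)\circ(\id\otimes\coev_n)=\id$ onto a single basis component. Only after that does it extend \cref{reallynot} to all $r$ and verify \cref{zigzag2}. So the paper trades your three direct checks for one induction and one projection argument, never touching the left action head-on. Your route is viable in principle but the hard step is exactly the one you flag and do not carry out.
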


Before proving the theorem, we write down several equivalent formulations of the
definitions of $\coev_n$ and $\ev_n$, assuming that such superbimodule homomorphisms do indeed exist.
For the unit of adjunction, the element $\coev_n(1)$ in the statement of \cref{cupsandcaps}
is equal to $(-1)^n z$ where $z$ is the central tensor from \cref{violins}.
In terms of generating functions, we have that
\begin{equation}
\coev_n(1) =
\left[v_n((t-x)^{-1})  \bar \eps^{(n+1)}(t) 
\otimes u_n\big((t- x)^{-1}\big)
\right]_{t^{-1}}.\label{sundoor1}
\end{equation}
This is easily checked by computing coefficients of $t$.
Using \cref{relsfancy3,newrelsfancy2} 
with $r=0$, we have that
\begin{align}
v_n\big((t-x)^{-1}\big)\bar\eps^{(n+1)}(t)
&= \bar\eps^{(n)}\big((-1)^{n}t\big)v_n\big((-1)^{n}\big),\label{crap1}\\\label{crap2}
\bar\eps^{(n+1)}(t) u_{n}\big((t-x)^{-1}\big)
&= u_n\big((-1)^{n}\big) \bar\eps^{(n)}(-t).
\end{align}
Hence, we can rewrite the right hand side of \cref{sundoor1}
to obtain
\begin{align}\label{fridoor}
\coev_n(1) &=
\left[v_n((t-x)^{-1})\otimes u_n\big((-1)^n \big)\bar\eps^{(n)}(-t) \right]_{t^{-1}}=
\left[\bar \eps^{(n)}\big((-1)^{n}t\big)v_n\big((-1)^n \big)\otimes u_n\big((t- x)^{-1}\big)
\right]_{t^{-1}}.
\end{align}
Equating coefficients in \cref{fridoor} gives two more formulae:
\begin{align}\label{Jackdoor}
\coev_n(1) &
=
\sum_{r=0}^n v_n\big(x^{r}\big) \otimes u_n(1) \bar \eps^{(n)}_{n-r}
= \sum_{s=0}^n
(-1)^{(n+1)s}
\bar \eps^{(n)}_{n-s} 
v_n(1)\otimes u_n\big(x^{s}\big).
\end{align}
For the counit $\ev_n$, we have the following two equivalent formulations:
\begin{align}\label{ev1}
\ev_n\left(u_n\big((t- x)^{-1}f(x)\big) \otimes 
v_n\big(g(x)\big)\right)
&= \left[\bar \gamm^{(n+1)}(t) f(t)g(t)\right]_{<t^{0}}\\
\ev_n\left(u_n\big(f(x)\big) \otimes v_n\big(g(x)(t- x)^{-1}\big)\right)
&= \left[\bar \gamm^{(n+1)}(t) f(t)g(t)\right]_{<t^{0}}\label{ev2}
\end{align}
for any $f(x),g(x)\in\k[x]$. 
This can be checked by 
assuming that $f(x)=x^r, g(x) = x^s$ then equating coefficients of $t$.

\begin{proof}[Proof of \cref{cupsandcaps}]
We split the proof up into six steps.

\vspace{1mm}
\noindent{\em Step one.}
We first construct the maps $\coev_n$ and $\ev_n$. 
For $\coev_n$, we define
$\coev_n(a) := (-1)^n az$ for any $a \in \EOH_n^\ell$, 
where $z$ is as in \cref{violins}.
This is an even degree 0 homomorphism of graded left $\EOH_n^\ell$-supermodules.
Since $(-1)^n az = (-1)^n za$ according to \cref{violins},
it is also a right $\EOH_n^\ell$-supermodule homomorphism, so it is a 
superbimodule homomorphism. 
Thus, we have defined the superbimodule homomorphism $\coev_n$, and 
\cref{notsobad} holds.
For $\ev_n$, 
$U_n^\ell \otimes_{\EOH_n^\ell} V_n^\ell$ is a free graded right $\EOH_{n+1}^\ell$-supermodule with basis 
$u_n(x^r) \otimes v_n(x^s)\:(0 \leq r \leq n', 0 \leq s \leq n)$
by \cref{lemma3}(1b) and (2a).
So there is a unique even degree 0 
graded right $\EOH_{n+1}^\ell$-supermodule homomorphism $\ev_n$ such that 
\cref{reallynot} holds for $0 \leq r \leq n'$ and $0 \leq s \leq n$.
It is not yet clear that \cref{reallynot} holds for other values of $r$ and $s$,
or that $\ev_n$ is a graded left $\EOH_{n+1}^\ell$-supermodule homomorphism.

\vspace{1mm}
\noindent
{\em Step two.}
Next we use induction to show that 
\cref{reallynot} also holds for $0 \leq r \leq n'$ and all
$s > n$. 
Fix a choice of $r$ with $0 \leq r \leq n'$.
We know by our definition that \cref{reallynot} holds for $0 \leq s \leq n$.
For the induction step, we take some $s \geq n$, assume that \cref{reallynot}
holds for this and all smaller values of $s$, and show that it also holds 
when $s$ is replaced by $s+1$.
The $m=0$ case of \cref{neweasypeasy}(2) shows that
$x_1^{n+1} = \sum_{p=0}^n (-1)^{n-p} x_1^p e^{(n+1)}_{n+1-p}$.
Multliplying on the left 
by $x_1^{s-n}$ then applying $\smiley_{n+1}$, we deduce that
 $x_{n+1}^{s+1} = \sum_{p=0}^n(-1)^{n-p} x_{n+1}^{p+s-n} \eps_{n+1-p}^{(n+1)}$.
 So
 $$
 u_n\big(x^r\big) \otimes v_n\big(x^{s+1}\big)
 = 
 \sum_{p=0}^n (-1)^{n-p} u_n\big(x^r\big)\otimes v_n\big(x^{p+s-n}\big)
 \bar\eps^{(n+1)}_{n+1-p}.
 $$
Now we apply the right supermodule homomorphism 
$\ev_n$ using the induction hypothesis
to see that
$$
\ev_n\Big(
 u_n\big(x^r\big) \otimes v_n\big(x^{s+1}\big)\Big)
 =  \sum_{p=\max(0,2n-r-s)}^n (-1)^{n-p} \bar \eta_{p+r+s-2n}^{(n+1)}
 \bar\eps^{(n+1)}_{n+1-p}.
 $$
This is equal to $\bar \eta_{r+s+1-n}^{(n+1)}$ by \cref{newigr},
which is what we wanted.

\vspace{1mm}
\noindent
{\em Step three.}
Since $\ev_n$ is a right supermodule homomorphism by definition, 
the composition of maps in \cref{zigzag1} makes sense 
and is a right $\EOH_n^\ell$-supermodule homomorphism\footnote{
However, \cref{zigzag2} does not make sense at this point
since $\id \otimes \ev_n$ is not defined until we can shown that $\ev_n$
is a left supermodule homomorphism.}.
To show that the composition is equal to the identity map,
it suffices to show that it
takes $u_n(f(x))$ to $u_n(f(x))$ for any $f(x) \in \k[x]$ of degree $\leq n'$,
since these elements generate $U_n^\ell$ as a right $\EOH_n^\ell$-supermodule
by \cref{lemma3}(2a).
Using \cref{sundoor1}, we apply the even map 
$\id \otimes \coev_n$ to $u_n(f(x))\otimes 1$ 
to obtain
$$
\left[u_n\big(f(x)\big)\otimes v_n\big((t-x)^{-1}\big)
\otimes \bar \eps^{(n+1)}(t) 
u_n\big((t-x)^{-1}\big)\right]_{t^{-1}}.
$$
By step two, \cref{ev2} holds for $f(x)$ of degree $\leq n'$.
We use it to apply $\ev_n \otimes \id$, then multiply out the tensor,
to obtain
$$
\left[\big[\bar \gamm^{(n+1)}(t) f(t)\big]_{<t^{0}}
\bar \eps^{(n+1)}(t) u_n\big(( t-x)^{-1}\big)\right]_{t^{-1}}.
$$
Now, \cref{crap2} shows that $\bar \eps^{(n+1)}(t) u_n\big((t-x)^{-1}\big)$ is a polynomial in $t$, so we can omit the inside square brackets. Then
the $\gamm$ and $\eps$ cancel by \cref{newigr}, leaving us with
$$
\left[f(t) u_n\big((t-x)^{-1}\big)\right]_{t^{-1}} = 
u_n\big(f(x)\big),
$$
where we applied \cref{why} for the final equality. Hence, the composition \cref{zigzag1}
is the identity.

\vspace{1mm}
\noindent
{\em Step four.}
We prove that $\ev_n$ is a left supermodule homomorphism.
 Since $v_n(1)$ generates $V_n^\ell$ as a superbimodule (\cref{lemma3}(1c))
 and $\ev_n$ is a right supermodule homomorphism, it suffices to show that
 $a \ev_n(u \otimes v_n(1)) = \ev_n(au \otimes v_n(1))$ for all
 $a \in \EOH_{n+1}^\ell$ and $u \in U_n^\ell$.
By step three, we have that
 $$
 a \left((\ev_n \otimes \id) \circ (\id \otimes\coev_n)(u \otimes 1)\right) = a(1 \otimes u)
  = (1 \otimes au) = 
  (\ev_n \otimes \id) \circ (\id \otimes\coev_n)(au \otimes 1)
 $$
 in $\EOH_{n+1}^\ell\otimes_{\EOH_{n+1}^\ell} U_n^\ell$.
 Using the formula for $\coev_n(1)$ from \cref{Jackdoor}, this shows that
$$
\sum_{s=0}^n
(-1)^{(n+1)s}
a \ev_n\big(u \otimes \bar \eps^{(n)}_{n-s} 
v_n(1)\big)\otimes u_n\big(x^{s}\big)
=
\sum_{s=0}^n
(-1)^{(n+1)s}
 \ev_n\big(au \otimes \bar \eps^{(n)}_{n-s} 
v_n(1)\big)\otimes u_n\big(x^{s}\big).
$$
By \cref{lemma3}(2b), $U_n^\ell$ is a free left $\EOH_{n+1}^\ell$-supermodule
with basis $u_n(x^s)\:(0 \leq s \leq n)$, so we can project to
$-\otimes u_n(x^n)$-components in the identity just proved
to obtain the 
desired equality $a \ev_n\big(u \otimes v_n(1)\big) = \ev_n\big(au \otimes v_n(1)\big)$.

\vspace{1mm}
\noindent
{\em Step five.}
We know already that \cref{reallynot} holds for $0 \leq r \leq n'$ and $s \geq 0$.
We now show that it holds for all remaining $r > n'$ and $s \geq 0$.
Take $r \geq n'$ and 
assume by induction that \cref{reallynot} holds for this value of $r$ and 
all $s \geq 0$. 
Equating $t^n$-coefficients in \cref{rels1b} gives
 that $u_n\big(x^{r+1}\big)
 = \bar o^{(n+1)} u_n\big(x^r\big)
 + (-1)^r u_n\big(x^r\big) \bar o^{(n)}$.
 Equating $t^n$-coefficients in \cref{newrels1} 
 (with $r$ replaced by $s$) gives that $v_n\big(x^{s+1}\big)= v_n\big(x^s\big) \bar o^{(n+1)}
 - (-1)^{n+s} \bar o^{(n)} v_n\big(x^s\big)$.
 Making these substitutions, it is then easy to check that
\begin{equation*}
u_n\big(x^{r+1}\big) \otimes v_n\big(x^s\big) = (-1)^{n+r+s+1} u_n\big(x^r\big) \otimes v_n(x^{s+1})
+ (-1)^{n+r+s} u_n\big(x^r\big) \otimes v_n\big(x^s\big) \bar o^{(n+1)}
+ \bar o^{(n+1)} u_n\big(x^r\big) \otimes v_n\big(x^s\big).
\end{equation*}
Now we compute $\ev_n\Big(u_n\big(x^{r+1}\big) \otimes v_n\big(x^s\big)\Big)$
by applying the superbimodule homomorphism $\ev_n$ to the right hand side of the equation just derived and using the induction hypothesis. If $r+s+1<n$ the right hand side evaluates to $0$
so $\ev_n\Big(u_n\big(x^{r+1}\big) \otimes v_n\big(x^s\big)\Big) = 0$ as required.
If $r+s+1=n$ the right hand side evaluates to $\bar \eta^{(n+1)}_{r+1+s-n}$
as required. If $r+s+1>n$ the right hand side evaluates to
$$
(-1)^{n+r+s+1} \bar \eta^{(n+1)}_{r+1+s-n} + (-1)^{n+r+s}  \bar\eta^{(n+1)}_{r+s-n}
\bar o^{(n+1)} + \bar o^{(n+1)} \bar\eta^{(n+1)}_{r+s-n}.
$$
If $n+r+s$ is odd then $\bar\eta^{(n+1)}_{r+s-n}$ and $\bar o^{(n+1)}$
commute by the image of the defining relation \cref{osym1b} under $\smiley$, so this simplifies to
the desired $\bar\eta^{(n+1)}_{r+1+s-n}$. 
If $n+r+s$ is even then $\bar\eta^{(n+1)}_{r+s-n} \bar o^{(n+1)} + \bar o^{(n+1)} \bar\eta^{(n+1)}_{r+s-n} = 2 \bar\eta^{(n+1)}_{r+1+s-n}$ by the image of 
the second relation from \cref{osym5}
under $\smiley$, so again the expression simplifies to 
$\bar\eta^{(n+1)}_{r+1+s-n}$.

\vspace{1mm}
\noindent
{\em Step six.}
It remains to check that the composition
\cref{zigzag2}, which makes sense as $\ev_n$ is a left supermodule homomorphism, is the identity.
We do this by showing 
that it takes $v_n(f(x))$ to $v_n(f(x))$ for any $f(x) \in \k[x]$.
The argument is similar to step three.
By \cref{sundoor1}, 
the map $\coev_n \otimes \id$ takes $1 \otimes v_n(f(x))$ to
$$
\left[v_n\big((t-x)^{-1}\big)\bar\eps^{(n+1)}(t) \otimes u_n\big((t- x)^{-1}\big) \otimes v_n\big(f(x)\big)
\right]_{t^{-1}}.
$$
Then we apply the even map $\id \otimes \ev_n$ using \cref{ev1} (whose validity relies on the conclusion of step five)
to get
$$\left[v_n\big((t-x)^{-1}\big)\bar \eps^{(n+1)}(t)
\big[\bar \gamm^{(n+1)}(t) f(t)\big]_{<t^{0}}\right]_{t^{-1}}.
$$
Since $v_n\big((t-x)^{-1}\big)\bar \eps^{(n+1)}(t)$ 
is a polynomial in $t$ by \cref{crap1}, 
we can omit the inside square brackets. After doing that,
$\eps$ and $\gamm$ cancel using \cref{newigr}, so we obtain
$$
\left[v_n\left((t-x)^{-1}\right)
 f(t)\right]_{t^{-1}} = v_n\big(f(x)\big),
$$
where we used \cref{why} for the final equality.
\end{proof}

For the following corollary, recall from \cref{buckley} that
$\EONH_n^\ell$ is a subalgebra of $\EONH_{n+1}^\ell$ in a natural way
for $0 \leq n < \ell$.

\begin{corollary}\label{indresrel}
For $0 \leq n < \ell$, the following diagrams of graded superfunctors commute up to even degree 0 isomorphisms:
$$
\begin{tikzcd}
\arrow[d,"(\overline{\omega\chi})_{n+1}-" left]
\EONH_{n+1}^\ell\sMod
\arrow[rr,"\!\!{(\Pi Q^{-2})^n \res_{\EONH_n^\ell}^{\EONH_{n+1}^\ell}}" above]&&
\EONH_{n}^\ell\sMod\arrow[d,"(\overline{\omega\chi})_n-" right]\\
\EOH_{n+1}^\ell\sMod\arrow[rr,"{V_{n}^\ell \otimes_{\EOH_{n+1}^\ell}\!\!\!\!\!-}" below]&&\EOH_{n}^\ell\sMod
\end{tikzcd}\qquad
\begin{tikzcd}
\arrow[rr,"(\Pi Q^{2})^n \ind_{\EONH_n^\ell}^{\EONH_{n+1}^\ell}" above]
\arrow[d,"(\overline{\omega\chi})_n-" left]
\EONH_{n}^\ell\sMod
&&\arrow[d,"(\overline{\omega\chi})_{n+1}-" right]\EONH_{n+1}^\ell\sMod
\\
\arrow[rr,"U_{n}^\ell \otimes_{\EOH_{n}^\ell}-\!\!" below]\EOH_{n}^\ell\sMod
&&\EOH_{n+1}^\ell\sMod
\end{tikzcd}
$$
(The vertical arrows are the equivalences 
of graded supercategories from \cref{veryslow}.)
\end{corollary}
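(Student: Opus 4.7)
I would focus on the left diagram; the right diagram then follows from it by taking left adjoints. By Theorem~\ref{cupsandcaps}, $U_n^\ell \otimes_{\EOH_n^\ell} -$ is left adjoint to $V_n^\ell \otimes_{\EOH_{n+1}^\ell} -$ with even degree 0 unit and counit, while $\ind$ is left adjoint to $\res$ in the standard way. Given that the left diagram commutes up to $(\Pi Q^{-2})^n$, uniqueness of left adjoints combined with the standard rule that the left adjoint of $\Sigma \circ F$ is $F' \circ \Sigma^{-1}$ (where $F'$ is the left adjoint of $F$) yields that the right diagram commutes up to $(\Pi Q^{-2})^{-n} = (\Pi Q^2)^n$, as required.

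For the left diagram, the plan is to identify the $(\EOH_n^\ell, \EONH_{n+1}^\ell)$-bimodules representing the two compositions of functors. Since $(\overline{\omega\chi})_{n+1}$ generates $\EONH_{n+1}^\ell$ as a two-sided ideal (via the Morita equivalence of Corollary~\ref{veryslow}), idempotent truncation gives a natural isomorphism
$$(\overline{\omega\chi})_n \EONH_{n+1}^\ell \simeq (\overline{\omega\chi})_n \EONH_{n+1}^\ell (\overline{\omega\chi})_{n+1} \otimes_{\EOH_{n+1}^\ell} (\overline{\omega\chi})_{n+1} \EONH_{n+1}^\ell$$
of $(\EOH_n^\ell, \EONH_{n+1}^\ell)$-bimodules, using Lemma~\ref{imathjmath} to identify $\EOH_{n+1}^\ell$ with $(\overline{\omega\chi})_{n+1}\EONH_{n+1}^\ell(\overline{\omega\chi})_{n+1}$. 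The problem therefore reduces to constructing an even degree 0 isomorphism of $(\EOH_n^\ell, \EOH_{n+1}^\ell)$-bimodules $(\overline{\omega\chi})_n \EONH_{n+1}^\ell (\overline{\omega\chi})_{n+1} \simeq \tV_n^\ell$, noting that $\tV_n^\ell = (\Pi Q^2)^n V_n^\ell$ by \cref{evil}.

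To construct this isomorphism I would use the polynomial representation. Corollary~\ref{veryslow} gives $\EONH_{n+1}^\ell (\overline{\omega\chi})_{n+1} \simeq \OPol_{n+1} \otimes_{\OSym_{n+1}} \EOH_{n+1}^\ell$, and truncating on the left by $(\overline{\omega\chi})_n$ yields $\bigl((\omega\chi)_n \cdot \OPol_{n+1}\bigr) \otimes_{\OSym_{n+1}} \EOH_{n+1}^\ell$. Since $\partial_i(x_{n+1}) = 0$ for $i = 1, \dots, n-1$, a short induction using \cref{betteract} shows that $\tau_i \cdot (f \cdot x_{n+1}^k) = (\tau_i \cdot f) \cdot x_{n+1}^k$ in $\OPol_{n+1}$ for such $i$, so the action of $(\omega\chi)_n \in \ONH_n$ preserves the decomposition $\OPol_{n+1} = \bigoplus_{k \geq 0} \OPol_n \cdot x_{n+1}^k$. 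Combined with $(\omega\chi)_n \cdot \OPol_n = \OSym_n$ from \cref{newsnow}, this gives $(\omega\chi)_n \cdot \OPol_{n+1} = \OSym_n \cdot \k[x_{n+1}] = \OSym_{(n,1)}$ as a subspace of $\OPol_{n+1}$. Composing with the iso $\EOH_{n+1}^\ell \simeq \OSym_{(n+1,n')}\otimes_{\OSym_\ell}R_\ell$ of Theorem~\ref{EOH}(1) and the multiplication iso $\OSym_{(n,1)} \otimes_{\OSym_{n+1}} \OSym_{(n+1,n')} \simeq \OSym_{(n,1,n')}$ (which is surjective onto its image in $\OPol_\ell$ and equates graded ranks by combining \cref{maybeuseful} with the factorization $\OSym_{(m,m')} \simeq \OSym_m \otimes \OSym_{m'}$ from \cref{gymnight}) produces the desired isomorphism with $\tV_n^\ell = \OSym_{(n,1,n')} \otimes_{\OSym_\ell} R_\ell$.

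The main obstacle will be careful bookkeeping of the parity and grading shifts: while the underlying bimodule identifications are essentially straightforward, verifying that the composite natural isomorphism is even of degree 0 requires tracking the normalizations fixed in \cref{evil}, the non-trivial grading shifts in the matrix algebra identification $\EONH_n^\ell \simeq M_{q^{\binom{n}{2}}[n]^!_{q,\pi}}(\EOH_n^\ell)$ of Corollary~\ref{geordie}, and matching the image of the cyclic generator of $(\overline{\omega\chi})_n \EONH_{n+1}^\ell (\overline{\omega\chi})_{n+1}$ under the chain of isomorphisms with the generator $v_n(1) \in V_n^\ell$ from Lemma~\ref{lemma3}(1).
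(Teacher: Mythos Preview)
Your proposal is correct and follows essentially the same route as the paper: reduce the second square to the first by uniqueness of left adjoints via Theorem~\ref{cupsandcaps}, then identify the relevant bimodule with $\tV_n^\ell$ using $(\omega\chi)_n \cdot \OPol_{n+1} = \OSym_{(n,1)}$ together with the isomorphism $\EOH_{n+1}^\ell \simeq \OSym_{(n+1,n')}\otimes_{\OSym_\ell} R_\ell$ from Theorem~\ref{EOH}(1).

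One small organizational difference worth noting: the paper sidesteps the bookkeeping concern you flag at the end. Rather than tracking the shifts through the composite, it reformulates the first square by removing the $(\Pi Q^{-2})^n$ on the horizontal arrows and replacing the left vertical equivalence $(\overline{\omega\chi})_{n+1}-$ with its quasi-inverse $\OPol_{n+1}\otimes_{\OSym_{n+1}}\EOH_{n+1}^\ell\otimes_{\EOH_{n+1}^\ell}-$. After this reformulation the target identification is literally $\tV_n^\ell$ rather than $(\Pi Q^{-2})^n V_n^\ell$, and no further shift-tracking is required. Your version would also work, but this trick makes the verification cleaner.
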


\begin{proof}
Note that
$(\Pi Q^2)^n \ind_{\EONH_n^\ell}^{\EONH_{n+1}^\ell}$
is left adjoint to $(\Pi Q^{-2})^n \res_{\EONH_n^\ell}^{\EONH_{n+1}^\ell}$.
Also $U_n^\ell\otimes_{\EOH_n^\ell}-$ is left adjoint to
$V_n^\ell\otimes_{\EOH_{n+1}^\ell}-$ by \cref{cupsandcaps}.
Hence, using the uniqueness of left adjoints,
it suffices to prove that the first square commutes.
We show equivalently that the following commutes up to even degree 0 isomorphism:
$$
\begin{tikzcd}
\EONH_{n+1}^\ell\sMod
\arrow[rr,"{\res_{\EONH_n^\ell}^{\EONH_{n+1}^\ell}}" above]&&
\EONH_{n}^\ell\sMod\arrow[d,"(\overline{\omega\chi})_n-" right]\\
\arrow[u,"\OPol_{n+1} \otimes_{\OSym_{n+1}}\EOH_{n+1}^\ell\otimes_{\EOH_{n+1}^\ell}-" left]
\EOH_{n+1}^\ell\sMod\arrow[rr,"{\tV_{n}^\ell \otimes_{\EOH_{n+1}^\ell}\!\!\!\!\!-}" below]&&\EOH_{n}^\ell\sMod
\end{tikzcd}
$$
Here, we have removed the degree and parity shifts on the horizontal arrows
and we have replaced the equivalence 
$(\overline{\omega\chi})_{n+1}- \simeq
\Hom_{\ONH_{n+1}^\ell}\big(\ONH_{n+1}^\ell (\overline{\omega\chi})_{n+1},-\big)$
on the left hand vertical arrow with the 
quasi-inverse equivalence
$\ONH_{n+1}^\ell (\overline{\omega\chi})_{n+1}
\otimes_{\EOH_{n+1}^\ell}-
\simeq \OPol_{n+1}\otimes_{\OSym_{n+1}}\EOH_{n+1}^\ell
\otimes_{\EOH_{n+1}^\ell}-$.
To prove this new diagram commutes, it suffices to show that
$$
(\omega\chi)_n \OPol_{n+1} \otimes_{\OSym_{n+1}} \EOH_{n+1}^\ell
\simeq
\tV_{n}^\ell
$$
as $(\EOH_n^\ell,\EOH_{n+1}^\ell)$-superbimodules.
As the proof of \cref{lemma0}(1) in the case $d=0$ shows, the isomorphism
$\EOH_{n+1}^\ell \stackrel{\sim}{\rightarrow}
\OSym_{(n+1,n')} \otimes_{\OSym_\ell} R_\ell$
of \cref{EOH}(1) is an isomorphism of
 graded $(\EOH_{n+1}^\ell,\EOH_{n+1}^\ell)$-superbimodules.
So 
$(\omega\chi)_n \OPol_{n+1} \otimes_{\OSym_{n+1}} \EOH_{n+1}^\ell
\simeq 
(\omega\chi)_{n} \OPol_{n+1} \otimes_{\OSym_{n+1}}
\OSym_{(n+1,n')} \otimes_{\OSym_\ell} R_\ell$.
By \cref{newsnow}, 
we have that $(\omega\chi)_n \OPol_{n+1} \simeq
\OSym_{(n,1)}$, so
this is
$\simeq \OSym_{(n,1,n')}\otimes_{\OSym_\ell} R_\ell$,
which is exactly the definition of $\tV_{n}^\ell$.
\end{proof}

The next theorem, which we prove by twisting \cref{cupsandcaps} with
some automorphisms,
 gives the second adjunction. The explicit formulae for
this are not as nice as for the first adjunction.

\begin{theorem}\label{secondadjunction}
Suppose that $\ell=n+1+n'$.
There are unique even degree 0 superbimodule homomorphisms
\begin{align*}\tcoev_n&:
\EOH_{n+1}^\ell
\rightarrow \tU_{n}^\ell \otimes_{\EOH_{n}^\ell}
\tV_{n}^\ell\\
1 &\mapsto 
\sum_{s=0}^{n'}
(-1)^{\ell s+\binom{s}{2}}
    \big(\psi_{n+1}^\ell\big)^{-1}\Big(\bar\eps_{n'-s}^{(n')}\Big)
\tu_n(1) \otimes \tv_n(x^s)\\\intertext{and}\tev_n&:
\tV_{n}^\ell \otimes_{\EOH_{n+1}^\ell}
\tU_{n}^\ell \rightarrow \EOH_{n}^\ell\\
\!\!\!\tv_n(x^r)\otimes \tu_n(x^s)&\mapsto
\left\{
\begin{array}{ll}
\!\!(-1)^{n(r+s)+\binom{r}{2}+\binom{s+1}{2}}
\Big[\big(\psi_{n}^\ell\big)^{-1} \Big(
\bar\gamm_{r+s-n'}^{(n'+1)}
+(-1)^{n+1}\big(1-(-1)^s\big) \bar\eta_{r+s-n'-1}^{(n'+1)} \dot o^{(\ell)}
\Big)\Big]
&\text{if $r+s> n'$}\\
\!\!(-1)^{n(r+s)+\binom{r}{2}+\binom{s+1}{2}}
&\text{if $r+s=n'$}\\
0&\text{otherwise}
\end{array}\right.
\end{align*}
giving the unit and counit of an adjunction making
$\left(\tV_{n}^\ell, \tU_{n}^\ell\right)$
into another dual pair of 1-morphisms in $\OGBim_\ell$.
\end{theorem}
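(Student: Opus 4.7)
The plan is to mimic the strategy used in \cref{cupsandcaps}, with the key new ingredient being a transport from the first adjunction. Uniqueness of $\tcoev_n$ and $\tev_n$ is immediate: the stated formula for $\tcoev_n(1)$ determines $\tcoev_n$ as a superbimodule homomorphism, while $\tev_n$ is determined by its values on the monomials $\tv_n(x^r)\otimes\tu_n(x^s)$ (with $0\leq r\leq n$, $0\leq s\leq n'$) since these span $\tV_n^\ell\otimes_{\EOH_{n+1}^\ell}\tU_n^\ell$ over $\EOH_n^\ell$ on either side, by the $\tilde{}$-analogues of \cref{lemma3}.

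For existence, the conceptual route is to transport the adjunction $(U_{n'}^\ell,V_{n'}^\ell)$ of \cref{cupsandcaps} (with $n\leftrightarrow n'$) through the $*$-isomorphisms of \cref{starry1,starry2}. By \cref{lemma0b}, the isomorphism $*\colon V_{n'}^\ell\to\tU_n^\ell$ twists the right $\EOH_{n'+1}^\ell$-action by $\psi_{n'+1}^\ell$, while $*\colon U_{n'}^\ell\to\tV_n^\ell$ twists the left $\EOH_{n'+1}^\ell$-action by $(\psi_n^\ell)^{-1}$; both twists take values in $\EOH_n^\ell$, so the tensor product over $\EOH_{n'+1}^\ell$ is converted into one over $\EOH_n^\ell$. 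Combining $(*\otimes *)$ with the precomposition $\psi_{n+1}^\ell\colon\EOH_{n+1}^\ell\to\EOH_{n'}^\ell$ (and, on the counit side, the postcomposition $(\psi_n^\ell)^{-1}\colon\EOH_{n'+1}^\ell\to\EOH_n^\ell$) then yields candidate maps of the correct source and target. The zig-zag identities \cref{zigzag1,zigzag2} for $(U_{n'}^\ell,V_{n'}^\ell)$ transport directly to those for $(\tV_n^\ell,\tU_n^\ell)$. Explicit formulas are obtained by applying $*\otimes *$ to \cref{notsobad}, using \cref{reallystarry1,reallystarry2} to identify $v_{n'}(x^r)^* = (-1)^{n'r}\tu_n(x^r)$ and $u_{n'}(x^s)^* = (-1)^{ns}\tv_n(x^s)$, commuting the $\bar\eps^{(n'+1)}_{n'-r-s}$-factor across the tensor via \cref{lemma0b}, and simplifying with \cref{newigr} together with the parity shifts built into \cref{evil}.

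The main obstacle will be the subtle mismatch between $\psi_{n'+1}^\ell$ and $(\psi_n^\ell)^{-1}$: both go $\EOH_{n'+1}^\ell\to\EOH_n^\ell$, but they are not equal in general, their discrepancy being controlled by the nontrivial automorphism $\delta_{n'+1}^\ell=\psi_n^\ell\circ\psi_{n'+1}^\ell$ of \cref{deltadef}, whose explicit action on $\bar\gamma_r^{(n'+1)}$ is described by \cref{yuky2} in \cref{overchicago}. This mismatch is precisely the source of the $\dot o^{(\ell)}$ correction term appearing in $\tev_n$ when $r+s>n'$: transporting the value $\bar\gamma_{r+s-n'}^{(n'+1)}=\ev_{n'}(u_{n'}(x^r)\otimes v_{n'}(x^s))$ to $\EOH_n^\ell$ via the specific choice $(\psi_n^\ell)^{-1}$ made in the statement forces the $\dot o^{(\ell)}$-correction in through the $\delta$-discrepancy. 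Rigorous bookkeeping of all signs, and of this correction, will be the principal computational task.
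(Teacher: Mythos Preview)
Your strategy of transporting the first adjunction via $*$ is exactly the paper's approach, and you have correctly identified the key difficulty as the discrepancy between $\psi_{n'+1}^\ell$ and $(\psi_n^\ell)^{-1}$, governed by $\delta_{n'+1}^\ell$. However, there is a genuine gap: this discrepancy is not merely a bookkeeping issue in the explicit formula for $\tev_n$, it actually prevents the map $*\otimes *$ from being well-defined on the tensor product. Concretely, for $\bar a \in \EOH_{n'+1}^\ell$, $v \in V_{n'}^\ell$, $u \in U_{n'}^\ell$, balancedness would require $v^*\,\psi_{n'+1}^\ell(\bar a) \otimes u^* = v^* \otimes (\psi_n^\ell)^{-1}(\bar a)\,u^*$ in $\tU_n^\ell \otimes_{\EOH_n^\ell} \tV_n^\ell$, which fails precisely because $\psi_{n'+1}^\ell \neq (\psi_n^\ell)^{-1}$. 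So your sentence ``both twists take values in $\EOH_n^\ell$, so the tensor product over $\EOH_{n'+1}^\ell$ is converted into one over $\EOH_n^\ell$'' is the point where the argument breaks: taking values in the same algebra is not sufficient, the two twists must agree.

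The paper's fix is to replace $*$ on the $\tU$-factor by $\dagger := \phi_{n'}^\ell \circ *$, where $\phi_{n'}^\ell:V_{n'}^\ell \stackrel{\sim}{\rightarrow} V_{n'}^\ell$ is the automorphism constructed in \cref{crazycrazy} (using the cyclic presentation of $V_{n'}^\ell$ from \cref{lemma3}) satisfying $\phi_{n'}^\ell(avb) = \delta_{n'}^\ell(a)\,\phi_{n'}^\ell(v)\,\delta_{n'+1}^\ell(b)$. Since $\psi_{n+1}^\ell = \delta_{n'}^\ell \circ (\psi_{n'}^\ell)^{-1}$ and $\psi_n^\ell = \delta_{n'+1}^\ell \circ (\psi_{n'+1}^\ell)^{-1}$, one obtains $(\bar b_1 u \bar b_2)^\dagger = \psi_{n+1}^\ell(\bar b_1)\,u^\dagger\,\psi_n^\ell(\bar b_2)$; now the right twist on $\tU_n^\ell$ matches the left twist on $\tV_n^\ell$ (both are $\psi_n^\ell$), so $\dagger \otimes *$ and $*\otimes\dagger$ are balanced and the zig-zag identities transport. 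The $\dot o^{(\ell)}$ correction in $\tev_n$ then enters through the explicit formula \cref{events} for $\phi_{n'}^\ell$, not as an ad hoc adjustment. So the missing ingredient in your proposal is the construction and use of $\phi_{n'}^\ell$.
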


\begin{proof}
Let $\dagger := \phi_{n'}^\ell \circ *:\tU_n^\ell
\rightarrow V_{n'}^\ell$, where $\phi_{n'}^\ell$ is the map
from \cref{crazycrazy}.
Since $\psi_{n+1}^\ell = \delta_{n'}^\ell \circ
\big(\psi_{n'}^\ell\big)^{-1}$
and $\psi_n^\ell = \delta_{n'+1}^\ell \circ
\big(\psi_{n'+1}^\ell\big)^{-1}$
according to the definition \cref{deltadef},
\cref{crazycrazy} and \cref{lemma0b}(2) imply that
\begin{equation}\label{lemma0bchanged}
\left(\bar b_1 u \bar b_2\right)^{\dagger} = \psi_{n+1}^\ell(\bar b_1) u^{\dagger} \psi_{n}^\ell(\bar b_2)
\end{equation}
for $u \in \tU_n^\ell$, $b_1 \in \OSym_{n+1}$ and $b_2
\in \OSym_n$.
Now we define even degree 0 graded $R_\ell$-supermodule homomorphisms
$\tcoev_n$ and $\tev_n$ 
so that the following diagrams commute:
\begin{equation*}
{\begin{tikzcd}
\arrow[d,"\psi^\ell_{n+1}" left]
\EOH_{n+1}^\ell\arrow[r,"\tcoev_n" above]&
\tU_{n}^\ell \otimes_{\EOH_{n}^\ell}
\tV_{n}^\ell\arrow[d,"\dagger \otimes *" right]\\
\EOH_{n'}^\ell
\arrow[r,"\coev_{n'}" below] &V^\ell_{n'}\otimes_{\EOH_{n'+1}^\ell} U^\ell_{n'}
\end{tikzcd}}
\qquad\qquad
{\begin{tikzcd}
\tV_n^\ell\otimes_{\EOH_{n+1}^\ell} \tU_n^\ell
\arrow[r,"\tev_n" above]\arrow[d,"*\otimes \dagger" left]&
\EOH_n^\ell\arrow[d,"\psi_n^\ell" right]\\
U_{n'}^\ell\otimes_{\EOH_{n'}^\ell} V_{n'}^\ell
\arrow[r,"\ev_{n'}" below]
&\EOH_{n'+1}^\ell
\end{tikzcd}}
\end{equation*}
To see that the vertical maps $\dagger\otimes *$ and $* \otimes \dagger$ in these diagrams make sense, one needs to check that they are balanced,
which follows using 
\cref{lemma0b}(1) and \cref{lemma0bchanged}.
In fact, $\tcoev_n$ and $\tev_n$ defined
in this way are superbimodule homomorphisms.
This again follows using
\cref{lemma0b}(1) and \cref{lemma0bchanged}
since $\coev_{n'}$ and $\ev_{n'}$ are superbimodule homomorphisms.

The zig-zag identities
for $\tcoev_n$ and $\tev_n$
follow from their definitions using the zig-zag identities
\cref{zigzag1,zigzag2} 
for $\coev_{n'}$ and $\ev_{n'}$.
Hence they give the unit and counit of an adjunction.

It just remains to compute the explicit formulae for the maps given in 
the statement of the corollary.
To see that $\tcoev_n(1) = \sum_{s=0}^{n'}
(-1)^{\ell s + \binom{s}{2}}
 \Big[   \big(\psi_{n+1}^\ell\big)^{-1}\Big]\big(\bar\eps_{n'-s}^{(n')}\big)
\tu_n(1) \otimes \tv_n(x^s)$, the image of $1$ under the
southwest pair of maps in the diagram defining $\tcoev_n$
is $\sum_{s=0}^{n'} (-1)^{(n'+1)s} \bar\eps_{n'-s}^{(n')}
v_{n'}(1)\otimes u_{n'}(x^s)$ thanks to \cref{Jackdoor}.
This is also the image of 
$\sum_{s=0}^{n'}
(-1)^{\ell s+\binom{s}{2}}
\Big[    \big(\psi_{n+1}^\ell\big)^{-1}\big(\bar\eps_{n'-s}^{(n')}\big)\Big]
\tu_n(1) \otimes \tv_n(x^s)$
under the right hand map $\dagger \otimes *$ by
\cref{reallystarry1,reallystarry2,lemma0bchanged,events}.

To compute $\tev_n\big(\tv_n(x^r)\otimes \tu_n(x^s)\big)$, we use the diagram defining $\tev_n$.
From \cref{reallystarry1,reallystarry2,events},
one sees that 
$$
(* \otimes \dagger)\big(\tv_n(x^r)\otimes \tu_n(x^s)\big) = (-1)^{\binom{r}{2}+\binom{s+1}{2}+n(r+s)} u_{n'}(x^r)
\otimes 
\left[
v_{n'}(x^s)+(-1)^{n+1} \big(1-(-1)^s\big) v_{n'}(x^{s-1}) \dot o^{(\ell)}\right].
$$
Using \cref{reallynot}, the image of this under the homomorphism
$\big(\psi_n^\ell\big)^{-1} \circ \ev_{n'}$ is easily seen to be equal
to the formula for $\tev_n\big(\tv_n(x^r)\otimes \tu_n(x^s)\big)$ given in the statement of the lemma.
\end{proof}

\begin{corollary}\label{indresrel2}
For $0 \leq n < \ell$, the following diagrams of graded superfunctors commute up to even degree 0 isomorphisms:
$$
\begin{tikzcd}
\arrow[d,"(\overline{\omega\chi})_{n+1}-" left]
\EONH_{n+1}^\ell\sMod \arrow[rr,"\res_{\EONH_n^\ell}^{\EONH_{n+1}^\ell}" above]&&
\EONH_{n}^\ell\sMod\arrow[d,"(\overline{\omega\chi})_n-" right]\\
\EOH_{n+1}^\ell\sMod\arrow[rr,"\tV_n^\ell \otimes_{\EOH_{n+1}^\ell}\!\!\!\!\!-" below]&&\EOH_{n}^\ell\sMod
\end{tikzcd}\qquad
\begin{tikzcd}
\arrow[rr,"\coind_{\EONH_n^\ell}^{\EONH_{n+1}^\ell}" above]
\arrow[d,"(\overline{\omega\chi})_n-" left]
\EONH_{n}^\ell\sMod
&&\arrow[d,"(\overline{\omega\chi})_{n+1}-" right]\EONH_{n+1}^\ell\sMod
\\
\arrow[rr,"\tU_{n}^\ell \otimes_{\EOH_{n}^\ell}-\!\!" below]\EOH_{n}^\ell\sMod
&&\EOH_{n+1}^\ell\sMod
\end{tikzcd}
$$
(The vertical arrows are the equivalences 
of graded supercategories from \cref{veryslow}.)
\end{corollary}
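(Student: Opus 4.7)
The plan is to deduce both squares of \cref{indresrel2} from \cref{indresrel} by absorbing the grading and parity shifts baked into the definition \cref{evil} and then invoking uniqueness of right adjoints.

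First I would handle the left square. By \cref{evil} taken at $d=1$, we have $V_n^\ell = (\Pi Q^{-2})^n \tV_n^\ell$ as graded $(\EOH_n^\ell,\EOH_{n+1}^\ell)$-superbimodules, whence $\tV_n^\ell \otimes_{\EOH_{n+1}^\ell} - \simeq (\Pi Q^2)^n \bigl(V_n^\ell \otimes_{\EOH_{n+1}^\ell} -\bigr)$ as graded superfunctors. The left square of \cref{indresrel} provides an even degree zero natural isomorphism $V_n^\ell \otimes_{\EOH_{n+1}^\ell} (\overline{\omega\chi})_{n+1} N \simeq (\overline{\omega\chi})_n \cdot (\Pi Q^{-2})^n \res_{\EONH_n^\ell}^{\EONH_{n+1}^\ell} N$ for $N \in \EONH_{n+1}^\ell\sMod$; applying the functor $(\Pi Q^2)^n$ to both sides cancels the interior shift on the right and gives the desired isomorphism $\tV_n^\ell \otimes_{\EOH_{n+1}^\ell} (\overline{\omega\chi})_{n+1} N \simeq (\overline{\omega\chi})_n \cdot \res N$, proving the first commuting square of \cref{indresrel2}.

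For the right square I would invoke uniqueness of right adjoints in the graded $(Q,\Pi)$-2-supercategory $\GSCAT$. By \cref{secondadjunction}, $(\tV_n^\ell,\tU_n^\ell)$ is a dual pair in $\OGBim_\ell$, so $\tU_n^\ell \otimes_{\EOH_n^\ell} -$ is right adjoint to $\tV_n^\ell \otimes_{\EOH_{n+1}^\ell} -$; on the other side, $\coind_{\EONH_n^\ell}^{\EONH_{n+1}^\ell}$ is right adjoint to $\res_{\EONH_n^\ell}^{\EONH_{n+1}^\ell}$ as recalled before \cref{generaltheory}. The equivalences of graded $(Q,\Pi)$-supercategories $(\overline{\omega\chi})_n -:\EONH_n^\ell\sMod \stackrel{\sim}{\rightarrow} \EOH_n^\ell\sMod$ from \cref{veryslow} preserve and reflect adjunctions, so combining the left square just proved with the fact that right adjoints are unique up to canonical even degree zero natural isomorphism identifies $\tU_n^\ell \otimes_{\EOH_n^\ell} -$ with the transport of $\coind$ across these equivalences, which is precisely the right commuting square.

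The only genuine subtlety is keeping track of parities and degrees: each isomorphism produced must be even of degree zero, which holds because the shift $(\Pi Q^2)^n$ introduced in the first step exactly cancels the shift built into \cref{evil} and because the Morita equivalences $(\overline{\omega\chi})_n -$ are themselves even of degree zero. Beyond this bookkeeping no obstacle arises: with \cref{indresrel} and \cref{secondadjunction} in hand, \cref{indresrel2} is essentially formal.
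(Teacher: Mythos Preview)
Your argument is correct and matches the paper's approach exactly: the paper also derives the first square from \cref{indresrel} by absorbing the shift $(\Pi Q^{-2})^n$ relating $V_n^\ell$ and $\tV_n^\ell$, and then deduces the second square from \cref{secondadjunction} together with uniqueness of right adjoints across the Morita equivalences.
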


\begin{proof}
The commutativity of the first diagram follows from \cref{indresrel}, then the
second follows using \cref{secondadjunction} and the uniqueness of right adjoints.
\end{proof}

The following generalizes \cref{forconsistency}.

\begin{corollary}
For $0 \leq n < \ell$, we have that
$
\ind_{\EONH_n^\ell}^{\EONH_{n+1}^\ell}
\simeq
(\Pi Q^{2})^{\ell-2n-1} \coind_{\EONH_n^\ell}^{\EONH_{n+1}^\ell}.
$
Hence, $\EONH_{n+1}^\ell$ is a graded Frobenius extension of
$\EONH_n^\ell$
of degree $2(\ell-2n-1)$ and parity $\ell-2n-1\pmod{2}$.
\end{corollary}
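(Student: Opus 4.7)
The isomorphism of functors is essentially formal, obtained by comparing the two realizations of induction and coinduction in terms of odd Grassmannian bimodules. From \cref{indresrel} (via the equivalences $(\overline{\omega\chi})_n-$ from \cref{veryslow}) we have an even degree $0$ isomorphism of graded superfunctors
\[
U_{n}^\ell \otimes_{\EOH_{n}^\ell}- \;\simeq\; (\Pi Q^{2})^n \ind_{\EONH_n^\ell}^{\EONH_{n+1}^\ell},
\]
while from \cref{indresrel2} we have
\[
\tU_{n}^\ell \otimes_{\EOH_{n}^\ell}- \;\simeq\; \coind_{\EONH_n^\ell}^{\EONH_{n+1}^\ell}.
\]
On the other hand, the very definition \cref{evil} in the case $d=1$ (so $n'\#1 = n'=\ell-n-1$) gives $\tU_n^\ell = (\Pi Q^{-2})^{\ell-n-1} U_n^\ell$ as graded superbimodules.

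Combining these three identifications, I would compute
\[
\coind_{\EONH_n^\ell}^{\EONH_{n+1}^\ell} \;\simeq\; (\Pi Q^{-2})^{\ell-n-1}(\Pi Q^2)^n \ind_{\EONH_n^\ell}^{\EONH_{n+1}^\ell} \;=\; \Pi^{\ell-1} Q^{-2(\ell-2n-1)} \ind_{\EONH_n^\ell}^{\EONH_{n+1}^\ell}.
\]
Since $\ell-1 \equiv \ell-2n-1 \pmod{2}$, this rearranges to the required isomorphism $\ind_{\EONH_n^\ell}^{\EONH_{n+1}^\ell} \simeq (\Pi Q^2)^{\ell-2n-1}\coind_{\EONH_n^\ell}^{\EONH_{n+1}^\ell}$.

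For the second assertion, I simply invoke the general principle recalled in \cref{sgradedsuperalgebra} (based on \cite[Th.~6.2]{PS}): the existence of an even degree $0$ isomorphism $\ind_A^B \simeq Q^d \Pi^p \coind_A^B$ is equivalent to $B$ being a graded Frobenius extension of $A$ of degree $d$ and parity $p$. Applying this with $A = \EONH_n^\ell$, $B = \EONH_{n+1}^\ell$, $d = 2(\ell-2n-1)$ and $p = \ell-2n-1\pmod{2}$ yields the claim. There is no real obstacle here: the hard work has already been done in constructing the two adjunctions (\cref{cupsandcaps,secondadjunction}) and interpreting them as induction/coinduction (\cref{indresrel,indresrel2}); the only thing to be careful about is the reconciliation of the parity shifts $\Pi^{\ell-1}$ and $\Pi^{\ell-2n-1}$, which is immediate once one observes that they differ by $\Pi^{2n}$.
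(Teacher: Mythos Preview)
Your proof is correct and follows essentially the same approach as the paper: both use \cref{indresrel,indresrel2} to identify $\ind$ and $\coind$ with $U_n^\ell\otimes_{\EOH_n^\ell}-$ and $\tU_n^\ell\otimes_{\EOH_n^\ell}-$ up to explicit shifts, then invoke the definition \cref{evil} to relate $U_n^\ell$ and $\tU_n^\ell$, and finally appeal to the general Frobenius-extension criterion recalled after \cref{generaltheory}. The only cosmetic difference is that the paper inverts the relation from \cref{indresrel} first (writing $\ind$ as $(\Pi Q^2)^{-n}U_n^\ell\otimes-$) and then compares bimodules directly, whereas you keep the shifts on the functor side and combine them at the end; the arithmetic is identical.
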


\begin{proof}
The second statement follows from the first statement by the general
theory of Frobenius extensions explained after \cref{generaltheory}.
To prove the first statement, 
\cref{indresrel,indresrel2} 
show that under the Morita equivalence $\ind_{\EONH_n^\ell}^{\EONH_{n+1}^\ell}$
corresponds to $(\Pi Q^2)^{-n} U_n \otimes_{\EOH_n^\ell}-$
and $\coind_{\EONH_n^\ell}^{\EONH_{n+1}^\ell}$ corresponds
to $\tU_n\otimes_{\EOH_n^\ell}-$.
Now the result follows because
$(\Pi Q^2)^{-n} U_n \simeq (\Pi Q^2)^{\ell-2n-1} \tU_n$ according to \cref{evil}.
\end{proof}

The final task in this section is to compute various mates of the
endomorphisms of $U_{(1^d);n}^\ell$ defined by the
action of $\ONH_d$ from \cref{lemma2}(2). Specifically,
we need to work out the endomorphisms in $\OGBim_\ell$
that correspond to the diagrams \cref{sigrel,phew} in the graphical calculus to be introduced later in the article.
Suppose that $\ell=n+d+n'$ for $d \geq 0$.
We define graded superalgebra homomorphisms
\begin{align}
\rho_{(1^d);n}:\ONH_d &\rightarrow 
\End_{\EOH_{n+d}^\ell\dash \EOH_n^\ell}
\big(U_{n+d-1}^\ell\otimes_{\EOH_{n+d-1}^\ell}
\cdots\otimes_{\EOH_{n+1}^\ell} U_{n}^\ell\big)^{\sop}\label{cam}\\\label{smith}
\lambda_{n;(1^d)}:\ONH_d&\rightarrow
\End_{\EOH_n^\ell\dash \EOH_{n+d}^\ell}
\big(V_n^\ell \otimes_{\EOH_{n+1}^\ell} \cdots\otimes_{\EOH_{n+d-1}^\ell}(V_{n+d-1}^\ell)\big)
\end{align}
as follows.
For $a \in \ONH_d$, $\rho_{(1^d);n}(a)$ is defined 
to be the top map the following diagram commute:
\begin{equation}\label{out}
\begin{tikzcd}
\arrow[d,"\big(b_{(1)^d}\big)^{-1}" left]
U_{n+d-1}^\ell \otimes_{\EOH_{n+d-1}^\ell}
\cdots\otimes_{\EOH_{n+1}^\ell}
U_{n}^\ell
\arrow[rr,"\rho_{(1^d);n}(a)" above]
&&
U_{n+d-1}^\ell \otimes_{\EOH_{n+d-1}^\ell}
\cdots\otimes_{\EOH_{n+1}^\ell}
U_{n}^\ell\\
U_{(1^d);n}^\ell
\arrow[rr,"u_{(1^d);n}(f) \mapsto (-1)^{\parity(a)\parity(f)} u_{(1^d);n}(f) \cdot a" below]&&
U_{(1^d);n}^\ell\arrow[u,"b_{(1)^d}" right]
\end{tikzcd}
\end{equation}
Also $\lambda_{n;(1^d)}(a)$ is the top map making
the following commute:
\begin{equation}\label{in}
\begin{tikzcd}
\arrow[d,"\big(c_{(1)^d}\big)^{-1}" left]
V_{n}^\ell \otimes_{\EOH_{n+1}^\ell}
\cdots\otimes_{\EOH_{n+d-1}^\ell}
V_{n+d-1}^\ell
\arrow[rr,"\lambda_{n;(1^d)}(a)" above]
&&
V_{n}^\ell \otimes_{\EOH_{n+1}^\ell}
\cdots\otimes_{\EOH_{n+d-1}^\ell}
V_{n+d-1}^\ell
\\
V_{n;(1^d)}^\ell
\arrow[rr,"v_{n;(1^d)}(f) \mapsto 
a\cdot v_{n;(1^d)}(f)" below]&&
V_{n;(1^d)}^\ell\arrow[u,"c_{(1)^d}" right]
\end{tikzcd}
\end{equation}
The vertical maps in \cref{out,in} come from \cref{ukraine2,ukraine3}.
We also remind the reader that
$u_{(1^d);n}(f) \cdot a = (-1)^{(d-1)\parity(a)} u_{(1^d);n}(f\cdot a)$
and $a\cdot v_{n;(1^d)}(f) = (-1)^{(n\# (d-1))\parity(a)} v_{n;(1^d)}(a \cdot f)$. 

\begin{lemma}\label{mate1}
For $0 \leq n < \ell$, the mate 
of the $\big(\EOH_{n+1}^\ell,\EOH_{n}^\ell\big)$-superbimodule endomorphism
$\rho_{(1);n}(x_1)$ 
under the adjunction from \cref{cupsandcaps},
that is, the composition
$$
\begin{tikzcd}
V_n^\ell\arrow[r,"\operatorname{can}"]&
\EOH_n^\ell\otimes_{\EOH_n^\ell} V_n^\ell\arrow[rr,"\coev_n\otimes\id"]&&
V_n^\ell \otimes_{\EOH_{n+1}^\ell} U_n^\ell\otimes_{\EOH_n^\ell} V_n^\ell\arrow[rrr,"{\id\otimes\rho_{(1);n}(x_1)\otimes\id}"]&&&
\phantom{x}
\end{tikzcd}\qquad\qquad\qquad
$$$$
\qquad\qquad\qquad\qquad\qquad\begin{tikzcd}
V_n^\ell \otimes_{\EOH_{n+1}^\ell}
U_n^\ell\otimes_{\EOH_n^\ell} V_n^\ell\arrow[rr,"\id\otimes\ev_n"]&&
V_n^\ell\otimes_{\EOH_{n+1}^\ell} \EOH_{n+1}^\ell\arrow[r,"\operatorname{can}"]&
V_n^\ell,
\end{tikzcd}
$$
is equal to $\lambda_{n;(1)}(x_1)$.
\end{lemma}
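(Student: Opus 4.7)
The plan is to exploit the fact that both $\lambda_{n;(1)}(x_1)$ and the mate $M$ of $\rho_{(1);n}(x_1)$ are graded $\big(\EOH_n^\ell,\EOH_{n+1}^\ell\big)$-superbimodule endomorphisms of $V_n^\ell$ of parity $\1$ and degree $2$. Since $V_n^\ell$ is cyclic as a superbimodule with generator $v_n(1)$ by \cref{lemma3}(1c), it will suffice to verify that the two maps agree on this one vector.

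First I would compute $\lambda_{n;(1)}(x_1)\big(v_n(1)\big)$ directly from the definition \cref{in}. Taking $d=1$, the isomorphism $c_{(1)}$ is the identity, so $\lambda_{n;(1)}(x_1)\big(v_n(1)\big) = x_1\cdot v_{n;(1)}(1) = v_n(x)$; there is no Koszul sign because $n\#0 = 0$.

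Next I would compute $M\big(v_n(1)\big)$ using the explicit formula \cref{notsobad} for $\coev_n$ together with \cref{reallynot} for $\ev_n$. A key intermediate identity, read off from \cref{out}, is that $\rho_{(1);n}(x_1)\big(u_n(x^s)\big) = (-1)^s u_n(x^{s+1})$. When this is inserted into $(\id\otimes\rho_{(1);n}(x_1)\otimes\id)\bigl(\coev_n(1)\otimes v_n(1)\bigr)$, the Koszul sign coming from passing the odd endomorphism across the element $v_n(x^r)\bar\eps^{(n+1)}_{n-r-s}$ (of parity $r+n+n-r-s \equiv s\pmod 2$) precisely cancels the sign $(-1)^s$ from the action itself, leaving
$$\sum_{\substack{r,s\geq 0\\ r+s\leq n}} (-1)^{n-r-s}\,v_n(x^r)\,\bar\eps^{(n+1)}_{n-r-s}\otimes u_n(x^{s+1})\otimes v_n(1).$$
Applying $\id\otimes\ev_n$ then collapses this to the terms with $s+1\geq n$, of which there are only three: $(r,s)=(0,n-1)$ contributing $-v_n(1)\,\bar\eps^{(n+1)}_1$, $(r,s)=(1,n-1)$ contributing $v_n(x)$, and $(r,s)=(0,n)$ contributing $v_n(1)\,\bar\eta^{(n+1)}_1$. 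Since $\bar\eps^{(n+1)}_1 = \bar h^{(n+1)}_1 = \bar\eta^{(n+1)}_1$, the first and third cancel and what remains is $v_n(x)$, in agreement with $\lambda_{n;(1)}(x_1)\big(v_n(1)\big)$.

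The only real obstacle here is the bookkeeping of Koszul signs during the evaluation of the composition $(\id\otimes\ev_n)\circ(\id\otimes\rho_{(1);n}(x_1)\otimes\id)\circ(\coev_n\otimes\id)$; the prior reduction to the single cyclic generator $v_n(1)$ keeps this manageable, and the signs conspire so that only the middle term in the three-term collapse survives.
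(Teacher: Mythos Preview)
Your proof is correct and follows essentially the same approach as the paper: both reduce to the cyclic generator $v_n(1)$ via \cref{lemma3}(1c), verify $\lambda_{n;(1)}(x_1)\big(v_n(1)\big)=v_n(x)$, and then compute the mate on $v_n(1)$ directly from the explicit description of $\coev_n$ and $\ev_n$. The only cosmetic difference is that the paper uses the generating-function form \cref{fridoor} of $\coev_n(1)$ (with $\bar\eps^{(n)}$ on the left) and carries the computation through in closed form, arriving at $v_n(1)\,\bar\gamm^{(n+1)}_1-(-1)^n\bar\eps^{(n)}_1 v_n(1)=v_n(x)$, whereas you expand the sum \cref{notsobad} term by term and obtain the same answer via the three-term collapse and the identity $\bar\eta^{(n+1)}_1=\bar\eps^{(n+1)}_1$. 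One small remark: your phrase ``of which there are only three'' tacitly assumes $n\geq 1$; when $n=0$ the pairs $(0,n-1)$ and $(1,n-1)$ do not occur and only $(0,0)$ survives, but the conclusion $v_n(1)\,\bar\eta^{(1)}_1=v_n(x)$ still holds.
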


\begin{proof}
Since $V_{n}^\ell$ is generated as a superbimodule
by $v_n(1)$ by \cref{lemma3}(1c), it suffices to show that
$$
(\id \otimes \ev_n)\circ
(\id \otimes \rho_{(1);n}(x_1) \otimes \id)
\circ (\coev_n \otimes\id)\big(1 \otimes v_n(1)\big)
=
\lambda_{n;(1)}(x_1)\big(v_n(1)\big) \otimes 1.
$$
This is a calculation from the definitions.
The right hand side is
$v_n(x) \otimes 1$ by the definition \cref{in}.
For the left hand side, we first apply
$\coev_n \otimes\id$ using \cref{fridoor}
to get 
$$
\Big[
\bar\eps^{(n)}\big((-1)^{n} t \big)
v_n\big((-1)^n\big) \otimes u_n\big((t-x)^{-1}\big) \otimes v_n(1)
\Big]_{t^{-1}}.
$$
Then we apply the odd endomorphism
$\id \otimes \rho_{(1);n}(x_1) \otimes \id$
defined by \cref{out} to get
$$
\Big[
\bar\eps^{(n)}\big((-1)^{n+1} t \big)
v_n\big((-1)^n\big) \otimes u_n\big((t+x)^{-1}x\big) \otimes v_n(1)
\Big]_{t^{-1}}.
$$
Finally we apply
$\id \otimes \ev_n$ using \cref{ev1} with $t$ replaced by $-t$ to get
$$
\Big[
\bar\eps^{(n)}\big((-1)^{n+1} t \big)
v_n\big((-1)^n\big) \big[
\bar\gamm^{(n+1)}(-t)t
\big]_{<t^{0}}\Big]_{t^{-1}} \otimes 1.
$$
Computing the coefficients explicitly,
this is equal to
$v_n(1)\bar\gamm^{(n+1)}_1-(-1)^n\bar\eps_1^{(n)} v_n(1)$.
Since $\gamm_1^{(n+1)} = x_1+\cdots+x_{n+1}$
and $\eps_1^{(n)} = x_1+\cdots+x_n$ (and $(-1)^n$ cancels
when $\eps_1^{(n)}$ acts on $v_n(1)$ due to the parity shift) this is $v_n(x)\otimes 1$.
\end{proof}

\begin{lemma}\label{mate2}
For $0 < n < \ell$, the $\big(\EOH_n^\ell,\EOH_n^\ell\big)$-superbimodule 
endomorphism $\sigma_n$ that is defined by the composition
$$
\begin{tikzcd}
U_{n-1}^\ell\otimes_{\EOH_{n-1}^\ell} V_{n-1}^\ell
\arrow[r,"\operatorname{can}"]&
\EOH_n^\ell\otimes_{\EOH_n^\ell} U_{n-1}^\ell\otimes_{\EOH_{n-1}^\ell} V_{n-1}^\ell\arrow[rrr,"\coev_n \otimes \id\otimes\id"]&&&\phantom{x}
\end{tikzcd}\qquad\qquad\qquad\qquad\qquad$$$$\begin{tikzcd}
V_n^\ell\otimes_{\EOH_{n+1}^\ell} U_n^\ell \otimes_{\EOH_n^\ell}
U_{n-1}^\ell\otimes_{\EOH_{n-1}^\ell} V_{n-1}^\ell\arrow[rrr,"\id \otimes \rho_{(1^2);n-1}(\tau_1) \otimes \id"]&&&
V_n^\ell\otimes_{\EOH_{n+1}^\ell} U_n^\ell \otimes_{\EOH_n^\ell}
U_{n-1}^\ell\otimes_{\EOH_{n-1}^\ell} V_{n-1}^\ell
\end{tikzcd}$$$$\qquad\qquad\qquad\qquad\qquad\qquad\quad\begin{tikzcd}
\phantom{x}\arrow[rrr,"\id\otimes\id\otimes\ev_{n-1}"]&&&
V_n^\ell\otimes_{\EOH_{n+1}^\ell} U_n^\ell \otimes_{\EOH_n^\ell}
\EOH_n^\ell\arrow[r,"\operatorname{can}"]&
V_n^\ell\otimes_{\EOH_{n+1}^\ell} U_n^\ell
\end{tikzcd}
$$
maps
$u_{n-1}\big(x^r\big) \otimes v_{n-1}\big(x^s\big)$
to \begin{equation}\label{Ian}
(-1)^{nr+rs+r+s+n+1} v_n\big(x^s\big) \otimes u_n\big(x^r\big)
+
\sum_{p=0}^n \sum_{q=0}^{r+s-n}
(-1)^{nq+pq+rq+r+q}
 v_n\big(x^p\big) \otimes u_n\big(x^{q}\big)
\bar\eps^{(n)}_{n-p} \bar \gamm^{(n)}_{r+s-n-q}
\end{equation}
for any $r \geq 0$ and $0 \leq s \leq n-1$.
\end{lemma}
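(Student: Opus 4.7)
The plan is a direct generating-function computation: express $\coev_n$ via \cref{sundoor1}, express $\rho_{(1^2);n-1}(\tau_1)$ by pulling the tensor back to $U_{(1^2);n-1}^\ell$ via $b_{(1)^2}^{-1}$ and using \cref{trulycrazy}, and then apply $\ev_{n-1}$ via \cref{ev1}. Throughout, we keep track of the formal variable $t$, computing the answer as $[\,\cdot\,]_{t^{-1}}$ at the very end. The key algebraic engine is \cref{trulycrazy}, which—after substituting $t\leadsto -t$ and rearranging—yields the identity
\[
(t-x_2)^{-1}x_1^{r}\cdot\tau_1 \;=\; (t-x_2)^{-1}\,x_2^{\,r}\,\bigl((-1)^{r}x_1-t\bigr)^{-1} \;+\; \sum_{q=0}^{r-1}(-1)^{qr}(t-x_2)^{-1}x_2^{\,q}x_1^{\,r-q-1}
\]
in $\OPol_2$. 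This splits the answer into a rational piece (R) and a polynomial piece (P); R will produce the ``swap'' term in \cref{Ian} and P will produce the finite summation.

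First I would apply $(\coev_n\otimes\id\otimes\id)$ to $1\otimes u_{n-1}(x^r)\otimes v_{n-1}(x^s)$ using the form
\[
\coev_n(1)=\bigl[\,v_n\bigl((t-x)^{-1}\bigr)\,\bar\eps^{(n+1)}(t)\otimes u_n\bigl((t-x)^{-1}\bigr)\bigr]_{t^{-1}}.
\]
The middle factor $u_n((t-x)^{-1})\otimes u_{n-1}(x^r)\in U_n^\ell\otimes_{\EOH_n^\ell}U_{n-1}^\ell$ corresponds, via \cref{scratchy}, to $(-1)^{r}u_{(1^2);n-1}\bigl((t-x_2)^{-1}x_1^{r}\bigr)$, so the middle arrow $\id\otimes\rho_{(1^2);n-1}(\tau_1)\otimes\id$ acts by the right action of $\tau_1$ on $(t-x_2)^{-1}x_1^r$, with an overall sign $-(-1)^r$ coming from the parity conventions in \cref{out} and \cref{lemma2}(2). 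Substituting the identity above, the middle factor becomes two summands; we translate each back through $b_{(1)^2}$, obtaining expressions of the shape $u_n(\cdot)\otimes u_{n-1}(\cdot)$ in $U_n^\ell\otimes_{\EOH_n^\ell}U_{n-1}^\ell$.

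Next I would apply $\id\otimes\id\otimes\ev_{n-1}$ using \cref{ev1}, so that the right two tensor factors collapse to (generating functions of) coefficients involving $\bar\gamm^{(n)}(t)$ times $x^s$ evaluated at $t$. For the polynomial piece P, this is immediate: the sum of shape $\sum_{q}(-1)^{qr}(t-x_2)^{-1}x_2^{q}x_1^{r-q-1}$ contributes, after applying $\ev_{n-1}$, a finite sum in $\EOH_n^\ell$ which telescopes—via the infinite Grassmannian relation \cref{newigr} and the residue identity \cref{why}—into precisely the double sum $\sum_{p,q}(-1)^{nq+pq+rq+r+q}v_n(x^p)\otimes u_n(x^q)\bar\eps^{(n)}_{n-p}\bar\gamm^{(n)}_{r+s-n-q}$ appearing in \cref{Ian}. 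For the rational piece R, we use the key simplification \cref{crap2}, namely $\bar\eps^{(n+1)}(t)u_n((t-x)^{-1})=u_n((-1)^{n})\,\bar\eps^{(n)}(-t)$, to convert the double pole in $t$ into a single pole, and then the residue $[\,\cdot\,]_{t^{-1}}$ immediately extracts the ``swap'' term $(-1)^{nr+rs+r+s+n+1}v_n(x^s)\otimes u_n(x^r)$.

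The main obstacle will be sign tracking: each of the three constituent maps $\coev_n$, $\rho_{(1^2);n-1}(\tau_1)$, $\ev_{n-1}$ involves parity shifts and signs from the conventions of \cref{lemma0,lemma1,lemma2,cupsandcaps}, and the extra sign from passing between $b_{(1)^2}$ and its inverse compounds these. A parity-by-parity bookkeeping (separating the case where $r$ or $s$ is odd from the case where it is even) is needed to reduce the exponents to the normalized forms in the target formula. Once the rational piece produces the correct sign for the first summand and the polynomial piece matches after reindexing $q\mapsto r-q-1$ inside the double sum, the identity \cref{Ian} follows.
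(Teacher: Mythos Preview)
Your proposal is correct and follows essentially the same approach as the paper: both compute via \cref{sundoor1} for $\coev_n$, pull the middle tensor back through $b_{(1)^2}$ and apply \cref{trulycrazy} to compute $\rho_{(1^2);n-1}(\tau_1)$, then use \cref{ev1} for $\ev_{n-1}$, with the rational piece simplified via \cref{relsfancy3} (your citation of \cref{crap2} is the $r=0$ case---you will need the general form \cref{relsfancy3} since after $\tau_1$ the $u_n$-factor carries $x^r$) and the polynomial piece giving the double sum. One small caution: the parity of $\tau_1$ flips $(t-x_2)^{-1}$ to $(t+x_2)^{-1}$ when you push through \cref{out}, so \cref{trulycrazy} applies directly without the $t\leadsto -t$ substitution; this is precisely the sign bookkeeping you flagged as the main obstacle.
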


\begin{proof}
First, we show that 
$\rho_{(1^2);n-1}(\tau_1)
\Big(u_n\big((t- x)^{-1}\big)
\otimes u_{n-1}\big(x^r\big)\Big)$
equals
\begin{equation}\label{signtrouble}
-u_{n}\big((t+x)^{-1} x^r\big) \otimes u_{n-1}\Big(\big(t-(-1)^r x\big)^{-1}\Big)
+ \sum_{q=0}^{r-1} (-1)^{q+r+rq}
u_{n}\big((t+x)^{-1}x^{q}\big)\otimes u_{n-1}\big(x^{r-q-1}\big).
\end{equation}
To do this, according to the definition \cref{out},
we first use the inverse of the map $b_{(1)^2}$ from \cref{ukraine2} to pass to
$U_{(1^2);n-1}^\ell$.
This maps
$u_n\big((t- x)^{-1}\big)\otimes
u_{n-1}\big(x^r\big)$
to
$(-1)^r u_{(1^2);n-1}\big((t-x_2)^{-1} x_1^r\big)$.
The application of $\rho_{(1^2);n-1}(\tau_1)$
takes this to
$-u_{(1^2);n-1}\big((t+x_2)^{-1} x_1^r \cdot \tau_1\big)$.
This we can compute with \cref{trulycrazy}
to get
$$
-u_{(1^2);n-1}\Big((t+x_2)^{-1} x_2^r \big(t+(-1)^r x_1\big)^{-1}\Big)
- \sum_{q=0}^{r-1} (-1)^{rq}
u_{(1)^2;n-1}\Big((t+x_2)^{-1}x_2^{q}x_1^{r-q-1}\Big).
$$
After that we apply $b_{(1)^2}$ to 
obtain the vector 
in $U_{n}^\ell\otimes_{\EOH_n^\ell} U_{n-1}^\ell$
displayed in \cref{signtrouble}.

Now to prove the lemma, we again calculate
with generating functions.
Start with the vector $u_{n-1}\big(x^r\big) \otimes v_{n-1}\big(x^s\big)$ for $0 \leq s \leq n-1$ (this assumption on $s$ will be crucial shortly).
By \cref{sundoor1}, the map
$\coev_n \otimes \id \otimes \id$ takes it to 
\begin{equation}\label{crv}
\Big[ v_n((t-x)^{-1}) \bar \eps^{(n+1)}(t) \otimes
u_n\big((t- x)^{-1}\big)\otimes
u_{n-1}\big(x^r\big) \otimes v_{n-1}\big(x^s\big)
\Big]_{t^{-1}}.
\end{equation}
Then we apply the odd homomorphism
$\id \otimes \rho_{(1^2);n-1}(\tau_1) \otimes \id$
using \cref{signtrouble} to obtain
\begin{multline}\label{tricky}
\Big[ v_n((t+x)^{-1}) \bar \eps^{(n+1)}(-t) \otimes
u_{n}\big((t+x)^{-1} x^r\big) \otimes u_{n-1}\Big(\big(t-(-1)^r x\big)^{-1}\Big)
\otimes v_{n-1}\big(x^s\big)
\Big]_{t^{-1}}\\
+
 \sum_{q=0}^{r-1} (-1)^{q+r+rq+1}
 \Big[ v_n((t+x)^{-1}) \bar \eps^{(n+1)}(-t) \otimes
u_{n}\big((t+x)^{-1}x^{q}\big)\otimes u_{n-1}\big(x^{r-q-1}\big)
\otimes v_{n-1}\big(x^s\big)
\Big]_{t^{-1}}.
\end{multline}
It just remains to apply $\id\otimes\id\otimes \ev_{n-1}$.
We treat the two terms in \cref{tricky} 
separately. For the first term, we have 
that 
$\ev_{n-1}\Big(u_{n-1}\Big(\big(t- (-1)^r x\big)^{-1}\Big)
\otimes v_{n-1}\big(x^s\big)\Big)
=(-1)^{rs+r} \left[\bar \gamm^{(n)}\big((-1)^r t\big) t^s\right]_{<t^{0}}$
by \cref{ev1} (with $t$ replaced by $(-1)^{r}t$).
The assumption that $s \leq n-1$ 
means that we can omit the truncation to $<t^{0}$ here.
So the first term contributes
$$
(-1)^{rs+r}
\Big[ v_n((t+x)^{-1}) \otimes \bar \eps^{(n+1)}(-t) u_{n}\big((t+x)^{-1} x^r\big)\otimes
\bar \gamm^{(n)}\big((-1)^r t\big) t^s
\Big]_{t^{-1}}.
$$
Now we use \cref{relsfancy3} to rewrite this as
$$
(-1)^{nr+rs+r+n+1}
\Big[ v_n((t+x)^{-1}) \otimes 
u_n\big(x^r\big) \bar\eps^{(n)}\big((-1)^{r} t\big)
\bar \gamm^{(n)}\big((-1)^r t\big) t^s \otimes 1
\Big]_{t^{-1}}.
$$
The $\eps$ and $\gamm$ cancel by the infinite Grassmannian relation 
to leave
$$
(-1)^{nr+rs+r+n+1}
\Big[ v_n((t+x)^{-1}t^s) \otimes 
u_n\big(x^r\big) \otimes 1
\Big]_{t^{-1}}
\stackrel{\cref{why}}{=}
(-1)^{nr+rs+r+s+n+1}
 v_n\big(x^s\big) \otimes u_n\big(x^r\big)\otimes 1.
$$
It remains to consider the term obtained by applying 
$\id\otimes\id\otimes \ev_{n-1}$
to the second term from \cref{tricky}. Using \cref{reallynot,relsfancy3}, this contributes
\begin{multline*}
\sum_{q=0}^{r+s-n} (-1)^{q+r+rq+1}
 \Big[ v_n((t+x)^{-1}) \otimes
\bar \eps^{(n+1)}(-t) u_{n}\big((t+x)^{-1}x^{q}\big)
\otimes \bar \gamm^{(n)}_{r+s-q-n}
\Big]_{t^{-1}}=\\
\sum_{q=0}^{r+s-n} (-1)^{q+r+rq+nq+n}
\Big[ v_n\big((t+x)^{-1}\big)\otimes u_n\big(x^q\big)\bar \eps^{(n)}\big((-1)^{q}t\big)\bar\gamm^{(n)}_{r+s-q-n}\otimes 1\Big]_{t^{-1}}.
\end{multline*}
It remains to work out the $t^{-1}$-coefficient explicitly to complete the proof.
\end{proof}

\begin{lemma}\label{mate3}
For $0 < n < \ell$, the mate of $\rho_{(1^2);n-1}(\tau_1)$
under the adjunction from \cref{cupsandcaps}, that is, the composition
$$
\begin{tikzcd}
V_{n-1}^\ell\otimes_{\EOH_{n}^\ell} V_{n}^\ell\arrow[r,"\operatorname{can}"]&
\EOH_{n-1}^\ell\otimes_{\EOH_{n-1}^\ell}V_{n-1}^\ell\otimes_{\EOH_{n}^\ell} V_{n}^\ell
\arrow[rrr,"\coev_{n-1}\otimes\id\otimes \id"]&&&\phantom{x}
\end{tikzcd}\qquad\qquad\qquad\qquad\qquad
$$
$$
\begin{tikzcd}
V_{n-1}^\ell\otimes_{\EOH_n^\ell} U_{n-1}^\ell
\otimes_{\EOH_{n-1}^\ell}V_{n-1}^\ell\otimes_{\EOH_{n}^\ell} V_{n}^\ell
\arrow[rr,"\id\otimes\sigma_n\otimes \id"]&&
V_{n-1}^\ell\otimes_{\EOH_n^\ell} V_{n}^\ell
\otimes_{\EOH_{n+1}^\ell}U_{n}^\ell\otimes_{\EOH_{n}^\ell} V_{n}^\ell
\end{tikzcd}
$$
$$
\qquad\qquad\qquad\qquad\qquad
\begin{tikzcd}
\phantom{x}\arrow[rrr,"\id\otimes\id \otimes \ev_{n}"]&&&
V_{n-1}^\ell\otimes_{\EOH_n^\ell} V_{n}^\ell
\otimes_{\EOH_{n+1}^\ell} \EOH_{n+1}^\ell\arrow[r,"\operatorname{can}"]&
V_{n-1}^\ell\otimes_{\EOH_n^\ell} V_{n}^\ell
\end{tikzcd}
$$ where $\sigma_n$ is the superbimodule homomorphism\footnote{Diagrammatically, we have rotated through $180^\circ$ by rotating by $90^\circ$ twice, see \cref{sigrel,phew}.}
 defined in \cref{mate2},
is equal to $\lambda_{n-1;(1^2)}(\tau_1)$.
\end{lemma}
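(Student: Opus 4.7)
\emph{Proof plan.} Both sides of the claimed identity are $(\EOH_{n-1}^\ell, \EOH_{n+1}^\ell)$-superbimodule homomorphisms of parity $1$ and degree $-2$: for the right-hand side this is built into the definition \cref{in} combined with the bimodule isomorphism $c_{(1)^2}$ of \cref{itchyandscratchy}; for the left-hand side, $\rho_{(1^2);n-1}(\tau_1)$ is a superbimodule homomorphism by \cref{lemma2}(2), and the sandwich with the even degree-zero adjunction morphisms from \cref{cupsandcaps} preserves this structure. So, after transferring along $c_{(1)^2}$, the problem reduces to comparing two superbimodule endomorphisms of $V_{n-1;(1^2)}^\ell$. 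Iterating the cyclicity of \cref{lemma3}(1c) (equivalently, using that $\OSym_{(n-1,1,1,\ell-n-1)}\otimes_{\OSym_\ell}R_\ell$ is generated, as bimodule, by the unit together with the action of the polynomial generators of $\OSym_{n+1}$ and $\OSym_{n-1}$), one sees that $V_{n-1;(1^2)}^\ell$ is cyclic with generator $v_{n-1;(1^2)}(1)$, so it suffices to check equality on this one element.

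On the right, $\lambda_{n-1;(1^2)}(\tau_1)$ sends $v_{n-1;(1^2)}(1)$ to $(-1)^{n-1}\,v_{n-1;(1^2)}(\tau_1\cdot 1)=0$, since $\tau_1$ acts via the odd Demazure operator $\partial_1$ and $\partial_1(1)=0$ by \cref{betteract}. So the remaining task is to verify that the left-hand composite annihilates $v_{n-1}(1)\otimes v_n(1)$. This is carried out by direct computation: apply $\coev_{n-1}\otimes\id\otimes\id$ using the generating-function expression \cref{sundoor1}, then $\id\otimes\sigma_n\otimes\id$ using the two-term formula \cref{Ian} of \cref{mate2} specialised to $s=0$, and finally $\id\otimes\id\otimes\ev_n$ using \cref{ev1}. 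The ``crossing'' contribution (from the first summand of \cref{Ian}) collapses via \cref{crap1}, \cref{crap2}, and \cref{why} into a pure-polynomial-in-$t$ factor multiplying $v_{n-1}(\cdot)\otimes v_n(1)\otimes 1$, which contributes no $t^{-1}$-coefficient. The ``straight'' contribution (the correction sum in \cref{Ian}) reassembles, after reindexing in $r$, into generating-function form using \cref{cleverer}, and its $t^{-1}$-coefficient cancels the residual part of the crossing contribution by the infinite Grassmannian identity \cref{newigr}.

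The main obstacle will be the sign and substitution bookkeeping: both contributions involve several substitutions $t\mapsto(-1)^k t$ coming from the defining relations \cref{newrels1} and \cref{relsfancy3} of $V_n^\ell$ and $U_n^\ell$, and these must be reconciled with the signs in \cref{Ian} and the parity shifts of \cref{evil}. Conceptually the identity holds because, in a rigid $2$-supercategory, mating is functorial, and $\rho_{(1^d);n}$ and $\lambda_{n;(1^d)}$ transport the same $\ONH_d$-action through the $(U,V)$ adjunctions; what the lemma really verifies is the single outstanding compatibility on the generator $\tau_1$, the corresponding statement for the $x_i$'s having been handled in \cref{mate1}.
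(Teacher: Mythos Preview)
Your reduction to a single generator does not work: $V_{n-1;(1^2)}^\ell$ is \emph{not} in general cyclic as an $(\EOH_{n-1}^\ell,\EOH_{n+1}^\ell)$-superbimodule on $v_{n-1;(1^2)}(1)$. Already for $n=1$ (which is allowed since $0<n<\ell$), the left action is by $\EOH_0^\ell=R_\ell$, which is central, so cyclicity as a bimodule is the same as cyclicity as a right $\EOH_2^\ell$-module; but by \cref{lemma0}(1a) this module is free of rank two. More conceptually, ``iterating'' \cref{lemma3}(1c) does not give what you want: if $M$ is a cyclic $(A,B)$-bimodule and $N$ is a cyclic $(B,C)$-bimodule, $M\otimes_B N$ need not be cyclic as an $(A,C)$-bimodule, because the middle $B$-action is lost. (Take $A=C=\k$, $B=\k[x]$, $M=N=\k[x]$: the tensor product is the infinite-dimensional $\k$-vector space $\k[x]$.) So checking the single vector $v_{n-1}(1)\otimes v_n(1)$ is not sufficient, and your proof has a genuine gap.

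What the paper actually does is use \cref{lemma3}(1b)--(1c) to see that $V_{n-1}^\ell\otimes_{\EOH_n^\ell}V_n^\ell$ is generated as a superbimodule by the $(n+1)$ vectors $v_{n-1}(1)\otimes v_n(x^s)$ for $0\leq s\leq n$, and then checks equality on each of these. For $s=0$ both sides vanish (this is essentially your computation, but simpler than you indicate: since every $r$ in the $\coev_{n-1}$ expansion satisfies $r\leq n-1$, the correction sum in \cref{Ian} is empty, and then $\ev_n$ kills everything because $r+s<n$). The nontrivial content is at $s\geq 1$, where both sides equal $(-1)^{n+1}v_{n-1;(1^2)}(\gamm_{s-1}^{(2)})$; this requires the identity $\sum_{r}(-1)^r\eps_r^{(n-1)}\gamm_{s-1-r}^{(n+1)}=\sig_{n-1}(\gamm_{s-1}^{(2)})$ together with careful tracking of the parity shift in $c_{(1)^2}$.
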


\begin{proof}
By \cref{lemma3}(1b)--(1c),
$V_{n-1}^\ell\otimes_{\EOH_n^\ell}
V_{n}^\ell$ is generated as an
$\big(\EOH_{n-1}^\ell,\EOH_{n+1}^\ell\big)$-superbimodule
by the vectors $v_{n-1}(1)\otimes v_n(x^s)\:\:(0 \leq s \leq n)$.
Therefore it suffices to show that
$$
(\id\otimes\id \otimes \ev_{n})
\circ (\id \otimes \sigma_n \otimes \id) \circ (\coev_{n-1}
\otimes \id \otimes \id)
\big(1 \otimes v_{n-1}(1)\otimes v_n(x^s)\big) =
\lambda_{n-1;(1^2)}(\tau_1) \big( v_{n-1}(1)\otimes v_n(x^s)\big) \otimes 1
$$
for $0 \leq s \leq n$.
The right hand side may be computed directly 
from \cref{in,ukraine3}.
It equals 
$$
(-1)^{n} c_{(1)^2}\Big(v_{n-1;(1^2)}\big(\tau_1 \cdot x_2^s\big)\Big) \otimes 1
\stackrel{\cref{rank22}}{=}  (-1)^{n+1} c_{(1)^2}\Big( v_{n-1;(1^2)}\big(\gamm^{(2)}_{s-1}\big)\Big) \otimes 1.
$$
So to complete the proof we must show that
\begin{multline}\label{todo}
\!\!\left(c_{(1)^2}\otimes\id\right)^{\!-1}\!\!
\Big( (\id\otimes\id \otimes \ev_{n})\
\!\circ (\id \otimes \sigma_n \otimes \id) \circ (\coev_{n-1}
\otimes \id \otimes \id)
\big(1 \otimes v_{n-1}(1)\otimes v_n(x^s)\big)\Big)\\ = 
(-1)^{n+1} v_{n-1;(1^2)}\big(\gamm^{(2)}_{s-1}\big)\otimes 1.
 \end{multline}
To compute the left hand side, we first use \cref{Jackdoor}
to get
$$
(\coev_{n-1}
\otimes \id \otimes \id)
\big(1\otimes v_{n-1}(1)\otimes v_n(x^s)\big)
=
\sum_{r=0}^{n-1}(-1)^{nr} \bar \eps_{n-1-r}^{(n-1)}
v_{n-1}(1) \otimes 
u_{n-1}\big(x^r\big) \otimes v_{n-1}(1)
\otimes v_n(x^s).
$$
Then we apply $\id \otimes \sigma_n \otimes \id$
using \cref{Ian}, noting also that $\sigma_n$ is {\em odd}. 
Since $r \leq n-1$ in this expression,
the summation over $q$ on the right hand side of \cref{Ian} is actually
an empty sum, so zero, and we get simply
$$
(-1)^{n+1} \sum_{r=0}^{n-1}
\bar \eps_{n-1-r}^{(n-1)}
v_{n-1}(1) \otimes
v_n(1) \otimes u_n\big(x^r\big)
\otimes v_n\big (x^s\big ).
$$
Next we apply $\id\otimes\id\otimes\ev_n$.
We must have that $r+s \geq n$ so $r \geq n-s$, and the final expression is
$$
 (-1)^{n+1} \sum_{r=n-s}^{n-1}
\bar \eps_{n-1-r}^{(n-1)}
v_{n-1}(1) \otimes
v_n(1) 
\bar\gamm^{(n+1)}_{r+s-n}
= (-1)^{n+1}\sum_{r=0}^{s-1} \bar\eps_r^{(n-1)}
v_{n-1}(1) \otimes
v_n(1) 
\bar\gamm^{(n+1)}_{s-1-r}.
$$
Applying $\big(c_{(1)^2}\otimes\id\big)^{-1}$ as is required for \cref{todo}, we get
$$
(-1)^{n+1} \sum_{r=0}^{s-1} \bar\eps_r^{(n-1)}
v_{n-1;(1^2)}(1)
\bar\gamm^{(n+1)}_{s-1-r}.
$$
There is a sign change of $(-1)^r$
due to the parity shift $(n-1)\# 2 \equiv 1 \pmod{2}$. Also 
we have that $\sum_{r=0}^{s-1} (-1)^r \eps_r^{(n-1)} \gamm^{(n+1)}_{s-1-r}
= \sig_{n-1}\big(\gamm^{(2)}_{s-1}\big)$ by a similar argument to the proof of \cref{notsoeasypeasy}.
So this is $(-1)^{n+1} v_{n-1;(1^2)}\big(\gamm^{(2)}_{s-1}\big)$
exactly as in \cref{todo}.
\end{proof}

\section{Singular Rouquier complex}\label{derby}

Throughout the section, we fix $\ell \in \N$.
The graded $(Q,\Pi)$-2-supercategory
$\OGBim_\ell$ 
categorifies the locally unital $\Z[q,q^{-1}]^\pi$-algebra that is
the image of $\mathbf{U}_{q,\pi}(\sl_2)$
in its representation on
$\mathbf{V}(-\ell)$, notation as at the end of \cref{leavingdepoe}.
To make this statement precise, let
$K_0(\EOH_n^\ell)$ be the Grothendieck group of 
$\EOH_n^\ell$; recall this means the split Grothendieck group
of the category $\EOH_n^\ell\Upsmod$.
Since $\EOH_n^\ell$ is positively graded with
degree 0 component that is the ground field $\k$,
this is nothing more than the free $\Z[q,q^{-1}]^\pi$-module
generated by the isomorphism class $[\EOH_n^\ell]$
of the regular module, with the actions of $\pi$ and $q$
induced by the parity and degree shift functors $\Pi$ and $Q$, respectively.
So we can identify
\begin{align}
\mathbf{V}(-\ell)
&\equiv
\bigoplus_{n=0}^\ell
K_0(\EOH_n^\ell),
&
b_n^\ell &\equiv [\EOH_n^\ell].\label{theid}
\end{align}
The following matches up the action of generators of 
$\mathbf{U}_{q,\pi}(\sl_2)$
on $\mathbf{V}(-\ell)$ with 
endomorphisms of the Grothendieck group
induced by tensoring with odd Grassmannian bimodules.

\begin{theorem}\label{K0}
Under the identification \cref{theid} of
$\bigoplus_{n=0}^\ell
K_0(\EOH_n^\ell)$
with
$\mathbf{V}(-\ell)$, 
the 
$\Z[q,q^{-1}]^\pi$-module endomorphisms induced by 
tensoring with odd Grassmannian bimodules
correspond
to endomorphisms defined 
by actions of elements of $\mathbf{U}_{q,\pi}(\sl_2)$ according to the following dictionary:
\begin{enumerate}
\item
 $\big[U_n^\ell\otimes_{\EOH_n^\ell}-\big]\equiv q^n E 1_{2n-\ell}$
 and, more generally, $\big[U_{(d);n}^\ell\otimes_{\EOH_n^\ell}-\big]
\equiv q^{nd} E^{(d)}1_{2n-\ell}$;
\item $\big[V_n^\ell\otimes_{\EOH_{n+1}^\ell}-\big]
\equiv q^{\ell-3n-1} 1_{2n-\ell} F$
 and $\big[V_{n;(d)}^\ell\otimes_{\EOH_{n+d}^\ell}-\big]\equiv
 q^{d(\ell-3n-2d+1)} 1_{2n-\ell} F^{(d)}$.
\end{enumerate}  
Also, for $-\ell \leq k \leq \ell$
with $k \equiv \ell\pmod{2}$,
the map $T:1_{-k} \mathbf{V}(-\ell)
\rightarrow 1_k \mathbf{V}(-\ell)$
from \cref{oddreflection}
corresponds to the $\Z[q,q^{-1}]^\pi$-module
homomorphism
\begin{align}\label{fromR}
T:K_0(\EOH_n^\ell) &\rightarrow K_0(\EOH_{n'}^\ell),&
\big[\EOH_n^\ell\big]&\mapsto
(-1)^n (\pi q^2)^{\binom{n+1}{2}+nk} q^{-nk}
\big[\EOH_{n'}^\ell\big]
\end{align}
where $n:=\frac{\ell-k}{2}$ and $n':=\frac{\ell+k}{2}$.
\end{theorem}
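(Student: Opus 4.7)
The plan is to verify each of the three assertions by direct computation in the Grothendieck group. Since each $\EOH_n^\ell$ is positively graded with $\big(\EOH_n^\ell\big)_{0,\bar 0} = \k$, the class of any finitely generated projective graded $\EOH_n^\ell$-supermodule is determined by its graded rank. So everything reduces to computing graded ranks of the bimodules $U_{(d);n}^\ell$ and $V_{n;(d)}^\ell$ as free supermodules on appropriate sides, then comparing with the explicit formulas \cref{pork} for the actions of $E^{(d)}$ and $F^{(d)}$ on $V(-\ell)$.

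For part (1), applying the bimodule $U_{(d);n}^\ell$ to $[\EOH_n^\ell]$ produces the class of $U_{(d);n}^\ell$ viewed as a graded left $\EOH_{n+d}^\ell$-supermodule; by \cref{lemma01}(2) this admits a basis indexed by $\lambda \in \GPar{d}{n}$ with each basis vector $u_{(d);n}\big(s_\lambda^{(d)}\big)$ carrying degree $2|\lambda|$ and parity $|\lambda|\pmod{2}$. \cref{qbinomialformula} then evaluates the graded rank to $\sum_{\lambda\in\GPar{d}{n}}(\pi q^2)^{|\lambda|} = q^{dn}\sqbinom{n+d}{d}_{q,\pi}$, which is precisely $q^{dn}$ times the scalar by which $E^{(d)}$ acts on $b_n^\ell$ via \cref{pork}. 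The $d=1$ case recovers the first equivalence in (1).

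Part (2) uses the analogous \cref{lemma01}(1), with the new wrinkle that the degree-and-parity shift $(\Pi Q^{-2})^{n\# d}$ in the definition \cref{evil} of $V_{n;(d)}^\ell$ must be incorporated. The graded rank of $\tV_{n;(d)}^\ell$ as a left $\EOH_n^\ell$-supermodule works out to $q^{dn'}\sqbinom{\ell-n}{d}_{q,\pi}$ (again by \cref{qbinomialformula}, with $n' = \ell-n-d$), which upon multiplication by $(\pi q^{-2})^{n\# d} = \pi^{nd+\binom{d}{2}}q^{-2nd-2\binom{d}{2}}$ simplifies via $n\#d = nd+\binom{d}{2}$ to $\pi^{nd+\binom{d}{2}}q^{d(\ell-3n-2d+1)}\sqbinom{\ell-n}{d}_{q,\pi}$; this matches the scalar by which $q^{d(\ell-3n-2d+1)}F^{(d)}1_{2n+2d-\ell}$ acts on $b_{n+d}^\ell$, again by \cref{pork}, with the $d=1$ case again supplying the first equivalence in (2).

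For the assertion concerning $T$, the formula $T(b_n^\ell) = (-1)^n\pi^{\binom{n}{2}+nn'}q^{n+nn'}b_{n'}^\ell$ with $n+n'=\ell$ is precisely what was derived during the proof of \cref{oddreflection} (see the intermediate identity \cref{rain2}), so I would simply extract this value and rewrite it in the form given in \cref{fromR}. Substituting $n' = n + k$, one checks that the $\pi$-exponent $\binom{n}{2}+n(n+k)$ and the target exponent $\binom{n+1}{2}+nk$ differ by $n+n^2 = n(n+1)$, which is always even, while the $q$-exponents $n + n(n+k) = n(n+1)+nk$ and $2\binom{n+1}{2}+nk = n(n+1)+nk$ are equal on the nose. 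Overall, the main obstacle in all three parts is essentially bookkeeping---most importantly, not forgetting the $(\Pi Q^{-2})^{n\# d}$ shift in the definition of $V_{n;(d)}^\ell$, in whose absence part (2) would be off by a factor of $\pi^{nd+\binom{d}{2}}q^{-2nd-2\binom{d}{2}}$.
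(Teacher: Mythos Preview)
Your proof is correct, with one harmless arithmetic slip in the final paragraph: the difference of the $\pi$-exponents is $\binom{n}{2}+n^2+nk - \big(\binom{n+1}{2}+nk\big) = n^2-n = n(n-1)$, not $n(n+1)$; since both are even, your conclusion that the $\pi$-exponents agree modulo $2$ is unaffected.

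Your route for (1) and (2) is genuinely different from the paper's. You compute the graded ranks of $U_{(d);n}^\ell$ and $V_{n;(d)}^\ell$ directly for general $d$ using the Schur bases of \cref{lemma01} together with \cref{qbinomialformula}, and then compare with \cref{pork}. The paper instead first establishes the $d=1$ cases from the monomial bases of \cref{lemma3}(1a) and (2b), and then bootstraps to general $d$ via the decompositions $U_{(1^d);n}^\ell \simeq \bigoplus_{w\in\S_d}(\Pi Q^2)^{\ell(w)}U_{(d);n}^\ell$ and $V_{n;(1^d)}^\ell \simeq \bigoplus_{w\in\S_d}(\Pi Q^2)^{\ell(w)}V_{n;(d)}^\ell$ of \cref{lemma2} together with the factorizations \cref{ukraine2,ukraine3} and \cref{poincare}. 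Your approach is more direct and avoids the passage through $(1^d)$-bimodules; the paper's approach has the compensating advantage of making the divided-power structure visible (since one literally divides $E^d 1_{2n-\ell}$ by $[d]^!_{q,\pi}$). For the final assertion about $T$, both arguments simply read off \cref{rain2} and rewrite the exponents, so there is no real difference there.
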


\begin{proof}
(1) 
Note here we are assuming implicitly that $0 \leq n \leq \ell-1$
and
$0 \leq n \leq \ell-d$, respectively
so that $U_n^\ell$ and $U_{(d);n}^\ell$
are defined.
By \cref{lemma3}(2b), we have that
$\big[U_n^\ell \otimes_{\EOH_n^\ell} \EOH_n^\ell\big]
=
q^n [n+1]_{q,\pi} \big[\EOH_{n+1}^\ell\big].$
Also $E b_n^\ell =
[n+1]_{q,\pi} b_n^\ell$.
It follows that
$\big[U_n^\ell\otimes_{\EOH_n^\ell}-\big]$ and 
$q^n E 1_{2n-\ell}$
define the same endomorphisms of $\mathbf{V}(-\ell)$.
For the more general assertion, take $d \geq 1$.
By \cref{lemma2}(2) and \cref{poincare}, we have that
$\big[U_{(1^d);n}^\ell\otimes_{\EOH_n^\ell}-\big]
= q^{\binom{d}{2}} [d]^!_{q,\pi}
\big[U_{(d);n}^\ell\otimes_{\EOH_n^\ell}-\big]$.
Also $U_{(1^d);n}^\ell
\simeq U^\ell_{n+d-1}\otimes_{\EOH_{n+d-1}^\ell}
\cdots\otimes_{\EOH_{n+1}^\ell} U_n^\ell$
by \cref{ukraine2},
so we deduce using the special case already treated that
\begin{align*}
q^{\binom{d}{2}} [d]^!_{q,\pi}
\big[U_{(d);n}^\ell\otimes_{\EOH_n^\ell}-\big]
&=
q^{nd+\binom{d}{2}} E^d 1_{2n-\ell}
= 
q^{nd+\binom{d}{2}} [d]^!_{q,\pi}
E^{(d)} 1_{2n-\ell}. 
\end{align*}
Cancelling $q^{\binom{d}{2}} [d]^!_{q,\pi}$
gives the required conclusion.

\vspace{1mm}
\noindent
(2) Again we are assuming that 
$0 \leq n \leq \ell-1$ and $0 \leq n \leq \ell-d$, respectively. 
The first step is to show that
$\big[\tV_n^\ell\otimes_{\EOH_{n+1}^\ell}-\big]
= q^{\ell-n-1}\pi^{n} 1_{2n-\ell} F$,
which follows from \cref{lemma3}(1a) like in the proof of (1).
Hence, since $V_n^\ell
= (\Pi Q^{-2})^n \tV_n^\ell$, 
we get that
$\big[V_n^\ell\otimes_{\EOH_{n+1}^\ell}-\big]
= q^{\ell-3n-1} 1_{2n-\ell} F$.
The passage from this to 
the more general
result about $V_{n;(d)}^\ell$ 
follows in a similar way to the argument given in (1).

\vspace{1mm}
\noindent
Now consider the final statement about $T$.
Take $k$ and $n=\frac{\ell-k}{2}, n'=\frac{\ell+k}{2}$
as in the statement of the theorem.
We saw in \cref{rain2} that $T(b_n^\ell) = 
(-1)^n \pi^{\binom{n}{2}+nn'} q^{n+n n'} b_{n'}^\ell$.
Using the identification \cref{theid},
it follows that
$
T\Big(\big[\EOH_n^\ell\big]\Big)
=
(-1)^n \pi^{\binom{n}{2}+nn'} q^{n+n n'}
\big[\EOH_{n'}^\ell\big].$
On replacing $n'$ by $k+n$,  
this becomes the formula in the statement of the theorem.
\end{proof}

The goal in the remainder section is to 
categorify $T:1_{-k} \mathbf{V}(-\ell)
\rightarrow 1_k \mathbf{V}(-\ell)$
for all $-\ell \leq k \leq \ell$ with $k \equiv \ell\pmod{2}$.
Throughout, we let $n := \frac{\ell-k}{2}$ and $n' := \frac{\ell+k}{2}=n+k$
so that $2n-\ell=-k$ and $2n'-\ell=k$.
Since $n+n'=\ell$, \cref{EOH}(4) shows that 
the graded superalgebras
$\EOH_n^\ell$ and $\EOH_{n'}^\ell$ are isomorphic.

\begin{definition}\label{weekend}
For $0 \leq d \leq n$, let
\begin{align}
C_{d} &:= 
\left\{
\begin{array}{ll}
U^\ell_{(k+d);n-d} \otimes_{\EOH_{n-d}^\ell}
V_{n-d;(d)}^\ell
&\text{if $d \geq -k$,}\\
0&\text{otherwise}.
\end{array}
\right.
\end{align}
The {\em singular Rouquier complex} for odd Grassmannian bimodules
is 
the following sequence of graded
$\big(\EOH_{n'}^\ell,\EOH_n^\ell\big)$-superbimodules 
and even degree 0 superbimodule homomorphisms
in $\OGBim_\ell$:
\begin{equation}\label{src}
0 \stackrel{\partial_{n+1}}{\longrightarrow} C_n 
\stackrel{\partial_n}{\longrightarrow} \cdots 
\stackrel{\partial_{d+1}}{\longrightarrow} C_d \stackrel{\partial_d}{\longrightarrow} C_{d-1}\stackrel{\partial_{d-1}}{\longrightarrow}\cdots\stackrel{\partial_2}{\longrightarrow} C_1 
\stackrel{\partial_1}{\longrightarrow} C_0 \stackrel{\partial_0}{\longrightarrow} 0
\end{equation}
where $\partial_d = 0$ unless $\max(0,-k) < d \leq n$, in which case
$\partial_d:C_d \rightarrow C_{d-1}$ is the even degree 0
superbimodule homomorphism defined by the composition
$$
\!\begin{tikzcd}
U^\ell_{(k+d);n-d} \otimes_{\EOH_{n-d}^\ell} V_{n-d;(d)}^\ell
\arrow[r,"\operatorname{inc}" above]
&
U^\ell_{(k+d-1,1);n-d} \otimes_{\EOH_{n-d}^\ell} 
V_{n-d;(1,d-1)}^\ell
\arrow[rrr,"{c_{(k+d-1),(1)}' \otimes\; c_{(1),(d-1)}}" above]&&&\phantom{psi[mgjokurhx}
\end{tikzcd}\!
$$

\vspace{-4mm}

$$
\!\begin{tikzcd}
U^\ell_{(k+d-1);n-d+1}\otimes_{\EOH_{n-d+1}^\ell}
U^\ell_{n-d}\otimes_{\EOH_{n-d}^\ell} V_{n-d}^\ell \otimes_{\EOH_{n-d+1}^\ell} V_{n-d+1;(d-1)}^\ell
\arrow[rr,"\id \otimes \ev_{n-d} \otimes \id" above]&&\phantom{x;oiagrhjqnrdgore'pj}
\end{tikzcd}\!
$$

\vspace{-4mm}

$$
\!\begin{tikzcd}
U^\ell_{(k+d-1);n-d+1}\otimes_{\EOH_{n-d+1}^\ell}\EOH_{n-d+1}^\ell\otimes_{\EOH_{n-d+1}^\ell} 
V_{n-d+1;(d-1)}^\ell
\arrow[r,"\operatorname{can}" above]&
U^\ell_{(k+d-1);n-d+1}\otimes_{\EOH_{n-d+1}^\ell}
V_{n-d+1;(d-1)}^\ell.
\end{tikzcd}\!
$$
\end{definition}

\begin{theorem}\label{SRC}
The singular Rouquier complex \cref{src} is a chain complex 
with homology that is zero in all except for the top ($n$th) homological degree.
Moreover, 
as a graded 
$\big(\EOH_{n'}^\ell,\EOH_n^\ell)$-superbimodule
the top homology is $\simeq(\Pi Q^2)^{\binom{n+1}{2}+nk}\EOH_{n'}^\ell$
viewed as a graded left $\EOH_{n'}^\ell$-supermodule by the natural action
and as a graded right $\EOH_{n}^\ell$-supermodule
by restricting the natural right action of $\EOH_{n'}^\ell$ along
some graded superalgebra isomorphism
$\EOH_{n}^\ell\stackrel{\sim}{\rightarrow}
\EOH_{n'}^\ell$.
\end{theorem}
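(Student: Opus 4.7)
My plan is to prove \cref{SRC} in three stages: verify $\partial^2 = 0$, pin down the expected Grothendieck class, and show that the homology is concentrated in the top degree.

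For $\partial^2 = 0$, I would unpack $\partial_d \partial_{d+1}$ as a composition of the inclusion, the tensor-decomposition isomorphisms from \cref{itchyandscratchy,scratchy}, and the counit $\ev_{n-d}$ from \cref{cupsandcaps}. By naturality, this reduces to a single pair of adjacent ev-cap insertions applied to a generator; the zig-zag identity \cref{zigzag1} together with the signs built into \cref{itchyandscratchy,scratchyitchy} and the odd nil-Hecke action of \cref{lemma2}, whose relevant mates are computed in \cref{mate2,mate3}, force the cancellation exactly as in the original Chuang--Rouquier argument.

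To pin down the answer, I would compute $[C_d]$ in $K_0(\EOH^\ell_{n'})$ using \cref{K0}(1)--(2) together with the divided-power formulae \cref{pork}, obtaining
\[
[C_d] = \pi^{\binom{d}{2}+(n-d)d}\, q^{nk+d}\, \sqbinom{n'}{k+d}_{q,\pi}\sqbinom{n'+d}{d}_{q,\pi}\, \big[\EOH^\ell_{n'}\big].
\]
Rewriting $\sum_{d\ge \max(0,-k)} (-q)^d E^{(k+d)}F^{(d)} b_n^\ell$ as $T(b_n^\ell)$ via \cref{oddreflection} and using \cref{fromR} then gives the Euler--Poincar\'e identity $\sum_d (-1)^{n-d}[C_d] = (\pi q^2)^{\binom{n+1}{2}+nk} [\EOH^\ell_{n'}]$, which matches the Grothendieck class of the asserted top homology $(\Pi Q^2)^{\binom{n+1}{2}+nk}\EOH^\ell_{n'}$.

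The bulk of the work is showing that $H_d(C_\bullet) = 0$ for $d < n$. My plan is to use the Schur bases of \cref{lemma01} to write any element of $C_d$ as an $\EOH^\ell_{n-d}$-linear combination of vectors $1\otimes u_{(k+d);n-d}\bigl(s^{(k+d)}_\mu\bigr) \otimes v_{n-d;(d)}\bigl(s^{(d)}_\lambda\bigr)\otimes 1$ for $\mu \in \GPar{k+d}{n-d}$ and $\lambda \in \GPar{d}{n-d}$. The differential $\partial_d$ is the composition of an inclusion of odd symmetric polynomials, which by \cref{thepierineededlater} expands each Schur polynomial as a Pieri sum indexed by removable boxes, with $\id \otimes \ev_{n-d}\otimes \id$, whose effect on these bases is governed by \cref{reallynot}. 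Tracking the result through \cref{itchyandscratchy,scratchyitchy} converts $C_\bullet$ into a combinatorial complex on pairs of partitions $(\mu, \lambda)$ on which one can construct a contracting homotopy by ``inverting'' box addition: a basis vector with a distinguished removable box is paired with its column-stripping predecessor, and the identity $dh + hd = \id$ is checked below top homological degree using the odd Pieri rule \cref{rowpieri}. Once exactness below the top is established, $H_n(C_\bullet)$ is the unique nonzero homology; it is generated as a superbimodule by an explicit cocycle, and comparing the left and right actions on this cocycle forces the right action to be twisted by a graded superalgebra isomorphism $\EOH^\ell_n \stackrel{\sim}{\rightarrow} \EOH^\ell_{n'}$, which by the Grothendieck computation and rigidity of \cref{EOH}(4) must coincide (possibly after composing with the automorphism $\delta^\ell_n$ of \cref{overchicago}) with $\psi^\ell_n$.

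The main obstacle will be the combinatorial bookkeeping in constructing the homotopy: the odd Pieri rule \cref{rowpieri} carries the sign $(-1)^{NE(\lambda)+NE(\mu)+S(\lambda,\mu)}$, while the adjunctions and parity shifts in \cref{cupsandcaps,itchyandscratchy,scratchyitchy,evil} each introduce their own signs, and the odd Littlewood--Richardson coefficients discussed in \cref{discussionhere} are needed to ensure that off-diagonal terms which \emph{ought} to cancel in fact do so. Coordinating all of this consistently across the whole complex is exactly the technical heart of the argument, and the reason the introduction describes this approach as ``considerably more technical'' than the nil-Hecke computation of \cite{ELV}.
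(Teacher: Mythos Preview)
Your plan for $\partial^2=0$ is the paper's argument: the composite factors through $\rho_{(1^2);n-d}(\tau_1)\otimes\lambda_{n-d;(1^2)}(\tau_1)$, and \cref{mate3} slides one $\tau_1$ across the caps so that $\tau_1^2=0$ kills it (\cref{isdifferential}).

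The exactness strategy is where you diverge. The paper does \emph{not} construct a contracting homotopy; it uses a rank count. First it specializes to $\overline{C}_d := C_d \otimes_{\EOH_n^\ell}\k$, whose graded superdimension is the polynomial $c_{n+k,n}(d)$ of \cref{numerology}. That lemma supplies the key identity $c_{n+k,n}(d)=b_{n+k,n}(d)+b_{n+k,n}(d+1)$, and \cref{mainstep} shows that once $\dim\im\bar\partial_d\geq|b_{n+k,n}(d)|$ for every $d$, an inductive dimension count forces exactness of $\overline{C}_\bullet$ below top degree, which then lifts to $C_\bullet$ by freeness over the graded-local ring $\EOH_n^\ell$. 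The lower bound itself comes from a \emph{triangularity} argument: one specifies a set $I$ of ``initial pairs'' $(\lambda,\mu)$ with $|I|=|b_{n+k,n}(d)|$ and proves that $\bar\partial_d(w(\lambda,\mu))=\pm w(\lambda^-,\mu^+)+(\text{lower terms})$ in a suitable total order, where $\lambda^-$ removes the entire bottom row of $\lambda$ and $\mu^+$ adds a column strip then deletes the top row. The odd Pieri rule and the Littlewood--Richardson coefficients of \cref{discussionhere} enter exactly as you anticipate, but only to locate leading terms---no global sign cancellation is ever verified.

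Your homotopy approach is not wrong in principle, but the pairing ``a basis vector with a distinguished removable box is paired with its column-stripping predecessor'' is too naive: the bijection the paper uncovers moves whole rows rather than single boxes, because the cap $\ev_{n-d}$ mixes the two Schur indices via the factor $\bar\gamm^{(n-d+1)}_r$. Checking $dh+hd=\id$ with all signs would be strictly more work than the paper's leading-term argument, which sidesteps the sign-coordination problem you correctly flag as the main obstacle. One small correction: by \cref{depoebay} the right-$\EOH_n^\ell$ basis of $C_d$ is indexed by $\GPar{(k+d)}{n}\times\GPar{d}{(n-d)}$, not $\GPar{(k+d)}{(n-d)}\times\GPar{d}{(n-d)}$, and uses dual Schur polynomials $\sigma$ rather than $s$.
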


To formulate a corollary, let
$K^b(\EOH_n^\ell\psmod)$ be the bounded homotopy supercategory
of the graded supercategory of finitely generated projective
graded left $\EOH_n^\ell$-supermodules;
in the definition of this 
we require that differentials and chain homotopies are even of degree 0 but
chain maps between cochain complexes can be constructed using 
arbitrary morphisms in $\EOH_n^\ell\psmod$.
By Euler characteristic (e.g., see \cite{Rose}),
the triangulated Grothendieck group of the underlying ordinary category
is identified with 
$K_0(\EOH_n^\ell)$, hence, 
via \cref{theid}, with $1_{-k} \mathbf{V}(-\ell)$.

\begin{corollary}\label{weirdshift}
The graded superfunctor 
$K^b(\EOH_n^\ell\psmod) \rightarrow K^b(\EOH_{n'}^\ell\psmod)$
defined by
tensoring with the singular Rouquier complex
\cref{src} (viewed now as a cochain complex) 
then taking the total complex
is an equivalence of triangulated graded supercategories. 
The induced  $\Z[q,q^{-1}]^\pi$-module isomorphism
$1_{-k} \mathbf{V}(-\ell)\stackrel{\sim}{\rightarrow}
1_k \mathbf{V}(-\ell)$
at the level of Grothendieck groups
is equal to $q^{nk} T$ for $T$ as in \cref{fromR}.
\end{corollary}

\begin{proof}
The theorem shows that the singular Rouquier complex
is quasi-isomorphic to the cochain complex which is the graded
superbimodule 
$(\Pi Q^2)^{\binom{n+1}{2}+nk}\EOH_{n'}^\ell$
described in \cref{SRC}
in cohomological degree $-n$, and zero elsewhere.
So it defines an equivalence of
triangulated graded supercategories
$D^-(\EOH_n^\ell\sMod) \rightarrow D^-(\EOH_{n'}^\ell\sMod)$
between the bounded-above derived categories.
Since $D^-(\EOH_n^\ell\sMod)$
is equivalent to $K^-(\EOH_n^\ell\psMod)$ and similarly
for $\EOH_{n'}^\ell$, we deduce that the functor arising from tensoring with the singular Rouquier complex defines
an equivalence of triangulated graded 
supercategories $K^-(\EOH_n^\ell\psMod)
\rightarrow K^-(\EOH_{n'}^\ell\psMod)$.
The first part of the corollary follows on
restricting this equivalence to $K^b(\EOH_n^\ell\psmod)$.
The second part follows using also (\ref{fromR}) because 
the functor takes the cochain complex that is $\EOH_n^\ell$ concentrated in cohomological degree 0 
to a cochain complex with the same Euler characteristic as 
$(\Pi Q^2)^{\binom{n+1}{2}+nk}
\EOH_{n'}^\ell$ concentrated in cohomological degree $-n$.
\end{proof}

The remainder of the section is devoted to the proof of \cref{SRC}, which will be carried out with a series of lemmas.
We assume for simplicity of notation that $k \geq 0$, although with obvious modifications the arguments work for negative $k$ too (in that case, the terms $C_0, \dots, C_{-k-1}$ of the complex are zero).

\begin{lemma}\label{isdifferential}
We have that $\partial_{d-1} \circ \partial_d = 0$ for $d=1,\dots,n+1$,
hence, \cref{src} is a chain complex.
\end{lemma}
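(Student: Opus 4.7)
The plan is to verify $\partial_{d-1} \circ \partial_d = 0$ by reorganizing the composite via a finer factorization, and then exhibiting the result as a map that manifestly annihilates the relevant image.

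First I would invoke the associativity of the splitting isomorphisms $b_{\alpha,\alpha'}$ and $c_{\alpha,\alpha'}$ from \cref{lemma1} to rewrite $\partial_{d-1} \circ \partial_d$ as a single composite refined through $U^\ell_{(k+d-2,1,1);n-d} \otimes V^\ell_{n-d;(1,1,d-2)}$. Applying the triple splittings on each side so that the four innermost tensor factors become $U^\ell_{n-d+1} \otimes U^\ell_{n-d} \otimes V^\ell_{n-d} \otimes V^\ell_{n-d+1}$, the composition becomes an inclusion followed by a ``nested double cap'' $\ev_{n-d+1} \circ (\id \otimes \ev_{n-d} \otimes \id)$ acting on these four middle factors. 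Coherence of the adjunction data in $\OGBim_\ell$ ensures that this refined composite agrees with the original $\partial_{d-1} \circ \partial_d$.

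The crux is then to analyze the nested double cap on the image of the inclusion. The key observation is that this image lands in the subspace where $\tau_1 \in \ONH_2$ (acting on the two inner ``$1$''-strands through \cref{lemma2}) acts as zero: under the identification $\OSym_{(1,1,d-2)} \simeq \OPol_2 \otimes \OSym_{d-2}$, the image of $\OSym_{(d)}$ is contained in $\OSym_2 \otimes \OSym_{d-2}$, and $\OSym_2 \subseteq \ker \partial_1$ by \cref{aaaah}. I would then show, using the explicit formulas for $\ev_n$ in \cref{cupsandcaps} together with the mate computations of \cref{mate2,mate3}, that the nested double cap factors up to an invertible sign through the action of $\tau_1$, thereby vanishing on $\ker\tau_1$.

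The main obstacle will be this last reduction: identifying the nested double cap as a map factoring through $\tau_1$. This requires an explicit computation on generators $u_{n-d+1}(x^a) \otimes u_{n-d}(x^b) \otimes v_{n-d}(x^c) \otimes v_{n-d+1}(x^e)$ using \cref{reallynot}, followed by manipulation of the resulting expression via the defining relations of the bimodules in \cref{lemma3} to surface the $\tau_1$-action (given explicitly by \cref{rank22} in combination with the sign conventions of \cref{lemma2}). The sign bookkeeping is intricate: the parity shifts $\Pi^{n\#d}$ in \cref{evil}, the signs carried by the splitting isomorphisms in \cref{lemma1}, and the signs in the evaluation formula must combine to produce $\tau_1$ on the nose rather than a more complicated odd nil-Hecke element.
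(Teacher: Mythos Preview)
Your refinement through $U^\ell_{(k+d-2,1,1);n-d}\otimes V^\ell_{n-d;(1,1,d-2)}$ and reduction to a ``nested double cap'' $\partial = \ev_{n-d+1}\circ(\id\otimes\ev_{n-d}\otimes\id)$ is exactly the paper's strategy, and you are right to bring \cref{mate3} into play. However, the key claim you make---that $\partial$ itself factors through the action of $\tau_1$ on the $V$-side---is not correct, and the explicit computation you propose to verify it would fail. The double cap does \emph{not} vanish on all of $U_{n-d+1}^\ell\otimes U_{n-d}^\ell\otimes\ker\lambda_{n-d;(1^2)}(\tau_1)$; there is no factorization $\partial = (\text{something})\circ\lambda(\tau_1)$.

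What makes the argument work is using \emph{both} sides simultaneously. By \cref{lemma2}, the image of $\iota$ is exactly the image of the idempotent $\rho_{(1^2);n-d}\big((\chi\omega)_2\big)\otimes\lambda_{n-d;(1^2)}\big((\omega\chi)_2\big)$. Since $(\chi\omega)_2 = x_1\tau_1$ and $(\omega\chi)_2 = \tau_1 x_1$, and since $\rho$ is an anti-homomorphism while $\lambda$ is a homomorphism, this idempotent factors as
\[
\big(\rho_{(1^2);n-d}(\tau_1)\otimes\lambda_{n-d;(1^2)}(\tau_1)\big)\circ\big(\rho_{(1^2);n-d}(x_1)\otimes\lambda_{n-d;(1^2)}(x_1)\big).
\]
Now \cref{mate3} gives $\partial\circ\big(\rho(\tau_1)\otimes\id\big) = \partial\circ\big(\id\otimes\lambda(\tau_1)\big)$, so
\[
\partial\circ\big(\rho(\tau_1)\otimes\lambda(\tau_1)\big) = \partial\circ\big(\id\otimes\lambda(\tau_1)\big)\circ\big(\id\otimes\lambda(\tau_1)\big) = \partial\circ\big(\id\otimes\lambda(\tau_1^2)\big) = 0.
\]
This is a purely conceptual argument and avoids the explicit generating-function computations and sign bookkeeping you were anticipating; the mate identity does all the work once you recognise that the image of $\iota$ carries a $\tau_1$ on \emph{each} side.
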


\begin{proof}
By the super interchange law, $\partial_{d-1} \circ \partial_d$
factorizes as the composition first of the embedding
$$
\begin{tikzcd}
C_d = U_{(k+d);n-d}^\ell \otimes_{\EOH_{n-d}^\ell}
V_{n-d;(d)}
\arrow[r,"\operatorname{inc}" above]
&U_{(k+d-2,2);n-d}^\ell \otimes_{\EOH_{n-d}^\ell}
V_{n-d;(2,d-2)}
\arrow[rrr,"b_{(k+d-2),(2)}\otimes\;c_{(2),(d-2)}" above]
&&&\phantom{s}
\end{tikzcd}
$$

\vspace{-4mm}

$$
\begin{tikzcd}
\qquad\qquad\qquad U_{(k+d-2);n-d+2}^\ell\otimes_{\EOH_{n-d+2}^\ell}
U_{(2);n-d}^\ell \otimes_{\EOH_{n-d}^\ell}
V^\ell_{n-d;(2)}\otimes_{\EOH_{n-d+2}^\ell}
V^\ell_{n-d+2;(d-2)}
\end{tikzcd}
$$
then the map $\id\otimes(\partial\circ\iota)\otimes\id$ 
from there to 
$C_{d-2}=U_{(k+d-2);n-d+2}^\ell\otimes_{\EOH_{n-d+2}^\ell}
V^\ell_{n-d+2;(d-2)}$,
where 
$$
\begin{tikzcd}
\iota:U_{(2);n-d}^\ell \otimes_{\EOH_{n-d}^\ell}
V_{n-d;(2)}^\ell\arrow[r,"\operatorname{inc}" above]&
U_{(1,1);n-d}^\ell\otimes_{\EOH_{n-d}^\ell}
V_{n-d;(1,1)}^\ell
\arrow[rr,"b_{(1),(1)}\otimes\;c_{(1),(1)}" above]&&\phantom{xwoiprgeqjrgpqjj}
\end{tikzcd}
$$

\vspace{-4mm}

$$
\phantom{oiergnoweirjgf}U_{n-d+1}^\ell\otimes_{\EOH_{n-d+1}^\ell}
U_{n-d}^\ell\otimes_{\EOH_{n-d}^\ell}
V_{n-d}^\ell
\otimes_{\EOH_{n-d+1}^\ell}
V_{n-d+1}^\ell,
$$

$$
\begin{tikzcd}
\partial:\!U_{n-d+1}^\ell\!\otimes_{\EOH_{n-d+1}^\ell}\!\!\!\!
U_{n-d}^\ell\otimes_{\EOH_{n-d}^\ell}\!\!\!
V_{n-d}^\ell
\!\otimes_{\EOH_{n-d+1}^\ell}\!\!\!
V_{n-d+1}^\ell
\arrow[rr,"\id\otimes\ev_{n-d}\otimes\id" above]&&
U_{n-d+1}^\ell
\!\otimes_{\EOH_{n-d+1}^\ell}\!\!\!\!
\EOH_{n-d+1}^\ell\!\otimes_{\EOH_{n-d+1}^\ell}\!\!\!
V_{n-d+1}^\ell
\end{tikzcd}
$$

\vspace{-4mm}

$$
\begin{tikzcd}
\phantom{aoeikrerfefg;rgja;porgi}
U_{n-d+1}^\ell
\otimes_{\EOH_{n-d+1}^\ell}
V_{n-d+1}^\ell
\arrow[rr,"\ev_{n-d+1}" above]&&
\EOH_{n-d+2}^\ell.\phantom{rgwtwrthtgrw}
\end{tikzcd}
$$
Thus, we are reduced to proving that $\partial$ takes vectors in the image
of $\iota$ to zero.
By \cref{lemma2}, the image of $\iota$
is equal to the image of the projection
$\rho_{(1^2);n-d}\big((\chi\omega)_2\big)
\otimes \lambda_{n-d;(1^2)}\big((\omega\chi)_2\big)$.
This projection equals
$$
\rho_{(1^2);n-d}(x_1 \tau_1)
\otimes \lambda_{n-d;(1^2)}(\tau_1 x_1)
=\big(
\rho_{(1^2);n-d}(\tau_1) \otimes  
\lambda_{n-d;(1^2)}(\tau_1)\big)
\circ\big(
\rho_{(1^2);n-d}(x_1)
\otimes \lambda_{n-d;(1^2)}(x_1)\big).
$$
Finally, to complete the proof,
we observe that
$\partial \circ \big(\rho_{(1^2);n-d}(\tau_1) \otimes \lambda_{n-d;(1^2)}(\tau_1)\big) = 0$
because
$$
\partial \circ \big(\rho_{(1^2);n-d}(\tau_1) \otimes \id\otimes \id\big)
=
\partial \circ  \big(\id \otimes \id \otimes \lambda_{n-d;(1^2)}(\tau_1)\big)
$$
thanks to \cref{mate3},
and 
$\lambda_{n-d;(1^2)}(\tau_1) \circ \lambda_{n-d;(1^2)}(\tau_1)
= \lambda_{n-d;(1^2)}\big(\tau_1^2\big) = 0$.
\end{proof}

Now we need to understand the ``numerology" of \cref{src}.
In fact, the combinatorial \cref{numerology} derived long ago
is just what we need for this. Recall the definitions
of $b_{m,n}(r), c_{m,n}(r) \in \Z[q,q^{-1}]^\pi$
made in the statement of that lemma.
The following shows that $c_{n+k,n}(d)$ is 
the graded superrank of $C_d$ either as a 
free graded 
right $\EOH_n^d$-supermodule or a free graded left $\EOH_{n'}^d$-supermodule. 
We will also see in a bit that $b_{n+k,n}(d)$ is the graded rank of
$\im \partial_{d}$ for $d=0,1,\dots,n$.


\begin{lemma}\label{depoebay}
The vectors
\begin{equation}\label{thebasis}
\bigg\{
u_{(k+d);n-d}\big(\bar \sigma^{(k+d)}_\lambda\big)
\otimes v_{n-d;(d)}\big(\bar \sigma^{(d)}_\mu\big)
\:\bigg|\:
(\lambda,\mu) \in \GPar{(k+d)}{n}\times \GPar{d}{(n-d)}
\bigg\}
\end{equation}
give a basis for $C_d$ as a free right $\EOH_n^\ell$-supermodule.
Hence, 
as a graded right $\EOH_n^\ell$-supermodule,
$C_d$ is free
of graded superrank $c_{n+k,n}(d)$.
It is also free as a graded left $\EOH_{n'}^\ell$-superbimodule
with the same graded superrank.
\end{lemma}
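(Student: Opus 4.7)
The plan is to assemble the two bases of $C_d$ from the right and left module bases of its two tensor factors, which are already known from \cref{lemma01}, and then to verify the graded-rank claim by a direct calculation matching the result against the formula for $c_{n+k,n}(d)$ from \cref{numerology}.

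First I would unpack what \cref{lemma01} gives for $U^\ell_{(k+d);n-d}$ and $V^\ell_{n-d;(d)}$: applying \cref{lemma01}(2) with the parameters shifted so that the ``$n+d$'' there becomes our $n'$, it shows that $U^\ell_{(k+d);n-d}$ is a free right $\EOH_{n-d}^\ell$-supermodule on $\{u_{(k+d);n-d}(\sigma^{(k+d)}_\lambda)\:|\:\lambda\in\GPar{(k+d)}{n}\}$ and a free left $\EOH_{n'}^\ell$-supermodule on $\{u_{(k+d);n-d}(s^{(k+d)}_\lambda)\:|\:\lambda\in\GPar{(k+d)}{(n-d)}\}$; similarly, \cref{lemma01}(1) shows that $V^\ell_{n-d;(d)}$ is a free right $\EOH_n^\ell$-supermodule on $\{v_{n-d;(d)}(\sigma^{(d)}_\mu)\:|\:\mu\in\GPar{d}{(n-d)}\}$ and a free left $\EOH_{n-d}^\ell$-supermodule on $\{v_{n-d;(d)}(s^{(d)}_\mu)\:|\:\mu\in\GPar{d}{n'}\}$.

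Next I would use the elementary fact that if $M$ is free as a graded right $A$-supermodule with basis $\{m_i\}$ and $N$ is a graded $(A,B)$-superbimodule then $M\otimes_A N\simeq\bigoplus_i (\Pi Q)^{\text{shift}_i}N$ as a graded right $B$-supermodule, with basis $\{m_i\otimes n_j\}$ whenever $\{n_j\}$ is a graded right $B$-basis of $N$. Applied with $M=U^\ell_{(k+d);n-d}$, $A=\EOH_{n-d}^\ell$, $N=V^\ell_{n-d;(d)}$ and $B=\EOH_n^\ell$, this immediately produces the right $\EOH_n^\ell$-basis claimed in \cref{thebasis}. The same general principle applied with $N$ free as a left $\EOH_{n-d}^\ell$-supermodule and $M$ viewed as a left $\EOH_{n'}^\ell$-supermodule yields the corresponding left $\EOH_{n'}^\ell$-basis $\{u_{(k+d);n-d}(s^{(k+d)}_\lambda)\otimes v_{n-d;(d)}(s^{(d)}_\mu)\}$ with $(\lambda,\mu)\in\GPar{(k+d)}{(n-d)}\times\GPar{d}{n'}$.

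Finally, I would compute both graded superranks and match them with $c_{n+k,n}(d)$. Remembering the parity/degree shift $(\Pi Q^{-2})^{(n-d)\#d}$ built into $V^\ell_{n-d;(d)}=(\Pi Q^{-2})^{(n-d)\#d}\tV^\ell_{n-d;(d)}$, the right-module rank is
\[
(\pi q^{-2})^{(n-d)\#d}\Bigg(\sum_{\lambda\in\GPar{(k+d)}{n}}(\pi q^2)^{|\lambda|}\Bigg)\Bigg(\sum_{\mu\in\GPar{d}{(n-d)}}(\pi q^2)^{|\mu|}\Bigg).
\]
Using \cref{qbinomialformula} to evaluate each inner sum (as $q^{(k+d)n}\sqbinom{n'+d}{k+d}_{q,\pi}$ and $q^{d(n-d)}\sqbinom{n}{d}_{q,\pi}$ respectively) and then applying the symmetry $\sqbinom{n'+d}{k+d}_{q,\pi}=\sqbinom{n'+d}{n}_{q,\pi}$ together with the identity $(k+d)n+d(n-d)=(k+d)n+(n-d)d$ puts the answer into the form in which $c_{n+k,n}(d)$ is written in \cref{numerology}. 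The left-module rank calculation is entirely parallel, this time giving $(\pi q^{-2})^{(n-d)\#d}q^{(k+d)(n-d)+dn'}\sqbinom{n'}{k+d}_{q,\pi}\sqbinom{n'+d}{d}_{q,\pi}$; the exponent of $q$ again equals $kn+2dn-d^2$, and clearing factorials shows that the product of binomials equals $\sqbinom{n'+d}{n}_{q,\pi}\sqbinom{n}{d}_{q,\pi}$, so this too equals $c_{n+k,n}(d)$. There is no serious obstacle: the only mildly delicate point is keeping track of the parity/degree shift built into the definition of $V^\ell_{n-d;(d)}$ and of the combinatorial reindexing required to apply \cref{qbinomialformula}, and then observing the two symmetries of $(q,\pi)$-binomials that make both answers coincide with the specific way $c_{n+k,n}(d)$ is presented.
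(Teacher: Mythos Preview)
Your argument is correct and matches the paper's approach for the right $\EOH_n^\ell$-module part: both use \cref{lemma01} to obtain bases of the two tensor factors and then \cref{qbinomialformula} to identify the product of generating functions with $c_{n+k,n}(d)$. The only difference is in the left $\EOH_{n'}^\ell$-module rank: the paper simply observes that freeness again follows from \cref{lemma01} and then, since $\EOH_{n'}^\ell\cong\EOH_n^\ell$ as graded superalgebras (\cref{EOH}(4)), the graded superrank over $\EOH_{n'}^\ell$ must equal the graded superrank over $\EOH_n^\ell$ already computed---whereas you compute the left rank directly and verify it agrees with $c_{n+k,n}(d)$ via the factorial identity $\sqbinom{n'}{k+d}_{q,\pi}\sqbinom{n'+d}{d}_{q,\pi}=\sqbinom{n'+d}{n}_{q,\pi}\sqbinom{n}{d}_{q,\pi}$. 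Your route is slightly longer but equally valid.
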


\begin{proof}
\cref{lemma01} implies that it is free as a graded right $\EOH_n^\ell$-supermodule with basis
\cref{thebasis}.
The formula for its graded superrank then follows using
\cref{qbinomialformula}. It is also free as a graded left $\EOH_{n'}^\ell$-supermodule
thanks to \cref{lemma01} again. Since $\EOH_{n'}^\ell
\cong \EOH_n^\ell$, its graded superrank for $\EOH_{n'}^\ell$
is the same as for $\EOH_n^\ell$.
\end{proof}

Recall that $\EOH_n^\ell$ is positively graded 
with degree 0 component isomorphic
the ground field $\k$.
We apply the functor
$-\otimes_{\EOH_n^\ell} \k$ to \cref{src} to obtain the chain complex
\begin{equation}\label{src2}
0 \stackrel{\overline{\partial}_{n+1}}{\longrightarrow} \overline{C}_n 
\stackrel{\overline{\partial}_n}{\longrightarrow} \cdots 
\stackrel{\overline{\partial}_{d+1}}{\longrightarrow} \overline{C}_d \stackrel{\overline{\partial}_d}{\longrightarrow} \overline{C}_{d-1}\stackrel{\overline{\partial}_{d-1}}{\longrightarrow}\cdots\stackrel{\overline{\partial}_2}{\longrightarrow} \overline{C}_1 
\stackrel{\overline{\partial}_1}{\longrightarrow} \overline{C}_0 \stackrel{\overline{\partial}_0}{\longrightarrow} 0
\end{equation}
of graded left $\ROH_{n'}^\ell$-supermodules.
\cref{depoebay} implies that 
$\overline{C}_d$ is of graded superdimension $c_{n+k,n}(d)$.

\begin{lemma}\label{mainstep}
Suppose we are given that $\dim \im \bar\partial_d \geq |b_{n+k,n}(d)|$
for $d=1,\dots,n$, 
where $|b_{n+k,n}(d)|$ denotes the natural number obtained by
applying the evaluation
map $\Z[q,q^{-1}]^\pi \rightarrow \Z, q \mapsto 1, \pi \mapsto 1$
to $b_{n+k,n}(d)$.
Then \cref{SRC} is true.
\end{lemma}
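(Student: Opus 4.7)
The plan is to combine the numerical identity from \cref{numerology} with a dimension count to force exactness of the complex $\overline{C}_\bullet := C_\bullet \otimes_{\EOH_n^\ell} \k$ below degree $n$, then to lift this to exactness of $C_\bullet$ via a hyperhomology spectral sequence, and finally to identify the top homology as a bimodule using an Euler characteristic calculation.

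First, \cref{depoebay} gives $\dim_\k \overline{C}_d = |c_{n+k,n}(d)|$, where $|\cdot|$ denotes the specialization at $q=\pi=1$. A direct inspection of the definitions shows that $b_{n+k,n}(0) = 0$ (empty sum) and that in the defining sum for $b_{n+k,n}(n+1)$ only the $s=0$ term survives (since $\sqbinom{s-1}{s}_{q,\pi}=0$ for $s\geq 1$), yielding after simplification $b_{n+k,n}(n+1) = (\pi q^2)^{\binom{n+1}{2}+nk}$. The identity $c_{n+k,n}(d) = b_{n+k,n}(d) + b_{n+k,n}(d+1)$ from \cref{numerology} then specializes to $|\overline{C}_d| = |b_{n+k,n}(d)| + |b_{n+k,n}(d+1)|$. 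Writing $\bar K_d,\bar I_d$ for the kernel and image of $\bar\partial_d$, rank-nullity gives $|\bar K_d| = |\overline{C}_d|-|\bar I_d| \leq |b_{n+k,n}(d+1)|$ by the hypothesis, while \cref{isdifferential} gives $|\bar I_{d+1}| \leq |\bar K_d|$; combined with the hypothesis $|\bar I_{d+1}| \geq |b_{n+k,n}(d+1)|$ (for $d+1 \leq n$), all three inequalities must be equalities. Hence $\overline{C}_\bullet$ is exact at degrees $<n$, with top homology of $\k$-dimension exactly $|b_{n+k,n}(n+1)| = 1$.

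To lift exactness to $C_\bullet$, I would apply the hyperhomology spectral sequence $E^2_{p,q} = \operatorname{Tor}_p^{\EOH_n^\ell}(H_q(C_\bullet),\k) \Rightarrow H_{p+q}(\overline{C}_\bullet)$, valid since each $C_d$ is flat by \cref{depoebay}. If $q_0 < n$ were minimal with $H_{q_0}(C_\bullet) \neq 0$, then graded Nakayama (applicable since $\EOH_n^\ell$ is positively graded with $(\EOH_n^\ell)_0 = \k$) forces $E^2_{0,q_0} = H_{q_0}(C_\bullet) \otimes_{\EOH_n^\ell} \k \neq 0$, while minimality of $q_0$ kills all incoming differentials $d^r \colon E^r_{r,q_0-r+1} \to E^r_{0,q_0}$ for $r \geq 2$ (since $H_{q_0-r+1}(C_\bullet) = 0$ by minimality when $q_0-r+1 \geq 0$, and the source vanishes otherwise). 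Thus $E^\infty_{0,q_0} = E^2_{0,q_0}$ would be a nonzero subquotient of $H_{q_0}(\overline{C}_\bullet) = 0$, a contradiction. So $H_d(C_\bullet) = 0$ for $d < n$.

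Finally I identify $H_n(C_\bullet) = \ker \partial_n$. Using exactness below $n$ and projectivity of $C_0,\dots,C_{n-1}$, I splice into short exact sequences that split as right $\EOH_n^\ell$-modules, showing inductively that each $\im \partial_d$ is projective and that $H_n$ is a direct summand of $C_n$; since $C_n$ is free of finite graded rank over the graded-connected $\EOH_n^\ell$, finitely generated projective summands are free, so $H_n$ is free on the right. Euler characteristic telescoping, $\sum_{d=0}^n (-1)^d c_{n+k,n}(d) = b_{n+k,n}(0) + (-1)^n b_{n+k,n}(n+1) = (-1)^n (\pi q^2)^{\binom{n+1}{2}+nk}$, via \cref{numerology}, pins down the graded right rank of $H_n$ as $(\pi q^2)^{\binom{n+1}{2}+nk}$, giving $H_n \simeq (\Pi Q^2)^{\binom{n+1}{2}+nk} \EOH_n^\ell$ as a graded right supermodule. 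Running the same splicing argument from the left (using that $C_d$ is also free over $\EOH_{n'}^\ell$ by \cref{depoebay}) shows $H_n$ is likewise free of the same graded rank as a left $\EOH_{n'}^\ell$-supermodule; the commuting right $\EOH_n^\ell$-action is then realized as right multiplication through a unique graded superalgebra homomorphism $\alpha\colon \EOH_n^\ell \to \EOH_{n'}^\ell$, which is injective by faithfulness of a free rank-one module and surjective by matching graded Hilbert series, hence an isomorphism, delivering the bimodule structure predicted by \cref{SRC}. The main obstacle I foresee is the bookkeeping in this last step: one must reconcile the two free-of-rank-one descriptions on either side to pin down the twist $\alpha$, whereas the real combinatorial heart of the theorem---the hypothesis of this lemma itself---is left to subsequent lemmas.
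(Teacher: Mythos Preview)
Your proposal is correct. The first step (exactness of $\overline{C}_\bullet$) and the final identification of the top homology match the paper's approach; your computation of $b_{n+k,n}(n+1) = (\pi q^2)^{\binom{n+1}{2}+nk}$ via the $s=0$ term is equivalent to the paper's direct calculation of $c_{n+k,n}(n) - b_{n+k,n}(n)$ (the two agree by \cref{numerology}), and your bimodule argument for the twist $\alpha$ is actually more detailed than the paper, which simply asserts that freeness of rank $(\pi q^2)^{\binom{n+1}{2}+nk}$ on both sides suffices.

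The genuine difference is in lifting exactness from $\overline{C}_\bullet$ to $C_\bullet$. The paper runs a single elementary induction: assuming $\im\partial_d$ is free of rank $b_{n+k,n}(d)$, it splits $0 \to Z_d \to C_d \to \im\partial_d \to 0$, concludes $Z_d$ is free of rank $b_{n+k,n}(d+1)$, observes $Z_d \otimes \k = \overline{Z}_d$ by a dimension count, and then lifts surjectivity of $\overline{C}_{d+1} \to \overline{Z}_d$ to $C_{d+1} \to Z_d$ by graded Nakayama. You instead invoke the hyperhomology spectral sequence for exactness and only afterwards do the splitting argument to get freeness of $H_n$. Both are valid, but note the redundancy: the splitting induction you perform in your step 3 is essentially the paper's entire lifting argument, so once you are doing that anyway, the spectral sequence becomes an unnecessary detour. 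The paper's route is more economical; yours has the minor advantage that the spectral-sequence step cleanly separates ``exactness'' from ``freeness,'' which some readers may find conceptually clearer.
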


\begin{proof}
We first consider the specialized complex \cref{src2},
showing that $\im \overline{\partial}_d = \ker \overline{\partial}_{d-1}$
and that it is of graded superdimension $b_{n+k,n}(d)$
for each $d=1,\dots,n$. This follows by induction
on $d$, defining $\overline{\partial}_{-1}$ to be the zero map
so that we can start the induction at $d=0$. The induction base 
holds because $b_{n+k,n}(0) = 0$.
For the induction step, take $0 \leq d < n$ and assume
that $\im\overline{\partial}_d = \ker \overline{\partial}_{d-1}$
is of graded superdimension $b_{n+k,n}(d)$.
We have that
$
\overline{C}_d = \overline{B}'_d \oplus \overline{Z}_d
$
where $\overline{Z}_d := \ker \overline{\partial}_d$
and $\overline{B}'_d \simeq \im \overline{\partial}_d$ is a complementary graded superspace.
By induction, $\gsdim \overline{B}'_d = b_{n+k,n}(d)$.
We have that
$\dim \overline{C}_d = \dim \im \overline{\partial}_d+\dim \ker \overline{\partial}_d$ so, using \cref{numerology} for the final equality, get that
\begin{align*}
|c_{n+k,n}(d)| = |b_{n+k,n}(d)| \!+\! \dim \ker \overline{\partial}_d
\geq |b_{n+k,n}(d)|\!+\! \dim \im \overline{\partial}_{d+1}
\geq |b_{n+k,n}(d)|\!+\!|b_{n+k,n}(d+1)| = |c_{n+k,n}(d)|.
\end{align*}
This means that equality holds throughout, thereby proving
that $\im \overline{\partial}_{d+1} = \ker \overline{\partial}_d$.
The same sequence of equalities without evaluating
at $q=\pi = 1$ now gives that
$\gsdim \im \overline{\partial}_{d+1} = b_{n+k,k}(d+1)$,
and the argument is complete.

Next we show that
$\im \partial_d = \ker \partial_{d-1}$
and that it is free as a graded right $\EOH_n^\ell$-supermodule
of graded superrank $b_{n+k,n}(d)$
for each $d=1,\dots,n$. This is a similar induction to the one in the previous paragraph.
For the induction step, we take $0 \leq d < n$
and assume that we have shown already that
$\im \partial_d$ is free of 
graded superrank $b_{n+k,n}(d)$. Consider the short exact sequence
 $$
 0 \longrightarrow Z_d \longrightarrow C_d \longrightarrow \im \partial_d \longrightarrow 0
 $$
 where $Z_d := \ker \partial_d$.
 Since $\im \partial_d$ is free, this short exact sequence splits, 
 so we have that 
 $
 C_d = B_d' \oplus Z_d
 $
 where $B'_d \simeq \im \partial_d$ is a complement to $Z_d$ in $C_d$
 as a graded right $\EOH_n^\ell$-supermodule.
Moreover, $\overline{Z}_d$ from the previous paragraph
 is $Z_d \otimes 1$. As it is a summand of $C_d$, which is free,
 we deduce that $Z_d$ is a free graded right $\EOH_n^\ell$-supermodule
 with $\gsrank Z_d = \gsdim \overline{Z}_d = b_{n+k,k}(d+1)$.
The map $\partial_{d+1}:C_{d+1}\rightarrow Z_d$
 is surjective because $\id\otimes\partial_{d+1}:\overline{C}_{d+1}
\rightarrow \overline{Z}_d$ is surjective according to the previous paragraph. 
 We deduce that $\im \partial_{d+1} = \ker \partial_d$
is free of graded superrank $b_{n+k,k}(d+1)$, and the argument 
is complete.

So now we have shown that $\im \partial_d = \ker \partial_{d-1}$
is free as a graded right $\EOH_n^\ell$-supermodule
of graded superrank $b_{n+k,n}(d)$ for $d=1,\dots,n$.
The same is true as a graded left $\EOH_{n'}^\ell$-supermodule
since $\EOH_{n'}^\ell \cong \EOH_n^\ell$ and all of the numerology is the same.

To complete the proof of \cref{SRC}, it just remains
to prove the assertion about the top degree homology.
As $\EOH_{n'}^\ell \cong \EOH_n^\ell$, it suffices to show that it is 
free 
of graded superrank
$(\pi q^2)^{\binom{n+1}{2}+nk}$ both as a graded right $\EOH_n^\ell$- and as a graded left $\EOH_{n'}^\ell$-supermodule.
We have already shown that the image of $\partial_n$ is free
of graded superrank $b_{n+k,k}(n)$. Hence, since $C_n$ is free of
graded superrank $c_{n+k,n}(n)$ by \cref{depoebay}, 
we deduce that $\ker \partial_n$
is free of graded superrank $c_{n+k,n}(n)-b_{n+k,k}(n)$.
Thus, we are reduced to showing that
$c_{n+k,n}(n)-b_{n+k,k}(n) = (\pi q^2)^{\binom{n+1}{2}+nk}.$
This follows from the following calculation:
\begin{align*}
c_{n+k,n}(n) - b_{n+k,k}(n) &=
(\pi q^{-2})^{\binom{n}{2}}
q^{(n+k)n}\sqbinom{2n+k}{n}_{q,\pi}\\
&\qquad\qquad-
(\pi q^{-2})^{\binom{n}{2}}
\sum_{s=0}^{n-1} (\pi q^{2})^{(n+k)(n-s-1)}
q^{(n+k-1)(s+1)}
\sqbinom{n+k+s}{s+1}_{q,\pi}\\
&=
(\pi q^{-2})^{\binom{n}{2}}
\sum_{s=0}^{n} (\pi q^{2})^{(n+k)(n-s)}
q^{(n+k-1)s}
\sqbinom{n+k+s-1}{s}_{q,\pi}\\&\qquad\qquad-
(\pi q^{-2})^{\binom{n}{2}}
\sum_{s=1}^{n} (\pi q^{2})^{(n+k)(n-s)}
q^{(n+k-1)s}
\sqbinom{n+k+s-1}{s}_{q,\pi}\\
&=
(\pi q^{2})^{(n+k)n-\binom{n}{2}}
= (\pi q^2)^{\binom{n+1}{2}+nk}.
\end{align*}
Here, we have used the definitions in \cref{numerology} for the first equality and \cref{numerologyc} for the second.
\end{proof}

\begin{remark}
From \cref{K0}(1)--(2),
tensoring with the $d$th term
$U^\ell_{(k+d);n-d} \otimes_{\EOH_{n-d}^\ell} V_{n-d;(d)}^\ell$
in the singular Rouquier complex 
corresponds at the level of Grothendieck groups to the 
$\Z[q,q^{-1}]^\pi$-module homomorphism
$1_{-k}\mathbf{V}(-\ell)
\rightarrow 1_k\mathbf{V}(-\ell)$
defined by the action of
$$
q^{(n-d)(k+d)+d(\ell-3(n-d)-2d+1)}
E^{(k+d)}F^{(d)}1_{2n-\ell}
= q^{nk} 
\left(q^d E^{(k+d)}F^{(d)}
1_{2n-\ell}\right).
$$
From the original definition of the map $T$ in
\cref{oddreflection},
it follows that  multiplication by the
Euler characteristic of \cref{src}
corresponds to 
$q^{nk}T: 1_{-k} \mathbf{V}(-\ell)
\rightarrow 1_k \mathbf{V}(-\ell)$.
Given that the homology of the complex
vanishes in all but the top degree, 
it then follows by \cref{fromR} that the
top homology is 
$(\pi q^2)^{\binom{n+1}{2}+nk}\left[\EOH_{n'}^\ell\right]$
which, reassuringly, agrees with the final assertion of \cref{SRC}.
\end{remark}

\begin{proof}[Proof of \cref{SRC}]
Fix $d$ with $1 \leq d \leq n$.
Define an {\em initial pair} to be
$(\lambda,\mu) \in \GPar{(k+d)}{n}\times\GPar{d}{(n-d)}$
such that 
$\lambda_{k+d} = d-1-s$ and $\mu_{d-s} = n-d$
for some $0 \leq s \leq d-1$. This condition is illustrated by the following picture:
\begin{align*}
\lambda &= 
\begin{tikzpicture}[anchorbase,scale=1.3]
 \draw[-] (1,0) to (1,1.8);
 \draw[-] (1,.2) to (2,.2);
  \fill [lightgray] (-.2,1.8) rectangle (2,1.4);
  \fill [lightgray] (-.2,1.4) rectangle (1.8,1);
  \fill [lightgray] (-.2,1) rectangle (1.2,.6);
  \fill [gray] (-.2,0) rectangle (1,1.8);
  \draw [-] (1,0) to (1,1.8);
  \draw[-] (-.2,0) to (1,0) to (1,.6) to (1.2,.6) to (1.2,1) to (1.8,1) to (1.8,1.4) to (2,1.4);
 \draw[-] (-.2,0) to (2,0) to (2,1.8) to (-.2,1.8) to (-.2,0);
 \draw[-] (-.2,.2) to (1,.2);
  \draw [decorate, decoration = {calligraphic brace}] (-.2,1.9) --  (2,1.9);
\node at (.9,2.1) {$\scriptstyle n$};
  \draw [decorate, decoration = {calligraphic brace}] (-.3,0) --  (-.3,1.8);
\node at (-.6,.9) {$\scriptstyle k+d$};
  \draw [decorate, decoration = {calligraphic brace,mirror}] (-.2,-.1) --  (1,-.1);
	\node at (0.4,-.3) {$\scriptstyle d-s-1$};
	\node at (1.4,1.45) {?};
  \draw [decorate, decoration = {calligraphic brace,mirror}] (2.1,.2) --  (2.1,1.8);
\node at (2.5,1) {$\scriptstyle k+d-1$};
	\draw[-] (1,0) to (2,.2);
	\draw[-] (1,0.2) to (2,0);
\end{tikzpicture}
&
\mu &=
\begin{tikzpicture}[anchorbase,scale=1.3]
 \fill [lightgray] (0,0) rectangle (.2,.4);
 \fill [lightgray] (.2,.2) rectangle (.8,.4);
 \fill [gray] (0,.4) rectangle (.8,1.6);
\draw[-] (0,1.4) to (.8,1.4);
  \draw[-] (0,0) to (.8,0) to (.8,1.6) to (0,1.6) to (0,0);
 \draw [-] (0,.4) to (.8,.4);
 \draw[-] (0,0) to (.2,0) to (.2,.2) to (.8,.2);
	\node at (.1,.2) {?};
 \draw [decorate, decoration = {calligraphic brace}] (0,1.7) --  (.8,1.7);
	\node at (0.4,1.9) {$\scriptstyle n-d$};
   \draw [decorate, decoration = {calligraphic brace,mirror}] (0,-.1) --  (.8,-.1);
\node at (.4,-.3) {$\scriptstyle n-d$};
\draw [decorate, decoration = {calligraphic brace}] (-.1,0) --  (-.1,1.6);
	\node at (-0.3,.8) {$\scriptstyle d$};
 \draw [decorate, decoration = {calligraphic brace,mirror}] (.9,.4) --  (.9,1.6);
	\node at (1.2,1) {$\scriptstyle d-s$};
\end{tikzpicture}
\end{align*}
Let $I$ be the set of all initial pairs. Note that
\begin{equation}\label{siz}
|I| = |b_{n+k,n}(d)|.
\end{equation}
To see this, the number of $(\lambda,\mu)\in I$
with $\lambda_{k+d} = d-1-s$ and $\mu_{d-s} = n-d$
is $|\GPar{(k+d-1)}{(n-d+s+1)} \times \GPar{s}{(n-d)}|$, which is
$\binom{n+k+s}{n-d+s+1}\binom{n-d+s}{s}$. 
Summing\footnote{This is all we need here, but with a little more care using also \cref{qbinomialformula}, 
this argument can be used to show that
the coefficient of $q^{2r} \pi^r $ in $b_{n+k,n}(d)$
is equal to
the number of $(\lambda,\mu) \in I$ with $|\lambda|+|\mu| = 2r$, explaining the 
definition of $b_{n+k,n}(d)$ itself rather than merely
its evaluation at $q=\pi=1$.} over $s=0,1,\dots,d-1$
gives $|b_{n+k,n}(d)|$ by the original definition of this natural number.

Also define a {\em terminal pair}
to be 
$(\kappa,\nu) \in \GPar{(k+d-1)}{n}\times\GPar{(d-1)}{(n-d+1)}$
such that 
$\kappa_{k+d-1} \geq d-1-s$ and $\nu_{d-s} <\nu_{d-s-1}= n-d+1$
for some $0 \leq s \leq d-1$:
\begin{align*}
\kappa &= 
\begin{tikzpicture}[anchorbase,scale=1.3]
 \draw[-] (1,.2) to (1,1.8);
  \fill [lightgray] (-.2,1.8) rectangle (2,1.4);
  \fill [lightgray] (-.2,1.4) rectangle (1.8,1);
  \fill [lightgray] (-.2,1) rectangle (1.2,.6);
  \fill [gray] (-.2,.2) rectangle (1,1.8);
  \draw [-] (1,.2) to (1,1.8);
  \draw[-] (-.2,.2) to (1,.2) to (1,.6) to (1.2,.6) to (1.2,1) to (1.8,1) to (1.8,1.4) to (2,1.4);
 \draw[-] (-.2,.2) to (2,.2) to (2,1.8) to (-.2,1.8) to (-.2,.2);
  \draw [decorate, decoration = {calligraphic brace}] (-.2,1.9) --  (2,1.9);
\node at (.9,2.1) {$\scriptstyle n$};
  \draw [decorate, decoration = {calligraphic brace}] (-.3,.2) --  (-.3,1.8);
\node at (-.7,1) {$\scriptstyle k+d-1$};
  \draw [decorate, decoration = {calligraphic brace,mirror}] (-.2,.1) --  (1,.1);
	\node at (0.4,-.1) {$\scriptstyle d-s-1$};
	\node at (1.4,1.45) {?};
  \draw [decorate, decoration = {calligraphic brace,mirror}] (2.1,.2) --  (2.1,1.8);
\node at (2.5,1) {$\scriptstyle k+d-1$};
\end{tikzpicture}
&
\nu &=
\begin{tikzpicture}[anchorbase,scale=1.3]
 \fill [lightgray] (0,0) rectangle (.2,.4);
 \fill [lightgray] (.2,.2) rectangle (.8,.4);
 \fill [gray] (0,.4) rectangle (1,1.4);
 \draw[-] (0,0) to (1,0) to (1,1.4) to (0,1.4) to (0,0);
 \draw[-] (.8,0) to (.8,1.4);
 \draw [-] (0,.4) to (1,.4);
 \draw[-] (0,0) to (.2,0) to (.2,.2) to (.8,.2);
 	\node at (.1,.2) {?};
 \draw [decorate, decoration = {calligraphic brace}] (0,1.5) --  (1,1.5);
	\node at (0.4,1.7) {$\scriptstyle n-d+1$};
   \draw [decorate, decoration = {calligraphic brace,mirror}] (0,-.1) --  (.8,-.1);
\node at (.4,-.3) {$\scriptstyle n-d$};
\draw [decorate, decoration = {calligraphic brace}] (-.1,0) --  (-.1,1.4);
	\node at (-0.5,.7) {$\scriptstyle d-1$};
 \draw [decorate, decoration = {calligraphic brace,mirror}] (1.1,.4) --  (1.1,1.4);
	\node at (1.6,1) {$\scriptstyle d-s-1$};
	\draw[-] (.8,0) to (.8,.4);
    \draw[-] (.8,0) to (1,.4);
    \draw[-] (1,0) to (.8,.4);
\end{tikzpicture}
\end{align*}
Let $T$ be the set of all terminal pairs.
Our final combinatorial observation
is that there is a bijection \begin{equation}\label{fd}
f:I \stackrel{\sim}{\rightarrow} T
\end{equation} 
taking
$(\lambda,\mu) \in I$ to $(\lambda^-,\mu^+) \in T$
where $\lambda^-$ is obtained from $\lambda$ by removing the
bottom row of its Young diagram, and 
$\mu^+$ is obtained from $\mu$ by 
adding one box to the end of the first $(d-s)$ rows
then removing completely the top row.

Now we are going to make an explicit computation of
the differential 
$\bar\partial_d$ in terms of the basis
for $\overline{C}_d = C_d \otimes_{\EOH_n^\ell} \k$
consisting of the vectors
\begin{equation}
w(\lambda,\mu) := 
u_{n-d;(k+d)}\big(\bar \sigma^{(k+d)}_\lambda\big)
\otimes v_{n-d;(d)}\big(\bar \sigma^{(d)}_\mu\big)
\otimes 1 \in 
U_{(k+d);n-d}^\ell\otimes_{\EOH_{n-d}^\ell}
V_{n-d;(d)}^\ell \otimes_{\EOH_n^\ell} \k 
\end{equation}
for $(\lambda,\mu) \in 
\GPar{(k+d)}{n}\times \GPar{d}{(n-d)}$; cf. \cref{depoebay}.
Order pairs $(\kappa,\nu) \in \GPar{(k+d-1)}{n} \times \GPar{(d-1)}{(n-d+1)}$
so that $(\kappa',\nu') < (\kappa,\nu)$
if either $|\kappa'| < |\kappa|$,
or $|\kappa'| = |\kappa|$ and $\nu' <_{\lex} \nu$.
We claim for $(\lambda,\mu) \in I$ that
\begin{equation}\label{triangularplace}
\overline{\partial}_d(w(\lambda,\mu))
= \pm w(\lambda^-,\mu^+) + \text{
(a linear combination of $w(\kappa,\nu)$ for $(\kappa,\nu) < (\lambda^-,\mu^+)$)}.
\end{equation}
Given the claim, it follows by
\cref{siz,fd} that
$\dim \im \overline{\partial}_d \geq |b_{n+k,n}(d)|$, so that the theorem 
follows by \cref{mainstep}.

It remains to prove \cref{triangularplace}.
Take $(\lambda,\mu) \in 
\GPar{(k+d)}{n}\times \GPar{d}{(n-d)}$
and consider $\overline{\partial}_d(w(\lambda,\mu))$.
According to 
the definition of $\overline{\partial}_d$, we have to apply three
different maps to $w(\lambda,\mu)$ arising from $b_{(d+k-1),(1)}$,
$c_{(1),(d-1)}$ and $\ev_{n-d}$. We apply these maps one by one.
\begin{itemize}
\item
First, the map $b_{(d+k-1),(1)}$ comes from the embedding
$$
\OSym_{k+d} \hookrightarrow \OSym_{(k+d-1,1)}
\stackrel{\sim}{\rightarrow}
\OSym_{k+d-1} \otimes \k[x].
$$
\cref{thepierineededlater} shows that this embedding takes
$s_\lambda^{(k+d)}$
to $\sum_{\kappa} \pm s_{\kappa}^{(k+d-1)}
\otimes x^{|\lambda|-|\kappa|}$
summing over all
$\kappa \in \GPar{(k+d-1)}{n}$ whose Young diagram is 
obtained by removing boxes from the bottoms of different columns of
the Young diagram of $\lambda$,  
necessarily including all
$\lambda_{k+d}$ 
boxes on its $(k+d)$th row.
We say simply ``$\kappa$ obtained by removing a row strip from $\lambda$''
for this from now on.
\item
Next, we apply the map $c_{(1),(d-1)}$, which comes from the embedding
$$
\OSym_d \hookrightarrow \OSym_{(1,d-1)}
\stackrel{\sim}{\rightarrow} \k[x] \otimes \OSym_{d-1},
$$
plus some extra signs due to the parity shift.
The version of Pieri obtained by applying $\gamma_d$
to \cref{thepierineededlater} (using also \cref{littlebears})
shows that this
takes $\sigma_\mu^{(d)}$
to $\sum_{\delta} \pm x^{|\mu|-|\delta|} \otimes 
\sigma_{\delta}^{(d-1)}$
summing over 
 $\delta\in \GPar{(d-1)}{(n-d)}$ whose Young diagram is obtained by removing 
 boxes from the bottoms of different columns of the Young diagram
of $\mu$, necessarily including all $\mu_d$ boxes on its $d$th row,
to obtain the Young diagram of partition $\delta$.
We say simply ``$\delta$ obtained by removing a row strip from $\mu$'' for this from now on.
\item
So far, remembering \cref{fancynotation2},
we have shown that
$\big(b_{(k+d-1),(1)}\otimes\; c_{(1),(d-1)}\otimes \id \big)\circ (\operatorname{inc} \otimes \id)$
takes $w(\lambda,\mu)$
to 
$$
\sum_{(\kappa,\delta)} \pm u_{(k+d-1);n-d+1}\big(\sigma_\kappa^{(k+d-1)}\big)
\otimes u_{n-d}\big(x^{|\lambda|-|\kappa|}\big) \otimes v_{n-d}\big(x^{|\mu|-|\delta|}\big)
\otimes v_{n-d+1;(d-1)}\big(\sigma_\delta^{(d-1)}\big)\otimes 1
$$
summing over $(\kappa,\delta) \in \GPar{(k+d-1)}{n}\times\GPar{(d-1)}{(n-d)}$
such that $\kappa$ is obtained by removing a row strip from 
$\lambda$ and $\delta$ is obtained by removing a row strip from $\mu$.
Then we use the definition in
\cref{cupsandcaps} to
apply $(\operatorname{can}\otimes \id) \circ (\id \otimes \ev_{n-d}\otimes \id \otimes \id)$, giving
$$
\overline{\partial}_d(w(\lambda,\mu))
=
\sum_{(\kappa,\delta)} \pm u_{(k+d-1);n-d+1}\big(\sigma_\kappa^{(k+d-1)}\big)
\otimes \bar\gamm^{(n-d+1)}_{(|\lambda|-|\kappa|)+(|\mu|-|\delta|)-
(n-d)}
v_{n-d+1;(d-1)}\big(\sigma_\delta^{(d-1)}\big)\otimes 1.
$$
summing over all $(\kappa,\delta) \in \GPar{(k+d-1)}{n}\times
\GPar{(d-1)}{(n-d)}$
obtained by removing a row strip from $(\lambda,\mu)
\in \GPar{(k+d)}{n}\times\GPar{d}{(n-d)}$.
\end{itemize}
It remains to commute the
elements $\bar\gamm^{(n-d+1)}_{(|\lambda|-|\kappa|)+(|\mu|-|\delta|)-
(n-d)}$ to the right hand side in this expression.
In view of \cref{lemma01}(1) 
and degree considerations, this will produce
some linear combination of basis vectors of the form $w(\kappa,\nu)$
for $\nu\in\GPar{(d-1)}{(n-d+1)}$ with $|\nu| =
|\mu|+(|\lambda|-|\kappa|)-(n-d)$.
We just need to show that $w(\lambda^-,\mu^+)$ appears with
coefficient $\pm 1$ and all other $w(\kappa,\nu)$
that arise satisfy $(\kappa,\nu) < (\lambda^-,\mu^+)$.
This is clearly the case if $|\kappa| < |\lambda^-|$,
so we may assume from now on that $\kappa$, like $\lambda^-$,
is obtained from $\lambda$
by removing the minimal number of boxes, i.e., just
its bottom row. So we have that $\kappa = \lambda^-$
and $|\lambda|-|\kappa| = \lambda_{k+d}$, which equals $d-s-1$ for 
a unique $0 \leq s < d$.
Also let $p := (d-s+1)+(|\mu|-|\delta|)
- (n-d)$ for short and consider
$$
\bar\gamm^{(n-d+1)}_{p}
v_{n-d+1;(d-1)}\big(\sigma_\delta^{(d-1)}\big)\otimes 1.
$$
By \cref{alreadybasis}, we have that
$\gamm^{(n-d+1)}_p = s^{(n-d+1)}_{(p)}$
plus a linear combination of other $s^{(n-d+1)}_\tau$
for partitions $\tau$ with $|\tau|=p$ and $\h(\tau) > 1$.
Also $\sigma_{(1^p)}^{(d-1)} = \eps_p^{(d-1)}$. 
Using \cref{trains}, we deduce that
\begin{equation}\label{dishy}
\bar\gamm^{(n-d+1)}_{p}
v_{n-d+1;(d-1)}\big(\sigma_\delta^{(d-1)}\big)\otimes 1
=
\pm v_{n-d+1;(d-1)}\big(\sigma_{\delta}^{(d-1)} \eps_p^{(d-1)}\big) \otimes 1
+ (*)
\end{equation}
where $(*)$
is a linear combination of
terms of the form
$v_{n-d+1;(d-1)}\big(\sigma_\delta^{(d-1)} \sigma^{(d-1)}_\tau\big)\otimes 1$
for partitions $\tau$ with $|\tau|=p$ and $\tau_1 > 1$.
We can compute all of these products of dual Schur polynomials
by conjugating with $\smiley_{d-1}$ and 
using the odd Littlewood-Richardson rule; see \cref{discussionhere}.
Remembering also that
$v_{n-d+1;(d-1)}\big(\sigma_\nu^{(d-1)}\big)\otimes 1 = 0$
unless $\nu \in \GPar{(d-1)}{(n-d+1)}$ by \cref{lemma01}(1) again,
we obtain a linear combination of 
basis vectors
$v_{n-d+1;(d-1)}\big(\sigma_{\nu}^{(d-1)}\big)\otimes 1$
for $\nu\in \GPar{(d-1)}{(n-d+1)}$ obtained from $\delta$
by adding $p$ boxes in particular ways.
If $p < d-s-1$ then we cannot have $\nu = \mu^+$
since that has $d-s+1$ boxes in the rightmost column,
whereas that column is empty in $\delta$.
Now suppose that $p = d-s-1$; then $\delta$ is $\mu$ with 
all $(n-d)$ boxes in its first row removed.
In this case, 
the leading term
$\pm v_{n-d+1;(d-1)}\Big(\sigma_{\delta}^{(d-1)}\eps_p^{(d-1)} \big) \otimes 1$ computed via the odd Littlewood-Richardson rule
does produce $\pm v_{n-d+1;(d-1)}\big(\sigma_{\mu^+}^{(d-1)}\big)\otimes 1$
when a column strip of $p$ boxes is added at the top right of the Young diagram of $\delta$. All other basis vectors coming from this leading term
are of the form
$v_{n-d+1;(d-1)}\big(\sigma_{\nu}^{(d-1)}\big)\otimes 1$
for $\nu <_{\lex} \mu^+$.
The basis vectors coming from the lower terms
$v_{n-d+1;(d-1)}\big(\sigma_\delta^{(d-1)}\sigma^{(d-1)}_\tau \big)\otimes 1$
for $\tau$ with $|\tau|=p$ and $\tau_1 > 1$
must also all be of the form
$v_{n-d+1;(d-1)}\big(s_{\nu}^{(d-1)}\big)\otimes 1$
for $\nu <_{\lex} \mu^+$ since when $\tau_1 > 1$ the odd Littlewood-Richardson rule
does not allow all $p$ boxes to be added to the same column of the
Young diagram of $\delta$.
\end{proof}

\section{Non-degeneracy of the odd 2-category
\texorpdfstring{$\UU(\sl_2)$}{U(sl(2))}}\label{catsec}

In this section, we will use the 
string calculus for strict graded monoidal supercategories and 2-supercategories
adopting all of the conventions from \cite{BE}. In particular, 
$f \circ g$ is vertical composition ($f$ on top of $g$) and $f \otimes g$ or simply $fg$ is horizontal composition
($f$ to the left of $g$).
The following definition originated in \cite{EL} and was reformulated in the present terms in \cite{BE2}.

\begin{definition}\label{km2cat}
The {\em odd $\mathfrak{sl}_2$ 2-category}
is the strict graded $2$-supercategory
$\UU(\sl_2)$ with object set $\Z$,
generating 1-morphisms 
$E 1_k =1_{k+2} E:k \rightarrow k\!+\!2$ and
$1_k F = F 1_{k+2}:k\!+\!2 \rightarrow k$ for each $k \in \Z$
whose identity 2-morphisms are represented graphically
by $\Big\uparrow \:{\red\scriptstyle k}={\red \scriptstyle k+2}\:\Big\uparrow$ and 
${\red\scriptstyle k} \:\Big\downarrow=\Big\downarrow \:{\red \scriptstyle k+2}$, respectively,
and generating 2-morphisms
\begin{align}\label{solid1}
&
\begin{tikzpicture}[anchorbase]
	\draw[->] (0.08,-.3) to (0.08,.4);
     \opendot{0.08,0.05};
   \node at (.3,.1) {$\red\scriptstyle{k}$};
\end{tikzpicture}
:E 1_k \Rightarrow E 1_k
&&
\begin{tikzpicture}[anchorbase]
	\draw[->] (0.2,-.3) to (-0.2,.4);
	\draw[->] (-0.2,-.3) to (0.2,.4);
   \node at (.35,.1) {$\red\scriptstyle{k}$};
\end{tikzpicture}:E^2 1_k \Rightarrow E^2 1_k
&&
\begin{tikzpicture}[anchorbase]
	\draw[<-] (0.4,0.3) to[out=-90, in=0] (0.1,-0.2);
	\draw[-] (0.1,-0.2) to[out = 180, in = -90] (-0.2,0.3);
  \node at (0.55,-0.1) {$\red\scriptstyle{k}$};
\end{tikzpicture}
:FE 1_k\Rightarrow 1_k
&&
\begin{tikzpicture}[anchorbase]
	\draw[<-] (0.4,-0.2) to[out=90, in=0] (0.1,0.3);
	\draw[-] (0.1,0.3) to[out = 180, in = 90] (-0.2,-0.2);
  \node at (0.55,0.2) {$\red\scriptstyle{k}$};
\end{tikzpicture}
:1_{k} \Rightarrow EF 1_{k}
\end{align}
which are odd of degree 2, odd of degee $-2$,
even of degree $k+1$, and even of degree $1-k$, respectively.
Then there are three families of relations.
First we have the odd nil-Hecke relations (in the standard formulation
rather than the modified version from \cref{ONH1,ONH3,ONH5,ONH2,ONH4,ONH6}):
\begin{align}
\begin{tikzpicture}[anchorbase,scale=.85]
	\draw[->] (0.28,.4) to[out=90,in=-90] (-0.28,1.1);
	\draw[->] (-0.28,.4) to[out=90,in=-90] (0.28,1.1);
	\draw[-] (0.28,-.3) to[out=90,in=-90] (-0.28,.4);
	\draw[-] (-0.28,-.3) to[out=90,in=-90] (0.28,.4);
   \node at (0.5,0.4) {$\red\scriptstyle{k}$};
\end{tikzpicture}
  &=0&
    \begin{tikzpicture}[anchorbase]
	\draw[<-] (0.45,.8) to (-0.45,-.4);
	\draw[->] (0.45,-.4) to (-0.45,.8);
        \draw[-] (0,-.4) to[out=90,in=-90] (-.45,0.2);
        \draw[->] (-0.45,0.2) to[out=90,in=-90] (0,0.8);
   \node at (.4,0.2) {$\red\scriptstyle{k}$};
\end{tikzpicture}
&=
\begin{tikzpicture}[anchorbase]
	\draw[<-] (0.45,.8) to (-0.45,-.4);
	\draw[->] (0.45,-.4) to (-0.45,.8);
        \draw[-] (0,-.4) to[out=90,in=-90] (.45,0.2);
        \draw[->] (0.45,0.2) to[out=90,in=-90] (0,0.8);
   \node at (0.7,0.2) {$\red\scriptstyle{k}$};
\end{tikzpicture}&
\begin{tikzpicture}[anchorbase]
	\draw[<-] (0.25,.6) to (-0.25,-.2);
	\draw[->] (0.25,-.2) to (-0.25,.6);
      \opendot{0.15,0.42};
   \node at (0.4,0.2) {$\red\scriptstyle{k}$};
\end{tikzpicture}+        
\begin{tikzpicture}[anchorbase]
	\draw[<-] (0.25,.6) to (-0.25,-.2);
	\draw[->] (0.25,-.2) to (-0.25,.6);
      \opendot{-0.13,-0.02};
   \node at (0.4,0.2) {$\red\scriptstyle{k}$};
\end{tikzpicture}
=
\begin{tikzpicture}[anchorbase]
	\draw[<-] (0.25,.6) to (-0.25,-.2);
	\draw[->] (0.25,-.2) to (-0.25,.6);
      \opendot{0.15,-0.02};
   \node at (.3,0.2) {$\red\scriptstyle{k}$};
\end{tikzpicture}
+
\begin{tikzpicture}[anchorbase]
	\draw[<-] (0.25,.6) to (-0.25,-.2);
	\draw[->] (0.25,-.2) to (-0.25,.6);
      \opendot{-0.13,0.42};
   \node at (.3,0.2) {$\red\scriptstyle{k}$};
\end{tikzpicture}
&=
    \begin{tikzpicture}[anchorbase]
      	\draw[->] (0.08,-.2) to (0.08,.6);
	\draw[->] (-0.2,-.2) to (-0.2,.6);
   \node at (0.3,0.2) {$\red\scriptstyle{k}$};
      \end{tikzpicture}
\label{nearlydone}\end{align}
Next we have the {\em right adjunction relations}
asserting that $Q^{-k-1} 1_{k}F $ is
right dual to $E 1_k$ in the $(Q,\Pi)$-envelope of $\UU(\sl_2)$
(cf. \cref{oddadjunction}(1)):
\begin{align}\label{rightadj}
\begin{tikzpicture}[anchorbase]
  \draw[->] (0.3,0) to (0.3,.4);
	\draw[-] (0.3,0) to[out=-90, in=0] (0.1,-0.3);
	\draw[-] (0.1,-0.3) to[out = 180, in = -90] (-0.1,0);
	\draw[-] (-0.1,0) to[out=90, in=0] (-0.3,0.3);
	\draw[-] (-0.3,0.3) to[out = 180, in =90] (-0.5,0);
  \draw[-] (-0.5,0) to (-0.5,-.4);
   \node at (0.5,.1) {$\red\scriptstyle{k}$};
\end{tikzpicture}
&=
\begin{tikzpicture}[anchorbase]
  \draw[->] (0,-0.4) to (0,.4);
   \node at (0.2,0.1) {$\red\scriptstyle{k}$};
\end{tikzpicture}&
\begin{tikzpicture}[anchorbase]
  \draw[->] (0.3,0) to (0.3,-.4);
	\draw[-] (0.3,0) to[out=90, in=0] (0.1,0.3);
	\draw[-] (0.1,0.3) to[out = 180, in = 90] (-0.1,0);
	\draw[-] (-0.1,0) to[out=-90, in=0] (-0.3,-0.3);
	\draw[-] (-0.3,-0.3) to[out = 180, in =-90] (-0.5,0);
  \draw[-] (-0.5,0) to (-0.5,.4);
   \node at (-0.7,0.1) {$\red\scriptstyle{k}$};
\end{tikzpicture}
&=
\begin{tikzpicture}[anchorbase]
  \draw[<-] (0,-0.4) to (0,.4);
   \node at (-0.2,0.1) {$\red\scriptstyle{k}$};
\end{tikzpicture}
\end{align}
Finally there are some {\em inversion relations}. 
To formulate these, we first
introduce new 2-morphisms
\begin{align}\label{sigrel}
&
\begin{tikzpicture}[anchorbase]
	\draw[<-] (0.3,-.3) to (-0.3,.4);
	\draw[->] (-0.3,-.3) to (0.3,.4);
   \node at (.4,.05) {$\red\scriptstyle k$};
\end{tikzpicture}
:=
\begin{tikzpicture}[anchorbase]
	\draw[->] (0.3,-.5) to (-0.3,.5);
\draw[->] (-0.7,.4) to[out=-90,in=180] (-.35,-.4) to[out=0,in=180] (.35,.4) to[out=0,in=90] (.7,-.5);
   \node at (.9,.05) {$\red\scriptstyle k$};
\end{tikzpicture}
:E F 1_k \rightarrow F E 1_k.
\end{align}
Then, denoting powers of the dot generator by labelling it with a natural number,  we require that the following (not necessarily homogeneous) 2-morphisms are isomorphisms:
\begin{align}\label{inv1}
\left(\:
\displaystyle\begin{tikzpicture}[baseline = 0]
	\draw[<-] (0.3,-.3) to (-0.3,.4);
	\draw[->] (-0.3,-.3) to (0.3,.4);
   \node at (.4,.05) {$\red\scriptstyle{k}$};
\end{tikzpicture}
\qquad
\begin{tikzpicture}[baseline = 0]
	\draw[<-] (0.4,-.1) to[out=90, in=0] (0.1,0.4);
	\draw[-] (0.1,0.4) to[out = 180, in = 90] (-0.2,-.1);
  \node at (0.6,0.2) {$\red\scriptstyle{k}$};
\end{tikzpicture}
\qquad
\begin{tikzpicture}[baseline = 0]
	\draw[<-] (0.4,-.1) to[out=90, in=0] (0.1,0.4);
	\draw[-] (0.1,0.4) to[out = 180, in = 90] (-0.2,-.1);
  \node at (0.6,0.2) {$\red\scriptstyle{k}$};
     \opendot{-0.15,0.2};
\end{tikzpicture}\qquad
\cdots\qquad
\begin{tikzpicture}[baseline = 0]
	\draw[<-] (0.5,-.1) to[out=90, in=0] (0.15,0.4);
	\draw[-] (0.15,0.4) to[out = 180, in = 90] (-0.2,-.1);
  \node at (0.7,0.2) {$\red\scriptstyle{k}$};
      \node at (0.15,0.2) {$\scriptstyle{k-1}$};
     \opendot{-0.15,0.2};
\end{tikzpicture}
\right)^T&:
E F 1_k\stackrel{\sim}{\rightarrow}
F E 1_k \oplus 1_k^{\oplus k}&&
\text{for $k \geq 0$}\\
\left(\:
\displaystyle\begin{tikzpicture}[baseline = 0]
	\draw[<-] (0.3,-.3) to (-0.3,.4);
	\draw[->] (-0.3,-.3) to (0.3,.4);
   \node at (.4,-.05) {$\red\scriptstyle{k}$};
\end{tikzpicture}
\qquad
\begin{tikzpicture}[baseline = 0]
	\draw[<-] (0.4,0.2) to[out=-90, in=0] (0.1,-.3);
	\draw[-] (0.1,-.3) to[out = 180, in = -90] (-0.2,0.2);
  \node at (0.6,-0.25) {$\red\scriptstyle{k}$};
      \end{tikzpicture}\qquad
      \begin{tikzpicture}[baseline = 0]
	\draw[<-] (0.4,0.2) to[out=-90, in=0] (0.1,-.3);
	\draw[-] (0.1,-.3) to[out = 180, in = -90] (-0.2,0.2);
      \opendot{.38,0};
  \node at (0.6,-0.25) {$\red\scriptstyle{k}$};
      \end{tikzpicture}
      \qquad\cdots\qquad
      \begin{tikzpicture}[baseline = 0]
	\draw[<-] (0.5,0.2) to[out=-90, in=0] (0.1,-.3);
	\draw[-] (0.1,-.3) to[out = 180, in = -90] (-0.3,0.2);
  \node at (0.65,-0.25) {$\red\scriptstyle{k}$};
      \node at (0.1,0) {$\scriptstyle{-k-1}$};
      \opendot{.47,0};
      \end{tikzpicture}\right)
&:E F 1_k \oplus  1_k^{\oplus (-k)}
\stackrel{\sim}{\rightarrow}
 F E 1_k&&\text{for $k \leq 0$.}\label{inv2}
\end{align}
The morphisms depicted in \cref{inv1,inv2} 
represent a $(k+1)\times 1$ matrix
and a $1\times (1-k)$ matrix of 2-morphisms
in $\UU(\sl_2)$, respectively, i.e., they are 2-morphisms in the additive envelope of $\UU(\sl_2)$. Saying that they are isomorphisms means that 
there are some 
further generating $2$-morphisms in $\UU(\sl_2)$ which provide
the matrix entries of two-sided inverses to these morphisms.
\end{definition}

The defining relations \cref{solid1,nearlydone,rightadj,inv1,inv2} look quite innocent but they imply many further relations.
In order to record some of these, we first 
need to introduce some further shorthand for 
generating $2$-morphisms whose existence is provided by the inversion relation. First, we have the leftward crossing and the leftward cups and caps \begin{align}\label{solid2}
\begin{tikzpicture}[baseline = 0]
	\draw[->] (0.3,-.3) to (-0.3,.4);
	\draw[<-] (-0.3,-.3) to (0.3,.4);
   \node at (.45,.1) {$\red\scriptstyle{k}$};
\end{tikzpicture}:F E 1_k \Rightarrow E F 1_k
&&
\begin{tikzpicture}[baseline = 0]
	\draw[-] (0.4,0.3) to[out=-90, in=0] (0.1,-0.2);
	\draw[->] (0.1,-0.2) to[out = 180, in = -90] (-0.2,0.3);
  \node at (0.55,-0.1) {$\red\scriptstyle{k}$};
\end{tikzpicture}
:1_{k} EF\Rightarrow 1_{k}
&&
\begin{tikzpicture}[baseline = 0]
	\draw[-] (0.4,-0.2) to[out=90, in=0] (0.1,0.3);
	\draw[->] (0.1,0.3) to[out = 180, in = 90] (-0.2,-0.2);
  \node at (0.55,0.2) {$\red\scriptstyle{k}$};
\end{tikzpicture}
:1_{k} \Rightarrow FE 1_{k}
\end{align}
which are defined as follows.
\begin{itemize}
\item
We let
$\begin{tikzpicture}[baseline = 0,scale=.8]
	\draw[->] (0.3,-.3) to (-0.3,.4);
	\draw[<-] (-0.3,-.3) to (0.3,.4);
   \node at (.45,.1) {$\red\scriptstyle{k}$};
\end{tikzpicture}:F E 1_k \Rightarrow E F 1_k
$
be the {\em negation} of the leftmost entry of the $1 \times (k+1)$ matrix that is the two-sided inverse of \cref{inv1}
if $k \geq 0$, 
or the {\em negation} of the top entry of the $(1-k)\times 1$ matrix that is the two-sided inverse of \cref{inv2} if $k \leq 0$; cf. \cite[(2.8)--(2.9)]{BE2}.
\item
We let 
$\begin{tikzpicture}[baseline = 0,scale=.8]
	\draw[-] (0.4,0.3) to[out=-90, in=0] (0.1,-0.2);
	\draw[->] (0.1,-0.2) to[out = 180, in = -90] (-0.2,0.3);
  \node at (0.5,-0.1) {$\red\scriptstyle{k}$};
\end{tikzpicture}
$
be the rightmost entry of the $1\times (k+1)$ matrix that is the
two-sided inverse of \cref{inv1} if $k > 0$,
or $(-1)^{k+1}
\begin{tikzpicture}[anchorbase,scale=.75]
	\draw[-] (0.28,.6) to[out=240,in=90] (-0.28,-.1);
	\draw[<-] (-0.28,.6) to[out=300,in=90] (0.28,-0.1);
   \node at (-.05,-.1) {$\scriptstyle{-k}$};
	\draw[-] (0.28,-0.1) to[out=-90, in=0] (0,-0.4);
	\draw[-] (0,-0.4) to[out = 180, in = -90] (-0.28,-0.1);
      \node at (.7,0.1) {$\red\scriptstyle{k}$};
      \opendot{0.28,-0.1};
      \end{tikzpicture}$ if $k \leq 0$; cf. \cite[(2.10)]{BE2}.
\item We let
\begin{tikzpicture}[baseline = 0,scale=.8]
	\draw[-] (0.4,-0.2) to[out=90, in=0] (0.1,0.3);
	\draw[->] (0.1,0.3) to[out = 180, in = 90] (-0.2,-0.2);
  \node at (0.55,0.2) {$\red\scriptstyle{k}$};
\end{tikzpicture}
be the bottom entry of the $(1-k) \times 1$ matrix that is the two-sided inverse of \cref{inv2}
if $k < 0$, or
$(-1)^{k+1}
\begin{tikzpicture}[anchorbase,scale=.75]
	\draw[-] (0.28,-.6) to[out=120,in=-90] (-0.28,.1);
	\draw[<-] (-0.28,-.6) to[out=60,in=-90] (0.28,.1);
   \node at (.5,-.1) {$\red\scriptstyle{k}$};
	\draw[-] (0.28,0.1) to[out=90, in=0] (0,0.4);
	\draw[-] (0,0.4) to[out = 180, in = 90] (-0.28,0.1);
      \node at (-0.05,0.1) {$\scriptstyle{k}$};
    \opendot{-.25,0.1};
\end{tikzpicture}$ if $k \geq 0$; cf. \cite[(2.11)]{BE2}
\end{itemize}
Finally, we have the downward dot and the downward crossing, which are
the right mates of the upward dot and the upward crossing:
\begin{align}\label{phew}
\begin{tikzpicture}[anchorbase]
	\draw[<-] (0.08,-.4) to (0.08,.4);
     \opendot{0.08,0};
   \node at (-.21,.15) {$\red\scriptstyle{k}$};
\end{tikzpicture}
&:=
\begin{tikzpicture}[anchorbase]
  \draw[->] (0.3,0) to (0.3,-.4);
	\draw[-] (0.3,0) to[out=90, in=0] (0.1,0.3);
	\draw[-] (0.1,0.3) to[out = 180, in = 90] (-0.1,0);
	\draw[-] (-0.1,0) to[out=-90, in=0] (-0.3,-0.3);
	\draw[-] (-0.3,-0.3) to[out = 180, in =-90] (-0.5,0);
  \draw[-] (-0.5,0) to (-0.5,.4);
   \opendot{-0.1,0};
    \node at (-0.67,0) {$\red\scriptstyle{k}$};
\end{tikzpicture}
:1_k F \Rightarrow 1_k F&
\begin{tikzpicture}[anchorbase]
	\draw[<-] (0.3,-.4) to (-0.3,.4);
	\draw[<-] (-0.3,-.4) to (0.3,.4);
   \node at (-.35,.1) {$\red\scriptstyle{k}$};
\end{tikzpicture}
&:=
\begin{tikzpicture}[anchorbase,scale=.8]
\draw[->] (1.3,.4) to (1.3,-1.2);
\draw[-] (-1.3,-.4) to (-1.3,1.2);
\draw[-] (.5,1.1) to [out=0,in=90] (1.3,.4);
\draw[-] (-.35,.4) to [out=90,in=180] (.5,1.1);
\draw[-] (-.5,-1.1) to [out=180,in=-90] (-1.3,-.4);
\draw[-] (.35,-.4) to [out=-90,in=0] (-.5,-1.1);
\draw[-] (.35,-.4) to [out=90,in=-90] (-.35,.4);
        \draw[-] (-0.35,-.5) to[out=0,in=180] (0.35,.5);
        \draw[->] (0.35,.5) to[out=0,in=90] (0.8,-1.2);
        \draw[-] (-0.35,-.5) to[out=180,in=-90] (-0.8,1.2);
   \node at (-1.5,0) {$\red\scriptstyle{k}$};
\end{tikzpicture}
:1_k F^2 \Rightarrow 1_k F^2.
\end{align}
The following table summarizes the parity and degree information about all of the 2-morphisms defined thus far.
\begin{equation}\label{table}
\begin{array}{|c|c|c||c|c|c|}
\hline
\text{Generator}&\text{Degree}&\text{Parity}&\text{Generator}&\text{Degree}&\text{Parity}\\\hline
\begin{tikzpicture}[baseline = 0,scale=.8]
	\draw[->] (0.08,-.3) to (0.08,.3);
\opendot{.08,0};
\end{tikzpicture}
{\red\scriptstyle k}
&2&\bar 1&
\begin{tikzpicture}[baseline = 0,scale=.8]
	\draw[<-] (0.08,-.3) to (0.08,.3);
     \opendot{0.08,0.05};
     \end{tikzpicture}
{\red\scriptstyle k}
&2&\bar 1\\
\begin{tikzpicture}[baseline = 0,scale=.8]
	\draw[->] (0.3,-.3) to (-0.3,.3);
	\draw[->] (-0.3,-.3) to (0.3,.3);
   \node at (.4,.05) {$\red\scriptstyle{k}$};
\end{tikzpicture}
&-2&\bar 1&
\begin{tikzpicture}[baseline = 0,scale=.8]
	\draw[<-] (0.3,-.3) to (-0.3,.3);
	\draw[->] (-0.3,-.3) to (0.3,.3);
   \node at (.4,.05) {$\red\scriptstyle{k}$};
\end{tikzpicture}&0&\bar 1\\
\begin{tikzpicture}[baseline = 0,scale=.8]
	\draw[<-] (0.3,-.3) to (-0.3,.3);
	\draw[<-] (-0.3,-.3) to (0.3,.3);
   \node at (.4,.05) {$\red\scriptstyle{k}$};
\end{tikzpicture}
&-2&\bar 1&
\begin{tikzpicture}[baseline = 0,scale=.8]
	\draw[->] (0.3,-.3) to (-0.3,.3);
	\draw[<-] (-0.3,-.3) to (0.3,.3);
   \node at (.4,.05) {$\red\scriptstyle{k}$};
\end{tikzpicture}
&0&\bar 1\\
\begin{tikzpicture}[baseline = 0,scale=.8]
	\draw[<-] (0.4,0.4) to[out=-90, in=0] (0.1,-0.1);
	\draw[-] (0.1,-0.1) to[out = 180, in = -90] (-0.2,0.4);
\node at (0.65,0.2) {$\red\scriptstyle{k}$};
\end{tikzpicture}
&k+1&\0&
\begin{tikzpicture}[baseline = 0,scale=.8]
	\draw[-] (0.4,0.4) to[out=-90, in=0] (0.1,-0.1);
	\draw[->] (0.1,-0.1) to[out = 180, in = -90] (-0.2,0.4);
  \node at (0.65,0.2) {$\red\scriptstyle{k}$};
\end{tikzpicture}
&1-k&\bar k+\1\\
\begin{tikzpicture}[baseline = 0,scale=.8]
	\draw[<-] (0.4,-0.1) to[out=90, in=0] (0.1,0.4);
	\draw[-] (0.1,0.4) to[out = 180, in = 90] (-0.2,-0.1);
   \node at (0.65,0.2) {$\red\scriptstyle{k}$};
\end{tikzpicture}
&1-k&\0&
\begin{tikzpicture}[baseline = 0,scale=.8]
	\draw[-] (0.4,-0.1) to[out=90, in=0] (0.1,0.4);
	\draw[->] (0.1,0.4) to[out = 180, in = 90] (-0.2,-0.1);
  \node at (0.65,0.2) {$\red\scriptstyle{k}$};
\end{tikzpicture}
&k+1&\bar k+\1\\
\hline
\end{array}
\end{equation}
The following relations
are derived from the defining relations
in \cite{BE2}.
\begin{itemize}
\item {\em Downward odd nil-Hecke relations}; cf. \cite[(3.7),(3.9),(3.5)--(3.6)]{BE2}.
\begin{align}
\begin{tikzpicture}[anchorbase,scale=.85]
	\draw[-] (0.28,.4) to[out=90,in=-90] (-0.28,1.1);
	\draw[-] (-0.28,.4) to[out=90,in=-90] (0.28,1.1);
	\draw[<-] (0.28,-.3) to[out=90,in=-90] (-0.28,.4);
	\draw[<-] (-0.28,-.3) to[out=90,in=-90] (0.28,.4);
   \node at (-0.5,0.4) {$\red\scriptstyle{k}$};
\end{tikzpicture}
  &=0&
    \begin{tikzpicture}[anchorbase]
	\draw[->] (0.45,.8) to (-0.45,-.4);
	\draw[<-] (0.45,-.4) to (-0.45,.8);
        \draw[<-] (0,-.4) to[out=90,in=-90] (-.45,0.2);
        \draw[-] (-0.45,0.2) to[out=90,in=-90] (0,0.8);
   \node at (-.7,0.2) {$\red\scriptstyle{k}$};
\end{tikzpicture}
&=
\begin{tikzpicture}[anchorbase]
	\draw[->] (0.45,.8) to (-0.45,-.4);
	\draw[<-] (0.45,-.4) to (-0.45,.8);
        \draw[<-] (0,-.4) to[out=90,in=-90] (.45,0.2);
        \draw[-] (0.45,0.2) to[out=90,in=-90] (0,0.8);
   \node at (-0.5,0.2) {$\red\scriptstyle{k}$};
\end{tikzpicture}&
\begin{tikzpicture}[anchorbase]
	\draw[->] (0.25,.6) to (-0.25,-.2);
	\draw[<-] (0.25,-.2) to (-0.25,.6);
      \opendot{0.15,0.42};
   \node at (-0.4,0.2) {$\red\scriptstyle{k}$};
\end{tikzpicture}+        
\begin{tikzpicture}[anchorbase]
	\draw[->] (0.25,.6) to (-0.25,-.2);
	\draw[<-] (0.25,-.2) to (-0.25,.6);
      \opendot{-0.13,-0.02};
   \node at (-0.4,0.2) {$\red\scriptstyle{k}$};
\end{tikzpicture}
=\begin{tikzpicture}[anchorbase]
	\draw[->] (0.25,.6) to (-0.25,-.2);
	\draw[<-] (0.25,-.2) to (-0.25,.6);
      \opendot{0.15,-0.02};
   \node at (-.3,0.2) {$\red\scriptstyle{k}$};
\end{tikzpicture}
+
\begin{tikzpicture}[anchorbase]
	\draw[->] (0.25,.6) to (-0.25,-.2);
	\draw[<-] (0.25,-.2) to (-0.25,.6);
      \opendot{-0.13,0.42};
   \node at (-.3,0.2) {$\red\scriptstyle{k}$};
\end{tikzpicture}
&=
 -   \begin{tikzpicture}[anchorbase]
      	\draw[<-] (0.08,-.2) to (0.08,.6);
	\draw[<-] (-0.2,-.2) to (-0.2,.6);
   \node at (-0.35,0.2) {$\red\scriptstyle{k}$};
      \end{tikzpicture}
\label{nearlydone2}\end{align}
\item {\em Left adjunction relations}; cf. \cite[(6.6)]{BE2}.
\begin{align}\label{leftadj}
\begin{tikzpicture}[anchorbase]
  \draw[-] (0.3,0) to (0.3,.4);
	\draw[-] (0.3,0) to[out=-90, in=0] (0.1,-0.3);
	\draw[-] (0.1,-0.3) to[out = 180, in = -90] (-0.1,0);
	\draw[-] (-0.1,0) to[out=90, in=0] (-0.3,0.3);
	\draw[-] (-0.3,0.3) to[out = 180, in =90] (-0.5,0);
  \draw[->] (-0.5,0) to (-0.5,-.4);
   \node at (-0.7,.1) {$\red\scriptstyle{k}$};
\end{tikzpicture}
&=
\begin{tikzpicture}[anchorbase]
  \draw[<-] (0,-0.4) to (0,.4);
   \node at (-0.2,0.1) {$\red\scriptstyle{k}$};
\end{tikzpicture}&
\begin{tikzpicture}[anchorbase]
  \draw[-] (0.3,0) to (0.3,-.4);
	\draw[-] (0.3,0) to[out=90, in=0] (0.1,0.3);
	\draw[-] (0.1,0.3) to[out = 180, in = 90] (-0.1,0);
	\draw[-] (-0.1,0) to[out=-90, in=0] (-0.3,-0.3);
	\draw[-] (-0.3,-0.3) to[out = 180, in =-90] (-0.5,0);
  \draw[->] (-0.5,0) to (-0.5,.4);
   \node at (0.5,0.1) {$\red\scriptstyle{k}$};
\end{tikzpicture}
&=
(-1)^{k+1} \begin{tikzpicture}[anchorbase]
  \draw[->] (0,-0.4) to (0,.4);
   \node at (0.2,0.1) {$\red\scriptstyle{k}$};
\end{tikzpicture}
\end{align}
Recalling \cref{oddadjunction}(1), these imply that
$Q^{k+1} \Pi^{k+1}E 1_k$ is right dual to $1_k F$ in the $(Q,\Pi)$-envelope
 of $\UU(\sl_2)$.
\item {\em Infinite Grassmannian relation}; cf. \cite[(5.3)--(5.7)]{BE2}.
Recall that $R$ is the largest supercommutative quotient of
$\OSym$ described explicitly in \cref{dumbc1}.
For each $k \in \Z$, there is a graded 
superalgebra homomorphism\footnote{There is some freedom in defining
  $\beta_k$---it is unique only up to an automorphism of $R$.
The specific choice here has been made to facilitate \cref{counterclockwisebubbles}.}
\begin{align}\label{hatbeta}
\beta_k:R &\rightarrow \End_{\UU(\sl_2)}(1_k),\\\notag
\dot \eps_r
&\mapsto
(-1)^{(k+1)r}\!\!\!\begin{tikzpicture}[anchorbase]
  \draw[<-] (0,0.4) to[out=180,in=90] (-.2,0.2);
  \draw[-] (0.2,0.2) to[out=90,in=0] (0,.4);
 \draw[-] (-.2,0.2) to[out=-90,in=180] (0,0);
  \draw[-] (0,0) to[out=0,in=-90] (0.2,0.2);
   \node at (0.4,0.2) {$\red\scriptstyle{k}$};
   \opendot{-0.2,0.2};
   \node at (-0.7,0.2) {$\scriptstyle{r+k-1}$};
\end{tikzpicture}
\text{ if $r \geq 1-k$,}\\\notag
\dot \eta_r
&\mapsto
(-1)^{(k+1)r}\!\!\! \begin{tikzpicture}[anchorbase]
  \draw[->] (0.2,0.2) to[out=90,in=0] (0,.4);
  \draw[-] (0,0.4) to[out=180,in=90] (-.2,0.2);
\draw[-] (-.2,0.2) to[out=-90,in=180] (0,0);
  \draw[-] (0,0) to[out=0,in=-90] (0.2,0.2);
   \node at (-0.35,0.2) {$\red\scriptstyle{k}$};
   \opendot{0.2,0.2};
   \node at (0.7,0.2) {$\scriptstyle{r-k-1}$};
\end{tikzpicture}
\text{ if $r\geq k+1$.}
\end{align}
Following Lauda's convention from \cite{Lauda, Lauda2},
we introduce new shorthands for 
endomorphisms of $1_k$ called ``fake bubbles", which are certain bubbles decorated by a negative number of dots. 
The clockwise fake bubble 
decorated by $r+k-1$ dots on its left boundary for $r \leq -k$
denotes $(-1)^{(k+1)r}\beta_k(\dot \eps_r)$ if $r \geq 0$, and it is defined to be $0$ if $r < 0$.
The counterclockwise fake bubble decorated by $r-k-1$
dots on its right boundary for $r \leq k$ denotes
$(-1)^{(k+1)r}\beta_k(\dot \eta_r)$ if $r \geq 0$, and it is defined to be $0$ if $r < 0$.
\item {\em Centrality of the odd bubble}; cf. \cite[(7.15)]{BE2}. 
The ``odd bubble"
$\begin{tikzpicture}[anchorbase,scale=.8]
  \draw[-] (0,0.3) to[out=180,in=90] (-.2,0.1);
  \draw[-] (0.2,0.1) to[out=90,in=0] (0,.3);
 \draw[-] (-.2,0.1) to[out=-90,in=180] (0,-0.1);
  \draw[-] (0,-0.1) to[out=0,in=-90] (0.2,0.1);
  \draw[-] (0.14,-0.05) to (-0.14,0.23);
  \draw[-] (0.14,0.23) to (-0.14,-0.05);
   \node at (0.4,0.1) {$\red\scriptstyle{k}$};
\end{tikzpicture}$ is shorthand for 
$\begin{tikzpicture}[anchorbase,scale=1]
  \draw[<-] (0,0.3) to[out=180,in=90] (-.2,0.1);
  \draw[-] (0.2,0.1) to[out=90,in=0] (0,.3);
 \draw[-] (-.2,0.1) to[out=-90,in=180] (0,-0.1);
  \draw[-] (0,-0.1) to[out=0,in=-90] (0.2,0.1);
  \opendot{-0.2,0.1};
    \node at (-.03,0.1) {$\scriptstyle{k}$};
   \node at (0.4,.13) {$\red\scriptstyle{k}$};
\end{tikzpicture}
$
if $k \geq 0$ or 
$\begin{tikzpicture}[anchorbase,scale=1]
   \node at (0.4,0.13) {$\red\scriptstyle{k}$};
  \draw[->] (0.2,0.1) to[out=90,in=0] (0,.3);
  \draw[-] (0,0.3) to[out=180,in=90] (-.2,0.1);
\draw[-] (-.2,0.1) to[out=-90,in=180] (0,-0.1);
  \draw[-] (0,-0.1) to[out=0,in=-90] (0.2,0.1);
   \opendot{0.2,0.1};
      \node at (-.06,0.13) {$\scriptstyle{-k}$};
\end{tikzpicture}$
if $k \leq 0$.
So it is
$(-1)^{k+1} \beta_k(\dot o) \in \End_{\UU(\sl_2)}(1_k)$.
These are odd 2-morphisms whose square is zero.
Moreover, they are strictly central:
\begin{align}\label{centrality}
\begin{tikzpicture}[anchorbase]
  \draw[-] (0,0.2) to[out=180,in=90] (-.2,0);
  \draw[-] (0.2,0) to[out=90,in=0] (0,.2);
 \draw[-] (-.2,0) to[out=-90,in=180] (0,-0.2);
  \draw[-] (0,-0.2) to[out=0,in=-90] (0.2,0);
  \draw[-] (0.14,-0.15) to (-0.14,0.13);
  \draw[-] (0.14,0.13) to (-0.14,-0.15);
   \node at (.7,0) {$\red\scriptstyle{k}$};
	\draw[->] (.5,-.4) to (.5,.4);
\end{tikzpicture}
&=
\begin{tikzpicture}[anchorbase]
  \draw[-] (0,0.2) to[out=180,in=90] (-.2,0);
  \draw[-] (0.2,0) to[out=90,in=0] (0,.2);
 \draw[-] (-.2,0) to[out=-90,in=180] (0,-0.2);
  \draw[-] (0,-0.2) to[out=0,in=-90] (0.2,0);
  \draw[-] (0.14,-0.15) to (-0.14,0.13);
  \draw[-] (0.14,0.13) to (-0.14,-0.15);
   \node at (.4,0) {$\red\scriptstyle{k}$};
	\draw[->] (-.5,-.4) to (-.5,.4);
\end{tikzpicture}
&
\begin{tikzpicture}[anchorbase]
  \draw[-] (0,0.2) to[out=180,in=90] (-.2,0);
  \draw[-] (0.2,0) to[out=90,in=0] (0,.2);
 \draw[-] (-.2,0) to[out=-90,in=180] (0,-0.2);
  \draw[-] (0,-0.2) to[out=0,in=-90] (0.2,0);
  \draw[-] (0.14,-0.15) to (-0.14,0.13);
  \draw[-] (0.14,0.13) to (-0.14,-0.15);
   \node at (.7,0) {$\red\scriptstyle{k}$};
	\draw[<-] (.5,-.4) to (.5,.4);
\end{tikzpicture}
=
\begin{tikzpicture}[anchorbase]
  \draw[-] (0,0.2) to[out=180,in=90] (-.2,0);
  \draw[-] (0.2,0) to[out=90,in=0] (0,.2);
 \draw[-] (-.2,0) to[out=-90,in=180] (0,-0.2);
  \draw[-] (0,-0.2) to[out=0,in=-90] (0.2,0);
  \draw[-] (0.14,-0.15) to (-0.14,0.13);
  \draw[-] (0.14,0.13) to (-0.14,-0.15);
   \node at (.4,0) {$\red\scriptstyle{k}$};
	\draw[<-] (-.5,-.4) to (-.5,.4);
\end{tikzpicture}
\end{align}
\item {\em Pitchfork relations}; cf. \cite[(2.4)--(2.5), (7.5)--(7.6)]{BE2}.
\begin{align}
\begin{tikzpicture}[anchorbase,scale=.8]
\draw[<-](.6,.3) to (.1,-.3);
	\draw[<-] (0.6,-0.3) to[out=140, in=0] (0.1,0.2);
	\draw[-] (0.1,0.2) to[out = -180, in = 90] (-0.2,-0.3);
  \node at (0.7,0) {$\red\scriptstyle{k}$};
\end{tikzpicture}&=
\begin{tikzpicture}[anchorbase,scale=.8]
\draw[<-](-.5,.3) to (0,-.3);
	\draw[<-] (0.3,-0.3) to[out=90, in=0] (0,0.2);
	\draw[-] (0,0.2) to[out = -180, in = 40] (-0.5,-0.3);
  \node at (0.5,0) {$\red\scriptstyle{k}$};
\end{tikzpicture}&
\begin{tikzpicture}[anchorbase,scale=.8]
\draw[->](-.5,-.2) to (0,.4);
	\draw[<-] (0.3,0.4) to[out=-90, in=0] (0,-0.1);
	\draw[-] (0,-0.1) to[out = 180, in = -40] (-0.5,0.4);
  \node at (0.5,0.1) {$\red\scriptstyle{k}$};
\end{tikzpicture}&=
\begin{tikzpicture}[anchorbase,scale=.8]
\draw[->](.6,-.2) to (.1,.4);
	\draw[<-] (0.6,0.4) to[out=-140, in=0] (0.1,-0.1);
	\draw[-] (0.1,-0.1) to[out = 180, in = -90] (-0.2,0.4);
  \node at (0.7,0.1) {$\red\scriptstyle{k}$};
\end{tikzpicture}
&\begin{tikzpicture}[anchorbase,scale=.8]
\draw[->](.6,.3) to (.1,-.3);
	\draw[<-] (0.6,-0.3) to[out=140, in=0] (0.1,0.2);
	\draw[-] (0.1,0.2) to[out = -180, in = 90] (-0.2,-0.3);
  \node at (0.7,0) {$\red\scriptstyle{k}$};
\end{tikzpicture}&=
\begin{tikzpicture}[anchorbase,scale=.8]
\draw[->](-.5,.3) to (0,-.3);
	\draw[<-] (0.3,-0.3) to[out=90, in=0] (0,0.2);
	\draw[-] (0,0.2) to[out = -180, in = 40] (-0.5,-0.3);
  \node at (0.5,0) {$\red\scriptstyle{k}$};
\end{tikzpicture}&
\begin{tikzpicture}[anchorbase,scale=.8]
\draw[<-](-.5,-.2) to (0,.4);
	\draw[<-] (0.3,0.4) to[out=-90, in=0] (0,-0.1);
	\draw[-] (0,-0.1) to[out = 180, in = -40] (-0.5,0.4);
  \node at (0.5,0.1) {$\red\scriptstyle{k}$};
\end{tikzpicture}&=
\begin{tikzpicture}[anchorbase,scale=.8]
\draw[<-](.6,-.2) to (.1,.4);
	\draw[<-] (0.6,0.4) to[out=-140, in=0] (0.1,-0.1);
	\draw[-] (0.1,-0.1) to[out = 180, in = -90] (-0.2,0.4);
  \node at (0.7,0.1) {$\red\scriptstyle{k}$};
\end{tikzpicture}\\
\begin{tikzpicture}[anchorbase,scale=.8]
\draw[<-](.6,.3) to (.1,-.3);
	\draw[-] (0.6,-0.3) to[out=140, in=0] (0.1,0.2);
	\draw[->] (0.1,0.2) to[out = -180, in = 90] (-0.2,-0.3);
  \node at (0.7,0) {$\red\scriptstyle{k}$};
\end{tikzpicture}&=
\begin{tikzpicture}[anchorbase,scale=.8]
\draw[<-](-.5,.3) to (0,-.3);
	\draw[-] (0.3,-0.3) to[out=90, in=0] (0,0.2);
	\draw[->] (0,0.2) to[out = -180, in = 40] (-0.5,-0.3);
  \node at (0.5,0) {$\red\scriptstyle{k}$};
\end{tikzpicture}&
\begin{tikzpicture}[anchorbase,scale=.8]
\draw[->](-.5,-.2) to (0,.4);
	\draw[-] (0.3,0.4) to[out=-90, in=0] (0,-0.1);
	\draw[->] (0,-0.1) to[out = 180, in = -40] (-0.5,0.4);
  \node at (0.5,0.1) {$\red\scriptstyle{k}$};
\end{tikzpicture}&=
\begin{tikzpicture}[anchorbase,scale=.8]
\draw[->](.6,-.2) to (.1,.4);
	\draw[-] (0.6,0.4) to[out=-140, in=0] (0.1,-0.1);
	\draw[->] (0.1,-0.1) to[out = 180, in = -90] (-0.2,0.4);
  \node at (0.7,0.1) {$\red\scriptstyle{k}$};
\end{tikzpicture}&
\begin{tikzpicture}[anchorbase,scale=.8]
\draw[->](.6,.3) to (.1,-.3);
	\draw[-] (0.6,-0.3) to[out=140, in=0] (0.1,0.2);
	\draw[->] (0.1,0.2) to[out = -180, in = 90] (-0.2,-0.3);
  \node at (0.7,0) {$\red\scriptstyle{k}$};
\end{tikzpicture}&=
-\begin{tikzpicture}[anchorbase,scale=.8]
\draw[->](-.5,.3) to (0,-.3);
	\draw[-] (0.3,-0.3) to[out=90, in=0] (0,0.2);
	\draw[->] (0,0.2) to[out = -180, in = 40] (-0.5,-0.3);
  \node at (0.5,0) {$\red\scriptstyle{k}$};
\end{tikzpicture}&
\begin{tikzpicture}[anchorbase,scale=.8]
\draw[<-](-.5,-.2) to (0,.4);
	\draw[-] (0.3,0.4) to[out=-90, in=0] (0,-0.1);
	\draw[->] (0,-0.1) to[out = 180, in = -40] (-0.5,0.4);
  \node at (0.5,0.1) {$\red\scriptstyle{k}$};
\end{tikzpicture}&=
-\begin{tikzpicture}[anchorbase,scale=.8]
\draw[<-](.6,-.2) to (.1,.4);
	\draw[-] (0.6,0.4) to[out=-140, in=0] (0.1,-0.1);
	\draw[->] (0.1,-0.1) to[out = 180, in = -90] (-0.2,0.4);
  \node at (0.7,0.1) {$\red\scriptstyle{k}$};
\end{tikzpicture}
\end{align}
\item {\em Dot slides}; cf. \cite[(2.3),(4.3)--(4.4)]{BE2}.
\begin{align}\label{ds}
\begin{tikzpicture}[anchorbase,scale=.8]
	\draw[<-] (0.4,0) to[out=90, in=0] (0.1,0.5);
	\draw[-] (0.1,0.5) to[out = 180, in = 90] (-0.2,0);
      \opendot{-0.17,0.25};
  \node at (0.6,0.35) {$\red\scriptstyle{k}$};
\end{tikzpicture}
&=
\begin{tikzpicture}[anchorbase,scale=.8]
      	\draw[<-] (0.4,0) to[out=90, in=0] (0.1,0.5);
	\draw[-] (0.1,0.5) to[out = 180, in = 90] (-0.2,0);
  \node at (-0.4,0.35) {$\red\scriptstyle{k}$};
      \opendot{0.35,0.3};
\end{tikzpicture}
&\begin{tikzpicture}[anchorbase,scale=.8]
	\draw[<-] (0.4,0.5) to[out=-90, in=0] (0.1,0);
	\draw[-] (0.1,0) to[out = 180, in = -90] (-0.2,0.5);
      \opendot{-0.16,0.2};
  \node at (0.55,0.15) {$\red\scriptstyle{k}$};
\end{tikzpicture}
&=
\begin{tikzpicture}[anchorbase,scale=.8]
     \draw[<-] (0.4,0.5) to[out=-90, in=0] (0.1,0);
	\draw[-] (0.1,0) to[out = 180, in = -90] (-0.2,0.5);
  \node at (-0.35,0.15) {$\red\scriptstyle{k}$};
      \opendot{.36,.2};
\end{tikzpicture}\\
\begin{tikzpicture}[anchorbase,scale=.8]
      	\draw[-] (0.4,0) to[out=90, in=0] (0.1,0.5);
	\draw[->] (0.1,0.5) to[out = 180, in = 90] (-0.2,0);
  \node at (-0.4,0.4) {$\red\scriptstyle{k}$};
      \opendot{0.35,0.3};
\end{tikzpicture}
&=
(-1)^{k}
\begin{tikzpicture}[anchorbase,scale=.8]
	\draw[-] (0.4,0) to[out=90, in=0] (0.1,0.5);
	\draw[->] (0.1,0.5) to[out = 180, in = 90] (-0.2,0);
      \opendot{-0.15,0.25};
  \node at (0.6,0.35) {$\red\scriptstyle{k}$};
\end{tikzpicture}
+
2\:
\begin{tikzpicture}[anchorbase,scale=.8]
  \draw[-] (.44,0.65) to (0.16,0.37);
  \draw[-] (0.44,.37) to (0.16,0.65);
  \draw[-] (0.3,0.3) to[out=180,in=-90] (0.1,0.5);
  \draw[-] (0.5,0.5) to[out=-90,in=0] (0.3,0.3);
 \draw[-] (0.1,0.5) to[out=90,in=180] (0.3,0.7);
  \draw[-] (0.3,0.7) to[out=0,in=90] (0.5,0.5);
	\draw[-] (0.6,-0.3) to[out=90, in=0] (0.3,0.2);
	\draw[->] (0.3,0.2) to[out = 180, in = 90] (0,-0.3);
   \node at (-0.1,0.2) {$\red\scriptstyle{k}$};
\end{tikzpicture}
&
\begin{tikzpicture}[anchorbase,scale=.8]
     \draw[-] (0.4,0.5) to[out=-90, in=0] (0.1,0);
	\draw[->] (0.1,0) to[out = 180, in = -90] (-0.2,0.5);
  \node at (0.55,0.15) {$\red\scriptstyle{k}$};
      \opendot{-.16,.2};
\end{tikzpicture}
&=
(-1)^{k}\begin{tikzpicture}[anchorbase,scale=.8]
	\draw[-] (0.4,0.5) to[out=-90, in=0] (0.1,0);
	\draw[->] (0.1,0) to[out = 180, in = -90] (-0.2,0.5);
      \opendot{0.36,0.2};
  \node at (-0.4,0.15) {$\red\scriptstyle{k}$};
\end{tikzpicture}
+
2\:
\begin{tikzpicture}[anchorbase,scale=.8]
  \draw[-] (.45,-0.35) to (0.17,-0.63);
  \draw[-] (0.45,-.63) to (0.17,-0.35);
  \draw[-] (0.31,-0.7) to[out=180,in=-90] (.11,-.5);
  \draw[-] (0.11,-.5) to[out=90,in=180] (0.31,-0.3);
 \draw[-] (.31,-.3) to[out=0,in=90] (0.51,-0.5);
  \draw[-] (.51,-0.5) to[out=-90,in=0] (0.31,-.7);
	\draw[-] (0.6,0.3) to[out=-90, in=0] (0.3,-0.2);
	\draw[->] (0.3,-0.2) to[out = 180, in = -90] (0,0.3);
   \node at (-0.15,-0.15) {$\red\scriptstyle{k}$};
\end{tikzpicture}
\end{align}
\item {\em Almost pivotal structure\footnote{Here, we have corrected
 a sign error in \cite[(1.28)]{BE2}, and another one is corrected
 in \cref{notasbroad} below. These mistakes were uncovered in \cite{DEL}.}}; cf. \cite[(1.27)--(1.28)]{BE2}.
\begin{align}
\begin{tikzpicture}[anchorbase]
	\draw[<-] (0.08,-.4) to (0.08,.4);
     \opendot{0.08,0};
   \node at (-.25,.15) {$\red\scriptstyle{k}$};
\end{tikzpicture}
&=2\:\begin{tikzpicture}[anchorbase]
  \draw[-] (0,0.3) to[out=180,in=90] (-.2,0.1);
  \draw[-] (0.2,0.1) to[out=90,in=0] (0,.3);
 \draw[-] (-.2,0.1) to[out=-90,in=180] (0,-0.1);
  \draw[-] (0,-0.1) to[out=0,in=-90] (0.2,0.1);
  \draw[-] (0.14,-0.05) to (-0.14,0.23);
  \draw[-] (0.14,0.23) to (-0.14,-0.05);
\node at (-.4,.1) {$\red\scriptstyle k$};
	\draw[<-] (0.38,-.4) to (0.38,.4);
\end{tikzpicture}
-
\begin{tikzpicture}[anchorbase]
  \draw[-] (0.3,0) to (0.3,.4);
	\draw[-] (0.3,0) to[out=-90, in=0] (0.1,-0.3);
	\draw[-] (0.1,-0.3) to[out = 180, in = -90] (-0.1,0);
	\draw[-] (-0.1,0) to[out=90, in=0] (-0.3,0.3);
	\draw[-] (-0.3,0.3) to[out = 180, in =90] (-0.5,0);
  \draw[->] (-0.5,0) to (-0.5,-.4);
   \node at (-.65,0) {$\red\scriptstyle{k}$};
   \opendot{-0.1,0};
\end{tikzpicture}
&
\begin{tikzpicture}[anchorbase]
	\draw[<-] (0.3,-.4) to (-0.3,.4);
	\draw[<-] (-0.3,-.4) to (0.3,.4);
   \node at (-.35,.1) {$\red\scriptstyle{k}$};
\end{tikzpicture}
&
=-\begin{tikzpicture}[anchorbase,scale=.8]
\draw[->] (-1.3,.4) to (-1.3,-1.2);
\draw[-] (1.3,-.4) to (1.3,1.2);
\draw[-] (-.5,1.1) to [out=180,in=90] (-1.3,.4);
\draw[-] (.35,.4) to [out=90,in=0] (-.5,1.1);
\draw[-] (.5,-1.1) to [out=0,in=-90] (1.3,-.4);
\draw[-] (-.35,-.4) to [out=-90,in=180] (.5,-1.1);
\draw[-] (-.35,-.4) to [out=90,in=-90] (.35,.4);
        \draw[-] (0.35,-.5) to[out=180,in=0] (-0.35,.5);
        \draw[->] (-0.35,.5) to[out=180,in=90] (-0.8,-1.2);
        \draw[-] (0.35,-.5) to[out=0,in=-90] (0.8,1.2);
   \node at (-1.55,0) {$\red\scriptstyle{k}$};
\end{tikzpicture}\label{crossingcyclicity}
\end{align}
\item {\em Bubble slides}; cf. \cite[(7.10)]{BE2}.
\begin{align}
\begin{tikzpicture}[anchorbase]
  \draw[<-] (-.1,0.2) to[out=180,in=90] (-.3,0);
  \draw[-] (0.1,0) to[out=90,in=0] (-0.1,.2);
 \draw[-] (-.3,0) to[out=-90,in=180] (-0.1,-0.2);
  \draw[-] (-.1,-0.2) to[out=0,in=-90] (0.1,0);
   \node at (-.85,0) {$\scriptstyle{r+k+1}$};
      \opendot{-0.3,0};
\end{tikzpicture}\:\:\:
\begin{tikzpicture}[anchorbase]
	\draw[->] (0.08,-.4) to (0.08,.4);
\node at (0.33,0) {${\red\scriptstyle k}$};
\end{tikzpicture}
&=
\sum_{s \geq 0}
(2s+1)
\begin{tikzpicture}[anchorbase]
  \draw[<-] (0,0.2) to[out=180,in=90] (-.2,0);
  \draw[-] (0.2,0) to[out=90,in=0] (0,.2);
 \draw[-] (-.2,0) to[out=-90,in=180] (0,-.2);
  \draw[-] (0,-0.2) to[out=0,in=-90] (0.2,0);
   \node at (-.9,0) {$\scriptstyle{r+k-2s-1}$};
      \opendot{-0.2,0};
	\draw[->] (-1.68,-.2) to (-1.68,.6);
   \node at (-1.95,0.3) {$\scriptstyle{2s}$};
      \opendot{-1.68,.3};
\node at (-.8,.4) {${\red\scriptstyle k}$};
\end{tikzpicture}
\end{align}
\item {\em Curl relations}; cf. \cite[(5.21)]{BE2}.
\begin{align}\label{curlrel}
\begin{tikzpicture}[anchorbase]
	\draw[<-] (0,0.5) to (0,0.3);
	\draw[-] (0,0.3) to [out=-90,in=180] (.3,-0.2);
	\draw[-] (0.3,-0.2) to [out=0,in=-90](.5,0);
	\draw[-] (0.5,0) to [out=90,in=0](.3,0.2);
	\draw[-] (0.3,.2) to [out=180,in=90](0,-0.3);
	\draw[-] (0,-0.3) to (0,-0.5);
   \node at (0.3,-0.02) {$\scriptstyle{r}$};
      \opendot{.2,-.18};
       \node at (.3,0.4) {$\red\scriptstyle{k}$};
\end{tikzpicture}&=
\sum_{s\geq 0} (-1)^{s}
\begin{tikzpicture}[anchorbase]
	\draw[->] (-0.3,-.5) to (-0.3,.5);
   \node at (-0.54,0.2) {$\scriptstyle{s}$};
      \opendot{-.3,0.2};
  \draw[<-] (1,0) to[out=180,in=90] (.8,-0.2);
  \draw[-] (1.2,-0.2) to[out=90,in=0] (1,0);
 \draw[-] (.8,-0.2) to[out=-90,in=180] (1,-.4);
  \draw[-] (1,-.4) to[out=0,in=-90] (1.2,-0.2);
   \node at (.3,-0.2) {$\scriptstyle{r-s-1}$};
     \opendot{.8,-0.2};
\node at (.4,.2) {$\red\scriptstyle k$};
     \end{tikzpicture}
\end{align}
\item {\em Alternating braid relation}; cf. \cite[(7.20)]{BE2}.
\begin{align}
\label{notasbroad}
\begin{tikzpicture}[anchorbase]
	\draw[<-] (0.45,.8) to (-0.45,-.4);
	\draw[->] (0.45,-.4) to (-0.45,.8);
        \draw[<-] (0,-.4) to[out=90,in=-90] (-.45,0.2);
        \draw[-] (-0.45,0.2) to[out=90,in=-90] (0,0.8);
   \node at (.5,.2) {$\red\scriptstyle{k}$};
\end{tikzpicture}
-
\begin{tikzpicture}[anchorbase]
	\draw[<-] (0.45,.8) to (-0.45,-.4);
	\draw[->] (0.45,-.4) to (-0.45,.8);
        \draw[<-] (0,-.4) to[out=90,in=-90] (.45,0.2);
        \draw[-] (0.45,0.2) to[out=90,in=-90] (0,0.8);
   \node at (.6,.2) {$\red\scriptstyle{k}$};
\end{tikzpicture}
&=
\displaystyle\sum_{q,r,s \geq 0}
(-1)^{k+r+s}
\begin{tikzpicture}[anchorbase]
	\draw[<-] (0.3,0.7) to[out=-90, in=0] (0,0.3);
	\draw[-] (0,0.3) to[out = 180, in = -90] (-0.3,0.7);
    \node at (.6,0) {$\red\scriptstyle{k}$};
  \draw[-] (0.2,0) to[out=90,in=0] (0,0.2);
  \draw[<-] (0,0.2) to[out=180,in=90] (-.2,0);
\draw[-] (-.2,0) to[out=-90,in=180] (0,-0.2);
  \draw[-] (0,-0.2) to[out=0,in=-90] (0.2,0);
   \opendot{-0.2,0};
    \node at (-1,0) {$\scriptstyle{-q-r-s-3}$};
   \opendot{0.23,0.43};
     \node at (0.43,0.43) {$\scriptstyle{r}$};
	\draw[-] (0.3,-.7) to[out=90, in=0] (0,-0.3);
	\draw[->] (0,-0.3) to[out = 180, in = 90] (-0.3,-.7);
    \opendot{0.25,-0.5};
    \node at (.4,-.5) {$\scriptstyle{s}$};
	\draw[->] (-.98,-0.7) to (-.98,0.7);
   \opendot{-0.98,0.5};
   \node at (-1.15,0.5) {$\scriptstyle{q}$};
\end{tikzpicture}
-
\displaystyle\sum_{q,r,s \geq 0}
(-1)^{k+r+s}
\begin{tikzpicture}[anchorbase]
	\draw[-] (0.3,0.7) to[out=-90, in=0] (0,0.3);
	\draw[->] (0,0.3) to[out = 180, in = -90] (-0.3,0.7);
    \node at (1.4,-0.32) {$\red\scriptstyle{k}$};
  \draw[->] (0.2,0) to[out=90,in=0] (0,0.2);
  \draw[-] (0,0.2) to[out=180,in=90] (-.2,0);
\draw[-] (-.2,0) to[out=-90,in=180] (0,-0.2);
  \draw[-] (0,-0.2) to[out=0,in=-90] (0.2,0);
   \opendot{0.2,0};
   \node at (.96,0) {$\scriptstyle{-q-r-s-3}$};
   \opendot{-0.23,0.43};
      \node at (-0.43,0.43) {$\scriptstyle{r}$};
	\draw[<-] (0.3,-.7) to[out=90, in=0] (0,-0.3);
	\draw[-] (0,-0.3) to[out = 180, in = 90] (-0.3,-.7);
   \opendot{-0.25,-0.5};
    \node at (-.4,-.5) {$\scriptstyle{s}$};
	\draw[->] (.98,-0.7) to (.98,0.7);
   \opendot{0.98,0.5};
    \node at (1.15,0.5) {$\scriptstyle{q}$};
\end{tikzpicture}
\end{align}
\end{itemize}

The following theorem is an odd analog of \cite[Th.~4.12]{Lauda2}.
Our proof is shorter since we are using the more efficient 
presentation of \cref{km2cat} (although afterwards there is still work to do to determine the
images of the leftward cups and caps).

\begin{theorem}\label{actiontheorem}
Fix $\ell \geq 0$.
There is a graded 2-superfunctor
$\Psi_\ell:\UU(\sl_2) \rightarrow \OGBim_\ell$
with the following properties.
\begin{enumerate}
\item
On objects,
$\Psi_\ell$ takes 
$2n-\ell$
to the graded superalgebra
$\EOH_n^\ell$ for $0 \leq n \leq \ell$.
All other objects of $\UU(\sl_2)$ go to the trivial graded superalgebra.
\item
On generating 1-morphisms,
$\Psi_\ell$ takes 
$E 1_{2n-\ell}$ to
the graded superbimodule
$Q^{-n} U_{n}^\ell$
and $1_{2n-\ell} F$ to 
the graded superbimodule $Q^{3n-\ell+1} V_{n}^\ell$, respectively,
both for $0 \leq n < \ell$.
All other generating 1-morphisms necessarily go to trivial graded superbimodules.
\item
On generating 2-morphisms,
$\Psi_\ell$ takes
\begin{align*}
\begin{tikzpicture}[anchorbase,scale=.8]
	\draw[->] (0.08,-.3) to (0.08,.4);
     \opendot{0.08,0.05};
   \node at (.65,.1) {$\red\scriptstyle{2n-\ell}$};
\end{tikzpicture}&\mapsto\Big(\rho_{(1);n}(x_1):Q^{-n}U_n^\ell \rightarrow
Q^{-n}U_n^\ell\Big)&&(0 \leq n \leq \ell-1)\\
\begin{tikzpicture}[anchorbase,scale=.8]
	\draw[->] (0.2,-.3) to (-0.2,.4);
	\draw[->] (-0.2,-.3) to (0.2,.4);
   \node at (.65,.1) {$\red\scriptstyle{2n-\ell}$};
\end{tikzpicture}&\mapsto
\Big(-\rho_{(1^2);n}(\tau_1):Q^{-2n-1} U_{n+1}^\ell\otimes_{\EOH_{n+1}^\ell} U_n^\ell 
\rightarrow Q^{-2n-1}U_{n+1}^\ell \otimes_{\EOH_{n+1}^\ell} U_n^\ell
\Big)&&(0 \leq n \leq \ell-2)\\
\begin{tikzpicture}[anchorbase,scale=.8]
	\draw[<-] (0.4,0.3) to[out=-90, in=0] (0.1,-0.2);
	\draw[-] (0.1,-0.2) to[out = 180, in = -90] (-0.2,0.3);
  \node at (0.8,-0.1) {$\red\scriptstyle{2n-\ell}$};
\end{tikzpicture}
&\mapsto \Big(\coev_n
: \EOH_n^\ell\rightarrow
Q^{2n-\ell + 1} V_{n}^\ell \otimes_{\EOH_{n+1}^\ell} U_{n}^\ell\Big)&&(0\leq n\leq\ell-1)\\
\begin{tikzpicture}[anchorbase,scale=.8]
	\draw[<-] (0.4,-0.2) to[out=90, in=0] (0.1,0.3);
	\draw[-] (0.1,0.3) to[out = 180, in = 90] (-0.2,-0.2);
  \node at (.99,0.2) {$\red\scriptstyle{2n-\ell+2}$};
\end{tikzpicture}
&\mapsto
\Big(\ev_{n}: Q^{2n-\ell+1} U_{n}^\ell \otimes_{\EOH_{n}^\ell}
 V_{n}^\ell \rightarrow \EOH_{n+1}^\ell\Big)
&&(0 \leq n \leq \ell-1)
\end{align*}
Here, $\rho_{(1^d);n}(a)$ is the superbimodule endomorphism 
from \cref{out},
and $\ev_n$ and $\coev_n$ are as in \cref{cupsandcaps}\footnote{We
  mean the same underlying linear maps---in some places we have
  applied some degree shifts (but no parity shifts) so that they are
  not now being viewed as homomorphisms between exactly the same {\em graded} superbimodules as before.}.
All other generating 2-morphisms are taken to zero.
\end{enumerate}
\end{theorem}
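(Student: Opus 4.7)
The plan is to verify that the assignments in (1)--(3) satisfy all of the defining relations of $\UU(\sl_2)$ listed in \cref{km2cat}, after which the $2$-superfunctor $\Psi_\ell$ is obtained by the universal property of that presentation. The well-definedness at the level of generating $1$-morphisms and $2$-morphisms reduces to a direct check that parities and degrees agree, using \cref{table} and the shifts on $Q^{-n}U_n^\ell$ and $Q^{3n-\ell+1}V_n^\ell$ given in (2); the two non-trivial inputs are that $\coev_n$ and $\ev_n$ are even of degree $0$ by \cref{cupsandcaps} and that the homomorphisms $\rho_{(1^d);n}$ of \cref{cam} preserve both the $\Z$-grading and the $\Z/2$-grading. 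The relations to check then split into the right adjunction relations \cref{rightadj}, the odd nil-Hecke relations \cref{nearlydone}, and the inversion relations \cref{inv1,inv2}.

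The right adjunction relations \cref{rightadj} are exactly the zig-zag identities \cref{zigzag1,zigzag2} established in \cref{cupsandcaps}, so nothing further is required there. The odd nil-Hecke relations \cref{nearlydone} reduce to the defining relations of the modified odd nil-Hecke algebra $\ONH_2$ from \cref{secONH}, and therefore follow from the fact that $\rho_{(1^2);n}$ is a superalgebra homomorphism out of $\ONH_2$ as part of \cref{lemma2}(2). The minus sign in front of $\rho_{(1^2);n}(\tau_1)$ in (3) is introduced precisely so that the $2$-categorical braid relation and dot-crossing relations, which are written in the standard sign convention for the odd nil-Hecke algebra, match the modified sign convention used for the $\tau_j$ in $\ONH_d$ throughout the paper.

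The substantive part is the inversion relations \cref{inv1,inv2}. Setting $k = 2n-\ell$ and unpacking through the identification of $1$-morphisms with superbimodules, these assert a direct sum decomposition relating $U_{n-1}^\ell \otimes_{\EOH_{n-1}^\ell} V_{n-1}^\ell$ and $V_n^\ell \otimes_{\EOH_{n+1}^\ell} U_n^\ell$ with $|k|$ copies of $\EOH_n^\ell$, after appropriate degree and parity shifts; the combinatorial shape of this decomposition is predicted by the relation $\E\F 1_k - \pi\F\E 1_k = \overline{[k]}_{q,\pi}1_k$ in $U_{q,\pi}(\sl_2)$ together with the Grothendieck ring identifications of \cref{K0}. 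To realize this at the superbimodule level, one must construct the entries of the inverse matrix explicitly: the crossing entry is the sideways crossing $\sigma_n$ from \cref{mate2}, while the remaining entries are dotted leftward cups and caps extracted, via \cref{oddadjunction}, from the second adjoint pair $(\tV_n^\ell,\tU_n^\ell)$ of \cref{secondadjunction} after incorporating the degree and parity shifts specified in the theorem statement. The hard part will be the sign and dot bookkeeping needed to verify that these candidate inverse entries compose with the original matrix to give identities; the key tools for this are \cref{mate1,mate3}, which provide the explicit mate formulas needed to propagate dots and crossings through both adjunctions, together with the Schur bases of \cref{lemma3} and \cref{lemma01}, on which the required identities can ultimately be checked element by element.
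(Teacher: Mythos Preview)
Your treatment of the right adjunction and odd nil-Hecke relations matches the paper. For \cref{rightadj} the paper simply invokes \cref{cupsandcaps}, and for \cref{nearlydone} it first records the general formulae
\[
\Psi_\ell(\text{$i$th dot}) = (-1)^{i-1}\rho_{(1^d);n}(x_i),\qquad
\Psi_\ell(\text{$j$th crossing}) = -(-1)^{j-1}\rho_{(1^d);n}(\tau_j),
\]
and then checks each relation against \cref{ONH1,ONH2,ONH3,ONH4,ONH5,ONH6}; the extra signs are exactly the bridge between the two sign conventions, as you say.

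For the inversion relations your plan diverges from the paper and contains a confusion. You propose to construct the inverse matrix explicitly, naming $\sigma_n$ from \cref{mate2} as its crossing entry. But $\sigma_n:U_{n-1}^\ell\otimes_{\EOH_{n-1}^\ell}V_{n-1}^\ell\to V_n^\ell\otimes_{\EOH_{n+1}^\ell}U_n^\ell$ is the image of the \emph{rightward} crossing: it is the first entry of the \emph{forward} matrix in \cref{inv1,inv2}, not of its inverse. The leftward crossing and leftward cups and caps are by definition entries of the inverse matrix, so you cannot use them to verify invertibility without either circularity or producing them from scratch; in the paper their images under $\Psi_\ell$ are only computed afterwards, in \cref{leftwardscupsandcaps}, relying on the theorem already being proved. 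Carrying out your program honestly would mean guessing inverse entries from \cref{secondadjunction} and checking all matrix products by hand, which is substantially harder than what the paper actually does. Also, \cref{mate1,mate3} play no role here.

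The paper's method avoids constructing any inverse. It writes the forward map explicitly using $\sigma_n$ (via the formula \cref{Ian}) and the formulae for $\ev_{n-1}$, $\coev_n$ from \cref{cupsandcaps}, observes by \cref{lemma3} that domain and codomain are free right $\EOH_n^\ell$-supermodules of equal finite rank, so invertibility reduces to surjectivity. Since $\EOH_n^\ell$ is graded local, it suffices to check surjectivity after applying $-\otimes_{\EOH_n^\ell}\k$, and there an explicit triangular basis computation (using \cref{Ian,Jackdoor,reallynot} specialized over $\k$) finishes the job.
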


\begin{proof}
Note to start with that the assignments in (3) are superbimodule
homomorphisms of the correct degrees and parities; cf. \cref{table}.
Viewing $\OGBim_\ell$ as a strict graded 2-supercategory
as explained in \cref{howstrict}, we will simply construct 
$\Psi_\ell$ as a strict
graded 2-superfunctor by checking that the defining relations from 
\cref{km2cat} are all satisfied. 
There are three sets of relations,
\cref{nearlydone}, \cref{rightadj} and \cref{inv1}--\cref{inv2}.

The right adjunction relations \cref{rightadj} follow immediately from \cref{cupsandcaps}.

Let us check the odd nil-Hecke relations
from \cref{nearlydone}. The
formulation of these relations in \cref{nearlydone}
differs by signs from the formulation in \cref{ONH1,ONH2,ONH3,ONH4,ONH5,ONH6}.
This discrepancy is explained by the signs in formula \cref{ukraine2}.
To be clear about this, 
for $a \in \ONH_d$,
let $\rho_{(1^d);n}(a)$ be the
$\big(\EOH_{n+d}^\ell,\EOH_n^\ell\big)$-superbimodule
endomorphism from \cref{out} viewed now as an endomorphism
of the degree-shifted $Q^{-nd - \binom{d}{2}} U^\ell_{n+d-1}\otimes_{\EOH_{n+d-1}^\ell}
\cdots\:\:\otimes_{\EOH_{n+1}^\ell}  U^\ell_{n}$,
for $0 \leq d \leq n$ and $0 \leq n \leq \ell-d$.
The definition from (3) implies more generally that
\begin{align}\label{inhand1}
\Psi_\ell\left(
\begin{tikzpicture}[anchorbase,scale=.8]
	\draw[->] (1,-.3) to (1,.4);
	\node at (.5,.05) {$\cdots$};
	\node at (-.5,.05) {$\cdots$};
	\draw[->] (-1,-.3) to (-1,.4);
	\draw[->] (0.0,-.3) to (0.0,.4);
	\node at (0,-.5) {$\stringnumber{i}$};
	\node at (1,-.5) {$\stringnumber{1}$};
	\node at (-1,-.5) {$\stringnumber{d}$};
     \opendot{0.0,0.05};
   \node at (1.5,.1) {$\red\scriptstyle{2n-\ell}$};
\end{tikzpicture}
\right)
&= (-1)^{i-1} \rho_{(1^d);n}(x_i),\\\label{inhand2}
\Psi_\ell\left(
\begin{tikzpicture}[anchorbase,scale=.8]
	\draw[->] (1,-.3) to (1,.4);
	\node at (.5,.05) {$\cdots$};
	\node at (-.8,.05) {$\cdots$};
	\draw[->] (-1.3,-.3) to (-1.3,.4);
	\draw[->] (0.0,-.3) to (-0.3,.4);
	\draw[->] (-.3,-.3) to (0.0,.4);
	\node at (0.03,-.5) {$\stringnumber{j}$};
	\node at (-0.33,-.5) {$\stringnumber{j+1}$};
	\node at (1,-.5) {$\stringnumber{1}$};
	\node at (-1.3,-.5) {$\stringnumber{d}$};
   \node at (1.5,.1) {$\red\scriptstyle{2n-\ell}$};
\end{tikzpicture}
\right)
&= -(-1)^{j-1} \rho_{(1^d);n}(\tau_j).
\end{align}
We check \cref{inhand2}, leaving the easier \cref{inhand1} to the reader.
We must show that
\begin{equation*}
\id\otimes\cdots\otimes \rho_{(1^2);n+j-1}(\tau_1)\otimes\cdots\otimes \id
= (-1)^{j-1} \rho_{(1^d);n}(\tau_j).
\end{equation*}
We do this by checking that both sides take the same value on
$u_{n+d-1}\big(x^{\kappa_d}\big)\otimes\cdots \otimes u_n\big(x^{\kappa_1}\big)$
for any $\kappa \in \N^d$.
Let $\sum_{\kappa'}$ denote summation over 
$\kappa'\in\N^d$ with $\kappa_i' = \kappa_i$ for $i \neq j,j+1$.
Suppose that
$x_{j+1}^{\kappa_{j+1}} x_j^{\kappa_j}
 \cdot \tau_j
= \sum_{\kappa'} c_{\kappa'}x_{j+1}^{\kappa_{j+1}'} x_j^{\kappa_j'}$.
Then, using \cref{ukraine2}, the left hand side gives
$$
\sum_{\kappa'}
(-1)^{1+(\kappa_j-\kappa'_{j})+\kappa_{j}+\cdots+\kappa_d}
c_{\kappa'}
u_{n+d-1}\big(x^{\kappa_d'}\big)\otimes\cdots \otimes
u_n(x^{\kappa_1'}\big)
$$
and the right hand side gives
$$
(-1)^{j-1}\sum_{\kappa'}
(-1)^{d-1+(d-j)(\kappa_j - \kappa_j')+
(d-j-1)(\kappa_{j+1}-\kappa_{j+1}') + 
\kappa_{j}+\cdots+\kappa_d} c_{\kappa'}
u_{n+d-1}\big(x^{\kappa'_d}\big)\otimes\cdots 
\otimes u_n\big(x^{\kappa'_1}\big).
$$
These are equal because
$\kappa_j - \kappa_j'+\kappa_{j+1}-\kappa_{j+1}'=1$
whenever $c_{\kappa'} \neq 0$.

With \cref{inhand1,inhand2} in hand,  the relations \cref{nearlydone} are easily checked.
For example, to check the length three braid relation,
we must show that
$$
\rho_{(1^3);n}(\tau_2) \circ 
\big(-\rho_{(1^3);n}(\tau_1)\big) \circ 
\rho_{(1^3);n}(\tau_2)
=
\big(-\rho_{(1^3);n}(\tau_1)\big) \circ \rho_{(1^3);n}(\tau_2) \circ \big(-\rho_{(1^3);n}(\tau_1)\big).
$$
Translating to $U_{(1^3);n}^\ell$
using the isomorphism $b_{(1)^3}$, the 
left hand side becomes the map $u_{(1^3);n}(f) \mapsto (-1)^{\parity(f)}
u_{(1^3);n}(f \cdot \tau_2 \tau_1 \tau_2)$
and the right hand side becomes 
$u_{(1^3);n}(f) \mapsto -(-1)^{\parity(f)}
u_{(1^3);n}(f \cdot \tau_1 \tau_2 \tau_1)$.
These are equal due to the sign in the 
relation \cref{ONH4}.
To check the third relation in \cref{nearlydone},
we must show that
$$
\rho_{(1^2);n}(x_1)\circ \big(-\rho_{(1^2);n}(\tau_1)\big)+ 
\rho_{(1^2);n}(\tau_1) \circ \rho_{(1^2);n}(x_2)
= \rho_{(1^2);n}(\id).
$$
The left hand side corresponds to the map
$u_{(1^2);n}(f) \mapsto u_{(1^2);n}(f \cdot \tau_1 x_1 -f \cdot x_2 \tau_1)$,
which equals $u_{(1^2);n}(f)$ by \cref{ONH6}.

Next we check \cref{inv1,inv2}.
There is nothing to do if $\ell = 0$ (the zero map is an isomorphism between zero superbimodules!) so assume that $\ell > 0$.
Take a weight $k=2n-\ell$ of $\mathbf{V}(-\ell)$
for some $0 \leq n \leq \ell$.
Setting $n' := \ell-n-1$, we have that $k = n-n'-1$.
For brevity, we will ignore grading shifts since they play no role in this place, i.e., we work with ordinary rather than
graded superbimodules.

We first check \cref{inv1}, so  $k \geq 0$ or, equivalently,
$n \geq n'+1$.
We need to show that the superbimodule homomorphism $f$
defined by the $(n-n')\times 1$ matrix
$$
\left(\:
\sigma_n
\quad
\ev_{n-1}
\quad
\cdots
\quad
\ev_{n-1} \circ \big(\rho_{(1);n-1}(x)^{n-n'-2} \otimes \id\big)
\right)^T\!:
U_{n-1} \otimes_{\EOH_{n-1}^\ell} \!V_{n-1}^\ell
\rightarrow
V_n^\ell\otimes_{\EOH_{n+1}^\ell} U_n^\ell
\oplus (\EOH_n^\ell)^{\oplus (n-n'-1)}
$$
is an isomorphism where $\sigma_n$, the image of the rightward crossing,
is the superbimodule homomorphism described explicitly in \cref{mate2},
or the zero map in the extremal case $n=\ell, n'=-1$.
By \cref{lemma3}, 
the domain of $f$ is free as a right $\EOH_n^\ell$-supermodule
with basis 
$\big\{u_{n-1}(x^r) \otimes v_{n-1}(x^s)\:\big|\:
0 \leq r \leq n'+1, 0 \leq s \leq n-1\big\}$,
and the codomain of $f$ is
free as a right $\EOH_n^\ell$-supermodule with basis
$\big\{v_n(x^s)\otimes u_n(x^r)\:\big|\:0 \leq r \leq n',
0 \leq s \leq n\big\} \cup \big\{b_1,\dots,b_{n-n'-1}\big\}$
where $b_i$ is the identity element in the $i$th copy of $\EOH_n^\ell$.
Both of these sets are of size $nn'+2n$, so it suffices 
to show that $f$ is {\em surjective}.
To prove this, since $\EOH_n^\ell$ is graded local, 
it is enough to show that the homomorphism
$\bar f := f \otimes 1$
obtained by applying the functor $-\otimes_{\EOH_n^\ell} \k$
is surjective.
Let $uv(r,s)$ denote 
$u_{n-1}(x^r)\otimes v_{n-1}(x^s) \otimes 1$ and
$vu(s,r)$ denote $v_n(x^s) \otimes u_n(x^r) \otimes 1$.
Thus, the domain of $\bar f$ has linear basis
$\{uv(r,s)\:|\:
0 \leq r \leq n'+1, 0 \leq s \leq n-1\}$
and the codomain has linear basis
$\{vu(s,r)\:|\:0 \leq r \leq n',
0 \leq s \leq n\} \cup \{b_1\otimes 1,\dots, b_{n-n'-1}\otimes 1\}$.
By \cref{Ian} and \cref{cupsandcaps}, we have that
\begin{equation}\label{howitgoes}
\bar f\big(uv(r,s)\big) = 
\pm vu(n,r+s-n)
\pm b_{n-r-s}\otimes 1
\pm vu(s,r)
\end{equation}
for $0 \leq r\leq n'+1$ and $0 \leq s \leq n-1$,
where the first term should be interpreted as zero if $r+s-n < 0$
and the middle term should be interpreted as zero
if $n-r-s < 1$ or $n-r-s > n-n'-1$.
Note also that the last term $vu(s,r)$ is zero for $r > n'$ by degree considerations.
The argument is completed with the following observations.
\begin{itemize}
\item
We get the vectors
$vu(n,r)$ for $0 \leq r \leq n'$ from the images of
the basis vectors $uv(n'+1,s)$ for $n-n'-1\leq s \leq n-1$.
Indeed, in the formula \cref{howitgoes}
for $\bar f(uv(n'+1,s))$ for these values of $s$,
the second and third terms on the right hand side are both zero.
\item
Modulo the span of vectors already obtained, 
we get the vectors $b_1\otimes 1,\dots, b_{n-n'-1}\otimes 1$
from the images of the basis vectors
$uv(n'+1,s)$ for $0 \leq s \leq n-n'-2$.
Indeed, in the formula for $f(uv(n'+1,s))$ for these values of $s$
the third term on the right hand side is zero.
\item
Modulo the span of vectors already obtained,
we get the vectors
$vu(s,r)$
for $0 \leq r \leq n'$ and $0 \leq s \leq n-1$
from the images of the remaining basis vectors $uv(r,s)$
for these values of $r$ and $s$.
\end{itemize}

Now consider \cref{inv2}, so $k \leq 0$ and $n'\geq n-1$.
We need to show that the superbimodule homomorphism $f$
defined by the $1\times (n'-n+2)$ matrix
$$
\left(\:
\sigma_n
\quad
\coev_n
\quad\cdots\quad
\big(\id \otimes \rho_{(1);n}(x)^{n'-n}\big) \circ \coev_n
\right):
U_{n-1} \otimes_{\EOH_{n-1}^\ell}\! V_{n-1}^\ell
\oplus (\EOH_n^\ell)^{\oplus (n'-n+1)}\rightarrow
V_n^\ell\otimes_{\EOH_{n+1}^\ell} U_n^\ell
      $$ 
      is an isomorphism,
      where $\sigma_n$ is as in \cref{mate2} or the zero map
      in the extremal case $n=0,n'=\ell-1$.
By \cref{lemma3}, 
the domain of $f$ is free as a right $\EOH_n^\ell$-supermodule
with basis $\big\{u_{n-1}(x^r) \otimes v_{n-1}(x^s)\:\big|\:
0 \leq r \leq n'+1, 0 \leq s \leq n-1\big\}
\cup \big\{b_1,\dots,b_{n'-n+1}\big\}$,
where $b_i$ is the identity element in the $i$th copy of $\EOH_n^\ell$,
and the codomain of $f$ is
free as a right $\EOH_n^\ell$-supermodule with basis
$\big\{v_n(x^s)\otimes u_n(x^r)\:\big|\:0 \leq r \leq n',
0 \leq s \leq n\big\}$.
Both of these sets are of size $nn'+n+n'+1$, so it suffices to 
see that $f$ is surjective. Again, we apply $-\otimes_{\EOH_n^\ell} \k$
and show that the resulting map $\bar f := f\otimes 1$ is surjective.
Let $uv(r,s) := u_{n-1}(x^r)\otimes v_{n-1}(x^s) \otimes 1$
and $vu(s,r) := v_n(x^s)\otimes u_n(x^r) \otimes 1$ for short.
So the domain of $\bar f$ has linear basis
$\{uv(r,s)\:|\:
0 \leq r \leq n'+1, 0 \leq s \leq n-1\}
\cup \{b_1\otimes 1,\dots,b_{n'-n+1}\otimes 1\}$
and the codomain has linear basis
$\{vu(s,r)\:|\:0 \leq r \leq n',
0 \leq s \leq n\}$.
By \cref{Ian} and \cref{Jackdoor}, we have that
\begin{align}\label{minimap}
\bar f\big(uv(r,s)\big) &= 
\pm vu(n,r+s-n)
\pm vu(s,r),
&
\bar f(b_i\otimes 1) &=
vu(n,i-1),
\end{align}
for $0 \leq r\leq n'+1$, $0 \leq s \leq n-1$
and $1 \leq i \leq n'-n+1$,
interpreting $vu(r+s-n)$ as zero if $r+s-n < 0$
and $s$.
The proof is completed by the following.
\begin{itemize}
\item
We get $vu(n,r)$ for $0 \leq r \leq n'-n$ from the
images of the vectors $b_i\otimes 1$ for $i=1,\dots,n'-n+1$.
\item
We get $vu(n,r)$ for $n'-n+1\leq r \leq n'$ from the images of the
vectors $uv(n'+1,r)$ for $0 \leq r \leq n-1$. This uses the observation
that $vu(r,n'+1) = 0$.
\item Modulo the span of vectors already obtained, we get the remaining $vu(s,r)$ for $0 \leq s \leq n-1$
and $0 \leq r \leq n'$ from the images of the vectors
$uv(r,s)$ for the same values of $r$ and $s$.
\end{itemize}
\end{proof}

In the next theorem, we give explicit descriptions of the images of the leftward cups and caps under the graded 2-superfunctor $\Psi_\ell$ from \cref{actiontheorem}, that is, the superbimodule homomorphisms
\begin{align}
\coev'_n := \Psi_\ell\left(
\begin{tikzpicture}[anchorbase,scale=.8]
	\draw[-] (0.4,0.3) to[out=-90, in=0] (0.1,-0.2);
	\draw[->] (0.1,-0.2) to[out = 180, in = -90] (-0.2,0.3);
  \node at (0.99,-0.1) {$\red\scriptstyle{2n-\ell+2}$};
\end{tikzpicture}\right)&:
\EOH_{n+1}^\ell\rightarrow
Q^{2n-\ell + 1} U_n^\ell \otimes_{\EOH_n^\ell} V_n^\ell,
\\
\ev'_n :=
\Psi_\ell\left(
\begin{tikzpicture}[anchorbase,scale=.8]
	\draw[-] (0.4,-0.2) to[out=90, in=0] (0.1,0.3);
	\draw[->] (0.1,0.3) to[out = 180, in = 90] (-0.2,-0.2);
  \node at (.8,0.2) {$\red\scriptstyle{2n-\ell}$};
\end{tikzpicture}
\right)&: Q^{2n-\ell+1} V_n^\ell
\otimes_{\EOH_{n+1}^\ell} U_{n}^\ell \rightarrow \EOH_n^\ell
\end{align}
for $0 \leq n \leq \ell-1$ (these maps are zero for all other $n$). 
Let $n'$ be defined so that $\ell=n+1+n'$. 
Then, by \cref{table},
$\coev_n'$
is a superbimodule homomorphism of 
degree $n'-n$ and parity $n'-n\pmod{2}$,
and
$\ev_n'$
is of degree $n-n'$ and parity $n-n'\pmod{2}$.
Recall also the maps $\tev_n$ and $\tcoev_n$ from \cref{secondadjunction}.

\begin{theorem}\label{leftwardscupsandcaps}
For $\ell=n+1+n'$ as above, we have that
\begin{align*}
\coev_n' &= (-1)^{\binom{n}{2}+(n+1)n'} 
(p_{n'}^{-1} \otimes q_n) \circ \tcoev_n,&
\ev_n' &= (-1)^{\binom{n+1}{2}+(n+1)n'} \tev_n \circ (q_n^{-1} \otimes p_{n'}),
\end{align*}
where
$p_{n'}:Q^{-n} U_n^\ell \rightarrow \tU_n^\ell$
and $q_n:\tV_n^\ell \rightarrow Q^{3n-\ell+1} V_n^\ell$
are the superbimodule isomorphisms that are the identity maps
on the underlying vector spaces.
Moreover:
\begin{align}\label{bubbly1}
\displaystyle\Psi_\ell\left(\begin{tikzpicture}[anchorbase,scale=.8]
  \draw[<-] (0,0.4) to[out=180,in=90] (-.2,0.2);
  \draw[-] (0.2,0.2) to[out=90,in=0] (0,.4);
 \draw[-] (-.2,0.2) to[out=-90,in=180] (0,0);
  \draw[-] (0,0) to[out=0,in=-90] (0.2,0.2);
   \node at (0.85,0.2) {$\red\scriptstyle{n-n'+1}$};
   \opendot{-0.2,0.2};
   \node at (-0.9,0.24) {$\scriptstyle{n-n'+r}$};
\end{tikzpicture}\right)(1) &= 
\sum_{s=0}^r
(-1)^{(n'+1)r+(n+1)s+\binom{s}{2}}
\Big[\big(\psi_{n+1}^\ell\big)^{-1}\big(\bar\eps_{r-s}^{(n')}\big) \Big]\bar\eta_{s}^{(n+1)}
&&\text{if $r \geq n'-n$,}\\
\label{bubbly2}
\displaystyle\Psi_\ell\left(\begin{tikzpicture}[anchorbase,scale=.8]
  \draw[->] (0.2,0.2) to[out=90,in=0] (0,.4);
  \draw[-] (0,0.4) to[out=180,in=90] (-.2,0.2);
\draw[-] (-.2,0.2) to[out=-90,in=180] (0,0);
  \draw[-] (0,0) to[out=0,in=-90] (0.2,0.2);
   \node at (-0.85,0.2) {$\red\scriptstyle{n-n'-1}$};
   \opendot{0.2,0.2};
   \node at (0.9,0.22) {$\scriptstyle{n'-n+r}$};
\end{tikzpicture}\right)(1) &= 
\sum_{s=0}^r (-1)^{(n'+s)r+(n+1)s+\binom{s}{2}}
\Big[\big(\psi_{n}^\ell\big)^{-1} \big(\bar\gamm_{r-s}^{(n'+1)}\big)\Big]\bar\eps_s^{(n)}&&\text{if $r \geq n-n'$.}
\end{align}
\end{theorem}

\begin{proof}
Recalling \cref{evil}, the map $p_{n'}$ is of degree $n-2n'$ and parity $n'\pmod{2}$,
and $q_n$ is of degree $n-\ell+1$ and parity $n\pmod{2}$.
The inverse of the map $p_{n'}^{-1} \otimes q_n$
is $(-1)^{nn'} p_{n'} \otimes q_n^{-1}$.
With this in mind, 
we let \begin{align*}
\widehat{\coev}_n &:= (-1)^{\binom{n}{2}+n'}
\big(p_{n'} \otimes q_n^{-1}\big) \circ \coev_{n}',
&
\widehat{\ev}_n &:= (-1)^{\binom{n+1}{2}+n'} \ev_n'
\circ \big(q_n \otimes p_{n'}^{-1}\big).
\end{align*}
These are both even of degree 0.
To prove the first part of the lemma, we must show that
$\widehat{\coev}_n = \tcoev_n$
and $\widehat{\ev}_n = \tev_n$.

We first show that
$\widehat{\coev}_n$ and $\widehat{\ev}_n$
are the counit and unit of an adjunction.
This follows from the left adjunction relations \cref{leftadj}:
\begin{align*}
\big(\id\otimes \widehat{\ev}_n\big) 
\circ \big(\widehat{\coev}_n \otimes \id\big)
&=
(-1)^{\binom{n}{2}+\binom{n+1}{2}} (\id \otimes \ev_n') \circ \big(\id \otimes q_n \otimes p_{n'}^{-1}\big)
\circ \big(p_{n'}\otimes q_n^{-1}\otimes \id\big)
\circ (\coev_n' \otimes \id)\\
&=
(-1)^{n+n'} (\id \otimes \ev_n') \circ \big(p_{n'} \otimes \id \otimes \id\big)
\circ \big(\id\otimes \id\otimes p_{n'}^{-1}\big)
\circ (\coev_n' \otimes \id)\\
&=
(-1)^{\ell-1} p_{n'} \circ (\id \otimes \ev_n') 
\circ (\coev_n' \otimes \id) \circ p_{n'}^{-1}
= \id,\\
\big(\widehat{\ev}_n\otimes\id\big) 
\circ \big(\id\otimes \widehat{\coev}_n \big)
&=
(-1)^{\binom{n}{2}+\binom{n+1}{2}} (\ev_n' \otimes \id) \circ \big(q_n \otimes p_{n'}^{-1}\otimes\id\big)
\circ \big(\id\otimes p_{n'}\otimes q_n^{-1}\big)
\circ (\id\otimes \coev_n')\\
&=
( \ev_n'\otimes\id) \circ \big(\id\otimes\id\otimes q_{n}^{-1}\big)
\circ \big(q_n\otimes \id\otimes \id\big)
\circ (\id\otimes \coev_n')\\
&=
q_{n}^{-1} \circ (\ev_n'\otimes\id) 
\circ (\id\otimes \coev_n') \circ q_{n}
= \id.
\end{align*}
Here, for brevity, we are implicitly assuming that $\OGBim_\ell$ 
is strict as in \cref{howstrict}.

So now we have two adjunctions
making $(\tV^\ell_n, \tU_n^\ell)$
into a dual pair, one $A_1$ with unit $\widehat{\ev}_n$ and counit
 $\widehat{\coev}_n$ just constructed,
and the other $A_2$ with unit $\tev_n$ and counit
$\tcoev_n$ coming from \cref{secondadjunction}.
Any such adjunction $A$ induces a  degree 0 even 
$\big(\EOH_n^\ell,\EOH_{n+1}^\ell\big)$-superbimodule
isomorphism
$\alpha:\tU^\ell_n\stackrel{\sim}{\rightarrow}\Hom_{\EOH_{n}^\ell\dash}(\tV_n^\ell, \EOH_{n}^\ell)$.
So from $A_1$ and $A_2$
we get isomorphisms $\alpha_1$ and $\alpha_2$,
hence, an even degree 0 automorphism 
$\alpha_2^{-1} \circ \alpha_1$ of $\tU^\ell_n$.
By \cref{lemma3}(2c), $\tU^\ell_n$ is cyclic generated by the vector 
$\tu_n(1)$. Moreover, this vector spans the (one-dimensional) graded component of $\tU^\ell_n$ of lowest degree.
So we must have that
$\alpha_2^{-1} \circ \alpha_1 = c_n \id$
for $c_n \in \k^\times$.

The argument in the previous paragraph
shows that
$\widehat{\coev}_n = c_n\; \tcoev_n$
and $\widehat{\ev}_n = c_n^{-1}\; \tev_n$
for some $c_n \in \k^\times$.
To complete the proof of the first part of the lemma, we must show that $c_n = 1$. To see this, we will show that
\begin{align}
\displaystyle\Psi_\ell\left(\begin{tikzpicture}[anchorbase,scale=.8]
  \draw[<-] (0,0.4) to[out=180,in=90] (-.2,0.2);
  \draw[-] (0.2,0.2) to[out=90,in=0] (0,.4);
 \draw[-] (-.2,0.2) to[out=-90,in=180] (0,0);
  \draw[-] (0,0) to[out=0,in=-90] (0.2,0.2);
   \node at (0.85,0.2) {$\red\scriptstyle{n-n'+1}$};
   \opendot{-0.2,0.2};
   \node at (-0.9,0.24) {$\scriptstyle{n-n'+r}$};
\end{tikzpicture}\right)(1) &= c_n 
\sum_{s=0}^r
(-1)^{(n'+1)r+(n+1)s+\binom{s}{2}}
\Big[\big(\psi_{n+1}^\ell\big)^{-1}\big(\bar\eps_{r-s}^{(n')}\big) \big]\bar\eta_{s}^{(n+1)}
&&\text{if $r \geq n'-n$,}\label{firstofthelast}\\\label{secondofthelast}
\displaystyle\Psi_\ell\left(\begin{tikzpicture}[anchorbase,scale=.8]
  \draw[->] (0.2,0.2) to[out=90,in=0] (0,.4);
  \draw[-] (0,0.4) to[out=180,in=90] (-.2,0.2);
\draw[-] (-.2,0.2) to[out=-90,in=180] (0,0);
  \draw[-] (0,0) to[out=0,in=-90] (0.2,0.2);
   \node at (-0.85,0.2) {$\red\scriptstyle{n-n'-1}$};
   \opendot{0.2,0.2};
   \node at (0.9,0.22) {$\scriptstyle{n'-n+r}$};
\end{tikzpicture}\right)(1) &= 
c_n^{-1} \sum_{s=0}^r(-1)^{(n'+s)r+(n+1)s+\binom{s}{2}}
\Big[\big(\psi_{n}^\ell\big)^{-1} \big(\bar\gamm_{r-s}^{(n'+1)}\big)\Big]\bar\eps_s^{(n)}
&&\text{if $r \geq n-n'$.}
\end{align}
Given this, taking $r=0$ in one of these equations and using that the
``bottom bubbles"
$\begin{tikzpicture}[anchorbase,scale=.8]
  \draw[<-] (0,0.4) to[out=180,in=90] (-.2,0.2);
  \draw[-] (0.2,0.2) to[out=90,in=0] (0,.4);
 \draw[-] (-.2,0.2) to[out=-90,in=180] (0,0);
  \draw[-] (0,0) to[out=0,in=-90] (0.2,0.2);
   \node at (0.85,0.2) {$\red\scriptstyle{n-n'+1}$};
   \opendot{-0.2,0.2};
   \node at (-0.64,0.24) {$\scriptstyle{n-n'}$};
\end{tikzpicture}$ 
and
\begin{tikzpicture}[anchorbase,scale=.8]
  \draw[->] (0.2,0.2) to[out=90,in=0] (0,.4);
  \draw[-] (0,0.4) to[out=180,in=90] (-.2,0.2);
\draw[-] (-.2,0.2) to[out=-90,in=180] (0,0);
  \draw[-] (0,0) to[out=0,in=-90] (0.2,0.2);
   \node at (-0.85,0.2) {$\red\scriptstyle{n-n'-1}$};
   \opendot{0.2,0.2};
   \node at (0.66,0.22) {$\scriptstyle{n'-n}$};
\end{tikzpicture}
are identities if $n \geq n'$ or $n \leq n'$, respectively,
gives that $c_n=1$, and the lemma follows.

In this paragraph, we prove \cref{firstofthelast}.
We need to apply
$\ev_n \circ \big(\rho_{(1);n}(x)^{n-n'+r}\otimes\id\big)
\circ \coev_n'$ to $1 \in \EOH_{n+1}^\ell$
using that $\coev_n' = (-1)^{\binom{n}{2}+(n+1)n'}
c_n\;\big(p_{n'}^{-1} \otimes q_n\big) \circ \tcoev_{n}$.
Applying $\tcoev_n$ to 1 using the second formula for that in \cref{secondadjunction} gives
$$
\sum_{s=0}^{n'}(-1)^{\ell s
  +\binom{s}{2}}\Big[\big(\psi_{n+1}^\ell\big)^{-1} \big(\bar\eps_{n'-s}^{(n')}\big)\Big]\tu_n(1)\otimes \tv_n(x^s).
$$
Then we scale by 
$(-1)^{\binom{n}{2}+(n+1)n'} c_n$ and apply
$(p_{n'}^{-1} \otimes q_n)$ to get
$$
c_n \sum_{s=0}^{n'}(-1)^{\binom{n}{2}+(n+1)n'+\ell s
  +\binom{s}{2}+ns+n'(n'-s)}\Big[\big(\psi_{n+1}^\ell\big)^{-1}
\big(\bar\eps_{n'-s}^{(n')}\big)\Big] u_n(1)\otimes v_n(x^s).
$$
This is $\coev'_n(1)$.
Then we apply $\rho_{(1);n}(x)^{n-n'+r}\otimes\id$
(the dots on the left boundary of the bubble)
using \cref{out}
to get
$$
c_n \sum_{s=0}^{n'}(-1)^{\binom{n}{2}+(n+1)n'+\ell s
  +\binom{s}{2}+ns+n'(n'-s)+(n-n'+r)(n'-s)+\binom{n-n'+r}{2}}\Big[\big(\psi_{n+1}^\ell\big)^{-1} \big(\bar\eps_{n'-s}^{(n')}\big)\Big]\tu_n(x^{n-n'+r})\otimes \tv_n(x^s).
$$
Finally we apply $\ev_n$ using the formula from \cref{reallynot} 
to obtain
$$
c_n \sum_{s=n'-r}^{n'}
(-1)^{\binom{n}{2}+(n+1)n'+\ell s +
  \binom{s}{2}+ns+n'(n'-s)+(n-n'+r)(n'-s)+\binom{n-n'+r}{2}}
\Big[\big(\psi_{n+1}^\ell\big)^{-1}
\big(\bar\eps_{n'-s}^{(n')}\big)\Big]
 \bar\gamm^{(n+1)}_{r+s-n'}.
$$
It remains to reindex the summation replacing $s$ by $s+(n'-r)$
and to simplify the signs to obtain \cref{firstofthelast}.

To prove \cref{secondofthelast},
we 
first note that
$\begin{tikzpicture}[anchorbase,scale=.8]
  \draw[->] (0.2,0.2) to[out=90,in=0] (0,.4);
  \draw[-] (0,0.4) to[out=180,in=90] (-.2,0.2);
\draw[-] (-.2,0.2) to[out=-90,in=180] (0,0);
  \draw[-] (0,0) to[out=0,in=-90] (0.2,0.2);
   \node at (-0.85,0.2) {$\red\scriptstyle{n-n'-1}$};
   \opendot{0.2,0.2};
   \node at (0.9,0.22) {$\scriptstyle{n'-n+r}$};
\end{tikzpicture}=
(-1)^{\binom{n'-n+r}{2}}\begin{tikzpicture}[anchorbase,scale=.8]
  \draw[->] (0.2,0.2) to[out=90,in=0] (0,.4);
  \draw[-] (0,0.4) to[out=180,in=90] (-.2,0.2);
\draw[-] (-.2,0.2) to[out=-90,in=180] (0,0);
  \draw[-] (0,0) to[out=0,in=-90] (0.2,0.2);
   \node at (0.95,0.2) {$\red\scriptstyle{n-n'-1}$};
   \opendot{-0.2,0.2};
   \node at (-0.8,0.22) {$\scriptstyle{n'-n+r}$};
\end{tikzpicture}
$
by \cref{ds} and the super interchange law.
Also $\Psi_\ell\Big(\begin{tikzpicture}[anchorbase,scale=.8]
	\draw[<-] (0.08,-.3) to (0.08,.4);
     \opendot{0.08,0.05};
   \node at (-.7,-.1) {$\red\scriptstyle{n-n-'1}$};
\end{tikzpicture}\Big)=\lambda_{n;(1)}(x_1)$
by the definition \cref{phew}, \cref{actiontheorem} and \cref{mate1}.
Now we calculate by applying
$(-1)^{\binom{n'-n+r}{2}}\ev'_n \circ \big(\lambda_{n;(1)}(x)^{n'-n+r}\otimes\id\big)
\circ \coev_n$ to $1 \in \EOH_{n}^\ell$
using that $\ev_n' = (-1)^{\binom{n+1}{2}+(n+1)n'}
c_n^{-1}\;\tev_n\circ \big(q_{n}^{-1} \otimes p_{n'}\big)$.
By \cref{Jackdoor}, we have that
$$
\coev_n(1) = 
\sum_{s=0}^n v_n\big(x^{s}\big) \otimes u_n(1) \bar \eps^{(n)}_{n-s}.
$$
Then we scale by 
$(-1)^{\binom{n'-n+r}{2}+\binom{n+1}{2}+(n+1)n'}
c_n^{-1}$ and apply $\lambda_{n;(1)}(x)^{n'-n+r}\otimes\id$ to obtain
$$
c_n^{-1} \sum_{s=0}^n (-1)^{\binom{n'-n+r}{2}+\binom{n+1}{2}+(n+1)n'}
v_n\big(x^{n'-n+r+s}\big)\otimes u_n(1) \bar \eps^{(n)}_{n-s}.
$$
Next $q_n^{-1} \otimes p_{n'}$ gives
$$
c_n^{-1} \sum_{s=0}^n (-1)^{\binom{n'-n+r}{2}+\binom{n+1}{2}+(n+1)n'+n'(n'+r+s)}
\tv_n\big(x^{n'-n+r+s}\big)\otimes \tu_n(1) \bar \eps^{(n)}_{n-s}.
$$
Finally we apply $\tev_n$ using the formula in 
\cref{secondadjunction} to obtain
$$
c_n^{-1} \sum_{s=n-r}^n (-1)^{\binom{n'-n+r}{2}+\binom{n+1}{2}+(n+1)n'+n'(n'+r+s)+n(n'-n+r+s)+\binom{n'-n+r+s}{2}}
\Big[\big(\psi_{n}^\ell\big)^{-1} \Big(
\bar\gamm_{r+s-n}^{(n'+1)}\Big)\Big] \bar \eps^{(n)}_{n-s}.
$$
It remains to replace $s$ by $n-s$ and simplify the sign to obtain \cref{secondofthelast}.
\end{proof}

The formulae for the positively 
dotted bubbles in \cref{bubbly1,bubbly2} 
are rather complicated.
To simplify, one can apply the homomorphism $\alpha_n^\ell$
from \cref{alphamap}, to obtain the following. 

\begin{corollary}\label{counterclockwisebubbles}
For $\ell = n+n'$ and $k=n-n'$,
the graded superalgebra homomorphism defined by the composition
$$
\begin{tikzcd}
R\arrow[r,"\beta_k"]
&\End_{\UU(\sl_2)}(1_k)
\arrow[r,"\Psi_\ell"]& \End_{\EOH_n\dash\EOH_n}(\EOH_n)
\arrow[r,"\theta \mapsto \theta(1)"]&\EOH_n
\arrow[r,"\alpha_n^\ell"]&
R_{n'}
\end{tikzcd}
$$
is equal to the canonical quotient map $R \twoheadrightarrow R_{n'},
\dot c
\mapsto \dot c^{(n')}$.
\end{corollary}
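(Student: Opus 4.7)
The plan is to show the composition, which is a graded $R_\ell$-superalgebra homomorphism $R\to R_{n'}$, agrees with the canonical quotient map on a generating set. Both $\{\dot\eps_r:r\geq 1\}$ and $\{\dot\eta_r:r\geq 1\}$ generate $R$ as a superalgebra (the latter because $\smiley$ is an algebra automorphism), so it suffices to verify on either. The key observation is to choose the generating set adaptively based on the sign of $k$: for $k\geq 0$ work with $\dot\eps_r$, and for $k\leq 0$ with $\dot\eta_r$. In either case, the inequality required by $\beta_k$ to avoid fake bubbles (respectively $r\geq 1-k$ or $r\geq k+1$) holds automatically for all $r\geq 1$, so $\beta_k$ of the generator is a genuine bubble whose image under $\Psi_\ell$ is given directly by \cref{bubbly1} or \cref{bubbly2}.

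Take the case $k\geq 0$ first. For $r\geq 1$ we have $\beta_k(\dot\eps_r)=(-1)^{(k+1)r}$ times a counterclockwise bubble of weight $k$ with $r+k-1$ dots. After translating the variable convention (the lemma uses $\ell=n+1+n'$ but the corollary uses $\ell=n+n'$, so the lemma's $\tilde n,\tilde n'$ correspond to $n-1,n'$ here), the output $\Psi_\ell(\beta_k(\dot\eps_r))(1)$ becomes a sum over $0\leq s\leq r$ of terms $[(\psi_n^\ell)^{-1}(\bar\eps_{r-s}^{(n')})]\bar\eta_s^{(n)}$ with explicit signs combining to $(-1)^{nr+ns+\binom{s}{2}}$. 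Applying $\alpha_n^\ell$ kills $\bar\eta_s^{(n)}$ for $s\geq 1$ since these lie in positive degree of $\OSym_n$, so only the $s=0$ term survives. For that term, the second formula of \cref{uglyc} expresses $(\psi_n^\ell)^{-1}(\bar e_r^{(n')})$ as a sum of $h_{r-s}^{(n)}\hatotimes\dot e_s^{(\ell)}$, and $\alpha_n^\ell$ again retains only the $s=r$ piece (with $h_0=1$). A sign tally collapses the accumulated signs to $(-1)^{\binom{r}{2}}$, producing exactly $\dot\eps_r^{(n')}=(-1)^{\binom{r}{2}}\dot e_r^{(n')}$.

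The case $k\leq 0$ proceeds symmetrically using \cref{bubbly2} and the generator $\dot\eta_r$, and the same collapse under $\alpha_n^\ell$ produces $(-1)^{nr}\cdot\alpha_n^\ell\circ(\psi_n^\ell)^{-1}(\bar\eta_r^{(n')})$. Since \cref{uglyc} does not directly compute $(\psi_n^\ell)^{-1}(\bar\eta_r^{(n')})$, I handle this step as follows. The map $f:=\alpha_n^\ell\circ(\psi_n^\ell)^{-1}$ is itself a graded $R_\ell$-superalgebra homomorphism $\EOH_{n'}^\ell\to R_{n'}$. By \cref{uglyc} combined with the fact that $\alpha_n^\ell$ annihilates the positive-degree part of $\OSym_n$, we compute $f(\bar e_r^{(n')})=(-1)^{nr}\dot e_r^{(n')}$ and $f(\bar h_r^{(n')})=(-1)^{nr}\dot h_r^{(n')}$. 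Since $\eta_r^{(n')}\in\OSym_{n'}$ is an integer polynomial in the noncommutative generators $h_s^{(n')}$ and $e_s^{(n')}$ with every monomial of total half-degree $r$, multiplicativity of $f$ forces $f(\bar\eta_r^{(n')})=(-1)^{nr}\dot\eta_r^{(n')}$, which closes the computation.

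The primary obstacle is the sign bookkeeping, not the overall strategy. One must carefully match the prefactor $(-1)^{(k+1)r}$ from the definition of $\beta_k$ against the intricate signs in \cref{bubbly1,bubbly2} (which trace back to the parity shifts $\Pi^{n\# d}$ built into $V_n^\ell$ and $U_n^\ell$) and those in \cref{uglyc}, and verify they all collapse precisely to the single factor $(-1)^{\binom{r}{2}}$ required so that $\dot\eps_r\mapsto\dot\eps_r^{(n')}$ and $\dot\eta_r\mapsto\dot\eta_r^{(n')}$.
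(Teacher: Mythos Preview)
Your proposal is correct and follows essentially the same approach as the paper: reduce to generators, split according to the sign of $k$, invoke \cref{bubbly1} or \cref{bubbly2} after the appropriate variable shift, and use that $\alpha_n^\ell$ kills the positive-degree part of $\OSym_n$ together with \cref{uglyc} to collapse the sum to a single term. Your explicit treatment of the $k\leq 0$ case via the multiplicativity of $f=\alpha_n^\ell\circ(\psi_n^\ell)^{-1}$ is a clean way to supply the detail that the paper defers with ``the arguments are similar,'' since $\eta_r$ is not a scalar multiple of $h_r$ and so requires exactly this kind of degree-homogeneous argument.
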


\begin{proof}
We note first that this composition is indeed a graded superalgebra homomorphism. Now we use \cref{bubbly1,bubbly2} to show 
for all $r \geq 1$ that it takes
$\dot \eps_r \mapsto \dot \eps_r^{(n')}$
in the case $k \geq 0$
and $\dot\eta_r \mapsto \dot \eta_r^{(n')}$
in the case $k \leq 0$.
The arguments are similar in the two cases, so we just give 
the details for $k \geq 0$, i.e., $n \geq n'$.
If $\ell = 0$ the result is trivial, so we may assume $\ell > 0$,
hence, $n \geq 1$.
Remembering
the definition of $\beta_k(\dot \eps_r)$ from \cref{hatbeta}, 
we apply \cref{bubbly1} with $n$ replaced by $n-1$ to get that
$$
\Psi_\ell(\beta_k(\dot\eps_r))(1)
=
(-1)^{(n-n'+1)r}\sum_{s=0}^r (-1)^{(n'+1)r+ns+\binom{s}{2}}
\Big[\big(\psi_{n}^\ell\big)^{-1}\big(\bar\eps_{r-s}^{(n')}\big)\Big]\bar\gamm_{s}^{(n)}.
$$
From \cref{po2}, it follows that $\alpha_n^\ell\Big(\big(\psi_{n}^\ell\big)^{-1}\big(\bar e_{r}^{(n')}\big)\Big) = 
(-1)^{nr} \dot e_{r}^{(n')}$, hence,
$\alpha_n^\ell\Big(\big(\psi_{n}^\ell\big)^{-1}\big(\bar \eps_{r}^{(n')}\big)\Big) = 
(-1)^{nr} \dot \eps_{r}^{(n')}$.
So
$\alpha_n^\ell\big(\Psi_\ell(\beta_k(\eps_r))(1)\big)
= (-1)^{(n-n'+1)r+(n'+1)r+nr}  \dot\eps_r^{(n')} = \dot \eps_r^{(n')}$.
\end{proof}

The results so far in this section have an application to prove the
{\em non-degeneracy} of $\UU(\sl_2)$, which was conjectured in \cite{EL, BE2}.
This asserts that the 2-morphism spaces in $\UU(\sl_2)$ have the expected
graded dimensions. The result may be formulated as follows.
For any $k,\ell \in \Z$
and 1-morphisms $X, Y \in \Hom_{\UU(\sl_2)}(k,\ell)$ (i.e., words
consisting of $m$ letters $E$ and $n$ letters $F$
such that $\ell = k + 2m-2n$)
we view the 2-morphism
space
$\Hom_{\UU(\sl_2)}(X,Y)$ as a graded right $R$-supermodule
so that $\dot c \in R$
acts by horizontally composing on the right with $\beta_{k}(\dot c)$.

\begin{theorem}\label{nondegthm}
For $k, \ell \in \Z$
and
$X, Y \in \Hom_{\UU(\sl_2)}(k,\ell)$,
the 2-morphism space
$\Hom_{\UU(\sl_2)}(X, Y)$
is free as a graded right $R$-supermodule
with basis given by a set of representatives for 
equivalence classes of decorated
reduced $(X,Y)$-matchings
in the sense defined in \cite[Sec.8]{BE2}.
In particular, $\beta_k:R \rightarrow \End_{\UU(\sl_2)}(1_k)$
is an isomorphism for all $k \in \Z$.
\end{theorem}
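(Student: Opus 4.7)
The plan follows the strategy pioneered by Khovanov--Lauda and Lauda for the even $\sl_2$ 2-category, now deploying the family of 2-representations $\Psi_\ell: \UU(\sl_2) \to \OGBim_\ell$ constructed in \cref{actiontheorem} as a simultaneous ``detection functor''. The argument splits into a spanning statement and a linear independence statement.

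For the spanning statement, the relations derived in this section---the odd nil-Hecke relations \cref{nearlydone}, the downward odd nil-Hecke relations \cref{nearlydone2}, the right and left adjunctions, the almost pivotal structure \cref{crossingcyclicity}, the pitchfork and dot slide relations \cref{ds}, the bubble slides, the curl relations \cref{curlrel}, the infinite Grassmannian relation, and crucially the alternating braid relation \cref{notasbroad}---are sufficient to rewrite any 2-morphism diagram in $\Hom_{\UU(\sl_2)}(X,Y)$ as a right $R$-linear combination of decorated reduced $(X,Y)$-matchings, where the right $R$-action corresponds diagrammatically to attaching bubbles at the right boundary of the diagram via $\beta_k$. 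This rewriting procedure follows \cite[Sec.~8]{BE2} essentially verbatim, requiring only careful tracking of signs due to our modified conventions; the overall shape of the normal form argument (clear internal bubbles using bubble slides and the infinite Grassmannian relation, reduce to reduced matchings via alternating braid, move dots to the boundary via dot slides and pitchforks) is unchanged.

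For linear independence, suppose $z = \sum_{i=1}^m \xi_i \cdot \dot c_i$ is a nonzero right $R$-linear combination of distinct decorated reduced $(X,Y)$-matchings $\xi_i$ with $\dot c_i \in R$. I would show that $\Psi_\ell(z) \neq 0$ for $\ell$ sufficiently large. By \cref{counterclockwisebubbles}, the composition
\begin{equation*}
R \xrightarrow{\beta_k} \End_{\UU(\sl_2)}(1_k) \xrightarrow{\Psi_\ell} \End_{\EOH_n^\ell\text{-}\EOH_n^\ell}(\EOH_n^\ell) \xrightarrow{\alpha_n^\ell} R_{n'}
\end{equation*}
is the canonical projection. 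Since $R \cong \varprojlim R_m$ by the corollary following \cref{whatekldo} combined with \cref{dumbc3}, and only finitely many $\dot c_i$ are involved, for $\ell\gg 0$ the scalars $\dot c_i$ inject into $R_{n'}$. It then suffices to prove that for each $\ell$ the images $\Psi_\ell(\xi_1),\dots,\Psi_\ell(\xi_m)$ are $R_{n'}$-linearly independent in the superbimodule Hom space $\Hom(\Psi_\ell(X),\Psi_\ell(Y))$ computed inside $\OGBim_\ell$. Here we have complete combinatorial control of the target: it is a space of homomorphisms between explicit tensor products of the odd Grassmannian bimodules $U_n^\ell$ and $V_n^\ell$, with free bases over the base provided by \cref{lemma0,lemma01,lemma3} and non-degenerate pairing from \cref{Etrace}.

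The main obstacle is establishing this last linear independence claim. The difficulty is that $\Psi_\ell(\xi_i)$ is a long composition of the explicit superbimodule maps $\coev_n$, $\ev_n$, $\rho_{(1);n}(x_1)$, and $\rho_{(1^2);n}(\tau_1)$, whose expansion in any fixed basis of the target involves an intricate sign-laden sum of products of odd Schur polynomials. I would proceed by reducing to the case $Y = \unit$ via the rigidity established in \cref{cupsandcaps,secondadjunction} (bending all strands of $Y$ downward using cups and caps), so that the matchings become morphisms $\Psi_\ell(X) \to \EOH_n^\ell$; then one expands each $\Psi_\ell(\xi_i)$ in the free right $\EOH_n^\ell$-module basis of $\Psi_\ell(X)$ afforded by the Schubert-type bases of \cref{lemma01}. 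The corresponding even computation is carried out by Lauda \cite{Lauda2} via Schubert calculus on Grassmannians; the additional combinatorial input needed here is the triangularity of odd Schur polynomials from \cref{triangularity} together with the odd Pieri rule \cref{thepierineededlater} and the odd Littlewood--Richardson positivity discussed in \cref{discussionhere}, which together supply the triangularity needed to conclude linear independence by an induction on dominance order. Finally, the ``in particular'' statement follows by applying the general result to $X = Y = 1_k$: the only decorated reduced $(1_k,1_k)$-matching is the empty one, and bubbles on the right are by definition the image of $\beta_k$, so freeness over $R$ forces $\beta_k$ to be an isomorphism.
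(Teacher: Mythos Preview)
Your overall strategy---spanning via the straightening algorithm, then linear independence via the 2-superfunctors $\Psi_\ell$ for $\ell$ large---is the same as the paper's. The spanning argument is fine and matches the paper exactly.

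The divergence is in how the linear independence is executed. The paper first reduces to the case $X = Y = E^d 1_k$ for some $d \geq 0$, citing the standard adjunction trick from \cite[Rem.~3.16]{KL3}. After this reduction, the decorated reduced matchings are exactly the morphisms $f(\kappa,w)$ corresponding to monomials $x^\kappa \tau_w$ in $\ONH_d$, and under $\Psi_\ell$ (conjugated by the isomorphism $b_{(1)^d}$ of \cref{ukraine2}) these become the endomorphisms of $U_{(1^d);n}^\ell$ given by the right action of $x^\kappa \tau_w$ from \cref{lemma2}(2). Linear independence of these over $\EOH_n^\ell$ then follows essentially from the basis theorem \cref{ONHbasis} for $\ONH_d$. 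The bubble coefficients $\dot c_{\kappa,w}$ are separated off and shown to vanish using \cref{counterclockwisebubbles} and the injectivity of $R \to R_{\ell-n}$ in low degree, exactly as you describe. This is quick and clean.

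Your route instead reduces to $Y = 1_k$ and proposes to analyse the images of all matchings as superbimodule homomorphisms via an odd Schubert calculus computation modelled on \cite{Lauda2}. This is a legitimate alternative strategy in principle, but you do not carry it out, and there is a genuine concern: you invoke ``odd Littlewood--Richardson positivity'' from \cref{discussionhere}, but that remark says precisely that odd Littlewood--Richardson coefficients are \emph{signed} counts, not positive. The triangularity argument you sketch would need to be made robust against cancellation, which you have not done. The paper's reduction to $X = Y = E^d$ sidesteps all of this combinatorics by reducing to the nil-Hecke algebra, whose basis theorem is already in hand.
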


\begin{proof}
The ``easy'' step in the proof is to show that 
$\Hom_{\UU(\sl_2)}(X, Y)$ is spanned as a right $R$-supermodule
by the 2-morphisms that are the
representatives for 
equivalence classes of decorated
reduced $(X,Y)$-matchings. This is proved by exhibiting
an explicit straightening algorithm 
going by induction on the number of crossings. See \cite[Th.~8.1]{BE2}, which simply cites \cite[Prop.~3.11]{KL3} as 
the argument is the same as in the purely even setting, or \cite{DEL} for a more systematic treatment.
Note the straightening algorithm requires all of the relations described above, including the alternating braid relation.

The ``hard" step is to establish the linear independence.
By a standard reduction, which is again the same as in the ordinary even setting
as in \cite[Rem.~3.16]{KL3}, it suffices to treat the case that
$X = Y = E^d$ for some $d \geq 0$.
In this case, the
decorated reduced $(X,Y)$-matchings consist of
$d$ strings oriented from bottom to top decorated with some dots 
close to the top boundary. 
We index them by pairs $(\kappa,w)$ for $\kappa \in \N^d$
and $w \in S_d$.
For such a pair, the corresponding 2-morphism $f(\kappa,w)$
has $\kappa_i$ dots at the top of the $i$th string, 
with the strings below arranged so that they represent some reduced expression for
 $w$.
 Consider some linear relation
$$
f := \sum_{\kappa\in \N^d,w \in S_d}
f(\kappa,w) \beta_k(\dot c_{\kappa,w}) = 0
$$
for $\dot c_{\kappa,w} \in R$.
Each $\dot c_{\kappa,w}$ is an $\k$-linear combination
of basis vectors 
$\dot h_\lambda$ of $R$ for $\lambda$ in some finite set $P_\kappa$
of partitions.
Pick $0\leq n \leq \ell$ with
$k = 2n-\ell$ in such a way that
$n$ and $\ell-n$ are both very large
relative to $|\kappa|$ and $|\lambda|$ for all 
$\lambda \in P_\kappa,\kappa\in \N^n$ with $\dot c_{\kappa,w} \neq 0$
for some $w \in S_n$.
Then we apply the 2-superfunctor $\Psi_\ell$ to $f$ to obtain
the relation
$$
\Psi_\ell(f) = \sum_{\kappa\in \N^d,w \in S_d}
\Psi_\ell(f(\kappa,w)) \Psi_\ell(\beta_k(\dot c_{\kappa,w})) = 0
$$
in $\End_{\EOH_{n+d}^\ell\dash\EOH_n^\ell}\big(U_{n+d-1}\otimes_{\EOH_{n+d-1}^\ell}\cdots\otimes_{\EOH_{n+1}^\ell} U_n^\ell\big)$.
Conjugating with the isomorphism $b_{(1)^d;n}$
from \cref{ukraine2},
we get from $\Psi_\ell(f)$ a superbimodule
endomorphism $\tilde f = 0 $ of 
$U_{(1^d);n}^\ell$.
Using \cref{inhand1,inhand2}, it follows that
$$
\tilde f = \sum_{\kappa\in\N^d,w \in S_d}
\pm x^\kappa \tau_w \otimes
\Psi_\ell(\beta_k(\dot c_{\kappa,w}))
$$
for some signs, where this is being
viewed as an endomorphism of the free right $\EOH_n^\ell$-superbimodule
$U_{(1^d);n}^\ell$
using the right action of $\ONH_d$ from \cref{lemma2}(2).
By the large choice of $n$ and $\ell$,
the endomorphisms defined by each $x^\kappa \tau_w$ 
are linearly independent; 
cf. the proof of \cref{ONHbasis}.
We deduce that 
$\Psi_\ell\big(\beta_k(\dot c_{\kappa,w})\big) = 0$
for all $\kappa$ and $w$.

It remains to show that 
$\Psi_\ell\big(\beta_k(\dot c_{\kappa,w})\big) = 0$
implies that $\dot c_{\kappa,w} = 0$ for sufficiently large $n$ and $\ell$.
Assume that 
$\Psi_\ell\big(\beta_k(\dot c_{\kappa,w})\big) = 0$.
Remembering 
that $\dot c_{\kappa,w}$ is an $\k$-linear combination
of $\dot h_\lambda$ for $\lambda$ with $|\lambda|$ small,
this follows on evaluating at $1 \in \EOH_n^\ell$ then applying
the homomorphism $\alpha_n^\ell:\EOH_n^\ell \rightarrow R_{\ell-n}$.
The point here is that by 
\cref{counterclockwisebubbles}
we have that 
$$
\alpha_n^\ell\Big(\Psi_\ell\big(\beta_k(\dot h_\lambda)\big)(1)\Big)
= \dot h_\lambda^{(\ell-n)}.
$$ 
These elements of $R_{\ell-n}$ are linearly independent
for small $\lambda$, so we can conclude that the coefficients 
of all $\dot h_\lambda$ in $\dot c_{\kappa,w}$ are zero.
\end{proof}

The following corollary is well known; see also 
\cite[Th.~11.7]{BE2} for the 
explicit definition of the isomorphism.
We just note a different convention for $(q,\pi)$-integers
is used 
in \cite[Sec.~9]{BE2} compared to \cref{qinteger}.
This accounts for the difference in the defining
relation \cite[(9.2)]{BE2} for $U_{q,\pi}(\sl_2)$ 
compared to the relation \cref{fridaylate} being used for it here.

\begin{corollary}\label{sgr}
The split Grothendieck ring $K_0\big(\gsKarunderline(\UU(\sl_2))\big)$
is isomorphic as
a $\Z[q,q^{-1}]^\pi$-algebra
to the integral form
$\mathbf{U}_{q,\pi}(\sl_2)$ of $U_{q,\pi}(\sl_2)$ defined at the end of \cref{leavingdepoe}.
Under the isomorphism,
the isomorphism classes of the 1-morphisms
$E 1_k$ and $F 1_k$ correspond to the elements
of $\mathbf{U}_{q,\pi}(\sl_2)$ denoted by the same notation.
\end{corollary}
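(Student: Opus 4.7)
The plan is to construct a $\Z[q,q^{-1}]^\pi$-algebra homomorphism
$$\phi:\mathbf{U}_{q,\pi}(\sl_2) \longrightarrow K_0\big(\gsKarunderline(\UU(\sl_2))\big)$$
on generators by $1_k \mapsto [1_k]$, $E1_k \mapsto [E1_k]$, $F1_k \mapsto [F1_k]$, and then to show that $\phi$ is bijective. The only nontrivial relation to verify for well-definedness is $\phi(EF1_k) - \pi \phi(FE1_k) = \overline{[k]}_{q,\pi}\,\phi(1_k)$. This will follow directly from the inversion relations \cref{inv1,inv2}: after reading off the shifts in the $(Q,\Pi)$-envelope from the degree and parity data in \cref{table} (the rightward crossing is odd of degree $0$, and the $i$-th decorated cap is of degree $k+1+2i$ and parity $i \pmod 2$), the isomorphisms of \cref{inv1,inv2} will translate into exactly this identity in $K_0$, in a manner essentially identical to the calculation carried out in \cite[Th.~11.7]{BE2}.

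For surjectivity, my intention is to argue that $K_0\big(\gsKarunderline(\UU(\sl_2))\big)$ is generated as a $\Z[q,q^{-1}]^\pi$-module by the classes $[E^{(a)} F^{(b)} 1_k]$ and $[F^{(a)} E^{(b)} 1_k]$. The divided-power objects $E^{(d)}1_k$ and $1_k F^{(d)}$ arise as summands picked off by the (primitive) idempotents $(\omega\chi)_d \in \ONH_d$ (cf.\ \cref{idempotents}), transported into $\End_{\UU(\sl_2)}(E^d 1_k)$ and $\End_{\UU(\sl_2)}(1_k F^d)$ via the evident homomorphisms from $\ONH_d$ built from the upward and downward odd nil-Hecke 2-morphisms \cref{nearlydone,nearlydone2}. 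Any word in $E$s and $F$s can then be reduced, via repeated application of the commutator relation already verified, to a $\Z[q,q^{-1}]^\pi$-combination of such divided-power classes, which all lie in the image of $\phi$.

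For injectivity, I would combine the 2-superfunctors $\Psi_\ell$ of \cref{actiontheorem} for all $\ell \geq 0$. Taking graded super Karoubi envelopes and passing to Grothendieck rings, these induce a $\Z[q,q^{-1}]^\pi$-linear map
$$\Phi:K_0\big(\gsKarunderline(\UU(\sl_2))\big) \longrightarrow \prod_{\ell \geq 0} \End_{\Z[q,q^{-1}]^\pi}\big(\mathbf{V}(-\ell)\big),$$
and by \cref{K0} the composition $\Phi\circ\phi$ is precisely the representation of $\mathbf{U}_{q,\pi}(\sl_2)$ on $\prod_{\ell\geq 0}\mathbf{V}(-\ell)$. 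The faithfulness of this representation then forces $\phi$ to be injective. Faithfulness itself is a standard consequence of the PBW-type basis for $\mathbf{U}_{q,\pi}(\sl_2)$ given in the theorem preceding \cref{integrablemodules} together with \cref{integrablemodules}: an element $u \in \mathbf{U}_{q,\pi}(\sl_2)$ annihilating all $V_\pm(-\ell)$ must, by acting on lowest weight vectors across all $\ell$, have all its PBW coefficients in $\Z[q,q^{-1}]^\pi$ equal to zero.

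The main obstacle will be the very first step: matching shifts carefully enough in the inversion relations to recover exactly $\overline{[k]}_{q,\pi}$ rather than a twisted variant. All three factors---the odd (parity-reversing) crossing, the sign $(-1)^{(k+1)r}$ in the definition \cref{hatbeta} of $\beta_k$, and the equality $\overline{[k]}_{q,\pi} = \pi^{k-1}[k]_{q,\pi}$ of \cref{baringpi}---must be balanced against each other, and this bookkeeping is sign-prone; apart from this, all ingredients are already in place via \cref{nondegthm,K0,actiontheorem}.
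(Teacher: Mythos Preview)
Your approach is in the right spirit, and the paper's own argument is essentially the same citation to \cite[Th.~12.1]{BE2}: non-degeneracy (\cref{nondegthm}) is the crucial input. However, your surjectivity step has a real gap. You show that for any \emph{word} $X$ in $E$'s and $F$'s, the class $[X]$ is a $\Z[q,q^{-1}]^\pi$-combination of classes $[E^{(a)}F^{(b)}1_k]$. But $K_0\big(\gsKarunderline(\UU(\sl_2))\big)$ is generated by classes of objects $(Q^m\Pi^p X, e)$ for arbitrary homogeneous idempotents $e$, not merely by the classes $[X]$ themselves. Knowing that $[X]=\sum c_i[Y_i]$ in $K_0$ does not, by itself, imply that each indecomposable summand of $X$ is isomorphic to some $Y_i$: for that you need Krull--Schmidt, and Krull--Schmidt requires the endomorphism algebras of the $E^{(a)}F^{(b)}1_k$ to be (graded) local. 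It is precisely \cref{nondegthm} that provides this---it shows that $\End\big(E^{(a)}F^{(b)}1_k\big)$ is free over $R$ with degree-zero component $\k$, so these objects are indecomposable, and then \cref{divpowers} plus the commutator relation force every indecomposable in the Karoubi envelope to be a shift of one of them. Without making this step explicit, your reduction of words to divided powers does not yet span $K_0$.

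Your injectivity argument via faithfulness of $\mathbf{U}_{q,\pi}(\sl_2)$ on $\prod_{\ell\geq 0}\mathbf{V}(-\ell)$ is a legitimate alternative to the basis-matching argument in \cite{BE2}, but the faithfulness claim deserves more than ``act on lowest weight vectors'': all the terms $F^{(r)}E^{(s)}1_k$ with fixed $s-r$ land in the same one-dimensional weight space of $\mathbf{V}(-\ell)$, so separating their coefficients requires varying $\ell$ and a genuine linear-independence argument for the resulting family of $(q,\pi)$-binomial scalars. This is true, but it is not quite the one-line consequence of \cref{integrablemodules} that you suggest.
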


\begin{proof}
See \cite[Th.~12.1]{BE2}, which explains how to deduce this from the non-degeneracy
given by \cref{nondegthm}.
\end{proof}

\begin{remark}
\cref{nondegthm} is not new---it was already been established in \cite{DEL} by a completely different technique.
Also a version of \cref{sgr} already appeared in \cite{EL}.
The proof of \cref{nondegthm}
given here is in the same 
spirit as the proof of non-degeneracy of the ordinary $\sl_2$ 2-category given in \cite[Prop.~8.2]{Lauda} and the more general proof of non-degeneracy for $\sl_n$ given in \cite{KL3}. 
\end{remark}

For $d \geq 1$ and $k \in \Z$, 
there are graded superalgebra homomorphisms
\begin{align}
\label{fromonh}
\rho_d^{(k)}:\ONH_d &\rightarrow \End_{\UU(\sl_2)}(E^d1_k)^{\sop},&&\\\notag
x_i &\mapsto 
(-1)^{i-1}
\begin{tikzpicture}[anchorbase,scale=.8]
	\draw[->] (1,-.3) to (1,.4);
	\node at (.5,.05) {$\cdots$};
	\node at (-.5,.05) {$\cdots$};
	\draw[->] (-1,-.3) to (-1,.4);
	\draw[->] (0.0,-.3) to (0.0,.4);
	\node at (0,-.5) {$\stringnumber{i}$};
	\node at (1,-.5) {$\stringnumber{1}$};
	\node at (-1,-.5) {$\stringnumber{d}$};
     \opendot{0.0,0.05};
   \node at (1.2,.1) {$\red\scriptstyle{k}$};
\end{tikzpicture},
&\tau_j &\mapsto
-(-1)^{j-1}
\begin{tikzpicture}[anchorbase,scale=.8]
	\draw[->] (1,-.3) to (1,.4);
	\node at (.5,.05) {$\cdots$};
	\node at (-.8,.05) {$\cdots$};
	\draw[->] (-1.3,-.3) to (-1.3,.4);
	\draw[->] (0.0,-.3) to (-0.3,.4);
	\draw[->] (-.3,-.3) to (0.0,.4);
	\node at (0.03,-.5) {$\stringnumber{j}$};
	\node at (-0.35,-.5) {$\stringnumber{j+1}$};
	\node at (1,-.5) {$\stringnumber{1}$};
	\node at (-1.3,-.5) {$\stringnumber{d}$};
   \node at (1.2,.1) {$\red\scriptstyle{k}$};
\end{tikzpicture}\\
\lambda_d^{(k)}:\ONH_d &\rightarrow \End_{\UU(\sl_2)}(1_k F^d),&&\label{fromonh2}\\\notag
x_i &\mapsto 
(-1)^{d-i}
\begin{tikzpicture}[anchorbase,scale=.8]
	\draw[<-] (1,-.3) to (1,.4);
	\node at (.5,.05) {$\cdots$};
	\node at (-.5,.05) {$\cdots$};
	\draw[<-] (-1,-.3) to (-1,.4);
	\draw[<-] (0.0,-.3) to (0.0,.4);
	\node at (0,.6) {$\stringnumber{i}$};
	\node at (-1,.6) {$\stringnumber{1}$};
	\node at (1,.6) {$\stringnumber{d}$};
     \opendot{0.0,0.07};
   \node at (-1.2,.1) {$\red\scriptstyle{k}$};
\end{tikzpicture},
&\tau_j &\mapsto
-(-1)^{d-j} 
\begin{tikzpicture}[anchorbase,scale=.8]
	\draw[<-] (1,-.3) to (1,.4);
	\node at (.5,.05) {$\cdots$};
	\node at (-.8,.05) {$\cdots$};
	\draw[<-] (-1.3,-.3) to (-1.3,.4);
	\draw[<-] (0.0,-.3) to (-0.3,.4);
	\draw[<-] (-.3,-.3) to (0.0,.4);
	\node at (0.06,.6) {$\stringnumber{j+1}$};
	\node at (-0.38,.6) {$\stringnumber{j}$};
	\node at (1,.6) {$\stringnumber{d}$};
	\node at (-1.3,.6) {$\stringnumber{1}$};
   \node at (-1.5,.1) {$\red\scriptstyle{k}$};
\end{tikzpicture}
\end{align}
This follows from the relations \cref{nearlydone,nearlydone2},
with the signs in \cref{fromonh,fromonh2}
accounting for the difference between these and our preferred relations for $\ONH_n$
from \cref{ONH1,ONH2,ONH3,ONH4,ONH5,ONH6}.
Another consequence of \cref{nondegthm} is that 
both $\rho_{d}^{(k)}$ and $\lambda_d^{(k)}$
are {\em injective}.

\begin{remark}\label{fromonhrem}
On comparing with \cref{inhand1,inhand2}, it follows that 
the composition of $\rho_d^{(2n-\ell)}$
with the homomorphism 
$\End_{\UU(\sl_2)}(E^d 1_k)^{\sop}\rightarrow
\End_{\EOH_{n+d}^\ell\dash \EOH_n^\ell}\big(Q^{-nd-\binom{d}{2}} U_{n+d-1}^\ell\otimes_{\EOH_{n+d-1}^\ell}
\cdots\otimes_{\EOH_{n+1}^\ell} U_n^\ell\big)^\sop$
induced by the 2-superfunctor
$\Psi_\ell$ is equal to
the anti-homomorphism $\rho_{(1^d);n}$ from \cref{cam} (up to a degree
shift).
One can check similarly starting from \cref{mate1} that the composition
of $\lambda_d^{(2n-\ell)}$
with the homomorphism induced by $\Psi_\ell$
is equal to the homomorphism $\lambda_{n;(1^d)}$ from \cref{smith} (up
to degree shift).
\end{remark}

To conclude the section, we explain how to define divided
powers.
In $\GSKar(\UU(\sl_2))$,
there are 1-morphisms
\begin{align}\label{divpowerdef}
E^{(d)} 1_k
:= 
Q^{\binom{d}{2}}\Big( E^d 1_k, 
\rho^{(k)}_d((\chi\omega)_d)\Big)&:
k \rightarrow k+2d,\\\label{divpowerdef2}
1_{k}F^{(d)} :=
Q^{\binom{d}{2}}
\Big(
1_k F^d ,\lambda^{(k)}_d((\omega\chi)_d)\Big)
&:k+2d \rightarrow k.
\end{align} 
For example, $E^{(2)} 1_k = Q \left(E^2 1_k, \begin{tikzpicture}[anchorbase,scale=.6]
	\draw[<-] (0.25,.6) to (-0.25,-.2);
	\draw[->] (0.25,-.2) to (-0.25,.6);
      \opendot{0.15,-0.02};
   \node at (0.45,0.2) {$\red\scriptstyle{k}$};
\end{tikzpicture}\!\right)$ and
$1_k F^{(2)} = Q \left(1_k F^2, 
-\begin{tikzpicture}[anchorbase,scale=.6]
	\draw[->] (0.25,.6) to (-0.25,-.2);
	\draw[<-] (0.25,-.2) to (-0.25,.6);
      \opendot{-0.11,0.02};
   \node at (-0.45,0.2) {$\red\scriptstyle{k}$};
\end{tikzpicture}\;\right)$.
By \cref{borisagain} 
plus \cref{idempotentstar}, we have that
\begin{align}\label{divpowers}
E^d 1_k &\simeq 
\bigoplus_{w \in S_d}
Q^{2\ell(w) - \binom{d}{2}}
\Pi^{\ell(w)}
E^{(d)} 1_k,& 1_k F^d &\simeq \bigoplus_{w \in S_d} 
Q^{2\ell(w) - \binom{d}{2}} \Pi^{\ell(w)}1_k F^{(d)}.
\end{align}
In view of \cref{poincare}, it follows that $E^{(d)} 1_k$
and $F^{(d)} 1_k$ categorify the divided powers 
\cref{snow}, i.e., the isomorphism classes of the former 1-morphisms
under the isomorphism from 
\cref{sgr} give the latter elements of $\mathbf{U}_{q,\pi}(\sl_2)$.
Note $E^{(d)} 1_k$ and $1_k F^{(d)}$ are 
obtained by upshifting
the bottom degree summands of $E^d 1_k$ and $1_k F^d$.
It is also useful to have available the following, which are
downshifts of the top degree summands:
\begin{align}\label{divpowerdeftop}
\overline{E}^{(d)} 1_k
:= 
Q^{-\binom{d}{2}}\Big( E^d 1_k, 
\rho^{(k)}_d((\omega\chi)_d)\Big)&:
k \rightarrow k+2d,\\\label{divpowerdeftop2}
1_k \overline{F}^{(d)}:=
Q^{-\binom{d}{2}}
\Big(
1_k F^d ,\lambda^{(k)}_d((\chi\omega)_d)\Big)
&:k+2d \rightarrow k.
\end{align} 
For example, $\overline{E}^{(2)} 1_k = Q^{-1} \left(E^2 1_k, \begin{tikzpicture}[anchorbase,scale=.6]
	\draw[<-] (0.25,.6) to (-0.25,-.2);
	\draw[->] (0.25,-.2) to (-0.25,.6);
      \opendot{0.11,0.38};
   \node at (0.45,0.2) {$\red\scriptstyle{k}$};
\end{tikzpicture}\!\right)$ and
$1_k \overline{F}^{(2)} = Q^{-1} \left(1_k F^2, 
-\begin{tikzpicture}[anchorbase,scale=.6]
	\draw[->] (0.25,.6) to (-0.25,-.2);
	\draw[<-] (0.25,-.2) to (-0.25,.6);
      \opendot{-0.13,0.42};
   \node at (-0.45,0.2) {$\red\scriptstyle{k}$};
\end{tikzpicture}\;\right)$.
These categorify the elements $\overline{E}^{(d)} 1_k$
and $1_k \overline{F}^{(d)}$
of $\mathbf{U}_{q,\pi}(\sl_2)$ from \cref{snowtilde} since,
by \cref{borisagain} 
plus \cref{idempotentstar} again, we have that
\begin{align}\label{divpowerstilde}
E^d 1_k &\simeq 
\bigoplus_{w \in S_d}
Q^{\binom{d}{2}-2\ell(w)}
\Pi^{\ell(w)}
\overline{E}^{(d)} 1_k,& 
 1_k F^d &\simeq 
\bigoplus_{w \in S_d}
Q^{\binom{d}{2}-2\ell(w)}
\Pi^{\ell(w)}
1_k \overline{F}^{(d)}.
\end{align}
Mirroring \cref{snowier}, we have that
\begin{align}\label{snowiest}
\overline{E}^{(d)} 1_k &\simeq \Pi^{\binom{d}{2}} E^{(d)} 1_k,
&
1_k \overline{F}^{(d)} &\simeq \Pi^{\binom{d}{2}} 1_k F^{(d)}.
\end{align}
This follows because the idempotents
$(\omega\chi)_d$ and $(\chi \omega)_d$ are conjugate
as discussed after \cref{newsnow}\footnote{It could also be deduced from \cref{divpowers,divpowerstilde}
using Krull-Schmidt, but we prefer the argument given since it constructs the isomorphism explicitly.}.

\begin{lemma}\label{haverightadjs}
In $\GSKar(\UU(\sl_2))$, 
the 1-morphism 
$Q^{-d(k+d)}\Pi^{\binom{d}{2}} 1_k F^{(d)}$ is right dual to $E^{(d)} 1_k$,
and the 1-morphism
$Q^{d(k+d)} \Pi^{d(k+d)+\binom{d}{2}} E^{(d)} 1_{k-2d}$ is
 right dual to $F^{(d)} 1_k$.
 \end{lemma}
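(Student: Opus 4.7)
The strategy is to lift the basic right adjunction \cref{rightadj} (respectively, left adjunction \cref{leftadj}) from the level of $(E, F)$ to the divided-power level by splitting idempotents. For the first claim, iterating \cref{rightadj} $d$ times gives a dual pair $(E^d 1_k, Q^{-d(k+d)} 1_k F^d)$ in the $(Q, \Pi)$-envelope of $\UU(\sl_2)$, with even degree-zero unit $\eta$ and counit $\epsilon$ built from nested right cups and caps; the $Q$-shift $-d(k+d) = -\sum_{i=1}^d (k+2i-1)$ absorbs the cap degrees at the intermediate weights $k, k+2, \ldots, k+2d-2$. By the general principle that in a 2-supercategory with idempotent-complete hom-categories, the right mate $e^\vee := (1_Y \epsilon) \circ (1_Y e 1_Y) \circ (\eta 1_Y)$ of an idempotent $e : X \Rightarrow X$ is itself an idempotent such that $(\im(e), \im(e^\vee))$ is a dual pair in the Karoubi envelope, splitting $e := \rho_d^{(k)}((\chi\omega)_d)$ on $E^d 1_k$ --- whose image is $Q^{-\binom{d}{2}} E^{(d)} 1_k$ by \cref{divpowerdef} --- reduces the first assertion to computing $e^\vee$ and identifying its image as an appropriate shift of $1_k F^{(d)}$.

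The core technical step is to show that mate-taking against the iterated right adjunction intertwines $\rho_d^{(k)}$ with $\lambda_d^{(k)}$, so that $e^\vee = \lambda_d^{(k)}(e'')$ for an idempotent $e'' \in \ONH_d$ in the graded conjugacy class of $(\omega\chi)_d$. For the dot generators $x_i$ this is immediate from the definition \cref{phew} of the downward dot as the right mate of the upward dot, combined with the sign conventions $(-1)^{i-1}$ and $(-1)^{d-i}$ appearing in \cref{fromonh} and \cref{fromonh2}. For the crossing generators $\tau_j$ it follows from the almost-pivotal relation \cref{crossingcyclicity}, which identifies the downward crossing with the negative of the right mate of the upward crossing; this sign combines with the opposite-algebra convention built into $\rho_d^{(k)}$ to yield the desired intertwining. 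Once $e^\vee$ is pinned down, the grading-shift isomorphism from the proof of \cref{borisagain} --- left multiplication by $\omega_d$ gives $(\chi\omega)_d \ONH_d \simeq (\Pi Q^2)^{\binom{d}{2}} (\omega\chi)_d \ONH_d$ with inverse left multiplication by $\chi_d$ --- translates via $\lambda_d^{(k)}$ to an even degree-zero isomorphism $(1_k F^d, e^\vee) \simeq Q^{\binom{d}{2}}\Pi^{\binom{d}{2}} 1_k F^{(d)}$. Collecting the shift $Q^{-d(k+d)-\binom{d}{2}}$ from the mate construction with this $Q^{\binom{d}{2}}\Pi^{\binom{d}{2}}$ yields the asserted right dual $Q^{-d(k+d)}\Pi^{\binom{d}{2}} 1_k F^{(d)}$.

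The second claim runs entirely parallel, starting from \cref{leftadj} in place of \cref{rightadj}: iterating produces a dual pair between $F^d 1_k$ and an appropriate grading- and parity-shifted $E^d 1_{k-2d}$; splitting the idempotent $\lambda_d^{(k)}((\omega\chi)_d)$ defining $F^{(d)} 1_k$, applying the same mate analysis with the roles of $\rho_d^{(k-2d)}$ and $\lambda_d^{(k)}$ swapped, and using the inverse of the $\omega_d$-isomorphism from \cref{borisagain} delivers the claimed right dual. The extra parity factor $\Pi^{d(k+d)}$ beyond $\Pi^{\binom{d}{2}}$ absorbs the per-cap parity sign $(-1)^{k+1}$ appearing in the second relation of \cref{leftadj}, iterated over the $d$ intermediate weights.

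\textbf{The main obstacle} is the careful sign and parity bookkeeping needed to pin down $e^\vee$ precisely: tracking the super-interchange signs as odd dots and crossings are pulled across cups and caps, verifying that the bubble-dot correction term in \cref{crossingcyclicity} contributes trivially when expanded against the product $\chi_d \omega_d$ (expected by degree considerations, since $\omega_d$ lies in the bottom graded component of the finite-dimensional subalgebra $\ONH_d^\fin \subset \ONH_d$, so any bubble insertion lowers it into a vanishing component), and ensuring that the reversal of strand order inherent to mate-taking places $e''$ in precisely the correct graded conjugacy class so that the $\omega_d$-isomorphism supplies the $Q^{\binom{d}{2}}\Pi^{\binom{d}{2}}$ shift in the final answer rather than its inverse.
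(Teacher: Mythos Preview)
Your global plan---iterate the one-string adjunction, split the defining idempotent, identify the mate idempotent---is exactly the paper's. Two points, however, deserve correction.

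For the first claim you have misplaced \cref{crossingcyclicity}. The downward dot \emph{and} the downward crossing are both \emph{defined} in \cref{phew} to be the right mates (via the rightward cup and cap) of their upward counterparts; the relation \cref{crossingcyclicity} instead compares the downward generators with their \emph{left} mates (via the leftward cup and cap), so it plays no role in the first claim. Using only \cref{phew}, \cref{idempotentstar} and the sign bookkeeping of \cref{oddadjunction}(2), the paper reads off the right mate of $\rho_d^{(k)}\big((\chi\omega)_d\big)$ as \emph{exactly} $\lambda_d^{(k)}\big((\chi\omega)_d\big)$, not merely a conjugate. Its image is by definition $Q^{-d(k+d)}\,1_k\overline{F}^{(d)}$, and \cref{snowiest} then supplies the $\Pi^{\binom{d}{2}}$.

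For the second claim the situation genuinely is \emph{not} parallel, and this is where your plan has a gap. Here the mate is taken with respect to the iterated leftward adjunction, so \cref{crossingcyclicity} now intervenes, and its first equation shows that the left mate of a dot differs from the naive rotation by an odd-bubble correction. Your proposed resolution---that this correction vanishes because $\omega_d$ sits at the bottom of $\ONH_d^{\fin}$---does not go through: the odd bubble has \emph{positive} degree $2$ (it is $\pm\beta_k(\dot o)$), and the ambient endomorphism algebra is $\ONH_d\otimes R$, not $\ONH_d^{\fin}$, so nothing forces the correction terms to land in a zero component. The paper does not attempt this computation at all. Instead it observes that the mate is in any case a primitive even degree-zero idempotent in $\End(E^d 1_{k})\cong \ONH_d\otimes R$, and invokes Krull--Schmidt on the (finite-dimensional) even degree-zero part to conclude that it is \emph{conjugate} to $\rho_d^{(k)}\big((\chi\omega)_d\big)$ by even degree-zero units; this suffices, and \cref{snowiest} finishes as before.
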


\begin{proof}
We first show that 
$Q^{-d(k+d)} 1_k F^{(d)}$ is right dual to $E^{(d)} 1_k$
in the $(Q,\Pi)$-envelope $\GSKar(\UU(\sl_2))$.
By \cref{rightadj}, 
$Q^{-k-1} 1_k F$ is right dual to $E 1_k$.
Hence, $Q^{-d(k+d)} Q^{-\binom{d}{2}}
1_k F^d$ is right dual to $Q^{\binom{d}{2}} E^d 1_k$.
By definition, $E^{(d)} 1_k$ is the summand of
$Q^{\binom{d}{2}} E^d 1_k$ defined by the idempotent
$Q^{\binom{d}{2}} \rho_d^{(k)}\big((\chi\omega)_d\big)$
and $Q^{-d(k+d)} 1_k \overline{F}^{(d)}$ is the summand of
$Q^{-d(k+d)} Q^{-\binom{d}{2}} 1_k F^d$ defined by the 
the idempotent
$Q^{-d(k+d)} Q^{-\binom{d}{2}} \lambda_d^{(k)}
\big((\chi\omega)_d\big)$.
Now we observe using \cref{oddadjunction}(2),
\cref{phew} and \cref{idempotentstar} 
that $Q^{-d(k+d)}Q^{-\binom{d}{2}}\lambda_d^{(k)}\big((\chi\omega)_d\big)$
is the right mate of $Q^{\binom{d}{2}} \rho_d^{(k)}\big((\chi\omega)_d\big)$.
Hence, we get that
$Q^{-d(k+d)} 1_k \overline{F}^{(d)}$ is right dual to
$E^{(d)} 1_k$. It remains to apply
\cref{snowiest} to pass from
$Q^{-d(k+d)} 1_k \overline{F}^{(d)}$
to
$Q^{-d(k+d)} \Pi^{\binom{d}{2}}1_k F^{(d)}$.

The proof that
$Q^{d(k+d)}\Pi^{d(k+d)+\binom{d}{2}} E^{(d)} 1_{k}$ is
 right dual to $1_k F^{(d)}$ is similar using \cref{leftadj} instead of \cref{rightadj}.
 By \cref{leftadj} and \cref{oddadjunction}(1),
 $Q^{k+1}\Pi^{k+1} E 1_k$ is right dual to $1_k F$.
Hence,
$Q^{d(k+dx)-\binom{d}{2}} \Pi^{d(k+d)} E^d 1_k$ is right dual to
$Q^{\binom{d}{2}} 1_k F^d$.
Since \cref{crossingcyclicity} is more complicated than
\cref{phew}, it is no longer true that
the idempotent
$Q^{d(k+d)-\binom{d}{2}}\Pi^{d(k+d)} 
\rho_d^{(k)}\big((\chi\omega)_d\big)$
is {\em equal} to the right mate of the idempotent
$Q^{\binom{d}{2}}\lambda_d^{(k)}\big((\chi\omega)_d\big)$,
but these two idempotents are conjugate via even degree 0 units.
This follows by the Krull-Schmidt theorem applied to the finite-dimensional algebra that is the even degree 0 component of $\ONH_d \otimes R$.
Hence, $Q^{d(k+d)} \Pi^{d(k+d)} \overline{E}^{(d)} 1_k$ is right dual to
$1_k F^{(d)}$.
It remains to appeal to \cref{snowiest} one more time.
\end{proof}

\section{Some graded 2-representation theory}\label{2Repsec}

In this section, we develop some 2-representation theory of
the $\sl_2$ 2-category  $\UU(\sl_2)$ from
\cref{km2cat}. We work throughout in the graded setting, but all the 
definitions and results here have analogs with the $\Z$-grading forgotten.
The following is modelled on \cite[Def.~5.1.1]{Rou}.

\begin{definition}\label{2repdef}
By a {\em graded 2-representation} $\cV$ of $\UU(\sl_2)$, we mean
a strict graded 2-superfunctor
$\cV:\UU(\sl_2)\rightarrow \GSCAT$.
Decoding the definition, $\cV$ consists of the following data:
\begin{itemize}
\item a graded supercategory $\cV$ with a 
given decomposition 
into {\em weight subcategories}
$\cV = \coprod_{k \in \Z} \cV_k$ (or
$\cV = \bigoplus_{k \in \Z} \cV_k$ when $\cV$ is additive);
\item graded superfunctors $E:\cV \rightarrow \cV$
and $F:\cV \rightarrow \cV$ 
such that $E|_{\cV_k}:\cV_k\rightarrow \cV_{k+2}$
and $F|_{\cV_k}:\cV_k \rightarrow \cV_{k-2}$ for each $k \in \Z$;
\item  graded supernatural transformations
$x:E \Rightarrow E$ and $\tau:E^2 \Rightarrow E^2$
which are odd of degrees 2
and $-2$, respectively;
\item (inhomogeneous) graded supernatural transformations
$\eta:\Id \Rightarrow FE$ and $\eps:EF \Rightarrow \Id$
whose 
restrictions
$\eta: \Id_{\cV_k} \Rightarrow FE|_{\cV_k}$
and
$\eps:E F|_{\cV_{k+2}} \Rightarrow \Id_{\cV_{k+2}}$
are even of degrees $k+1$
and $-k-1$, respectively.
\end{itemize}
Then there are the axioms:
\begin{itemize}
\item
the relations from \cref{nearlydone} hold:
$\tau\circ \tau = 0$, $(\tau E) \circ (E \tau) \circ (\tau E) = (E \tau) \circ (\tau E) \circ (E \tau )$ and
$(Ex)\circ \tau+(xE) \circ \tau = (xE)\circ \tau +\tau \circ (Ex) = E^2$;
\item
$\eta$ and $\eps$ satisfy the zig-zag relations:
$(F\eps) \circ (\eta F) = F$
and $(\eps E) \circ (E \eta) = E$ (equivalently,
they define units and counits of adjunctions
making $Q^{-k-1} F|_{\cV_{k+2}}$ into a 
right adjoint to $E|_{\cV_k}$ for each $k \in \Z$);
\item
letting $\sigma := (FE\eps) \circ (F\tau F) \circ (\eta EF):EF\Rightarrow FE$
be the image of the rightward crossing under $\cV$,
the following inhomogeneous matrices of 
supernatural transformations are isomorphisms:
\begin{align*}
\left(\:\sigma
\qquad
\eps
\qquad
\eps\circ (xF)
\qquad
\cdots\qquad
\eps \circ (x F)^{k-1}
\right)^T&:
E F|_{\cV_k}\Rightarrow
F E|_{\cV_k} \oplus \Id_{\cV_k}^{\oplus k}&&
\text{for $k \geq 0$}\\
\left(\:
\sigma\qquad
\eta
\qquad
(F x) \circ \eta
      \qquad\cdots\qquad
(F x)^{-k-1} \circ \eta
\right)
&:E F|_{\cV_k} \oplus  \Id_{\cV_k}^{\oplus (-k)}
\Rightarrow
 F E|_{\cV_k}&&\text{for $k \leq 0$}
 \end{align*}
\end{itemize}
\end{definition}

There are natural notions of (full) {\em sub-2-representations} (which are called ``invariant ideals" in \cite[$\S$4.2]{BD}),
{\em quotient 2-representations}, and
{\em morphisms} of graded 2-representations. 
The latter definition, which is the super analog of \cite[Def.~2.3]{Rou},
is equivalent to the following, which is similar to the formulation adopted in
\cite[Sec.~5.2.1]{CR}; the terminology being used is the same as in
\cite[Def.~4.6]{BD} (and actually goes back to Ben Webster).

\begin{definition}\label{sef}
Let $\cV$ and $\cW$ be two graded 2-representations of $\UU(\sl_2)$.
A {\em strongly equivariant graded superfunctor}
$\Omega:\cV\rightarrow \cW$
is a graded superfunctor such that $\Omega|_{\cV_k}:\cV_k \rightarrow
\cW_k$ for each $k \in \Z$,
plus a degree 0 even graded
supernatural isomorphism
$\zeta:E \Omega \stackrel{\sim}{\Rightarrow} \Omega E$,
such that the following holds
\begin{itemize}
\item
the supernatural transformation $(F \Omega \eps) \circ (F \zeta F) \circ
(\eta \Omega F):\Omega F \Rightarrow F \Omega$ is invertible;
\item
we have that $(\Omega x) \circ \zeta = \zeta \circ (x \Omega)$;
\item we have that $(\Omega \tau) \circ (\zeta E)\circ (E \zeta) = (\zeta E) \circ (E \zeta) \circ (\tau \Omega)$.
\end{itemize}
A {\em strongly equivariant graded superequivalence} is
a strongly equivariant graded superfunctor which is also a superequivalence of
supercategories. 
\end{definition}

\begin{remark}
For strongly equivariant graded superequivalences, the first axiom in \cref{sef} actually
holds automatically; see \cite[Rem.~4.8]{BD} where this is explained (in the purely even setting).
Also in \cite{BD}, the diagrammatic interpretation of these definitions
is discussed, which we still find helpful.
\end{remark}

\begin{remark}
There is an obvious way to make the composition
of two strongly
equivariant graded superfunctors into a strongly equivariant graded superfunctor in its own
right.
Also the identity functor $\Id$ is strongly equivariant with
$\zeta := 1_{E}$.
So there is a category $\mathpzc{Rep}(\UU(\sl_2))$
consisting of graded
2-representations and strongly equivariant graded superfunctors.
\end{remark}

Usually, the graded supercategories $\cV_k$ in
a graded 2-representation $\cV$ will have some extra structure,
such as being additive or $(Q,\Pi)$-complete.
We are mainly interested here in what we call 
{\em graded Karoubian 2-representations}.
By definition, this means a graded 2-representation $\cV$ such that,
for each $k \in \Z$, the weight subcategory
$\cV_k$ is additive and $(Q,\Pi)$-complete,
and the underlying ordinary category
$\underline{\cV}_k$ is idempotent complete.
Any graded 2-representation $\cV$ can be upgraded to a Karoubian graded
2-representation by passing to its graded super Karoubi envelope $\GSKar(\cV)$.

Given a graded Karoubian 2-representation
$\cV$, the underlying graded 2-superfunctor
from $\UU(\sl_2)$ to $\cV$
extends canonically to a graded
2-superfunctor from the
graded super Karoubi envelope $\GSKar(\UU(\sl_2))$ to $\cV$.
The direct sum over all $k \in \Z$ of the images 
under this graded 2-superfunctor
of the 1-morphisms 
$E^{(d)} 1_k$ and $F^{(d)} 1_k$ 
from \cref{divpowerdef,divpowerdef2}
give graded superfunctors
\begin{equation}
E^{(d)}, F^{(d)}:\cV \rightarrow \cV.
\end{equation}
By \cref{divpowers}, we have that
\begin{align}\label{divpowersapp}
E^d &\simeq 
\bigoplus_{w \in S_d}
\Pi^{\ell(w)}
Q^{2\ell(w) - \binom{d}{2}}
E^{(d)},& 
F^d &\simeq 
\bigoplus_{w \in S_d}
\Pi^{\ell(w)}
Q^{2\ell(w) - \binom{d}{2}}
F^{(d)}.
\end{align}
\cref{haverightadjs} implies that 
$Q^{-d(k+d)}\Pi^{\binom{d}{2}} 
F^{(d)}|_{\cV_{k+2d}}$ is right adjoint to
$E^{(d)}|_{\cV_k}$
and $Q^{d(k+d)}\Pi^{d(k+d)+\binom{d}{2}}E^{(d)}|_{\cV_{k-2d}}$
is right adjoint to $F^{(d)}|_{\cV_{k}}$,
with 
units and counits of adjunction that are defined by images of 2-morphisms in $\GSKar(\UU(\sl_2))$.

A graded 2-representation $\cV$
is said to be {\em integrable} if $E$ and $F$ are locally nilpotent, i.e.,
for any $k \in \Z$ and any $M \in \cV_k$
there is some $n \geq 0$ such that $E^n M = F^n M = 0$.
Also, for $\ell \in \N$, a {\em lowest weight object of weight $-\ell$}
means an object $M \in \cV_{-\ell}$ such that $FM = 0$.

\begin{example}\label{aishaonthephone}
Suppose that $\ell \in \N$. By
\cref{actiontheorem}, there is an integrable
graded Karoubian 2-representation 
\begin{equation}
\EOH^\ell\psmod
 := 
\bigoplus_{n=0}^\ell \EOH_n^\ell\psmod
\end{equation}
with the weight $k$ subcategory
$(\EOH^\ell\psmod)_k$ equal to
$\EOH_n^\ell\psmod$ if $k = 2n-\ell$
for $0 \leq n \leq \ell$,
or the trivial (zero) graded supercategory otherwise.
Other data is as follows.
\begin{itemize}
\item
The graded superfunctors $E$ and $F$ 
are $Q^{-n} U_n^\ell\otimes_{\EOH_n^\ell}-$
on the weight subcategory $\EOH_n^\ell\psmod$ 
and $Q^{3n+1-\ell} V_n^\ell\otimes_{\EOH_{n+1}^\ell}-$
on the weight subcategory $\EOH_{n+1}^\ell\psmod$, 
respectively,
assuming $0 \leq n < \ell$. On all other weight subcategories, $E$ and $F$ are zero. 
\item
The graded supernatural transformations $x$  and $\tau$ are defined 
by the supernatural transformations $\rho_{(1);n}(x_1) \otimes \id$
viewed as elements 
 $\sEnd\big(Q^{-n} U_n^\ell\otimes_{\EOH_n^\ell}-\big)_{2,\1}$ and
the supernatural transformations
$-\rho_{(1^2);n}(\tau_1) \otimes \id$ viewed as elements of 
 $\sEnd\big(Q^{-2n-1} U_{n+1}^\ell\otimes_{\EOH_{n+1}^\ell}
Q^{-n} U_n^\ell\otimes_{\EOH_n^\ell}-\big)_{-2,\1}$, respectively, 
for all admissible $n$.
\item
The graded supernatural transformations $\eta$ and $\eps$ are given by the appropriate 
counit and unit from
\cref{cupsandcaps}.
\item
The homomorphisms induced by \cref{fromonh,fromonh2} 
are equal to \cref{cam,smith} thanks to \cref{fromonhrem}.
\item
For $0 \leq n \leq n+d \leq \ell$,
we have that
$E^{(d)}|_{\EOH_n^\ell\psmod}
\simeq Q^{-dn} U_{(d);n}^\ell \otimes_{\EOH_n^\ell}-$
and $F^{(d)}|_{\EOH_{n+d}^\ell\psmod}
\simeq Q^{-d(\ell-3n-2d+1)}V_{n;(d)}^\ell \otimes_{\EOH_{n+d}^\ell}-$;
cf. \cref{K0}.
\end{itemize}
We point out also by \cref{K0} that 
$K_0(\EOH^\ell\Upsmod\big)$
is naturally identified with the $\mathbf{U}_{q,\pi}(\sl_2)$-module 
$\mathbf{V}(-\ell)$, and $\EOH_0^\ell$ is a lowest weight object of weight $-\ell$.
\end{example}

Now we come to one of the key constructions introduced by
Rouquier in \cite{Rou} in the purely even case, the construction of 
{\em cyclotomic quotients}.
For any $\ell \in \Z$,
there is a graded 2-representation $\cR(\ell)$
with
\begin{equation}
\cR(\ell)_k := \mathpzc{Hom}_{\UU(\sl_2)}(\ell,k)
\end{equation}
for $k \in \Z$,
viewed as a graded 2-representation of the graded 2-supercategory
$\UU(\sl_2)$ in an obvious way.
For example, the graded superfunctor 
$E|_{\cR(\ell)_k}:\cR(\ell)_k \rightarrow \cR(\ell)_{k+2}$
is defined by horizontally composing on the left
with the 1-morphism $E 1_k$, and the 
supernatural transformation $x:E|_{\cR(\ell)_k}\Rightarrow E|_{\cR(\ell)_k}$
is induced by the 2-endomorphism
$\begin{tikzpicture}[anchorbase,scale=.8]
	\draw[->] (0.08,-.3) to (0.08,.4);
     \opendot{0.08,0.05};
   \node at (.35,.1) {$\red\scriptstyle{k}$};
\end{tikzpicture} :E 1_k \Rightarrow E 1_k$.
The graded 2-representation $\cR(\ell)$
has the following universal property.

\begin{lemma}\label{Lop}
Given any graded 2-representation $\cV$ and any $M \in \cV_\ell$
there is a canonical strongly equivariant graded superfunctor
$\omega_M:\cR(\ell) \rightarrow \cV$ taking the
object $1_\ell$ of $\cR(\ell)_{\ell}$ to $M$.
\end{lemma}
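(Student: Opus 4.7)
The plan is to use the universal property of the graded 2-representation $\cR(\ell)$ as the ``free'' graded 2-representation on one object at weight $\ell$. Concretely, on objects I would define
\[
\omega_M(X) := \cV(X)(M) \qquad (X \in \cR(\ell)_k = \mathpzc{Hom}_{\UU(\sl_2)}(\ell,k)),
\]
where $\cV(X):\cV_\ell\rightarrow\cV_k$ is the graded superfunctor assigned to the 1-morphism $X:\ell\rightarrow k$ by the strict graded 2-superfunctor underlying the 2-representation $\cV$. On a morphism $\alpha:X\Rightarrow Y$ in $\cR(\ell)_k$ (which is just a 2-morphism $\alpha:X\Rightarrow Y$ in $\UU(\sl_2)$) I would define $\omega_M(\alpha)$ to be $\cV(\alpha)_M$, the component at $M$ of the graded supernatural transformation $\cV(\alpha):\cV(X)\Rightarrow \cV(Y)$. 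Functoriality of $\omega_M$ is immediate from functoriality of vertical composition, and the fact that $\omega_M$ sends $\cR(\ell)_k$ into $\cV_k$ is built into the construction. By definition $\omega_M(1_\ell) = \cV(1_\ell)(M) = \Id_{\cV_\ell}(M) = M$.

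The key observation is that strictness of the graded 2-superfunctor $\cV$ makes $\omega_M$ interact with the generating 1-morphisms essentially on the nose. For any $X:\ell\rightarrow k$ in $\UU(\sl_2)$, horizontal composition in $\UU(\sl_2)$ gives $E\circ X = EX$ and $F\circ X = FX$ in $\cR(\ell)$, while strictness of $\cV$ gives $\cV(EX) = \cV(E)\circ\cV(X)$ and $\cV(FX) = \cV(F)\circ\cV(X)$. Consequently $E\,\omega_M(X) = \omega_M(EX)$ and $F\,\omega_M(X) = \omega_M(FX)$ as objects of $\cV$, and likewise on morphisms. So I would take the strongly equivariant structure $\zeta:E\,\omega_M\stackrel{\sim}{\Rightarrow}\omega_M E$ to be the identity supernatural transformation (which is trivially even of degree $0$).

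With $\zeta = \id$ the three axioms of \cref{sef} become essentially tautological. The second axiom $(\omega_M x)\circ\zeta = \zeta\circ(x\,\omega_M)$ reduces to the statement that both $x$'s are the image of the single generator $x:E\Rightarrow E$ of $\UU(\sl_2)$: the $x$ in $\cR(\ell)$ is by definition the supernatural transformation induced by horizontally composing with the 2-morphism $x:E 1_\ell\Rightarrow E 1_\ell$ of $\UU(\sl_2)$, and its image under $\omega_M$ is by construction the $x$ of $\cV$ whiskered on the right with $\omega_M(X)$. The third axiom involving $\tau$ is verified in exactly the same manner. For the first axiom, I would observe that since $\zeta = \id$ one has $\Omega F = F\Omega$ as graded superfunctors, and the transformation $(F\Omega\eps)\circ(F\zeta F)\circ(\eta\Omega F)$ reduces to the whiskering of the identity $(F\eps)\circ(\eta F) = 1_F$ by $\Omega$ (which holds by the right adjunction \cref{rightadj} in $\UU(\sl_2)$, preserved by $\cV$); this is the identity on $\Omega F$ and hence invertible.

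Honestly there is no real obstacle here: the result is a ``Yoneda-type'' universal property for the strict graded 2-representation generated by a single object at weight $\ell$, and the only substantive point is getting the bookkeeping right so that strictness of $\cV$ forces $E\omega_M$ and $\omega_M E$ (and similarly for $F$) to coincide as graded superfunctors, after which all three strong-equivariance axioms become direct consequences of the corresponding relations in $\UU(\sl_2)$.
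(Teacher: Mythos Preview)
The paper does not supply a proof of this lemma; it is stated as the universal property of $\cR(\ell)$ and left to the reader. Your evaluation-at-$M$ construction is exactly the expected Yoneda-type argument, and it is correct: strictness of the 2-superfunctor $\cV$ forces $E\omega_M=\omega_M E$ and $F\omega_M=\omega_M F$ on the nose, so taking $\zeta=\id$ reduces all three axioms of \cref{sef} to identities already holding in $\UU(\sl_2)$. The only place needing a moment of care is the first axiom, where the $\eta$ is the one in $\cV$ while the $\eps$ is the one in $\cR(\ell)$; but strictness gives $\omega_M(\eps^{\cR(\ell)}_X)=\eps^{\cV}_{\omega_M(X)}$, after which the composite collapses to the zig-zag identity $(F\eps)\circ(\eta F)=1_F$ exactly as you claim.
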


We define the {\em universal
graded 2-representation of lowest weight $-\ell \in \Z$},
denoted $\cV(-\ell)$, to be the quotient 2-representation
$\cR(-\ell) / \cI$,
where $\cI$ here is the sub-2-representation of $\cR(-\ell)$
generated by $\Big\downarrow \:{\red \scriptstyle -\ell}$ (the identity endomorphism of the object $F 1_{-\ell}$).
We denote the lowest weight object of $\cV(-\ell)_{-\ell}$
arising from the object $1_{-\ell} \in \cR(-\ell)_{-\ell}$ 
by $\overline{1}_{-\ell}$, and call this the {\em canonical 
lowest weight object}. It is a generating object for $\cV(-\ell)$.
The identity endomorphism of
$\overline{1}_{-\ell}$ is equal to the image of
the bottom bubble
$\begin{tikzpicture}[anchorbase]
  \draw[<-] (0,0.4) to[out=180,in=90] (-.2,0.2);
  \draw[-] (0.2,0.2) to[out=90,in=0] (0,.4);
 \draw[-] (-.2,0.2) to[out=-90,in=180] (0,0);
  \draw[-] (0,0) to[out=0,in=-90] (0.2,0.2);
   \node at (0.4,0.2) {$\red\scriptstyle{-\ell}$};
   \opendot{-0.2,0.2};
   \node at (-.7,0.2) {$\scriptstyle{-\ell-1}$};
\end{tikzpicture}$, i.e., the image of $1 \in R$
under the homomorphism \cref{hatbeta}. 
If $\ell < 0$, this bubble is not a fake bubble, so it
belongs to $\cI$.
This shows that  the graded supercategory
$\cV(-\ell)$ is trivial if $\ell < 0$.
Thus, $\cV(-\ell)$ is only interesting if $\ell \in \N$,
i.e., it is a dominant weight for $\sl_2$.
The following, the universal property of $\cV(-\ell)$,
follows immediately from \cref{Lop} and
the universal property of quotients.

\begin{lemma}\label{Lup}
Let $\cV$ be any graded 2-representation
of $\UU(\sl_2)$, $\ell \in \N$ and 
$M \in\cV_{-\ell}$ be a lowest weight object.
The superfunctor $\omega_M:\cR(-\ell)\rightarrow \cV$
from \cref{Lop} induces a strongly equivariant graded superfunctor
$\Omega_M:\cV(-\ell)\rightarrow \cV$
taking $\overline{1}_{-\ell}$ to $M$.
\end{lemma}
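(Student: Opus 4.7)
The plan is to deduce the lemma directly from the universal property of $\cR(-\ell)$ given by \cref{Lop} combined with the universal property of the quotient $\cV(-\ell) = \cR(-\ell)/\cI$. First I would invoke \cref{Lop} to obtain the strongly equivariant graded superfunctor $\omega_M: \cR(-\ell) \to \cV$ sending $1_{-\ell}$ to $M$. The only thing that needs to be checked is that $\omega_M$ annihilates the sub-2-representation $\cI$ generated by the identity 2-morphism of the 1-morphism $F 1_{-\ell}$, for then passing to the quotient gives a canonical strongly equivariant graded superfunctor $\Omega_M:\cV(-\ell)\rightarrow \cV$ with $\Omega_M(\overline{1}_{-\ell}) = M$.

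The key observation is that, since $M$ is a lowest weight object of weight $-\ell$, we have $FM = 0$ in $\cV_{-\ell-2}$. Unravelling the definition of $\omega_M$ (which sends the generating object $1_{-\ell}$ of $\cR(-\ell)_{-\ell}$ to $M$ and acts on 1- and 2-morphisms by applying the images of the corresponding generators of $\UU(\sl_2)$ in $\cV$), the 1-morphism $F 1_{-\ell}$ goes to the graded superfunctor $F|_{\cV_{-\ell}}$ evaluated at $M$, which is the zero object. Hence the identity 2-morphism $1_{F 1_{-\ell}}$ is mapped to the identity on this zero object, i.e.\ to zero. Since $\cI$ is defined as the sub-2-representation of $\cR(-\ell)$ generated by this single 2-morphism, and $\omega_M$ is a strict graded 2-superfunctor (hence preserves horizontal and vertical composition with arbitrary 1- and 2-morphisms), $\omega_M$ kills every 2-morphism factoring through $\cI$ and, more generally, every object and morphism of $\cI$.

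By the universal property of quotients of graded 2-representations,  $\omega_M$ therefore descends to a strict graded 2-superfunctor $\Omega_M: \cV(-\ell) \to \cV$ with $\Omega_M(\overline 1_{-\ell})=M$. Strong equivariance in the sense of \cref{sef} transfers automatically: the invertible graded supernatural isomorphism $\zeta:E\Omega_M \Rightarrow \Omega_M E$ and the conditions involving $x$ and $\tau$ are inherited from the corresponding data witnessing that $\omega_M$ is strongly equivariant, since each of these lives over objects of $\cV(-\ell)$ whose preimages in $\cR(-\ell)$ satisfy the required identities modulo $\cI$.

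I expect no serious obstacle here: the content is entirely formal manipulation of universal properties. The only point requiring a moment of care is the precise meaning of ``the sub-2-representation generated by a 2-morphism'' — one should verify (routinely) that this ideal is generated, as a collection of 2-morphisms, by all horizontal and vertical composites of $1_{F 1_{-\ell}}$ with arbitrary 2-morphisms of $\cR(-\ell)$ — after which the fact that $\omega_M$ kills the generator immediately propagates to the whole ideal.
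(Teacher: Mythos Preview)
Your proposal is correct and matches the paper's approach exactly: the paper states that the lemma ``follows immediately from \cref{Lop} and the universal property of quotients'' without further elaboration, and your argument is precisely the routine unpacking of this—using that $FM=0$ to see $\omega_M$ kills the generator of $\cI$, then descending to the quotient.
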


There is a more sophisticated version of \cref{Lup}, which is analogous to
\cite[Prop.~5.6]{Rou}. To formulate this, we need one more preliminary lemma.

\begin{lemma}\label{endofid}
The homomorphism\footnote{In fact, $\beta_{-\ell}$ is
itself an isomorphism thanks
to \cref{nondegthm}, but this is not relevant for the present lemma.} 
 $\beta_{-\ell}:R \rightarrow \End_{\cR(-\ell)}(1_{-\ell})$
from \cref{hatbeta} induces an isomorphism
$\bar{\beta}_{-\ell}: 
R_\ell\stackrel{\sim}{\rightarrow} \End_{\cV(-\ell)}(\overline{1}_{-\ell})$.
\end{lemma}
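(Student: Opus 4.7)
The plan is as follows. By \cref{nondegthm}, the map $\beta_{-\ell}$ is already an isomorphism $R \stackrel{\sim}{\rightarrow} \End_{\cR(-\ell)}(1_{-\ell})$. Composing with the quotient $\End_{\cR(-\ell)}(1_{-\ell}) \twoheadrightarrow \End_{\cV(-\ell)}(\overline{1}_{-\ell})$ yields a surjective ring homomorphism $R \twoheadrightarrow \End_{\cV(-\ell)}(\overline{1}_{-\ell})$, whose kernel I denote $I$. The lemma then amounts to showing $I = \ker(R \twoheadrightarrow R_\ell)$, which by \cref{dumbc3} is the ideal of the supercommutative algebra $R$ generated by $\{\dot\eps_r : r > \ell\}$.

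For the containment $I \subseteq \ker(R \to R_\ell)$, I would pass to the graded Karoubian 2-representation $\EOH^\ell\psmod$ from \cref{aishaonthephone}. Its weight $-\ell$ object $\EOH_0^\ell$ is canonically identified with $R_\ell$ (read directly from \cref{blue} since $\OSym_0 = \k$ trivializes the defining relations), and it is a lowest weight object because $V_{-1}^\ell$ is zero by convention. Applying \cref{Lup} produces a strongly equivariant graded superfunctor $\Omega: \cV(-\ell) \to \EOH^\ell\psmod$ with $\Omega(\overline{1}_{-\ell}) = R_\ell$, hence an induced ring homomorphism $\bar\Omega: \End_{\cV(-\ell)}(\overline{1}_{-\ell}) \to R_\ell$. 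By \cref{counterclockwisebubbles} taken at $n = 0$ (where $\alpha_0^\ell$ reduces to the identity on $R_\ell$), the composite $R \twoheadrightarrow \End_{\cV(-\ell)}(\overline{1}_{-\ell}) \xrightarrow{\bar\Omega} R_\ell$ agrees with the canonical quotient map. This gives $I \subseteq \ker(R \to R_\ell)$, so that $\bar\beta_{-\ell}: R_\ell \to \End_{\cV(-\ell)}(\overline{1}_{-\ell})$ is well-defined, and moreover shows $\bar\Omega \circ \bar\beta_{-\ell} = \id_{R_\ell}$, which already proves the injectivity of $\bar\beta_{-\ell}$.

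The main obstacle is the reverse containment $\ker(R \to R_\ell) \subseteq I$, namely showing that $\beta_{-\ell}(\dot\eps_r) \in \cI$ for every $r > \ell$. For this I would exploit that the 2-ideal $\cI$, being closed under whiskering on the left by $1_E$, contains $1_{EF 1_{-\ell}}$ in addition to its generator $1_{F 1_{-\ell}}$; thus $EF 1_{-\ell}$ becomes a zero $1$-morphism in $\cV(-\ell)$ and the inversion relation \eqref{inv2} at $k = -\ell$ collapses into an isomorphism $\bigl((Fx)^i \eta\bigr)_{i=0}^{\ell-1}: 1_{-\ell}^{\oplus \ell} \stackrel{\sim}{\rightarrow} FE 1_{-\ell}$. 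Inverting this isomorphism produces explicit leftward caps decorated by $0, 1, \ldots, \ell - 1$ dots, and pairing these against the dotted counterclockwise bubbles together with the infinite Grassmannian identity in $R$ lets one express each $\beta_{-\ell}(\dot\eps_r)$ for $r > \ell$ as a combination of bubbles with at most $\ell - 1$ dots modulo $\cI$. An induction on $r$ then forces all such higher bubbles into $\cI$. Combined with the injectivity established above, this yields $I = \ker(R \to R_\ell)$, completing the proof that $\bar\beta_{-\ell}$ is the claimed isomorphism.
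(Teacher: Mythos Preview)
Your overall strategy matches the paper's, but there are two issues worth flagging.

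First, a logical slip: you claim that $I \subseteq \ker(R \to R_\ell)$ makes $\bar\beta_{-\ell}: R_\ell \to \End_{\cV(-\ell)}(\overline{1}_{-\ell})$ well-defined. That is backwards. Writing $J := \ker(R \to R_\ell)$, well-definedness of the induced map $R/J \to R/I$ requires $J \subseteq I$, not $I \subseteq J$. What your second paragraph actually establishes (via $\bar\Omega \circ \bar\beta_{-\ell} = \id_{R_\ell}$) is injectivity, \emph{assuming} well-definedness. The containment you label the ``main obstacle'' in the third paragraph is the one needed for well-definedness, and once that holds, surjectivity is automatic because $R \twoheadrightarrow \End_{\cV(-\ell)}(\overline{1}_{-\ell})$ is already onto.

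Second, your third paragraph is much harder than it needs to be. You correctly note that $1_{EF\,1_{-\ell}} \in \cI$ (whisker $1_{F\,1_{-\ell}}$ on the left by $1_E$). The point you are missing is that for $r > \ell$ the element $\beta_{-\ell}(\dot\eps_r)$ is a \emph{genuine} clockwise bubble (it carries $r-\ell-1 \geq 0$ dots, so it is not one of the fake bubbles), and as a $2$-morphism it factors through $EF\,1_{-\ell}$: cutting the clockwise bubble at its equator exhibits it as a composite $1_{-\ell} \Rightarrow EF\,1_{-\ell} \Rightarrow 1_{-\ell}$. Since $EF\,1_{-\ell}$ is an object of the sub-$2$-representation $\cI$, any $2$-morphism factoring through it lies in $\cI$. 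This is a one-line argument --- exactly what the paper does --- and it makes your detour through the collapsed inversion relation, ``leftward caps'', counterclockwise bubbles, and induction on $r$ entirely unnecessary (and that sketch is in any case too vague to evaluate).

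A minor point: you invoke the full strength of \cref{nondegthm} to say $\beta_{-\ell}$ is an isomorphism, but only its surjectivity (the ``easy'' spanning part) is needed, which is why the paper's footnote emphasizes that non-degeneracy is not required here. Otherwise, your injectivity argument via $\Omega_{\EOH_0^\ell}$ and \cref{counterclockwisebubbles} at $n=0$ is essentially identical to the paper's.
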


\begin{proof}
The bubble
$\begin{tikzpicture}[anchorbase]
  \draw[<-] (0,0.4) to[out=180,in=90] (-.2,0.2);
  \draw[-] (0.2,0.2) to[out=90,in=0] (0,.4);
 \draw[-] (-.2,0.2) to[out=-90,in=180] (0,0);
  \draw[-] (0,0) to[out=0,in=-90] (0.2,0.2);
   \node at (0.5,0.2) {$\red\scriptstyle{-\ell}$};
   \opendot{-0.2,0.2};
   \node at (-.7,0.2) {$\scriptstyle{r-\ell-1}$};
\end{tikzpicture}$ belongs to $\cI$ for $r > \ell$.
Up to a sign, the composition of $\beta_{-\ell}$ with the canonical  
 map $\End_{\cR(-\ell)}(1_{-\ell}) \twoheadrightarrow \End_{\cV(-\ell)}(\overline{1}_{-\ell})$
takes $\dot \eps_r \in R$ to the 
the image of this bubble, which is zero.
We deduce
that this  homomorphism factors through the quotient $R_\ell$ of $R$
to induce
$\bar\beta_{-\ell}$.
Moreover, $\bar\beta_{-\ell}$ is surjective 
since $\beta_{-\ell}$ is surjective by the ``easy" part of \cref{nondegthm}.

To show that $\bar\beta_{-\ell}$ is also injective, we use the following
diagram of graded supercategories and superfunctors:
$$
\begin{tikzcd}
\END_{\UU(\sl_2)}(-\ell)\arrow[d,"\operatorname{Ev}_{\overline{1}_{-\ell}}" left]
\arrow[r,"\Psi_\ell" above]&\EOH_0^\ell \bisMod \EOH_0^\ell\arrow[d,"-\otimes_{\EOH_0^\ell} \EOH_0^\ell" right]\\
\cV(-\ell)_{-\ell}\arrow[r,"\Omega_{\EOH_0^\ell}" below]&\EOH_0^\ell\sMod
\end{tikzcd}
$$
Here, the top map comes from \cref{actiontheorem},
the left hand vertical superfunctor 
is given by evaluating 
on the object $\overline{1}_{-\ell}$,
and the right hand vertical superfunctor is given by
tensoring with the lowest weight object $\EOH_0^\ell$.
The way the bottom superfunctor 
$\Omega_{\EOH_0^\ell}$ is defined in \cref{Lup}
ensures that this diagram commutes strictly. 
It follows that the middle square 
in the following diagram commutes:
$$
\begin{tikzcd}
\arrow[d,"\operatorname{can}" left,twoheadrightarrow]
R\arrow[r,"\beta_{-\ell}" above]&\End_{\cR(-\ell)_{-\ell}}(1_{-\ell})
\arrow[d,"\operatorname{can}" right]
\arrow[r,"\Psi_\ell" above]&\End_{\EOH_0^\ell\dash\EOH_0^\ell}(\EOH_0^\ell)
\arrow[d,"\theta\mapsto \theta\otimes \id" right]\\
R_\ell
\arrow[r,twoheadrightarrow,"\bar\beta_{-\ell}" below]&\End_{\cV(-\ell)_{-\ell}}(\overline{1}_{-\ell})\arrow[r,"\Omega_{\EOH_0^\ell}" below]&\End_{\EOH_0^\ell\dash}(\EOH_0^\ell)\arrow[r,"\phi \mapsto \phi(1)" below]&\EOH_0^\ell 
\arrow[r,"\sim" above,"\alpha_0^\ell" below]&R_\ell
\end{tikzcd}
$$
\cref{counterclockwisebubbles} 
shows that 
the composition $R \rightarrow R_\ell$
around the northeast boundary of this diagram
is equal to the canonical quotient map.
Hence, the composition $R_\ell\rightarrow R_\ell$
of the three maps at the bottom of the diagram is the identity.
This implies that $\bar\beta_{-\ell}$ is injective.
\end{proof}

Any morphism space
$\Hom_{\cR(-\ell)}(X,Y)$ in $\cR(-\ell)$
can be viewed as a right $R$-supermodule
so that $\ocirc c \in R$ acts by horizontally composing on the right
with $\beta_{-\ell}(\ocirc c)$.
This induces a structure of right $R_\ell$-supermodule
on any morphism space $\Hom_{\cV(-\ell)}(X,Y)$; cf.
the first paragraph of the proof of \cref{endofid}.
Given a graded $R_\ell$-superalgebra $A$,
we let $\cV(-\ell)\otimes_{R_\ell} A$
be the graded supercategory with the same
objects as $\cV(-\ell)$ and 
morphism spaces $\Hom_{\cV(-\ell)\otimes_{R_\ell} A}(X,Y)
:= \Hom_{\cV(-\ell)}(X,Y)\otimes_{R_\ell} A$.
This is naturally a graded 2-representation
of $\UU(\sl_2)$ in its own right.

\begin{theorem}\label{structthm}
Let $\cV$ be any graded 2-representation
of $\UU(\sl_2)$, $\ell \in \N$ and 
$M \in\cV_{-\ell}$ be any lowest weight object.
The strongly equivariant graded 
superfunctor $\Omega_M:\cV(-\ell)\rightarrow
\cV$ from \cref{Lup} extends to a {\em fully faithful}
strongly equivariant graded superfunctor
$\Omega_M\otimes \id:\cV(-\ell)\otimes_{R_\ell} 
\End_{\cV}(M) \rightarrow \cV$.
\end{theorem}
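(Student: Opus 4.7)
The plan is first to reduce the claim to morphism spaces emanating from $\overline{1}_{-\ell}$ using adjunction, then to exploit the lowest weight condition together with the inversion relations to decompose the target into a direct sum of shifts of $\overline{1}_{-\ell}$.

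By \cref{haverightadjs}, the 1-morphisms $E^{(d)}$ and $F^{(d)}$ in $\GSKar(\UU(\sl_2))$ admit right duals whose units and counits come from 2-morphisms in $\UU(\sl_2)$, so by composition every 1-morphism $X$ in $\GSKar(\UU(\sl_2))$ has a right dual $X^{\vee}$ built from $\UU(\sl_2)$-data. Since $\Omega_M$ is strongly equivariant (\cref{sef}), it transports these adjunctions in $\cV(-\ell)$ to the analogous adjunctions in $\cV$, so the natural adjunction isomorphism
\[
\Hom_{\cV(-\ell)}(X\overline{1}_{-\ell}, Y\overline{1}_{-\ell}) \stackrel{\sim}{\rightarrow} \Hom_{\cV(-\ell)}(\overline{1}_{-\ell}, X^{\vee}Y\overline{1}_{-\ell})
\]
is compatible with the corresponding isomorphism in $\cV$ under $\Omega_M \otimes \id$. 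It therefore suffices to prove that for every 1-morphism $Z:-\ell\rightarrow -\ell$ in $\GSKar(\UU(\sl_2))$, the induced map
\begin{equation*}
\Omega_M \otimes \id:\Hom_{\cV(-\ell)}(\overline{1}_{-\ell}, Z\overline{1}_{-\ell}) \otimes_{R_\ell} \End_\cV(M) \rightarrow \Hom_\cV(M, ZM)
\end{equation*}
is bijective.

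The key simplification is that $F\overline{1}_{-\ell}=0$ in $\cV(-\ell)$ and $FM=0$ in $\cV$. Specialized at weight $-\ell$, the inversion relation \cref{inv2} thus collapses to yield an isomorphism $FE\overline{1}_{-\ell} \simeq \overline{1}_{-\ell}^{\oplus \ell}$ (with appropriate grading and parity shifts), and the analogous statement for $M$. Iterating, together with the relations \cref{nearlydone} and the divided power isomorphisms \cref{divpowers}, any 1-morphism $Z:-\ell \rightarrow -\ell$ in $\GSKar(\UU(\sl_2))$ may be straightened so that $Z\overline{1}_{-\ell}$ decomposes as a direct sum $\bigoplus_i Q^{d_i}\Pi^{p_i}\overline{1}_{-\ell}$ whose multiplicities match those forced at the level of Grothendieck groups by \cref{K0} and \cref{sgr}. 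The witnessing isomorphism is assembled entirely from 2-morphisms in $\UU(\sl_2)$, so $\Omega_M$ transports it to a corresponding decomposition $ZM \simeq \bigoplus_i Q^{d_i}\Pi^{p_i} M$ in $\cV$. Using \cref{endofid}, the map in question becomes the evident bijection $\bigoplus_i Q^{d_i}\Pi^{p_i} R_\ell \otimes_{R_\ell} \End_\cV(M) \stackrel{\sim}{\rightarrow} \bigoplus_i Q^{d_i}\Pi^{p_i} \End_\cV(M)$.

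The main obstacle is justifying the categorical decomposition of $Z\overline{1}_{-\ell}$, that is, lifting the Grothendieck-group identity to an honest direct sum isomorphism assembled from $\UU(\sl_2)$-data, and checking that $\cV(-\ell)$ admits no extraneous relations beyond those imposed by $\UU(\sl_2)$ together with the lowest weight condition. The concrete realization $\EOH^\ell\psmod$ from \cref{aishaonthephone} provides a useful check: for that 2-representation the decomposition can be verified directly using \cref{actiontheorem} together with the basis theorems \cref{lemma0}--\cref{lemma3}, and the non-degeneracy theorem \cref{nondegthm} then precludes any additional relations in the universal $\cV(-\ell)$, since any such relation would descend from a relation among spanning 2-morphisms in $\UU(\sl_2)$.
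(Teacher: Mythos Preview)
Your approach coincides with the paper's: the proof there simply invokes \cite[Lem.~5.4, Prop.~5.6]{Rou}, and what you have written is a correct sketch of that argument---reduce by adjunction to $\Hom(\overline{1}_{-\ell}, Z\overline{1}_{-\ell})$, straighten $Z\overline{1}_{-\ell}$ into shifts of $\overline{1}_{-\ell}$ using the inversion relations and $F\overline{1}_{-\ell}=0$, then apply \cref{endofid}.

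Your final paragraph, though, misdiagnoses what remains. Non-degeneracy (\cref{nondegthm}) describes 2-morphism spaces in $\UU(\sl_2)$, not in the quotient $\cV(-\ell)$, so it does not by itself preclude collapses there. And verifying the decomposition in $\EOH^\ell\psmod$ and then pulling it back would require $\Omega_{\EOH_0^\ell}$ to reflect isomorphisms, which is \cref{babykid}---itself a corollary of the theorem you are proving. The resolution is simpler than you suggest: the inversion isomorphisms \cref{inv1}--\cref{inv2} are part of the \emph{definition} of $\UU(\sl_2)$ and therefore hold in every 2-representation. The iteration you begin in your second paragraph thus produces the required decomposition simultaneously in $\cV(-\ell)$ and in $\cV$, compatibly under $\Omega_M$, with no external input beyond the base case \cref{endofid}. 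Rouquier's induction (on word length, peeling off an outermost $E$ or $F$ via the two adjunctions \cref{rightadj} and \cref{leftadj} at each step) makes this precise; the concrete model enters only to establish that base case, exactly as in the proof of \cref{endofid}, not to control the inductive step.
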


\begin{proof}
Let $A:= \End_{\cV}(M)$ for short.
The graded superfunctor $\Omega_M$ extends to
$\Omega_M\otimes \id$ by the universal property of tensor product.
To see that the resulting graded superfunctor is fully faithful,
we must show that it defines an isomorphism
$\Hom_{\cV(-\ell)\otimes_{R_\ell} A}(X,Y)
\stackrel{\sim}{\rightarrow}
\Hom_{\cV}(X,Y)$ for objects of any weight subcategory of
$\cV(-\ell)\otimes_{R_\ell} A$.
This is clear if $X=Y = \overline{1}_{-\ell}$.
The result in general then follows by the (now standard) technique 
explained in the proof of \cite[Lem.~5.4, Prop.~5.6]{Rou}.
\end{proof}

\begin{corollary}\label{babykid}
For $\ell \in \N$, let $\EOH^\ell\psmod$ be the graded Karoubian 2-representation from \cref{aishaonthephone},
and let $\GSKar(\cV(-\ell))$ be the graded super Karoubi envelope of
$\cV(-\ell)$, which is another graded
Karoubian 2-representation.
The strongly equivariant graded superfunctor
$\Omega_{\EOH_0^\ell}:\cV(-\ell) \rightarrow \EOH^\ell\psmod$
associated to the lowest weight object
$\EOH_0^\ell$
induces a strongly equivariant graded superequivalence
$\Xi_\ell:\GSKar(\cV(-\ell)) \rightarrow \EOH^\ell\psmod$.
\end{corollary}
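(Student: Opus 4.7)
The plan is to apply the structure theorem \cref{structthm} directly to deduce full faithfulness of $\Omega_{\EOH_0^\ell}$, then extend to the Karoubi envelope and verify essential surjectivity by realizing each regular module $\EOH_n^\ell$ as the image of an appropriate divided-power object.

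First I would identify $\End_{\EOH^\ell\psmod}(\EOH_0^\ell)$ with $R_\ell$. Since $\OSym_0 = \k$, all the generating relations in \cref{blue} that define $\EOH_0^\ell$ reduce to $\dot e_r^{(\ell)} = 0$ for $r > \ell$, which already hold in $R_\ell$; hence $\EOH_0^\ell \cong R_\ell$ as a supercommutative graded superalgebra, and the endomorphism ring of $\EOH_0^\ell$ as a left $\EOH_0^\ell$-supermodule is $R_\ell$. Combined with the isomorphism $\bar\beta_{-\ell}:R_\ell\stackrel{\sim}{\to}\End_{\cV(-\ell)}(\overline{1}_{-\ell})$ of \cref{endofid}, and the fact that (as in the proof of \cref{endofid} via \cref{counterclockwisebubbles}) the $R_\ell$-actions on both sides are compatibly identified through $\Omega_{\EOH_0^\ell}$, I can apply \cref{structthm} with $M = \EOH_0^\ell$ and $A = \End_{\EOH^\ell\psmod}(\EOH_0^\ell) = R_\ell$. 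Since tensoring $\cV(-\ell)$ with $R_\ell$ over itself is trivial, the conclusion is that $\Omega_{\EOH_0^\ell}:\cV(-\ell)\to\EOH^\ell\psmod$ is itself fully faithful.

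The graded super Karoubi envelope is universal among strongly equivariant extensions into additive idempotent-complete graded $(Q,\Pi)$-supercategories, and $\EOH^\ell\psmod$ is already of this form, so $\Omega_{\EOH_0^\ell}$ extends to $\Xi_\ell:\GSKar(\cV(-\ell))\to\EOH^\ell\psmod$, still strongly equivariant by 2-functoriality of $\GSKar$. Fully faithful additive superfunctors extend to fully faithful superfunctors between super Karoubi envelopes, so $\Xi_\ell$ remains fully faithful. For essential surjectivity, note that each $\EOH_n^\ell$ is positively graded with degree-$0$ component $\k$, hence is a graded local superalgebra; consequently $\EOH_n^\ell\psmod$ is the additive idempotent completion of the category of $(Q,\Pi)$-shifts of the regular module $\EOH_n^\ell$. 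It therefore suffices to produce each $\EOH_n^\ell$ in the essential image. Taking the object $E^{(n)}\overline{1}_{-\ell}\in\GSKar(\cV(-\ell))$ and using the identification $E^{(n)}|_{\EOH_0^\ell\psmod}\simeq U_{(n);0}^\ell\otimes_{\EOH_0^\ell}-$ from the final bullet of \cref{aishaonthephone}, I get $\Xi_\ell(E^{(n)}\overline{1}_{-\ell})\simeq U_{(n);0}^\ell$. By \cref{lemma01}(2) applied with $n$ of that statement equal to $0$, the set $\GPar{n}{0} = \{\varnothing\}$, so $U_{(n);0}^\ell$ is free of rank one as a left $\EOH_n^\ell$-supermodule with generator $u_{(n);0}(1)$; thus $U_{(n);0}^\ell\cong\EOH_n^\ell$, completing the surjectivity check.

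The main obstacle will be verifying rigorously that the two $R_\ell$-actions (on $\End_{\cV(-\ell)}(\overline{1}_{-\ell})$ via $\bar\beta_{-\ell}$ and on $\End_{\EOH^\ell\psmod}(\EOH_0^\ell)$ via the natural identification with $\EOH_0^\ell\cong R_\ell$) match under $\Omega_{\EOH_0^\ell}$, so that the application of \cref{structthm} genuinely collapses the tensor product to $\cV(-\ell)$ itself; this is really what \cref{counterclockwisebubbles} and the proof of \cref{endofid} deliver, but it needs to be stated carefully. Everything else is a straightforward unpacking of universal properties.
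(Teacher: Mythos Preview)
Your proposal is correct and follows essentially the same route as the paper: use \cref{endofid} together with \cref{structthm} (and the identification $\End_{\EOH^\ell\psmod}(\EOH_0^\ell)\cong R_\ell$) to get full faithfulness of $\Omega_{\EOH_0^\ell}$, extend via the universal property of the graded super Karoubi envelope, then check density by realizing $\EOH_n^\ell$ as $E^{(n)}\EOH_0^\ell\simeq U_{(n);0}^\ell$ via \cref{lemma01}(2). The paper's proof is simply a terser version of what you wrote; in particular, your careful remark that the $R_\ell$-actions match (so that $\cV(-\ell)\otimes_{R_\ell}R_\ell=\cV(-\ell)$) is exactly what is implicit in the paper's phrase ``In view of \cref{endofid,structthm}, $\Omega_{\EOH_0^\ell}$ is fully faithful.''
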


\begin{proof}
In view of \cref{endofid,structthm},
$\Omega_{\EOH_0^\ell}$ is fully faithful. 
This extends by the universal property of the graded super Karoubi envelope
to give 
a fully faithful strongly equivariant graded superfunctor
$\Xi_\ell:\GSKar(\cV(-\ell)) \rightarrow \EOH^\ell\psmod$.
To see that $\Xi_\ell$ is a graded superequivalence, it remains to check that it is dense.
This follows because 
$$
E^{(n)} \EOH_0^\ell \simeq 
U_{(n);0}^\ell \otimes_{\EOH_0^\ell} \EOH_0^\ell
\simeq \EOH_n^\ell,
$$
the last isomorphism following
 since 
$U_{(n);0}^\ell$ is free of rank 1 as a graded left $\EOH_n^\ell$-supermodule
by \cref{lemma01}(2).
\end{proof}

We record one more basic lemma, which is analogous to the first part of \cite[Lem.~5.2]{Rou}.

\begin{lemma}\label{sillylemma}
Let $\cV$ be an integrable Karoubian graded 2-representation of $\UU(\sl_2)$.
Let $N$ be an object of $\cV_k$ for some $k \in \Z$.
If 
$\Hom_{\cV_k}(E^n M, N) = 0$ for all $\ell \in \N$,
$n \geq 0$ such that $k = 2n-\ell$
and all lowest weight objects $M \in \cV_{-\ell}$, then $N = 0$.
\end{lemma}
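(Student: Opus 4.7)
The plan is to argue by contradiction. Suppose $N \neq 0$. Since $\cV$ is integrable, $F$ acts locally nilpotently, so I may choose $m \geq 0$ minimal with $F^{m+1} N = 0$, and set $M := F^m N$. By minimality, $M$ is nonzero, and it satisfies $FM = 0$, so it is a lowest weight object lying in $\cV_{k'}$ where $k' := k - 2m$.

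Next I would check that $k' \leq 0$. If instead $k' \geq 1$, the first inversion isomorphism of \cref{inv1}, applied to $M$, yields an isomorphism $EFM \cong FEM \oplus M^{\oplus k'}$. Since $FM = 0$ the left-hand side vanishes, forcing $M^{\oplus k'} = 0$ and hence $M = 0$, contradicting our choice of $m$. So $k' \leq 0$, and setting $\ell := -k' = 2m - k \in \N$ and $n := m$ gives $k = 2n - \ell$ with $M \in \cV_{-\ell}$ a lowest weight object; this is an admissible pair in the hypothesis.

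The final step is to exhibit a nonzero graded morphism $E^n M \to N$, contradicting the hypothesis. Here I would use that, by iterating the right adjunction of \cref{rightadj} in $\UU(\sl_2)$ and invoking its image under the 2-representation $\cV$, the superfunctor $E^n|_{\cV_{-\ell}}$ has a right adjoint of the form $Q^a \Pi^b F^n|_{\cV_k}$ for suitable integers $a, b$. This gives an isomorphism of graded superspaces
\[
\Hom_{\cV_k}(E^n M, N) \;\cong\; Q^a \Pi^b \Hom_{\cV_{-\ell}}(M, F^n N) \;=\; Q^a \Pi^b \End_{\cV_{-\ell}}(M),
\]
since $F^n N = F^m N = M$. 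The right-hand side contains a nonzero (shifted) copy of $\id_M$ because $M \neq 0$, so the left-hand side is nonzero, yielding the contradiction.

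There is no real obstacle here beyond bookkeeping: the two ingredients (the inversion relation, to rule out nonzero lowest weight objects of strictly positive weight, and the $(E^n, F^n)$-adjunction with appropriate $(Q,\Pi)$-shift) are intrinsic to any graded 2-representation. The only minor subtlety worth flagging is that one genuinely needs the inversion step, because otherwise $M$ could a priori lie in some positive weight $\cV_{k'}$ that is not of the form $\cV_{-\ell}$ with $\ell \in \N$, and the hypothesis would not apply.
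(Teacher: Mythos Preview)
Your proof is correct and follows essentially the same contrapositive argument as the paper: pick $M = F^n N$ at the point where $F$ first kills $N$, then use the $(E^n, F^n)$ adjunction to identify $\Hom_{\cV_k}(E^n M, N)$ with a shift of $\End_{\cV_{-\ell}}(M)$, which is nonzero. The one thing you add is the explicit verification via the inversion relation that a nonzero lowest weight object cannot sit in strictly positive weight (so that $\ell \in \N$); the paper simply asserts this, presumably relying on the triviality of $\cV(-\ell)$ for $\ell < 0$ together with \cref{Lup}, so your extra step is a harmless and arguably cleaner justification of the same fact.
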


\begin{proof}
Suppose that $N \neq 0$.
By integrability, there exists $n \geq 0$ such that 
$F^n N \neq 0$ and $F^{n+1} N = 0$.
This means that $M := F^n N$ is a non-zero lowest weight object
of $\cV_{-\ell}$ for  $\ell=k-2n \in \N$.
By assumption, we have that $\Hom_{\cV_k}(E^n M, N) = 0$.
Hence, by adjunction, $$
\End_{\cV_{-\ell}}(M)=
\Hom_{\cV_{-\ell}}(M, F^n N)
\simeq \Hom_{\cV_k}(E^n M, N)
 = 0.
 $$
It follows that $1_{M} = 0$, so $M = 0$, which is a contradiction.
\end{proof}

\begin{remark}
There is more still to be done here. For example, Rouquier continues in \cite[Sec.~5.1.4]{Rou} to construct
a Jordan-H\"older series in an arbitrary integrable 
Karoubian 2-representation, and this result assuredly
carries over to our setting.
There is also a good theory of
{\em locally finite Abelian} 2-representations of $\UU(\sl_2)$,
including an analog of \cite[Prop.~5.20]{CR} which implies that
the irreducible objects of such a 2-representation 
can be given the structure of a crystal in the sense of Kashiwara.
It would be worthwhile to extend \cite[Th.~5.27]{CR} (which is 
a special case of Rouquier's ``control by $K_0$'' from \cite[Th.~5.22]{Rou}) 
to this setting. This would pave the way to more applications involving representations of the supergroup $Q(n)$ and 
the Lie superalgebra $\mathfrak{q}_n(\C)$. 
In the ordinary case, an alternative approach by-passing control by $K_0$ was developed in \cite{BSW}, which we expect 
should also have an interesting and non-trivial super analog.
Another direction we would like to investigate further is 
to extend \cref{actiontheorem,nondegthm} from odd $\sl_2$ to 
the super Kac-Moody 2-category 
associated to 
``odd $\mathfrak{so}_{2n+1}$", 
thereby giving an odd analog of the 2-representation
of $\mathfrak{sl}_n$ constructed 
in \cite{KL3}.
\end{remark}

\section{The odd analog of the Rickard complex}\label{ricksec}

Let $\cV$ be a graded 2-supercategory. 
The notation $\Ch^b(\cV)$ denotes the graded supercategory of bounded cochain complexes
and chain maps in $\cV$; 
differentials in a cochain complex are assumed to be even of degree 0
but we allow 
chain maps whose components are inhomogeneous.
Also $K^b(\cV)$ is the homotopy category, which
is a graded supercategory
with the same objects as
$\Ch^b(\cV)$ and morphisms that are chain homotopy equivalence classes of chain maps; 
chain homotopies are again required 
to be even of degree 0.
If $\cV$ is an 
integrable graded Karoubian 2-representation of $\UU(\sl_2)$
as in the previous section,
both $\Ch^b(\cV)$ and $K^b(\cV)$ are themselves integrable graded Karoubian 2-representations
of $\UU(\sl_2)$ in a natural way.

Fix $k \in \Z$.
The {\em odd Rickard complex} $\Theta_k$,
so-called because it is the odd analog of the complex in \cite[Sec.~6.2]{CR} 
which was
introduced originally by Rickard in the context of symmetric groups,
is the following 
cochain complex in
$\Ch\big(\mathpzc{Hom}_{\GSKar(\UU(\sl_2))}(-k,k)\big)$:
$$
\left\{
\begin{array}{rl}
\cdots \rightarrow
Q^d E^{(k+d)} F^{(d)}1_{-k} \stackrel{\partial^{-d}}{\rightarrow}
Q^{d-1}E^{(k+d-1)} F^{(d-1)} 1_{-k}\rightarrow
\cdots \rightarrow E^{(k)} 1_{-k} \rightarrow 0\rightarrow\cdots&\text{if $k \geq 0$}\\
\cdots \rightarrow
Q^d E^{(k+d)} F^{(d)}1_{-k} \stackrel{\partial^{-d}}{\rightarrow}
Q^{d-1}E^{(k+d-1)} F^{(d-1)} 1_{-k}\rightarrow
\cdots \rightarrow Q^{-k}F^{(-k)} 1_{-k} \rightarrow 0 \rightarrow \cdots
&\text{if $k \leq 0$},
\end{array}\right.
$$
where in both cases $E^{(k+d)} F^{(d)} 1_{-k}$
is in cohomological degree $-d$.
The differential
$$
\partial^{-d}:
Q^d E^{(k+d)} F^{(d)}1_{-k} \rightarrow
Q^{d-1} E^{(k+d-1)} F^{(d-1)} 1_{-k}
$$ is
the composition first of the ``inclusion"
of $Q^d E^{(k+d)} F^{(d)} 1_{-k}
\rightarrow Q^{k+3d-2} E^{(k+d-1)} E F F^{(d-1)} 1_{-k}$ 
as a summand\footnote{The idempotent endomorphism defining 
$Q^{k+3d-2} E^{(k+d-1)} E F F^{(d-1)} 1_{-k}$ as a summand of $Q^d E^{k+d} F^d 1_{-k}$
decomposes as the sum of two mutually orthogonal idempotents, one of which
is the idempotent defining $Q^{d-1} E^{(k+d)} F^{(d)} 1_{-k}$.} 
of $E^{(k+d-1)} E F F^{(d-1)} 1_{-k}$,
then 
$Q^{k+3d-2} E^{(k+d-1)} \eps F^{(d-1)}:
Q^{k+3d-2} E^{(k+d-1)} E F F^{(d-1)} 1_{-k}
\rightarrow
Q^{d-1} E^{(k+d-1)} F^{(d-1)} 1_{-k}$.
Note this is even of degree 0 as required.
The following checks that it is a cochain complex.

\begin{lemma}\label{kentucky}
We have that $\partial^{-d+1} \circ \partial^{-d} = 0$ for all $d$.
\end{lemma}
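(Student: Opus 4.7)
The plan is to translate the argument used to prove \cref{isdifferential} for the singular Rouquier complex directly into the string calculus for $\UU(\sl_2)$. First I would unwind the definition of $\partial^{-d+1}\circ \partial^{-d}$, viewed as a 2-morphism $E^{(k+d)} F^{(d)} 1_{-k} \Rightarrow E^{(k+d-2)} F^{(d-2)} 1_{-k}$ (ignoring grading shifts). By the super interchange law, this composition factors as the inclusion of $E^{(k+d)}F^{(d)} 1_{-k}$ into $E^{(k+d-2)} E^2 F^2 F^{(d-2)} 1_{-k}$ as the summand cut out by the idempotent $1_{E^{(k+d-2)}} \otimes \rho_2((\chi\omega)_2) \otimes \lambda_2((\omega\chi)_2) \otimes 1_{F^{(d-2)}}$ (at the relevant weights), followed by $1_{E^{(k+d-2)}} \otimes \partial \otimes 1_{F^{(d-2)}}$, where
$$
\partial := \eps \circ (E \eps F): E^2 F^2 1_{-k+2} \Rightarrow 1_{-k+2}
$$
is the ``double counit''. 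Hence it suffices to show that $\partial$ annihilates the image of the idempotent $\rho_2((\chi\omega)_2) \otimes \lambda_2((\omega\chi)_2)$.

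Using the factorizations $(\chi\omega)_2 = \chi_2 \omega_2 = x_1\tau_1$ and $(\omega\chi)_2 = \omega_2 \chi_2 = \tau_1 x_1$ in $\ONH_2$, the composite idempotent factors through $\bigl(\rho_2(\tau_1) \otimes \lambda_2(\tau_1)\bigr)$, so the problem reduces to showing
$$
\partial \circ \bigl(\rho_2(\tau_1) \otimes \lambda_2(\tau_1)\bigr) = 0.
$$

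The expected main obstacle is the mate identity
$$
\partial \circ \bigl(\rho_2(\tau_1) \otimes 1_{F^2}\bigr) = \pm\, \partial \circ \bigl(1_{E^2} \otimes \lambda_2(\tau_1)\bigr),
$$
which diagrammatically says that an upward crossing tucked beneath a double cap slides across to become a downward crossing. This identity is essentially built into the definition of the downward crossing in \cref{phew} as an iterated right mate: applying \cref{oddadjunction}(2) once for each of the two counits appearing in $\partial$, together with the pitchfork relations, the almost pivotal structure \cref{crossingcyclicity}, and the right adjunction relations \cref{rightadj}, converts a crossing on the upward strand pair into a crossing on the downward pair with a sign that can be tracked explicitly from the grading/parity data in \cref{table}.

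Granting the mate identity, the conclusion is immediate:
\begin{equation*}
\partial \circ \bigl(\rho_2(\tau_1) \otimes \lambda_2(\tau_1)\bigr) = \pm\, \partial \circ \bigl(1_{E^2} \otimes \lambda_2(\tau_1) \circ \lambda_2(\tau_1)\bigr) = \pm\, \partial \circ \bigl(1_{E^2} \otimes \lambda_2(\tau_1^2)\bigr) = 0,
\end{equation*}
since $\tau_1^2 = 0$ in $\ONH_2$ by \cref{ONH2}. This gives $\partial^{-d+1} \circ \partial^{-d} = 0$, completing the proof.
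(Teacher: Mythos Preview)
Your proposal is correct and follows essentially the same route as the paper: both reduce to showing that $\eps\circ(E\eps F)$ annihilates the idempotent $\rho_2((\chi\omega)_2)\,\lambda_2((\omega\chi)_2)$, factor the latter through $\rho_2(\tau_1)\,\lambda_2(\tau_1)$, slide the upward crossing through the nested caps to become a downward crossing, and then use $\tau_1^2=0$. The only remark is that your justification of the mate identity is heavier than needed---the paper obtains it in one diagrammatic step directly from the pitchfork relations (equivalently, from the definition \cref{phew} of the downward crossing as a right mate), without invoking \cref{crossingcyclicity} or \cref{oddadjunction}(2).
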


\begin{proof}
Ignoring gradings for brevity, 
it suffices to show that 
the composition 
$$
E^{(2)} F^{(2)}  
1_{-k-2d+4} 
\stackrel{\operatorname{inc}}{\longrightarrow} 
E^2 F^2 1_{-k-2d+4}
\stackrel{E \eps F}{\longrightarrow} E 1_{-k-2d+4}
F \stackrel{\eps}{\longrightarrow} 1_{-k-2d+4}
$$
is zero.
The identity endomorphism of
$E^{(2)} F^{(2)} 1_{-k-2d+4}$ 
is $$
\rho_2^{(-k-2d)}(x_1 \tau_1) \lambda_2^{(-k-2d)} (\tau_1 x_1)
= \big(\rho_2^{(-k-2d)}(\tau_1) \lambda_2^{(-k-2d)}(\tau_1)\big)
\circ \big(\rho_2^{(-k-2d)}(x_1) \lambda_2^{(-k-2d)}(x_1)\big).
$$
The composition of this with
$\eps \circ (E \eps F)$ is zero:
$$
\begin{tikzpicture}[anchorbase,scale=1]
\draw[->] (-.1,-.5) to (-.1,.2) [out=90,in=-90] to (.2,.7) to [out=90,in=-90] (.2,.8) to [out=90,in=180](.4,1) to [out=0,in=90] (.6,.8) to[out=-90,in=90] (.6,.3) to [out=-90,in=90] (.9,-.2) to [out=-90,in=90] (0.9,-.5);
\draw[->] (.2,-.5) to (.2,.2) [out=90,in=-90] to (-.1,.7) to [out=90,in=-90] (-.1,.8) to [out=90,in=180] (.4,1.3) to [out=0,in=90]
(.9,.8) to [out=-90,in=90] (.9,.3) to [out=-90,in=90] (.6,-.2) to [out=-90,in=90] (.6,-.5);
\opendot{.2,-.2};
\opendot{.6,-.3};
   \node at (1.5,.45) {$\red\scriptstyle{-k-2d+4}$};
\end{tikzpicture}
=
\begin{tikzpicture}[anchorbase,scale=1]
\draw[->] (.2,-.5) to [out=90,in=-90] (.2,.8) to [out=90,in=180] (.4,1) to[out=0,in=90] (.6,.8) to [out=-90,in=90] (.9,.3) to[out=-90,in=90] (.6,-.2) to [out=-90,in=90] (.6,-.5);
\draw[->] (-.1,-.5) to[out=90,in=-90] (-.1,.8) to [out=90,in=180] (.4,1.3) to [out=0,in=90] (.9,.8) to[out=-90,in=90] (.6,.3) to [out=-90,in=90] (.9,-.2) to [out=-90,in=90] (0.9,-.5);
   \node at (1.5,.45) {$\red\scriptstyle{-k-2d+4}$};
\opendot{.2,-.2};
\opendot{.6,-.3};
\end{tikzpicture} = 0.
$$
\end{proof}

\begin{remark}
Note \cref{kentucky} plus \cref{actiontheorem} implies \cref{isdifferential}.
So the proof of that lemma was actually unnecessary
(as, by association, 
was \cref{mate3}) but we included it to make \cref{derby} 
independent of the subsequent material.
\end{remark}

Suppose now that $\cV$ is an integrable graded Karoubian 2-representation
of $\UU(\sl_2)$.
Given any cochain complex $C \in \Ch^b(\cV_{-k})$,
we can apply the complex of graded superfunctors that is the image under 
$\cV$ of the odd Rickard complex $\Theta_k$ to obtain a double complex.
The associated total complex is again bounded thanks to the integrability assumption.
This construction defines a graded superfunctor 
$\Ch^b(\cV_{-k}) \rightarrow \Ch^b(\cV_k)$. Passing to the quotient
$K^b(\cV)$ of $\Ch^b(\cV)$,
we obtain from this a graded superfunctor 
\begin{equation}
\cV(\Theta_k):K^b(\cV_{-k}) \rightarrow K^b(\cV_k).
\end{equation}

\begin{lemma}\label{nearly}
Let $\cV$ be the graded 2-representation $\EOH^\ell\psmod$ from
\cref{aishaonthephone}. The image of the odd Rickard complex 
$\Theta_k$ under $\cV$ recovers the graded superfunctor defined by tensoring with singular Rouqiuer complex
from \cref{src} shifted globally in degree by an application of $Q^{-nk}$.
\end{lemma}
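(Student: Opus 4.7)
The plan is to match term-by-term and differential-by-differential. For the terms, I would use the explicit formulae for $E^{(d)}$ and $F^{(d)}$ in the $2$-representation $\EOH^\ell\psmod$ recorded at the end of \cref{aishaonthephone}: applied to $\EOH_n^\ell$ with $n=\tfrac{\ell-k}{2}$, the composite $E^{(k+d)}F^{(d)}$ becomes
\begin{equation*}
Q^{-d(\ell-3(n-d)-2d+1)-(k+d)(n-d)}\; U^\ell_{(k+d);n-d}\otimes_{\EOH_{n-d}^\ell} V^\ell_{n-d;(d)}\otimes_{\EOH_n^\ell}-.
\end{equation*}
With the additional $Q^d$ built into the Rickard complex, a direct substitution $\ell=2n+k$ collapses the exponent to $-nk$, so the degree-$(-d)$ term of $\cV(\Theta_k)$ applied to $\EOH_n^\ell$ is $Q^{-nk}$ times $C_d=U^\ell_{(k+d);n-d}\otimes_{\EOH_{n-d}^\ell}V^\ell_{n-d;(d)}$, exactly as required. (The edge cases $d>n$ and $d<\max(0,-k)$ are vacuous since $U_{(k+d);n-d}^\ell$ or $V_{n-d;(d)}^\ell$ are zero there, which matches $C_d=0$.)

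For the differentials, recall that $\partial^{-d}$ in $\Theta_k$ is the image of $E^{(k+d-1)}\eps F^{(d-1)}$ restricted along the inclusion of $E^{(k+d)}F^{(d)}1_{-k}$ as a summand of $E^{(k+d-1)}\,EF\,F^{(d-1)}1_{-k}$. Under $\cV$, the counit $\eps$ is precisely the superbimodule homomorphism $\ev_{n-d}$ of \cref{cupsandcaps} (the definition of $\Psi_\ell$ in \cref{actiontheorem}, together with \cref{aishaonthephone}, ensures this match). What remains is to identify the inclusion step: the inclusion of $E^{(k+d)}$ as a summand of $E^{(k+d-1)}\!\cdot\! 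E$ is, by construction of divided powers in \cref{divpowerdef} together with \cref{indresrel} and \cref{lemma2}(2), intertwined by the map $c_{(k+d-1),(1)}'$ of \cref{lemma1} with the natural inclusion $U^\ell_{(k+d);n-d}\hookrightarrow U^\ell_{(k+d-1,1);n-d}$ coming from $\OSym_{k+d}\subset\OSym_{(k+d-1,1)}$. The analogous statement holds for $F^{(d)}$ using $V^\ell_{n-d;(d)}\hookrightarrow V^\ell_{n-d;(1,d-1)}$ and $c_{(1),(d-1)}$. Putting these together gives exactly the four-step composition defining $\partial_d$ in \cref{weekend}.

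So the two complexes will agree on the nose once signs and grading shifts are matched. The one technical obstacle I expect is the sign/shift bookkeeping: the parity shifts in the definition \cref{evil} of $V_{n;(d)}^\ell$ (versus $\tilde V_{n;(d)}^\ell$), the signs hidden in the isomorphisms $b$ and $c$ of \cref{lemma1}, and the additional signs in the action of $\ONH_d$ on $V_{n;(1^d)}^\ell$ (from \cref{lemma2}(1)) used to define the divided-power idempotents, all need to be reconciled with \cref{fromonhrem} and with the explicit constants in \cref{table}. Since each constituent isomorphism has already been pinned down unambiguously in the relevant lemmas of Sections~8 and~9, this reduces to a finite calculation of scalar prefactors; I would carry it out by checking both differentials act identically on the generating vector $u_{(k+d);n-d}(1)\otimes v_{n-d;(d)}(1)$ of $C_d$, which suffices because $C_d$ is cyclic as an $(\EOH_{n+k}^\ell,\EOH_n^\ell)$-superbimodule thanks to \cref{lemma3}.
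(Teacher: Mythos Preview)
Your proposal is correct and follows the same approach as the paper: identify the terms via the divided-power formulae from \cref{aishaonthephone} and verify the total grading shift collapses to $Q^{-nk}$. The paper's proof is actually briefer than yours---it only writes out the degree-shift calculation explicitly and leaves the matching of differentials implicit in the construction (since $\eps$ maps to $\ev_{n-d}$ under $\Psi_\ell$ by \cref{actiontheorem}, and the divided-power inclusions correspond to the $b,c$ maps by design).

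One minor caution: your proposed sign-check strategy invokes \cref{lemma3} to assert that $C_d$ is cyclic, but that lemma only treats the $d=1$ bimodules $V_n^\ell$ and $U_n^\ell$, not the thick versions $U_{(k+d);n-d}^\ell$ or $V_{n-d;(d)}^\ell$; cyclicity of $C_d$ would need a separate argument (or you could instead check on a basis from \cref{lemma01}). In practice this is unnecessary, since the differentials agree by construction once the identifications in \cref{aishaonthephone} and \cref{actiontheorem} are in place.
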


\begin{proof}
This follows using the explicit identification of the
divided powers $E^{(d)}$ and $F^{(d)}$ as
endofunctors of $\cV$ explained in \cref{aishaonthephone}.
We just check that the degree shifts match correctly.
Let $n = \frac{\ell-k}{2}$ and $d$ be as in \cref{weekend}, so $k = \ell-2n$.
In the $-d$th cohomological degree in the odd Rickard complex,
we have 
$Q^d E^{(k+d)} F^{(d)}1_{-k}$. 
In the 2-representation $\cV$, this acts by tensoring with the graded
superbimodule
$Q^d 
\big(Q^{-(\ell-2n+d)(n-d)}
U_{(k+d);n-d}^\ell \big)\otimes_{\EOH_{n-d}^\ell} \big(Q^{-d(\ell-3(n-d)-2d+1)}
V_{n-d;(d)}^\ell\big)$, where
the degree shifts are as described in \cref{aishaonthephone}.
The total grading shift here simplifies to
$Q^{-n k}$,
so this is equal to the graded superbimodule
$U_{(k+d);n-d} \otimes_{\EOH_{n-d}^\ell} V_{n-d;(d)}$
in the $d$th homological degree of the singular Rouquier complex
shifted by $Q^{-nk}$.
\end{proof}

\begin{corollary}\label{dearly}
For $\ell \in \N$, 
$\big(\GSKar(\cV(-\ell)\big))(\Theta_k):K^b\big(\GSKar(\cV(-\ell))_{-k}\big) \rightarrow K^b\big(\GSKar(\cV(-\ell))_k\big)$ is a graded superequivalence
inducing $T:1_{-k} \mathbf{V}(-\ell)\stackrel{\sim}{\rightarrow}
1_k \mathbf{V}(-\ell)$ at the level of the Grothendieck groups.
\end{corollary}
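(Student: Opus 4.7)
The plan is to reduce the claim to \cref{weirdshift} by transporting the question along the strongly equivariant graded superequivalence $\Xi_\ell:\GSKar(\cV(-\ell))\stackrel{\sim}{\longrightarrow}\EOH^\ell\psmod$ of \cref{babykid}.

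First I would verify that any strongly equivariant graded superfunctor $\Omega:\cV\to\cW$ of integrable Karoubian 2-representations intertwines the Rickard-complex construction up to even degree 0 isomorphism, in the sense that $\Omega\circ\cV(\Theta_k)\simeq\cW(\Theta_k)\circ\Omega$ as graded superfunctors $K^b(\cV_{-k})\to K^b(\cW_k)$. This is formal: the data $\zeta:E\Omega\stackrel{\sim}{\Rightarrow}\Omega E$ gives an invertible $F\Omega\stackrel{\sim}{\Rightarrow}\Omega F$ through the adjunctions preserved by $\Omega$, and these iterate to produce even degree 0 isomorphisms $E^{(k+d)}F^{(d)}\Omega\simeq\Omega E^{(k+d)}F^{(d)}$ that intertwine the differentials $\partial^{-d}$, since the latter are built only from $\eps$ together with inclusions of summands in tensor powers of $E$ and $F$, all of which are respected by strongly equivariant functors.

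Next I would specialise to $\cV=\EOH^\ell\psmod$. By \cref{nearly}, the image $(\EOH^\ell\psmod)(\Theta_k)$ coincides with the singular Rouquier complex of \cref{src} shifted globally by $Q^{-nk}$, where $n=\tfrac{\ell-k}{2}$. Combining \cref{SRC} with \cref{weirdshift}, the singular Rouquier complex defines a graded superequivalence $K^b(\EOH_n^\ell\psmod)\stackrel{\sim}{\longrightarrow}K^b(\EOH_{n'}^\ell\psmod)$ inducing $q^{nk}T$ on Grothendieck groups. The additional $Q^{-nk}$ shift multiplies the induced map by $q^{-nk}$, so $(\EOH^\ell\psmod)(\Theta_k)$ itself is a graded superequivalence inducing $q^{-nk}\cdot q^{nk}T=T$ on Grothendieck groups.

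Finally, since $\Xi_\ell$ is a graded superequivalence that commutes with $\Theta_k$ by the first step, the graded superfunctor $(\GSKar(\cV(-\ell)))(\Theta_k)$ must also be a graded superequivalence, and under the identifications $K_0(\GSKar(\cV(-\ell))_{\pm k})\cong 1_{\pm k}\mathbf{V}(-\ell)$ induced by $\Xi_\ell$ and \cref{K0} it induces the map $T$ of \cref{oddreflection}. The hard part is the first bookkeeping step: establishing the coherence of the intertwining isomorphism, in particular checking that the inclusions of $E^{(k+d)}F^{(d)}$ into $E^{(k+d-1)}EFF^{(d-1)}$ and the counit $\eps$ are genuinely respected by $\Omega$ and $\zeta$ at the level of cochain maps, not merely on individual terms. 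Once the data from \cref{2repdef,sef} is unpacked diagrammatically this is routine, but it is where all the weight of the argument sits.
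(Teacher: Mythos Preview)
Your proposal is correct and follows the same route as the paper, which simply cites \cref{nearly}, \cref{weirdshift} and \cref{babykid}; you have just made explicit the intertwining step (that a strongly equivariant graded superequivalence commutes with $\Theta_k$ up to even degree~0 isomorphism) that the paper leaves implicit here but uses explicitly in the proof of \cref{bigt}. Your computation of the induced map on Grothendieck groups, cancelling the $q^{-nk}$ shift from \cref{nearly} against the $q^{nk}$ in \cref{weirdshift}, is also correct.
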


\begin{proof}
This follows from \cref{nearly} together with \cref{weirdshift} and \cref{babykid}.
\end{proof}

The proof of the following theorem is based on the argument in \cite[Th.~5.18]{Rou},
the main step really being \cite[Lem.~5.5]{Rou}.
This was itself a generalization of \cite[Th.~6.4]{CR}
which
constructed equivalences between 
bounded derived categories of locally finite Abelian 2-representations.

\begin{theorem}\label{bigt}
Let $\cV$ be an integrable graded Karoubian 2-representation
of $\UU(\sl_2)$.
For $k \in \Z$, the graded 
superfunctor $\cV(\Theta_k):K^b(\cV_{-k}) \rightarrow K^b(\cV_k)$ induced by the odd Rickard complex
is a graded superequivalence.
\end{theorem}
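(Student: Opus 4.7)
The plan is to adapt Rouquier's strategy in \cite[Th.~5.18]{Rou} to the graded super setting, reducing to the universal case already disposed of in \cref{dearly}. First I would construct a candidate ``inverse'' complex $\Theta_k^\vee$ in $\Ch\big(\mathpzc{Hom}_{\GSKar(\UU(\sl_2))}(k,-k)\big)$ whose terms are the right duals of the terms of $\Theta_k$ provided by \cref{haverightadjs}, namely $Q^{-d+d(k+d)}\Pi^{d(k+d)+\binom{d}{2}} F^{(k+d)} E^{(d)} 1_k$ in cohomological degree $d$ (with differential obtained from $\partial$ by applying right mates). The rigidity of $\GSKar(\UU(\sl_2))$ together with the usual zig-zag yoga at the level of cochain complexes then yields unit and counit 2-morphisms $\eta^\vee: \unit \Rightarrow \Theta_k^\vee \circ \Theta_k$ and $\eps^\vee: \Theta_k \circ \Theta_k^\vee \Rightarrow \unit$ satisfying the triangle identities in the homotopy category of $\mathpzc{End}_{\GSKar(\UU(\sl_2))}(\pm k)$.

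Next I would reduce to generating objects. Let $\mathcal{N}_k \subseteq K^b(\cV_k)$ be the full graded subcategory consisting of $X$ for which $\cV(\eps^\vee)_X$ is an isomorphism (equivalently, a homotopy equivalence of cochain complexes in $\cV$). A standard argument shows $\mathcal{N}_k$ is closed under $Q,\Pi$, direct sums, summands, cones and shifts; by \cref{sillylemma} and integrability of $\cV$, it therefore suffices to show $E^n M \in \mathcal{N}_k$ for every $\ell \in \N$, every $n \geq 0$ with $k = 2n-\ell$, and every lowest weight object $M \in \cV_{-\ell}$. The analogous reduction handles $\cV(\eta^\vee)$ on $K^b(\cV_{-k})$. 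For such an $M$, \cref{Lup} produces a strongly equivariant graded superfunctor $\Omega_M: \cV(-\ell) \to \cV$, which extends by universality to $\GSKar(\cV(-\ell)) \to \cV$ and then to a functor on bounded homotopy categories. Being strongly equivariant, $\Omega_M$ intertwines the action of every 1- and 2-morphism of $\UU(\sl_2)$, so it sends $\big(\GSKar(\cV(-\ell))\big)(\eps^\vee)$ to $\cV(\eps^\vee)$ (and similarly for $\eta^\vee$); by \cref{dearly} the former is already an isomorphism in the universal case, and applying $\Omega_M$ transfers the conclusion to $\cV$.

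The main obstacle will be Step 1: the construction of $\Theta_k^\vee$ and its zig-zag data at the chain level. In the ordinary (even) setting, Chuang--Rouquier write down a direct contraction of the bicomplexes $\Theta_k^\vee \circ \Theta_k$ and $\Theta_k \circ \Theta_k^\vee$ onto $\unit$, and the analog here must survive the parity shifts $\Pi^{\binom{d}{2}}$ and $\Pi^{d(k+d)}$ introduced by the odd-type right adjunction in \cref{haverightadjs}, as well as the various signs produced by odd nil-Hecke computations via \cref{fromonh,fromonh2}. Keeping these signs consistent while verifying the triangle identities — which is the odd analog of the key technical \cite[Lem.~5.5]{Rou} — is the genuinely delicate piece; once this is done, Steps 2 and 3 are essentially formal applications of the structural results in \cref{2Repsec} together with \cref{dearly}.
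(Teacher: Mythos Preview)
Your overall architecture matches the paper's: form the right dual complex $\Theta^k$ from \cref{haverightadjs}, obtain an adjunction $\cV(\Theta_k)\dashv\cV(\Theta^k)$, reduce to the universal case via \cref{Lup} and \cref{dearly}. But you have misidentified where the difficulty lies, and your reduction step contains a real gap.

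First, your ``main obstacle'' is not one. The construction of $\Theta^k$ and the unit/counit $\eps,\eta$ satisfying the triangle identities is formal once each term of $\Theta_k$ has a right dual; the paper simply cites \cite[Sec.~4.1.4]{CR} for this. No sign management is required here---the triangle identities hold automatically for the induced adjunction on homotopy categories. What you actually need to prove is that $\eps$ and $\eta$ become \emph{isomorphisms}, equivalently that $\cV(Z)=0$ where $Z=\Cone(\eps)$.

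Second, your invocation of \cref{sillylemma} does not do what you claim. That lemma says: if $\Hom(E^nM,N)=0$ for all lowest weight $M$, then $N=0$. It is a Hom-detection statement, not a thick-subcategory generation statement. Knowing that your $\mathcal{N}_k$ is closed under cones and summands and contains every $E^nM$ does \emph{not} yield $\mathcal{N}_k=K^b(\cV_k)$; you would need to know the $E^nM$ generate $K^b(\cV_k)$ as a thick subcategory, which is a Jordan--H\"older type statement that the paper explicitly does not prove (see the remark following \cref{sillylemma}). The paper instead closes the argument with an adjunction trick: letting $\cV(Z)^\vee$ be right adjoint to $\cV(Z)$, the already-established vanishing $\cV(Z)(E^nC)=0$ gives $\Hom(E^nC,\cV(Z)^\vee\cV(Z)(D))\simeq\Hom(\cV(Z)(E^nC),\cV(Z)(D))=0$, so \cref{sillylemma} forces $\cV(Z)^\vee\cV(Z)(D)=0$, whence $\End(\cV(Z)(D))\simeq\Hom(D,\cV(Z)^\vee\cV(Z)(D))=0$ and $\cV(Z)(D)=0$. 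This extra pass through the adjoint is the missing idea in your Step~2.
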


\begin{proof}
By \cref{haverightadjs}, the 1-morphism $Q^d E^{(k+d)} F^{(d)} 1_{-k}$ has a right dual 
in $\GSKar(\UU(\sl_2))$. 
Hence, we can form the right dual
$\Theta^k$ to $\Theta_k$, which is a cochain complex in
$\Ch\big(\mathpzc{Hom}_{\GSKar(\UU(\sl_2))}(k,-k)\big)$.
The 1-morphism in
the $d$th cohomological degree of $\Theta^k$
is the right dual 
of the 1-morphism
in the $(-d)$th cohomological degree of $\Theta_k$,
and the differentials in $\Theta^k$ are the right mates
of the corresponding differentials in $\Theta_k$.
Let
$\Theta^k \circ \Theta_k$ and $\Theta_k \circ \Theta^k$
be the total complexes associated to the double complexes obtained by
composing these cochain complexes. 
The complex $\Theta_k$ is bounded above, and $\Theta^k$ is bounded below,
but neither is bounded. Consequently,
in each cohomological degree,
the total complexes 
$\Theta^k \circ \Theta_k$ and $\Theta_k \circ \Theta^k$
involve {\em infinite} direct sums of 1-morphisms in 
$\GSKar(\UU(\sl_2))$,
so in fact, one needs to pass to a completion of this graded $(Q,\Pi)$-supercategory 
for it to make sense. This does not cause issues since, on a given object in an
 integrable graded Karoubian
 2-representation, the superfunctors arising from 
 all but finitely many of the summands of these infinite 
 direct sums are zero.

Like $\Theta_k$, the complex $\Theta^k$ defines a graded superfunctor
denoted $\cV(\Theta^k):K^b(\cV_k) \rightarrow
K^b(\cV_{-k})$. Moreover, $\cV(\Theta^k)$ is
right adjoint to $\cV(\Theta_k)$, with
counit and unit of adjunction denoted
\begin{align*}
\cV(\eps):&
\cV(\Theta_k)\circ\cV(\Theta^k)
\Rightarrow \Id_{K^b(\cV_k)},
&\cV(\eta):&\Id_{K^b(\cV_{-k})}
\Rightarrow 
\cV(\Theta^k) \circ \cV(\Theta_k).
\end{align*}
This is explained in more detail in \cite[Sec.~4.1.4]{CR}.
As the notation $\cV(\eps)$ and
$\cV(\eta)$ suggests, if we identify
$\cV(\Theta_k)\circ\cV(\Theta^k)$ with $\cV(\Theta_k \circ \Theta^k)$
and
$\cV(\Theta^k) \circ \cV(\Theta_k)$ with
$\cV(\Theta^k \circ \Theta_k)$ then
these even degree 0 supernatural transformations are induced by
corresponding chain maps denoted simply by
$\eps:\Theta_k\circ\Theta^k\Rightarrow 1_k$
and $\eta:1_{-k}\rightarrow \Theta^k\circ \Theta_k$
between cochain complexes in the completion of $\GSKar(\UU(\sl_2))$.
Although not needed here, these chain maps can be seen quite explicitly; the 
matrix coefficients of their components
are 2-morphisms in $\GSKar(\UU(\sl_2))$ that arise from the counits
and units defining the duality between
the 1-morphisms $Q^d E^{(k+d)} F^{(d)} 1_{-k}$ and their right duals.

To prove the theorem, it suffices to show that 
$\cV(\eps)$ and $\cV(\eta)$ are isomorphisms.
We just explain the argument to see this in the case of
$\cV(\eps)$, since the case of $\cV(\eta)$ is similar.
Since a chain map is an isomorphism in $K^b(\cV_k)$
if and only if its cone is zero in $K^b(\cV_k)$, 
the even degree 0 graded supernatural transformation $\cV(\eps)$ is an isomorphism if and only if
$\Cone(\cV(\eps)_C) \cong 0$ in $K^b(\cV_k)$ 
for all $C \in K^b(\cV_k)$.
Now we observe that
$\Cone(\cV(\eps)_C) = \cV(Z)(C)$
where $Z := \Cone(\eps)$ is the cone of 
$\eps:\Theta_k\circ\Theta^k\Rightarrow 1_k$.
Thus, it suffices to show that
the graded superfunctor $\cV(Z):K^b(\cV_k)\rightarrow K^b(\cV_k)$
is zero.

Consider $K^b(\cV)$ as an integrable Karoubian graded 2-representation in its own right.
In this paragraph, we show that 
$\cV(Z)(E^n C) = 0$
in $K^b(\cV_k)$
for all $\ell \in \N$, $n \geq 0$ 
such that $k=2n-\ell$, 
and all lowest weight objects $C \in K^b(\cV_{-\ell})$. 
To see this, we apply \cref{Lup} (with $\cV$ replaced by 
$K^b(\cV)$)
to get a
strongly equivariant graded superfunctor $\Omega_C:\cV(-\ell)\rightarrow K^b(\cV)$
taking $\bar 1_{-\ell}$ to $C$. 
This extends to a strongly equivariant graded superfunctor
$\widehat{\Omega}_C:\GSKar(\cV(-\ell)) \rightarrow K^b(\cV)$ 
by the universal property of
graded super Karoubi envelope.
Let $K^b(\widehat{\Omega}_C):
K^b(\GSKar(\cV(-\ell)_k)) \rightarrow K^b(\cV_k)$
be the graded superfunctor defined by applying 
$\widehat{\Omega}_C$ to a complex in 
$K^b(\GSKar(\cV(-\ell)_k))$ to obtain a double complex then taking the associated total complex.
Since $\widehat{\Omega}_C$ is strongly equivariant,
we have that
\begin{equation}\label{lastthing}
K^b(\widehat{\Omega}_C)
\circ (\GSKar(\cV(-\ell)))(Z)\circ \operatorname{inc} \simeq
\cV(Z) \circ \widehat{\Omega}_C,
\end{equation}
where $\operatorname{inc}:\GSKar(\cV(-\ell)) 
\rightarrow K^b(\GSKar(\cV(-\ell))$ is the canonical graded superfunctor
sending objects to complexes concentrated in cohomological degree 0.
By \cref{dearly},
$(\GSKar(\cV(-\ell)))(Z) = 0$
in $K^b(\GSKar(\cV(-\ell)_k))$,
hence, the graded superfunctor on the left hand side of \cref{lastthing}
takes $E^n \bar 1_{-\ell}$ to 0.
So the graded superfunctor on the right hand side takes
$E^n \bar 1_{-\ell}$ to 0 too.
Since we have that 
$\widehat{\Omega}_C (E^n \bar 1_{-\ell}) \simeq E^n C$,
it follows that
$\cV(Z)(E^n C) = 0$ as required.

To complete the proof, 
we let $\cV(Z)^\vee$ be a right adjoint to
$\cV(Z):K^b(\cV_k) \rightarrow K^b(\cV_k)$, which exists by the general 
discussion in \cite[Sec.~4.1.4]{CR} again.
We must show that $\cV(Z)(D) = 0$ for any
$D \in K^b(\cV_k)$, which we do by showing that
$\cV(Z)^\vee \big(\cV(Z) (D)\big) = 0$;
this is sufficient since 
 it implies that $\End_{K^b(\cV_k)}\big(\cV(Z)(D)\big) = 0$.
Using \cref{sillylemma}, we just need to show that
$$
\Hom_{K^b(\cV_k)}\Big(E^n C, 
\cV(Z)^\vee \big(\cV(Z) (D)\big)\Big) = 0
$$
for $C$ and $n$ as in the previous paragraph.
This follows because by adjunction we have that
$$
\Hom_{K^b(\cV_k)}\Big(E^n C, 
\cV(Z)^\vee \big( \cV(Z) (D)\big)\Big) 
\simeq
\Hom_{K^b(\cV_k)}\big(\cV(Z)(E^n C), 
\cV(Z)(D)\big) 
$$
which is zero by the previous paragraph.
\end{proof}

\section{Application to representations of spin symmetric groups}\label{sapps}

\cref{bigt} can be applied to obtain graded superequivalences 
between homotopy/derived categories of
supermodules over the
cyclotomic quiver Hecke superalgebras from \cite{KKT,KKO1,KKO2}.
In explaining this, we will mainly cite \cite[Sec.~8]{KKO2} which presents the results needed to do this
rather concisely. However, 
we need to 
reverse the roles of $E$ and $F$
compared to \cite{KKO2} to be consistent with our convention for $\UU(\sl_2)$
in \cref{catsec}, in which we preferred lowest weight modules 
to highest weight modules.

Fix a Cartan superdatum $(A, P, \Pi, \Pi^\vee)$ as in \cite[Sec.~4.1]{KKO2}. So:
\begin{itemize}
\item
$I$ is an index set
with given decomposition $I = I_{\even}\sqcup I_{\odd}$;
\item
$A = (a_{i,j})_{i,j \in I}$ is a symmetrizable
Cartan matrix such that $a_{i,j} \in 2\Z$ for all $i \in I_{\odd}, j \in I$;
\item
$P$ is the weight lattice;
\item
 $\Pi = \{\alpha_i\:|\:i \in I\}$ is the set of simple roots;
 \item $\Pi^\vee = \{h_i\:|\:i \in I\}$ is the set of simple coroots.
 \end{itemize}
Let $d_i\:(i \in I)$ be positive integers chosen
so that $d_i a_{i,j} = d_j a_{j,i}$ for all $i,j \in I$.
Let $P^+$ be the corresponding set of dominant weights
and $Q^+ := \bigoplus_{i \in I} \N \alpha_i$ be the non-negative
part of the root lattice.
Finally, let $W < \Aut(P)$ be the Weyl group.

Let $\Bbbk = \bigoplus_{d \geq 0} \Bbbk_d$ be a positively graded commutative ground ring with $\Bbbk_0 = \k$ (our usual algebraically closed ground field) and $\dim_\k 
\Bbbk_d < \infty$ for all $d$. We view $\Bbbk$ as a purely even 
graded $\k$-superalgebra.
Given any $\alpha \in Q^+$, there is a corresponding {\em quiver Hecke superalgebra} $R_\alpha$ which is defined by generators and relations
as in \cite[Sec.~8.1]{KKO2}; the definition depends on an 
additional choice of parameters as explained in \cite{KKO2}.
Let $R^\lambda_\alpha$ be the deformed cyclotomic quotient
from \cite[Def.~8.10]{KKO2} associated to a dominant weight
$\lambda \in P^+$ and a choice of 
monic polynomials $a_i^\lambda \:(i \in I)$ as in \cite[(8.12)]{KKO2}.
We are interested in the graded $(Q,\Pi)$-supercategory
\begin{equation}
R^\lambda\psmod := \bigoplus_{\alpha \in Q^+} 
R^\lambda_\alpha\psmod.
\end{equation}
The constructions in \cite[Sec.~8.3]{KKO2} make $R^\lambda\psmod$
into a ``supercategorification" of the 
integrable lowest weight module $V(-\lambda)$ for
the covering quantum group $U_{q,\pi}(\mathfrak{g})$
with the given Cartan superdatum. 
From this, it can be seen that
$R^\lambda\psmod$ has the structure of a graded
2-representation of the corresponding graded Kac-Moody 2-supercategory
as defined in \cite{BE2}, with the Grothendieck group 
$K_0(R^\lambda\Upsmod)$
being identified with the Kostant
$\Z[q,q^{-1}]^\pi$-form for
$V(-\lambda)$.

To be more precise, we focus now on some fixed $i \in I$
and consider the corresponding $\sl_2$-subalgebra
of $U_{q,\pi}(\mathfrak{g})$. In this generality,
we actually need to work now with $q_i := q^{d_i}$ and the grading shift functor
$Q_i := Q^{d_i}$ rather than $q$ and $Q$ used in previous sections. 
This means that when $d_i > 1$
definitions such as \cref{2repdef} 
earlier in the paper
should be modified by replacing $Q$ with $Q_i$
and scaling all degrees by $d_i$ too, e.g., $x$ and $\tau$
are now of degrees $2 d_i$ and $-2 d_i$ rather than of degrees 2 and $-2$.
Since the $\Z$- and $\Z/2$-gradings are independent this does not cause any problems.
There are graded superfunctors 
\begin{align*}
E_i&:
R^\lambda \psmod \rightarrow R^\lambda \psmod,
&
F_i&:R^\lambda \psmod \rightarrow R^\lambda \psmod.
\end{align*}
In terms of the induction and restriction functors denoted
$F_i^\lambda$  and $E_i^\lambda$ in \cite[Sec.~8.3]{KKO2},
our $E_i$ is 
$F_i^\lambda = \bigoplus_{\alpha \in Q_+} F_i^\lambda|_{R^\lambda_\alpha\psmod}$
and our $F_i$ is
$\bigoplus_{\alpha \in Q^+}
Q_i^{\langle h_i, \alpha-\lambda\rangle - 1} E_i^\lambda|_{R^\lambda_{\alpha}\psmod}$.
As well as switching the roles of $E$ and $F$
we have incorporated an additional grading shift
into the restriction functors compared to \cite{KKO2}.
This is needed because \cite{KKO2} does not follow the standard
conventions for covering quantum groups.
It ensures that the graded supernatural transformations
$\eps:E_i F_i|_{R^\lambda_{\alpha+\alpha_i}\psmod} \Rightarrow \Id_{R^\lambda_{\alpha+\alpha_i}\psmod}$,
$\eta:\Id_{R^\lambda_\alpha\psmod} \Rightarrow F_i E_i|_{R^\lambda_\alpha\psmod}$
defined on a graded supermodule by exactly the same underlying functions as for the natural adjunction between restriction and induction
are of the correct degree to match the degrees of the rightward cups and caps
in \cref{table} (also now scaled by $d_i$).
Also in \cite[Sec.~8.3]{KKO2}, one finds the definition of graded supernatural transformations
$x:E_i \Rightarrow E_i$ of degree $2d_i$ and 
$\tau:E_i^2 \Rightarrow E_i^2$
of degree $-2d_i$, both of which are even if $i \in I_{\even}$
and odd if $i \in I_{\odd}$.
(A further complication is that the language of supercategory, superfunctor and supernatural transformation is used differently in \cite{KKO2} compared to
here, but the appropriate translation is easy to make; see the table at the end of the introduction in \cite{BE}.)

This construction makes $R^\lambda\psmod$ into
a graded integrable Karoubian 2-representation of the ordinary $\sl_2$
2-category from \cite{Lauda,Rou} 
if $i$ is even, or of the odd $\sl_2$ 2-category
$\UU(\sl_2)$ as in \cref{2repdef}
if $i$ is odd (with the modified convention for degrees when $d_i >1$).
The last statement is not stated explicitly in \cite{KKO2}---the relevant place
is \cite[Th.~8.13]{KKO2} but one has to work through the proof which
goes back to \cite[Th.~5.2]{KK} to see that the isomorphisms are given by the 
appropriate matrices of supernatural transformations needed to check 
the difficult relations \cref{inv1,inv2}.

\begin{theorem}
In the above setup, for $\alpha \in Q^+$ such that $V(-\lambda)_{\alpha-\lambda} \neq 0$, the even or odd Rickard complex $\Theta_{\langle h_i, \lambda-\alpha\rangle}$ induces a graded superequivalence 
$K^b\big(R^\lambda_{\alpha}\psmod\big)
 \rightarrow K^b\big(R^\lambda_{\alpha-\langle h_i, \alpha-\lambda\rangle \alpha_i}\psmod\big)$.
\end{theorem}

\begin{proof}
This follows from \cite[Th.~5.18]{Rou} 
if $i$ is even, with the graded superequivalence being induced by the
even analog of the Rickard complex,
or from \cref{bigt} if $i$ is odd.
\end{proof}

There is also a ``dual version" of this theorem
with $R^\lambda\psmod$ replaced with
\begin{equation}
R^\lambda\smod := \bigoplus_{\alpha \in Q^+}
R^\lambda_\alpha\smod.
\end{equation}
The underlying ordinary category 
is a locally finite Abelian $(Q,\Pi)$-category.
The results from \cite[Sec.~8.3]{KKO2} 
show that this categorifies 
the dual Kostant $\Z[q,q^{-1}]^\pi$-form
for $V(-\lambda)$.
For fixed $i \in I$ again, 
$R^\lambda\smod$
can be made into a graded
2-representation of the even or odd $\sl_2$ 2-category
exactly as above.

\begin{theorem}\label{dereq}
In the above setup, for $\alpha \in Q^+$ such that $V(-\lambda)_{\alpha-\lambda} \neq 0$, 
the even or
odd Rickard complex $\Theta_{\langle h_i, \lambda-\alpha\rangle}$
induces a graded superequivalence 
$
D^b\big(R^\lambda_{\alpha}\smod\big)
 \rightarrow D^b\big(R^\lambda_{\alpha-\langle h_i, \alpha-\lambda\rangle \alpha_i}\smod\big).
 $
 \end{theorem}

 \begin{proof}
 Like in the previous theorem, the even or odd Rickard complex
$\Theta_{\langle h_i, \lambda-\alpha\rangle}$
induces a graded superequivalence 
$K^b\big(R^\lambda_{\alpha}\smod\big)
 \rightarrow K^b\big(R^\lambda_{\alpha-\langle h_i, \alpha-\lambda\rangle \alpha_i}\smod\big)$.
The result for derived categories follows since they are localizations of these homotopy categories.
 \end{proof}

For a graded superalgebra $A$, we write $A \otimes C_1$ for the graded
superalgebra obtained by tensoring with the rank one Clifford superalgebra
generated by an odd degree 0 involution.
There is also a variation of \cref{dereq} with 
$R^\lambda\smod$ replaced by 
\begin{equation}
R^\lambda\otimes C_1 \smod := \bigoplus_{\alpha \in Q^+}
R^\lambda_\alpha\otimes C_1\smod.
\end{equation}
This can be made into a graded 2-representation which also categorifies
the dual Kostant $\Z[q,q^{-1}]^\pi$-form
for $V(-\lambda)$, just as $R^\lambda\smod$ did earlier.
In particular, for each $i \in I$, 
we can make
$R^\lambda\otimes C_1 \smod$ into a graded
2-representation of the even or odd $\sl_2$ 2-category
exactly as above. This follows by the construction explained in the next paragraph.

There is a general notion of the
{\em Clifford twist} $\cA^\CT$ of a graded 
supercategory $\cA$, which goes back to \cite[Lem.~2.3]{KKT}.
By definition, this is the graded supercategory 
whose objects are pairs $(X, \phi)$ for $X \in \cA$ and an
odd degree 0 involution $\phi \in \End_{\cA}(X)$.
A morphism $f:(X,\phi) \rightarrow (Y,\theta)$
is a morphism $f:X\rightarrow Y$ in $\cA$ such that 
$\theta \circ f = (-1)^{\parity(f)} \phi \circ f$.
Degree and parity of morphisms in $\cA^\CT$ are induced by the ones for $\cA$.
There are obvious ways to define the
Clifford twist $F^\CT:\cA^\CT \rightarrow \cB^\CT$
of a graded superfunctor $F:\cA \rightarrow \cB$,
and also the Clifford twist $\alpha^\CT:F^\CT \Rightarrow G^\CT$
of a graded supernatural transformation $\alpha:F \Rightarrow G$
between two graded superfunctors.
This makes $\CT$ into a strict graded 2-superfunctor
$\CT:\GSCAT\rightarrow \GSCAT$.
Now if  $\cV$ is any graded 2-representation of the even or the odd
2-supercategory $\UU(\sl_2)$, its Clifford twist $\cV^\CT$ can be made into a graded 2-representation
in its own right, with the required
graded superfunctors $E$ and $F$ on $\cV^\CT$ being the Clifford twists of the ones
for $\cV$, and all of the required graded supernatural transformations being the Clifford twists of the one for $\cV$ too. If $\cV$ is integrable and Karoubian then so is $\cV^\CT$.

\begin{theorem}\label{dereq2}
In the above setup, for $\alpha \in Q^+$ such that $V(-\lambda)_{\alpha-\lambda} \neq 0$, 
the even or
odd Rickard complex $\Theta_{\langle h_i, \lambda-\alpha\rangle}$
induces a graded superequivalence 
$D^b\big(R^\lambda_{\alpha}\otimes C_1\smod\big)
 \rightarrow D^b\big(R^\lambda_{\alpha-\langle h_i, \alpha-\lambda\rangle \alpha_i}\otimes C_1\smod\big).$
 \end{theorem}

 \begin{proof}
 This follows by the same arguments as \cref{dereq}.
\end{proof}

Assume henceforth that the characteristic of the
ground field $\k$ is $p = 2l+1 > 2$,
and that the Cartan superdatum fixed above is of type 
$A_{2l}^{(2)}$, with the shortest simple root $\alpha_0$ being odd and all other simple roots being even.
We consider the cyclotomic quiver Hecke superalgebras $R^\lambda_\alpha$
for $\alpha \in Q_+$ and $\lambda := \Lambda_0$, 
taking the ring $\Bbbk$ 
to be the ground field $\k$, and all other choices made as 
explained in \cite[Sec.~3.1]{KLi}.
Let 
$R^\lambda_\alpha \otimes C_1$
be the superalgebra tensor product of $R^\lambda_\alpha$ with the
rank one Clifford superalgebra generated by an odd involution.
Now we forget both the $\Z$- and $\Z/2$-gradings
on $R^\lambda_\alpha$ and $R^\lambda_\alpha \otimes C_1$
to view them as ordinary finite-dimensional algebras.
For such an algebra $A$, we write $A\fdmod$ for the Abelian category
of finite-dimensional left $A$-modules and $D^b(A\fdmod)$ for its ordinary
bounded derived category.

In view of \cite[Lem.~3.1.39]{KLi}, 
the following proves \cite[Conj.~1.3.1]{KLi}.

\begin{theorem}
Suppose that $\alpha, \beta \in Q^+$ are
such that $\alpha-\lambda$ and $\beta - \lambda$ are weights of 
$V(-\lambda)$ in the same $W$-orbit.
The categories
$D^b\big(R^\lambda_\alpha\fdmod\big)$
and
$D^b\big(R^\lambda_\beta\fdmod\big)$ are equivalent
as are $D^b\big(R^\lambda_\alpha\otimes C_1\fdmod\big)$
and
$D^b\big(R^\lambda_\beta\otimes C_1 \fdmod\big)$.
\end{theorem}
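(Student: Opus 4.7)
The plan is to reduce to a chain of single simple reflections and then apply \cref{dereq} and \cref{dereq2}. Since $W$ is generated by the simple reflections $\{s_i\}_{i \in I}$, the hypothesis that $\alpha-\lambda$ and $\beta-\lambda$ lie in the same $W$-orbit yields a factorization $\beta - \lambda = s_{i_k} \cdots s_{i_1}(\alpha - \lambda)$ for some indices $i_1, \ldots, i_k \in I$. Setting $\gamma_0 := \alpha$ and inductively $\gamma_j := \gamma_{j-1} - \langle h_{i_j}, \gamma_{j-1} - \lambda\rangle \alpha_{i_j}$ produces a chain $\alpha = \gamma_0, \gamma_1, \ldots, \gamma_k = \beta$ in $Q^+$ along which $\gamma_j - \lambda = s_{i_j}(\gamma_{j-1} - \lambda)$ remains a weight of $V(-\lambda)$; in particular $V(-\lambda)_{\gamma_{j-1}-\lambda} \neq 0$, so the hypotheses of \cref{dereq} and \cref{dereq2} are satisfied at every step.

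Iterating \cref{dereq} (respectively \cref{dereq2}) along this chain, composing the resulting Rickard complexes, yields graded superequivalences
\[D^b(R^\lambda_\alpha\smod) \simeq D^b(R^\lambda_\beta\smod) \qquad \text{and} \qquad D^b(R^\lambda_\alpha \otimes C_1\smod) \simeq D^b(R^\lambda_\beta \otimes C_1\smod),\]
each realized by tensor product with a bounded complex of graded superbimodules, with quasi-inverse given by the right-dual complex exactly as in the proof of \cref{bigt}.

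Next I would forget the $\Z$-grading. The unit and counit of adjunction, being quasi-isomorphisms of graded complexes of supermodules, remain quasi-isomorphisms once the $\Z$-grading is forgotten, so the equivalences descend to the ungraded super derived categories $D^b(R^\lambda_\alpha\realsmod) \simeq D^b(R^\lambda_\beta\realsmod)$ and $D^b(R^\lambda_\alpha \otimes C_1\realsmod) \simeq D^b(R^\lambda_\beta \otimes C_1\realsmod)$. Finally I would pass to ordinary module categories via the standard equivalences $A \otimes C_1\fdmod \cong A\realsmod$ (obtained by identifying the action of the odd generator of $C_1$ with the parity involution on an $A$-supermodule via the super-tensor commutation rule $ca = (-1)^{\parity(a)}ac$) and $A \otimes C_1\realsmod \cong A\fdmod$ (obtained by iterating the first equivalence using the ungraded algebra isomorphism $C_1 \otimes C_1 \cong M_2(\k)$ that holds for $\k$ algebraically closed of characteristic different from $2$). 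Applying the first identification to the equivalence coming from \cref{dereq} gives the Clifford-tensored statement of the theorem, and applying the second to the equivalence coming from \cref{dereq2} gives the statement about $R^\lambda_\alpha\fdmod$.

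The main obstacle will be the bookkeeping in this final descent: verifying that tensoring with the Rickard bimodule complex (and its right dual) commutes appropriately with the forgetful functor from graded to ungraded supermodules, and with the Morita-style identifications relating supermodule and ordinary module categories. These compatibilities are routine in the super setting but sign-sensitive; beyond the graded super derived equivalences supplied by \cref{dereq}, \cref{dereq2} and \cref{bigt}, no genuinely new ideas are required.
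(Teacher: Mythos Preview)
Your reduction to a single simple reflection and invocation of \cref{dereq,dereq2} matches the paper exactly. The descent to ungraded ordinary module categories, however, takes a different route. The paper simply forgets \emph{both} the $\Z$- and $\Z/2$-gradings at once: the Rickard bimodule complex and its right dual, viewed as complexes of ordinary bimodules, still define mutually quasi-inverse functors between the ordinary derived categories, since the unit and counit were already isomorphisms before forgetting. Thus \cref{dereq} directly yields the equivalence for $R^\lambda_\alpha\fdmod$ and \cref{dereq2} directly yields the one for $R^\lambda_\alpha\otimes C_1\fdmod$.

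Your route instead forgets only the $\Z$-grading, lands in the super setting, and then passes to ordinary modules via the identifications $A\realsmod \simeq (A\otimes C_1)\fdmod$ and $(A\otimes C_1)\realsmod \simeq A\fdmod$. This is correct and has the amusing effect of swapping which of \cref{dereq,dereq2} proves which half of the theorem. The cost is extra bookkeeping: your justification of the second identification via $C_1\otimes C_1\cong M_2(\k)$ glosses over the fact that the \emph{super} tensor $A\otimes C_1\otimes C_1$ is not the ordinary tensor $A\otimes_{\text{ord}} M_2(\k)$; one needs the twist $a\otimes 1\mapsto \operatorname{diag}(a,\operatorname{p}(a))$ to identify it with $M_2(A)$ as an ordinary algebra before invoking Morita equivalence. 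The paper's direct forgetting avoids this entirely.
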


\begin{proof}
Since the simple reflections generate $W$,
it suffices to prove the theorem in the special case that
$\alpha-\lambda$ is a weight of $V(-\lambda)$
and $\beta = \alpha-\langle h_i, \alpha-\lambda\rangle \alpha_i$ for some $i \in I$.
The graded superequivalences in \cref{dereq,dereq2} are obtained by taking the derived tensor product
with the complex of graded superbimodules
arising from the appropriate Rickard complex.
Similarly, the quasi-inverse graded superequivalences are obtained from the
right adjoint of this complex. Now we are forgetting both the $\Z$- and $\Z/2$-gradings,
viewing these complexes of graded superbimodules as complexes of ordinary bimodules.
The resulting complexes define functors 
between the ordinary derived categories.
Since they are quasi-inverse with all gradings present, they are obviously quasi-inverse without these gradings.
\end{proof}

\begin{corollary}\label{CBroue}
Brou\'e's Abelian Defect Group Conjecture holds for double covers of symmetric and alternating groups over any algebraically closed field of positive characteristic.
\end{corollary}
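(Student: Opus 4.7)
The plan is to deduce this corollary by combining the derived equivalences established in the preceding theorem with known reduction results. First, I would dispose of the characteristic $2$ case: the double covers of $S_n$ and $A_n$ are central extensions by a group of order $2$, so in characteristic $2$ their blocks differ from those of $S_n$ and $A_n$ only up to isomorphism induced by the center, and Broué's conjecture for symmetric and alternating groups in characteristic $2$ is a theorem of Chuang and Rouquier \cite{CR}. So from now on assume $\k$ has characteristic $p = 2l+1$ with $l \geq 1$.

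The odd characteristic case splits along the standard two-step strategy. The first step, due to the second author and Livesey \cite{KLi}, is an odd analog of the Chuang--Kessar Morita equivalence: it reduces Broué's conjecture for spin blocks of $\tilde S_n$ and $\tilde A_n$ of a given weight to the assertion that any two such spin blocks with the same weight (equivalently, lying in the same $W$-orbit on weights of $V(-\Lambda_0)$ for $\widehat{\mathfrak{sl}}$ of type $A_{2l}^{(2)}$) are derived equivalent. Combined with \cite[Lem.~3.1.39]{KLi}, this reduction is precisely what allows the problem to be translated into the language of cyclotomic quiver Hecke superalgebras.

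The second step is the content of the theorem immediately preceding this corollary: for $\alpha, \beta \in Q^+$ whose weights $\alpha-\Lambda_0, \beta-\Lambda_0$ of $V(-\Lambda_0)$ lie in the same $W$-orbit, the ordinary bounded derived categories $D^b(R^{\Lambda_0}_\alpha\fdmod)$ and $D^b(R^{\Lambda_0}_\beta\fdmod)$ are equivalent, and likewise after tensoring with $C_1$. This is exactly the second step, transported through the Morita equivalence of \cite{KKT} (up to Clifford twist) between cyclotomic quiver Hecke superalgebras of type $A_{2l}^{(2)}$ and spin blocks of symmetric groups. The Clifford twist is precisely what is absorbed by the $\otimes C_1$ version, so one of the two variants in the theorem applies depending on whether the blocks in question correspond to single spin blocks of $\tilde S_n$ or to Clifford-paired ones, and the passage from $\tilde S_n$ to $\tilde A_n$ is then a standard argument using that $[\tilde S_n : \tilde A_n]=2$ is coprime to $p$.

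The conceptual content is already laid out in full in the paper; the only obstacle left is a careful bookkeeping argument matching spin blocks of $\tilde S_n$ and $\tilde A_n$ to blocks of the cyclotomic quiver Hecke superalgebra $R^{\Lambda_0}_\alpha$ (respectively $R^{\Lambda_0}_\alpha \otimes C_1$) via the Morita equivalence of \cite{KKT}, then using \cite{KLi} to check that Brauer correspondence on the spin side is intertwined with the $W$-action on the $Q^+$-grading, so that the derived equivalence produced by the Rickard complex of the previous theorem is indeed between a block and the image of its Brauer correspondent. This bookkeeping is the most delicate piece, but it is essentially routine given the statements of \cite{KLi} and \cite{KKT}; no new homological algebra is needed beyond the main theorem.
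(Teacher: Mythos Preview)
Your proposal is correct and follows the same reduction strategy as the paper: the preceding theorem establishes \cite[Conj.~2]{KLi} (via \cite[Lem.~3.1.39]{KLi}), and the paper's proof of the corollary is simply a one-line citation to \cite[Th.~5.4.12]{KLi}, where Brou\'e's conjecture for double covers is deduced from that conjecture. You have essentially unpacked what that citation contains---the two-step Chuang--Kessar/Chuang--Rouquier strategy, the Morita equivalence with cyclotomic quiver Hecke superalgebras from \cite{KKT}, and the passage to alternating groups via Clifford theory---which is exactly the content of the relevant sections of \cite{KLi}. One minor point: your characteristic~$2$ reduction is morally right (the central involution generates a nilpotent ideal, so the block theory collapses to that of $S_n$ and $A_n$), but you should be careful about attributing the $A_n$ case in characteristic~$2$ directly to \cite{CR}; in any event this is subsumed in the cited result of \cite{KLi}.
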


\begin{proof}
See \cite[Th.~5.4.13]{KLi} where this is deduced from \cite[Conj.~1.3.1]{KLi}.
\end{proof}

\end{document}